\newtheorem{theorem}{Theorem}[subsection]
\newtheorem{definition}[theorem]{Definition}
\newtheorem{definition-lemma}[theorem]{Definition/Lemma}
\newtheorem{definition-explanation}[theorem]{Definition/Explanation}
\newtheorem{explanation-definition}[theorem]{Explanation/Definition}
\newtheorem{definition-fact}[theorem]{Definition/Fact}
\newtheorem{definition-notation}[theorem]{Definition/Notation}
\newtheorem{definition-conjecture}[theorem]{Definition/Conjecture}
\newtheorem{lemma}[theorem]{Lemma}
\newtheorem{lemma-definition}[theorem]{Lemma/Definition}
\newtheorem{proposition}[theorem]{Proposition}
\newtheorem{corollary}[theorem]{Corollary}
\newtheorem{remark}[theorem]{\it Remark}
\newtheorem{remark-notation}[theorem]{\it Remark/Notation}
\newtheorem{application-lemma}[theorem]{Application/Lemma}
\newtheorem{convention}[theorem]{\it Convention}
\newtheorem{example}[theorem]{Example}
\newtheorem{example-definition}[theorem]{Example/Definition}
\newtheorem{definition-prototype}[theorem]{Definition-Prototype}
\newtheorem{terminology}[theorem]{\it Terminology}
\numberwithin{equation}{subsection}
\newtheorem{stheorem}{Theorem}[section]
\newtheorem{sdefinition}[stheorem]{Definition}
\newtheorem{sdefinition-lemma}[stheorem]{Definition/Lemma}
\newtheorem{sdefinition-explanation}[stheorem]{Definition/Explanation}
\newtheorem{sexplanation-definition}[stheorem]{Explanation/Definition}
\newtheorem{sdefinition-fact}[stheorem]{Definition/Fact}
\newtheorem{sdefinition-notation}[stheorem]{Definition/Notation}
\newtheorem{sdefinition-conjecture}[stheorem]{Definition/Conjecture}
\newtheorem{slemma}[stheorem]{Lemma}
\newtheorem{slemma-definition}[stheorem]{Lemma/Definition}
\newtheorem{sremark}[stheorem]{\it Remark}
\newtheorem{sremark-notation}[stheorem]{\it Remark/Notation}
\newtheorem{sapplication-lemma}[stheorem]{Application/Lemma}
\newtheorem{sexample-definition}[stheorem]{Example/Definition}
\newtheorem{sdefinition-prototype}[stheorem]{Definition-Prototype}
\newtheorem{ssdefinition-lemma}[sstheorem]{Definition/Lemma}
\newtheorem{ssdefinition-explanation}[sstheorem]{Definition/Explanation}
\newtheorem{ssexplanation-definition}[sstheorem]{Explanation/Definition}
\newtheorem{ssdefinition-fact}[sstheorem]{Definition/Fact}
\newtheorem{ssdefinition-notation}[sstheorem]{Definition/Notation}
\newtheorem{ssdefinition-conjecture}[sstheorem]{Definition/Conjecture}
\newtheorem{sslemma-definition}[sstheorem]{Lemma/Definition}
\newtheorem{ssremark-notation}[sstheorem]{\it Remark/Notation}
\newtheorem{ssapplication-lemma}[sstheorem]{Application/Lemma}
\newtheorem{ssexample-definition}[sstheorem]{Example/Definition}
\newtheorem{ssdefinition-prototype}[sstheorem]{Definition-Prototype}
\newcommand{\Ad}{{\mbox{\it Ad}\,}}
\newcommand{\Cl}{{\mbox{\it Cl}\,}}
\newcommand{\Der}{\mbox{\it Der}\,}
  \newcommand{\tinyDouble}{\mbox{\it\tiny Double}}
\newcommand{\Endsheaf}{{\mbox{\it ${\cal E}\!$nd}\,}}
\newcommand{\Hom}{\mbox{\it Hom}\,}
\newcommand{\Imaginary}{\mbox{\it Im}\,}
  \newcommand{\Modscriptsize}{{\scriptsize\mbox{\it Mod}\,}}
\newcommand{\Pev}{{{\cal P}^{\,ev}}}
\newcommand{\Real}{\mbox{\it Re}\,}
\newcommand{\SL}{\mbox{\it SL}}
\newcommand{\SO}{\mbox{\it SO}\,}
\newcommand{\Spin}{\mbox{\it Spin}\,}
\newcommand{\Tr}{\mbox{\it Tr}\,}
  \newcommand{\tinyWZ}{{\mbox{\tiny\it WZ}\,}}
\newcommand{\scriptsizeach}{{\mbox{\scriptsize\it ach}}}
\newcommand{\anticommuting}{{\mbox{\scriptsize\it anti-c}}}
\newcommand{\chd}{\mbox{\it c.h.d}\,}
\newcommand{\scriptsizech}{{\mbox{\scriptsize\it ch}}}
\newcommand{\coordinates}{{\mbox{\scriptsize\it coordinates}}}
\newcommand{\even}{{\mbox{\scriptsize\rm even}}}
\newcommand{\exotic}{{\mbox{\scriptsize\it exotic}\,}}  
\newcommand{\field}{{\mbox{\scriptsize\it field}}}
\newcommand{\gaugescriptsize}{{\mbox{\scriptsize\it gauge}\,}}
\newcommand{\hol}{{\mbox{\it\scriptsize hol}}}
  \newcommand{\ahol}{{\mbox{\it\scriptsize ahol}}}
\newcommand{\mediumscriptsize}{{\mbox{\it\scriptsize medium}\,}}
\newcommand{\odd}{{\mbox{\scriptsize\rm odd}}}
\newcommand{\parameter}{{\mbox{\scriptsize\it parameter}}}
\newcommand{\physics}{\mbox{\scriptsize\it physics}}
\newcommand{\smallscriptsize}{{\mbox{\scriptsize\it small}\,}}
  \newcommand{\smalltiny}{{\mbox{\tiny\it small}\,}}
\newcommand{\spinor}{{\mbox{\scriptsize\it spinor}}}
\newcommand{\tamescriptsize}{{\mbox{\scriptsize\it tame}\,}}
\newcommand{\trivial}{{\mbox{\scriptsize\it trivial}}} 
\newcommand{\vectorr}{{\mbox{\scriptsize\it vector}}}
\newcommand{\bolda}{\mbox{\boldmath $a$}}
\newcommand{\boldd}{\mbox{\boldmath $d$}}
  \newcommand{\scriptsizeboldd}{\mbox{\scriptsize\boldmath $d$}}
\newcommand{\boldu}{\mbox{\boldmath $u$}}
\newcommand{\boldy}{\mbox{\boldmath $y$}}
\newcommand{\boldz}{\mbox{\boldmath $z$}}
\newcommand{\bdsigma}{\boldsymbol{\sigma}}
\newcommand{\squareflat}{{\square\hspace{-.98ex}\flat}\,}
\newcommand{\tinybullet}{{\raisebox{.2ex}{\tiny $\bullet$}}}							
\begin{document}

\enlargethispage{24cm}

\begin{titlepage}

$ $

\vspace{-1.5cm} 

\noindent\hspace{-1cm}
\parbox{6cm}{\small December 2019}\
   \hspace{7cm}\
   \parbox[t]{6cm}{\small
                arXiv:yymm.nnnnn [math.AG] \\
                SUSY(2.1): $d=3+1$, $N=1$ SQFT, \\ $\mbox{\hspace{5em}}$
				                       trivialized spinor bundle
				}

\vspace{2em}

\centerline{\large\bf
 Grothendieck meeting [Wess \& Bagger]:\hspace{1em} }
\vspace{1ex}
\centerline{\bf
 Wess \& Bagger [Supersymmetry and supergravity: Chs.\,IV, V, VI, VII, XXII] ({\scriptsize 2nd ed.})}
\vspace{1ex}
\centerline{\large\bf
 reconstructed in complexified ${\Bbb Z}/2$-graded $C^\infty$-Algebraic Geometry}
\vspace{1ex}
\centerline{\large\bf
  I. The construction under a trivialization of the Weyl spinor bundles}
\vspace{1ex}
\centerline{\large\bf
   by complex conjugate pairs of covariantly constant sections}

\bigskip

\vspace{3em}

\centerline{\large
  Chien-Hao Liu
            \hspace{1ex} and \hspace{1ex}
  Shing-Tung Yau
}

\vspace{3em}

\begin{quotation}
\centerline{\bf Abstract}
\vspace{0.3cm}

\baselineskip 12pt  
{\small
  Forty-six years after the birth of supersymmetry in 1973 from works of {\it Julius Wess} and {\it Bruno Zumino},    
   the standard quantum-field-theorists and particle physicists' language of
     `superspaces', `supersymmetry', and `supersymmetric action functionals in superspace formulation'
	 as given in Chapters IV, V, VI, VII, XXII of the classic on supersymmetry and supergravity:
  {\it Julius Wess \& Jonathan Bagger:} {\sl Supersymmetry and Supergravity} (2nd ed.),	
  is finally polished, with only minimal mathematical patches added for consistency and accuracy
     in dealing with nilpotent objects from the Grassmann algebra involved,
  to a precise setting in the language of complexified ${\Bbb Z}/2$-graded $C^\infty$-Algebraic Geometry.
 This is completed after the lesson learned from  D(14.1) (arXiv:1808.05011 [math.DG])
   and the notion of `$d=3+1$, $N=1$ towered superspaces' as complexified ${\Bbb Z}/2$-graded $C^\infty$-schemes, 
    their distinguished sectors, and purge-evaluation maps 
	first developed in SUSY(1) (= D(14.1.Supp.1)) (arXiv:1902.06246 [hep-th]) and further polished in the current work.
 While the construction depends on a choice of a trivialization of the spinor bundle by covariantly constant sections,
  as long as
     the transformation law and the induced isomorphism
        under a change of trivialization of the spinor bundle by covariantly constant sections
	 are understood,
  any object or structure thus defined or constructed is mathematically well-defined.        
 The construction can be generalized to all other space-time dimensions with simple or extended  supersymmetries.
 This is part of the mathematical foundation required to study fermionic D-branes in the Ramond-Neveu-Schwarz formulation.
} 
\end{quotation}

\bigskip

\baselineskip 12pt
{\footnotesize
\noindent
{\bf Key words:} \parbox[t]{14cm}{superspace, tower, supersymmetry;
     complexified ${\Bbb Z}/2$-graded $C^\infty$ function-ring, complexified ${\Bbb Z}/2$-graded $C^\infty$-scheme;
	 tame superfield, small chiral superfield, Wess-Zumino model;
	 vector superfield, supersymmetric gauge theory; chiral map, nonlinear sigma model
 }} 

\medskip

\noindent {\small MSC number 2020:  14A22, 16S38, 58A50; 14M30, 81T60, 83E50
} 

\bigskip

\baselineskip 10pt
{\scriptsize
\noindent{\bf Acknowledgements.}
We thank
 Andrew Strominger, Cumrun Vafa
   for influence to our understanding of strings, branes, and gravity.
C.-H.L.\ thanks in addition
 Daniel Freed,
   whose works (alone or collaborative) almost 20 years ago have occupied his thoughts for the last three years;
 Girma Hailu, Albrecht Klemm, Pei-Ming Ho, Jesse Thaler (time-ordered)
   who together boosted his understanding of superspaces and supersymmetry on the physics side
    in one year (fall 2018 - fall 2019, cf.\:footnote~1 {\it Special acknowledge});
 Enno Ke{\ss}ler
   for enrichment of his understanding of supergeometry on the mathematical side;
 Dennis Borisov, Tristan Collins, Artan Sheshmani, Cumrun Vafa, Kai Xu, Yang Zhou for discussions/consultations on issues beyond;
 Eric Sharpe, Wei Gu for communication;
 Yng-Ing Lee and Department of Mathematics and Ho and Department of Physics, National Taiwan University,
   for hospitality and discussions, May 2019;
 Karsten Gimre, Vafa, Sheshmani, and
 Nima Arkani-Hamed,  Christopher Gerig, Matthew Reece, Yum-Tung Siu, Thaler
   for topic courses in spring 2019 and fall 2019 respectively;
 Ling-Miao Chou
   for comments that improve the illustrations and moral support.
The project is supported by NSF grants DMS-9803347 and DMS-0074329.
} 

\end{titlepage}

\newpage

\enlargethispage{24cm}
\begin{titlepage}

$ $


\centerline{\small\it
 Chien-Hao Liu dedicates this work to the memory of}
\centerline{\small\it
 two founding fathers of Supersymmetry$^{\,\sharp}$}
\centerline{\small\it
 Prof.\:Bruno Zumino (1923 - 2014),}
\centerline{\small\it
 whom he came across in his topic course on {\sl Quantum Groups} during U.C.\,Berkeley years,}
\centerline{\small\it
 \hspace{-4em}and\;\;\;\;\;\;\;\;Prof.\:Julius Wess (1934 - 2007),}
\centerline{\small\it
 whose classic book$^{\,\natural}$ with Jonathan Bagger on Supersymmetry and Supergravity motivates the current work.}

\bigskip

\centerline{\includegraphics[width=0.6\textwidth]{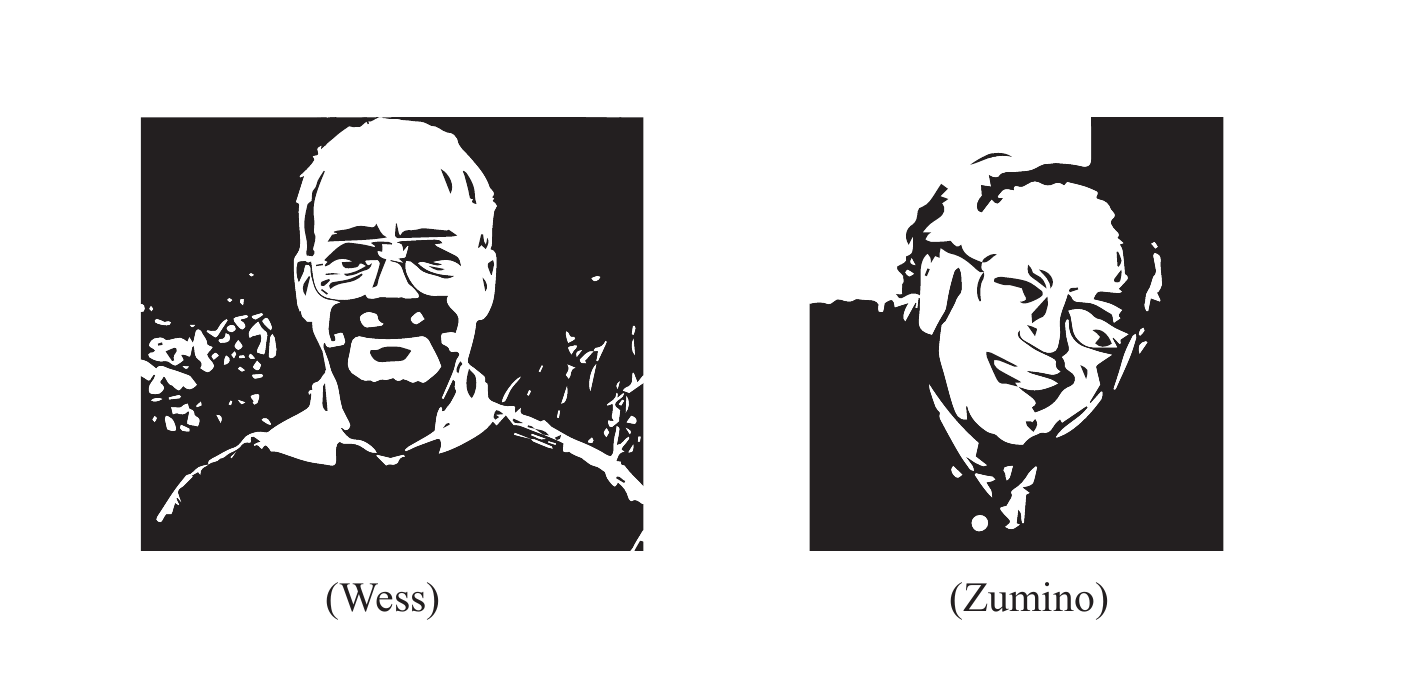}}

\bigskip

\centerline{\includegraphics[width=0.8\textwidth]{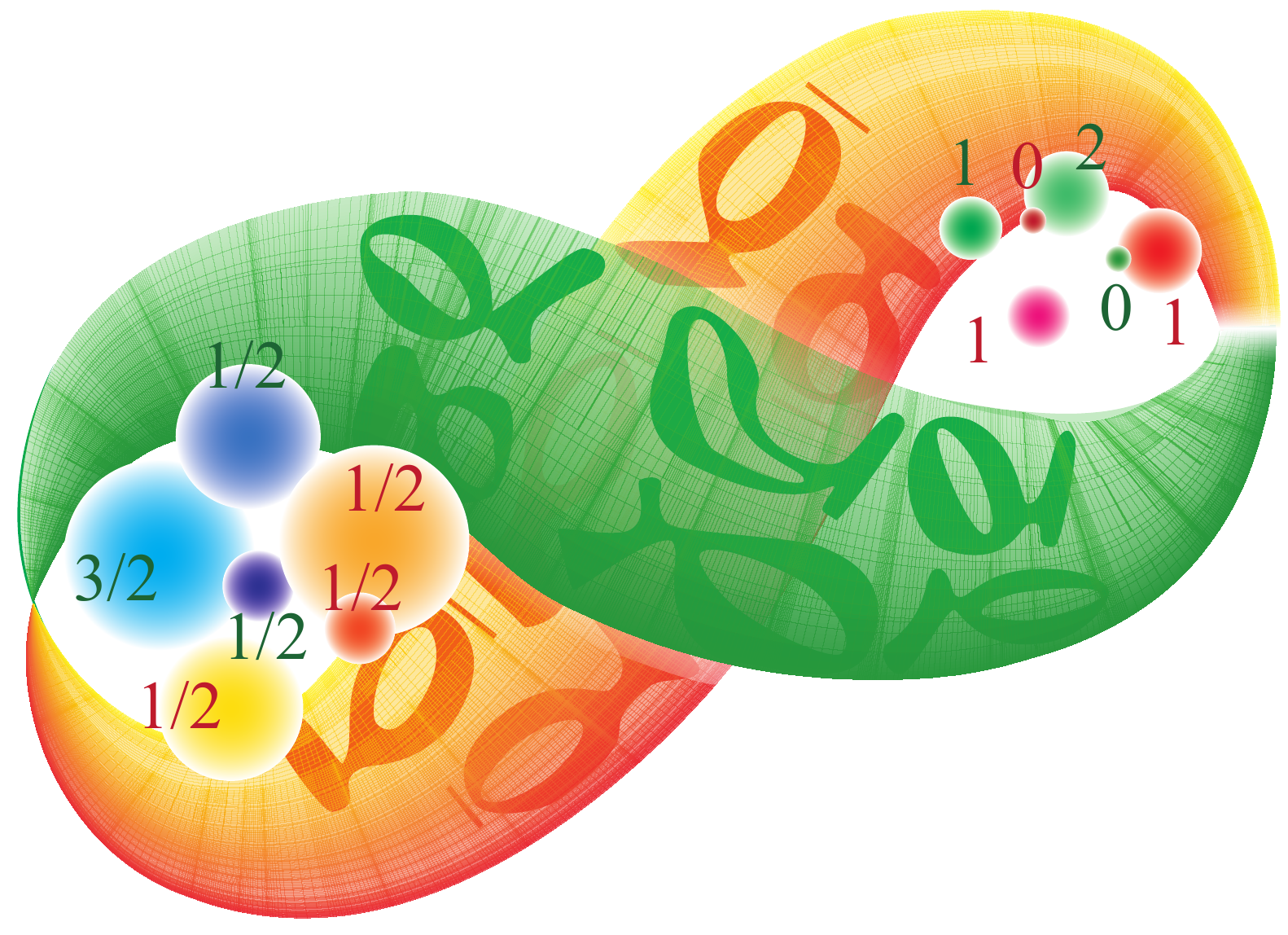}}
\centerline{[\, \textcolor{blue}{{$\cal S$}upersymmetry}:
                           \textcolor{green}{$Q$} and \textcolor{red}{$\bar{Q}$}\;]$^\flat$}
 
\vspace{2em}

\noindent
{\footnotesize
$^\sharp$Works
of {\it Julius Wess} and {\it Bruno Zumino} (1973):

 \medskip

 [W-Z1]\hspace{1em}\parbox[t]{44em}{J.\ Wess and B.\ Zumino,
 {\it A Lagrangian model invariant under supergauge transformations},\\
 {\sl Phys.\ Lett.}\ {\bf 49B} (1974), 52--54.}

 \medskip

 [W-Z2]\hspace{1em}\parbox[t]{44em}{--------,
 {\it Supergauge transformations in four-dimensions},
 {\sl Nucl.\ Phys.}\ {\bf 70} (1974), 39--50.}

 \smallskip

 [W-Z3]\hspace{1em}\parbox[t]{44em}{--------,
 {\it Supergauge invariant extension of quantum electrodynamics},
 {\sl Nucl.\ Phys.}\ {\bf B78} (1974), 1--13. }

\bigskip

\noindent
$^\natural$The
classic book by {\it Julius Wess} and  {\it Jonathan Bagger}:

\medskip
 
[Wess \& Bagger] J.\:Wess and J.\:Bagger,
 {\sl Supersymmetry and supergravity}, 2nd ed., revised and expanded,\\ $\mbox{\hspace{5.2em}}$
  Princeton Univ.\ Press, 1992.

\bigskip

\noindent
$^\flat$Illustration
inspired by particle physics and art works of {\sl Maurits Cornelis Escher} (1898-1972):\\
 $\mbox{\hspace{.2ex}}$
 {\it Day and Night}, (1938), woodcut,   and
 {\it Swans}, (1956), wood engraving on thin Japanese paper.
} 

%
%

\end{titlepage}


\newpage
$ $

\vspace{-3em}

\centerline{\sc
 Grothendieck Meeting [Wess \& Bagger], I
 } %

\vspace{2em}


\begin{flushleft}
{\Large\bf 0. Introduction and outline}
\end{flushleft}
Following
 [L-Y1] (D(1)) and [L-Y2] (D(11.1)),
one naturally comes to the realization that
 \begin{itemize}
  \item[\LARGE $\cdot$]  \parbox[t]{38em}{From
   the aspect of modern Algebraic Geometry in the spirit of Alexander Grothendieck,
   a {\it fermionic D-brane in the Ramond-Neveu-Schwarz (RNS) formulation} is described
   by a differentiable map
   $$
      \widehat{f}\; :\;
      (\widehat{X}^{\!A\!z}, \widehat{\cal E}; \widehat{\nabla})\;
      \longrightarrow\; Y
   $$
   from an Azumaya/matrix supermanifold/superscheme with a fundamental module
      $\widehat{X}^{\!A\!z}
	     := (\widehat{X},
		         \widehat{\cal O}_X^{A\!z}
				   := \Endsheaf_{\Modscriptsize\mbox{\scriptsize -}{\cal O}_X^{\Bbb C}}(\widehat{\cal E}))$
	   with a connection $\widehat{\nabla}$ on $\widehat{\cal E}$
   (cf.\:the world-sheet of a fermionic D-brane with the Chan-Paton bundle with a connection)
      to a smooth manifold $Y$ (cf.\:the target space-time).
    }
 \end{itemize}
Cf.\:[L-Y3] (D(11.2)) and [L-Y4] (D(14.1)).
The structure sheaf $\widehat{\cal O}_X^{A\!z}$ of the Azumaya superscheme $\widehat{X}^{\!A\!z}$
 is a sheaf of Azumaya algebras over a complexified ${\Bbb Z}/2$-graded $C^\infty$-scheme $\widehat{X}$.
Thus,
 any construct of $\widehat{X}$
 would give rise to a notion/definiton of fermionic D-branes in RNS formulation in the end.
The question is
 \begin{itemize}
  \item[\bf Q.]   \parbox[t]{38em}{\it
    Which construct of $\widehat{X}$, or its enhancement if necessary,
	 truly reflects particle physicists' perception and practical language
	  (albeit possibly requiring some mathematical patches to make everything precise)
	 of superspaces and supersymmetries,
	 as in, e.g.
	 {\rm\footnotesize
	 \begin{itemize}
	  \item[]\hspace{-2em}
	   \parbox[t]{5em}{ [G-G-R-S]}
       \parbox[t]{38em}{S.J.\:Gates, Jr., M.T.\:Grisaru, M.\:Roc\u{c}ek, and W.\:Siegel,
             {\sl Superspace -- one thousand and one lessons in supersymmetry},
               Frontiers Phys.\ Lect.\ Notes Ser.\ 58,
               Benjamin/Cummings Publ.\ Co., Inc., 1983.
			   }

	  \item[]\hspace{-2em}
       \parbox[t]{5em}{[We]}
	   \parbox[t]{38em}{ P.\:West,
          {\sl Introduction to supersymmetry and supergravity},
            extended 2nd ed.,
            World Scientific, 1990.
			}

	  \item[]\hspace{-2em}
	   \parbox[t]{5em}{$\hspace{1em}$}
	   \parbox[t]{38em}{\hspace{-5.6em}
	      [Wess \& Bagger]\hspace{.4em}
	      J.\:Wess and J.\:Bagger,
           {\sl Supersymmetry and supergravity}, 2nd ed.,
             Princeton Univ.\ Press, 1992.
		 }	
	 \end{itemize}}
	 ?
	  }
 \end{itemize}
 so that the corresponding notion of `fermionic D-branes' rings with the fermionic D-branes from superstring theory.
(Cf.\ See footnote\:1 for more words on what we are looking for here.)
In this way, our pursuit in the study of fermionic D-branes brings us back to a more fundamental question we have to resolve first
 before going on.

\bigskip
 
The goal of the current work is to provide a construct of superspaces that answers the above question.
(The construct is in the $d=3+1$, $N=1$ case
    but similar arguments extend it to other space-time dimensions
	with simple (i.e.\:$N=1$) or extended (i.e.\:$N\ge 2$) supersymmetries.)
With lessons learned from
  [L-Y4] (D(14.1), arXiv:1808.05011 [math.DG])  and
  [L-Y5] (SUSY(1) (= D(14.1.Supp.1)),  arXiv:1902.06246 [hep-th]) as the foundation,
 we re-do one of the classics on supersymmetry and supergravity for quantum-field-theorists and particle physicists:
   \begin{itemize}
    \item[]
	[Wess \& Bagger] \hspace{.4em}
	 \parbox[t]{30em}{Julius Wess \& Jonathan Bagger: {\sl Supersymmetry and Supergravity}\\
	    2nd ed., revised and expanded,  Princeton Univ.\ Press, 1992,}
	 \begin{itemize}
      \item[]
	  \parbox[t]{7em}{Chapter IV}
	   {\it Superfields}
	
	  \item[]
	  \parbox[t]{7em}{Chapter V}
	   {\it Chiral superfields}
	
	  \item[]
	  \parbox[t]{7em}{Chapter VI}
	   {\it Vector superfields}
	
	  \item[]
	  \parbox[t]{7em}{Chapter VII}
	  {\it Gauge invariant interactions},\;
	  ($U(1)$ part)
	
	  \item[]
	  \parbox[t]{7em}{Chapter XXII}
	  {\it Chiral models and \"{a}hler geometry}	
	\end{itemize}
   \end{itemize}
  from the aspect of complexified ${\Bbb Z}/2$-graded $C^\infty$-Algebraic
     Geometry.\footnote{\makebox[17.6em][l]{\it Special acknowledgements from C.-H.L.}
                 The time-and-place was some year in 1990s
				   at the Department of Mathematics in Evens Hall, University of California, Berkeley.
				  Motivated by the emerging dominating mathematical topic at that time:
				     {\sl Quantum invariants of low-dimensional manifolds},
				   which had arisen from supersymmetry and topological field theories on the physics side
				    --- particularly works of {\it Edward Witten} ---,
                  Prof.\:{\it Nicolai Reshetikhin}, a then young star on quantum invariants in low dimensional topology,
				  organized 	a seminar on {\sl Quantum invariants}.
                 In one of the beginning meetings of the seminar, Prof.\:Reshetikhin started to explain {\sl Supersymmetry}
				   to a group of serious mathematicians, including {\it Alexander Givental} and {\it Alan Weinstein},
				   and enthusiastic graduate students.
				 Alas! Not far into the intended lecture, tons of questions and puzzles already arose from the audience
				   that in the end Prof.\:Reshetikhin had to quit the lecture he had prepared
				   and rather announced that he would try to invite someone from the Department of Physics to come to the seminar
				   to explain supersymmetry to mathematicians.
				 Unfortunately, no physicists showed up to give a lecture on supersymmetry for the mathematical mind   and
				  that intended introduction of supersymmetry to mathematicians was thus ended in the middle and never finished.
				
				 Almost twenty-five years later,
				  in fall 2017 when the D-project stepped into her second decade and the focus was turned to fermionic D-branes,
				  one would expect that after all these years' joint effort
				    from both physics-friendly mathematicians and mathematics-friendly physicists,
				  there should be some mathematical work on supersymmetry and superspaces
				    that one could take as the clean, clear, and solid starting point to address fermionic D-branes
				    in parallel to Ramond-Neveu-Schwarz fermionic strings.
                 To make sure such a study of fermionic D-branes is linked to superstring theory,
				  two minimal requirements on such a sought-for mathematical work on supersymmetry are
                  \begin{itemize}
				   \item[(1)]
				    Since
					 \begin{itemize}
					  \item[\LARGE $\cdot$]
					  in addition to the ordinary commuting coordinate functions,
					  a superspace has also\\ fermionic/anticommuting coordinate functions,
					 which are nilpotent,
					
					  \item[\LARGE $\cdot$]
					  the underlying topology of a superspace remains a smooth manifold,
					
					  \item[\LARGE $\cdot$]
					  either complex spinors or complexified spinors were involved,
					 \end{itemize}
					this sought-for work should be
					 {\it in the realm of complexified ${\Bbb Z}/2$-graded $C^\infty$-Algebraic Geometry},
					 in the same spirit as how {\it Alexander Grothendieck} built up the modern language of Algebraic Geometry.

                   \item[(2)]			
                    Since [Wess \& Bagger: {\sl Supersymmetry and supergravity}] (2nd ed.)
					  by {\it Julius Wess} and {\it Jonathan Bagger}
				     is one of the classical textbooks that were taken by string theorists as the standard language for supersymmetry,
				     the sought-for work should {\it reproduce chapters in {\sl [Wess \& Bagger]}
					  of classical} (as opposed to quantum) {\it level in a direct, fluent, and effortless way}.	
					In particular, all the expressions in these chapters should have  a well-defined mathematical meaning and
					  all the formulas and computations are re-derivable in this sought-for mathematical work.
				  \end{itemize}
				 Painfully, up to summer 2018,
				   (i.e.\ 47 years since the appearance of supersymmetry in string theory in 1971   and
				              45 years since a $4$-dimensional supersymmetric quantum field theory was constructed in 1973),
				 a mathematical work that met the above minimal requirements
				   had not yet been in existence.
                 It was when I was in such an embarrassing situation that a train of lucks were given to us.
				 \begin{itemize}
				   \item[$(a)$]
				   Through the {\it Topic course in Supersymmetry}
				      given by {\it Girma Hailu} at the Department of Physics, Harvard University,
					  and related discussions with him, fall 2018,
				     it was realized that to construct the physically correct function-ring of a superspace,
					a Grassmann-algebra tower must be built over the ordinary superspace.
				   It is because of this tower that physicists can take only even objects but still with anticommuting fields included. 				
				   With this as the starting point and taking into account all its mathematical consequences, we were led
				    to the ``(tower construction) + (purge-evaluations in the end)" way
					to produce results in [Wess \& Bagger]
					in the realm of  complexified ${\Bbb Z}/2$-graded $C^\infty$-Algebraic Geometry;
				   cf.\:[L-Y5] (SUSY(1)).		
				   Through SUSY(1), one also learns why and how physicists could so cleverly bypass all the sign-factors
				    in ${\Bbb Z}/2$-graded geometry, as compared to [L-Y4] (D(14.1)).

                   \item[$(b)$]		
				   In March 2019, after a highlight of the construction to {\it Albrecht Klemm} at the Center,
				    he pointed out to me that many of the multiple-spinors type expressions
					 that appear in supersymmetric quantum field theories
					 are already contracted in some way in physicists' interpretation of these expressions.
                   This led us to a guiding question:
					  \begin{itemize}
					   \item[{\bf Q.}]
					   {\it Should one enact a purge-evaluation map not at the end of making sense of
					             the supersymmetric action functional, as did in SUSY (1), but rather much earlier before that?
								If so, then how?}                               								 					 
                      \end{itemize}                   				
				   				
				   \item[$(c)$]
				   In May 2019, the two days' intensive discussions with {\it Pei-Ming Ho} at the Department of Physics,
				    National Taiwan University, and the presentation of SUSY (1)  in the String Theory Seminar there
					left me with other input, including operational interpretation of these bi-spinor or triple-spinor type expressions.
                   After all, for physicists, it is the quantum theory that matters most. 	
				   Expressions that appear in a supersymmetric action functional may or may not have easy explanation classically
				    but as long as their meaning is clear at the quantum level, it remains a very good expression.
                   Indeed, in one aspect of superspace, there is hiddenly/implicitly already an infinite tower
				     over the ordinary superspace.
                   Such quantum-level picture of a superspace from physicists' view
				     could be difficult to realize through $C^\infty$-Algebraic Geometry alone but may serve as a final goal.
				   				
				   \item[$(d)$]
				    Finally, in fall 2019, {\it Jesse Thaler} gave a topic course on {\sl Supersymmetric quantum field theories}
    			 	  at the Department of Physics, Massachusetts Institute of Technology.
				    This is a SQFT course given by a superspace advocate.
				    From the very first lecture and through the seven lectures in September 2019,
				      he explained carefully what a $d=3+1$, $N=1$ superspace is from a physicist' eye
				      and how it can be used in a very elegant manner to construct supersymmetric action functionals.
                    After the exercises done in D(14.1) and SUSY~(1) and inputs from Girma, Albrecht, and Pei-Ming,
				      this gave me a rare chance to re-examine and compare step by step
					   physicists' way of working with supersymmetry and superspace
				       and what's enforced by complexified ${\Bbb Z}/2$-graded Algebraic Geometry.
					His very careful counting of independent degrees of freedom (either on-shell or off-shell)
					  whenever a new type of superfield is introduced indicates very clearly
					  that something crucial is still missing in the setting of SUSY (1).
                    There, as a mathematical must, additional independent degrees of freedom have to be thrown in
					  to keep the collection of superfields (resp.\; chiral superfields) to honestly form a ring.
                    It is through such re-checking,  cross examinations,	and his generous answer to my questions
				       that it becomes clear that there is still room for improvement beyond SUSY (1)
				       to realize [Wess \& Bagger] in complexified ${\Bbb Z}/2$-graded $C^\infty$-Algebraic Geometry.				
				 \end{itemize}
			  It is through the above sequence of unexpected occurrences ---
			    with right people, at right time, in right order, and at right place --- and these physicists' input along the way
				that the current work appears.
			  As if these lucks were not enough, while the current work is in the intensive editing stage,
               {\it Enno Ke{\ss}ler} suggested weekly Friday meetings on Supersymmetry
			    at the Center of Mathematical Sciences and Applications,
			    spring semester 2020, in which we teach each other
				our different, ``orthogonal-to-each-other", tower-vs-base aspects of supergeometry and supersymmetry.
              The various questions he raised about [Wess \& Bagger] in the meetings were turned to part of the motivations to improve
			    the writing of the current notes.             		
              His insight on supergeometry as explained in his book [Ke],
			    {\sl Supergeometry, super Riemann surfaces and the superconformal action functional},
			   particularly the emphasis on the {\it construction over bases}
			     (as opposed to the towered construction we are advocating in [L-Y5] (SUSY (1)) and the current work,
				     which looks most natural from the particle physics aspect),			
			   the notion of (relative) `underlying even manifolds', and the various setup/design in [Ke: Part II]
			    are sure to have a significant influence to one's understanding of supergeometry. 			  
				 },  
In this first part of the work,
we present the construction as close as can be to the original presentation
  except mathematically more favored notations and necessary mathematical patches for accuracy and consistency.
In particular,
 the construction of the full function-ring of a superspace, i.e.\:the function ring of a towered superspace,
  will depend on a choice of a trivialization of the spinor bundle by covariantly constant sections.
As long as
     the transformation law and/or the induced isomorphism
        under a change of trivialization of the spinor bundle by covariantly constant sections
	 is understood,
  any object or structure thus defined or constructed is mathematically well-defined.
 In this way
   --- and 46 years after the birth of supersymmetry in 1973 from works of Julius Wess and Bruno Zumino ---
 the standard physics language of
     `superspaces', `supersymmetry', and `supersymmetric action functionals in superspace formulation'
  is finally polished, with only minimal mathematical patches added for consistency and accuracy
     in dealing with nilpotent objects from the Grassmann algebra involved,
  to a precise setting in the language of complexified ${\Bbb Z}/2$-graded $C^\infty$-Algebraic Geometry.
   
Once a mathematical presentation of superspace and supersymmetry that matches [Wess \& Bagger] is completed,
  the immediate next question is:
  What is the intrinsic description of the same,
    without resorting to a choice of a trivialization of the spinor bundles by covariantly constant sections?
This and the similar construction for supersymmetric non-Abelian gauge theory
    ([Wess \& Bagger: Chapter VII, non-Abelian part])
   are the themes for the sequels.

\bigskip
\bigskip

\noindent
{\bf Convention.}
 References for standard notations, terminology, operations and facts are:\; \\
 (1)
     algebraic geometry: [Hart];\; $C^{\infty}$-algebraic geometry: [Jo];\; supergeometry: [Ke]; \;\; \\
 (2)
     spinors and supersymmetry (mathematical aspect):  [Ch], [De], [D-F1], [D-F2], [Fr], [Harv], [S-W];\;\;	
 (3)
     supersymmetry (physical aspect, especially $d=4$, $N=1$ case): [Wess \& Bagger; W-B], [G-G-R-S], [We];\,
            also [Argu], [Argy],  [Bi], [B-T-T], [RR-vN], [St], [S-S].   			
 \begin{itemize}
    \item[$\cdot$]
    For clarity, the {\it real line} as a real $1$-dimensional manifold is denoted by ${\Bbb R}^1$,
      while the {\it field of real numbers} is denoted by ${\Bbb R}$.
     Similarly, the {\it complex line} as a complex $1$-dimensional manifold is denoted by ${\Bbb C}^1$,
      while the {\it field of complex numbers} is denoted by ${\Bbb C}$.
  
   \item[$\cdot$]	
    The inclusion `${\Bbb R}\subset{\Bbb C}$' is referred to the {\it field extension
     of ${\Bbb R}$ to ${\Bbb C}$} by adding $\sqrt{-1}$, unless otherwise noted.
  

   \item[$\cdot$]
    All manifolds are paracompact, Hausdorff, and admitting a (locally finite) partition of unity.
    We adopt the {\it index convention for tensors} from differential geometry.
     In particular, the tuple coordinate functions on an $n$-manifold is denoted by, for example,
     $(y^1,\,\cdots\,y^n)$.
    However,  all summations are expressed explicitly; no up-low index summation convention is used.
  
  
  
  \item[$\cdot$]
   `{\it differentiable}', `{\it smooth}', and $C^{\infty}$ are taken as synonyms.

  %
  
  \item[$\cdot$]
   {\it section} $s$ of a sheaf or vector bundle vs.\ dummy labelling index $s$

  %
  %
  %
  %
  %
  %
  %
  
  \item[$\cdot$]
   group {\it action} vs.\  {\it action} functional in a quantum field theory


  \item[$\cdot$]
   {\it sheaves} ${\cal F}$, ${\cal G}$ vs.\
    {\it curvature tensor} $F_{\nabla}$,
	{\it gauge-symmetry group} ${\cal G}_{\gaugescriptsize}$

  %
  %
  %
  
  \item[$\cdot$]
   {\it coordinate-function index}, e.g.\ $(y^1,\,\cdots\,,\, y^n)$ for a real manifold
      vs.\  the {\it exponent of a power},
	  e.g.\  $a_0y^r+a_1y^{r-1}+\,\cdots\,+a_{r-1}y+a_r\in {\Bbb R}[y]$.

  \item[$\cdot$]
   {\it dimension} $n$ or $2n$ vs.\ {\it nilpotent} component $n$ of an element of a ring.
	
  %
  %
  %
   
  \item[$\cdot$]
   {\it Various brackets}\,:\;\;
    \parbox[t]{30em}{$[A,B]:= AB-BA$,\; $\{A,B\}:= AB+BA$,\\
	 $[A,B\}:= AB-(-1)^{p(A)p(B)}BA$,
	   where $p(\,\tinybullet\,)$ is the parity of $\tinybullet$\,.
	                  }
					
  \item[$\cdot$]
   We adopt the following convention as in the work of Deligne and Freed [D-F2: \S 6]:
    \begin{itemize}	
     \item[] {\it Convention\; $[$cohomological degree vs.\ parity\,$]$}\;
      We treat elements $f$ of ${\Bbb Z}/2$-graded ring as of cohomological degree $0$
        and the exterior differential operator $d$ as of cohomological degree $1$ and {\it even}.
        In notation, $\chd(f)=0$ and $\chd(d)=1$, $p(d)=0$.
      Under such $({\Bbb Z}\times ({\Bbb Z}/2))$-bi-grading,
        $$
          ab = (-1)^{c.h.d(a)\,c.h.d(b)}(-1)^{p(a)p(b)}ba
        $$
        for objects $a, b$ homogeneous with respect to the bi-grading.     														 
       Here, $a$ and $b$ are not necessarily of the same type.
    \end{itemize}
					
  \item[$\cdot$]
   The current SUSY(2.1) continues the study in
	  \begin{itemize}	
	   \item[]  \hspace{-2em} [L-Y4]\hspace{1em}\parbox[t]{34em}{{\it
	    $N=1$  fermionic D3-branes in RNS formulation I.
         $C^\infty$-Algebrogeometric foundations of $d=4$, $N=1$ supersymmetry,
         SUSY-rep compatible hybrid connections, and
         $\widehat{D}$-chiral maps from a $d=4$ $N=1$ Azumaya/matrix superspace},
         arXiv:1808.05011 [math.DG]. (D(14.1))
	    }
	
       \smallskip	
	   \item[]  \hspace{-2em} [L-Y5]\hspace{1em}\parbox[t]{34em}{{\it
	   Physicists' $d=3+1$, $N=1$ superspace-time and supersymmetric QFTs
         from a tower construction in complexified ${\Bbb Z}/2$-graded $C^\infty$-Algebraic Geometry
         and a purge-evaluation/index-contracting map},
       arXiv:1902.06246 [hep-th]. (D(14.1.Supp.1)=SUSY(1))   		
		 }
	  \end{itemize}  	
   Notations and conventions follow ibidem whenever applicable.
 \end{itemize}

\newpage

\begin{flushleft}
{\bf Outline}
\end{flushleft}
\nopagebreak
{\small\raggedright
\baselineskip 12pt  
\begin{itemize}
 \item[0]
  Introduction

 \item[1]
  The function-ring of a $d=3+1$, $N=1$ towered superspace and its distinguished even subrings
  (under a trivialization of the spinor bundle by covariantly constant sections)
   \vspace{-.6ex}
   \begin{itemize}	 	
    \item[1.1]	
     Basics of the Clifford algebra, the spin group, Clifford modules, and Weyl spinors\\ behind
     $d = 3 + 1$, $N = 1$ supersymmetry\;
    (cf.\:[Wess \& Bagger: Appendix A])
 
    \item[1.2]
	The function-ring of a $d=3+1$, $N=1$ towered superspace and its distinguished even subrings\\
   (under a trivialization of the spinor bundle by covariantly constant sections)\\	
    (cf.\:[Wess \& Bagger: Chapter IV and Appendices A \& B])
		
    \item[1.3]
    The chiral and the antichiral condition on  $C^\infty(\widehat{X}^{\widehat{\boxplus}})$
   \end{itemize}	

 \item[2]
  Purge-evaluation maps and
  the Fundamental Theorem on supersymmetric action functionals\\ via a superspace formulation
   
 \item[3]
  The small function-ring of $\widehat{X}^{\widehat{\boxplus}}$ and the Wess-Zumino model\\
 (cf.\:[Wess \& Bagger: Chapter V])
  \vspace{-.6ex}
  \begin{itemize}	
   \item[3.1]
    The small function-ring
	  $C^\infty(\widehat{X}^{\widehat{\boxplus}})^\smalltiny$
	and its chiral and antichiral sectors
 
   \item[3.2]
    The Wess-Zumino model
  \end{itemize}
   
  \item[4]
  Supersymmetric $U(1)$ gauge theory with matter on $X$ in terms of $\widehat{X}^{\widehat{\boxplus}}$\\
 (cf.\:[Wess \& Bagger: Chapter VI and Chapter VII, $U(1)$ part])
  \vspace{-.6ex}
  \begin{itemize}	 	
	\item[4.1]
	 Vector superfields and their associated (even left) connection
	
	\item[4.2]
	 Vector superfields in Wess-Zumino gauge
		
	\item[4.3]
     Supersymmetry transformations of a vector superfield in Wess-Zumino gauge	
		
	\item[4.4]	
	 Supersymmetric $U(1)$ gauge theory with matter on $X$ in terms of $\widehat{X}^{\widehat{\boxplus}}$
  \end{itemize}
		
  \item[5]
	$d=3+1$, $N=1$ nonlinear sigma models\;
    (cf.\:[Wess \& Bagger: Chapter XXII])	
  \vspace{-.6ex}
  \begin{itemize}	 	
	\item[5.1]
	 Smooth maps from $\widehat{X}^{\widehat{\boxplus}, \smalltiny}$ to a smooth manifold $Y$
	
	\item[5.2]
	 Chiral maps from $\widehat{X}^{\widehat{\boxplus}, \smalltiny}$ to a complex manifold $Y$
	
	\item[5.3]
	 The action functional for chiral maps\\ --- $d=3+1$, $N=1$ nonlinear sigma models with a superpotential
  \end{itemize}

%
%
%

\end{itemize}
} 

\newpage

\section{The function-ring of a $d=3+1$, $N=1$ towered superspace and its distinguished even subrings
  {\large (under a trivialization of the spinor bundle by covariantly constant sections)}}
  
In this section we review and upgrade the notion of `towered superspace' from [L-Y5: Sec.\:1] (SUSY(1)).
The representation theory of the Lorentz group, particularly the vector representation and the spinor representations,
 is among the key constituents in how particle physicists think about a superspace and superfields,
 thus in Sec.\:1.1 we recall some basics of the Clifford algebra, the spin group, Clifford modules, and Weyl spinors behind
   $d = 3 + 1$, $N = 1$ supersymmetry and set up the notations.
In Sec.\:1.2 we recall why one needs the notion of a towered superspace and how they are constructed.
Three distinguished sectors thereof that are related to physicists' notion of scalar superfields are identified.
In Sec.\:1.3 we discuss the chiral conditions and the antichiral conditions on these sectors and
 compare the corresponding chiral superfields to
 chiral multiplets from representations of supersymmetry algebra.

\bigskip
  
\subsection{Basics of the Clifford algebra, the spin group, Clifford modules, and Weyl spinors behind
     $d = 3 + 1$, $N = 1$ supersymmetry\;\\
    {\normalsize (cf.\:[Wess \& Bagger: Appendix A])}}

In this subsection
 we highlight basics of the Clifford algebra, the spin group, Clifford modules, and Weyl spinors behind
   behind the $d = 3 + 1$, $N = 1$ supersymmetry     and
 introduce some notations used in [Wess \& Bagger: Appendix A] and the current notes.
Readers are referred to, e.g., [Harv], [L-M], [Mo] for details.

\bigskip

\begin{flushleft}
{\bf The Clifford algebra and the spin group}
\end{flushleft}
Let
 $V\simeq {\Bbb R}^4$ be a $4$-dimensional vector space over ${\Bbb R}$
   with an inner product $\langle\,,\,\rangle$ of signature type $(-+++)$,  and
 \begin{itemize}
  \item[\LARGE $\cdot$]
   $T^\bullet V := \oplus_{k\ge 0} V^{\otimes k}$ be the {\it tensor algebra} associated to $V$,

  \item[\LARGE $\cdot$]
   $\bigwedge^\bullet V:= \oplus_{k\ge 0}\bigwedge^kV
     = T^\bullet V/(v\otimes v: v\in V)$  be the {\it Grassmann algebra}
   (synonymously, {\it exterior algebra}) associated to $V$,
   where $(v\otimes v: v\in V)$ is the bi-ideal in $T^\bullet V$ generated by elements as indicated,      and

  \item[\LARGE $\cdot$]
   $\Cl(V, \langle\,,\,\rangle)
      := T^\bullet V/ (v\otimes v +  \langle v, v\rangle\cdot 1: v\in V)$
     be the {\it Clifford algebra} associated to $(V, \langle\,,\,\rangle)$,
     where $(v\otimes v+  \langle v, v\rangle: v\in V)$
      is the bi-ideal in $T^\bullet V$ generated by elements as indicated.
  Denote the multiplicative group of multiplicatively invertible elements of $\Cl(V, \langle\,,\,\rangle)$
     by $\Cl^\ast(V, \langle\,,\,\rangle)$. 	
  \end{itemize}
The tensor algebra $T^\bullet V$ associated to $V$ is naturally
 ${\Bbb Z}/2$-graded: $T^\bullet V= T^{\even}V\oplus T^{\odd}V
    : =  (\oplus_{k\ge 0, even}V^{\otimes k})
	          \oplus (\oplus_{k\ge 1, odd}V^{\otimes k})  $.
After passing to quotients,
 all $T^\bullet V$, $\bigwedge^\bullet V$, and $\Cl(V, \langle\,,\,\rangle)$
 are (unital, associative) ${\Bbb Z}/2$-graded algebras over ${\Bbb R}$.
The quotient maps $T^\bullet V\rightarrow \bigwedge^\bullet V$ and
  $T^\bullet V\rightarrow \Cl(V, \langle\,,\,\rangle)$
 together induce a canonical isomorphism $\bigwedge^\bullet V \simeq \Cl(V, \langle\,,\,\rangle)$
 as vector spaces over ${\Bbb R}$.
The inner product $\langle\,,\,\rangle$ on $V$ induces canonically an inner product on $\bigwedge^\bullet V$,
 which in turn induces an inner product on $\Cl(V,\langle\,,\,\rangle)$ via the above vector-space isomorphism.
This renders both $\bigwedge^\bullet V$ and $\Cl(V,\langle\,,\,\rangle)$ algebras over ${\Bbb R}$
 with {\it norm squared},\footnote{Here,
                                                    for any vector space $W$ with an inner product,
													 (i.e.\ nondegenerate pairing)    $\langle\,,\,\rangle$,
													we call $\langle w, w\rangle$ the {\it norm squared} of $w\in W$.
                                                                } 
 denoted $\|\cdot\|^2$.
By definition, the {\it spin group} $\SO(1,3)$ is the subgroup of the multiplicative group
 $\Cl^\ast(V,\langle\,,\,\rangle)^{\even}$ of $\Cl(V,\langle\,,\,\rangle)^{\even}$ generated
 by elements in $V$ of norm squared $\pm1$ (i.e.\ the unit ``sphere" in $(V,\langle\,,\,\rangle)$).
Its connected component that contains the identity element will be denoted $\Spin^0(1,3)$ and
 called the {\it identity-component} of the spin group.
The isometry on $(V, \langle\,,\,\rangle)$, $v\mapsto -v$,
  induces an algebra-automorphism $^\sim:  \Cl(V,\langle\,,\,\rangle)\rightarrow \Cl(V, \langle\,,\,\rangle)$,
  which in turn defines a {\it twisted Adjoint representation} $\widetilde{\Ad}$  of $\Cl^\ast(V,\langle\,,\,\rangle)$
  on $\Cl(V, \langle\,,\,\rangle)$ (as a representation of a Lie group on a vector space)
  $$
    \widetilde{\Ad}_a b\;: =\; \tilde{a} b a^{-1}\hspace{2em}
	 \mbox{for $a\in \Cl^\ast(V,\langle\,,\,\rangle)$ and $b\in \Cl(V,\langle\,,\,\rangle)$}\,.
  $$
In terms of $\widetilde{\Ad}$, the spin group is characterized by
 $$
   \begin{array}{lcl}
    \Spin(1,3)     & =
	  & \{a\in \Cl^\ast(V,\langle\,,\,\rangle)^{\even}\,|\, \widetilde{\Ad}_a(V)\subset V\,,\;  \|a\|^2=\pm 1 \}
	                                                                     \\[.8ex]
	\Spin^0(1,3) & =
	  & \{a\in \Cl^\ast(V,\langle\,,\,\rangle)^{\even}\,|\, \widetilde{\Ad}_a(V)\subset V\,,\;  \|a\|^2= 1 \} \,.
   \end{array}
 $$
Furthermore, the $\Spin(1,3)$-action on $V$ via the twisted Adjoint representation
  preserves the inner product $\langle\,,\,\rangle$.
This gives rise to the double covers
 $$
   \Spin(1,3)\;\longrightarrow\; \SO(1,3) \hspace{2em}\mbox{and}\hspace{2em}
   \Spin^0(1,3)\;\longrightarrow\; \SO^\uparrow(1,3)
 $$
 with kernel $\{1, -1\}\simeq{\Bbb Z}/2$.
Here,
 $\SO(1,3)$ is the isometry group of $(V, \langle\,,\,\rangle)$ that preserves a fixed orientation of $V$;
 $\SO^\uparrow(1,3)\subset \SO(1,3)$ is its subgroup that preserves in addition a specified time direction on $V$,
 which is identical to the connected component of $\SO(1,3)$ that contains the identity element.

\bigskip

\begin{flushleft}
{\bf Weyl spinors with a symplectic pairing $\varepsilon$}
\end{flushleft}
Up to equivalences,
 $\Cl(V, \langle\,,\,\rangle)$ has a unique irreducible complex representation $S$,
    of complex dimension $4$ (i.e.\:the Dirac spinors),
while
 $\Cl(V, \langle\,,\,\rangle)^{\even}$
 has two inequivalent irreducible complex representations $S^\prime$ and $S^{\prime\prime}$,
 both of complex dimension $2$, that are complex conjugate to each other.
$S\simeq S^\prime\oplus S^{\prime\prime}$ as $\Cl(V, \langle\,,\,\rangle)^{\even}$-modules
 under the inclusion $\Cl(V, \langle\,,\,\rangle)^{\even}\subset \Cl(V, \langle\,,\,\rangle)$.
The application of $\Cl(V, \langle\,,\,\rangle)^{\odd}$ on $S$ exchanges $S^\prime$ and $S^{\prime\prime}$.
Elements in $S^\prime$ and $S^{\prime\prime}$ are called {\it Weyl spinors} in physics literature.
Let
 $\hat{\hspace{1ex}}: \Cl(V, \langle\,,\,\rangle)\rightarrow \Cl(V, \langle\,,\,\rangle)$
 be an involution on $\Cl(V, \langle\,,\,\rangle)$
 generated by the correspondence
  $v_1\otimes\cdots\otimes v_p \mapsto (-v_p)\otimes\cdots \otimes(-v_1)$.
Then, up to a complex constant,
  there is a unique complex symplectic bilinear form $\varepsilon$ on $S^\prime$ (resp.\:$S^{\prime\prime}$)
 such that
  $\varepsilon(as_1, s_2)= \varepsilon(s_1, \hat{a}s_2)$
 for $a\in \Cl(V, \langle\,,\,\rangle)^{\even}$ and $s_1, s_2\in S^{\prime}$ (resp.\:$S^{\prime\prime}$).
The restriction of $\Cl(V, \langle\,,\,\rangle)^{\even}$ to $\Spin^0(1,3)$
 gives a group-isomorphism $\Spin^0(1,3)\simeq \SL(2,{\Bbb C})$.

\bigskip

\begin{remark} $[$choice of $\varepsilon]$\; {\rm
Through the above highlights, note that, up to an equivalence of representations,
 the above construction is canonically and uniquely associated to $(V, \langle\,,\,\rangle)$,
 except for the choice of the complex symplectic bilinear form $\varepsilon$ on $S^\prime$ and $S^{\prime\prime}$.
The symplectic complex bilinear form $\varepsilon$ on $S$ can be chosen to be real and hence pass to $\varepsilon$
 on $S^{\prime\prime}$.
Under the reality constraint, the ambiguity remains is a positive constant in ${\Bbb R}_{> 0}$.
The isomorphisms $S^\prime\stackrel{\sim}{\rightarrow}S^{\prime,\vee}$
   and $S^{\prime\prime}\stackrel{\sim}{\rightarrow}S^{\prime\prime,\vee}$
   determined by $\varepsilon$ via the ${\Bbb C}$-linear map\footnote{{\it Note for mathematicians}\hspace{1em}
                                                     There can be three other conventions in physics literature to set the isomorphisms
													   $S^\prime\stackrel{\sim}{\rightarrow}S^{\prime,\vee}$ and
													   $S^{\prime\prime}\stackrel{\sim}{\rightarrow} S^{\prime\prime,\vee}$
													   via $\varepsilon$\,:\;
													  (2) $s\mapsto \varepsilon(s,\cdot)$
																     for $s\in S^{\prime}$ or $S^{\prime\prime}$;
                                                      (3) $s^\prime\mapsto \varepsilon(\cdot, s^\prime)$ and
																$s^{\prime\prime}\mapsto \varepsilon(s^{\prime\prime}, \cdot)$
																     for $s^\prime\in S^{\prime}$ and
																	      $s^{\prime\prime}\in S^{\prime\prime}$; and
                                                      (4) $s^\prime\mapsto \varepsilon(s^\prime, \cdot)$ and
																$s^{\prime\prime}\mapsto \varepsilon(\cdot, s^{\prime\prime})$
																     for $s^\prime\in S^{\prime}$ and
																	      $s^{\prime\prime}\in S^{\prime\prime}$.
                                                     Since $\varepsilon$ is symplectic, different conventions lead to a discrepancy
													   by  factors of $-1$.
                                                     Here, we follow the convention of [Wess \& Bagger: Appendix Eq.\:(A.9)].
                                                                } 
       $s\mapsto   s^\vee =\varepsilon(\,\cdot\,, s)$
   can thus be canonically specified only up to a complex constant.
The induced symplectic form, still denoted by $\varepsilon$, on $S^{\prime,\vee}$ is defined by requiring
 $\varepsilon(s^\vee, t^\vee)= - \varepsilon(s, t)$ for $s, t\in S^\prime$.\footnote{One
                                                     may define the induced symplectic form $\varepsilon$ on $S^{\prime\vee}$
													    (resp.\ $S^{\prime,\vee}$) by requiring
													  $\varepsilon(s^\vee, t^\vee)=\varepsilon(s, t)$ for $s, t\in S^\prime$
													    (resp.\ $S^{\prime\prime}$).			
                                                     However, since we choose to define $s^\vee$ as $\varepsilon(\cdot, s)$,
													   rather than $\varepsilon(s, \cdot)$, for $s\in S^\prime$,
													   it is more natural to require $\varepsilon(s^\vee, t^\vee)= - \varepsilon(s,t) $
													    ($=\varepsilon(t,s)$).
                                                     This is also the convention used in [Wess \& Bagger].
                                                                }  
Similarly for the induced symplectic form, also denoted by $\varepsilon$, on $S^{\prime\prime, \vee}$.
}\end{remark}

\bigskip

\begin{flushleft}
{\bf The Clifford multiplication and the $\Spin^0(1,3)$-module-isomorphism
         $V^\vee_{\Bbb C}\simeq S^\prime \otimes_{\Bbb C}S^{\prime\prime}$}
\end{flushleft}
The representation of $\Cl(V, \langle\,,\,\rangle)$ on $S\simeq S^\prime\oplus S^{\prime\prime}$
 (i.e.\:the Clifford multiplication)
 and the inclusion $V\subset \Cl(V, \langle\,,\,\rangle)^\odd$
 induces a homomorphism
 $$
  V_{\Bbb C}:= V\otimes_{\Bbb R}{\Bbb C}\; \longrightarrow\;
  \Hom_{\Bbb C}(S^\prime, S^{\prime\prime})
    \simeq S^{\prime\,\vee}\otimes_{\Bbb C} S^{\prime\prime}
 $$
 as $\Spin^0(1,3)$-modules.
This homomorphism turns out to be an isomorphism.
Together with the $\Spin^0(1,3)$-module isomorphisms
 $S^\prime\simeq S^{\prime\,\vee}$ and $S^{\prime\prime}\simeq S^{\prime\prime\,\vee}$,
one has the $\Spin^0(1,3)$-module isomorphism
  $V^\vee_{\Bbb C}\simeq S^\prime \otimes_{\Bbb C}S^{\prime\prime}$.

\bigskip

\begin{flushleft}
{\bf The explicit presentation in [Wess \& Bagger: Appendix A]}
\end{flushleft}
An explicit presentation of the above is given by [Wess \& Bagger: Appendix A],
  which we recall here and will used in the current notes.

\bigskip

\begin{definition} {\bf [Weyl spinors with a symplectic pairing $\varepsilon$]}\; {\rm
 Under the isomorphism $\Spin^0(1,3)\simeq \SL(2,{\Bbb C})$,
  the two inequivalent Weyl spinor representations $S^\prime$ and $S^{\prime\prime}$ of $\Spin^0(1,3)$ are given
  respectively by the fundamental representation ${\Bbb C}^2$  of $\SL(2,{\Bbb C})$ and its complex conjugate.
 The representation of $\SL(2,{\Bbb C})$ on $S^{\prime,\vee}$ (resp.\ $S^{\prime\prime,\vee}$)
   is equivalent to that on $S^\prime$ (resp.\ $S^{\prime\prime}$)
   and is given by $\eta \mapsto  (m^{-1})^t \eta$ for $m\in \SL(2,{\Bbb C})$
   (resp.\ $\bar{\eta} \mapsto  (\overline{m}^{-1})^t\bar{\eta}$) for $m\in \SL(2,{\Bbb C})$.
 Here $(\,\cdot\,)^t$ is the transpose of a matrix $(\,\cdot\,)$.
 In terms of components of Weyl spinors, these representations are given explicitly
   by\footnote{{\it Note for mathematicians}\hspace{1em}
                         Here we respect physicists' standard notation that
						  the conjugate Weyl spinor in $S^{\prime\prime}$ is denoted with a
						   bar $\bar{\hspace{1ex}}$ together with a dotted spinor index $\dot{\beta}$.
						 Though this may look redundant for mathematicians, there is a good reason for this:
                           In situations physicists want to take a $2$-component Weyl spinor as a whole,
						     unbar versus bar distinguishes the two Weyl spinors in inequivalent representations of the Lorentz group,
						  for example,
							  $\theta\theta\;  (:= \sum_\alpha \theta^\alpha\theta_\alpha)$
							      versus $\bar{\theta}\bar{\theta}\;
								                (:=  \sum_{\dot{\beta}}\bar{\theta}_{\dot{\beta}}\bar{\theta}^{\dot{\beta}})$,
						   while in situations
						     that involve only spinorial indices,
							    undotted versus dotted distinguishes the two inequivalent spinor representations the expression must transform
								accordingly under the covering {\it Spin} group of the Lorentz group,
						     for example,
							   (1) $\varepsilon^{\alpha\gamma}$ versus $\varepsilon^{\dot{\beta}\dot{\delta}}$ and
							   (2) $\sigma^\mu_{\alpha\dot{\beta}}$.
                           Employing both notational conventions on a spinor component reinforces the distinction of the two inequivalent Weyl spinors.
                         }
     \marginpar{\vspace{1em}\raggedright\tiny
         \raisebox{-1ex}{\hspace{-2.4em}\LARGE $\cdot$}Cf.\,[\,\parbox[t]{20em}{Wess
		\& Bagger:\\ Eq.\:(A.1)].}}
  $$
   \begin{array}{llccll}
    \mbox{on $S^\prime$}:
	  &  (\psi_\alpha)_{\alpha} \mapsto
	         \mbox{\Large(} \mbox{$\sum$}_{\beta}\,m_\alpha\,\!^\beta \psi_\beta
			    \mbox{\Large)}_{\alpha}\,,
	  &&
	    & \mbox{on $S^{\prime\prime}$}:
		&   (\bar{\psi}_{\dot{\alpha}})_{\dot{\alpha}} \mapsto
	         \mbox{\Large(} \mbox{$\sum$}_{\dot{\beta}}\,
			        \overline{m}_{\dot{\alpha}}\,\!^{\dot{\beta}} \bar{\psi}_{\dot{\beta}}
			    \mbox{\Large)}_{\dot{\alpha}}\,,
		\\[2ex]
     \mbox{on $S^{\prime,\vee}$}:
	  & (\psi^\alpha)_{\alpha} \mapsto
	         \mbox{\Large(} \mbox{$\sum$}_{\beta}\, m^{-1}\,\!_\beta\,\!^\alpha \psi^\beta
			    \mbox{\Large)}_{\alpha}\,,
	  &&
	    & \mbox{on $S^{\prime\prime, \vee}$}:
		&     (\bar{\psi}^{\dot{\alpha}})_{\dot{\alpha}} \mapsto
	         \mbox{\Large(} \mbox{$\sum$}_{\dot{\beta}}\,
			        \overline{m}^{-1}\,\!_{\dot{\beta}}\,\!^{\dot{\alpha}} \bar{\psi}^{\dot{\beta}}
			    \mbox{\Large)}_{\dot{\alpha}}\,,
  \end{array}
 $$
 for $m=(m_\alpha\,\!^\beta)_{\alpha\beta}\in \SL(2,{\Bbb C})$;
 cf.\;[W-B: Appendix A, Eq.\,(A.1)].

 We fix the symplectic pairing $\varepsilon$
   on $S^\prime$ and $S^{\prime\prime}$ to be the standard/defining symplectic pairing   and
   on $S^{\prime\,\vee}$ and $S^{\prime\,\vee}$ the negative standard symplectic pairing:
(as anticommuting $2$-tensors)
     \marginpar{\vspace{1em}\raggedright\tiny
         \raisebox{-1ex}{\hspace{-2.4em}\LARGE $\cdot$}Cf.\,[\,\parbox[t]{20em}{Wess
		\& Bagger:\\ above Eq.\:(A.8)].}}
 $$
  \varepsilon^{12}= - \varepsilon^{21}
	    = \varepsilon^{\dot{1}\dot{2}}= - \varepsilon^{\dot{2}\dot{1}}\;
		=\; 1\,,\hspace{2em}
     \varepsilon_{12}= - \varepsilon_{21}
	    = \varepsilon_{\dot{1}\dot{2}}= - \varepsilon_{\dot{2}\dot{1}}\;
		=\; -1\,.
 $$		
 By construction,
  $\varepsilon$ on $S^\prime$, $S^{\prime\prime}$, $S^{\prime,\vee}$, and $S^{\prime\prime,\vee}$
  are invariant under the $\SL(2,{\Bbb C})$-action.
 Also note that
   $$
    \mbox{$\sum$}_\beta \varepsilon_{\alpha\beta}\varepsilon^{\beta\gamma}\;
	  =\; \delta_\alpha^{\;\gamma}
	 \hspace{2em}\mbox{and}\hspace{2em}
	\mbox{$\sum$}_{\dot{\beta}}
	   \varepsilon_{\dot{\alpha}\dot{\beta}}\varepsilon^{\dot{\beta}\dot{\gamma}}\;
	  =\; \delta_{\dot{\alpha}}^{\;\dot{\gamma}}	
   $$
  for our choice of $\varepsilon$.
  Here, $\delta_\tinybullet^{\,\tinybullet}$ is the Kronecker delta:
    $\delta_\alpha^{\;\gamma}=1$ for $\alpha=\gamma$, else $=0$;
	similarly for $\delta_{\dot{\alpha}}^{\;\dot{\gamma}}$.
}\end{definition}

\medskip

\begin{definition} {\bf [raising/lowering Weyl spinorial indices via $\varepsilon$]}\; {\rm
 Continuing Definition~1.1.2,
we adopt the following rule to raise or lower a spinorial index:
     \marginpar{\vspace{1em}\raggedright\tiny
         \raisebox{-1ex}{\hspace{-2.4em}\LARGE $\cdot$}Cf.\,[\,\parbox[t]{20em}{Wess
		\& Bagger:\\ Eq.\:(A.9)].}}
  $$
   \begin{array}{c}	
     \psi^\alpha\;=\; \sum_{\beta=1,2}\varepsilon^{\alpha\beta}\psi_{\beta}\,,\hspace{2em}
     \psi_\alpha\;=\; \sum_{\beta=1,2}\varepsilon_{\alpha\beta}\psi^{\beta}\,, \\[1.2ex]
	 \bar{\psi}^{\dot{\alpha}}\;
	    =\; \sum_{\dot{\beta}=\dot{1},\dot{2}}
		    \varepsilon^{\dot{\alpha}\dot{\beta}}\bar{\psi}_{\dot{\beta}}\,,\hspace{2em}
     \bar{\psi}_{\dot{\alpha}}\;
	    =\; \sum_{\dot{\beta}=\dot{1},\dot{2}}
		    \varepsilon_{\dot{\alpha}\dot{\beta}}\bar{\psi}^{\dot{\beta}}
   \end{array}	
  $$
 and similarly for any object with upper or lower, undotted or dotted spinorial indices.
} \end{definition}
 
\bigskip
 
Note that, by convention from differential geometry,
 tensorial indices from $V$ or $V_{\Bbb C}$ can be raised or lowered via the inner product $\langle\,,\,\rangle$.
(Note that the inner product on $V$ extends to a complex inner product, still denoted $\langle\,,\,\rangle$,
    on $V_{\Bbb C}$ via ${\Bbb C}$-bi-linearity.)
 
\bigskip

\begin{remark} $[S^{\prime\,\vee}\simeq S^\prime$ and $S^{\prime\prime\,\vee}\simeq S^{\prime\prime}]$\;
{\rm
 Let
  $$
   m\;=\;\left(\!\! \begin{array}{rr} a & b \\ c & d \end{array}\!\!\right)  \;\in\; \SL(2,{\Bbb C})\,.
  $$
 Then
  \begin{eqnarray*}
   (m^{-1})^t & = &
      \left(\!\! \begin{array}{rr} d & -c \\ -b & a \end{array} \!\!\right)\;\;
	 =\;\;   \left(\!\! \begin{array}{rr} 0 & -1 \\ 1 & 0 \end{array} \!\!\right)
	   \left(\!\! \begin{array}{rr} a & b \\ c & d \end{array} \!\!\right)
	     \left(\!\! \begin{array}{rr} 0 & -1 \\ 1 & 0 \end{array} \!\!\right)^{-1}\,;
		 \\[.6ex]
  (\overline{m}^{\,-1})^t & = &
      \left(\!\! \begin{array}{rr} \bar{d} & -\bar{c} \\ -\bar{b} & \bar{a} \end{array} \!\!\right)\;\;
	 =\;\;   \left(\!\! \begin{array}{rr} 0 & -1 \\ 1 & 0 \end{array} \!\!\right)
	   \left(\!\! \begin{array}{rr} \bar{a} & \bar{b} \\ \bar{c} & \bar{d} \end{array} \!\!\right)
	     \left(\!\! \begin{array}{rr} 0 & -1 \\ 1 & 0 \end{array} \!\!\right)^{-1}\,.
  \end{eqnarray*}
  This realizes the equivalences of $\Spin^0(1,3)$-modules
   $S^{\prime\,\vee}\simeq S^\prime$ and $S^{\prime\prime\,\vee}\simeq S^{\prime\prime}$
   that respect the rule of raising and lowering spinorial indices via $\varepsilon$.
}\end{remark}
   
\bigskip

Let $M_2({\Bbb C})$ be the algebra of $2\times 2$ matrices over ${\Bbb C}$.
The dual vector space with inner product $(V^\vee, \langle\,,\,\rangle)$ is realized
 as the ${\Bbb R}$-subspace of Hermitian matrices in $M_2({\Bbb C})$
 $$
  p=(p_0, p_1, p_2, p_3)^t\in V^\vee \;
    \longmapsto\;
	\tilde{p}\;
	 :=\;
	    \left(
		  \begin{array}{cc}
		    -p_0  + p_3   & p_1 -\sqrt{-1}\,p_2 \\
			p_1 + \sqrt{-1}\,p_2 & -p_0 - p_3
		  \end{array}
	     \right)
  $$		
  with the inner product realized by the quadratic form $\langle \tilde{p}, \tilde{p}\rangle:= - \det(\tilde{p})$.
Under this isomorphism, $\SL(2,{\Bbb C})$ acts on $V^\vee$ by
 $$
   \tilde{p}\; \longmapsto\; m\,\tilde{p}\, \overline{m}^t \; =:\; m\,\tilde{p}\,m^\dagger
 $$
 for $m\in \SL(2,{\Bbb C})$.
It preserves $\langle\tilde{p}, \tilde{p} \rangle$   and
 realizes the double covering map $\SL(2,{\Bbb C})\rightarrow \SO^\uparrow(1,3)$.
The fact that $\SL(2,{\Bbb C})$ acts on $\tilde{p}$
       from the left by multiplication by $m$ and from the right by multiplication by $\overline{m}^t$
 implies that
  the correspondence $p\mapsto \tilde{p}$ induces an isomorphism
  $$
    V^{\vee}_{\Bbb C}
	  \stackrel{\sim}{\rightarrow}\;  S^\prime\otimes S^{\prime\prime}
  $$
  as (left) $\SL(2,{\Bbb C})$-modules.
This realizes the Clifford multiplication.
Denote
 $$
    \sigma^0 := \left(\!\!\begin{array}{rr} -1 & 0  \\ 0 & -1\end{array}\!\!\right)\,, \hspace{2em}
    \sigma^1 := \left(\!\!\begin{array}{rr}  0 & 1  \\ 1 & 0\end{array}\!\!\right)\,, \hspace{2em}
	\sigma^2 := \left(\!\!\begin{array}{rr}  0 &  -\sqrt{-1}  \\ \sqrt{-1} & 0\end{array}\!\!\right)\,,
	                \hspace{1em}
	\sigma^4 := \left(\!\!\begin{array}{rr}  1 & 0  \\ 0 & -1\end{array}\!\!\right)	
 $$
(the {\it Pauli matrices}). Then, one may write
 $$
  \tilde{p}\;=\; p_0\,\sigma^0 + p_1\,\sigma^1 + p_2\,\sigma^2 + p_3\,\sigma^3 \;
  =:\; (p_{\alpha\dot{\beta}})_{\alpha\dot{\beta}}\,.
 $$
The double indices $\alpha\dot{\beta}$ for entries of $\tilde{p}$ is justified by the isomorphism
  $V^\vee_{\Bbb C}\simeq S^\prime\times S^{\prime\prime}$.
The conversion rules are
     \marginpar{\vspace{1em}\raggedright\tiny
         \raisebox{-1ex}{\hspace{-2.4em}\LARGE $\cdot$}Cf.\,[\,\parbox[t]{20em}{Wess
		\& Bagger:\\ Eq.\:(A.13)].}}
 $$
  p_{\alpha\dot{\beta}}\;=\; \mbox{$\sum$}_{\mu=0}^3 \sigma^\mu_{\alpha\dot{\beta}}p_\mu
   \hspace{2em}\mbox{and}\hspace{2em}
  p^\mu\;
    =\; -\,\mbox{\large $\frac{1}{2}$}\,
               \sum_{\alpha=1,2;\,\dot{\beta}=\dot{1}, \dot{2}}
			     \bar{\sigma}^{\mu\dot{\beta}\alpha}\,p_{\alpha\dot{\beta}}\,,
 $$
 where
 $\bar{\sigma}^{\mu\dot{\beta}\alpha}
     := \sum_{\gamma\dot{\delta}}
	      \varepsilon^{\alpha\gamma}\varepsilon^{\dot{\beta}\dot{\delta}}
		    \sigma^\mu_{\gamma\dot{\delta}}$.

\bigskip

\subsection{The function-ring of a $d=3+1$, $N=1$ towered superspace and its distinguished subrings
   {\normalsize (under a trivialization of the spinor bundle by covariantly constant sections)\;
    {\normalsize (cf.\:[Wess \& Bagger: Chapter IV and Appendices A \& B])}}
	                       }

We recall in this subsection how we come to the notion of `{\it towered superspace}' from [L-Y5: Sec.\:1] (SUSY(1))
 for terminology and notations    and
 explain some upgrade on the notion of `{\it physics sector}'.\footnote{In
                                                                [L-Y5: Sec.\:1] (SUSY(1)) we define the function ring of the physics sector as
																 subring generated by the chiral superfield and the antichiral superfield.
																Then in [L-Y5: Sec.\:3] (SUSY(1))
																  we define `vector superfield' from this physics sector to construct
																  a supersymmetric $U(1)$ gauge theory coupled with matters.
																It will turn out that to fit [Wess \& Bagger] exactly,
																  one has to look for the notion of `vector superfields' {\it outside} this subring!
                                                                It is for this reason that in the current new work
																 we abandon the name ``{\it physics sector}" assigned to this subring
																 since that would give a wrong impression or implication that
																 elements outside this subring is not physics-relevant.
																In the current work, this subring is given a new name
																  the `{\it small function-ring}' of the towered superspace in question.
																It is the subring that is involved in
																  Wess-Zumino models (cf.\:Sec.\:3)  and
																  $d=3+1$, $N=1$ nonlinear sigma models (cf.\:Sec.\:5).
											          } 
Readers are referred to ibidem for more details.

\bigskip

\begin{flushleft}
{\bf $d=3+1$, $N=1$ towered superspaces and their function ring}
\end{flushleft}
Let
 \begin{itemize}
  \item[\LARGE $\cdot$]
   $X={\Bbb R}^{3+1}$ be the $3+1$-dimensional Minkowski space-time;
     as a $C^\infty$-scheme $(X, {\cal O}_X)$,
	 where ${\cal O}_X$ is the sheaf of smooth functions on $X$;
   with coordinate functions $(x^\mu)_\mu = (x^0, x^1, x^2, x^3)$ and
   the metric $ds^2 = -(dx^0)^2 + (dx^1)^2+ (dx^2)^2+ (dx^3)^3$;
   tangent bundle $T_X$, cotangent bundle $T^\ast_X$,
   their corresponding sheaf of sections ${\cal T}_X$, ${\cal T}_X^\ast$  and
   complexification
	  ${\cal T}_X^{\,{\Bbb C}}:= {\cal T}_X\otimes_{{\cal O}_X}\!\!{\cal O}_X^{\,\Bbb C}$ and
	  ${\cal T}_X^{\ast, {\Bbb C}}:= {\cal T}_X^\ast\otimes_{{\cal O}_X}\!\!{\cal O}_X^{\,\Bbb C}$, 	
	all equipped with the Levi-Civita connection;
  here ${\cal O}_X^{\,\Bbb C}$ is the sheaf of complex-valued smooth functions on $X$ and
    $(X, {\cal O}_X^{\,\Bbb C})$ as a complexified $C^\infty$-scheme
	  with $C^\infty$ hull ${\cal O}_X\subset {\cal O}_X^{\,\Bbb C}$;
   
  \item[\LARGE $\cdot$]
   $P_X$ the principal Lorentz-frame bundle over $X$ with the Levi-Civita connection,\\
   ${\cal P}_X$ the corresponding principal sheaf of Lorentz frames with a connection;
   
  \item[\LARGE $\cdot$]
   $S^\prime$, $S^{\prime\prime}$ be the Weyl spinor bundles and
   ${\cal S}^\prime$, ${\cal S}^{\prime\prime}$ the corresponding sheaf of sections,
      all equipped with the induced connection and covariantly constant symplectic pairing $\varepsilon$
	 that respect the complex conjugation
       $\bar{\hspace{1ex}}: S^\prime \leftrightarrow S^{\prime\prime}$,
	   ${\cal S}^\prime \leftrightarrow {\cal S}^{\prime\prime}$;
   $S^{\prime\,\vee}$, $S^{\prime\prime\,\vee}$	 the dual spinor bundle of $S^\prime$ and $S^{\prime\prime}$
     respectively; \
	${\cal S}^{\prime\,\vee}$ and ${\cal S}^{\prime\prime\,\vee}$ the corresponding sheaves.	
	
  \item[\LARGE $\cdot$]	
   Recall Definition~1.1.3 and Remark~1.1.4.
   Let $(\theta_1, \theta_2)$ be a pair of independent covariantly constant sections of $S^\prime$.
   This then gives rise to pairs of covariantly constant sections
     $(\theta^1, \theta^2)$ in $S^{\prime\,\vee}$,
	 $(\bar{\theta}_{\dot{1}}, \bar{\theta}_{\dot{2}})$ in $S^{\prime\prime}$,  and
	 $(\bar{\theta}^{\dot{1}}, \bar{\theta}^{\dot{2}})$ in $S^{\prime\prime\,\vee}$.
	
  \item[]
   When in need of taking a copy of spinor bundle in a construction,
       i.e.\:complex rank-$2$ bundle on $X$ with a fixed isomorphism to $S^\prime$,
    $(\theta_1, \theta_2)$ then passes to $(\vartheta_1, \vartheta_2)$ in the copy,
	which in turn gives rise to
	 $(\vartheta^1, \vartheta^2)$,
	 $(\bar{\vartheta}_{\dot{1}}, \bar{\vartheta}_{\dot{2}})$,  and
	 $(\bar{\vartheta}^{\dot{1}}, \bar{\vartheta}^{\dot{2}})$	
	 respectively in the various corresponding copies of spinor bundles via taking dual or complex conjugation.
 \end{itemize}

\bigskip

From the very start, physicists' design of superfields
  ({\it Abdus Salam} and {\it John Strathdee}:  [S-S: Eqs.\:(I.15) \& (I.16)] (1978))
  wants them to form a ring,
 i.e.\:addition and multiplication of superfields make sense and are still superfields.
In essence, a `towered superspace' is the `space' that takes this ring as its function ring.
From the fact that a single `superfield' on $X$ combines both bosonic field(s) and fermionic field(s) on $X$ into its components
 (and hence ``super") and the number and details of bosonic fields and fermionic fields depend
 on the supersymmetric quantum-field-theory model one wants to construct,
this towered superspace depends on the supersymmetric quantum-field-theory model one wants to construct as well.
As a complexified ${\Bbb Z}/2$-graded $C^\infty$-scheme, this towered superspace can be described level-by-level
 as follows:
(The presentation here is for the case of $d=3+1$, $N=1$ supersymmetric quantum field theories and
     assuming that all fermionic fields are described by sections of Weyl spinor bundles $S^\prime$, $S^{\prime\prime}$.
	Other situations are similar.)
\begin{itemize}
 \item[(1)] [\,{\it fundamental/ground level}\,]\hspace{1em}
  This is the super homogeneous space $\widehat{X}$
    from the quotient of the super Poincar\'{e} group by the Lorentz subgroup.
  As a complexified ${\Bbb Z}/2$-graded $C^\infty$-scheme, it has
    the underlying topology $X$   and
    the structure sheaf
    $$
     {\cal O}_{\widehat{X}}:= \widehat{\cal O}_X
	 :=  \mbox{$\bigwedge$}^\tinybullet_{{\cal O}_X^{\,\Bbb C}}
	         ({\cal S}^{\prime\,\vee} \oplus {\cal S}^{\prime\prime\,\vee})\,,
    $$
     with the ${\Bbb Z}/2$-grading given by
    $$
     \mbox{$\bigwedge$}^\tinybullet_{{\cal O}_X^{\,\Bbb C}}
	    ({\cal S}^{\prime\,\vee} \oplus {\cal S}^{\prime\prime\,\vee})\;
      =\; \mbox{$\bigwedge$}^\even_{{\cal O}_X^{\,\Bbb C}}
	         ({\cal S}^{\prime\,\vee} \oplus {\cal S}^{\prime\prime\,\vee})
	       \oplus
          \mbox{$\bigwedge$}^\odd_{{\cal O}_X^{\,\Bbb C}}
		     ({\cal S}^{\prime\,\vee} \oplus {\cal S}^{\prime\prime\,\vee})\;
	 =:\; \widehat{\cal O}_X^{\,\even}\oplus \widehat{\cal O}_X^{\,\odd}
    $$
	and
    the $C^\infty$-hull given by
     $$
      {\cal O}_X\oplus
	    \mbox{$\bigwedge$}^{\even,\, \ge 2}_{{\cal O}_X^{\,\Bbb C}}
		  ({\cal S}^{\prime\,\vee} \oplus {\cal S}^{\prime\prime\,\vee})\,.
     $$
   This is the sheaf of Grassmann algebras generated by the ${\cal O}_X^{\,\Bbb C}$-module
    ${\cal S}^{\prime\,\vee}\oplus {\cal S}^{\prime\prime\,\vee}$.
   Covariantly constant sections of ${\cal S}^{\prime\,\vee}\oplus {\cal S}^{\prime\prime\,\vee}$	
     provide fermionic/anticommuting coordinate functions on $\widehat{X}$ and
   the action of the super Poincar\'{e} group on $\widehat{X}$ is realized as automorphisms
    on the function ring $C^\infty(\widehat{X})= \widehat{\cal O}_X(X)$ of $X$.
  This gives a representation of the super Poincar\'{e} algebra
    in the ${\Bbb Z}/2$-graded Lie algebra of derivations on $C^\infty(\widehat{X})$.	
  In particular, the infinitesimal supersymmetry generators $Q_\alpha$, $\bar{Q}_{\dot{\beta}}$ and
   the supersymmetrically-invariant-flow generators $D_\alpha$, $\bar{D}_{\dot{\beta}}$
   of [Wess \& Bagger: Eqs.\:(4.4) and (4.5)] are specific derivations on $C^\infty(\widehat{X})$.
 (Cf.\:Example~1.2.5.)
   
 \item[(2)] [\,{\it field/upper level(s)}\,]\hspace{1em}
 The Spin-Statistics Theorem from Quantum Statistical Mechanics and Quantum Field Theory
  says that fermionic fields must be anticommuting by nature.
 Thus, every time a fermionic field appears in the problem, we have to ask:
   \begin{itemize}
    \item[{\bf Q.}]
	  {\it Does it already lie in the existing sheaf of Grassmann algebras of the problem?}
   \end{itemize}
  If not, then we have to enlarge the generating ${\cal O}_X^{\,\Bbb C}$-module of the existing sheaf of Grassmann algebras
    to include the new anticommuting field(s).	
 This gives rise to an inclusion system of function rings or, contravariantly equivalently
  a projection system of complexified ${\Bbb Z}/2$-graded $C^\infty$-schemes over $\widehat{X}$.
 This is why and how a towered superspace appears.
 The new generators to the enlarged function ring are themselves fermionic fields over $X$.
 Specifically, when these fermionic fields are sections of different copies of spinor sheaves,
  we distinguish them by a subscript ${\cal S}^\prime_{\field, i}\oplus {\cal S}^{\prime\prime}_{\field, i}$,
    $i=1,\,\ldots\,, l$, and
  construct the towered superspace as a complexified ${\Bbb Z}/2$-graded $C^\infty$-scheme
   $$
     \widehat{X}^{\widehat{\boxplus}_l}\;
	  :=\; (X, \widehat{\cal O}_X^{\,\widehat{\boxplus}_l})\;
	  :=\; (X, \mbox{$\bigwedge$}_{{\cal O}_X^{\,\Bbb C}}^{\tinybullet}{\cal F})
   $$
  over $\widehat{X}$ with
   $$
    {\cal F}\;
	    :=\;  ({\cal S}^{\prime\,\vee}_{\coordinates}\oplus {\cal S}^{\prime\prime\,\vee}_{\coordinates})		
			    \oplus
				  \mbox{$\bigoplus$}_{i=1}^l
				     ({\cal S}^\prime_{\field, i}\oplus {\cal S}^{\prime\prime}_{\field, i})\,.
   $$
  We say that
    ${\cal S}^\prime_{\field, i}
          \oplus {\cal S}^{\prime\prime}_{\field, i}$
	 contributes to the {\it $i$-th field level} of $\widehat{X}^{\widehat{\boxplus}_l}$.			
 The total level number $l$ is the number of distinct types/species/generations of fermionic fields
   in a $d=3+1$, $N=1$ supersymmetric field theory one wants to construct.
 It can be different theory-by-theory. 	

 \item[(3)] [\,{\it parameter/basement level}\,]\hspace{1em}
  Finally, when physicists working on supersymmetry introduce `Grassmann number' parameter
    $(\eta,\bar{\eta}):= (\eta^1, \eta^2, \bar{\eta}^{\dot{1}}, \bar{\eta}^{\dot{2}})$
    in their computation,
   these `Grassmann number' parameter are meant to be independent of anything else.
  Thus, they should be thought of as constant sections
    of another copy of ${\cal S}^{\prime\,\vee}\oplus {\cal S}^{\prime\prime\,\vee}$.
 Since they are used in the computation, it is appealing in practice (if not in concept) to incorporate them
  into the function ring of the towered superspace and think of
  $\widehat{X}^{\widehat{\boxplus}}$ as over this basic complexified ${\Bbb Z}/2$-graded $C^\infty$-scheme.	
 Thus, we make a final adjustment to $\widehat{X}^{\widehat{\boxplus}}$ by redefining
  $$
     \widehat{X}^{\widehat{\boxplus}_l}\;
	  :=\; (X, \widehat{\cal O}_X^{\,\widehat{\boxplus}_l})\;
	  :=\; (X, \mbox{$\bigwedge$}_{{\cal O}_X^{\,\Bbb C}}^{\tinybullet}{\cal F})
   $$
 with
   $$
    {\cal F}\;
	    :=\;
	    ({\cal S}^{\prime\,\vee}_{\parameter}\oplus {\cal S}^{\prime\prime\,\vee}_{\parameter})
          \oplus
		({\cal S}^{\prime\,\vee}_{\coordinates}\oplus {\cal S}^{\prime\prime\,\vee}_{\coordinates})		
		  \oplus
		  \mbox{$\bigoplus$}_{i=1}^l
		   ({\cal S}^\prime_{\field, i}\oplus {\cal S}^{\prime\prime}_{\field, i})\,.
   $$	
 We say that
    ${\cal S}^{\prime\,\vee}_{\parameter}
          \oplus {\cal S}^{\prime\prime\,\vee}_{\parameter}$
	 contributes to the {\it Grassmann parameter level} of $\widehat{X}^{\widehat{\boxplus}_l}$.
 Similar to the fundamental level $\widehat{X}$ of the tower,
  this level depends only on the supersymmetry in question (here, $d=3+1$, $N=1$).
 Together they form the universal base for all $d=3+1$, $N=1$ towered superspaces.
\end{itemize}

\bigskip

These motivate the following definitions in [L-Y5: Sec.\:1.3] (SUSY(1)):

\bigskip

\begin{definition}
{\bf [$d=4$, $N=1$ towered superspace $\widehat{X}^{\widehat{\boxplus}_l}$ with $l$ field-theory levels]}\;
{\rm
 The complexified ${\Bbb Z}/2$-graded $C^\infty$-scheme
   $$
     \widehat{X}^{\widehat{\boxplus}_l}\;
	  :=\; (X, \widehat{\cal O}_X^{\,\widehat{\boxplus}_l})\;
	  :=\; (X, \mbox{$\bigwedge$}_{{\cal O}_X^{\,\Bbb C}}^{\tinybullet}{\cal F})
   $$
 with
   $$
    {\cal F}\;
	    :=\;
	  ({\cal S}^{\prime\,\vee}_{\parameter}\oplus {\cal S}^{\prime\prime\,\vee}_{\parameter})
				\oplus
		({\cal S}^{\prime\,\vee}_{\coordinates}\oplus {\cal S}^{\prime\prime\,\vee}_{\coordinates})		       
			    \oplus
				  \mbox{$\bigoplus$}_{i=1}^l
				     ({\cal S}^\prime_{\field, i}\oplus {\cal S}^{\prime\prime}_{\field, i})
   $$
  is called the {\it $d=4$, $N=1$ towered superspace with $l$ field-theory levels}.
 Here,
   all ${\cal S}^\prime_{\tinybullet}$
    (resp.\ ${\cal S}^{\prime\prime}_{\tinybullet}$,
	               ${\cal S}^{\prime\, \vee}_{\tinybullet}$, ${\cal S}^{\prime\prime\,\vee}_{\tinybullet}$)
   are copies\footnote{Mathematically
                                           this means that
                                          ${\cal S}^{\prime\,\vee}_{\mbox{\tiny\it coordinates }}$ is isomorphic to
										  ${\cal S}^{\prime\,\vee}$ with a fixed isomorphism; and similarly for all other spinor sheaves
										  that appear as direct summands of ${\cal F}$.
										      } 
      of ${\cal S}^\prime$
    (resp.\  ${\cal S}^{\prime\prime}$,
	               ${\cal S}^{\prime\,\vee}$, ${\cal S}^{\prime\prime\,\vee}$).
 When $l$ is implicit in the problem, we will denote
  $\widehat{X}^{\widehat{\boxplus}_l}$ simply by $\widehat{X}^{\widehat{\boxplus}}$.
 By convention, we will keep the parameter level suppressed when not activated for use in a discussion.
}\end{definition}

\bigskip

\noindent
Figure~1-2-1. (Cf.\:[L-Y5: Definition/Explanation 1.3.2] (SUSY(1)).)
  \begin{figure}[htbp]
 \bigskip
  \centering
  \includegraphics[width=0.80\textwidth]{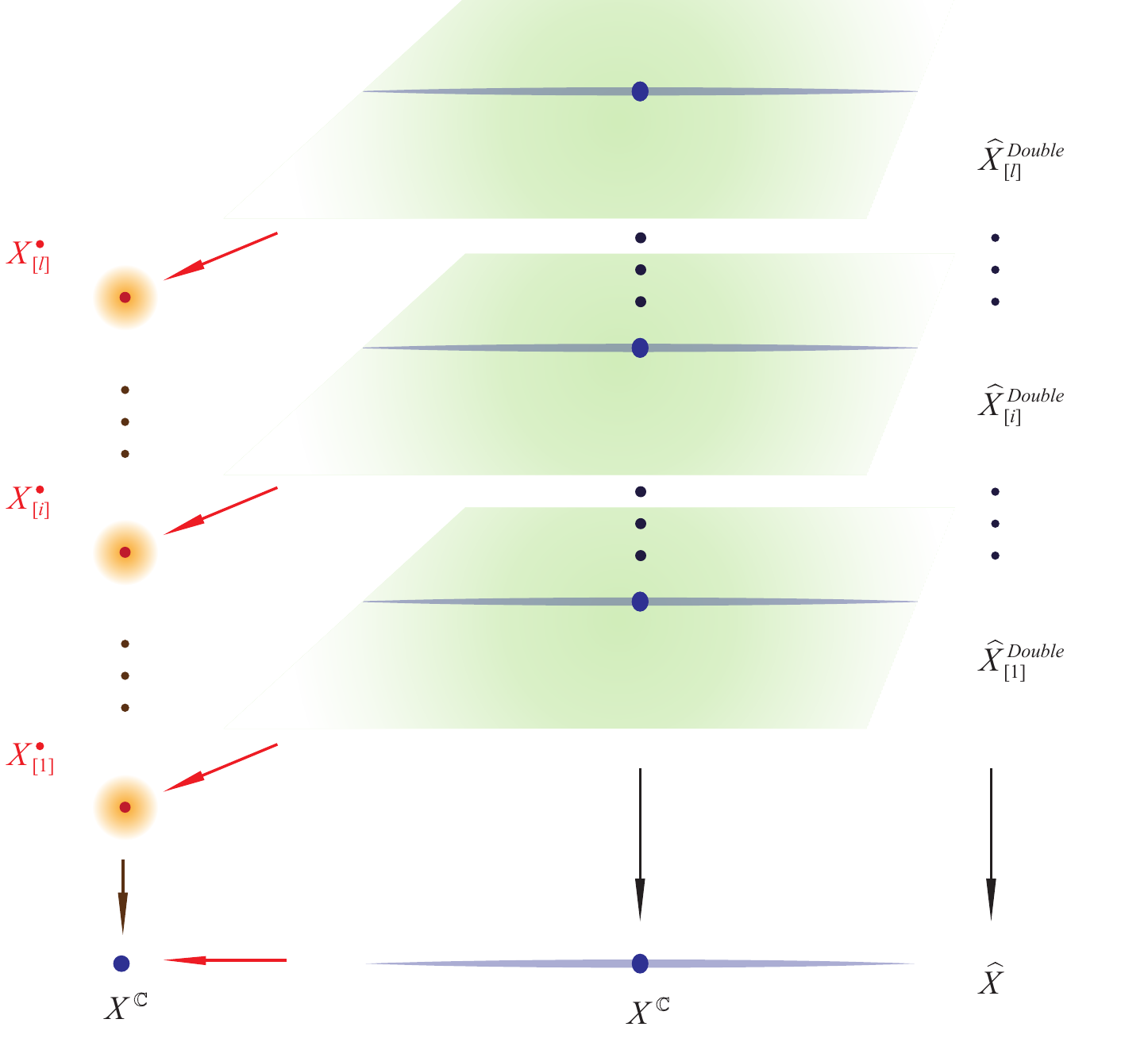}

  \bigskip
  \bigskip
 \centerline{\parbox{13cm}{\small\baselineskip 12pt
  {\sc Figure}~1-2-1.
  (Cf.\:[L-Y5: {\sc Figure} 1-4-1].)
  The space-time coordinate functions $x^\mu$, $\mu=0,1,2,3$,
     and the fermionic coordinate functions $\theta^\alpha$, $\bar{\theta}^{\dot{\beta}}$,
     $\alpha=1,2$, $\dot{\beta}=\dot{1}, \dot{2}$, 	
   generate the function ring of the fundamental superspace $\widehat{X}$
     as a complexified ${\Bbb Z}/2$-graded $C^\infty$-scheme.
  Over it sits a supertower with Grassmann-number level and other field-theory levels that are needed
    for the construction of supersymmetric quantum field theories.
  From the direct-sum expression of the generating sheaf 	
	\begin{eqnarray*}
    {\cal F}
	   & :=\:
	      &  ({\cal S}^{\prime\,\vee}_{\coordinates}\oplus {\cal S}^{\prime\prime\,\vee}_{\coordinates})
		        \oplus
		({\cal S}^{\prime\,\vee}_{\parameter}\oplus {\cal S}^{\prime\prime\,\vee}_{\parameter})\\
        &&\hspace{2em} 		
			    \oplus\,
				  \mbox{$\bigoplus$}_{i=1}^l
				     ({\cal S}^\prime_{\field, i}\oplus {\cal S}^{\prime\prime}_{\field, i})\\[1ex]
      & =
	      &  ({\cal S}^{\prime\,\vee}_{\coordinates}\oplus {\cal S}^{\prime\prime\,\vee}_{\coordinates})
		        \oplus
		({\cal S}^{\prime\,\vee}_{\parameter}\oplus {\cal S}^{\prime\prime\,\vee}_{\parameter})\\[.6ex]
        &&\hspace{2em} 		
			    \oplus\,
				  \mbox{$\bigoplus$}_{i=1}^l				
				    \frac{
					 ({\cal S}^{\prime\,\vee}_{\coordinates}\oplus {\cal S}^{\prime\prime\,\vee}_{\coordinates}
					       \oplus  {\cal S}^\prime_{\field, i}\oplus {\cal S}^{\prime\prime}_{\field, i})}
					{{\cal S}^{\prime\,\vee}_{\coordinates}\oplus {\cal S}^{\prime\prime\,\vee}_{\coordinates}}
   \end{eqnarray*}
   of the structure sheaf $\widehat{\cal O}_X^{\,\widehat{\boxplus}}$
     of $\widehat{X}^{\widehat{\boxplus}}$,
   one may think of each field-theory level as contributing
   a floor-$[i]$
    $$
	   \widehat{X}_{[i]}^{\tinyDouble}\; :=\;
	     \mbox{\Large $($}
		  X, \mbox{$\bigwedge$}_{{\cal O}_X^{\,\Bbb C}}^{\tinybullet}
		     ({\cal S}^{\prime\,\vee}_{\coordinates}\oplus {\cal S}^{\prime\prime\,\vee}_{\coordinates}
					       \oplus  {\cal S}^\prime_{\field, i}\oplus {\cal S}^{\prime\prime}_{\field, i})
		 \mbox{\Large $)$}
	$$
   over $\widehat{X}$  and
 these field-theory floors are glued by the ${\Bbb Z}/2$-graded version of
    fibered product over $\widehat{X}$ to give $\widehat{X}^{\widehat{\boxplus}}$.	
 Each field-theory floor $\widehat{X}_{[i]}^{\tinyDouble}$ has some distinguished sectors
   $X_{[i]}^\tinybullet$ that are purely even.
 They generate some distinguished sectors $\widehat{X}^{\widehat{\boxplus}, \tinybullet}$	
   of $\widehat{X}^{\widehat{\boxplus}}$
    that are also purely even.
 These distinguished sectors are where physics-relevant superfields on $X$ lie.
  }}
\end{figure}

\bigskip

Note that, as an ${\cal O}_X^{\,\Bbb C}$-modules,
 \begin{eqnarray*}
  \lefteqn{
   \widehat{\cal O}_X^{\,\widehat{\boxplus}}\;
    =\;   \mbox{$\bigwedge$}_{{\cal O}_X^{\,\Bbb C}}^{\tinybullet}
	             ({\cal S}^{\prime\,\vee}_{\coordinates}
				         \oplus{\cal S}^{\prime\prime\,\vee}_{\coordinates})   }\\
   && \hspace{3em}						
			  \otimes_{{\cal O}_X^{\,\Bbb C}}
              \mbox{$\bigwedge$}_{{\cal O}_X^{\,\Bbb C}}^{\tinybullet}
	             ({\cal S}^{\prime\,\vee}_{\parameter}
				         \oplus{\cal S}^{\prime\prime\,\vee}_{\parameter})
			  \otimes_{{\cal O}_X^{\,\Bbb C}}
			   \mbox{$\bigotimes_{{\cal O}_X^{\,\Bbb C}}$}_{i=1}^l
                 \mbox{$\bigwedge$}_{{\cal O}_X^{\,\Bbb C}}^{\tinybullet}
	             ({\cal S}^\prime_{\field, i}\oplus{\cal S}^{\prime\prime}_{\field, i})\,.
 \end{eqnarray*}
Each factor $\bigwedge^\tinybullet_{{\cal O}_X^{\,\Bbb C}}(\cdots)$
  in the $\otimes_{{\cal O}_X^{\,\Bbb C}}$-decomposition
contributes to a level/layer/floor of the towered superspace $\widehat{X}^{\widehat{\boxplus}}$.
  
\bigskip

\begin{definition}
{\bf [derivation on $\widehat{X}$ applied to $C^\infty(\widehat{X}^{\widehat{\boxplus}})$]}\;
{\rm
 Let
   $\xi\in \Der_{\Bbb C}(\widehat{X})$  be a derivation on $\widehat{X}$ over ${\Bbb C}$ and
   $\breve{f}\in C^\infty(\widehat{X}^{\widehat{\boxplus}})$.
 Then we define $\xi\breve{f}\in C^\infty(\widehat{X}^{\widehat{\boxplus}})$
   via the built-in inclusion
   $\Der_{\Bbb C}(\widehat{X})
              \hookrightarrow \Der_{\Bbb C}(\widehat{X}^{\widehat{\boxplus}})$.
}\end{definition}

\medskip

\begin{definition} {\bf [complex conjugation vs.\;twisted complex conjugation]}\; {\rm
 The complex conjugation
    $ \bar{\hspace{1.2ex}}:{\cal O}_X^{\,\Bbb C}\rightarrow {\cal O}_X^{\,\Bbb C}$ and
		${\cal S}^\prime\rightarrow {\cal S}^{\prime\prime}\,,\;
	       {\cal S}^{\prime\prime}\rightarrow {\cal S}^\prime$,
    of Weyl spinors
   extends canonically to a {\it complex conjugation}
   $$
     \bar{\hspace{1.2ex}}\;:\; \widehat{\cal O}_X^{\,\widehat{\boxplus}}\;
	    \longrightarrow\; \widehat{\cal O}_X^{\,\widehat{\boxplus}}\,,
   $$
   by setting
   \begin{itemize}	
	\item[(1)]
	 $\overline{\breve{f}+\breve{g}}\;=\; \bar{\breve{f}} + \bar{\breve{g}}$\,;
	
    \item[(2)]  $\overline{\breve{f}\breve{g}}\;=\; \bar{\breve{g}}\bar{\breve{f}}$\,.
   \end{itemize}
  and a	{\it twisted complex conjugation}
  $$
    ^\dag\;:\;  \widehat{\cal O}_X^{\,\widehat{\boxplus}}\;
	                    \longrightarrow\; \widehat{\cal O}_X^{\,\widehat{\boxplus}}\,,
  $$
  by setting
   \begin{itemize}
    \item[(0$^\prime$)]
	 $^\dag = \bar{\hspace{1.2em}}:
	       {\cal O}_X^{\,\Bbb C}\rightarrow {\cal O}_X^{\,\Bbb C}\,;
	      {\cal S}^\prime \rightarrow {\cal S}^{\prime\prime}\,,\;
	      {\cal S}^{\prime\prime}\rightarrow {\cal S}^\prime$\,;
	
	\item[(1$^\prime$)]
	 $(\breve{f}+\breve{g})^\dag\;=\; \breve{f}^\dag + \breve{g}^\dag$\,;
	
    \item[(2$^\prime$)]  $(\breve{f}\breve{g})^\dag\;=\; \breve{g}^\dag \breve{f}^\dag$\,.
   \end{itemize}
 Caution that the order of multiplication is preserved under the complex conjugate $\bar{\hspace{1.2ex}}$
   but is reversed under the twisted complex conjugate $^\dag$.
}\end{definition}
 
\medskip

\begin{definition} {\bf [standard coordinate functions on $\widehat{X}^{\widehat{\boxplus}}$]}\;
{\rm
 Let $\widehat{X}^{\widehat{\boxplus}} = \widehat{X}^{\widehat{\boxplus}_l}$.
 Then,
 the standard coordinate functions $(x,\theta,\bar{\theta})$ on $\widehat{X}$
 extends uniquely to a tuple of coordinate functions
 $$
  (x^\mu, \theta^{\alpha},
      \bar{\theta}^{\dot{\beta}};
	  \eta^{\alpha^\prime}, \bar{\eta}^{\dot{\beta}^\prime};
	  \vartheta^1_{\gamma_1}, \bar{\vartheta}^1_{\dot{\delta}_1} ;\,
	   \cdots\,;
	  \vartheta^l_{\gamma_l}, \bar{\vartheta}^l_{\dot{\delta}_l}  )\;
	  =:\;   (x, \theta, \bar{\theta}, \eta, \bar{\eta},
	               \vartheta, \bar{\vartheta})\;
 $$
 on $\widehat{X}^{\widehat{\boxplus}}$ via
    the $\varepsilon$-tensor
     $\varepsilon:
 	    {\cal S}^\prime\otimes_{{\cal O}_X^{\,\Bbb C}}{\cal S}^\prime
 		      \rightarrow {\cal O}_X^{\,\Bbb C} $,\,
        ${\cal S}^{\prime\prime}\otimes_{{\cal O}_X^{\,\Bbb C}}{\cal S}^{\prime\prime}
 		      \rightarrow {\cal O}_X^{\,\Bbb C} $, 			
   and
   the fixed isomorphisms
  $ {\cal S}^\prime_{\tinybullet}\simeq {\cal S}^\prime$,
  ${\cal S}^{\prime\prime}_{\tinybullet}\simeq {\cal S}^{\prime\prime}$.

 Explicitly, regard
  ${\cal S}^{\prime\,\vee}_{\parameter}$ as a copy of ${\cal S}^{\prime\,\vee}_{\coordinates}$,
  ${\cal S}^{\prime\prime\,\vee}_{\parameter}$
      as a copy of ${\cal S}^{\prime\prime\,\vee}_{\coordinates}$,
  ${\cal S}^\prime_{\field, i}$
    as a copy of $({\cal S}^{\prime\,\vee}_{\coordinates})^\vee = {\cal S}^\prime_{\coordinates}$, and
  ${\cal S}^{\prime\prime}_{\field, i}$
   as a copy of $({\cal S}^{\prime\prime\,\vee}_{\coordinates})^\vee
      = {\cal S}^{\prime\prime}_{\coordinates}$
  under the fixed isomorphisms.
 Then,
   $(\eta^{\alpha^\prime},\bar{\eta}^{\dot{\beta}^\prime})
     = (\theta^{\alpha^\prime},\bar{\theta}^{\dot{\beta}^\prime})$ and
   $(\vartheta^i_{\alpha_i},\bar{\vartheta}^i_{\dot{\beta}_i})
       = (\theta_{\alpha_i}, \bar{\theta}_{\dot{\beta}_i})$
    for all $i$, 	 	
 where $\theta_\alpha = \sum_{\gamma}\varepsilon_{\alpha\gamma}\theta^\gamma$,  	
	         $\theta_{\dot{\beta}}
			   = \sum_{\dot{\delta}}\varepsilon_{\dot{\beta}\dot{\delta}}\bar{\theta}^{\dot{\delta}}$\,.
			
 We shall call $(x, \theta, \bar{\theta}, \eta, \bar{\eta}, \vartheta, \bar{\vartheta})$		
   the {\it standard coordinate functions} on $\widehat{X}^{\widehat{\boxplus}}$.
}\end{definition}	

\bigskip

In terms of this,
 $$
   C^\infty(\widehat{X}^{\widehat{\boxplus}})\;
     =\;  C^\infty(X)^{\Bbb C}
	        [\theta, \bar{\theta}, \eta, \bar{\eta}, \vartheta, \bar{\vartheta}]^{\anticommuting}
 $$
and an $\breve{f}\in C^\infty(\widehat{X}^{\widehat{\boxplus}})$
 has a $(\theta,\bar{\theta})$-expansion
   \begin{eqnarray*}
     \breve{f} & = &  \breve{f}_{(0)}\,
	        +\, \sum_{\alpha}\theta^\alpha \breve{f}_{(\alpha)}\,
			+\, \sum_{\dot{\beta}} \bar{\theta}^{\dot{\beta}} \breve{f}_{(\dot{\beta})}\,
			+\, \theta^1\theta^2  \breve{f}_{(12)}\,
			+\, \sum_{\alpha,\dot{\beta}}
			       \theta^\alpha\bar{\theta}^{\dot{\beta}} \breve{f}_{(\alpha\dot{\beta})}\,  \\
        && 				
            +\, \bar{\theta}^{\dot{1}}\bar{\theta}^{\dot{2}}  \breve{f}_{(\dot{1}\dot{2})}\,
			+\, \sum_{\dot{\beta}}
			        \theta^1\theta^2\bar{\theta}^{\dot{\beta}}  \breve{f}_{(12\dot{\beta})}\,
			+\, \sum_\alpha
			        \theta^\alpha\theta^{\dot{1}}\theta^{\dot{2}}  \breve{f}_{(\alpha\dot{1}\dot{2})}\,
		    +\, \theta^1\theta^2\theta^{\dot{1}}\theta^{\dot{2}} \breve{f}_{(12\dot{1}\dot{2})}			
   \end{eqnarray*}
 with coefficients
   $\breve{f}_{(\tinybullet)}\in
       C^\infty(X)^{\Bbb C}[\eta,\bar{\eta},\vartheta,\bar{\vartheta}]^{\anticommuting}$.

\bigskip

\begin{example} {\bf [special derivations on $C^\infty(\widehat{X})$]}\; {\rm
 Recall from [L-Y4: Sec.\,1.4] (D(14.1))
  the standard {\it infinitesimal supersymmetry generators} on $\widehat{X}$
     \marginpar{\vspace{1em}\raggedright\tiny
         \raisebox{-1ex}{\hspace{-2.4em}\LARGE $\cdot$}Cf.\,[\,\parbox[t]{20em}{Wess
		\& Bagger:\\ Eq.\:(4.4)].}}
 {\small
  $$
   Q_{\alpha}\;
    =\;  \frac{\partial}{\partial \theta^{\alpha}}
            -\, \sqrt{-1}\,\sum_{\mu=0}^3 \sum_{\dot{\beta}=\dot{1}}^{\dot{2}}
			          \sigma^{\mu}_{\alpha\dot{\beta}}\bar{\theta}^{\dot{\beta}}
					   \frac{\partial}{\partial x^{\mu}}
    \hspace{2em}\mbox{and}\hspace{2em}
  \bar{Q}_{\dot{\beta}}	\;
    =\; -\, \frac{\partial}{\rule{0ex}{.8em}\partial \bar{\theta}^{\dot{\beta}}}\,
	      +\, \sqrt{-1}\sum_{\mu=0}^3 \sum_{\alpha=1}^2
		              \theta^{\alpha} \sigma^\mu_{\alpha\dot{\beta}}\frac{\partial}{\partial x^{\mu}}
  $$}
 and {\it derivations that are invariant under the flow that generate supersymmetries} on $\widehat{X}$
     \marginpar{\vspace{1em}\raggedright\tiny
         \raisebox{-1ex}{\hspace{-2.4em}\LARGE $\cdot$}Cf.\,[\,\parbox[t]{20em}{Wess
		\& Bagger:\\ Eq.\:(4.6)].}}
 {\small
 $$
   e_{\alpha^{\prime}}\;
    =\;  \frac{\partial}{\partial \theta^{\alpha}}
            +\, \sqrt{-1}\,\sum_{\mu=0}^3 \sum_{\dot{\beta}=\dot{1}}^{\dot{2}}
			          \sigma^{\mu}_{\alpha\dot{\beta}}\bar{\theta}^{\dot{\beta}}
					   \frac{\partial}{\partial x^{\mu}}
    \hspace{2em}\mbox{and}\hspace{2em}
  e_{\beta^{\prime\prime}}	\;
    =\; -\, \frac{\partial}{\rule{0ex}{.8em}\partial \bar{\theta}^{\dot{\beta}}}\,
	      -\, \sqrt{-1}\sum_{\mu=0}^3 \sum_{\alpha=1}^2
		              \theta^{\alpha} \sigma^\mu_{\alpha\dot{\beta}}\frac{\partial}{\partial x^{\mu}}\,.
 $$}
Then one can check directly that
  they satisfy the following anticommutator relations
     \marginpar{\vspace{1em}\raggedright\tiny
         \raisebox{-1ex}{\hspace{-2.4em}\LARGE $\cdot$}Cf.\,[\,\parbox[t]{20em}{Wess
		\& Bagger:\\ Eqs.\:(4.5), (4.7),\\ (4.8)].}}
  $$
   \begin{array}{c}
    \begin{array}{lcl}
     \{ Q_\alpha, \bar{Q}_{\dot{\beta}}\}\;
	  =\; 2\sqrt{-1}\,\sum_\mu
	           \sigma^\mu_{\alpha\dot{\beta}}
			   \mbox{\Large $\frac{\partial}{\partial x^\mu}$}\,,
	    && \{Q_\alpha, Q_\beta\}\;
	             =\; \{\bar{Q}_{\dot{\alpha}}, \bar{Q}_{\dot{\beta}}\}\;
				 =\; 0\,;  \\[1.2ex]
      \{ e_{\alpha^\prime},  e_{\dot{\beta}^{\prime\prime}}\}\;
	    =\; -\,2\sqrt{-1}\,\sum_\mu
	         \sigma^\mu_{\alpha\dot{\beta}}
			   \mbox{\Large $\frac{\partial}{\partial x^\mu}$}\,,
	    && \{e_{\alpha^\prime}, e_{\beta^\prime}\}\;
	             =\; \{e_{\dot{\alpha}^{\prime\prime}}, e_{\dot{\beta}^{\prime\prime}}\}\; =\; 0\,,
    \end{array}\\[4ex]
	 \{e_{\alpha^\prime}, Q_\beta\}\;
	 =\;  \{e_{\alpha^\prime}, \bar{Q}_{\dot{\beta}}\}\;
	 =\; \{e_{\alpha^{\prime\prime}}, Q_\beta\}\;
	 =\; \{e_{\alpha^{\prime\prime}}, \bar{Q}_{\dot{\beta}}\}\;
     =\; 0\,,
	\end{array}
  $$
 for
  $\alpha, \beta=1, 2$; $\dot{\alpha}, \dot{\beta}=\dot{1}, \dot{2}$;
  $\alpha^\prime, \beta^\prime=1^\prime, 2^\prime$; and
  $\alpha^{\prime\prime}, \beta^{\prime\prime}=1^{\prime\prime}, 2^{\prime\prime}$.
 See ibidem for more details.
 (Cf.\:[Wess \& Bagger: Eqs.\:(4.4), (4.5), (4.6), (4.7), (4.8)] and related discussion therein.)
}\end{example}

\bigskip

\begin{flushleft}
{\bf A subtlety:
  The ${\cal P}_X$-module structure on $\widehat{\cal O}_X^{\,\widehat{\boxplus}}$
  from a partial twisting by ${\cal T}_X^{\ast,{\Bbb C}}$}
\end{flushleft}
While under the fixed trivialization of ${\cal S}^{\prime\, \vee}$ by the covariantly constant sections $\theta^\alpha$
  and of ${\cal S}^{\prime\prime\,\vee}$ by their complex conjugate $\bar{\theta}^{\dot{\beta}}$
 the structure sheaf $\widehat{\cal O}_X^{\,\widehat{\boxplus}}$ of $\widehat{X}^{\widehat{\boxplus}}$,
    as a sheaf of complexified ${\Bbb Z}/2$-graded $C^\infty$-rings,
   is isomorphic to the sheaf of Grassmann algebras
  $\bigwedge_{{\cal O}_X^{\,\Bbb C}}^{\tinybullet}{\cal F}$, where\\
  ${\cal F}
	   =  ({\cal S}^{\prime\,\vee}_\coordinates \oplus {\cal S}^{\prime\prime\,\vee}_\coordinates)
		        \oplus
				({\cal S}^{\prime\,\vee}_\parameter \oplus {\cal S}^{\prime\prime\,\vee}_\parameter)
			    \oplus
				  \mbox{$\bigoplus$}_{i=1}^l
				     ({\cal S}^\prime_{\field, i}\oplus {\cal S}^{\prime\prime}_{\field, i})$
  for some $l$, 	
this isomorphism does {\it not} respect the ${\cal P}_X$-module structure physicists intended for
 $\widehat{\cal O}_X^{\,\widehat{\boxplus}}$!
This is what makes an intrinsic construction of the function ring
 $C^\infty(\widehat{X}^{\widehat{\boxplus}})= \Gamma (\widehat{\cal O}_X^{\,\widehat{\boxplus}})$
 that matches [Wess \& Bagger] subtle.
While the built-in/natural ${\cal P}_X$-module structure on
 $\bigwedge_{{\cal O}_X^{\,\Bbb C}}^{\tinybullet}{\cal F})$
 is induced by the spinor representations of the Lorentz group on the spinor bundles $S^\prime$ and $S^{\prime\prime}$,
  \begin{itemize}
   \item[\LARGE $\cdot$]
   {\it the ${\cal P}_X$-module structure physicists use for $\widehat{\cal O}_X^{\,\widehat{\boxplus}}$
      is somewhat
       the ``sheaf of Grassmann algebras
       $\bigwedge_{{\cal O}_X^{\,\Bbb C}}^{\tinybullet}{\cal F}$
       partially twisted by the complexified cotangent sheaf ${\cal T}^{\ast,{\Bbb C}}_X$ of $X$"}.
  \end{itemize}
Since ${\cal T}_X^{\ast, {\Bbb C}}$ is itself a nontrivial ${\cal P}_X$-module,
 this creates a difference between
   the ${\cal P}_X$-module structure on $\widehat{\cal O}_X^{\,\widehat{\boxplus}}$   and
   that on  $\bigwedge_{{\cal O}_X^{\,\Bbb C}}^{\tinybullet}{\cal F}$.
For the current work, we will explicitly spell out such a partial-twisting-by-${\cal T}_X^{\ast,{\Bbb C}}$
 in the next theme for all the superfields that will be used to reconstruct [Wess \& Bagger].

Before proceeding to the next theme, it is important to note that
it is with respect to this partially twisted ${\cal P}$-module structure on $\widehat{\cal O}_X^{\,\widehat{\boxplus}}$,
  not the natural ${\cal P}_X$-module structure on $\bigwedge_{{\cal O}_X^{\,\Bbb C}}^{\tinybullet}{\cal F}$
 that physicists defines the following notions:

\bigskip

\begin{lemma-definition} {\bf [(Lorentz-)scalar superfield, spinor superfield, ... on $X$]}\;
 The partially twisted ${\cal P}_X$-structure on $\widehat{\cal O}_X^{\,\widehat{\boxplus}}$
  induces a direct-sum decomposition as a ${\cal P}_X$-module
  $$
   \widehat{\cal O}_X^{\,\widehat{\boxplus}}\;
    =\; {\cal M}_\trivial + {\cal M}_\spinor + {\cal M}_\vectorr + \cdots\,,
  $$
 where
  ${\cal M}_\trivial$ is a trivial ${\cal P}_X$-module,
  ${\cal M}_\spinor$ is a direct sum of ${\cal P}_X$-modules associated to spinor representations of the Lorentz group,
  ${\cal M}_\vectorr$ is a direct sum of ${\cal P}_X$-modules associated to the vector representation of the Lorentz group,
  $\cdots$, etc..
 {\rm
  A section of ${\cal M}_\trivial$ is called a {\it Lorentz-scalar superfield} or simply {\it scalar superfield}  on $X$;
  a section of ${\cal M}_\spinor$ is called a {\it spinor superfield} on $X$;
  a section of ${\cal M}_\vectorr$ is called a {\it vector superfield} on $X$;
  $\cdots$, etc..
   }
\end{lemma-definition}

\bigskip

\noindent
The details of the decomposition require a further study of the partial twist of
 $\bigwedge_{{\cal O}_X^{\,\Bbb C}}^{\tinybullet}{\cal F}$ by ${\cal T}_X^{\ast,{\Bbb C}}$,
 which we won't pursue in the current work.

\bigskip

\begin{terminology} $[$scalar superfield, vector superfield vs.\:Lemma/Definition 1.2.6\,$]$\;
{\rm
 In addition to Lemma/Definition~1.2.6,
  there is a second naming system\footnote{\makebox[12em][l]{\it Note for mathematicians}
                                                     The naming system by the ${\cal P}_X$-module structure
													   is favored from the perspective of
													   complexified ${\Bbb Z}/2$-graded $C^\infty$-Algebraic Geometry
													 while the naming by the representation of the lowest dynamical component(s)
													   is favored from the perspective of representation theory of supersymmetry algebra.
													 One should tell exactly which sense the term is in from the context.	
                                                           } 
     assigned to the term `scalar superfield', `vector superfield', $\cdots$, etc.\:that physicists also use:
 {\it naming by the lowest dynamical component fields of a superfield}.
 E.g.\
  a Lorentz-scalar superfield in the sense of Lemma/Definition~1.2.6
      that has its lowest dynamical component(s) in the trivial representation of the Lorentz group
   	     is called a {\it scalar superfield};
  a Lorentz-scalar superfield in the sense of Lemma/Definition~1.2.6
      that has its lowest dynamical components in the vector representation of the Lorentz group
	  is called a {\it vector superfield}; $\cdots$, etc.. 	
 It is this second sense that is used in [Wess \& Bagger: Chapter VI] when defining `vector superfield',
  which we will follow in Sec.\:4.
}\end{terminology}
 
\bigskip

\begin{convention} $[$towered superspace$]$\; {\rm
 To keep the notation simple and being enough to demonstration the reconstruction of [Wess \& Bagger],
   for the rest of the work
  we will assume
    $\widehat{X}^{\widehat{\boxplus}}= \widehat{X}^{\widehat{\boxplus}_1}$
    unless otherwise noted.
 Also,
  the parameter level of $\widehat{X}^{\widehat{\boxplus}}$ will be suppressed
  when not in use for a discussion, computation, or expression.
}\end{convention}

\bigskip

\begin{flushleft}
{\bf Tame, medium, and small scalar superfields on $X$}
\end{flushleft}
As a polynomial in $C^\infty(X)^{\Bbb C}[\theta, \bar{\theta}, \vartheta, \bar{\vartheta}]^\anticommuting$,
 a general element in $C^\infty(\widehat{X}^{\widehat{\boxplus}})$ has $2^8=256$ monomial summands.
Some special classes of elements, with much fewer monomial summands, play more prominent role
  in the construction of supersymmetric quantum field theories.
In this theme we spell out a few such classes.
Each class forms an even subring of $C^\infty(\widehat{X}^{\widehat{\boxplus}})$.

\bigskip

\begin{definition} {\bf [tame (Lorentz-)scalar superfield]}\;
{\rm
 An $\breve{f}\in C^{\infty}(\widehat{X}^{\widehat{\boxplus}})$ is called
   a {\it tame scalar superfield} on\footnote{Since $\breve{f}$
                                                                                 is a combination of both bosonic an fermionic fields on $X$,
                                                                                 it is conceptually more accurate to call $\breve{f}$ a superfield on $X$,
																				 rather than on $\widehat{X}$ or
																				 $\widehat{X}^{\widehat{\boxplus}}$.
																		} 
   $X$   																		                                                                  																				
  if it is a Lorentz-scalar superfield on $X$ in the sense of Lemma/Definition~1.2.6    and,
 as a polynomial in $C^\infty(X)^{\Bbb C}[\theta, \bar{\theta}, \vartheta, \bar{\vartheta}]^{\anticommuting}$,
   it satisfies the following property
  \begin{itemize}
   \item[\LARGE $\cdot$]
    $(\vartheta, \bar{\vartheta})$-degree $\le$  $(\theta, \bar{\theta})$-degree
	for every summand of $\breve{f}$.
  \end{itemize}
}\end{definition}
   
\bigskip
   
Explicitly, a tame scalar superfield $\breve{f}$ on $X$ is of the following form
 {\small
 \begin{eqnarray*}
  \breve{f}
   & =  &
   \breve{f}_{(0)}
   + \sum_{\alpha}\theta^\alpha\breve{f}_{(\alpha)}
   + \sum_{\dot{\beta}}\bar{\theta}^{\dot{\beta}}\breve{f}_{(\dot{\beta})}
   + \theta^1\theta^2 \breve{f}_{(12)}
   + \sum_{\alpha,\dot{\beta}} \theta^\alpha\bar{\theta}^{\dot{\beta}}
           \breve{f}_{(\alpha\dot{\beta})}
   + \bar{\theta}^{\dot{1}}\bar{\theta}^{\dot{2}} \breve{f}_{(\dot{1}\dot{2})} \\
 && \hspace{4em}
   + \sum_{\dot{\beta}}\theta^1\theta^2\bar{\theta}^{\dot{\beta}}
           \breve{f}_{(12\dot{\beta})}
   + \sum_\alpha \theta^\alpha\bar{\theta}^{\dot{1}}\bar{\theta}^{\bar{\dot{2}}}
           \breve{f}_{(\alpha\dot{1}\dot{2})}
   + \theta^1\theta^2\bar{\theta}^{\dot{1}}\bar{\theta}^{\dot{2}}
           \breve{f}_{(12\dot{1}\dot{2})} \\
  &= &
    f_{(0)}
	+ \sum_{\alpha}\theta^\alpha\vartheta_\alpha f_{(\alpha)}
	+ \sum_{\dot{\beta}}
	       \bar{\theta}^{\dot{\beta}}\bar{\vartheta}_{\dot{\beta}} f_{(\dot{\beta})}
		  \\
  && \hspace{1em}			
    +\; \theta^1\theta^2
	       \mbox{\large $($}
		     f^\prime_{(0)}
	         + \vartheta_1\vartheta_2 f_{(12)}
		   \mbox{\large $)$}
	+ \sum_{\alpha,\dot{\beta}}\theta^\alpha \bar{\theta}^{\dot{\beta}}
	      \left(\rule{0ex}{1.2em}\right.\!
		    \sum_\mu \sigma^\mu_{\alpha\dot{\beta}} f_{[\mu]}\,
			 +\, \vartheta_\alpha\bar{\vartheta}_{\dot{\beta}} f_{(\alpha\dot{\beta})}
		  \!\left.\rule{0ex}{1.2em}\right)
    + \bar{\theta}^{\dot{1}}\bar{\theta}^{\dot{2}}
	     \mbox{\large $($}
		   f^{\prime\prime}_{(0)}
	       + \bar{\vartheta}_{\dot{1}}\bar{\vartheta}_{\dot{2}} f_{(\dot{1}\dot{2})}
		 \mbox{\large $)$}
		   \\
  && \hspace{1em}		
	+ \sum_{\dot{\beta}}\theta^1\theta^2\bar{\theta}^{\dot{\beta}}
	     \left(\rule{0ex}{1.2em}\right.\!
		   \sum_{\alpha,\mu }
		          \vartheta_\alpha\,\sigma^{\mu \alpha}_{\;\;\;\;\dot{\beta}} f^\prime_{[\mu]}\,
			+\, \bar{\vartheta}_{\dot{\beta}} f^\prime_{(\dot{\beta})}
		    +\, \vartheta_1\vartheta_2\bar{\vartheta}_{\dot{\beta}} f_{(12\dot{\beta})}
		 \!\left.\rule{0ex}{1.2em}\right)
		   \\
  && \hspace{1em}			
    + \sum_\alpha \theta^\alpha\bar{\theta}^{\dot{1}}\bar{\theta}^{\dot{2}}
	   \left(\rule{0ex}{1.2em}\right.\!
	   	 \vartheta_\alpha f^{\prime\prime}_{(\alpha)}
	    +\, \sum_{\dot{\beta}, \mu}    		
		     \bar{\vartheta}_{\dot{\beta}}
			  \sigma^{\mu\dot{\beta}}_\alpha  f^{\prime\prime}_{[\mu]}\,				
        +\, \vartheta_\alpha \bar{\vartheta}_{\dot{1}}\bar{\vartheta}_{\dot{2}}	
		           f_{(\alpha\dot{1}\dot{2})}
	   \!\left.\rule{0ex}{1.2em}\right)
	   \\
  && \hspace{1em}
	+\; \theta^1\theta^2\bar{\theta}^{\dot{1}}\bar{\theta}^{\dot{2}}
	     \left(\rule{0ex}{1.2em}\right.\!
		  f^\sim_{(0)}
		  + \vartheta_1\vartheta_2 f^\sim_{(12)}
		  + \sum_{\alpha,\dot{\beta}, \mu} \vartheta_\alpha\bar{\vartheta}_{\dot{\beta}}
		         \bar{\sigma}^{\mu\dot{\beta}\alpha} f^{\sim}_{[\mu]}
		  + \bar{\vartheta}_{\dot{1}} \bar{\vartheta}_{\dot{2}}
		       f^\sim_{(\dot{1}\dot{2})}
		  + \vartheta_1\vartheta_2\bar{\vartheta}_{\dot{1}}\bar{\vartheta}_{\dot{2}}
		        f_{(12\dot{1}\dot{2})}
		 \!\left.\rule{0ex}{1.2em}\right)\,,
 \end{eqnarray*}
  }
  where
  $\alpha=1,2$; $\dot{\beta}=\dot{1}, \dot{2}$; $\mu=0,1,2,3$;
  $$
    \begin{array}{c}
    \sigma^{\mu \alpha}_{\;\;\;\;\dot{\beta}}
      = \sum_{\gamma}\varepsilon^{\alpha\gamma} \sigma^\mu_{\gamma\dot{\beta}} \,,\;\;\;\;
    \sigma^{\mu\dot{\beta}}_\alpha
      = \sum_{\dot{\delta}}
	        \varepsilon^{\dot{\beta}\dot{\delta}}\sigma^\mu_{\alpha\dot{\delta}}\,,\;\;\;\;
   \bar{\sigma}^{\mu\dot{\beta}\alpha}
     = \sum_{\gamma\dot{\delta}}
	      \varepsilon^{\alpha\gamma}\varepsilon^{\dot{\beta}\dot{\delta}}
		    \sigma^\mu_{\gamma\dot{\delta}}\,;
    \end{array}			
  $$
  and,
 for the {\it forty-one} coefficients $f^{\tinybullet}_{\tinybullet}$
   of the $(\theta,\bar{\theta},\vartheta,\bar{\vartheta})$-monomial summands of $\breve{f}$,
 {\small
  \begin{eqnarray*}
   \lefteqn{
     f_{(0)};\;\;
     f_{(\alpha)};\;\;
	 f_{(\dot{\beta})};\;\;
     f^\prime_{(0)},\, f_{(12)};\;\;
     f_{(\alpha\dot{\beta})};\;\;
     f^{\prime\prime}_{(0)},\,  f_{(\dot{1}\dot{2})};
      }\\
  &&
      f^\prime_{(\dot{\beta})},\,  f_{(12\dot{\beta})};\;\;
           f^{\prime\prime}_{(\alpha)},\,  f_{(\alpha\dot{1}\dot{2})};\;\;
     f^\sim_{(0)},\,
        f^\sim_{(12)},\, f^\sim_{(\dot{1}\dot{2})},\,
        f_{(12\dot{1}\dot{2})}\;
      \in\;  C^\infty(X)^{\Bbb C}\;\; \mbox{as a trivial ${\cal P}_X$-module}\,,
   \end{eqnarray*}}
 while
 {\small
 $$
  (f_{[\mu]})_\mu,\,
    (f^\prime_{[\mu]})_\mu,\,
    (f^{\prime\prime}_{[\mu]})_\mu,\,
	(f^\sim_{[\mu]})_\mu\;
    \in\; (C^\infty(X)^{\Bbb C})^{\oplus 4}
	         \simeq {\cal T}_X^{\ast\,{\Bbb C}}
			 \;\; \mbox{as a trivialized nontrivial ${\cal P}_X$-module}\,.
 $$
 Here the trivialization
   $(C^\infty(X)^{\Bbb C})^{\oplus 4}
	         \simeq {\cal T}_X^{\ast\,{\Bbb C}}$ as ${\cal O}_X^{\,\Bbb C}$-modules
   is induced by the fixed isomorphism\\
   ${\cal T}_X^{\ast,{\Bbb C}}
       \simeq {\cal S}^\prime\otimes_{{\cal O}_X^{\,{\Bbb C}}} {\cal S}^{\prime\prime}$
     as ${\cal P}_X$-modules and
   the trivialization of ${\cal S}^{\prime\,\vee}$ by $\theta^\alpha$'s and
       ${\cal S}^{\prime\prime\,\vee}$ by $\bar{\theta}^{\dot{\beta}}$'s.
This is the partial twist of
 $\bigwedge_{{\cal O}_X^{\,\Bbb C}}^{\tinybullet}{\cal F}$ by ${\cal T}_X^{\ast,{\Bbb C}}$
 we refer to in the previous theme.
This partial twist justifies the $\breve{f}$ as expressed above to be a Lorentz-scalar superfield
 in the sense of Lemma/Definition~1.2.6.

Note that	among the forty-one coefficients $f^\tinybullet_\tinybullet$,
 {\it twenty-nine}
 {\small
  $$
     f_{(0)};\;\;
     f^\prime_{(0)},\, f_{(12)};\;\;
     f_{[\mu]},\,  f_{(\alpha\dot{\beta})};\;\;
     f^{\prime\prime}_{(0)},\,  f_{(\dot{1}\dot{2})};\;\;
     f^\prime_{[\mu]};\;\;
     f^{\prime\prime}_{[\mu]};\;\;
     f^\sim_{(0)},\,
        f^\sim_{(12)},\, f^\sim_{[\mu]},\, f^\sim_{(\dot{1}\dot{2})},\,
        f_{(12\dot{1}\dot{2})}
   $$}
  are related to bosonic fields on $X$ and {\it twelve}
 {\small
  $$
     f_{(\alpha)};\;\;  f_{(\dot{\beta})};\;\;	
     f^\prime_{(\dot{\beta})},\,  f_{(12\dot{\beta})};\;\;
     f^{\prime\prime}_{(\alpha)},\,  f_{(\alpha\dot{1}\dot{2})};\;\;\
  $$}
  to fermionic fields on $X$.

\bigskip

\begin{lemma-definition} {\bf [tame sector $\widehat{X}^{\widehat{\boxplus}, \tamescriptsize}$
                                            of $\widehat{X}^{\widehat{\boxplus}}$]}\;
 The collection of  tame scalar superfields on $X$
   as defined in Definition~1.2.9
  is an even subring of the complexified ${\Bbb Z}/2$-graded $C^\infty$-ring
  $C^\infty(\widehat{X}^{\widehat{\boxplus}})$.
 Denote this subring
   (also  a $C^{\infty}(X)^{\Bbb C}$-subalgebra
   of $C^{\infty}(\widehat{X}^{\widehat{\boxplus}})$)
   by $C^\infty(\widehat{X}^{\widehat{\boxplus}})^\tamescriptsize$.
 Then, the $C^\infty$-hull of $C^\infty(\widehat{X}^{\widehat{\boxplus}})$ restricts to the $C^\infty$-hull
   of $C^\infty(\widehat{X}^{\widehat{\boxplus}})^\tamescriptsize$, which is given by
   $$
     \mbox{$C^\infty$-hull}\,(C^\infty(\widehat{X}^{\widehat{\boxplus}})^\tamescriptsize)\;
	   =\; \{\breve{f}\in C^\infty(\widehat{X}^{\widehat{\boxplus}})^\tamescriptsize
	                                                                            \,|\, \breve{f}_{(0)}\in C^\infty(X)\}\,.
   $$																					
 {\rm
 Denote by $\widehat{X}^{\widehat{\boxplus}, \tamescriptsize}$
   the complexified ${\Bbb Z}/2$-graded $C^\infty$-scheme with the underlying topology $X$ and function ring
   $C^\infty(\widehat{X}^{\widehat{\boxplus}})^\tamescriptsize$.
 Then there is a built-in dominant morphism
   $\widehat{X}^{\widehat{\boxplus}}\rightarrow \widehat{X}^{\widehat{\boxplus}, \tamescriptsize}$.
 We will call $\widehat{X}^{\widehat{\boxplus}, \tamescriptsize}$ the {\it tame sector}
   of $\widehat{X}^{\widehat{\boxplus}}$.
   }
\end{lemma-definition}

\begin{proof}
 This follows from the fact that
  both the conditions on tame scalar superfields
   (1)  Lorentz scalar and
   (2)  $(\vartheta, \bar{\vartheta})$-degree $\le$  $(\theta, \bar{\theta})$-degree
	           for every summand
 are closed under the multiplication in $C^\infty(\widehat{X}^{\widehat{\boxplus}})$.

\end{proof}
 
\bigskip

Explicitly, let
 {\small
 \begin{eqnarray*}
  \breve{A}
  &= &
    A_{(0)}
	+ \sum_{\alpha}\theta^\alpha\vartheta_\alpha A_{(\alpha)}
	+ \sum_{\dot{\beta}}
	       \bar{\theta}^{\dot{\beta}}\bar{\vartheta}_{\dot{\beta}} A_{(\dot{\beta})}
		  \\
  && \hspace{1em}			
    +\; \theta^1\theta^2
	       \mbox{\large $($}
		     A^\prime_{(0)}
	         + \vartheta_1\vartheta_2 A_{(12)}
		   \mbox{\large $)$}
	+ \sum_{\alpha,\dot{\beta}}\theta^\alpha \bar{\theta}^{\dot{\beta}}
	      \left(\rule{0ex}{1.2em}\right.\!
		    \sum_\mu \sigma^\mu_{\alpha\dot{\beta}} A_{[\mu]}\,
			 +\, \vartheta_\alpha\bar{\vartheta}_{\dot{\beta}} A_{(\alpha\dot{\beta})}
		  \!\left.\rule{0ex}{1.2em}\right)
    + \bar{\theta}^{\dot{1}}\bar{\theta}^{\dot{2}}
	     \mbox{\large $($}
		   A^{\prime\prime}_{(0)}
	       + \bar{\vartheta}_{\dot{1}}\bar{\vartheta}_{\dot{2}} A_{(\dot{1}\dot{2})}
		 \mbox{\large $)$}
		   \\
  && \hspace{1em}		
	+ \sum_{\dot{\beta}}\theta^1\theta^2\bar{\theta}^{\dot{\beta}}
	     \left(\rule{0ex}{1.2em}\right.\!
		   \sum_{\alpha,\mu }
		          \vartheta_\alpha\,\sigma^{\mu \alpha}_{\;\;\;\;\dot{\beta}} A^\prime_{[\mu]}\,
			+\, \bar{\vartheta}_{\dot{\beta}} A^\prime_{(\dot{\beta})}
		    +\, \vartheta_1\vartheta_2\bar{\vartheta}_{\dot{\beta}} A_{(12\dot{\beta})}
		 \!\left.\rule{0ex}{1.2em}\right)
		   \\
  && \hspace{1em}			
    + \sum_\alpha \theta^\alpha\bar{\theta}^{\dot{1}}\bar{\theta}^{\dot{2}}
	   \left(\rule{0ex}{1.2em}\right.	
	     \vartheta_\alpha f^{\prime\prime}_{(\alpha)}
	     + \sum_{\dot{\beta}, \mu}
		      \bar{\vartheta}_{\dot{\beta}} \sigma^{\mu\dot{\beta}}_\alpha
		      A^{\prime\prime}_{[\mu]}\,		
         +\, \vartheta_\alpha \bar{\vartheta}_{\dot{1}}\bar{\vartheta}_{\dot{2}}	
		           A_{(\alpha\dot{1}\dot{2})}
	   \left.\rule{0ex}{1.2em}\right)
	   \\
  && \hspace{1em}
	+\; \theta^1\theta^2\bar{\theta}^{\dot{1}}\bar{\theta}^{\dot{2}}
	     \left(\rule{0ex}{1.2em}\right.\!
		  A^\sim_{(0)}
		  + \vartheta_1\vartheta_2 A^\sim_{(12)}
		  + \sum_{\alpha,\dot{\beta}, \mu} \vartheta_\alpha\bar{\vartheta}_{\dot{\beta}}
		         \bar{\sigma}^{\mu\dot{\beta}\alpha} A^{\sim}_{[\mu]}
		  + \bar{\vartheta}_{\dot{1}} \bar{\vartheta}_{\dot{2}}
		       A^\sim_{(\dot{1}\dot{2})}
		  + \vartheta_1\vartheta_2\bar{\vartheta}_{\dot{1}}\bar{\vartheta}_{\dot{2}}
		        A_{(12\dot{1}\dot{2})}
		 \!\left.\rule{0ex}{1.2em}\right)
 \end{eqnarray*}
  }
 and
 {\small
 \begin{eqnarray*}
  \breve{B}
  &= &
    B_{(0)}
	+ \sum_{\alpha}\theta^\alpha\vartheta_\alpha B_{(\alpha)}
	+ \sum_{\dot{\beta}}
	       \bar{\theta}^{\dot{\beta}}\bar{\vartheta}_{\dot{\beta}} B_{(\dot{\beta})}
		  \\
  && \hspace{1em}			
    +\; \theta^1\theta^2
	       \mbox{\large $($}
		     B^\prime_{(0)}
	         + \vartheta_1\vartheta_2 B_{(12)}
		   \mbox{\large $)$}
	+ \sum_{\alpha,\dot{\beta}}\theta^\alpha \bar{\theta}^{\dot{\beta}}
	      \left(\rule{0ex}{1.2em}\right.\!
		    \sum_\mu \sigma^\mu_{\alpha\dot{\beta}} B_{[\mu]}\,
			 +\, \vartheta_\alpha\bar{\vartheta}_{\dot{\beta}} B_{(\alpha\dot{\beta})}
		  \!\left.\rule{0ex}{1.2em}\right)
    + \bar{\theta}^{\dot{1}}\bar{\theta}^{\dot{2}}
	     \mbox{\large $($}
		   B^{\prime\prime}_{(0)}
	       + \bar{\vartheta}_{\dot{1}}\bar{\vartheta}_{\dot{2}} B_{(\dot{1}\dot{2})}
		 \mbox{\large $)$}
		   \\
  && \hspace{1em}		
	+ \sum_{\dot{\beta}}\theta^1\theta^2\bar{\theta}^{\dot{\beta}}
	     \left(\rule{0ex}{1.2em}\right.\!
		   \sum_{\alpha,\mu }
		          \vartheta_\alpha\,\sigma^{\mu \alpha}_{\;\;\;\;\dot{\beta}} B^\prime_{[\mu]}\,
			+\, \bar{\vartheta}_{\dot{\beta}} f^\prime_{(\dot{\beta})}
		    +\, \vartheta_1\vartheta_2\bar{\vartheta}_{\dot{\beta}} B_{(12\dot{\beta})}
		 \!\left.\rule{0ex}{1.2em}\right)
		   \\
  && \hspace{1em}			
    + \sum_\alpha \theta^\alpha\bar{\theta}^{\dot{1}}\bar{\theta}^{\dot{2}}
	   \left(\rule{0ex}{1.2em}\right.\!
	     \vartheta_\alpha B^{\prime\prime}_{(\alpha)}
	     \sum_{\dot{\beta}, \mu}
		   \bar{\vartheta}_{\dot{\beta}} \sigma^{\mu\dot{\beta}}_\alpha
		     B^{\prime\prime}_{[\mu]}\,	
         +\, \vartheta_\alpha \bar{\vartheta}_{\dot{1}}\bar{\vartheta}_{\dot{2}}	
		           B_{(\alpha\dot{1}\dot{2})}
	   \!\left.\rule{0ex}{1.2em}\right)\\
  && \hspace{1em}
	+\; \theta^1\theta^2\bar{\theta}^{\dot{1}}\bar{\theta}^{\dot{2}}
	     \left(\rule{0ex}{1.2em}\right.\!
		  B^\sim_{(0)}
		  + \vartheta_1\vartheta_2 B^\sim_{(12)}
		  + \sum_{\alpha,\dot{\beta}, \mu} \vartheta_\alpha\bar{\vartheta}_{\dot{\beta}}
		         \bar{\sigma}^{\mu\dot{\beta}\alpha} B^{\sim}_{[\mu]}
		  + \bar{\vartheta}_{\dot{1}} \bar{\vartheta}_{\dot{2}}
		       B^\sim_{(\dot{1}\dot{2})}
		  + \vartheta_1\vartheta_2\bar{\vartheta}_{\dot{1}}\bar{\vartheta}_{\dot{2}}
		        B_{(12\dot{1}\dot{2})}
		 \!\left.\rule{0ex}{1.2em}\right)
       \\
   & \in &	   C^\infty(\widehat{X}^{\widehat{\boxplus}})^\tamescriptsize
 \end{eqnarray*}
  }
be tame scalar superfields on $X$.
Here,
 $A^{\tinybullet}_{\,\tinybullet}= A^{\tinybullet}_{\,\tinybullet}(x)$ and
 $B^{\tinybullet}_{\,\tinybullet}= A^{\tinybullet}_{\,\tinybullet}(x)\,\in C^\infty(X)^{\Bbb C}$.
Then, their product is given by

{\small
\begin{eqnarray*}
 && \breve{A}\breve{B}\; =\; \breve{B}\breve{A}
   \\
 && \hspace{1em}
  =\;
   A_{(0)} B_{(0)}
     + \sum_\alpha \theta^\alpha\vartheta_\alpha
	      \mbox{\Large $($}
	        A_{(\alpha)}B_{(0)} + A_{(0)} B_{(\alpha)}
		  \mbox{\Large $)$}
	 + \sum_{\dot{\beta}} \bar{\theta}^{\dot{\beta}}\bar{\vartheta}_{\dot{\beta}}
	       \mbox{\Large $($}
	        A_{(\dot{\beta})} B_{(0)} + A_{(0)} B_{(\dot{\beta})}
		   \mbox{\Large $)$}
	 \\
   && \hspace{2em}	
	 +\, \theta^1\theta^2
	        \left\{\rule{0ex}{1.2em}\right.			
			   \mbox{\Large $($}
			    A^\prime_{(0)}B_{(0)} + A_{(0)}B^\prime_{(0)}
			   \mbox{\Large $)$}
	         + \vartheta_1\vartheta_2
	          \mbox{\Large $($}
   	            A_{(12)}B_{(0)} - A_{(1)}B_{(2)}
				  - A_{(2)} B_{(1)} + A_{(0)} B_{(12)}
			  \mbox{\Large $)$}
		   \left.\rule{0ex}{1.2em}\right\}
	  \\
   && \hspace{2em}	
	 + \sum_{\alpha, \dot{\beta}}
	      \left\{\rule{0ex}{1.2em}\right.
		    \sum_\mu \sigma^\mu_{\alpha\dot{\beta}}
			  \mbox{\Large $($}
			   A_{[\mu]}B_{(0)} + A_{(0)} B_{[\mu]}
			  \mbox{\Large $)$}
	     + \vartheta_\alpha \bar{\vartheta}_{\dot{\beta}}
		      \mbox{\Large $($}
		      A_{(\alpha \dot{\beta})} B_{(0)} - A_{(\alpha)}B_{(\dot{\beta})}
			  - A_{(\dot{\beta})} B_{(\alpha)} + A_{(0)} B_{(\alpha\dot{\beta})}			
			  \mbox{\Large $)$}
		  \left.\rule{0ex}{1.2em}\right\}
	  \\
   && \hspace{2em}	  		
	 +\, \bar{\theta}^{\dot{1}} \bar{\theta}^{\dot{2}}
	        \left\{\rule{0ex}{1.2em}\right.
	          \mbox{\Large $($}
			   A^{\prime\prime}_{(0)}B_{(0)} + A_{(0)}B^{\prime\prime}_{(0)}
	          \mbox{\Large $)$}
	         + \bar{\vartheta}_{\dot{1}} \bar{\vartheta}_{\dot{2}}
		          \mbox{\Large $($}
		            A_{(\dot{1}\dot{2})} B_{(0)} - A_{(\dot{1})}B_{(\dot{2})}
			          - A_{(\dot{2})}B_{(\dot{1})} + A_{(0)} B_{(\dot{1}\dot{2})}
                  \mbox{\Large $)$}			
            \left.\rule{0ex}{1.2em}\right\}		
	   \\
   && \hspace{2em}
	 + \sum_{\dot{\beta}} \theta^1\theta^2 \bar{\theta}^{\dot{\beta}}
	      \left\{\rule{0ex}{1.2em}\right.
		   \sum_{\alpha, \mu}
		     \vartheta_\alpha \sigma^{\mu\alpha}_{\;\;\;\;\dot{\beta}}
			    \mbox{\Large $($}
			      A^\prime_{[\mu]}B_{(0)}  + A_{[\mu]}B_{(\alpha)} +A_{(\alpha)} B_{[\mu]}
				   + A_{(0)}B^\prime_{[\mu]}
			    \mbox{\Large $)$}	
	         \\
		&& \hspace{6em}
		  +\, \bar{\vartheta}_{\dot{\beta}}
		         \mbox{\Large $($}
				  A^\prime_{(\dot{\beta})} B_{(0)} + A^\prime_{(0)}B_{(\dot{\beta})}
				   + A_{(\dot{\beta})} B^\prime_{(0)} + A_{(0)} B^\prime_{(\dot{\beta})}
				 \mbox{\Large $)$}
			 \\
        && \hspace{6em}	  				
	       +\, \vartheta_1\vartheta_2 \bar{\vartheta}_{\dot{\beta}}
		          \mbox{\Large $($}
  		             A_{(12\dot{\beta})} B_{(0)} + A_{(12)}B_{(\dot{\beta})}
					  + A_{(1\dot{\beta})}B_{(2)}+ A_{(2\dot{\beta})} B_{(1)}
	                      \\[-1ex]
                         && \hspace{12em}	  					
					  +\, A_{(1)} B_{(2\dot{\beta})} + A_{(2)} B_{(1\dot{\beta})}
					  + A_{(\dot{\beta})} B_{(12)} + A_{(0)} B_{(12\dot{\beta})}
				  \mbox{\Large $)$}
		  \left.\rule{0ex}{1.2em}\right\}
	  \\
   && \hspace{2em}	  		
     + \sum_\alpha \theta^\alpha \bar{\theta}^{\dot{1}} \bar{\theta}^{\dot{2}}
	      \left\{\rule{0ex}{1.2em}\right.
		    \vartheta_\alpha
			 \mbox{\Large $($}
			  A^{\prime\prime}_{(\alpha)}B_{(0)} + A^{\prime\prime}_{(0)}B_{(\alpha)}
			   + A_{(\alpha)}B^{\prime\prime}_{(0)} + A_{(0)}B^{\prime\prime}_{(\alpha)}
			 \mbox{\Large $)$}
		     	  \\
              && \hspace{6em}	  		
		    +\, \sum_{\dot{\beta}, \mu}
			   \bar{\vartheta}_{\dot{\beta}} \sigma^{\mu\dot{\beta}}_\alpha
			     \mbox{\Large $($}
			      A^{\prime\prime}_{[\mu]}B_{(0)} + A_{[\mu]} B_{(\dot{\beta})}
				     + A_{(\dot{\beta})} B_{[\mu]} + A_{(0)}B^{\prime\prime}_{[\mu]}
				 \mbox{\Large $)$}
           	  \\
              && \hspace{6em}	  			
		    +\, \vartheta_\alpha \bar{\vartheta}_{\dot{1}} \bar{\vartheta}_{\dot{2}}
			       \mbox{\Large $($}
			         A_{(\alpha\dot{1}\dot{2})} B_{(0)}  + A_{(\alpha\dot{1})}B_{(\dot{2})}
					  + A_{(\alpha\dot{2})}B_{(\dot{1})}  + A_{(\dot{1}\dot{2})}B_{(\alpha)}
					  	  \\[-1ex]
                     && \hspace{12em}	  					
					  +\, A_{(\alpha)}B_{(\dot{1}\dot{2})}+ A_{(\dot{1})}B_{(\alpha\dot{2})}
					  + A_{(\dot{2})} B_{(\alpha\dot{1})} + A_{(0)}B_{(\alpha\dot{1}\dot{2})}
				   \mbox{\Large $)$}
		  \left.\rule{0ex}{1.2em}\right\}
		\\
  && \hspace{2em}
      +\, \theta^1\theta^2\bar{\theta}^{\dot{1}} \bar{\theta}^{\dot{2}}
	       \left\{\rule{0ex}{1.2em}\right.
		       \mbox{\Large $($}
		     A^\sim_{(0)}B_{(0)}    + A^\prime_{(0)}B^{\prime\prime}_{(0)}
			   + 2 \sum_{\mu, \nu}\eta^{\mu\nu} A_{[\mu]}B_{[\nu]}
			   + A^{\prime\prime}_{(0)}B^\prime_{(0)}+ A_{(0)}B^\sim_{(0)}
			   \mbox{\Large $)$}
			  \\
           && \hspace{6em}	
		     +\, \vartheta_1\vartheta_2
                  \mbox{\Large $($}
				    A^\sim_{(12)}B_{(0)}
					 - A^{\prime\prime}_{(1)}B_{(2)} - A^{\prime\prime}_{(2)}B_{(1)}
					 + A^{\prime\prime}_{(0)}B_{(12)}
					  	  \\[-.8ex]
                       && \hspace{14em}	  				
					 +\, A_{(12)}B^{\prime\prime}_{(0)}
					 - A_{(1)}B^{\prime\prime}_{(2)} - A_{(2)}B^{\prime\prime}_{(1)}
					 + A_{(0)}B^\sim_{(12)}
                  \mbox{\Large $)$}				
        	  \\
           && \hspace{6em}	  				
			  +\, \sum_{\alpha, \dot{\beta}}
			        \vartheta_\alpha \bar{\vartheta}_{\dot{\beta}}\,
					   \bar{\sigma}^{\mu\dot{\beta}\alpha}
					 \mbox{\Large $($}
					   A^\sim_{[\mu]}B_{(0)} + A^\prime_{[\mu]} B_{(\dot{\beta})}
					   - A^{\prime\prime}_{[\mu]}B_{(\alpha)}- A_{(\alpha)}B^{\prime\prime}_{[\mu]}
					   + A_{(\dot{\beta})} B^\prime_{[\mu]} + A_{(0)}B^\sim_{[\mu]}
					 \mbox{\Large $)$}
        	  \\
		    && \hspace{6em}	
		    +\, \bar{\vartheta}_{\dot{1}}\bar{\vartheta}_{\dot{2}}
                  \mbox{\Large $($}
				    A^\sim_{(\dot{1}\dot{2})}B_{(0)}
					- A^\prime_{(\dot{1})}B_{(\dot{2})} - A^\prime_{(\dot{2})}B_{(\dot{1})}
					+ A^\prime_{(0)}B_{(\dot{1}\dot{2})}
					   \\[-.8ex]
                       && \hspace{14em}	  	
					+\, A_{(\dot{1}\dot{2})}B^\prime_{(0)}
					- A_{(\dot{1})}B^\prime_{(\dot{2})}
					    - A_{(\dot{2})}B^\prime_{(\dot{1})}
					+ A_{(0)}B^\sim_{(\dot{1}\dot{2})}
                  \mbox{\Large $)$}				
        	  \\
            && \hspace{6em}	  						
			  +\, \vartheta_1\vartheta_2 \bar{\vartheta}_{\dot{1}} \bar{\vartheta}_{\dot{2}}
			        \mbox{\Large $($}
			          A_{(12\dot{1}\dot{2})}B_{(0)} - A_{(12\dot{1})}B_{(\dot{2})}
					  - A_{(12\dot{2})}B_{(\dot{1})} - A_{(1\dot{1}\dot{2})}B_{(2)}
					  - A_{(2\dot{1}\dot{2})}B_{(1)}
					  \\
                   && \hspace{8em}	  					
					  +\, A_{(12)}B_{(\dot{1}\dot{2})}
					  + A_{(1\dot{1})}B_{(2\dot{2})} + A_{(1\dot{2})}B_{(2\dot{1})}
					  + A_{(2\dot{1})}B_{(1\dot{2})} + A_{(2\dot{2})}B_{(1\dot{1})}
 	                   \\[-1ex]
                   && \hspace{8em}	  										
					  +\, A_{(\dot{1}\dot{2})}B_{(12)} - A_{(1)}B_{(2\dot{1}\dot{2})}
					  - A_{(2)}B_{(1\dot{1}\dot{2})} -A_{(\dot{1})}B_{(12\dot{2})}
					  - A_{(\dot{2})}B_{(12\dot{1})} + A_{(0)}B_{(12\dot{1}\dot{2})}
				   \mbox{\Large $)$}
		   \left.\rule{0ex}{1.2em}\right\}
	        \\
     && \hspace{1em}	  							
       \in\; C^\infty(\widehat{X}^{\widehat{\boxplus}})^\tamescriptsize\,.
\end{eqnarray*}
}

\bigskip

\begin{definition} {\bf [medium (Lorentz-)scalar superfield]}\;
{\rm
 A tame scalar superfield\\ $\breve{f}\in C^{\infty}(\widehat{X}^{\widehat{\boxplus}})$
  is called a {\it medium scalar superfield} on $X$   	
  if in addition
   \begin{itemize}
    \item[\LARGE $\cdot$]
	 $f^\prime_{(0)}= f^{\prime\prime}_{(0)}=0$
   \end{itemize}
  as a polynomial in $C^\infty(X)^{\Bbb C}[\theta, \bar{\theta}, \vartheta, \bar{\vartheta}]^{\anticommuting}$.
}\end{definition}
   
\bigskip
   
Explicitly, a medium scalar superfield $\breve{f}$ on $X$ is of the following form
 {\small
 \begin{eqnarray*}
  \breve{f}
  &= &
    f_{(0)}
	+ \sum_{\alpha}\theta^\alpha\vartheta_\alpha f_{(\alpha)}
	+ \sum_{\dot{\beta}}
	       \bar{\theta}^{\dot{\beta}}\bar{\vartheta}_{\dot{\beta}} f_{(\dot{\beta})}
		  \\
  && \hspace{1em}			
    +\; \theta^1\theta^2 \vartheta_1\vartheta_2 f_{(12)}		  		
	+ \sum_{\alpha,\dot{\beta}}\theta^\alpha \bar{\theta}^{\dot{\beta}}
	      \left(\rule{0ex}{1.2em}\right.\!
		    \sum_\mu \sigma^\mu_{\alpha\dot{\beta}} f_{[\mu]}\,
			 +\, \vartheta_\alpha\bar{\vartheta}_{\dot{\beta}} f_{(\alpha\dot{\beta})}
		  \!\left.\rule{0ex}{1.2em}\right)
    + \bar{\theta}^{\dot{1}}\bar{\theta}^{\dot{2}}
	      \bar{\vartheta}_{\dot{1}}\bar{\vartheta}_{\dot{2}} f_{(\dot{1}\dot{2})}  		 		
		   \\
  && \hspace{1em}		
	+ \sum_{\dot{\beta}}\theta^1\theta^2\bar{\theta}^{\dot{\beta}}
	     \left(\rule{0ex}{1.2em}\right.\!
		   \sum_{\alpha,\mu }
		          \vartheta_\alpha\,\sigma^{\mu \alpha}_{\;\;\;\;\dot{\beta}} f^\prime_{[\mu]}\,
			+\, \bar{\vartheta}_{\dot{\beta}} f^\prime_{(\dot{\beta})}
		    +\, \vartheta_1\vartheta_2\bar{\vartheta}_{\dot{\beta}} f_{(12\dot{\beta})}
		 \!\left.\rule{0ex}{1.2em}\right)
		   \\
  && \hspace{1em}			
    + \sum_\alpha \theta^\alpha\bar{\theta}^{\dot{1}}\bar{\theta}^{\dot{2}}
	   \left(\rule{0ex}{1.2em}\right.\!
	   	 \vartheta_\alpha f^{\prime\prime}_{(\alpha)}
	    +\, \sum_{\dot{\beta}, \mu}    		
		     \bar{\vartheta}_{\dot{\beta}}
			  \sigma^{\mu\dot{\beta}}_\alpha  f^{\prime\prime}_{[\mu]}\,				
        +\, \vartheta_\alpha \bar{\vartheta}_{\dot{1}}\bar{\vartheta}_{\dot{2}}	
		           f_{(\alpha\dot{1}\dot{2})}
	   \!\left.\rule{0ex}{1.2em}\right)
	   \\
  && \hspace{1em}
	+\; \theta^1\theta^2\bar{\theta}^{\dot{1}}\bar{\theta}^{\dot{2}}
	     \left(\rule{0ex}{1.2em}\right.\!
		  f^\sim_{(0)}
		  + \vartheta_1\vartheta_2 f^\sim_{(12)}
		  + \sum_{\alpha,\dot{\beta}, \mu} \vartheta_\alpha\bar{\vartheta}_{\dot{\beta}}
		         \bar{\sigma}^{\mu\dot{\beta}\alpha} f^{\sim}_{[\mu]}
		  + \bar{\vartheta}_{\dot{1}} \bar{\vartheta}_{\dot{2}}
		       f^\sim_{(\dot{1}\dot{2})}
		  + \vartheta_1\vartheta_2\bar{\vartheta}_{\dot{1}}\bar{\vartheta}_{\dot{2}}
		        f_{(12\dot{1}\dot{2})}
		 \!\left.\rule{0ex}{1.2em}\right)
		 \\
   &\in &	C^\infty(X)^{\Bbb C}[\theta, \bar{\theta}, \vartheta, \bar{\vartheta}]^\anticommuting\,.
 \end{eqnarray*}
  }

\bigskip

\begin{lemma-definition} {\bf [medium sector $\widehat{X}^{\widehat{\boxplus}, \mediumscriptsize}$
                                            of $\widehat{X}^{\widehat{\boxplus}}$]}\;
 The collection of  medium scalar superfields on $X$
   as defined in Definition~1.2.11
  is an even subring of $C^\infty(\widehat{X}^{\widehat{\boxplus}})$.
 Denote this subring
   (also  a $C^{\infty}(X)^{\Bbb C}$-subalgebra
   of $C^{\infty}(\widehat{X}^{\widehat{\boxplus}})$)
   by $C^\infty(\widehat{X}^{\widehat{\boxplus}})^\mediumscriptsize$.
 Then, the $C^\infty$-hull of $C^\infty(\widehat{X}^{\widehat{\boxplus}})$ restricts to the $C^\infty$-hull
   of $C^\infty(\widehat{X}^{\widehat{\boxplus}})^\mediumscriptsize$,
   which is given by
   $$
     \mbox{$C^\infty$-hull}\,(C^\infty(\widehat{X}^{\widehat{\boxplus}})^\mediumscriptsize)\;
	   =\; \{\breve{f}\in C^\infty(\widehat{X}^{\widehat{\boxplus}})^\mediumscriptsize
	                                                                            \,|\, \breve{f}_{(0)}\in C^\infty(X)\}\,.
   $$																					
 {\rm
 Denote by $\widehat{X}^{\widehat{\boxplus}, \mediumscriptsize}$
   the complexified ${\Bbb Z}/2$-graded $C^\infty$-scheme with the underlying topology $X$ and
   function ring $C^\infty(\widehat{X}^{\widehat{\boxplus}})^\mediumscriptsize$.
 Then there is a built-in dominant morphism
   $\widehat{X}^{\widehat{\boxplus}, \tamescriptsize}
      \rightarrow \widehat{X}^{\widehat{\boxplus}, \mediumscriptsize}$.
 We will call $\widehat{X}^{\widehat{\boxplus}, \mediumscriptsize}$
   the {\it medium sector} of $\widehat{X}^{\widehat{\boxplus}}$.
   }
\end{lemma-definition}

\begin{proof}
 This follows from the fact that
   the conditions on medium scalar superfields
   (1)  Lorentz scalar,
   (2)  $(\vartheta, \bar{\vartheta})$-degree $\le$  $(\theta, \bar{\theta})$-degree
	           for every summand, and
   (3)	 $f^\prime_{(0)}= f^{\prime\prime}_{(0)}=0$
            as a polynomial in
			$C^\infty(X)^{\Bbb C}[\theta, \bar{\theta}, \vartheta, \bar{\vartheta}]^{\anticommuting}$
 are closed under the multiplication in $C^\infty(\widehat{X}^{\widehat{\boxplus}})$.

\end{proof}

\bigskip

Explicitly, let
 {\small
 \begin{eqnarray*}
  \breve{A}
  &= &
    A_{(0)}
	+ \sum_{\alpha}\theta^\alpha\vartheta_\alpha A_{(\alpha)}
	+ \sum_{\dot{\beta}}
	       \bar{\theta}^{\dot{\beta}}\bar{\vartheta}_{\dot{\beta}} A_{(\dot{\beta})}
		  \\
  && \hspace{1em}			
    +\; \theta^1\theta^2  \vartheta_1\vartheta_2 A_{(12)}		 		
	+ \sum_{\alpha,\dot{\beta}}\theta^\alpha \bar{\theta}^{\dot{\beta}}
	      \left(\rule{0ex}{1.2em}\right.\!
		    \sum_\mu \sigma^\mu_{\alpha\dot{\beta}} A_{[\mu]}\,
			 +\, \vartheta_\alpha\bar{\vartheta}_{\dot{\beta}} A_{(\alpha\dot{\beta})}
		  \!\left.\rule{0ex}{1.2em}\right)		
    + \bar{\theta}^{\dot{1}}\bar{\theta}^{\dot{2}}
	     \bar{\vartheta}_{\dot{1}}\bar{\vartheta}_{\dot{2}} A_{(\dot{1}\dot{2})}  		
		   \\
  && \hspace{1em}		
	+ \sum_{\dot{\beta}}\theta^1\theta^2\bar{\theta}^{\dot{\beta}}
	     \left(\rule{0ex}{1.2em}\right.\!
		   \sum_{\alpha,\mu }
		          \vartheta_\alpha\,\sigma^{\mu \alpha}_{\;\;\;\;\dot{\beta}} A^\prime_{[\mu]}\,
			+\, \bar{\vartheta}_{\dot{\beta}} A^\prime_{(\dot{\beta})}
		    +\, \vartheta_1\vartheta_2\bar{\vartheta}_{\dot{\beta}} A_{(12\dot{\beta})}
		 \!\left.\rule{0ex}{1.2em}\right)
		   \\
  && \hspace{1em}			
    + \sum_\alpha \theta^\alpha\bar{\theta}^{\dot{1}}\bar{\theta}^{\dot{2}}
	   \left(\rule{0ex}{1.2em}\right.	
	     \vartheta_\alpha f^{\prime\prime}_{(\alpha)}
	     + \sum_{\dot{\beta}, \mu}
		      \bar{\vartheta}_{\dot{\beta}} \sigma^{\mu\dot{\beta}}_\alpha
		      A^{\prime\prime}_{[\mu]}\,		
         +\, \vartheta_\alpha \bar{\vartheta}_{\dot{1}}\bar{\vartheta}_{\dot{2}}	
		           A_{(\alpha\dot{1}\dot{2})}
	   \left.\rule{0ex}{1.2em}\right)
	   \\
  && \hspace{1em}
	+\; \theta^1\theta^2\bar{\theta}^{\dot{1}}\bar{\theta}^{\dot{2}}
	     \left(\rule{0ex}{1.2em}\right.\!
		  A^\sim_{(0)}
		  + \vartheta_1\vartheta_2 A^\sim_{(12)}
		  + \sum_{\alpha,\dot{\beta}, \mu} \vartheta_\alpha\bar{\vartheta}_{\dot{\beta}}
		         \bar{\sigma}^{\mu\dot{\beta}\alpha} A^{\sim}_{[\mu]}
		  + \bar{\vartheta}_{\dot{1}} \bar{\vartheta}_{\dot{2}}
		       A^\sim_{(\dot{1}\dot{2})}
		  + \vartheta_1\vartheta_2\bar{\vartheta}_{\dot{1}}\bar{\vartheta}_{\dot{2}}
		        A_{(12\dot{1}\dot{2})}
		 \!\left.\rule{0ex}{1.2em}\right)
 \end{eqnarray*}
  }
 and
 {\small
 \begin{eqnarray*}
  \breve{B}
  &= &
    B_{(0)}
	+ \sum_{\alpha}\theta^\alpha\vartheta_\alpha B_{(\alpha)}
	+ \sum_{\dot{\beta}}
	       \bar{\theta}^{\dot{\beta}}\bar{\vartheta}_{\dot{\beta}} B_{(\dot{\beta})}
		  \\
  && \hspace{1em}			
    +\; \theta^1\theta^2
	        \vartheta_1\vartheta_2 B_{(12)}		  		   		
	+ \sum_{\alpha,\dot{\beta}}\theta^\alpha \bar{\theta}^{\dot{\beta}}
	      \left(\rule{0ex}{1.2em}\right.\!
		    \sum_\mu \sigma^\mu_{\alpha\dot{\beta}} B_{[\mu]}\,
			 +\, \vartheta_\alpha\bar{\vartheta}_{\dot{\beta}} B_{(\alpha\dot{\beta})}
		  \!\left.\rule{0ex}{1.2em}\right)
    + \bar{\theta}^{\dot{1}}\bar{\theta}^{\dot{2}}
	     \bar{\vartheta}_{\dot{1}}\bar{\vartheta}_{\dot{2}} B_{(\dot{1}\dot{2})}  	
		   \\
  && \hspace{1em}		
	+ \sum_{\dot{\beta}}\theta^1\theta^2\bar{\theta}^{\dot{\beta}}
	     \left(\rule{0ex}{1.2em}\right.\!
		   \sum_{\alpha,\mu }
		          \vartheta_\alpha\,\sigma^{\mu \alpha}_{\;\;\;\;\dot{\beta}} B^\prime_{[\mu]}\,
			+\, \bar{\vartheta}_{\dot{\beta}} f^\prime_{(\dot{\beta})}
		    +\, \vartheta_1\vartheta_2\bar{\vartheta}_{\dot{\beta}} B_{(12\dot{\beta})}
		 \!\left.\rule{0ex}{1.2em}\right)
		   \\
  && \hspace{1em}			
    + \sum_\alpha \theta^\alpha\bar{\theta}^{\dot{1}}\bar{\theta}^{\dot{2}}
	   \left(\rule{0ex}{1.2em}\right.\!
	     \vartheta_\alpha B^{\prime\prime}_{(\alpha)}
	     \sum_{\dot{\beta}, \mu}
		   \bar{\vartheta}_{\dot{\beta}} \sigma^{\mu\dot{\beta}}_\alpha
		     B^{\prime\prime}_{[\mu]}\,	
         +\, \vartheta_\alpha \bar{\vartheta}_{\dot{1}}\bar{\vartheta}_{\dot{2}}	
		           B_{(\alpha\dot{1}\dot{2})}
	   \!\left.\rule{0ex}{1.2em}\right)
	   \\
  && \hspace{1em}
	+\; \theta^1\theta^2\bar{\theta}^{\dot{1}}\bar{\theta}^{\dot{2}}
	     \left(\rule{0ex}{1.2em}\right.\!
		  B^\sim_{(0)}
		  + \vartheta_1\vartheta_2 B^\sim_{(12)}
		  + \sum_{\alpha,\dot{\beta}, \mu} \vartheta_\alpha\bar{\vartheta}_{\dot{\beta}}
		         \bar{\sigma}^{\mu\dot{\beta}\alpha} B^{\sim}_{[\mu]}
		  + \bar{\vartheta}_{\dot{1}} \bar{\vartheta}_{\dot{2}}
		       B^\sim_{(\dot{1}\dot{2})}
		  + \vartheta_1\vartheta_2\bar{\vartheta}_{\dot{1}}\bar{\vartheta}_{\dot{2}}
		        B_{(12\dot{1}\dot{2})}
		 \!\left.\rule{0ex}{1.2em}\right)
		 \\
	& \in & C^\infty(\widehat{X}^{\widehat{\boxplus}})^\mediumscriptsize
 \end{eqnarray*}
  }
be medium scalar superfields on $X$.
Then, their product is given by

{\small
\begin{eqnarray*}
 && \breve{A}\breve{B}\; =\; \breve{B}\breve{A}
   \\
 && \hspace{1em}
  =\;
   A_{(0)} B_{(0)}
     + \sum_\alpha \theta^\alpha\vartheta_\alpha
	      \mbox{\Large $($}
	        A_{(\alpha)}B_{(0)} + A_{(0)} B_{(\alpha)}
		  \mbox{\Large $)$}
	 + \sum_{\dot{\beta}} \bar{\theta}^{\dot{\beta}}\bar{\vartheta}_{\dot{\beta}}
	       \mbox{\Large $($}
	        A_{(\dot{\beta})} B_{(0)} + A_{(0)} B_{(\dot{\beta})}
		   \mbox{\Large $)$}
	 \\
   && \hspace{2em}	
	 +\, \theta^1\theta^2 \vartheta_1\vartheta_2
	          \mbox{\Large $($}
   	            A_{(12)}B_{(0)} - A_{(1)}B_{(2)}
				  - A_{(2)} B_{(1)} + A_{(0)} B_{(12)}
			  \mbox{\Large $)$}
	  \\
   && \hspace{2em}	
	 + \sum_{\alpha, \dot{\beta}}
	      \left\{\rule{0ex}{1.2em}\right.
		    \sum_\mu \sigma^\mu_{\alpha\dot{\beta}}
			  \mbox{\Large $($}
			   A_{[\mu]}B_{(0)} + A_{(0)} B_{[\mu]}
			  \mbox{\Large $)$}
	     + \vartheta_\alpha \bar{\vartheta}_{\dot{\beta}}
		      \mbox{\Large $($}
		      A_{(\alpha \dot{\beta})} B_{(0)} - A_{(\alpha)}B_{(\dot{\beta})}
			  - A_{(\dot{\beta})} B_{(\alpha)} + A_{(0)} B_{(\alpha\dot{\beta})}			
			  \mbox{\Large $)$}
		  \left.\rule{0ex}{1.2em}\right\}
	  \\
   && \hspace{2em}	  		
	 +\, \bar{\theta}^{\dot{1}} \bar{\theta}^{\dot{2}} 	
	         \bar{\vartheta}_{\dot{1}} \bar{\vartheta}_{\dot{2}}
		          \mbox{\Large $($}
		            A_{(\dot{1}\dot{2})} B_{(0)} - A_{(\dot{1})}B_{(\dot{2})}
			          - A_{(\dot{2})}B_{(\dot{1})} + A_{(0)} B_{(\dot{1}\dot{2})}
                  \mbox{\Large $)$}			
	   \\
   && \hspace{2em}
	 + \sum_{\dot{\beta}} \theta^1\theta^2 \bar{\theta}^{\dot{\beta}}
	      \left\{\rule{0ex}{1.2em}\right.
		   \sum_{\alpha, \mu}
		     \vartheta_\alpha \sigma^{\mu\alpha}_{\;\;\;\;\dot{\beta}}
			    \mbox{\Large $($}
			      A^\prime_{[\mu]}B_{(0)}  + A_{[\mu]}B_{(\alpha)} +A_{(\alpha)} B_{[\mu]}
				   + A_{(0)}B^\prime_{[\mu]}
			    \mbox{\Large $)$}	
		  + \bar{\vartheta}_{\dot{\beta}}
		         \mbox{\Large $($}
				  A^\prime_{(\dot{\beta})} B_{(0)}	+ A_{(0)} B^\prime_{(\dot{\beta})}
				 \mbox{\Large $)$}
			 \\
        && \hspace{6em}	  				
	       +\, \vartheta_1\vartheta_2 \bar{\vartheta}_{\dot{\beta}}
		          \mbox{\Large $($}
  		             A_{(12\dot{\beta})} B_{(0)} + A_{(12)}B_{(\dot{\beta})}
					  + A_{(1\dot{\beta})}B_{(2)}+ A_{(2\dot{\beta})} B_{(1)}
	                      \\[-1ex]
                         && \hspace{12em}	  					
					  +\, A_{(1)} B_{(2\dot{\beta})} + A_{(2)} B_{(1\dot{\beta})}
					  + A_{(\dot{\beta})} B_{(12)} + A_{(0)} B_{(12\dot{\beta})}
				  \mbox{\Large $)$}
		  \left.\rule{0ex}{1.2em}\right\}
	  \\
   && \hspace{2em}	  		
     + \sum_\alpha \theta^\alpha \bar{\theta}^{\dot{1}} \bar{\theta}^{\dot{2}}
	      \left\{\rule{0ex}{1.2em}\right.
		    \vartheta_\alpha
			 \mbox{\Large $($}
			  A^{\prime\prime}_{(\alpha)}B_{(0)} + A_{(0)}B^{\prime\prime}_{(\alpha)}
			 \mbox{\Large $)$}
		    + \sum_{\dot{\beta}, \mu}
			   \bar{\vartheta}_{\dot{\beta}} \sigma^{\mu\dot{\beta}}_\alpha
			     \mbox{\Large $($}
			      A^{\prime\prime}_{[\mu]}B_{(0)} + A_{[\mu]} B_{(\dot{\beta})}
				     + A_{(\dot{\beta})} B_{[\mu]} + A_{(0)}B^{\prime\prime}_{[\mu]}
				 \mbox{\Large $)$}
           	  \\
              && \hspace{6em}	  			
		    +\, \vartheta_\alpha \bar{\vartheta}_{\dot{1}} \bar{\vartheta}_{\dot{2}}
			       \mbox{\Large $($}
			         A_{(\alpha\dot{1}\dot{2})} B_{(0)}  + A_{(\alpha\dot{1})}B_{(\dot{2})}
					  + A_{(\alpha\dot{2})}B_{(\dot{1})}  + A_{(\dot{1}\dot{2})}B_{(\alpha)}
					  	  \\[-1ex]
                     && \hspace{12em}	  					
					  +\, A_{(\alpha)}B_{(\dot{1}\dot{2})}+ A_{(\dot{1})}B_{(\alpha\dot{2})}
					  + A_{(\dot{2})} B_{(\alpha\dot{1})} + A_{(0)}B_{(\alpha\dot{1}\dot{2})}
				   \mbox{\Large $)$}
		  \left.\rule{0ex}{1.2em}\right\}
		\\
  && \hspace{2em}
      +\, \theta^1\theta^2\bar{\theta}^{\dot{1}} \bar{\theta}^{\dot{2}}
	       \left\{\rule{0ex}{1.2em}\right.
		       \mbox{\Large $($}
		     A^\sim_{(0)}B_{(0)}    			
			   + 2 \sum_{\mu, \nu}\eta^{\mu\nu} A_{[\mu]}B_{[\nu]}			
			   + A_{(0)}B^\sim_{(0)}
			   \mbox{\Large $)$}
			  \\
           && \hspace{6em}	
		     +\, \vartheta_1\vartheta_2
                  \mbox{\Large $($}
				    A^\sim_{(12)}B_{(0)}
					 - A^{\prime\prime}_{(1)}B_{(2)} - A^{\prime\prime}_{(2)}B_{(1)}					 
					 - A_{(1)}B^{\prime\prime}_{(2)} - A_{(2)}B^{\prime\prime}_{(1)}
					 + A_{(0)}B^\sim_{(12)}
                  \mbox{\Large $)$}				
        	  \\
           && \hspace{6em}	  				
			  +\, \sum_{\alpha, \dot{\beta}}
			        \vartheta_\alpha \bar{\vartheta}_{\dot{\beta}}\,
					   \bar{\sigma}^{\mu\dot{\beta}\alpha}
					 \mbox{\Large $($}
					   A^\sim_{[\mu]}B_{(0)} + A^\prime_{[\mu]} B_{(\dot{\beta})}
					   - A^{\prime\prime}_{[\mu]}B_{(\alpha)}- A_{(\alpha)}B^{\prime\prime}_{[\mu]}
					   + A_{(\dot{\beta})} B^\prime_{[\mu]} + A_{(0)}B^\sim_{[\mu]}
					 \mbox{\Large $)$}
        	  \\
		    && \hspace{6em}	
		    +\, \bar{\vartheta}_{\dot{1}}\bar{\vartheta}_{\dot{2}}
                  \mbox{\Large $($}
				    A^\sim_{(\dot{1}\dot{2})}B_{(0)}
					- A^\prime_{(\dot{1})}B_{(\dot{2})} - A^\prime_{(\dot{2})}B_{(\dot{1})}						
					- A_{(\dot{1})}B^\prime_{(\dot{2})}
					    - A_{(\dot{2})}B^\prime_{(\dot{1})}
					+ A_{(0)}B^\sim_{(\dot{1}\dot{2})}
                  \mbox{\Large $)$}				
        	  \\
            && \hspace{6em}	  						
			  +\, \vartheta_1\vartheta_2 \bar{\vartheta}_{\dot{1}} \bar{\vartheta}_{\dot{2}}
			        \mbox{\Large $($}
			          A_{(12\dot{1}\dot{2})}B_{(0)} - A_{(12\dot{1})}B_{(\dot{2})}
					  - A_{(12\dot{2})}B_{(\dot{1})} - A_{(1\dot{1}\dot{2})}B_{(2)}
					  - A_{(2\dot{1}\dot{2})}B_{(1)}
					  \\
                   && \hspace{8em}	  					
					  +\, A_{(12)}B_{(\dot{1}\dot{2})}
					  + A_{(1\dot{1})}B_{(2\dot{2})} + A_{(1\dot{2})}B_{(2\dot{1})}
					  + A_{(2\dot{1})}B_{(1\dot{2})} + A_{(2\dot{2})}B_{(1\dot{1})}
 	                   \\[-1ex]
                   && \hspace{8em}	  										
					  +\, A_{(\dot{1}\dot{2})}B_{(12)} - A_{(1)}B_{(2\dot{1}\dot{2})}
					  - A_{(2)}B_{(1\dot{1}\dot{2})} -A_{(\dot{1})}B_{(12\dot{2})}
					  - A_{(\dot{2})}B_{(12\dot{1})} + A_{(0)}B_{(12\dot{1}\dot{2})}
				   \mbox{\Large $)$}
		   \left.\rule{0ex}{1.2em}\right\}
	        \\
     && \hspace{1em}	  							
       \in\; C^\infty(\widehat{X}^{\widehat{\boxplus}})^\mediumscriptsize\,.
\end{eqnarray*}
}

\bigskip

\begin{definition} {\bf [small (Lorentz-)scalar superfield]}\;
{\rm
 A medium scalar superfield\\
  $\breve{f}\in C^{\infty}(\widehat{X}^{\widehat{\boxplus}})^\mediumscriptsize$
  is called a {\it small\footnote{The
                                          term `{\it short}' is better reserved for the future when we address {\it BPS short multiplet}
		                                  in the situation with extended (i.e.\ $N\ge 2$) supersymmetries.
										 Conceptually one may better call a small scalar superfield on $X$
										   a `{\it curtailed}' or '{\it trimmed}' or `{\it diminished}' scalar superfield on $X$
										   since this is exactly what it is.
										 However, when naming the subring they form in
										  $C^\infty(\widehat{X}^{\widehat{\boxplus}})$,
										  it seems more appealing to call it the {\it small function-ring}
										     of the towered superspace $\widehat{X}^{\widehat{\boxplus}}$
										   than any other name.
                                         It is for this reason, we choose the term `{\it small scalar superfield}'	here.
                                         Such superfields  are called
										   {\it superfields in the physical sector} of $\widehat{X}^{\widehat{\boxplus}}$
										   or synonymously {\it physical superfields} in [L-Y5: Definition 1.4.1] (SUSY(1))
										   as they form the subring in $C^\infty(\widehat{X}^{\widehat{\boxplus}})$
										    generated by chiral superfields and antichiral superfields; cf.\:[L-Y5: Sec.\:1.4] (SUSY(1)).
										 However, during the topic course {\sl Supersymmetric quantum field theories}
										    given by {\it Jesse Thaler} at MIT, fall 2019,
										  we gradually realized that the best candidate in the setting of [L-Y5] (SUSY(1))
										    for the notion of `{\it vector superfield}' in [Wess \& Bagger: Chapter VI]
										    is {\it not} in this subring;
										    cf.\:Sec.\:4.1 of the current notes versus [L-Y5: Sec.\:3] (SUSY(1)).
										 With this upgraded understanding and
										   since the name `physical' will almost for sure mislead one to think
										   that elements in $C^\infty(\widehat{X}^{\widehat{\boxplus}})$
										       that do not lie in this subring
										     are ``not physical" or ``not relevant to physics",
										   we now fix ourselves to call this subring the ``{\it small function-ring}" or the ``{\it small sector}"
										   of the towered superspace $\widehat{X}^{\widehat{\boxplus}}$;
                                         cf.\:Lemma/Definition~1.2.14.
                                                           }
						   scalar superfield} on $X$   	
  if in addition
   \begin{itemize}
    \item[\LARGE $\cdot$]
	 $f^\prime_{(\dot{\beta})}= f^{\prime\prime}_{(\alpha)}
	   = f^\sim_{(12)}= f^\sim_{(\dot{1}\dot{2})}=0$,
	 for $\alpha=1,2$, $\dot{\beta}=\dot{1}, \dot{2}$.
   \end{itemize}
  as a polynomial in $C^\infty(X)^{\Bbb C}[\theta, \bar{\theta}, \vartheta, \bar{\vartheta}]^{\anticommuting}$.
}\end{definition}
   
\bigskip
   
Explicitly, a small scalar superfield $\breve{f}$ on $X$ is of the following form
 {\small
 \begin{eqnarray*}
  \breve{f}
 &= &
    f_{(0)}
	+ \sum_{\alpha}\theta^\alpha\vartheta_\alpha f_{(\alpha)}
	+ \sum_{\dot{\beta}}
	       \bar{\theta}^{\dot{\beta}}\bar{\vartheta}_{\dot{\beta}} f_{(\dot{\beta})}  \\
  && \hspace{1em}			
    +\; \theta^1\theta^2\vartheta_1\vartheta_2 f_{(12)}
	+ \sum_{\alpha,\dot{\beta}}\theta^\alpha \bar{\theta}^{\dot{\beta}}
	      \left(\rule{0ex}{1.2em}\right.\!
		    \sum_\mu \sigma^\mu_{\alpha\dot{\beta}} f_{[\mu]}\,
			 +\, \vartheta_\alpha\bar{\vartheta}_{\dot{\beta}} f_{(\alpha\dot{\beta})}
		  \!\left.\rule{0ex}{1.2em}\right)
    + \bar{\theta}^{\dot{1}}\bar{\theta}^{\dot{2}}
	   \bar{\vartheta}_{\dot{1}}\bar{\vartheta}_{\dot{2}} f_{(\dot{1}\dot{2})}  \\
  && \hspace{1em}		
	+ \sum_{\dot{\beta}}\theta^1\theta^2\bar{\theta}^{\dot{\beta}}
	     \left(\rule{0ex}{1.2em}\right.\!
		   \sum_{\alpha,\mu }
		          \vartheta_\alpha\,\sigma^{\mu \alpha}_{\;\;\;\;\dot{\beta}} f^\prime_{[\mu]}\,
		    +\, \vartheta_1\vartheta_2\bar{\vartheta}_{\dot{\beta}} f_{(12\dot{\beta})}
		 \!\left.\rule{0ex}{1.2em}\right)
    + \sum_\alpha \theta^\alpha\bar{\theta}^{\dot{1}}\bar{\theta}^{\dot{2}}
	   \left(\rule{0ex}{1.2em}\right.\!
	     \sum_{\dot{\beta}, \mu}
		  \sigma^{\mu\dot{\beta}}_\alpha
		     \bar{\vartheta}_{\dot{\beta}} f^{\prime\prime}_{[\mu]}\,
         +\, \vartheta_\alpha \bar{\vartheta}_{\dot{1}}\bar{\vartheta}_{\dot{2}}	
		           f_{(\alpha\dot{1}\dot{2})}
	   \!\left.\rule{0ex}{1.2em}\right)\\
  && \hspace{1em}
	+\; \theta^1\theta^2\bar{\theta}^{\dot{1}}\bar{\theta}^{\dot{2}}
	     \left(\rule{0ex}{1.2em}\right.\!
		  f^\sim_{(0)}
		  + \sum_{\alpha,\dot{\beta}, \mu} \vartheta_\alpha\bar{\vartheta}_{\dot{\beta}}
		         \bar{\sigma}^{\mu\dot{\beta}\alpha} f^{\sim}_{[\mu]}
		  + \vartheta_1\vartheta_2\bar{\vartheta}_{\dot{1}}\bar{\vartheta}_{\dot{2}}
		        f_{(12\dot{1}\dot{2})}
		 \!\left.\rule{0ex}{1.2em}\right)
	 \\
   & \in &	C^\infty(X)^{\Bbb C}[\theta, \bar{\theta}, \vartheta, \bar{\vartheta}]^\anticommuting\,.		 
 \end{eqnarray*}
  }
    
\bigskip

\begin{lemma-definition} {\bf [small sector $\widehat{X}^{\widehat{\boxplus},\smallscriptsize}$
                and small function-ring of $\widehat{X}^{\widehat{\boxplus}}$]}\;
 The collection of  small scalar superfields on $X$
   as defined in Definition~1.2.13
  is an even subring of the complexified ${\Bbb Z}/2$-graded $C^\infty$-ring
  $C^\infty(\widehat{X}^{\widehat{\boxplus}})$.
 Denote this subring
   (also  a $C^{\infty}(X)^{\Bbb C}$-subalgebra
   of $C^{\infty}(\widehat{X}^{\widehat{\boxplus}})$)
   by $C^\infty(\widehat{X}^{\widehat{\boxplus}})^\smallscriptsize$.
 Then, the $C^\infty$-hull of $C^\infty(\widehat{X}^{\widehat{\boxplus}})$ restricts to the $C^\infty$-hull
   of $C^\infty(\widehat{X}^{\widehat{\boxplus}})^\smallscriptsize$,
   which is given by
   $$
     \mbox{$C^\infty$-hull}\,(C^\infty(\widehat{X}^{\widehat{\boxplus}})^\smallscriptsize)\;
	   =\; \{\breve{f}\in C^\infty(\widehat{X}^{\widehat{\boxplus}})^\smallscriptsize
	                                                                            \,|\, \breve{f}_{(0)}\in C^\infty(X)\}\,.
   $$								
{\rm
 Denote by $\widehat{X}^{\widehat{\boxplus}, \smallscriptsize}$
   the complexified ${\Bbb Z}/2$-graded $C^\infty$-scheme with the underlying topology $X$ and
   function ring $C^\infty(\widehat{X}^{\widehat{\boxplus}})^\smallscriptsize$.
 Then there is a built-in dominant morphism
   $\widehat{X}^{\widehat{\boxplus}, \mediumscriptsize}
      \rightarrow \widehat{X}^{\widehat{\boxplus}, \smallscriptsize}$.
 We will call $\widehat{X}^{\widehat{\boxplus}, \smallscriptsize}$
   the {\it small sector} of $\widehat{X}^{\widehat{\boxplus}}$  and
   $C^\infty(\widehat{X}^{\widehat{\boxplus}})^\smallscriptsize$
   the {\it small function-ring} of the towered superspace $\widehat{X}^{\widehat{\boxplus}}$.
   }
\end{lemma-definition}

\begin{proof}
 This follows from the fact that
  the conditions on small scalar superfields
   (1)  Lorentz scalar,
   (2)  $(\vartheta, \bar{\vartheta})$-degree $\le$  $(\theta, \bar{\theta})$-degree
	           for every summand,
   (3)	 $f^\prime_{(0)}= f^{\prime\prime}_{(0)}
                 = f^\prime_{(\dot{\beta})}= f^{\prime\prime}_{(\alpha)}
				 = f^\sim_{(12)} = f^\sim_{(\dot{1}\dot{2})}= 0$
            as a polynomial in
			$C^\infty(X)^{\Bbb C}[\theta, \bar{\theta}, \vartheta, \bar{\vartheta}]^{\anticommuting}$
 are closed under the multiplication in $C^\infty(\widehat{X}^{\widehat{\boxplus}})$.
 Cf.\:[L-Y5: Lemma 1.4.2] (SUSY(1)).

\end{proof}
 
\bigskip

Explicitly, let
{\small
\begin{eqnarray*}
 && \breve{A}\; =\;
   A_{(0)}
     + \sum_\alpha \theta^\alpha\vartheta_\alpha A_{(\alpha)}
	 + \sum_{\dot{\beta}} \bar{\theta}^{\dot{\beta}}\vartheta_{\dot{\beta}} A_{(\dot{\beta})}
	 \\
   && \hspace{2em}	
	 +\, \theta^1\theta^2 \vartheta_1\vartheta_2\, A_{(12)}
	 + \sum_{\alpha, \dot{\beta}}
	      \mbox{\Large $($}
		    \sum_\mu \sigma^\mu_{\alpha\dot{\beta}}A_{[\mu]}
			+ \vartheta_\alpha \bar{\vartheta}_{\dot{\beta}} A_{(\alpha \dot{\beta})}
		  \mbox{\Large $)$}
	 + \bar{\theta}^{\dot{1}} \bar{\theta}^{\dot{2}}
	       \bar{\vartheta}_{\dot{1}} \bar{\vartheta}_{\dot{2}}\, A_{(\dot{1}\dot{2})}
	   \\
   && \hspace{2em}
	 + \sum_{\dot{\beta}} \theta^1\theta^2 \bar{\theta}^{\dot{\beta}}
	      \mbox{\Large $($}
		   \sum_{\alpha, \mu}
		     \vartheta_\alpha \sigma^{\mu\alpha}_{\;\;\;\;\dot{\beta}}\,A^\prime_{[\mu]}
			+ \vartheta_1\vartheta_2 \bar{\vartheta}_{\dot{\beta}}\,  A_{(12\dot{\beta})}
		  \mbox{\Large $)$}
     + \sum_\alpha \theta^\alpha \bar{\theta}^{\dot{1}} \bar{\theta}^{\dot{2}}
	      \mbox{\Large $($}
		    \sum_{\dot{\beta}, \mu}
			   \bar{\vartheta}_{\dot{\beta}} \sigma^{\mu\dot{\beta}}_\alpha A^{\prime\prime}_{[\mu]}
		    + \vartheta_\alpha \vartheta_{\dot{1}} \vartheta_{\dot{2}}\, A_{(\alpha\dot{1}\dot{2})}
		  \mbox{\Large $)$}
		\\
  && \hspace{2em}
      +\, \theta^1\theta^2\bar{\theta}^{\dot{1}} \bar{\theta}^{\dot{2}}
	       \mbox{\Large $($}
		     A^\sim_{(0)}
			  + \sum_{\alpha, \dot{\beta}}
			        \vartheta_\alpha \bar{\vartheta}_{\dot{\beta}}\,
					   \bar{\sigma}^{\mu\dot{\beta}\alpha}\, A^\sim_{[\mu]}
			  + \vartheta_1\vartheta_2 \bar{\vartheta}_{\dot{1}} \bar{\vartheta}_{\dot{2}}
			       A_{(12\dot{1}\dot{2})}
		   \mbox{\Large $)$}
\end{eqnarray*}
}and 
{\small
\begin{eqnarray*}
 && \breve{B}\;  =\;
   B_{(0)}
     + \sum_\gamma \theta^\gamma \vartheta_\gamma B_{(\gamma)}
	 + \sum_{\dot{\delta}} \bar{\theta}^{\dot{\delta}}\vartheta_{\dot{\delta}} B_{(\dot{\delta})}
	 \\
   && \hspace{2em}	
	 +\, \theta^1\theta^2 \vartheta_1\vartheta_2\, B_{(12)}
	 + \sum_{\gamma, \dot{\delta}}
	      \mbox{\Large $($}
		    \sum_\nu \sigma^\nu_{\gamma\dot{\delta}}B_{[\nu]}
			+ \vartheta_\gamma \bar{\vartheta}_{\dot{\delta}} B_{(\gamma \dot{\delta})}
		  \mbox{\Large $)$}
	 + \bar{\theta}^{\dot{1}} \bar{\theta}^{\dot{2}}
	       \bar{\vartheta}_{\dot{1}} \bar{\vartheta}_{\dot{2}}\, B_{(\dot{1}\dot{2})}
	   \\
   && \hspace{2em}
	 + \sum_{\dot{\delta}} \theta^1\theta^2 \bar{\theta}^{\dot{\delta}}
	      \mbox{\Large $($}
		   \sum_{\gamma, \nu}
		     \vartheta_\gamma \sigma^{\nu\gamma}_{\;\;\;\;\dot{\delta}}\,B^\prime_{[\nu]}
			+ \vartheta_1\vartheta_2 \bar{\vartheta}_{\dot{\delta}}\,  B_{(12\dot{\delta})}
		  \mbox{\Large $)$}
     + \sum_\gamma \theta^\gamma \bar{\theta}^{\dot{1}} \bar{\theta}^{\dot{2}}
	      \mbox{\Large $($}
		    \sum_{\dot{\delta}, \nu}
			   \bar{\vartheta}_{\dot{\delta}} \sigma^{\nu\dot{\delta}}_\gamma B^{\prime\prime}_{[\nu]}
		    + \vartheta_\gamma \vartheta_{\dot{1}} \vartheta_{\dot{2}}\, B_{(\gamma\dot{1}\dot{2})}
		  \mbox{\Large $)$}
		\\
  && \hspace{2em}
      +\, \theta^1\theta^2\bar{\theta}^{\dot{1}} \bar{\theta}^{\dot{2}}
	       \mbox{\Large $($}
		     B^\sim_{(0)}
			  + \sum_{\gamma, \dot{\delta}}
			        \vartheta_\gamma \bar{\vartheta}_{\dot{\delta}}\,
					   \bar{\sigma}^{\nu\dot{\delta}\gamma}\, B^\sim_{[\nu]}
			  + \vartheta_1\vartheta_2 \bar{\vartheta}_{\dot{1}} \bar{\vartheta}_{\dot{2}}
			       B_{(12\dot{1}\dot{2})}
		   \mbox{\Large $)$}
\end{eqnarray*}
}be 
elements in $C^\infty(\widehat{X}^{\widehat{\boxplus}})^\smallscriptsize$.
Here,
 $A^{\tinybullet}_{\,\tinybullet}= A^{\tinybullet}_{\,\tinybullet}(x)$ and
 $B^{\tinybullet}_{\,\tinybullet}= A^{\tinybullet}_{\,\tinybullet}(x)\,\in C^\infty(X)^{\Bbb C}$.
Then,

{\small
\begin{eqnarray*}
 && \breve{A}\breve{B}\; =\; \breve{B}\breve{A}
   \\
 && \hspace{1em}
  =\;
   A_{(0)} B_{(0)}
     + \sum_\alpha \theta^\alpha\vartheta_\alpha
	      \mbox{\Large $($}
	        A_{(\alpha)}B_{(0)} + A_{(0)} B_{(\alpha)}
		  \mbox{\Large $)$}
	 + \sum_{\dot{\beta}} \bar{\theta}^{\dot{\beta}}\vartheta_{\dot{\beta}}
	       \mbox{\Large $($}
	        A_{(\dot{\beta})} B_{(0)} + A_{(0)} B_{(\dot{\beta})}
		   \mbox{\Large $)$}
	 \\
   && \hspace{2em}	
	 +\, \theta^1\theta^2 \vartheta_1\vartheta_2
	        \mbox{\Large $($}
   	          A_{(12)}B_{(0)} - A_{(1)}B_{(2)} - A_{(2)} B_{(1)} + A_{(0)} B_{(12)}
			\mbox{\Large $)$}
	  \\
   && \hspace{2em}	
	 + \sum_{\alpha, \dot{\beta}}
	      \left\{\rule{0ex}{1.2em}\right.
		    \sum_\mu \sigma^\mu_{\alpha\dot{\beta}}
			  \mbox{\Large $($}
			   A_{[\mu]}B_{(0)} + A_{(0)} B_{[\mu]}
			  \mbox{\Large $)$}
	     + \vartheta_\alpha \bar{\vartheta}_{\dot{\beta}}
		      \mbox{\Large $($}
		      A_{(\alpha \dot{\beta})} B_{(0)} - A_{(\alpha)}B_{\dot{\beta}}
			  - A_{(\dot{\beta})} B_{(\alpha)} + A_{(0)} B_{(\alpha\dot{\beta})}			
			  \mbox{\Large $)$}
		  \left.\rule{0ex}{1.2em}\right\}
	  \\
   && \hspace{2em}	  		
	 +\, \bar{\theta}^{\dot{1}} \bar{\theta}^{\dot{2}}
	       \bar{\vartheta}_{\dot{1}} \bar{\vartheta}_{\dot{2}}
		  \mbox{\Large $($}
		     A_{(\dot{1}\dot{2})} B_{(0)} - A_{(\dot{1})}B_{(\dot{2})}
			   - A_{(\dot{2})}B_{(\dot{1})} + A_{(0)} B_{(\dot{1}\dot{2})}
          \mbox{\Large $)$}			
	   \\
   && \hspace{2em}
	 + \sum_{\dot{\beta}} \theta^1\theta^2 \bar{\theta}^{\dot{\beta}}
	      \left\{\rule{0ex}{1.2em}\right.
		   \sum_{\alpha, \mu}
		     \vartheta_\alpha \sigma^{\mu\alpha}_{\;\;\;\;\dot{\beta}}
			    \mbox{\Large $($}
			      A^\prime_{[\mu]}B_{(0)}  + A_{[\mu]}B_{(\alpha)} +A_{(\alpha)} B_{[\mu]}
				   + A_{(0)}B^\prime_{[\mu]}
			    \mbox{\Large $)$}	
	         \\
        && \hspace{6em}	  				
	       +\, \vartheta_1\vartheta_2 \bar{\vartheta}_{\dot{\beta}}
		          \mbox{\Large $($}
  		             A_{(12\dot{\beta})} B_{(0)} + A_{(12)}B_{(\dot{\beta})}
					  + A_{(1\dot{\beta})}B_{(2)}+ A_{(2\dot{\beta})} B_{(1)}
	                      \\[-1ex]
                         && \hspace{12em}	  					
					  +\, A_{(1)} B_{(2\dot{\beta})} + A_{(2)} B_{(1\dot{\beta})}
					  + A_{(\dot{\beta})} B_{(12)} + A_{(0)} B_{(12\dot{\beta})}
				  \mbox{\Large $)$}
		  \left.\rule{0ex}{1.2em}\right\}
	  \\
   && \hspace{2em}	  		
     + \sum_\alpha \theta^\alpha \bar{\theta}^{\dot{1}} \bar{\theta}^{\dot{2}}
	      \left\{\rule{0ex}{1.2em}\right.
		    \sum_{\dot{\beta}, \mu}
			   \bar{\vartheta}_{\dot{\beta}} \sigma^{\mu\dot{\beta}}_\alpha
			     \mbox{\Large $($}
			      A^{\prime\prime}_{[\mu]}B_{(0)} + A_{[\mu]} B_{(\dot{\beta})}
				     + A_{(\dot{\beta})} B_{[\mu]} + A_{(0)}B^{\prime\prime}_{[\mu]}
				 \mbox{\Large $)$}
           	  \\
              && \hspace{6em}	  			
		    +\, \vartheta_\alpha \vartheta_{\dot{1}} \vartheta_{\dot{2}}
			       \mbox{\Large $($}
			         A_{(\alpha\dot{1}\dot{2})} B_{(0)}  + A_{(\alpha\dot{1})}B_{(\dot{2})}
					  + A_{(\alpha\dot{2})}B_{(\dot{1})}  + A_{(\dot{1}\dot{2})}B_{(\alpha)}
					  	  \\[-1ex]
                     && \hspace{12em}	  					
					  +\, A_{(\alpha)}B_{(\dot{1}\dot{2})}+ A_{(\dot{1})}B_{(\alpha\dot{2})}
					  + A_{(\dot{2})} B_{(\alpha\dot{1})} + A_{(0)}B_{(\alpha\dot{1}\dot{2})}
				   \mbox{\Large $)$}
		  \left.\rule{0ex}{1.2em}\right\}
		\\
  && \hspace{2em}
      +\, \theta^1\theta^2\bar{\theta}^{\dot{1}} \bar{\theta}^{\dot{2}}
	       \left\{\rule{0ex}{1.2em}\right.
		     A^\sim_{(0)}B_{(0)}   + 2 \sum_{\mu, \nu}\eta^{\mu\nu} A_{[\mu]}B_{[\nu]}
			   + A_{(0)}B^\sim_{(0)}
        	  \\
           && \hspace{6em}	  				
			  + \sum_{\alpha, \dot{\beta}}
			        \vartheta_\alpha \bar{\vartheta}_{\dot{\beta}}\,
					   \bar{\sigma}^{\mu\dot{\beta}\alpha}
					 \mbox{\Large $($}
					   A^\sim_{[\mu]}B_{(0)} + A^\prime_{[\mu]} B_{(\dot{\beta})}
					   - A^{\prime\prime}_{[\mu]}B_{(\alpha)}- A_{(\alpha)}B^{\prime\prime}_{[\mu]}
					   + A_{(\dot{\beta})} B^\prime_{[\mu]} + A_{(0)}B^\sim_{[\mu]}
					 \mbox{\Large $)$}
        	  \\
            && \hspace{6em}	  						
			  +\, \vartheta_1\vartheta_2 \bar{\vartheta}_{\dot{1}} \bar{\vartheta}_{\dot{2}}
			        \mbox{\Large $($}
			          A_{(12\dot{1}\dot{2})}B_{(0)} - A_{(12\dot{1})}B_{(\dot{2})}
					  - A_{(12\dot{2})}B_{(\dot{1})} - A_{(1\dot{1}\dot{2})}B_{(2)}
					  - A_{(2\dot{1}\dot{2})}B_{(1)}
					  \\
                   && \hspace{8em}	  					
					  +\, A_{(12)}B_{(\dot{1}\dot{2})}
					  + A_{(1\dot{1})}B_{(2\dot{2})} + A_{(1\dot{2})}B_{(2\dot{1})}
					  + A_{(2\dot{1})}B_{(1\dot{2})} + A_{(2\dot{2})}B_{(1\dot{1})}
 	                   \\[-1ex]
                   && \hspace{8em}	  										
					  +\, A_{(\dot{1}\dot{2})}B_{(12)} - A_{(1)}B_{(2\dot{1}\dot{2})}
					  - A_{(2)}B_{(1\dot{1}\dot{2})} -A_{(\dot{1})}B_{(12\dot{2})}
					  - A_{(\dot{2})}B_{(12\dot{1})} + A_{(0)}B_{(12\dot{1}\dot{2})}
				   \mbox{\Large $)$}
		   \left.\rule{0ex}{1.2em}\right\}
	        \\
     && \hspace{1em}	  							
       \in\; C^\infty(\widehat{X}^{\widehat{\boxplus}})^\smallscriptsize\,.
\end{eqnarray*}
}

\bigskip

For a brief comparison:

\bigskip

\centerline{\small
\begin{tabular}{|l||c|c|c|c|}\hline
 class of superfield $\in C^\infty(\widehat{X}^{\widehat{\boxplus}})$ \rule{0ex}{1.2em}
     & general  & tame scalar  & medium scalar  & small scalar \\ \hline
 number of components in $C^\infty(X)^{\Bbb C}$\rule{0ex}{1.2em}
     & $2^8= 256$  & $41$   & $39$   & $33$ \\ \hline
\end{tabular}}

\bigskip

\subsection{The chiral and the antichiral condition on  $C^\infty(\widehat{X}^{\widehat{\boxplus}})$ and its subrings}

\begin{definition} {\bf [chiral and antichiral function on $\widehat{X}^{\widehat{\boxplus}}$ and its sectors]}\;
{\rm
 Recall the supersymmetrically invariant derivations on $\widehat{X}$
  $$
    e_{\alpha^\prime}\;
	  :=\; \mbox{\Large $\frac{\partial}{\partial \theta^\alpha}$}
	          + \sqrt{-1}\sum_{\dot{\beta},\mu}\sigma^\mu_{\alpha\dot{\beta}}
			       \bar{\theta}^{\dot{\beta}} \partial_\mu\,,\hspace{2em}
    e_{\beta^{\prime\prime}}\;
	  :=\; - \mbox{\Large $\frac{\partial}{\partial \bar{\theta}^{\dot{\beta}}}$}
	          - \sqrt{-1}\sum_{\alpha,\mu}
			      \theta^\alpha \sigma^\mu_{\alpha\dot{\beta}} \partial_\mu\,.
  $$
 An $\breve{f}\in C^\infty(\widehat{X}^{\widehat{\boxplus}})$ is called {\it chiral}
  (resp.\:{\it antichiral}) if
  $$
    e_{1^{\prime\prime}} \breve{f} \;=\; e_{2^{\prime\prime}}\breve{f}\;=\; 0
	\hspace{4em}
	(\,\mbox{resp.}\;\;
	     e_{1^\prime} \breve{f} \;=\; e_{2^\prime}\breve{f}\;=\; 0)\,.
  $$
 $\breve{f}$ is called a {\it tame chiral superfield}
    (resp.\:{\it medium chiral superfield}, {\it small chiral superfield}) on $X$
 if in addition
  $\breve{f}\in C^\infty(\widehat{X}^{\widehat{\boxplus}})^\tamescriptsize$
  (resp.\:$C^\infty(\widehat{X}^{\widehat{\boxplus}})^\mediumscriptsize$,
               $C^\infty(\widehat{X}^{\widehat{\boxplus}})^\smallscriptsize$).
}\end{definition}

\medskip

\begin{example} {\bf [basic chiral and antichiral function on $\widehat{X}$]}\; {\rm
 (1)
 Let
  $$
   x^{\prime\mu} \;
    := x^\mu
	    + \sqrt{-1}\sum_{\alpha,\dot{\beta}}
	         \theta^\alpha \sigma^\mu_{\alpha\dot{\beta}} \bar{\theta}^{\dot{\beta}}
	 \hspace{2em}\mbox{and}\hspace{2em}
   x^{\prime\prime\mu} \;
    := x^\mu
	    - \sqrt{-1}\sum_{\alpha,\dot{\beta}}
	         \theta^\alpha \sigma^\mu_{\alpha\dot{\beta}} \bar{\theta}^{\dot{\beta}}\;
	\in C^\infty(\widehat{X})\,,
  $$
   for $\mu=0,1,2,3$.
 (In collective short-hand,
     $x^\prime = x + \sqrt{-1}\theta\sigma\bar{\theta}$ and
     $x^{\prime\prime} = x - \sqrt{-1}\theta\sigma\bar{\theta}$.)
 Then $x^{\prime\mu}$'s are chiral and $x^{\prime\prime\mu}$   are antichiral.
 
 (2)
 $\theta^\alpha$, $\alpha=1,2$, are chiral   and
 $\bar{\theta}^{\dot{\beta}}$, $\dot{\beta}=\dot{1}, \dot{2}$, are antichiral.
}\end{example}

\bigskip

Note that
 since $x^\mu$, $x^{\prime\mu}$, and $x^{\prime\prime\mu}$ differ only by an even nilpotent element
  in $C^\infty(\widehat{X})$,
 any of the collection $(x, \theta, \bar{\theta})$, or $(x^\prime, \theta, \bar{\theta})$,
  $(x^{\prime\prime}, \theta, \bar{\theta})$
  generate $C^\infty(\widehat{X})$ as a complexified ${\Bbb Z}/2$-graded $C^\infty$-ring
 and, hence, can serve as coordinate functions on $\widehat{X}$.

\bigskip

\begin{definition} {\bf [standard chiral coordinate functions and
    standard antichiral coordinate functions on $\widehat{X}^{\widehat{\boxplus}}$ - with abuse]}\; {\rm
 For convenience but with slight abuse of the terminology,
   we shall call
     $(x^\prime, \theta, \bar{\theta})$
      the {\it standard chiral coordinate functions} on $\widehat{X}^{\widehat{\boxplus}}$
	  (despite that $\bar{\theta}_{\dot{\beta}}$ are not chiral)    and
     $(x^{\prime\prime}, \theta, \bar{\theta})$
      the {\it standard antichiral coordinate functions} on $\widehat{X}^{\widehat{\boxplus}}$
	  (despite that $\theta_\alpha$ are not antichiral).
}\end{definition}

\bigskip

The following lemma gives a characterization of chiral functions and antichiral functions
 on $\widehat{X}^{\widehat{\boxplus}}$:

\bigskip

\begin{lemma} {\bf [chiral function and antichiral function on $\widehat{X}^{\widehat{\boxplus}}$]}\;
 $(1)$
 $\breve{f}\in C^\infty(\widehat{X}^{\widehat{\boxplus}})$ is chiral if and only if,
   as an element of $C^\infty(X)^{\Bbb C}[\theta, \bar{\theta}, \vartheta, \bar{\theta}]^\anticommuting$,
  $\breve{f}$ is of the following form
  \begin{eqnarray*}
   \breve{f} & = &
     \breve{f}_{(0)}(x, \vartheta, \bar{\vartheta})
	 + \sum_\gamma \theta^\gamma \breve{f}_{(\gamma)}(x, \vartheta, \bar{\theta})
	    + \theta^1\theta^2 \breve{f}_{(12)}(x,\vartheta, \bar{\vartheta})		     	
	    + \sqrt{-1} \sum_{\gamma, \dot{\delta}, \nu}
		    \theta^\gamma \bar{\theta}^{\dot{\delta}} \sigma^\nu_{\gamma\dot{\delta}}
			  \partial_\nu \breve{f}_{(0)}(x, \vartheta, \bar{\vartheta})
 		\\
	&& 			
        +\, \sqrt{-1} \sum_{\dot{\delta}, \gamma, \nu}
		     \theta^1\theta^2 \bar{\theta}^{\dot{\delta}}
			  \sigma^{\nu\gamma}_{\;\;\;\;\dot{\delta}}\,\partial_\nu
			   \breve{f}_{(\gamma)}(x, \vartheta, \bar{\vartheta})
		- \theta^1\theta^2\bar{\theta}^{\dot{1}} \bar{\theta}^{\dot{2}}\,
		    \square \breve{f}_{(0)}(x,\vartheta, \bar{\vartheta})\,.
  \end{eqnarray*}
 In particular, a chiral $\breve{f}\in C^\infty(\widehat{X}^{\widehat{\boxplus}})$
  has four independent components in $C^\infty(X)^{\Bbb C}[\vartheta, \bar{\vartheta}]^\anticommuting$:
  $$
    \breve{f}_{(0)}\,,\;\;  \breve{f}_{(\gamma)}\,,\,{}_{\gamma\;=\;1, 2}\,,\;\;
	\breve{f}_{(12)}\,.
  $$
 In terms of the standard chiral coordinate functions $(x^\prime, \theta, \bar{\theta}, \vartheta, \bar{\vartheta})$
  on $\widehat{X}^{\widehat{\boxplus}}$,
 $$
  \breve{f}\;
   =\; \breve{f}_{(0)}(x^\prime, \vartheta, \bar{\vartheta})
         + \sum_\gamma \theta^\gamma
		      \breve{f}_{(\gamma)}(x^\prime, \vartheta, \bar{\vartheta})
		 + \theta^1\theta^2
		     \breve{f}_{(12)}(x^\prime, \vartheta, \bar{\vartheta})\,,		
 $$
 which is independent of $\bar{\theta}$.
 
 $(2)$
   $\breve{f}\in C^\infty(\widehat{X}^{\widehat{\boxplus}})$ is antichiral if and only if,
   as an element of $C^\infty(X)^{\Bbb C}[\theta, \bar{\theta}, \vartheta, \bar{\theta}]^\anticommuting$,
  $\breve{f}$ is of the following form
  \begin{eqnarray*}
   \breve{f} & = &
     \breve{f}_{(0)}(x, \vartheta, \bar{\vartheta})
	 + \sum_{\dot{\delta}} \bar{\theta}^{\dot{\delta}}
	      \breve{f}_{(\dot{\delta})}(x, \vartheta, \bar{\theta})
	    + \bar{\theta}^1\bar{\theta}^2 \breve{f}_{(\dot{1}\dot{2})}(x,\vartheta, \bar{\vartheta})		     	
	    - \sqrt{-1} \sum_{\gamma, \dot{\delta}, \nu}
		    \theta^\gamma \bar{\theta}^{\dot{\delta}} \sigma^\nu_{\gamma\dot{\delta}}
			  \partial_\nu \breve{f}_{(0)}(x, \vartheta, \bar{\vartheta})
 		\\
	&& 			
        +\, \sqrt{-1} \sum_{\gamma, \dot{\delta}, \nu}
		      \theta^\gamma \bar{\theta}^{\dot{1}} \bar{\theta}^{\dot{2}}
			  \sigma^{\nu\dot{\delta}}_\gamma\,
			  \partial_\nu \breve{f}_{(\dot{\delta})}(x, \vartheta, \bar{\vartheta})
		- \theta^1\theta^2\bar{\theta}^{\dot{1}} \bar{\theta}^{\dot{2}}\,
		    \square \breve{f}_{(0)}(x,\vartheta, \bar{\vartheta})\,.
  \end{eqnarray*}
 In particular, an antichiral $\breve{f}\in C^\infty(\widehat{X}^{\widehat{\boxplus}})$
  has four independent components in $C^\infty(X)^{\Bbb C}[\vartheta, \bar{\vartheta}]^\anticommuting$:
  $$
    \breve{f}_{(0)}\,,\;\;
	\breve{f}_{(\dot{\delta})}\,,\,{}_{\dot{\delta}\;=\;\dot{1}, \dot{2}}\,,\;\;
	\breve{f}_{(\dot{1}\dot{2})}\,.
  $$
 In terms of the standard antichiral coordinate functions
  $(x^{\prime\prime}, \theta, \bar{\theta}, \vartheta, \bar{\vartheta})$
  on $\widehat{X}^{\widehat{\boxplus}}$,
 $$
  \breve{f}\;
   =\; \breve{f}_{(0)}(x^{\prime\prime}, \vartheta, \bar{\vartheta})
         + \sum_{\dot{\delta}} \bar{\theta}^{\dot{\delta}}
		      \breve{f}_{(\dot{\delta})}(x^{\prime\prime}, \vartheta, \bar{\vartheta})
		 + \bar{\theta}^{\dot{1}}\bar{\theta}^{\dot{2}}
		     \breve{f}_{(\dot{1}\dot{2})}(x^{\prime\prime}, \vartheta, \bar{\vartheta})\,,		
 $$
 which is independent of $\theta$.
\end{lemma}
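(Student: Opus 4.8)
The plan is to diagonalize the antichiral derivations $e_{1^{\prime\prime}}, e_{2^{\prime\prime}}$ by passing to the standard chiral coordinate functions $(x^\prime, \theta, \bar{\theta}, \vartheta, \bar{\vartheta})$ of Definition~1.3.3. First I would establish the operational identity
$$
 e_{\beta^{\prime\prime}}\; =\; -\,\frac{\partial}{\partial \bar{\theta}^{\dot{\beta}}}\Big|_{(x^\prime,\theta,\vartheta,\bar{\vartheta})}\,,
$$
i.e.\ that in the chiral coordinate system $e_{\beta^{\prime\prime}}$ is simply minus the coordinate derivative $\partial/\partial\bar{\theta}^{\dot{\beta}}$ taken with $x^\prime,\theta,\vartheta,\bar{\vartheta}$ held fixed. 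Since both sides are odd derivations on $C^\infty(\widehat{X}^{\widehat{\boxplus}})$, it suffices to check agreement on the generators $x^{\prime\mu}, \theta^\alpha, \bar{\theta}^{\dot{\gamma}}, \vartheta, \bar{\vartheta}$. On $x^{\prime\mu}$ the left side vanishes by Example~1.3.2(1) (that $x^{\prime\mu}$ is chiral, $e_{\beta^{\prime\prime}}x^{\prime\mu}=0$) and the right side vanishes since $x^\prime$ is a coordinate; on $\theta^\alpha$ both vanish; on $\bar{\theta}^{\dot{\gamma}}$ both give $-\delta^{\dot{\gamma}}_{\dot{\beta}}$; and on $\vartheta,\bar{\vartheta}$ both vanish, as $e_{\beta^{\prime\prime}}$ involves only $\partial/\partial\bar{\theta}$ and $\partial_\mu$ while $x^\prime$ is independent of $\vartheta,\bar{\vartheta}$. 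The one point requiring care is the Koszul sign incurred when the odd operator $\partial/\partial\bar{\theta}^{\dot{\beta}}$ passes the odd factor $\theta^\alpha$ inside $x^{\prime\mu}-x^\mu$; this is exactly the sign that renders $e_{\beta^{\prime\prime}}x^{\prime\mu}=0$.

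Granting this identity, the chiral condition $e_{1^{\prime\prime}}\breve{f}=e_{2^{\prime\prime}}\breve{f}=0$ becomes the statement that $\breve{f}$, written in $(x^\prime,\theta,\bar{\theta},\vartheta,\bar{\vartheta})$, is annihilated by $\partial/\partial\bar{\theta}^{\dot{1}}$ and $\partial/\partial\bar{\theta}^{\dot{2}}$, i.e.\ is independent of $\bar{\theta}$. Expanding in the two anticommuting variables $\theta^1,\theta^2$ then yields precisely
$$
 \breve{f}\;=\; \breve{f}_{(0)}(x^\prime,\vartheta,\bar{\vartheta})
  + \sum_\gamma \theta^\gamma \breve{f}_{(\gamma)}(x^\prime,\vartheta,\bar{\vartheta})
  + \theta^1\theta^2 \breve{f}_{(12)}(x^\prime,\vartheta,\bar{\vartheta})\,,
$$
with the three independent coefficients claimed. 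To pass to the $(x,\theta,\bar{\theta})$-form I would substitute $x^{\prime\mu}=x^\mu+\sqrt{-1}\sum_{\alpha,\dot{\beta}}\theta^\alpha\sigma^\mu_{\alpha\dot{\beta}}\bar{\theta}^{\dot{\beta}}$ and Taylor-expand each coefficient in $x^{\prime\mu}-x^\mu$; the expansion terminates after second order because $x^{\prime\mu}-x^\mu$ is an even nilpotent built from $\theta^1\theta^2\bar{\theta}^{\dot{1}}\bar{\theta}^{\dot{2}}$-type monomials. The zeroth-, first-, and second-order terms of $\breve{f}_{(0)}(x^\prime)$ produce $\breve{f}_{(0)}(x)$, the $\sqrt{-1}\,\theta^\gamma\bar{\theta}^{\dot{\delta}}\sigma^\nu_{\gamma\dot{\delta}}\partial_\nu\breve{f}_{(0)}$ term, and the $-\theta^1\theta^2\bar{\theta}^{\dot{1}}\bar{\theta}^{\dot{2}}\square\breve{f}_{(0)}$ term respectively, while the first-order term of $\sum_\gamma\theta^\gamma\breve{f}_{(\gamma)}(x^\prime)$ produces the $\theta^1\theta^2\bar{\theta}^{\dot{\delta}}\sigma^{\nu\gamma}_{\;\;\;\;\dot{\delta}}\partial_\nu\breve{f}_{(\gamma)}$ term after raising the spinor index via $\varepsilon$.

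The main obstacle is purely computational: pinning down the exact coefficients and signs in this $(x,\theta,\bar{\theta})$-expansion, where all the bookkeeping lives. Two ingredients from Sec.~1.1 carry the weight. The reduction of the second-order term requires a Fierz-type contraction of $\sum_\mu(\sum_{\alpha,\dot{\beta}}\theta^\alpha\sigma^\mu_{\alpha\dot{\beta}}\bar{\theta}^{\dot{\beta}})(\sum_{\gamma,\dot{\delta}}\theta^\gamma\sigma^\nu_{\gamma\dot{\delta}}\bar{\theta}^{\dot{\delta}})$, which — after rewriting $\theta^\alpha\theta^\gamma$ and $\bar{\theta}^{\dot{\beta}}\bar{\theta}^{\dot{\delta}}$ as $\varepsilon$-multiples of $\theta^1\theta^2$ and $\bar{\theta}^{\dot{1}}\bar{\theta}^{\dot{2}}$ and using the $\bar{\sigma}\sigma$ trace relations of Sec.~1.1 — collapses to the normalization giving exactly $-\theta^1\theta^2\bar{\theta}^{\dot{1}}\bar{\theta}^{\dot{2}}\square$; and the cross term needs the index-raising $\sigma^{\nu\gamma}_{\;\;\;\;\dot{\delta}}=\sum_\alpha\varepsilon^{\gamma\alpha}\sigma^\nu_{\alpha\dot{\delta}}$. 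These go through with the conventions fixed in Definition~1.1.2. Part~(2) is entirely parallel: one checks $e_{\alpha^\prime}=\partial/\partial\theta^\alpha$ in the standard antichiral coordinates $(x^{\prime\prime},\theta,\bar{\theta},\vartheta,\bar{\vartheta})$ (using Example~1.3.2 that $x^{\prime\prime\mu}$ and $\bar{\theta}^{\dot{\beta}}$ are antichiral), so antichirality forces independence of $\theta$, and the same expansion in $x^{\prime\prime\mu}-x^\mu=-\sqrt{-1}\sum_{\alpha,\dot{\beta}}\theta^\alpha\sigma^\mu_{\alpha\dot{\beta}}\bar{\theta}^{\dot{\beta}}$ gives the stated antichiral form with $\theta\leftrightarrow\bar{\theta}$ and $\gamma\leftrightarrow\dot{\delta}$ interchanged.
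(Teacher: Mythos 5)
Your argument is correct, but it runs in the opposite direction from the paper's. The paper starts from the general $(\theta,\bar{\theta})$-expansion of $\breve{f}$ in the standard coordinates, computes $-e_{\beta^{\prime\prime}}\breve{f}$ term by term, and solves the resulting linear system among the coefficients --- the constraints $\breve{f}_{(\dot{\beta})}=\breve{f}_{(\dot{1}\dot{2})}=\breve{f}_{(\gamma\dot{1}\dot{2})}=0$, $\breve{f}_{(\gamma\dot{\beta})}=\sqrt{-1}\sum_\nu\sigma^\nu_{\gamma\dot{\beta}}\partial_\nu\breve{f}_{(0)}$, etc. --- so the standard-coordinate form is the primary output, and the chiral-coordinate form is extracted only at the end from the single observation that $e_{\beta^{\prime\prime}}=-\partial/\partial\bar{\theta}^{\dot{\beta}}$ in the coordinates $(x^\prime,\theta,\bar{\theta},\vartheta,\bar{\vartheta})$. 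You promote that observation to the engine of the proof: you verify it on generators, conclude that chirality is exactly $\bar{\theta}$-independence in chiral coordinates, and then recover the standard-coordinate form by Taylor-expanding the nilpotent shift $x^\prime-x$. Both routes are complete; the computational weight just sits in different places. Yours makes the count of four independent components and the structural meaning of chirality transparent from the outset, at the cost of the Fierz contraction $\sum_{\alpha,\dot{\beta}}\theta^\alpha\sigma^\mu_{\alpha\dot{\beta}}\bar{\theta}^{\dot{\beta}}\cdot\sum_{\gamma,\dot{\delta}}\theta^\gamma\sigma^\nu_{\gamma\dot{\delta}}\bar{\theta}^{\dot{\delta}}=2\,\theta^1\theta^2\bar{\theta}^{\dot{1}}\bar{\theta}^{\dot{2}}\eta^{\mu\nu}$ (which the paper records only later, in Sec.~2) and some parity bookkeeping in the cross term $\theta^\gamma\theta^\alpha\bar{\theta}^{\dot{\delta}}\,\partial_\nu\breve{f}_{(\gamma)}$, where the coefficients $\breve{f}_{(\gamma)}$ need not be even; the paper's route avoids the Fierz identity but reaches the answer only after a longer direct computation whose last constraint $\breve{f}_{(12\dot{1}\dot{2})}=-\square\breve{f}_{(0)}$ still requires back-substitution to simplify. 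One point worth making explicit in your write-up: the reduction to checking $e_{\beta^{\prime\prime}}=-\partial/\partial\bar{\theta}^{\dot{\beta}}$ on coordinate functions uses that both sides are odd derivations compatible with the $C^\infty$-hull (so the chain rule handles arbitrary smooth functions of $x^\prime$), not merely derivations of a polynomial ring.
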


\begin{proof}
 Given
  \begin{eqnarray*}
   \breve{f}& = &
     \breve{f}_{(0)}
	  + \sum_\alpha \theta^\gamma \breve{f}_{(\gamma)}
	  + \sum_{\dot{\delta}} \bar{\theta}^{\dot{\delta}} \breve{f}_{(\dot{\delta})}
	  + \theta^1\theta^2 \breve{f}_{(12)}
	  + \sum_{\gamma, \dot{\delta}} \theta^\gamma \bar{\theta}^{\dot{\delta}}
	       \breve{f}_{(\gamma\dot{\delta})}
	  + \bar{\theta}^{\dot{1}} \bar{\theta}^{\dot{2}}
	        \breve{f}_{(\dot{1}\dot{2})}
		 \\
		 &&\hspace{2em}
	  +\, \sum_{\dot{\delta}}	  \theta^1\theta^2 \bar{\theta}^{\dot{\delta}}
	       \breve{f}_{(12\dot{\delta})}
	  + \sum_\gamma \theta^\gamma \bar{\theta}^{\dot{1}} \bar{\theta}^{\dot{2}}
	       \breve{f}_{(\gamma\dot{1}\dot{2})}
	  + \theta^1\theta^2 \bar{\theta}^{\dot{1}} \bar{\theta}^{\dot{2}}
	       \breve{f}_{(12\dot{1}\dot{2})}\;\;\;\;
     \in\; C^\infty(\widehat{X}^{\widehat{\boxplus}})\,,
  \end{eqnarray*}
 where
  $\breve{f}_{(\tinybullet)}\in C^\infty(X)^{\Bbb C}[\vartheta, \bar{\vartheta}]^\anticommuting$,
 one has
 \begin{eqnarray*}
  -\, e_{\beta^{\prime\prime}}\breve{f}
    & = &
    \breve{f}_{(\dot{\beta})}	
    + \sum_\gamma \theta^\gamma
	    \mbox{\Large $($}
		 - \breve{f}_{(\gamma\dot{\beta})}
		 + \sqrt{-1}\sum_\nu
		      \sigma^\nu_{\gamma\dot{\beta}} \partial_\nu \breve{f}_{(0)}
		\mbox{\Large $)$}
    - \sum_{\dot{\delta}}  \bar{\theta}^{\dot{\delta}}
	    \varepsilon_{\dot{\beta}\dot{\delta}} \breve{f}_{(\dot{1}\dot{2})}
		\\
		&&
    +\, \theta^1\theta^2
	    \mbox{\Large $($}
		 \breve{f}_{(12\dot{\beta})}
		  - \sqrt{-1} \sum_{\gamma, \nu} \sigma^{\nu\gamma}_{\;\;\;\;\dot{\beta}}
		      \partial_\nu \breve{f}_{(\gamma)}		
		\mbox{\Large $)$}
	+ \sum_{\gamma, \dot{\delta}}\theta^\gamma \bar{\theta}^{\dot{\delta}}
	     \mbox{\Large $($}
		  \varepsilon_{\dot{\beta}\dot{\delta}} \breve{f}_{(\gamma\dot{1}\dot{2})}
		  + \sqrt{-1}\,\sum_\nu \sigma^\nu_{\gamma\dot{\beta}}
		      \partial_\nu \breve{f}_{(\dot{\delta})}
		 \mbox{\Large $)$}
		\\
		&&
	 -\, \sum_{\dot{\delta}} \theta^1\theta^2 \bar{\theta}^{\dot{\delta}}
	    \mbox{\Large $($}
		 \varepsilon_{\dot{\beta}\dot{\delta}} \breve{f}_{(12\dot{1}\dot{2})}
		 + \sqrt{-1}\,\sum_{\gamma, \nu} \sigma^{\nu\gamma}_{\;\;\;\;\dot{\beta}}
		     \partial_\nu \breve{f}_{(\gamma\dot{\delta})}
		\mbox{\Large $)$}
	 + \sqrt{-1}\, \sum_{\gamma, \nu} \theta^\gamma \bar{\theta}^{\dot{1}} \bar{\theta}^{\dot{2}}
	     \sigma^\nu_{\gamma\dot{\beta}} \partial_\nu \breve{f}_{(\dot{1}\dot{2})}
	    \\
		&& 		
	 -\, \sqrt{-1}\, \theta^1\theta^2 \bar{\theta}^{\dot{1}} \bar{\theta}^{\dot{2}}
	     \sum_{\gamma, \nu}\sigma^{\nu\gamma}_{\;\;\;\;\dot{\beta}}
		  \partial_\nu \breve{f}_{(\gamma\dot{1}\dot{2})}\,.	
 \end{eqnarray*}
 Thus,
  $e_{1^{\prime\prime}}\breve{f}= e_{2^{\prime\prime}}\breve{f}=0$
  if and only if
  \begin{eqnarray*}
   &
    \breve{f}_{(\dot{\beta})}\;
	=\; \breve{f}_{(\dot{1}\dot{2})}\;
	=\; \breve{f}_{(\gamma\dot{1}\dot{2})}\; =\; 0\,,  &
	\\[.6ex]
	&
	\breve{f}_{(\gamma\dot{\beta})}\;
	 =\; \sqrt{-1}\, \sum_\nu \sigma^\nu_{\gamma\dot{\beta}}
	        \partial_\nu \breve{f}_{(0)}\,, \hspace{2em}
    \breve{f}_{(12\dot{\beta})}\;
	 =\; \sqrt{-1}\, \sum_{\gamma^\prime, \nu} \sigma^{\nu\gamma^\prime}_{\;\;\;\;\dot{\beta}}
	        \partial_\nu \breve{f}_{(\gamma^\prime)}\,, &
	 \\[.6ex]
	 &
	\breve{f}_{(12\dot{1}\dot{2})}\;
	 =\; -\, \mbox{\Large $\frac{\sqrt{-1}}{2}$}
	        \sum_{\gamma^\prime, \dot{\delta}, \mu} \bar{\sigma}^{\mu\dot{\delta}\gamma^\prime}
			  \partial_\mu \breve{f}_{(\gamma^\prime\dot{\delta})}\,, &
  \end{eqnarray*}
 for $\gamma=1, 2$, $\dot{\beta}=\dot{1}, \dot{2}$.
 The last equation simplifies to
  $\breve{f}_{(12\dot{1}\dot{2})}= -\,\square\,\breve{f}_{(0)}$ after plugging in
   the equation
    $\breve{f}_{(\gamma^\prime\dot{\delta})}
	 = \sqrt{-1}\, \sum_\nu \sigma^\nu_{\gamma^\prime\dot{\delta}} \partial_\nu \breve{f}_{(0)}$
   in the system.
 This proves the first part of Statement (1).
 The second part of Statement (1) follows from the observation that
     \marginpar{\vspace{1em}\raggedright\tiny
         \raisebox{-1ex}{\hspace{-2.4em}\LARGE $\cdot$}Cf.\,[\,\parbox[t]{20em}{Wess
		\& Bagger:\\ Eq.\:(5.4)].}}
  $$
   \mbox{\Large $($}
     e_{\alpha^\prime}\;
	   =\; \mbox{\Large $\frac{\partial}{\partial \theta^\alpha}$}
	          + 2\sqrt{-1}\sum_{\dot{\beta},\mu}\sigma^\mu_{\alpha\dot{\beta}}
			       \bar{\theta}^{\dot{\beta}} \partial_\mu	
	 \hspace{2em}\mbox{and}\,
   \mbox{\Large $)$}\hspace{2em}
     e_{\beta^{\prime\prime}}\; = - \mbox{\large $\frac{\partial}{\partial\bar{\theta}^{\dot{\beta}}}$}
  $$
   in the coordinate system $(x^\prime, \theta, \bar{\theta}, \vartheta, \bar{\vartheta})$
   for $\widehat{X}^{\widehat{\boxplus}}$.

 Similar argument proves Statement (2).
 In which case,
     \marginpar{\vspace{1em}\raggedright\tiny
         \raisebox{-1ex}{\hspace{-2.4em}\LARGE $\cdot$}Cf.\,[\,\parbox[t]{20em}{Wess
		\& Bagger:\\ Eq.\:(5.6)].}}
  $$
     e_{\alpha^\prime}\; = \mbox{\large $\frac{\partial}{\partial \theta^{\alpha}}$}
	  \hspace{2em}
      \mbox{\Large $($}
	   \,\mbox{and}\hspace{2em}
	      e_{\beta^{\prime\prime}}\;
	  :=\; - \mbox{\Large $\frac{\partial}{\partial \bar{\theta}^{\dot{\beta}}}$}
	          - 2\,\sqrt{-1}\sum_{\alpha,\mu}
			      \theta^\alpha \sigma^\mu_{\alpha\dot{\beta}} \partial_\mu
      \mbox{\Large $)$}	
  $$
   in the coordinate system $(x^{\prime\prime}, \theta, \bar{\theta}, \vartheta, \bar{\vartheta})$
   for $\widehat{X}^{\widehat{\boxplus}}$.
 
\end{proof}

\bigskip

\begin{corollary} {\bf [tame chiral superfield, tame antichiral superfield]}\;
 $(1)$
 A tame superfield $\breve{f}$ on $X$ is chiral if and only if,
   as an element of $C^\infty(X)^{\Bbb C}[\theta, \bar{\theta}, \vartheta, \bar{\theta}]^\anticommuting$,
  $\breve{f}$ is of the following form
  \begin{eqnarray*}
   \breve{f} & = &
     f_{(0)}(x)
	 + \sum_\gamma \theta^\gamma \vartheta_\gamma f_{(\gamma)}(x)
	    + \theta^1\theta^2
		    \mbox{\large $($}
			 f^\prime_{(0)}(x)  + \vartheta_1\vartheta_2 f_{(12)}(x)
			\mbox{\large $)$}
			\\
	&& \hspace{2em}
	    +\, \sqrt{-1} \sum_{\gamma, \dot{\delta}, \nu}
		    \theta^\gamma \bar{\theta}^{\dot{\delta}} \sigma^\nu_{\gamma\dot{\delta}}
			  \partial_\nu f_{(0)}(x)
        + \sqrt{-1} \sum_{\dot{\delta}, \gamma, \nu}
		     \theta^1\theta^2 \bar{\theta}^{\dot{\delta}} \vartheta_\gamma
			  \sigma^{\nu\gamma}_{\;\;\;\;\dot{\delta}}\,\partial_\nu f_{(\gamma)}(x)
		- \theta^1\theta^2\bar{\theta}^{\dot{1}} \bar{\theta}^{\dot{2}}\,
		    \square f_{(0)}(x)\,.               		
  \end{eqnarray*}
 In particular, a chiral $\breve{f}\in C^\infty(\widehat{X}^{\widehat{\boxplus}})^\tamescriptsize$
  has five independent components in $C^\infty(X)^{\Bbb C}$:
  $$
    f_{(0)}\,,\;\;  f_{(\gamma)}\,,\,{}_{\gamma\;=\;1, 2}\,,\;\; f^\prime_{(0)}\,,\;\; f_{(12)}\,.
  $$
 In terms of the standard chiral coordinate functions $(x^\prime, \theta, \bar{\theta}, \vartheta, \bar{\vartheta})$
  on $\widehat{X}^{\widehat{\boxplus}}$,
 $$
  \breve{f}\;
   =\; f_{(0)}(x^\prime)
         + \sum_\gamma \theta^\gamma \vartheta_\gamma f_{(\gamma)}(x^\prime)
		 + \theta^1\theta^2
		      \mbox{\large $($}
			   f^\prime_{(0)}(x^\prime)
			   + \vartheta_1\vartheta_2 f_{(12)}(x^\prime)
			  \mbox{\large $)$}\,,
 $$
 which is independent of $\bar{\theta}$ and $\bar{\vartheta}$.
 
 $(2)$
 A tame superfield $\breve{f}$ on $X$ is antichiral if and only if,
   as an element of $C^\infty(X)^{\Bbb C}[\theta, \bar{\theta}, \vartheta, \bar{\theta}]^\anticommuting$,
  $\breve{f}$ is of the following form
  \begin{eqnarray*}
   \breve{f} & = &
     f_{(0)}(x)
	 + \sum_{\dot{\delta}} \bar{\theta}^{\dot{\delta}} \bar{\vartheta}_{\dot{\delta}}
	       f_{(\dot{\delta})}(x)
	    + \bar{\theta}^{\dot{1}}\bar{\theta}^{\dot{2}}
		    \mbox{\large $($}
			 f^{\prime\prime}_{(0)}(x)
			 + \bar{\vartheta}_{\dot{1}}\bar{\vartheta}_{\dot{2}}
			       f_{(\dot{1}\dot{2})}(x)
			\mbox{\large $)$}
			\\
	&& \hspace{2em}
	    -\, \sqrt{-1} \sum_{\gamma, \dot{\delta}, \nu}
		    \theta^\gamma \bar{\theta}^{\dot{\delta}} \sigma^\nu_{\gamma\dot{\delta}}
			  \partial_\nu f_{(0)}(x)
        + \sqrt{-1} \sum_{\dot{\delta}, \gamma, \nu}
		     \theta^\gamma \bar{\theta}^{\dot{1}} \bar{\theta}^{\dot{2}}
			  \bar{\vartheta}_{\dot{\delta}}
			  \sigma^{\nu\dot{\delta}}_\gamma\,\partial_\nu f_{(\dot{\delta})}(x)
		- \theta^1\theta^2\bar{\theta}^{\dot{1}} \bar{\theta}^{\dot{2}}\,
		    \square f_{(0)}(x)\,.               		
  \end{eqnarray*}
 In particular, an antichiral $\breve{f}\in C^\infty(\widehat{X}^{\widehat{\boxplus}})^\tamescriptsize$
  has five independent components in $C^\infty(X)^{\Bbb C}$:
  $$
    f_{(0)}\,,\;\;  f_{(\dot{\delta})}\,,\,{}_{\dot{\delta}\;=\;\dot{1}, \dot{2}}\,,\;\;
	f^{\prime\prime}_{(0)}\,,\;\; f_{(\dot{1}\dot{2})}\,.
  $$
 In terms of the standard antichiral coordinate functions
   $(x^{\prime\prime}, \theta, \bar{\theta}, \vartheta, \bar{\vartheta})$
  on $\widehat{X}^{\widehat{\boxplus}}$,
 $$
  \breve{f}\;
   =\; f_{(0)}(x^{\prime\prime})
         + \sum_{\dot{\delta}} \bar{\theta}^{\dot{\delta}}
		      \bar{\vartheta}_{\dot{\delta}} f_{(\dot{\delta})}(x^{\prime\prime})
		 + \bar{\theta}^{\dot{1}} \bar{\theta}^{\dot{2}}
		      \mbox{\large $($}
			   f^{\prime\prime}_{(0)}(x^{\prime\prime})
			   + \bar{\vartheta}_{\dot{1}}\bar{\vartheta}_{\dot{2}}
			        f_{(\dot{1}\dot{2})}(x^{\prime\prime})
			  \mbox{\large $)$}\,,
 $$
 which is independent of $\theta$ and $\vartheta$.
\end{corollary}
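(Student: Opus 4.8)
The plan is to obtain Corollary~1.3.6 as the intersection of two descriptions already in hand: the explicit expansion of a tame scalar superfield recorded after Definition~1.2.9 and the chirality criterion of Lemma~1.3.5. Since $\breve{f}$ is assumed tame, I would begin from its displayed $(\theta,\bar{\theta})$-expansion and read off its $(\theta,\bar{\theta})$-components $\breve{f}_{(\tinybullet)}\in C^\infty(X)^{\Bbb C}[\vartheta,\bar{\vartheta}]^\anticommuting$; in particular $\breve{f}_{(0)}=f_{(0)}$, $\breve{f}_{(\gamma)}=\vartheta_\gamma f_{(\gamma)}$, and $\breve{f}_{(12)}=f^\prime_{(0)}+\vartheta_1\vartheta_2 f_{(12)}$, with the six remaining components given by the corresponding tame coefficients. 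Throughout, Lorentz-scalarity is inherited from the tame form, so the only new condition to impose is chirality.

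Next I would apply Lemma~1.3.5(1): $e_{1^{\prime\prime}}\breve{f}=e_{2^{\prime\prime}}\breve{f}=0$ holds exactly when $\breve{f}_{(\dot{\beta})}=\breve{f}_{(\dot{1}\dot{2})}=\breve{f}_{(\gamma\dot{1}\dot{2})}=0$ together with $\breve{f}_{(\gamma\dot{\beta})}=\sqrt{-1}\sum_\nu\sigma^\nu_{\gamma\dot{\beta}}\partial_\nu\breve{f}_{(0)}$, $\breve{f}_{(12\dot{\beta})}=\sqrt{-1}\sum_{\gamma,\nu}\sigma^{\nu\gamma}_{\;\;\;\;\dot{\beta}}\partial_\nu\breve{f}_{(\gamma)}$, and $\breve{f}_{(12\dot{1}\dot{2})}=-\square\breve{f}_{(0)}$. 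Feeding the tame expressions into these relations is then bookkeeping: the three vanishing conditions force $f_{(\dot{\beta})}=f^{\prime\prime}_{(0)}=f_{(\dot{1}\dot{2})}=f^{\prime\prime}_{(\alpha)}=f^{\prime\prime}_{[\mu]}=f_{(\alpha\dot{1}\dot{2})}=0$; matching $(\vartheta,\bar{\vartheta})$-degrees in the first determination yields $f_{[\mu]}=\sqrt{-1}\,\partial_\mu f_{(0)}$ and $f_{(\alpha\dot{\beta})}=0$; the second fixes $f^\prime_{[\mu]}$ through $\partial_\nu f_{(\gamma)}$, produces the stated $\theta^1\theta^2\bar{\theta}^{\dot{\delta}}$-term, and kills $f^\prime_{(\dot{\beta})}$ and $f_{(12\dot{\beta})}$; and the third collapses to $f^\sim_{(0)}=-\square f_{(0)}$ while annihilating $f^\sim_{(12)},f^\sim_{[\mu]},f^\sim_{(\dot{1}\dot{2})},f_{(12\dot{1}\dot{2})}$. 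Collecting survivors leaves exactly the five free fields $f_{(0)},f_{(1)},f_{(2)},f^\prime_{(0)},f_{(12)}$ and reproduces the displayed expansion. For the converse I would verify directly that the displayed form is chiral (it visibly solves Lemma~1.3.5(1)) and tame (a one-line count: each surviving coefficient has $(\vartheta,\bar{\vartheta})$-degree at most its $(\theta,\bar{\theta})$-degree), giving the ``if and only if''.

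The chiral-coordinate reformulation then follows as in Lemma~1.3.5: in $(x^\prime,\theta,\bar{\theta},\vartheta,\bar{\vartheta})$ one has $e_{\beta^{\prime\prime}}=-\partial/\partial\bar{\theta}^{\dot{\beta}}$, so chirality is literal $\bar{\theta}$-independence, while tameness restricts the three surviving coefficients to $f_{(0)}(x^\prime)$, $\vartheta_\gamma f_{(\gamma)}(x^\prime)$, and $f^\prime_{(0)}(x^\prime)+\vartheta_1\vartheta_2 f_{(12)}(x^\prime)$, which is manifestly independent of $\bar{\theta}$ and $\bar{\vartheta}$. Statement~(2) I would obtain symmetrically: either by repeating the matching with the antichiral half of Lemma~1.3.5, or, more economically, by applying the complex conjugation $\bar{\hspace{1ex}}$ (equivalently the twisted conjugation $^\dag$ of Definition~1.2.4), which exchanges $S^\prime\leftrightarrow S^{\prime\prime}$ and $\theta\leftrightarrow\bar{\theta}$ and sends chiral onto antichiral while preserving tameness and the Lorentz-scalar property.

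The main obstacle I anticipate is not conceptual but the index gymnastics in the determination steps: one must confirm that the abstract relations of Lemma~1.3.5 reduce to the tame coefficients correctly under the $\sigma$- and $\varepsilon$-conventions of Definitions~1.1.2--1.1.3 --- in particular that $\sum_\nu\sigma^\nu_{\gamma\dot{\beta}}f_{[\nu]}=\sqrt{-1}\sum_\nu\sigma^\nu_{\gamma\dot{\beta}}\partial_\nu f_{(0)}$ forces $f_{[\mu]}=\sqrt{-1}\,\partial_\mu f_{(0)}$, which rests on the nondegeneracy of the Clifford map $V^\vee_{\Bbb C}\simeq S^\prime\otimes_{\Bbb C}S^{\prime\prime}$. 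This is routine given the earlier setup, and the real content is simply that the tame and chiral conditions are mutually compatible and cut the superfield down to precisely five independent component fields.
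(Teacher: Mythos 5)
Your proposal is correct and is exactly the argument the paper intends: the corollary carries no separate proof because it is the preceding chirality-characterization lemma specialized to the explicit tame expansion, with the $(\vartheta,\bar{\vartheta})$-degree matching and the nondegeneracy of the $\sigma$-map pinning down which of the forty-one tame coefficients vanish, which are determined, and which five remain free. Your two routes to Statement~(2) are both sound; the paper's ordering favors the symmetric argument via the antichiral half of the lemma, since the twisted-conjugation swap of chiral and antichiral is only recorded afterwards.
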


\bigskip

\begin{corollary} {\bf [medium chiral = small chiral, medium antichiral = small antichiral]}\;
 $(1)$
 A medium chiral superfield coincides with a small chiral superfield.
 A medium or small superfield $\breve{f}$ on $X$ is chiral if and only if,
   as an element of $C^\infty(X)^{\Bbb C}[\theta, \bar{\theta}, \vartheta, \bar{\theta}]^\anticommuting$,
  $\breve{f}$ is of the following form
     \marginpar{\vspace{2em}\raggedright\tiny
         \raisebox{-1ex}{\hspace{-2.4em}\LARGE $\cdot$}Cf.\,[\,\parbox[t]{20em}{Wess
		\& Bagger:\\ Eq.\:(5.3)].}}
  \begin{eqnarray*}
   \breve{f} & = &
     f_{(0)}(x)
	 + \sum_\gamma \theta^\gamma \vartheta_\gamma f_{(\gamma)}(x)
	    + \theta^1\theta^2
		     \vartheta_1\vartheta_2 f_{(12)}(x)						
			\\
	&& \hspace{2em}
	    +\, \sqrt{-1} \sum_{\gamma, \dot{\delta}, \nu}
		    \theta^\gamma \bar{\theta}^{\dot{\delta}} \sigma^\nu_{\gamma\dot{\delta}}
			  \partial_\nu f_{(0)}(x)
        + \sqrt{-1} \sum_{\dot{\delta}, \gamma, \nu}
		     \theta^1\theta^2 \bar{\theta}^{\dot{\delta}} \vartheta_\gamma
			  \sigma^{\nu\gamma}_{\;\;\;\;\dot{\delta}}\,\partial_\nu f_{(\gamma)}(x)
		- \theta^1\theta^2\bar{\theta}^{\dot{1}} \bar{\theta}^{\dot{2}}\,
		    \square f_{(0)}(x)\,.               		
  \end{eqnarray*}
 In particular, a chiral $\breve{f}\in C^\infty(\widehat{X}^{\widehat{\boxplus}})^\mediumscriptsize$
   or\, $C^\infty(\widehat{X}^{\widehat{\boxplus}})^\smallscriptsize$
  has four independent components in $C^\infty(X)^{\Bbb C}$:
  $$
    f_{(0)}\,,\;\;  f_{(\gamma)}\,,\,{}_{\gamma\;=\;1, 2}\,,\;\; f_{(12)}\,.
  $$
 In terms of the standard chiral coordinate functions $(x^\prime, \theta, \bar{\theta}, \vartheta, \bar{\vartheta})$
  on $\widehat{X}^{\widehat{\boxplus}}$,
     \marginpar{\vspace{0em}\raggedright\tiny
         \raisebox{-1ex}{\hspace{-2.4em}\LARGE $\cdot$}Cf.\,[\,\parbox[t]{20em}{Wess
		\& Bagger:\\ Eq.\:(5.3)].}}
 $$
  \breve{f}\;
   =\; f_{(0)}(x^\prime)
         + \sum_\gamma \theta^\gamma \vartheta_\gamma f_{(\gamma)}(x^\prime)
		 + \theta^1\theta^2 \vartheta_1\vartheta_2 f_{(12)}(x^\prime)\,,			
 $$
 which is independent of $\bar{\theta}$ and $\bar{\vartheta}$.
 
 $(2)$
 A medium antichiral superfield coincides with a small antichiral superfield.
 A medium or small superfield $\breve{f}$ on $X$ is antichiral if and only if,
   as an element of $C^\infty(X)^{\Bbb C}[\theta, \bar{\theta}, \vartheta, \bar{\theta}]^\anticommuting$,
  $\breve{f}$ is of the following form
     \marginpar{\vspace{2em}\raggedright\tiny
         \raisebox{-1ex}{\hspace{-2.4em}\LARGE $\cdot$}Cf.\,[\,\parbox[t]{20em}{Wess
		\& Bagger:\\ Eq.\:(5.5)].}}
  \begin{eqnarray*}
   \breve{f} & = &
     f_{(0)}(x)
	 + \sum_{\dot{\delta}} \bar{\theta}^{\dot{\delta}} \bar{\vartheta}_{\dot{\delta}}
	       f_{(\dot{\delta})}(x)
	    + \bar{\theta}^{\dot{1}}\bar{\theta}^{\dot{2}} 		
			 \bar{\vartheta}_{\dot{1}}\bar{\vartheta}_{\dot{2}}  f_{(\dot{1}\dot{2})}(x)			
			\\
	&& \hspace{2em}
	    -\, \sqrt{-1} \sum_{\gamma, \dot{\delta}, \nu}
		    \theta^\gamma \bar{\theta}^{\dot{\delta}} \sigma^\nu_{\gamma\dot{\delta}}
			  \partial_\nu f_{(0)}(x)
        + \sqrt{-1} \sum_{\dot{\delta}, \gamma, \nu}
		     \theta^\gamma \bar{\theta}^{\dot{1}} \bar{\theta}^{\dot{2}}
			  \bar{\vartheta}_{\dot{\delta}}
			  \sigma^{\nu\dot{\delta}}_\gamma\,\partial_\nu f_{(\dot{\delta})}(x)
		- \theta^1\theta^2\bar{\theta}^{\dot{1}} \bar{\theta}^{\dot{2}}\,
		    \square f_{(0)}(x)\,.               		
  \end{eqnarray*}
 In particular, an antichiral $\breve{f}\in C^\infty(\widehat{X}^{\widehat{\boxplus}})^\mediumscriptsize$
  or\, $C^\infty(\widehat{X}^{\widehat{\boxplus}})^\smallscriptsize$
  has four independent components in $C^\infty(X)^{\Bbb C}$:
  $$
    f_{(0)}\,,\;\;  f_{(\dot{\delta})}\,,\,{}_{\dot{\delta}\;=\;\dot{1}, \dot{2}}\,,\;\;
	f_{(\dot{1}\dot{2})}\,.
  $$
 In terms of the standard antichiral coordinate functions
   $(x^{\prime\prime}, \theta, \bar{\theta}, \vartheta, \bar{\vartheta})$
  on $\widehat{X}^{\widehat{\boxplus}}$,
     \marginpar{\vspace{0em}\raggedright\tiny
         \raisebox{-1ex}{\hspace{-2.4em}\LARGE $\cdot$}Cf.\,[\,\parbox[t]{20em}{Wess
		\& Bagger:\\ Eq.\:(5.5)].}}
 $$
  \breve{f}\;
   =\; f_{(0)}(x^{\prime\prime})
         + \sum_{\dot{\delta}} \bar{\theta}^{\dot{\delta}}
		      \bar{\vartheta}_{\dot{\delta}} f_{(\dot{\delta})}(x^{\prime\prime})
		 + \bar{\theta}^{\dot{1}} \bar{\theta}^{\dot{2}} 		
			   \bar{\vartheta}_{\dot{1}}\bar{\vartheta}_{\dot{2}}
			        f_{(\dot{1}\dot{2})}(x^{\prime\prime})\,,
 $$
 which is independent of $\theta$ and $\vartheta$.
\end{corollary}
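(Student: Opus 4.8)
The plan is to obtain Corollary~1.3.6 as a direct specialization of Corollary~1.3.5, reading off the extra vanishings forced by chirality against the component bookkeeping of Definitions~1.2.9, 1.2.11, and 1.2.13. Recall that a medium superfield is a tame superfield subject to $f^\prime_{(0)}=f^{\prime\prime}_{(0)}=0$, while a small superfield is a medium superfield subject to the additional conditions $f^\prime_{(\dot\beta)}=f^{\prime\prime}_{(\alpha)}=f^\sim_{(12)}=f^\sim_{(\dot1\dot2)}=0$. Since a medium (resp.\ small) chiral superfield is nothing but a tame chiral superfield satisfying these further constraints, it suffices to examine which of the labelled components actually survive once the tame chiral form of Corollary~1.3.5(1) is imposed.

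First I would take the tame chiral form
$$
 \breve{f}\; =\; f_{(0)}(x)
   + \sum_\gamma\theta^\gamma\vartheta_\gamma f_{(\gamma)}(x)
   + \theta^1\theta^2\big(f^\prime_{(0)}(x)+\vartheta_1\vartheta_2 f_{(12)}(x)\big)
   + \cdots
$$
from Corollary~1.3.5(1) and compare it monomial-by-monomial with the generic tame expansion of Definition~1.2.9. The key observation is that the tame chiral form contains \emph{no} monomial of type $\bar\theta^{\dot1}\bar\theta^{\dot2}$ nor of type $\theta^\gamma\bar\theta^{\dot1}\bar\theta^{\dot2}$, and that its $\theta^1\theta^2\bar\theta^{\dot\delta}$- and $\theta^1\theta^2\bar\theta^{\dot1}\bar\theta^{\dot2}$-coefficients carry only the specific $(\vartheta,\bar\vartheta)$-structures displayed there. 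Matching $(\vartheta,\bar\vartheta)$-components inside each fixed $(\theta,\bar\theta)$-coefficient then shows at once that, for \emph{any} tame chiral superfield, the labels $f^{\prime\prime}_{(0)}$, $f^\prime_{(\dot\beta)}$, $f^{\prime\prime}_{(\alpha)}$, $f^\sim_{(12)}$, and $f^\sim_{(\dot1\dot2)}$ all vanish --- each simply because no monomial of the corresponding type occurs (for instance the $\theta^1\theta^2\bar\theta^{\dot\delta}$-coefficient of the tame chiral form is $\sqrt{-1}\sum_{\gamma,\nu}\vartheta_\gamma\sigma^{\nu\gamma}_{\;\;\;\;\dot\delta}\partial_\nu f_{(\gamma)}$, which carries no $\bar\vartheta$-monomial, so the label $f^\prime_{(\dot\beta)}$ sitting on $\bar\vartheta_{\dot\beta}$ is forced to vanish).

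Consequently the only small-sector condition not already implied by chirality is $f^\prime_{(0)}=0$, and this is exactly the defining medium condition. Hence the medium chiral superfields and the small chiral superfields are literally the same subset of the tame chiral superfields, which proves the coincidence asserted in Part~(1); imposing $f^\prime_{(0)}=0$ in the Corollary~1.3.5(1) form yields the stated four-component expression with independent data $f_{(0)}, f_{(\gamma)}, f_{(12)}$, and passing to the standard chiral coordinates $(x^\prime,\theta,\bar\theta,\vartheta,\bar\vartheta)$ gives the $\bar\theta$- and $\bar\vartheta$-independent presentation. Part~(2) then follows by the identical argument applied to Corollary~1.3.5(2), or more economically by invoking the twisted complex conjugation of Definition~1.2.3, which exchanges chiral with antichiral and interchanges the roles of $\theta\leftrightarrow\bar\theta$, $\vartheta\leftrightarrow\bar\vartheta$, and the primed/doubly-primed component labels.

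I expect no genuine difficulty here: the entire content is the component bookkeeping of the second step. The one place demanding care is the matching inside the top coefficient, where the chiral identity $\breve{f}_{(12\dot1\dot2)}=-\square\breve{f}_{(0)}$ of Lemma~1.3.4 must be read at each $(\vartheta,\bar\vartheta)$-degree; since $\breve{f}_{(0)}=f_{(0)}(x)$ is $(\vartheta,\bar\vartheta)$-free by the medium constraint, its right-hand side $-\square f_{(0)}(x)$ is $(\vartheta,\bar\vartheta)$-free as well, which is precisely what forces $f^\sim_{(12)}=f^\sim_{(\dot1\dot2)}=0$ together with the vanishing of the remaining higher $(\vartheta,\bar\vartheta)$-components of that coefficient.
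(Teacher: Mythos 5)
Your proof is correct and follows the route the paper intends: the corollary is stated without a separate proof precisely because it is the specialization of Corollary~1.3.5 (equivalently Lemma~1.3.4) obtained by observing that every small-sector vanishing except $f^\prime_{(0)}=0$ is already forced by chirality, which is exactly the component bookkeeping you carry out. The only quibble is that $\breve{f}_{(0)}$ being $(\vartheta,\bar{\vartheta})$-free is already part of the tame condition rather than the medium one, but this attribution slip does not affect the argument.
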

\bigskip

\medskip

\begin{corollary} {\bf [medium or small chiral = chiral multiplet]}
 A medium or small chiral superfield matches with the chiral multiplet from representations of $d=3+1$, $N=1$ supersymmetry algebra.
 Medium or small chiral superfields on $X$ form a complexified $C^\infty$-ring.
 Similar statements hold for medium or small antichiral superfields.
\end{corollary}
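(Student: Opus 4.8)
The plan is to reduce everything to the common class provided by Corollary~1.3.5, which already identifies medium chiral superfields with small chiral superfields and records their $(x^\prime,\theta,\vartheta)$-expansion
$$
 \breve{f}\;
   =\; f_{(0)}(x^\prime)
         + \mbox{$\sum$}_\gamma \theta^\gamma \vartheta_\gamma\, f_{(\gamma)}(x^\prime)
		 + \theta^1\theta^2 \vartheta_1\vartheta_2\, f_{(12)}(x^\prime)\,.
$$
For the ring assertion I would first note that the chiral condition $e_{1^{\prime\prime}}\breve{f}=e_{2^{\prime\prime}}\breve{f}=0$ is linear, hence closed under addition, and that $e_{1^{\prime\prime}}$, $e_{2^{\prime\prime}}$ are odd derivations of $C^\infty(\widehat{X}^{\widehat{\boxplus}})$; the graded Leibniz rule then forces a product of chiral functions to be chiral. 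Intersecting with the subring $C^\infty(\widehat{X}^{\widehat{\boxplus}})^\mediumscriptsize = C^\infty(\widehat{X}^{\widehat{\boxplus}})^\smallscriptsize$ of Lemma/Definition~1.2.12 and Lemma/Definition~1.2.14 shows that the medium (equivalently small) chiral superfields are closed under $+$ and $\times$, i.e.\ form an even subring.

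To upgrade this to a complexified $C^\infty$-ring I would work in the standard chiral coordinates $(x^\prime,\theta,\bar{\theta},\vartheta,\bar{\vartheta})$, where by Lemma~1.3.4(1) a chiral function is independent of $\bar{\theta}$ and $\bar{\vartheta}$ and is thus a polynomial in the nilpotent generators $\theta^1,\theta^2,\vartheta_1,\vartheta_2$ with coefficients in $C^\infty(X)^{\Bbb C}$ evaluated at $x^\prime$. Since $x^\prime$ differs from $x$ only by an even nilpotent element, it generates the same $C^\infty$-hull, so applying any $h\in C^\infty({\Bbb R}^k)$ (extended ${\Bbb C}$-linearly) to chiral inputs via the Hadamard/Taylor expansion built into the $C^\infty$-ring structure of $C^\infty(\widehat{X}^{\widehat{\boxplus}})$ yields, order by order in the nilpotents, another $\bar{\theta}$-, $\bar{\vartheta}$-independent expression of the same restricted shape. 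One then checks that the $C^\infty$-hull of $C^\infty(\widehat{X}^{\widehat{\boxplus}})^\smallscriptsize$, characterized in Lemma/Definition~1.2.14 by $\breve{f}_{(0)}\in C^\infty(X)$, restricts to the chiral subring, endowing it with the claimed complexified $C^\infty$-ring structure.

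The matching with the physics chiral multiplet is the substantive step. I would set $\phi:=f_{(0)}$, $\psi_\gamma:=f_{(\gamma)}$, $F:=f_{(12)}$, matching the field content $(\phi,\psi,F)$ of [Wess \& Bagger: Ch.~V]. Because $\{e_{\beta^{\prime\prime}},Q_\alpha\}=\{e_{\beta^{\prime\prime}},\bar{Q}_{\dot{\beta}}\}=0$ (Example~1.2.5), the supersymmetry variation of a chiral $\breve{f}$ generated by $Q_\alpha$ and $\bar{Q}_{\dot{\beta}}$ is again annihilated by $e_{1^{\prime\prime}}$ and $e_{2^{\prime\prime}}$; hence the space of medium/small chiral superfields is a genuine $d=3+1$, $N=1$ supersymmetry module. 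The plan is then to compute this variation in chiral coordinates, extract $\delta_\xi\phi$, $\delta_\xi\psi_\gamma$, $\delta_\xi F$, and verify term by term that they reproduce the standard chiral-multiplet transformation laws, in particular that $\delta_\xi F$ is a total space-time derivative, which pins down the module as the irreducible chiral multiplet. The antichiral statement follows either by the parallel argument using Lemma~1.3.4(2) and Corollary~1.3.5(2), or at once by transporting the chiral case under the twisted complex conjugation $^\dag$ of Definition~1.2.4, which exchanges the roles of $\theta$ and $\bar{\theta}$ and hence of chirality and antichirality.

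The hardest part, I expect, will be the final term-by-term comparison: the components $f_{(\gamma)}$ and $f_{(12)}$ are carried by the auxiliary spinors $\vartheta_\gamma$ and $\vartheta_1\vartheta_2$, so reconciling our derivation-based variations with Wess \& Bagger's explicit formulas demands exact tracking of the $\varepsilon$-raising/lowering rule of Definition~1.1.3 together with the $\sigma^\mu_{\alpha\dot{\beta}}$-identities, so that the total-derivative form of $\delta_\xi F$ emerges with no spurious constants.
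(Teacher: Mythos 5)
Your proposal is correct, and for the parts the paper actually relies on it follows the same route: the paper offers no separate proof of this corollary, treating it as an immediate reading-off from Corollary~1.3.5 (four independent components $f_{(0)},\,f_{(\gamma)},\,f_{(12)}$ of the right Lorentz types, i.e.\ a complex scalar, a Weyl spinor carried by $\vartheta_\gamma$, and an auxiliary scalar carried by $\vartheta_1\vartheta_2$ --- exactly the off-shell content $(A,\psi,F)$ of the chiral multiplet) together with closure of the chirality condition and of the medium/small conditions under sums and products (Lemma/Definitions~1.2.12, 1.2.14, and the derivation property of $e_{1^{\prime\prime}},e_{2^{\prime\prime}}$); the nilpotency of the $F$-component needed for this closure is exactly the point of Remark~1.3.7. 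Where you diverge is in proposing, as the ``substantive step,'' a full term-by-term computation of the supersymmetry variations $\delta_\xi\phi,\,\delta_\xi\psi_\gamma,\,\delta_\xi F$ to pin down the module as the irreducible chiral multiplet. That is a stronger and perfectly legitimate verification (and your observation that $\{e_{\beta^{\prime\prime}},Q_\alpha\}=\{e_{\beta^{\prime\prime}},\bar{Q}_{\dot{\beta}}\}=0$ guarantees the chiral sector is a supersymmetry module is exactly the right structural input), but it is more than the corollary as stated requires: ``matches with the chiral multiplet'' here is meant at the level of component content and Lorentz type, which Corollary~1.3.5 already delivers. So your plan buys a sharper statement at the cost of a computation the paper defers; nothing in it would fail, but the unexecuted variation check is not needed to certify the corollary.
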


\medskip

\begin{corollary} {\bf [small ring generated by small chiral and small antichiral]}\;
 The small function ring $C^\infty(\widehat{X}^{\widehat{\boxplus}})^\smallscriptsize$
   of $\widehat{X}^{\widehat{\boxplus}}$
  is the subring of $C^\infty(\widehat{X}^{\widehat{\boxplus}})$
  generated by the small chiral superfields and the small antichiral superfields.\footnote{In
                                                                [L-Y5] (SUSY(1)), we take this as a starting point to understand
															     how physicists think of the function ring of a superspace
																   and its ``more physically relevant sector".
                                                                   }
\end{corollary}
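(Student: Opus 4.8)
The plan is to prove the two inclusions separately; the inclusion ``generated subring $\subseteq$ small function ring'' is routine, while the reverse inclusion requires an explicit synthesis of an arbitrary small scalar superfield out of chiral and antichiral generators, organized by Grassmann degree.

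\medskip
For the easy inclusion I would first observe that every small chiral and every small antichiral superfield is in particular a small scalar superfield in the sense of Definition~1.2.13: their explicit forms in Corollary~1.3.6 manifestly satisfy the constraints $f^\prime_{(0)}=f^{\prime\prime}_{(0)}=f^\prime_{(\dot\beta)}=f^{\prime\prime}_{(\alpha)}=f^\sim_{(12)}=f^\sim_{(\dot1\dot2)}=0$. Since $C^\infty(\widehat{X}^{\widehat{\boxplus}})^\smallscriptsize$ is a subring of $C^\infty(\widehat{X}^{\widehat{\boxplus}})$ by Lemma/Definition~1.2.14, any polynomial in small chiral and small antichiral superfields again lies in it, so the subring they generate is contained in the small function ring.

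\medskip
For the reverse inclusion I would synthesize a general small $\breve{f}$, given through its $33$ components, from products $C\,\bar{C}^\prime$ of the chiral monomial generators $1,\ \theta^\gamma\vartheta_\gamma,\ \theta^1\theta^2\vartheta_1\vartheta_2$ and the antichiral monomial generators $1,\ \bar\theta^{\dot\delta}\bar\vartheta_{\dot\delta},\ \bar\theta^{\dot1}\bar\theta^{\dot2}\bar\vartheta_{\dot1}\bar\vartheta_{\dot2}$, each scaled by chiral coefficients $h(x^\prime)$ or antichiral coefficients $h(x^{\prime\prime})$ (Example~1.3.2, Corollary~1.3.6). The argument is an induction on the total Grassmann degree $d=\#\theta+\#\bar\theta+\#\vartheta+\#\bar\vartheta$: at each stage the lowest-degree uncorrected component is matched and subtracted, and since a generator contributes nothing strictly below its own leading degree, the subtraction strictly raises the degree of the remainder, which stays small by the easy inclusion and is nilpotent of bounded order, so the process terminates. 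The ``primary'' slots $f_{(0)}$, $f_{(\alpha)}$, $f_{(\dot\beta)}$, $f_{(12)}$, $f_{(\dot1\dot2)}$, $f_{(\alpha\dot\beta)}$, $f_{(12\dot\beta)}$, $f_{(\alpha\dot1\dot2)}$, $f_{(12\dot1\dot2)}$ occur directly as leading terms of such products, and a sum of rank-one products over the spinor indices realizes any prescribed value in $C^\infty(X)^{\Bbb C}$ for each.

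\medskip
The main obstacle will be the $\sigma^\mu$-contracted ``vector'' slots $f_{[\mu]}$, $f^\prime_{[\mu]}$, $f^{\prime\prime}_{[\mu]}$, $f^\sim_{[\mu]}$ and the scalar $f^\sim_{(0)}$. By Lemma~1.3.4 and Corollary~1.3.6 the $\theta\bar\theta$-term of a chiral coefficient $h(x^\prime)$ is forced to equal $\sqrt{-1}\,\sum_\mu\sigma^\mu_{\alpha\dot\beta}\partial_\mu h$, a \emph{derivative} of the lower component; hence no vector slot appears as an independent leading term and none can be isolated naively. The remedy, which is the crux, is a commutator-with-coordinate trick exploiting $\partial_\nu x^{\mu_0}=\delta^{\mu_0}_\nu$: for fixed $\mu_0$ the combination
\[
 g(x^\prime)\,x^{\prime\prime\,\mu_0} \;-\; (g\,x^{\mu_0})(x^\prime)
\]
has vanishing degree-$0$ part while its $\theta\bar\theta$-part contributes $f_{[\mu]}=-2\sqrt{-1}\,g\,\delta^{\mu_0}_\mu$, so it adds an arbitrary multiple of $f_{[\mu_0]}$ without touching any strictly lower component; the analogous moves in the one-$\vartheta$, one-$\bar\vartheta$, and top sectors handle $f^\prime_{[\mu]},f^{\prime\prime}_{[\mu]}$ and $f^\sim_{[\mu]},f^\sim_{(0)}$, with the isomorphism $V^\vee_{\Bbb C}\simeq S^\prime\otimes_{\Bbb C}S^{\prime\prime}$ guaranteeing that the $\sigma^\mu$-images span the vector slots. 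Finally I would verify the compatibility that singles out ``small'' (rather than ``tame''): the forbidden slots $f^\prime_{(0)},f^{\prime\prime}_{(0)},f^\prime_{(\dot\beta)},f^{\prime\prime}_{(\alpha)},f^\sim_{(12)},f^\sim_{(\dot1\dot2)}$ never arise, precisely because the generators carry no bare $\theta^1\theta^2$ or $\bar\theta^{\dot1}\bar\theta^{\dot2}$ factor unaccompanied by $\vartheta_1\vartheta_2$ resp.\ $\bar\vartheta_{\dot1}\bar\vartheta_{\dot2}$; this is consistent with the easy inclusion and reproduces the precedent in [L-Y5: Sec.\:1.4] (SUSY(1)).
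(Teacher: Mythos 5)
Your overall strategy is sound, and it is genuinely different from what the paper does: the paper supplies no proof of this corollary, leaning instead on [L-Y5: Sec.\,1.4], where the logic runs in the opposite direction --- there the subring generated by small chiral and small antichiral superfields is \emph{computed} (in effect the explicit product formula recorded after Lemma/Definition~1.2.14) and its closed form is what the present paper takes as the \emph{definition} of ``small''. Your easy inclusion is exactly that reading and is fine. Your reverse inclusion is a constructive, slot-by-slot synthesis organized by Grassmann degree, and the one computation you actually carry out --- that $g(x^{\prime})\,x^{\prime\prime\,\mu_0}-(g\,x^{\mu_0})(x^{\prime})$ has vanishing degree-$0$ part and deposits $-2\sqrt{-1}\,g\,\delta^{\mu_0}_{\mu}$ into the $f_{[\mu]}$ slot --- is correct and is the right device for unlocking the derivative-forced vector slots. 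The triangularity of the induction (each correction disturbs only strictly higher-degree slots, and products of small superfields stay small) also holds.

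The one genuine gap is the phrase ``the analogous moves \dots\ handle $f^{\sim}_{[\mu]}$, $f^{\sim}_{(0)}$.'' For $f^{\sim}_{(0)}$ the literal analogue fails: the top component of $g(x^{\prime})\,x^{\prime\prime\,\mu_0}-(g\,x^{\mu_0})(x^{\prime})$ is $4\,\partial^{\mu_0}g$, still a total derivative, so one coordinate insertion leaves that slot locked inside combinations of derivatives; similarly, any two-factor product of a small chiral with a small antichiral can only populate the $\vartheta_\alpha\bar{\vartheta}_{\dot{\beta}}$-part of the top $(\theta,\bar{\theta})$-component with expressions of the form $\partial_\mu a\cdot b-a\,\partial_\mu b$, so $f^{\sim}_{[\mu]}$ is \emph{not} reachable by rank-one leading terms alone. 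The clean repair, entirely within your framework, is the element
$$
  \delta^{\mu_0}\;:=\;x^{\prime\,\mu_0}-x^{\prime\prime\,\mu_0}\;=\;2\sqrt{-1}\sum_{\alpha,\dot{\beta}}\theta^{\alpha}\sigma^{\mu_0}_{\alpha\dot{\beta}}\bar{\theta}^{\dot{\beta}}\,,
$$
which lies in the generated subring, carries no $\vartheta$'s, and has no components other than the displayed one. Multiplying an already-constructed generator by $\delta^{\mu_0}$ appends a bare $\theta\sigma^{\mu_0}\bar{\theta}$ with \emph{no} derivative: $g(x^{\prime})\,\delta^{\mu_0}$ realizes an arbitrary $f_{[\mu_0]}$; $g(x^{\prime})\,\delta^{1}\delta^{1}=-8\,g\,\theta^1\theta^2\bar{\theta}^{\dot{1}}\bar{\theta}^{\dot{2}}$ realizes an arbitrary $f^{\sim}_{(0)}$ with all other slots zero; and $\bigl(\sum_\gamma\theta^\gamma\vartheta_\gamma a\bigr)$-type chiral times $\bigl(\sum_{\dot{\delta}}\bar{\theta}^{\dot{\delta}}\bar{\vartheta}_{\dot{\delta}}b\bigr)$-type antichiral times $\delta^{\mu_0}$ realizes arbitrary $f^{\sim}_{[\mu]}$ (and, with the obvious variants, $f^{\prime}_{[\mu]}$ and $f^{\prime\prime}_{[\mu]}$). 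With this substitute for your ``analogous moves'' the induction closes and the proof is complete.
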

  
\medskip

\begin{remark} $[$mathematical patch to {\sl [Wess \& Bagger]}: nilpotency of `$F$-component'\,$]$\; {\rm
 Caution that chiral functions on $\widehat{X}^{\widehat{\boxplus}}$ of the following form
  $$
  \breve{f}\;
   =\; f_{(0)}(x^\prime)
         + \sum_\gamma \theta^\gamma \vartheta_\gamma f_{(\gamma)}(x^\prime)
		 + \theta^1\theta^2 f^\prime_{(0)}(x^\prime)
 $$
  in coordinate functions $(x^\prime, \theta, \bar{\theta}, \vartheta, \bar{\vartheta})$
 do not form a ring as they are not closed under multiplication:
 The product $\breve{A}\breve{B}$ of general $\breve{A}$ and $\breve{B}$ of the above form
  will have an additional nilpotent summand
   $$
    -\,\theta^1\theta^2\vartheta_1\vartheta_2 (A_{(1)}B_{(2)} + A_{(2)}B_{(1)})
   $$
  that does not fit in.
 It follows that if one demands that chiral superfields that match with the chiral multiplets
   (which have only four independent components) to form a ring,
  then their `$F$-component' (cf.\:[Wess \& Bagger: Eq.\:(5.3)]) must be {\it nilpotent} Lorentz-scalar of the form
		$\vartheta_1\vartheta_2 f_{(12)}$,
   where $f_{(12)}\in C^\infty(X)^{\Bbb C}$.
 This is the first mathematical patch that one is forced to make if one wants to bring [Wess \& Bagger]
    in complexified ${\Bbb Z}/2$-graded Algebraic Geometry;
	([L-Y5: Sec.\:1.2] (SUSY(1))).\footnote{\makebox[9.2em][l]{\it Note for physicists}                              						
							   It turns that, as demonstrated in the current notes,
							    except the additional introduction of purge-evaluation maps
							    (which physicists already took implicitly whenever in need),
							   this is the only mathematical patch to make precise (of the non-quantum part of) [Wess \& Bagger].
                               }
							
 Similarly for antichiral functions on  $\widehat{X}^{\widehat{\boxplus}}$ of the following form
 $$
  \breve{f}\;
   =\; f_{(0)}(x^{\prime\prime})
         + \sum_{\dot{\delta}} \bar{\theta}^{\dot{\delta}}
		      \bar{\vartheta}_{\dot{\delta}} f_{(\dot{\delta})}(x^{\prime\prime})
		 + \bar{\theta}^{\dot{1}} \bar{\theta}^{\dot{2}}
			   f^{\prime\prime}_{(0)}(x^{\prime\prime}) 			   			
 $$
  in coordinate functions $(x^{\prime\prime}, \theta, \bar{\theta}, \vartheta, \bar{\vartheta})$.
}\end{remark}

\bigskip

\begin{flushleft}
{\bf Chirality on $C^\infty(X^{\widehat{\boxplus}})$ and its sectors
           under the twisted complex conjugation}
\end{flushleft}
\begin{lemma}  {\bf [$Q_\alpha$, $\bar{Q}_{\dot{\beta}}$, $e_{\alpha^\prime}$, $e_{\beta^{\prime\prime}}$
   under twisted complex conjugation]}\;
 Under the twisted complex conjugation ${}^\dag$ on
   $C^\infty(\widehat{X}^{\widehat{\boxplus}})
     = C^\infty(\widehat{X}^{\widehat{\boxplus}})_\even+ C^\infty(\widehat{X}^{\widehat{\boxplus}})_\odd$,
  $$
    Q_\alpha^\dag \; =\;  \bar{Q}_{\dot{\alpha}}\,,\hspace{2em}
    \bar{Q}_{\dot{\beta}}^\dag \; =\;   Q_\beta\,,\hspace{2em}
    e_{\alpha^\prime}^\dag \; =\;   e_{\alpha^{\prime\prime}}\,,\hspace{2em}
    e_{\beta^{\prime\prime}}^\dag \; =\;   e_{\beta^\prime}
  $$
 on $C^\infty(\widehat{X}^{\widehat{\boxplus}})_\even$
 in the sense that
 $$
   \mbox{\large $($}Q_\alpha \breve{f}\mbox{\large $)$}^\dag\;
     =\;  \bar{Q}_{\dot{\alpha}} \breve{f}^\dag\,,\;\;\;\;
   \mbox{\large $($}\bar{Q}_{\dot{\beta}} \breve{f}\mbox{\large $)$}^\dag
    =  Q_\beta \breve{f}^\dag\,,\;\;\;\; 	
  \mbox{\large $($}e_{\alpha^\prime} \breve{f}\mbox{\large $)$}^\dag
    = e_{\alpha^{\prime\prime}} \breve{f}^\dag\,,\;\;\;\;	
  \mbox{\large $($}e_{\beta^{\prime\prime}} \breve{f}\mbox{\large $)$}^\dag
    =  e_{\beta^\prime} \breve{f}^\dag
 $$
 for $\breve{f}\in C^\infty(\widehat{X}^{\widehat{\boxplus}})_\even$.
Similarly,
 $$
   Q_\alpha^\dag \; =\;  -\, \bar{Q}_{\dot{\alpha}}\,,\hspace{2em}
    \bar{Q}_{\dot{\beta}}^\dag \; =\;   -\, Q_\beta\,,\hspace{2em}
    e_{\alpha^\prime}^\dag \; =\;   -\, e_{\alpha^{\prime\prime}}\,,\hspace{2em}
    e_{\beta^{\prime\prime}}^\dag \; =\;  -\, e_{\beta^\prime}
 $$
 on $C^\infty(\widehat{X}^{\widehat{\boxplus}})_\odd$
 in the sense that
 $$
   \mbox{\large $($}Q_\alpha \breve{f}\mbox{\large $)$}^\dag\;
     =\;  -\, \bar{Q}_{\dot{\alpha}} \breve{f}^\dag\,,\;\;\;\;	
   \mbox{\large $($}\bar{Q}_{\dot{\beta}} \breve{f}\mbox{\large $)$}^\dag\;
    =\;   -\, Q_\beta \breve{f}^\dag\,,\;\;\;\; 	
  \mbox{\large $($}e_{\alpha^\prime} \breve{f}\mbox{\large $)$}^\dag\;
    =\;  -\, e_{\alpha^{\prime\prime}} \breve{f}^\dag\,,\;\;\;\;	
  \mbox{\large $($}e_{\beta^{\prime\prime}} \breve{f}\mbox{\large $)$}^\dag\;
    =\;  -\, e_{\beta^\prime} \breve{f}^\dag
 $$
 for $\breve{f}\in C^\infty(\widehat{X}^{\widehat{\boxplus}})_\odd$.
\end{lemma}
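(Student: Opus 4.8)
The plan is to reduce the four asserted identities to the way the twisted complex conjugation $^\dag$ interacts with the elementary operations out of which $Q_\alpha$, $\bar{Q}_{\dot\beta}$, $e_{\alpha^\prime}$, $e_{\beta^{\prime\prime}}$ are assembled: the even bosonic derivations $\partial/\partial x^\mu$, the odd Grassmann derivations $\partial/\partial\theta^\alpha$ and $\partial/\partial\bar\theta^{\dot\beta}$, multiplication by the constants $\sqrt{-1}$ and $\sigma^\mu_{\alpha\dot\beta}$, and multiplication by the coordinates $\theta^\alpha$, $\bar\theta^{\dot\beta}$. Since each of the four operators is a ${\Bbb C}$-linear combination of these, once the elementary rules are in hand the lemma becomes a single bookkeeping of signs, carried out once and then read off for each generator.

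First I would record the elementary rules. Because the $x^\mu$ are real coordinate functions and $^\dag$ restricts to ordinary complex conjugation on ${\cal O}_X^{\,\Bbb C}$ while exchanging ${\cal S}^\prime\leftrightarrow{\cal S}^{\prime\prime}$, the even real derivation $\partial_\mu := \partial/\partial x^\mu$ commutes with $^\dag$, so $(\partial_\mu\breve{f})^\dag = \partial_\mu\breve{f}^\dag$ for every $\breve{f}$. On constants and coordinates one has $(\sqrt{-1})^\dag = -\sqrt{-1}$, $(\theta^\alpha)^\dag = \bar\theta^{\dot\alpha}$, $(\bar\theta^{\dot\beta})^\dag = \theta^\beta$, and $\overline{\sigma^\mu_{\alpha\dot\beta}} = \sigma^\mu_{\beta\dot\alpha}$; the last identity is exactly the Hermiticity of the Pauli matrices fixed in Definition~1.1.2, which I would check directly on the four explicit matrices there.

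The heart of the argument, and the step I expect to be the main obstacle, is the parity-dependent rule for the Grassmann derivations:
\[
 (\partial_{\theta^\alpha}\breve{f})^\dag = -\,\partial_{\bar\theta^{\dot\alpha}}\breve{f}^\dag
 \qquad\text{and}\qquad
 (\partial_{\bar\theta^{\dot\beta}}\breve{f})^\dag = -\,\partial_{\theta^\beta}\breve{f}^\dag
 \qquad (\breve{f}\ \text{even}),
\]
with both signs reversed to $+$ when $\breve{f}$ is odd. The sign is forced by the clash between the order-reversing property $(\breve{g}\breve{h})^\dag = \breve{h}^\dag\breve{g}^\dag$ of $^\dag$ and the left-derivative convention defining $\partial_{\theta^\alpha}$: differentiating a monomial, reversing it, and re-differentiating moves the surviving Grassmann generators past one another, and the number of transpositions — hence the sign — depends on the parity of $\breve{f}$. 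I would prove this by reducing to monomials $\breve{f}=\theta^{(I)}\bar\theta^{(J)}g$ with $g$ a combination of $x^\mu$, $\vartheta$, $\bar\vartheta$, verifying the two parities and invoking additivity; the representative cases $\breve{f}=\theta^1\theta^2 h$ (even) and $\breve{f}=\theta^\gamma g$ (odd) already exhibit the opposite signs.

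With these rules assembled, the lemma is a direct substitution. For even $\breve{f}$, applying $^\dag$ to $Q_\alpha\breve{f}=\partial_{\theta^\alpha}\breve{f}-\sqrt{-1}\sum_{\mu,\dot\beta}\sigma^\mu_{\alpha\dot\beta}\bar\theta^{\dot\beta}\partial_\mu\breve{f}$ and reversing each product turns the first summand into $-\partial_{\bar\theta^{\dot\alpha}}\breve{f}^\dag$, sends $\sqrt{-1}\mapsto-\sqrt{-1}$ and $\sigma^\mu_{\alpha\dot\beta}\mapsto\sigma^\mu_{\beta\dot\alpha}$ and $\bar\theta^{\dot\beta}\mapsto\theta^\beta$, and (since $\partial_\mu\breve{f}^\dag$ is then even and commutes with $\theta^\beta$) reassembles to $-\partial_{\bar\theta^{\dot\alpha}}\breve{f}^\dag+\sqrt{-1}\sum_{\mu,\beta}\theta^\beta\sigma^\mu_{\beta\dot\alpha}\partial_\mu\breve{f}^\dag=\bar{Q}_{\dot\alpha}\breve{f}^\dag$. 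For odd $\breve{f}$ the two sign reversals — the $+$ in the Grassmann rule and the anticommutation of the now-odd factor $\partial_\mu\breve{f}^\dag$ past $\theta^\beta$ — combine into a global $-1$, giving $(Q_\alpha\breve{f})^\dag=-\bar{Q}_{\dot\alpha}\breve{f}^\dag$. The remaining identities $\bar{Q}_{\dot\beta}^\dag=Q_\beta$, $e_{\alpha^\prime}^\dag=e_{\alpha^{\prime\prime}}$, $e_{\beta^{\prime\prime}}^\dag=e_{\beta^\prime}$ differ from this computation only by the fixed sign attached to the $\sqrt{-1}$-term (and by dotting or undotting the index), so they follow by the identical bookkeeping; in particular the opposite $\sqrt{-1}$-signs of $e_{\alpha^\prime}$ versus $e_{\alpha^{\prime\prime}}$ are matched precisely by the two conjugations $(\sqrt{-1})^\dag=-\sqrt{-1}$ and $\overline{\sigma^\mu_{\alpha\dot\beta}}=\sigma^\mu_{\beta\dot\alpha}$.
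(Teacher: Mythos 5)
Your proposal is correct and follows essentially the same route as the paper: the paper's proof consists precisely of listing the same basic identities — the parity-dependent conjugation rule for $\partial/\partial\theta^\alpha$ and $\partial/\partial\bar{\theta}^{\dot{\beta}}$, $(\partial_\mu\breve{B})^\dag=\partial_\mu\breve{B}^\dag$, $\overline{\sigma^\mu_{\alpha\dot{\beta}}}=\sigma^\mu_{\beta\dot{\alpha}}$, and $(\theta^\alpha\breve{B})^\dag=(-1)^{p(\breve{B})}\bar{\theta}^{\dot{\alpha}}\breve{B}^\dag$ — and letting the substitution follow. You merely make explicit the monomial check of the Grassmann-derivative rule and the final sign bookkeeping, which the paper leaves to the reader.
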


\medskip

\begin{proof}
 This follows from the following basic identities
  \begin{eqnarray*}
   &
      \mbox{\Large $($}
	   \mbox{\Large $\frac{\partial}{\partial\theta^\alpha}$}
	    \mbox{\large $($}
		 \theta^\alpha \breve{A}
		\mbox{\large $)$}
	  \mbox{\Large $)$}^\dag\;
	 =\; (-1)^{p(\breve{A})}\,\mbox{\Large $\frac{\partial}{\partial \bar{\theta}^{\dot{\alpha}}}$}	       
		   \mbox{\Large $($}
	         \mbox{\large $($}\theta^\alpha \breve{A}\mbox{\large $)$}^\dag
	  \mbox{\Large $)$}\,,\hspace{2em}
	   \mbox{\Large $($}
	   \mbox{\Large $\frac{\partial}{\partial\bar{\theta}^{\dot{\beta}}}$}
	    \mbox{\large $($}
		 \bar{\theta}^{\dot{\beta}} \breve{A}
		\mbox{\large $)$}
	  \mbox{\Large $)$}^\dag\;
	 =\; (-1)^{p(\breve{A})}\,\mbox{\Large $\frac{\partial}{\partial \theta^\beta}$}	
		   \mbox{\Large $($}
	         \mbox{\large $($}\bar{\theta}^{\dot{\beta}} \breve{A}\mbox{\large $)$}^\dag
	  \mbox{\Large $)$}\,,	
   & \\[.6ex]
   &
    \mbox{\Large $($}\partial_\mu \breve{B}\mbox{\Large $)$}^\dag \;=\; \partial_\mu\breve{B}^\dag\,,
	  \hspace{2em}\overline{\sigma^\mu_{\alpha\dot{\beta}}}\;=\; \sigma^\mu_{\beta\dot{\alpha}}\,,
   & \\[.6ex]
   &	
     \mbox{\Large $($}\theta^\alpha\breve{B} \mbox{\Large $)$}^\dag\;
	  =\; (-1)^{p(\breve{B})}\, \bar{\theta}^{\dot{\alpha}}\breve{B}^\dag\,,\hspace{2em}
     \mbox{\Large $($}\bar{\theta}^{\dot{\beta}}\breve{B} \mbox{\Large $)$}^\dag\;
	  =\; (-1)^{p(\breve{B})}\, \theta^\beta \breve{B}^\dag\,,
   &
  \end{eqnarray*}
  for $\breve{A}$, $\breve{B}$ parity-homogeneous elements of $C^\infty(\widehat{X}^{\widehat{\boxplus}})$.

\end{proof}

\bigskip

Since the twisted complex conjugation leaves
  each of the tame, the medium, and the small sectors of $\widehat{X}^{\widehat{\boxplus}}$
 invariant,
 one has the following consequence:

\bigskip
 
\begin{corollary} {\bf [swapping of `chiral' and `antichiral' under twisted complex conjugation]}\;
 {\rm (Cf.\:[L-Y5: Lemma 2.1.7] (SUSY(1)).)}			
 The twisted complex conjugation ${}^\dag$ on $C^\infty(\widehat{X}^{\widehat{\boxplus}})$
   takes chiral elements to antichiral elements, and vice versa.
 The same holds for the restriction of ${}^\dag$ on
     the tame subring $C^\infty(\widehat{X}^{\widehat{\boxplus}})^\tamescriptsize$,
     the medium subring $C^\infty(\widehat{X}^{\widehat{\boxplus}})^\mediumscriptsize$,   and
     the small subring $C^\infty(\widehat{X}^{\widehat{\boxplus}})^\smallscriptsize$
   of $C^\infty(\widehat{X}^{\widehat{\boxplus}})$.
\end{corollary}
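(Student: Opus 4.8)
The plan is to read off the statement directly from Lemma~1.3.10, which records how the supersymmetrically invariant derivations intertwine with the twisted complex conjugation $^\dag$. By Definition~1.3.1, $\breve{f}$ is chiral exactly when $e_{1^{\prime\prime}}\breve{f}=e_{2^{\prime\prime}}\breve{f}=0$ and antichiral exactly when $e_{1^\prime}\breve{f}=e_{2^\prime}\breve{f}=0$. Hence the entire assertion amounts to the observation that $^\dag$ carries the common kernel of $e_{1^{\prime\prime}},e_{2^{\prime\prime}}$ onto the common kernel of $e_{1^\prime},e_{2^\prime}$, and conversely.

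First I would reduce to the parity-homogeneous case. Because $^\dag$ acts conjugate-linearly on ${\cal O}_X^{\,\Bbb C}$ and interchanges the odd sheaves ${\cal S}^\prime \leftrightarrow {\cal S}^{\prime\prime}$, it preserves the ${\Bbb Z}/2$-grading; in particular it maps $C^\infty(\widehat{X}^{\widehat{\boxplus}})_\even$ and $C^\infty(\widehat{X}^{\widehat{\boxplus}})_\odd$ each to itself. Moreover each $e_{\beta^{\prime\prime}}$ (and each $e_{\alpha^\prime}$) is an odd operator, so for $\breve{f}=\breve{f}_\even+\breve{f}_\odd$ the images $e_{\beta^{\prime\prime}}\breve{f}_\even$ and $e_{\beta^{\prime\prime}}\breve{f}_\odd$ lie in opposite parity components; thus $e_{\beta^{\prime\prime}}\breve{f}=0$ holds if and only if it holds for $\breve{f}_\even$ and $\breve{f}_\odd$ separately, and likewise for antichirality. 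It therefore suffices to treat $\breve{f}$ of pure parity.

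For even $\breve{f}$, Lemma~1.3.10 gives $(e_{\beta^{\prime\prime}}\breve{f})^\dag=e_{\beta^\prime}\breve{f}^\dag$, so chirality $e_{\beta^{\prime\prime}}\breve{f}=0$ ($\beta=1,2$) forces $e_{\beta^\prime}\breve{f}^\dag=0$, that is, $\breve{f}^\dag$ antichiral; exchanging the roles of the primed and double-primed derivations via $(e_{\alpha^\prime}\breve{f})^\dag=e_{\alpha^{\prime\prime}}\breve{f}^\dag$ sends antichiral to chiral. For odd $\breve{f}$ the same Lemma supplies these identities with an overall sign, which is immaterial for the vanishing conditions, so the conclusion is unchanged. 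Reassembling the two parity components yields the claim for arbitrary $\breve{f}\in C^\infty(\widehat{X}^{\widehat{\boxplus}})$. For the restriction to the tame, medium, and small subrings I would invoke the $^\dag$-invariance of these sectors noted just above: since $\breve{f}$ and $\breve{f}^\dag$ then lie in one and the same sector, the chiral/antichiral interchange restricts verbatim.

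I expect no genuine obstacle, only one point needing care in the parity reduction: one must confirm that $^\dag$ is grading-preserving, so that chirality and antichirality may be tested on homogeneous components, and that the sign discrepancy between the even and odd cases of Lemma~1.3.10 cannot spoil the argument. Both are harmless, since the target condition is the vanishing of the relevant $e$-images: no cancellation between parity sectors can arise, and an overall factor of $\pm1$ never affects a kernel.
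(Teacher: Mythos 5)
Your proposal is correct and follows the paper's own route: the paper derives this corollary directly from Lemma~1.3.10 together with the observation that $^\dag$ preserves the tame, medium, and small sectors, which is exactly your argument. Your added care about the parity decomposition and the irrelevance of the sign in the odd case is sound but only makes explicit what the paper leaves implicit.
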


\bigskip

\section{Purge-evaluation maps and
  the Fundamental Theorem on supersymmetric action functionals via a superspace formulation}

We highlight [L-Y5: Sec.\:1.5] (SUSY(1))
  on the purge-evaluation maps and
  the Fundamental Theorem on supersymmetric action functionals via a superspace formulation,
 with two additional remarks.

\bigskip

\begin{flushleft}
{\bf Purge-evaluation maps}
\end{flushleft}
The action functional of a supersymmetric quantum field theory model via a superspace formulation is
 itself a (Lorentz-)scalar superfield
 $\in C^\infty(\widehat{X}^{\widehat{\boxplus}})
    = C^\infty(X)^{\Bbb C}[\theta, \bar{\theta}, \vartheta, \bar{\vartheta}]^\anticommuting$.
After taking an appropriate fermionic integration:
 $$
  \int d\bar{\theta}^{\dot{2}} d\bar{\theta}^{\dot{1}}  d\theta^ 2 d\theta^1\,,
    \hspace{2em}\mbox{or}\hspace{2em}
  \int d\theta^ 2 d\theta^1\,,
    \hspace{2em}\mbox{or}\hspace{2em}
  \int d\bar{\theta}^{\dot{2}} d\bar{\theta}^{\dot{1}}\,,
 $$
it becomes an element in $C^\infty(X)^{\Bbb C}[\vartheta, \bar{\vartheta}]^\anticommuting$.
In general, a summand of which may still contain an even nilpotent factor of the form
  $\vartheta_1\vartheta_2$,
  $\vartheta_\alpha\bar{\vartheta}_{\dot{\beta}}$, ${}_{\alpha=1,2, \dot{\beta}=\dot{1}, \dot{2}}$,
  $\bar{\vartheta}_{\dot{1}}\bar{\vartheta}_{\dot{2}}$, or
  $\vartheta_1\vartheta_2\bar{\vartheta}_{\dot{1}}\bar{\vartheta}_{\dot{2}}$.
Such a nilpotent factor has to be ``removed" properly to obtain a final supersymmetric action functional of fields on $X$.
This removal is realized by a purge-evaluation:

\bigskip

\begin{sdefinition} {\bf [purge-evaluation map $\Pev$]}\: {\rm
 A {\it purge-evaluation map}
   $$
     \Pev\;:\; C^\infty(X)^{\Bbb C}[\vartheta, \bar{\vartheta}]^\anticommuting\;
       \longrightarrow\; C^\infty(X)^{\Bbb C}
   $$
   is a $C^\infty(X)^{\Bbb C}$-module homomorphism
   that sends monomials
   $\vartheta_1^{d_1}\vartheta_2^{d_2}
     \bar{\vartheta}_{\dot{1}}^{d_{\dot{1}}}\bar{\vartheta}_{\dot{2}}^{d_{\dot{2}}}$
   to a constant $c_{d_1d_2d_{\dot{1}}d_{\dot{2}}}\in {\Bbb C}$.
 This induces a map, also denoted by $\Pev$ and called {\it purge-evaluation map}
  $$
   \Pev\;:\; C^\infty(\widehat{X}^{\widehat{\boxplus}})\;
     \longrightarrow\; C^\infty(\widehat{X})\,.
  $$
}\end{sdefinition}

\bigskip

\noindent
(Cf.\: [L-Y5: Definition 1.5.1] (SUSY(1)) for a slightly different formulation.)

Any $\Pev$  thus defined has the following property:\;
 (Cf.\:[L-Y5: Lemma 1.5.2] (SUSY(1)).)

\bigskip

\begin{slemma} {\bf [property of $\Pev$]}\;
  $(1)$
  Let $\xi\in \Der_{\Bbb C}(\widehat{X})$ be a derivation on $\widehat{X}$.
  Then for $\breve{f}\in C^\infty(\widehat{X}^{\widehat{\boxplus}})$,
	$$
      {\cal P}(\xi\breve{f})= \xi {\cal P}(\breve{f})\,.
    $$
 $(2)$
  For $\breve{f}\in C^\infty(\widehat{X}^{\widehat{\boxplus}})$,
  $$
     \int\! d\bar{\theta}^{\dot{2}}d\bar{\theta}^{\dot{1}} d\theta^2d\theta^1 \Pev(\breve{f})\;		
	    =\;  \Pev(\int\! d\bar{\theta}^{\dot{2}}d\bar{\theta}^{\dot{1}} d\theta^2d\theta^1\breve{f}\,)\,.
  $$
 $(3)$
  For $\breve{f}\in C^\infty(\widehat{X}^{\widehat{\boxplus}})$ chiral (resp.\ antichiral),
    $$
	   \int\! d\theta^2d\theta^1  \Pev(\breve{f})\;	 =\; \Pev(\int\! d\theta^2d\theta^1 \breve{f}\,)
	  \hspace{2em}
      \mbox{(resp.}\;\;
	   \int\! d\bar{\theta}^{\dot{2}}d\bar{\theta}^{\dot{1}} \Pev(\breve{f})\;	
		=\; \Pev(\int\! d\bar{\theta}^{\dot{2}}d\bar{\theta}^{\dot{1}} \breve{f}\,)\;\;
     \mbox{)}\,.
   $$	
\end{slemma}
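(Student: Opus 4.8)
The plan is to isolate the single structural fact that drives all three parts: the purge-evaluation $\Pev$ acts only on the field/parameter-level odd coordinates $\vartheta,\bar{\vartheta}$ and is inert on the ground-level data $(x,\theta,\bar{\theta})$. Concretely, I would normal-order every $\breve{f}\in C^\infty(\widehat{X}^{\widehat{\boxplus}})$ uniquely as $\breve{f}=\sum_{\vec{d}}h_{\vec{d}}\,\vartheta^{\vec{d}}$, where $h_{\vec{d}}\in C^\infty(\widehat{X})$ and $\vartheta^{\vec{d}}=\vartheta_1^{d_1}\vartheta_2^{d_2}\bar{\vartheta}_{\dot{1}}^{d_{\dot{1}}}\bar{\vartheta}_{\dot{2}}^{d_{\dot{2}}}$ runs over the sixteen $(\vartheta,\bar{\vartheta})$-monomials. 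The defining $C^\infty(X)^{\Bbb C}$-linearity of $\Pev$ then gives
\[
 \Pev(\breve{f})\;=\;\mbox{$\sum$}_{\vec{d}}\,c_{\vec{d}}\,h_{\vec{d}}\,,
 \qquad c_{\vec{d}}:=\Pev(\vartheta^{\vec{d}})\in{\Bbb C}\,.
\]
From this I would first record the key auxiliary statement that $\Pev$ is a homomorphism of left $C^\infty(\widehat{X})$-modules, i.e.\ $\Pev(a\breve{f})=a\,\Pev(\breve{f})$ for all $a\in C^\infty(\widehat{X})$: left multiplication by such $a$ folds into each $h_{\vec{d}}$ without disturbing $\vartheta^{\vec{d}}$, and the $c_{\vec{d}}$ are central.

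For (1), I would use that a derivation $\xi\in\Der_{\Bbb C}(\widehat{X})$ annihilates the coordinates $\vartheta,\bar{\vartheta}$ and has all its coefficients in $C^\infty(\widehat{X})$ --- this is exactly the content of the built-in inclusion $\Der_{\Bbb C}(\widehat{X})\hookrightarrow\Der_{\Bbb C}(\widehat{X}^{\widehat{\boxplus}})$ of Definition~1.2.2. Hence the Leibniz rule collapses to $\xi\breve{f}=\sum_{\vec{d}}(\xi h_{\vec{d}})\,\vartheta^{\vec{d}}$, and applying $\Pev$ together with $\xi(c_{\vec{d}})=0$ yields $\Pev(\xi\breve{f})=\sum_{\vec{d}}c_{\vec{d}}\,\xi h_{\vec{d}}=\xi\bigl(\sum_{\vec{d}}c_{\vec{d}}h_{\vec{d}}\bigr)=\xi\,\Pev(\breve{f})$. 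For (2), the Berezin integral $\mathcal{I}:=\int d\bar{\theta}^{\dot{2}}d\bar{\theta}^{\dot{1}}d\theta^2 d\theta^1$ is a ${\Bbb C}$-linear top-degree coefficient extraction acting only in the $\theta,\bar{\theta}$ directions, so on the normal form $\mathcal{I}(\breve{f})=\sum_{\vec{d}}\mathcal{I}(h_{\vec{d}})\,\vartheta^{\vec{d}}$ with $\mathcal{I}(h_{\vec{d}})\in C^\infty(X)^{\Bbb C}$. Computing both sides against the normal form, $\mathcal{I}(\Pev(\breve{f}))=\sum_{\vec{d}}c_{\vec{d}}\mathcal{I}(h_{\vec{d}})$ and $\Pev(\mathcal{I}(\breve{f}))=\sum_{\vec{d}}\mathcal{I}(h_{\vec{d}})c_{\vec{d}}$ agree, since $\mathcal{I}$ and $\Pev$ act on disjoint blocks of odd coordinates.

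For (3), I would first note that $\Pev$ preserves chirality: by (1) applied to $\xi=e_{1^{\prime\prime}},e_{2^{\prime\prime}}$ one gets $e_{\beta^{\prime\prime}}\Pev(\breve{f})=\Pev(e_{\beta^{\prime\prime}}\breve{f})=0$, so $\Pev(\breve{f})$ is again chiral and the left-hand side is meaningful. Passing to the standard chiral coordinates $(x^\prime,\theta,\bar{\theta},\vartheta,\bar{\vartheta})$, Lemma~1.3.5 makes $\breve{f}$ manifestly $\bar{\theta}$-independent, so $\int d\theta^2 d\theta^1$ reduces to extraction of the $\theta^1\theta^2$-coefficient --- an operation purely in the $\theta$-sector. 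Since the passage from $x$ to $x^\prime$ is a change of coordinates within $C^\infty(\widehat{X})$ and $\Pev$ is $C^\infty(\widehat{X})$-linear by the auxiliary statement above, the same disjoint-block computation as in (2) applies and the two operators commute. The antichiral case follows symmetrically, either by repeating the argument in the standard antichiral coordinates $(x^{\prime\prime},\theta,\bar{\theta},\vartheta,\bar{\vartheta})$ using Lemma~1.3.5(2), or by transporting the chiral case through the twisted complex conjugation of Corollary~1.3.12.

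The only genuinely delicate point, and hence the expected main obstacle, lies in (3): I must ensure that $\Pev$ is insensitive to the replacement of $x$ by $x^\prime=x+\sqrt{-1}\,\theta\sigma\bar{\theta}$ --- a shift that mixes in the ground-level odd coordinates --- and that the half-superspace integral in the chiral frame is truly confined to the $\theta$-sector on which $\Pev$ does not act. This is precisely what the $C^\infty(\widehat{X})$-linearity of $\Pev$ guarantees, so the difficulty is disciplined bookkeeping of the $(\theta,\bar{\theta})$ versus $(\vartheta,\bar{\vartheta})$ separation across the nilpotent coordinate change and the Berezin sign conventions, rather than any substantive analytic content.
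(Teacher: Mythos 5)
Your proof is correct and rests on the same single observation that drives the paper's (one-line) argument: $\Pev$ acts only on the $(\vartheta,\bar{\vartheta})$-block, replacing each monomial by a central constant, while $\xi\in\Der_{\Bbb C}(\widehat{X})$ and the Berezin integrals act only on the $(x,\theta,\bar{\theta})$-block, so the two kinds of operators commute term by term. Your (1) and (2) match the paper in substance; the only cosmetic difference is that you normal-order by $(\vartheta,\bar{\vartheta})$-monomials first, where the paper applies $\Pev$ $(\theta,\bar{\theta})$-degree-by-degree --- equivalent bookkeeping, and your auxiliary remark that $\Pev$ is a homomorphism of $C^\infty(\widehat{X})$-modules is the right way to package it.

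For (3) you take a detour the paper does not: you pass to the chiral coordinates $(x^\prime,\theta,\bar{\theta},\vartheta,\bar{\vartheta})$ and then worry (rightly) about whether $\Pev$ survives the nilpotent shift $x\mapsto x^\prime$. This is unnecessary. The identity in (3) is, exactly like (2), an instance of the disjoint-block commutation with $\int d\theta^2 d\theta^1$ read as $\theta^1\theta^2$-coefficient extraction in the standard coordinates; it holds for arbitrary $\breve{f}$, and the chirality hypothesis in the statement reflects only the context in which the identity is used (Theorem~2.3), not a requirement of the proof. Your detour does still close, because the re-expansion in $x^\prime$ is itself a ${\Bbb C}$-linear operation confined to the $(x,\theta,\bar{\theta})$-block and hence commutes with $\Pev$ by your module-homomorphism remark --- but the direct route is shorter and sidesteps the coordinate-dependence of the half-integral entirely. (Two citation slips: the characterization of chiral functions you invoke is Lemma~1.3.4, not 1.3.5, and the conjugation swap is Corollary~1.3.11.) Your use of part (1) with $\xi=e_{1^{\prime\prime}},e_{2^{\prime\prime}}$ to show that $\Pev$ preserves chirality is a nice touch not made explicit in the paper.
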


\medskip

\begin{proof}
 Statement (1) follows from
    the fact that $\xi\in \Der_{\Bbb C}(\widehat{X})$
    has no $(\vartheta, \bar{\vartheta})$-dependence
    while $\Pev$ applies to $\breve{f}$
	    $(\theta,\bar{\theta})$-degree-by-$(\theta,\bar{\theta})$-degree
      with $\Pev(\vartheta_1^{d_1}\vartheta_2^{d_2}
				      \bar{\vartheta}_{\dot{1}}^{d_{\dot{1}}}
					  \bar{\vartheta}_{\dot{2}}^{d_{\dot{2}}})$ constant,
   and hence $\xi(\Pev(\breve{f}_{(\tinybullet)}))
                          = \Pev(\xi\, \breve{f}_{(\tinybullet)})$,
	where $\breve{f}_{(\tinybullet)}\in C^\infty(X)^{\Bbb C}[\vartheta, \bar{\vartheta}]^\anticommuting$
	is a component of $\breve{f}$ as a polynomial in $(\theta, \bar{\theta})$.					
  Statement (2) and Statement (3) follow from the definition of fermionic integration on $\widehat{X}$.
  
\end{proof}

\bigskip

\begin{flushleft}
{\bf The Fundamental Theorem on supersymmetric action functionals}
\end{flushleft}
The following Fundamental Theorem is truly amazing and yet so elegantly simple!
It is {\it the} tool physicists use for the construction of  supersymmetric quantum field theory models via superspace.
Readers are referred to, e.g.\
  [Bi: Sec.\:4.3] and [B-T-T: Sec.\:2.9]
for the original account by physicists.

\bigskip

\begin{stheorem} {\bf [fundamental: supersymmetric functional]}\;
 Up to a boundary term\footnote{Though
                                                    ignored in the current work, it should be noted that
													such a boundary term becomes an important part of understanding
													 when one studies supersymmetric quantum field theory with boundary.
                                                       } 
  on $X$,\\
 $(1)$
    $$
	 S_1(\breve{f})\;
	  :=\;  \int_{\widehat{X}}d^4x\,
	             d\bar{\theta}^{\dot{2}}d\bar{\theta}^{\dot{1}} d\theta^2 d\theta^1
			   \breve{f}
	$$
      is a functional on $C^\infty(\widehat{X}^{\widehat{\boxplus}})$ that is invariant under supersymmetries;\\
 $(2)$
    $$
	   S_2(\breve{f})\;
	   :=\;  \int_{\widehat{X}}d^4x\, d\theta^2 d\theta^1   \breve{f}  \hspace{2em}	
	 \mbox{(resp.}\;\;
	    S_3(\breve{f})\;
		   :=\; \int_{\widehat{X}}d^4x\, d\bar{\theta}^{\dot{2}} d\bar{\theta}^{\dot{1}}
		            \breve{f} \;\;\mbox{)}
	$$
	 is a functional on   $C^\infty(\widehat{X}^{\widehat{\boxplus}})^{\scriptsizech}$
    (resp.\ $C^\infty(\widehat{X}^{\widehat{\boxplus}})^{\scriptsizeach}$)
	  that is invariant under supersymmetries.

 Let $\Pev$ be a purge-evaluation map.
 Then, up to a boundary term on $X$,\\
 $(1^\prime)$
    $$
	 S^\prime_1(\breve{f})\;
	  :=\;  \int_{\widehat{X}}d^4x\,
	             d\bar{\theta}^{\dot{2}}d\bar{\theta}^{\dot{1}} d\theta^2 d\theta^1
			   \Pev(\breve{f})
	$$
      is a functional on $C^\infty(\widehat{X}^{\widehat{\boxplus}})$ that is invariant under supersymmetries;\\
 $(2^\prime)$
    $$
	   S^\prime_2(\breve{f})\;
	   :=\;  \int_{\widehat{X}}d^4x\, d\theta^2 d\theta^1  \Pev(\breve{f})  \hspace{2em}	
	 \mbox{(resp.}\;\;
	    S^\prime_3(\breve{f})\;
		   :=\; \int_{\widehat{X}}d^4x\, d\bar{\theta}^{\dot{2}} d\bar{\theta}^{\dot{1}}
		            \Pev(\breve{f})    \;\;\mbox{)}
	$$
	 is a functional on   $C^\infty(\widehat{X}^{\widehat{\boxplus}})^{\scriptsizech}$
    (resp.\ $C^\infty(\widehat{X}^{\widehat{\boxplus}})^{\scriptsizeach}$)
	  that is invariant under supersymmetries.
\end{stheorem}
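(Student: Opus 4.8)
The plan is to prove invariance in the standard way: a functional $S$ is invariant under supersymmetries iff $S(\delta_\xi\breve{f})$ is a boundary term on $X$, where $\delta_\xi\breve{f} = (\sum_\alpha \xi^\alpha Q_\alpha + \sum_{\dot\beta}\bar\xi_{\dot\beta}\bar Q^{\dot\beta})\breve{f}$ with $Q_\alpha$, $\bar Q_{\dot\beta}$ the infinitesimal supersymmetry generators of Example~1.2.5. Because each of $Q_\alpha$ and $\bar Q_{\dot\beta}$ lies in $\Der_{\Bbb C}(\widehat{X})$ and so acts $C^\infty(X)^{\Bbb C}[\vartheta,\bar\vartheta]^\anticommuting$-linearly with no $(\vartheta,\bar\vartheta)$-dependence, it is enough to show that $\int_{\widehat{X}}(\mbox{measure})\,Q_\alpha\breve{f}$ and $\int_{\widehat{X}}(\mbox{measure})\,\bar Q_{\dot\beta}\breve{f}$ are boundary terms for the relevant fermionic measure; the tower variables $\vartheta,\bar\vartheta$ ride along as spectators throughout.

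For Statement~(1) I would split $Q_\alpha = \partial/\partial\theta^\alpha - \sqrt{-1}\sum_{\mu,\dot\beta}\sigma^\mu_{\alpha\dot\beta}\bar\theta^{\dot\beta}\partial_\mu$. The Berezin integral $\int d\bar\theta^{\dot2}d\bar\theta^{\dot1}d\theta^2 d\theta^1$ annihilates any $\theta$-derivative, killing the first summand, while the second is a total $\partial_\mu$, so $\int d^4x$ of it is a boundary term; the same computation disposes of $\bar Q_{\dot\beta} = -\partial/\partial\bar\theta^{\dot\beta} + \sqrt{-1}\sum_{\mu,\alpha}\theta^\alpha\sigma^\mu_{\alpha\dot\beta}\partial_\mu$.

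For Statements~(2) and (3) I would first note that, since $\{e_{\beta^{\prime\prime}},Q_\alpha\}=\{e_{\beta^{\prime\prime}},\bar Q_{\dot\alpha}\}=0$ by Example~1.2.5, the generators $Q$, $\bar Q$ preserve chirality, so $\delta_\xi$ keeps the domain $C^\infty(\widehat{X}^{\widehat{\boxplus}})^\scriptsizech$ invariant. Passing to the chiral coordinates $(x',\theta,\bar\theta)$ of Example~1.3.2 and Definition~1.3.3, in which $d^4x = d^4x'$ because $x'-x$ is an even nilpotent shift with trivial Jacobian, the chain rule (as in the proof of Lemma~1.3.4) gives $Q_\alpha = \partial/\partial\theta^\alpha$ and $\bar Q_{\dot\beta} = -\partial/\partial\bar\theta^{\dot\beta} + 2\sqrt{-1}\sum_{\gamma,\mu}\theta^\gamma\sigma^\mu_{\gamma\dot\beta}\partial'_\mu$. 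For chiral $\breve{f}$, which by Lemma~1.3.4 is $\bar\theta$-independent in these coordinates, the $Q_\alpha$-variation is a pure $\theta$-derivative and dies under $\int d\theta^2 d\theta^1$, while the $\bar Q_{\dot\beta}$-variation collapses to $2\sqrt{-1}\sum_{\gamma,\mu}\theta^\gamma\sigma^\mu_{\gamma\dot\beta}\partial'_\mu\breve{f}$, a total spacetime derivative whose $\int d^4x'\,d\theta^2 d\theta^1$ is a boundary term. Statement~(3) (antichiral) is the mirror image: applying the twisted complex conjugation $^\dag$, Lemma~1.3.10 interchanges $Q\leftrightarrow\bar Q$ and Corollary~1.3.11 swaps chiral with antichiral, so (3) follows from (2).

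Finally, the primed Statements $(1')$, $(2')$, $(3')$ reduce to the unprimed ones through Lemma~2.0.2. As $Q_\alpha,\bar Q_{\dot\beta}\in\Der_{\Bbb C}(\widehat{X})$, part~(1) of that lemma yields $\Pev(Q_\alpha\breve{f})=Q_\alpha\Pev(\breve{f})$ and $\Pev(\bar Q_{\dot\beta}\breve{f})=\bar Q_{\dot\beta}\Pev(\breve{f})$, and parts~(2)--(3) let $\Pev$ pass through the Berezin integrals; since $\Pev$ is $C^\infty(X)^{\Bbb C}$-linear and commutes with $\partial_\mu$, it carries a boundary term to a boundary term. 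Thus $S'_i(\breve{f})$ equals $\Pev$ applied to the corresponding boundary term from the unprimed case, and invariance persists. The main obstacle I expect is Statements~(2)/(3): one must carefully justify the chiral-coordinate forms of $Q_\alpha,\bar Q_{\dot\beta}$ and the triviality of the measure change $d^4x=d^4x'$, while tracking the signs produced by commuting the odd parameters $\xi^\alpha,\bar\xi_{\dot\beta}$ and the odd derivations past the odd coordinates $\theta,\bar\theta$.
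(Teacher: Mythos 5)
Your proposal is correct and follows essentially the same route as the paper's proof: both rest on the fact that $Q_\alpha$, $\bar{Q}_{\dot{\beta}}$ are derivations on $\widehat{X}$ whose $\partial/\partial\theta$-, $\partial/\partial\bar{\theta}$-parts are annihilated by the Berezin measure (or, in the chiral case, by $e_{\beta^{\prime\prime}}\breve{f}=0$), leaving a total space-time derivative, and both invoke Lemma~2.2 to pass $\Pev$ through the derivations and the fermionic integrals. The only cosmetic differences are that you treat Statement~(2) in the chiral coordinates $(x^\prime,\theta,\bar{\theta})$ where the paper instead stays in standard coordinates and writes $\bar{Q}_{\dot{\beta}} = e_{\beta^{\prime\prime}} + 2\sqrt{-1}\sum_{\alpha,\mu}\theta^\alpha\sigma^\mu_{\alpha\dot{\beta}}\partial_\mu$, and that you derive the primed statements from the unprimed ones rather than proving the primed ones directly.
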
		

\medskip

\begin{proof}
 We give the proof  of Statement $(1^\prime)$ and Statement $(2^\prime)$.
 (The proof is identical to that of [L-Y5: Theorem 1.5.3] (SUSY(1))
      and is repeated here due to that the Theorem is so fundamental.)
 A similar, simpler argument proves Statement (1) and Statement (2).	
	
 For Statement $(1^\prime)$,
   since $Q_{\alpha}, \bar{Q}_{\dot{\beta}}\in \Der_{\Bbb C}(\widehat{X})$,
   it follows from
     the invariance of
	     $d^4x\,d\bar{\theta}^{\dot{2}}d\bar{\theta}^{\dot{1}}d\theta^2 d\theta^1$,
         $d^4x d\theta^2 d\theta^1$,  and
		 $d^4x d\bar{\theta}^{\dot{2}} d\bar{\theta}^{\dot{1}}$
		 under the flow that generates supersymmetries,
     Lemma~2.2, and basic calculus
   that
   \begin{eqnarray*}
     \delta_{Q_\alpha}S^\prime_1(\breve{f})
	   & :=\: & \int_{\widehat{X}}d^4x\,
	                         d\bar{\theta}^{\dot{2}} d\bar{\theta}^{\dot{1}} d\theta^2 d\theta^1\,
	                       \Pev(Q_\alpha\breve{f})\;
			    =\;  \int_{\widehat{X}}d^4x\,
				        d\bar{\theta}^{\dot{2}} d\bar{\theta}^{\dot{1}}  d\theta^2 d\theta^1\,
	                         Q_{\alpha}\Pev(\breve{f})\\
       & = & 	-\sqrt{-1}\,
	               \int_{\widehat{X}}d^4x\,
				     d\bar{\theta}^{\dot{2}} d\bar{\theta}^{\dot{1}} d\theta^2 d\theta^1\,
	                \sum_{\dot{\beta},\,\mu}
					  \sigma^\mu_{\alpha\dot{\beta}}\bar{\theta}^{\dot{\beta}}
					  \partial_\mu (\Pev(\breve{f}))		\\
       & = & -\sqrt{-1}\,
                  \int_X	d^4x \sum_{\mu}\partial_\mu
                    \left(\rule{0ex}{1.2em}\right.				
					\int d\bar{\theta}^{\dot{2}} d\bar{\theta}^{\dot{1}}d\theta^2 d\theta^1\,
					   \sum_{\dot{\beta}}
					      \sigma^\mu_{\alpha\dot{\beta}}\bar{\theta}^{\dot{\beta}}
						   \Pev(\breve{f})
					\left.\rule{0ex}{1.2em}\right)\;\;
				=\;\; -\sqrt{-1}\int_X dB_{\alpha}\,,
   \end{eqnarray*}
   where
     $B_{\alpha}= B_\alpha^0 dx^1\wedge dx^2\wedge dx^3
	                                   - B_\alpha^1 dx^0\wedge dx^2\wedge dx^3
									  + B_\alpha^2 dx^0\wedge dx^1\wedge dx^3
									   - B_\alpha^3 dx^0\wedge dx^1\wedge dx^2$
      is a $3$-form on $X$ with
    $$
	  B_\alpha^\mu \;
	    =\;  \int d\bar{\theta}^{\dot{2}} d\bar{\theta}^{\dot{1}}d\theta^2 d\theta^1\,
					   \sum_{\dot{\beta}}
					      \sigma^\mu_{\alpha\dot{\beta}}\bar{\theta}^{\dot{\beta}}
						   \Pev(\breve{f})\,.
	$$
 The proof that  $\delta_{Q_{\alpha}}S_1^\prime(\breve{f})$ is also a boundary term is similar.

 For Statement $(2^\prime)$,
  note that  for $\breve{f}$ chiral,
  $\delta_{Q_\alpha}S_2^\prime(\breve{f})=0$ always, for $\alpha=1,2$, and,
  thus one only needs to check the variation
  $\delta_{\bar{Q}_{\dot{\beta}}}S^\prime_2(\breve{f})$:
   \begin{eqnarray*}
     \delta_{\bar{Q}_{\dot{\beta}}}S^\prime_2(\breve{f})
	   & :=\: & \int_{\widehat{X}}d^4x\, d\theta^2 d\theta^1\,
	                       {\cal P}(\bar{Q}_{\dot{\beta}} \breve{f})\;
                =\; \int_{\widehat{X}}d^4x\, d\theta^2 d\theta^1\,
	                       \Pev(
						       (e_{\beta^{\prime\prime}}\,
                                  + \, 2\sqrt{-1}\mbox{$\sum$}_{\alpha, \mu}
								          \theta^\alpha \sigma^\mu_{\alpha\dot{\beta}}\partial_\mu													   
							   )\breve{f})                                                   \\
       & = & 	2 \sqrt{-1}\,
	               \int_{\widehat{X}}d^4x\, d\theta^2 d\theta^1\,
	                \sum_{\alpha,\,\mu}
					   \theta^\alpha \sigma^\mu_{\alpha\dot{\beta}}
					   \partial_\mu (\Pev(\breve{f}))		\\
       & = & 2\sqrt{-1}\,
                  \int_X	d^4x \sum_{\mu}\partial_\mu
                    \left(\rule{0ex}{1.2em}\right.				
					\int d\theta^2 d\theta^1\,
					   \sum_\alpha
					      \theta^\alpha \sigma^\mu_{\alpha\dot{\beta}} \Pev(\breve{f})
					\left.\rule{0ex}{1.2em}\right)\;\;
				=\;\; 2\sqrt{-1}\int_X dC_{\dot{\beta}}\,,
   \end{eqnarray*}
     where
     $C_{\dot{\beta}}= C_{\dot{\beta}}^0 dx^1\wedge dx^2\wedge dx^3
	                                   - C_{\dot{\beta}}^1 dx^0\wedge dx^2\wedge dx^3
									  + C_{\dot{\beta}}^2 dx^0\wedge dx^1\wedge dx^3
									   - C_{\dot{\beta}}^3 dx^0\wedge dx^1\wedge dx^2$
      is a $3$-form on $X$ with
    $$
	  C_{\dot{\beta}}^\mu \;
	    =\;  \int d\theta^2 d\theta^1\,
					\sum_\alpha
					      \theta^\alpha \sigma^\mu_{\alpha\dot{\beta}}\Pev(\breve{f})\,.
	$$
  For $\breve{f}$ antichiral,
   $\delta_{\bar{Q}_{\dot{\beta}}}S_3^\prime(\breve{f})=0$ always,
       for $\dot{\beta}=\dot{1}, \dot{2}$,    and
   the variation $\delta_{Q_\alpha}S_3^\prime(\breve{f})$, $\alpha=1,2$,
     can be computed similarly to show that it is a boundary term on $X$.
   
  This completes the proof.
  
\end{proof}
  
\medskip

\begin{sremark}$[$application of Theorem 2.3\,$]$\;
{\rm
 In an application of Theorem~2.3,
   the $\breve{f}$ in Statements (1), (2), $(1^\prime)$, and $(2^\prime)$
   usually comes from a functional of elements in $C^\infty(\widehat{X}^{\widehat{\boxplus}})$.
}\end{sremark}

\bigskip

Having reviewed `purge-evaluation maps' and the `Fundamental Theorem',
 we give two remarks/impose two questions below on purge-evaluation maps that deserve to be understood better.

\bigskip

\begin{flushleft}
{\bf Remark: A purge-evaluation map merged into an exotic ring?}
\end{flushleft}
To begin, we impose a guiding question:
 \begin{itemize}
  \item[{\bf Q.}]
    \parbox[t]{42em}{\it
     	Is it possible to construct a new function ring on a towered superspace $\widehat{X}^{\widehat{\boxplus}}$
           that accommodate
		     both the anticommuting Grassmann coordinate functions $\theta$, $\bar{\theta}$, $\vartheta$, $\bar{\vartheta}$
			   on $\widehat{X}^{\widehat{\boxplus}}$
			and a purge-evaluation map 		
			  $C^\infty(X)^{\Bbb C}[\theta, \bar{\theta}, \vartheta, \bar{\vartheta}]^\anticommuting
			      \rightarrow   C^\infty(\widehat{X})$			
			 defined via
			  $\Pev: C^\infty(X)^{\Bbb C}[\vartheta, \bar{\vartheta}]^\anticommuting
			      \rightarrow C^\infty(X)^{\Bbb C}$?}		
 \end{itemize}
The design that
  $\Pev(\vartheta_1^{d_1}\vartheta_2^{d_2}
				      \bar{\vartheta}_{\dot{1}}^{d_{\dot{1}}}
					  \bar{\vartheta}_{\dot{2}}^{d_{\dot{2}}})$ are constants in ${\Bbb C}$
 motivates one to consider first a relevant ${\Bbb C}$-algebra generated by the variables
   $\vartheta_\alpha$, $\bar{\vartheta}_{\dot{\beta}}$.
Together with
 \begin{itemize}
  \item[(1)]
   the fact the variables $\vartheta$, $\bar{\vartheta}$ are in spinor representations $S^\prime$, $S^{\prime\prime}$
     of the Lorentz group and there are pairings $\varepsilon$ chosen on the spinor bundles,

  \item[(2)]
   the isomorphism of representations of the Lorentz group:
     $V^\vee_{\Bbb C}\simeq S^\prime\oplus S^{\prime\prime}$,
  
  \item[(3)]
   compatibility with the twisted complex conjugation ${}^\dag$,
   
  \item[(4)]
   a Fierz-type identity:
   $\sum_{\alpha, \dot{\beta}}
         \theta^\alpha \sigma^\mu_{\alpha\dot{\beta}}\bar{\theta}^{\dot{\beta}}
	 \cdot
	  \sum_{\gamma, \dot{\delta}}
	     \theta^\gamma \sigma^\nu_{\gamma\dot{\delta}} \bar{\theta}^{\dot{\delta}}
	= 2\, \theta^1\theta^2\bar{\theta}^{\dot{1}}\bar{\theta}^{\dot{2}}\eta^{\mu\nu}$,
 \end{itemize}					
 (and some sense of naturality),
one is led almost  uniquely to the following ${\Bbb C}$-algebra:

\bigskip

\begin{sdefinition}
{\bf [basic exotic ${\Bbb Z}/2$-graded ${\Bbb C}$-algebra $\widehat{R}^{\,\flat}$]}\;
{\rm
  Let
    $\widehat{R}^\flat$
    be the ${\Bbb C}$-algebra with the underline ${\Bbb C}$-vector space	
	  $({\Bbb C}\oplus (S^\prime\oplus S^{\prime\prime})\oplus V^{\vee}_{\Bbb C}\,,\,+)$
	and multiplication $\bullet$ defined as follows. \\
	(Recall that
          $\varepsilon_{12}=\varepsilon_{\dot{1}\dot{2}}=-1$ and
	      $\varepsilon_{21}=\varepsilon_{\dot{2}\dot{1}}=1$.)
    	
  \bigskip
  
  \noindent
  $(a)$\hspace{1em}
  In terms of the basis
  $(1\,,\,\vartheta_1\,,\, \vartheta_2\,,\,
         \bar{\vartheta}_{\dot{1}}\,,\,
         \bar{\vartheta}_{\dot{2}}\,,\,
         \boldsymbol{\sigma}_{1\dot{1}}\,,\, \boldsymbol{\sigma}_{1\dot{2}}\,,\,
         \boldsymbol{\sigma}_{2\dot{1}}\,,\, \boldsymbol{\sigma}_{2\dot{2}})$\,:
  \begin{eqnarray*}
   &
   \vartheta_\alpha\bullet \vartheta_{\beta}= -\varepsilon_{\alpha\beta}\,,\hspace{2em}
   \vartheta_\alpha\bullet \bar{\vartheta}_{\dot{\beta}}\;
    =\; -\bar{\vartheta}_{\dot{\beta}}\bullet \vartheta_\alpha\;
	=\; \boldsymbol{\sigma}_{\alpha\dot{\beta}}\,,\hspace{2em}
   \bar{\vartheta}_{\dot{\alpha}}\bullet \bar{\vartheta}_{\dot{\beta}}
     =  \varepsilon_{\dot{\alpha}\dot{\beta}}\,,
	  &           \\[1.2ex]
   &
    \vartheta_\alpha \bullet \boldsymbol{\sigma}_{\gamma\dot{\delta}}\;
      =\; \boldsymbol{\sigma}_{\gamma\dot{\delta}}\bullet \vartheta_\alpha\;
	  =\; \varepsilon_{\gamma\alpha}\bar{\vartheta}_{\dot{\delta}}\,,\hspace{2em}	
	\bar{\vartheta}_{\dot{\beta}}\bullet \boldsymbol{\sigma}_{\gamma\dot{\delta}}\;
	  =\; \boldsymbol{\sigma}_{\gamma\dot{\delta}}\bullet \bar{\vartheta}_{\dot{\beta}}\;
	  =\; \varepsilon_{\dot{\delta}\dot{\beta}}\vartheta_\gamma\,,
	  &        \\[1.2ex]
   &
	 \boldsymbol{\sigma}_{\alpha\dot{\beta}}
	   \bullet \boldsymbol{\sigma}_{\gamma\dot{\delta}}\;
	   =\; |\varepsilon_{\alpha\gamma}\,\varepsilon_{\dot{\beta}\dot{\delta}}|\,.
	  &
 \end{eqnarray*}
 Explicitly,

 \bigskip

 \centerline{\footnotesize
 \begin{tabular}{|c||c|cccc|rrrr|} \hline
  $(\mbox{\tiny $\downarrow$}) \bullet (\mbox{\tiny $\rightarrow$}\!)$
   & $\;1$\rule{0ex}{1.2em} \raisebox{-1.2ex}{\rule{0ex}{1ex}}
   & $\vartheta_1$   & $\vartheta_2$   & $\bar{\vartheta}_{\dot{1}}$   &  $\bar{\vartheta}_{\dot{2}}$
      &  $\boldsymbol{\sigma}_{1\dot{1}}$     & $\boldsymbol{\sigma}_{1\dot{2}}$
	  &  $\boldsymbol{\sigma}_{2\dot{1}}$     &  $\boldsymbol{\sigma}_{2\dot{2}}$   \\  \hline\hline
  $1$\rule{0ex}{1.2em} \raisebox{-1.2ex}{\rule{0ex}{1ex}} 	
      & $1$
      & $\vartheta_1$   & $\vartheta_2$
	  & $\bar{\vartheta}_{\dot{1}}$   &  $\bar{\vartheta}_{\dot{2}}$
      & $\boldsymbol{\sigma}_{1\dot{1}}$        &  $\boldsymbol{\sigma}_{1\dot{2}}$
	  & $\boldsymbol{\sigma}_{2\dot{1}}$        &  $\boldsymbol{\sigma}_{2\dot{2}}$        \\ \hline
  $\vartheta_1$\rule{0ex}{1.6em} \raisebox{-1.2ex}{\rule{0ex}{1ex}}
      & $\vartheta_1$       &  $0$     & $1$
	  & $\boldsymbol{\sigma}_{1\dot{1}}$
	  & $\boldsymbol{\sigma}_{1\dot{2}}$
	  & $0\:\:$   & $0\:\:$    & $\bar{\vartheta}_{\dot{1}}$    & $\bar{\vartheta}_{\dot{2}}$  \\[1.2ex]     	
 $\vartheta_2$\rule{0ex}{1.2em} \raisebox{-1.2ex}{\rule{0ex}{1ex}}
      & $\vartheta_2$  & $-1$  & $0$
	  & $\boldsymbol{\sigma}_{2\dot{1}}$
	  & $\boldsymbol{\sigma}_{2\dot{2}}$
	  & $-\bar{\vartheta}_{\dot{1}}$   & $-\bar{\vartheta}_{\dot{2}}$   & $0\:\:$   & $0\:\:$ \\[1.2ex]
  $\bar{\vartheta}_{\dot{1}}$\rule{0ex}{1.2em} \raisebox{-1.2ex}{\rule{0ex}{1ex}}
      & $\bar{\vartheta}_{\dot{1}}$
	  & $-\boldsymbol{\sigma}_{1\dot{1}}$  & $-\boldsymbol{\sigma}_{2\dot{1}}$	
	  & $0$  & $-1$
	  & $0\:\:$  & $\vartheta_1$  & $0\:\:$  & $\vartheta_2$      \\[1.2ex]
  $\bar{\vartheta}_{\dot{2}}$\rule{0ex}{1.2em} \raisebox{-1.2ex}{\rule{0ex}{1ex}}
      & $\bar{\vartheta}_{\dot{2}}$
	  & $-\boldsymbol{\sigma}_{1\dot{2}}$
	  & $-\boldsymbol{\sigma}_{2\dot{2}}$
	  & $1$  & $0$
	  & $-\vartheta_1$  & $0\:\:$  & $-\vartheta_2$  & $0\:\:$       \\[1.2ex] \hline
  $\boldsymbol{\sigma}_{1\dot{1}}$\rule{0ex}{1.2em} \raisebox{-1.6ex}{\rule{0ex}{1ex}}
      & $\boldsymbol{\sigma}_{1\dot{1}}$
	  & $0$  & $-\bar{\vartheta}_{\dot{1}}$  & $0$   & $-\vartheta_1$
	  & $0\:\:$  & $0\:\:$ & $0\:\:$ & $1\:\:$ \\[1.2ex]
  $\boldsymbol{\sigma}_{1\dot{2}}$\rule{0ex}{1.2em} \raisebox{-1.2ex}{\rule{0ex}{1ex}}
      & $\boldsymbol{\sigma}_{1\dot{2}}$
	  & $0$  & $-\bar{\vartheta}_{\dot{2}}$  & $\vartheta_1$    & $0$
	  & $0\:\:$   & $0\:\:$   & $1\:\:$    & $0\:\:$  \\[1.2ex]
  $\boldsymbol{\sigma}_{2\dot{1}}$\rule{0ex}{1.2em} \raisebox{-1.2ex}{\rule{0ex}{1ex}}
      & $\boldsymbol{\sigma}_{2\dot{1}}$
	  & $\bar{\vartheta}_{\dot{1}}$   & $0$   & $0$   & $-\vartheta_2$
	  & $0\:\:$    & $1\:\:$    & $0\:\:$    & $0\:\:$   \\[1.2ex]
  $\boldsymbol{\sigma}_{2\dot{2}}$\rule{0ex}{1.2em} \raisebox{-1.2ex}{\rule{0ex}{1ex}}
      & $\boldsymbol{\sigma}_{2\dot{2}}$
	  & $\bar{\vartheta}_{\dot{2}}$   & $0$   & $\vartheta_2$  & $0$	
	  & $1\:\:$     & $0\:\:$     & $0\:\:$     & $0\:\:$  \\  \hline  	
 \end{tabular}
 }

 \bigskip	
 \bigskip

 \noindent
 $(b)$\hspace{1em}
 Or, equivalently,
 in terms of the basis
  $(1\,,\, \vartheta_1\,, \vartheta_2\,,\, \bar{\vartheta}_{\dot{1}}\,,\, \bar{\vartheta}_{\dot{2}}\,,\,
       \boldsymbol{\sigma}^0\,,\, \boldsymbol{\sigma}^1\,,\,
       \boldsymbol{\sigma}^2\,,\, \boldsymbol{\sigma}^3)$\,:
  \begin{eqnarray*}
   & \vartheta_\alpha\bullet \vartheta_{\beta}= -\varepsilon_{\alpha\beta}\,,\;\;\;\;\;\;
       \vartheta_\alpha\bullet \bar{\vartheta}_{\dot{\beta}}\;
        =\; -\bar{\vartheta}_{\dot{\beta}}\bullet \vartheta_\alpha\;
		=\;  -\,\mbox{\Large $\frac{1}{2}$}\,
		             \sum_\mu \bdsigma^{\mu}\bar{\sigma}_\mu^{\dot{\beta}\alpha}\,,\;\;\;\;\;\;
   \bar{\vartheta}_{\dot{\alpha}}\bullet \bar{\vartheta}_{\dot{\beta}}
        = \varepsilon_{\dot{\alpha}\dot{\beta}}\,,
	  &           \\[1.2ex]
   & \vartheta_\alpha\bullet \boldsymbol{\sigma}^\mu\;
        =\; \boldsymbol{\sigma}^\mu\bullet \vartheta_\alpha\;
		=\; \sum_{\gamma, \dot{\delta}}\bar{\vartheta}_{\dot{\delta}}
		        \sigma^\mu_{\gamma\dot{\delta}}\varepsilon_{\gamma\alpha}\,,\;\;\;\;\;\;
	   \bar{\vartheta}_{\dot{\beta}}\bullet \boldsymbol{\sigma}^\mu\;
        =\; \boldsymbol{\sigma}^\mu\bullet \bar{\vartheta}_{\dot{\beta}}\;
		=\; \sum_{\gamma,\dot{\delta}}\vartheta_\gamma
                \sigma^\mu_{\gamma\dot{\delta}}\varepsilon_{\dot{\delta}\dot{\beta}}\,,
	  &           \\[1.2ex]	
	& \boldsymbol{\sigma}^\mu \bullet \boldsymbol{\sigma}^\nu\;
	   =\; \sum_{\alpha, \dot{\delta}, \gamma, \dot{\delta}}
	           |\varepsilon_{\alpha\gamma}\,\varepsilon_{\dot{\beta}\dot{\delta}}|\,
			       \sigma^\mu_{\alpha\dot{\beta}}\, \sigma^\nu_{\gamma\dot{\delta}}\,.
	  &
 \end{eqnarray*}
 Explicitly,

 \bigskip

 \centerline{\footnotesize
 \begin{tabular}{|c||c|cccc|rrcr|} \hline
  $(\mbox{\tiny $\downarrow$}) \bullet (\mbox{\tiny $\rightarrow$}\!)$
   & $\;1$\rule{0ex}{1.2em} \raisebox{-1.2ex}{\rule{0ex}{1ex}}
   & $\vartheta_1$   & $\vartheta_2$   & $\bar{\vartheta}_{\dot{1}}$   &  $\bar{\vartheta}_{\dot{2}}$
      &  $\boldsymbol{\sigma}^0$     & $\boldsymbol{\sigma}^1$           & $\boldsymbol{\sigma}^2$                                             
	  &  $\boldsymbol{\sigma}^3$   \\  \hline\hline
  $1$\rule{0ex}{1.2em} \raisebox{-1.2ex}{\rule{0ex}{1ex}} 	
      & $1$
      & $\vartheta_1$   & $\vartheta_2$
	  & $\bar{\vartheta}_{\dot{1}}$   &  $\bar{\vartheta}_{\dot{2}}$
      & $\boldsymbol{\sigma}^0$        & $\boldsymbol{\sigma}^1$
	  & $\boldsymbol{\sigma}^2$               &  $\boldsymbol{\sigma}^3$             \\ \hline
  $\vartheta_1$\rule{0ex}{1.6em} \raisebox{-1.2ex}{\rule{0ex}{1ex}}
      & $\vartheta_1$       &  $0$     & $1$
	  & $\;\;-\frac{1}{2}\boldsymbol{\sigma}^0 + \frac{1}{2}\boldsymbol{\sigma}^3\;\;$
	  & $\;\;\frac{1}{2}\boldsymbol{\sigma}^1 + \frac{\sqrt{-1}}{2}\boldsymbol{\sigma}^2\;\;$
	  & $-\bar{\vartheta}_{\dot{2}}$   &  $\bar{\vartheta}_{\dot{1}}$
	  & $\sqrt{-1}\bar{\vartheta}_{\dot{1}}$    & $-\bar{\vartheta}_{\dot{2}}$  \\[1.2ex]
  $\vartheta_2$\rule{0ex}{1.2em} \raisebox{-1.2ex}{\rule{0ex}{1ex}}
      & $\vartheta_2$  & $-1$  & $0$
	  & $\;\;\frac{1}{2}\boldsymbol{\sigma}^1-\frac{\sqrt{-1}}{2}\boldsymbol{\sigma}^2\;\;$
	  & $\;\;-\frac{1}{2}\boldsymbol{\sigma}^0-\frac{1}{2}\boldsymbol{\sigma}^3\;\;$
	  & $\bar{\vartheta}_{\dot{1}}$   & $-\bar{\vartheta}_{\dot{2}}$
	  & $\sqrt{-1}\bar{\vartheta}_{\dot{2}}$   & $-\bar{\vartheta}_{\dot{1}}$ \\[1.2ex]
  $\bar{\vartheta}_{\dot{1}}$\rule{0ex}{1.2em} \raisebox{-1.2ex}{\rule{0ex}{1ex}}
      & $\bar{\vartheta}_{\dot{1}}$
	  & $\;\;\frac{1}{2}\boldsymbol{\sigma}^0-\frac{1}{2}\boldsymbol{\sigma}^3\;\;$
      & $\;\;-\frac{1}{2}\boldsymbol{\sigma}^1+\frac{\sqrt{-1}}{2}\boldsymbol{\sigma}^2\;\;$	
	  & $0$  & $-1$
	  & $-\vartheta_2$  & $\vartheta_1$  & $-\sqrt{-1}\vartheta_1$  & $-\vartheta_2$ \\[1.2ex]
  $\bar{\vartheta}_{\dot{2}}$\rule{0ex}{1.2em} \raisebox{-1.2ex}{\rule{0ex}{1ex}}
      & $\bar{\vartheta}_{\dot{2}}$
	  & $\;\;-\frac{1}{2}\boldsymbol{\sigma}^1-\frac{\sqrt{-1}}{2}\boldsymbol{\sigma}^2\;\;$
	  & $\;\;\frac{1}{2}\boldsymbol{\sigma}^0+\frac{1}{2}\boldsymbol{\sigma}^3\;\;$
	  & $1$  & $0$
	  & $\vartheta_1$  & $-\vartheta_2$  & $-\sqrt{-1}\vartheta_2$  & $-\vartheta_1$  \\[1.2ex] \hline
  $\boldsymbol{\sigma}^0$\rule{0ex}{1.2em} \raisebox{-1.6ex}{\rule{0ex}{1ex}}
      & $\boldsymbol{\sigma}^0$
	  & $-\bar{\vartheta}_{\dot{2}}$  & $\bar{\vartheta}_{\dot{1}}$
	  & $-\vartheta_2$   & $\vartheta_1$
	  & $2\:\:$  & $0\:\:$ & $0$ & $0\:\:$ \\[1.2ex]
  $\boldsymbol{\sigma}^1$\rule{0ex}{1.2em} \raisebox{-1.2ex}{\rule{0ex}{1ex}}
      & $\boldsymbol{\sigma}^1$
	  & $\bar{\vartheta}_{\dot{1}}$  & $-\bar{\vartheta}_{\dot{2}}$
	  & $\vartheta_1$    & $-\vartheta_2$
	  & $0\:\:$   & $2\:\:$   & $0$    & $0\:\:$  \\[1.2ex]
  $\boldsymbol{\sigma}^2$\rule{0ex}{1.2em} \raisebox{-1.2ex}{\rule{0ex}{1ex}}
      & $\boldsymbol{\sigma}^2$
	  & $\sqrt{-1}\bar{\vartheta}_{\dot{1}}$
	  & $\sqrt{-1}\bar{\vartheta}_{\dot{2}}$
	  & $-\sqrt{-1}\vartheta_1$   & $-\sqrt{-1}\vartheta_2$
	  & $0\:\:$    & $0\:\:$    & $2$    & $0\:\:$ \\[1.2ex]
  $\boldsymbol{\sigma}^3$\rule{0ex}{1.2em} \raisebox{-1.2ex}{\rule{0ex}{1ex}}
      & $\boldsymbol{\sigma}^3$  & $-\bar{\vartheta}_{\dot{2}}$   & $-\bar{\vartheta}_{\dot{1}}$
      & $-\vartheta_2$  & $-\vartheta_1$	
	  & $0\:\:$     & $0\:\:$     & $0$     & $-2\:\:$  \\  \hline
 \end{tabular}
 }

 \bigskip
 \bigskip
 
 This gives a
   ${\Bbb Z}/2$-graded, ${\Bbb Z}/2$-commutative, unital, {\it non-associative} algebra over ${\Bbb C}$.
 We will call it the {\it basic exotic ${\Bbb Z}/2$-graded ${\Bbb C}$-algebra} and
   denote it by $\widehat{R}^{\,\flat}$.
}\end{sdefinition}

\bigskip

One can check directly that

\bigskip

\begin{slemma}
{\bf [$\widehat{R}^{\,\flat}$ under twisted complex conjugation ${}^{\dag}$]}\;
 For $r_1$, $r_2\in \widehat{R}^{\,\flat}$,
  $(r_1\bullet r_2)^{\dag}= r_2^\dag\bullet r_1^\dag$.
\end{slemma}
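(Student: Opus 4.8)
The plan is to reduce the asserted identity to a finite check on basis elements and then verify that check block-by-block against the two multiplication tables defining $\widehat{R}^{\,\flat}$.

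First I would record how the twisted complex conjugation ${}^\dag$ acts on the generators of $\widehat{R}^{\,\flat}$. By part (0$^\prime$) of Definition~1.2.3 it restricts to complex conjugation on the scalar summand ${\Bbb C}$ and interchanges $S^\prime$ and $S^{\prime\prime}$; concretely $1^\dag = 1$, $\vartheta_\alpha^\dag = \bar{\vartheta}_{\dot{\alpha}}$, $\bar{\vartheta}_{\dot{\beta}}^\dag = \vartheta_\beta$, while on the vector summand $V^\vee_{\Bbb C}$, which is real, one has $(\boldsymbol{\sigma}^\mu)^\dag = \boldsymbol{\sigma}^\mu$ and hence $\boldsymbol{\sigma}_{\alpha\dot{\beta}}^\dag = \boldsymbol{\sigma}_{\beta\dot{\alpha}}$ via the reality relation $\overline{\sigma^\mu_{\alpha\dot{\beta}}} = \sigma^\mu_{\beta\dot{\alpha}}$ established in Section~1.3. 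Since ${}^\dag$ is additive and conjugate-${\Bbb C}$-linear (combine (0$^\prime$) and (2$^\prime$)) and the product $\bullet$ is ${\Bbb C}$-bilinear, writing $r_1 = \sum_i a_i e_i$ and $r_2 = \sum_j b_j e_j$ in the chosen basis gives $(r_1\bullet r_2)^\dag = \sum_{i,j}\overline{a_i}\,\overline{b_j}\,(e_i\bullet e_j)^\dag$ and $r_2^\dag\bullet r_1^\dag = \sum_{i,j}\overline{b_j}\,\overline{a_i}\,(e_j^\dag\bullet e_i^\dag)$. Thus the identity for all $r_1, r_2$ follows once I establish the basis identity $(e_i\bullet e_j)^\dag = e_j^\dag\bullet e_i^\dag$.

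Next I would verify this basis identity. Products with the unit $1$ are immediate, so it remains to treat the $8\times 8$ block indexed by $\vartheta_\alpha, \bar{\vartheta}_{\dot{\beta}}, \boldsymbol{\sigma}_{\gamma\dot{\delta}}$, which I would organize into five sub-blocks: the spinor--spinor block ($\vartheta$--$\vartheta$, $\vartheta$--$\bar{\vartheta}$, $\bar{\vartheta}$--$\bar{\vartheta}$), the two mixed spinor--$\boldsymbol{\sigma}$ blocks, and the $\boldsymbol{\sigma}$--$\boldsymbol{\sigma}$ block. Each is handled by substituting the table entry on the left, applying ${}^\dag$ with the generator rules above, and comparing with the right-hand product read off the table; the only arithmetic needed is the reality and antisymmetry of the structure constants, namely $\overline{\varepsilon_{\alpha\beta}} = \varepsilon_{\alpha\beta}$, $\overline{\varepsilon_{\dot{\alpha}\dot{\beta}}} = \varepsilon_{\dot{\alpha}\dot{\beta}}$, $\varepsilon_{\dot{\beta}\dot{\alpha}} = -\varepsilon_{\dot{\alpha}\dot{\beta}}$, and the numerical coincidence of dotted and undotted $\varepsilon$. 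For instance $(\vartheta_\alpha\bullet\vartheta_\beta)^\dag = -\varepsilon_{\alpha\beta}$ must equal $\bar{\vartheta}_{\dot{\beta}}\bullet\bar{\vartheta}_{\dot{\alpha}} = \varepsilon_{\dot{\beta}\dot{\alpha}} = -\varepsilon_{\alpha\beta}$, and $(\boldsymbol{\sigma}_{\alpha\dot{\beta}}\bullet\boldsymbol{\sigma}_{\gamma\dot{\delta}})^\dag = |\varepsilon_{\alpha\gamma}\,\varepsilon_{\dot{\beta}\dot{\delta}}|$ must equal $\boldsymbol{\sigma}_{\delta\dot{\gamma}}\bullet\boldsymbol{\sigma}_{\beta\dot{\alpha}} = |\varepsilon_{\delta\beta}\,\varepsilon_{\dot{\gamma}\dot{\alpha}}|$, which agree because each equals $1$ exactly when $\alpha\neq\gamma$ and $\beta\neq\delta$.

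I do not expect a genuine conceptual obstacle here, since the computation is mechanical once the generator rules are fixed; the principal risk is bookkeeping over the sub-blocks. The one point I would flag explicitly is that although $\widehat{R}^{\,\flat}$ is non-associative, the statement concerns only products of two elements, so no three-fold regrouping ever occurs and non-associativity does not interfere. The second point meriting care is the identification $\boldsymbol{\sigma}_{\alpha\dot{\beta}}^\dag = \boldsymbol{\sigma}_{\beta\dot{\alpha}}$: I would confirm its consistency with $(\boldsymbol{\sigma}^\mu)^\dag = \boldsymbol{\sigma}^\mu$ through $\boldsymbol{\sigma}_{\alpha\dot{\beta}} = \sum_\mu \sigma^\mu_{\alpha\dot{\beta}}\boldsymbol{\sigma}^\mu$ and $\overline{\sigma^\mu_{\alpha\dot{\beta}}} = \sigma^\mu_{\beta\dot{\alpha}}$, after which either the $\boldsymbol{\sigma}_{\alpha\dot{\beta}}$-basis or the $\boldsymbol{\sigma}^\mu$-basis may be used freely for the $\boldsymbol{\sigma}$--$\boldsymbol{\sigma}$ sub-block.
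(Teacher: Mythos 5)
Your proposal is correct and is precisely the ``direct check'' the paper invokes (the lemma is prefaced by ``One can check directly that'' and no further proof is given): reduce to basis elements via additivity and conjugate-linearity of ${}^\dag$ together with ${\Bbb C}$-bilinearity of $\bullet$, then verify the identity entry-by-entry against the multiplication table using $1^\dag=1$, $\vartheta_\alpha^\dag=\bar{\vartheta}_{\dot{\alpha}}$, $\bar{\vartheta}_{\dot{\beta}}^\dag=\vartheta_\beta$, $\boldsymbol{\sigma}_{\alpha\dot{\beta}}^\dag=\boldsymbol{\sigma}_{\beta\dot{\alpha}}$ and the reality of the $\varepsilon$-structure constants. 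Your flagged consistency check $\boldsymbol{\sigma}_{\alpha\dot{\beta}}^\dag=\boldsymbol{\sigma}_{\beta\dot{\alpha}}$ via $\overline{\sigma^\mu_{\alpha\dot{\beta}}}=\sigma^\mu_{\beta\dot{\alpha}}$ is exactly right and matches the identity used in the proof of Lemma~1.3.9.
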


\medskip

\begin{sremark} $[$from Grassmann number to exotic number$]$\; {\rm
 Conceptually, one should think of
  the ${\Bbb Z}/2$-graded ${\Bbb C}$-algebra $\widehat{R}^{\,\flat}$ as {\it diminished/flattened} from
  the Grassmann algebra $\bigwedge^{\tinybullet}_{\Bbb C}(S^\prime\oplus S^{\prime\prime})$
   {\it via the purge-evaluation} defined by the ${\Bbb C}$-vector-space-homomorphism associated to the assignment
 $$
   \begin{array}{ccccrl}
    \widehat{\Pev} & : & \bigwedge^{\tinybullet}_{\Bbb C}(S^\prime\oplus S^{\prime\prime})
	  & \longrightarrow & \widehat{R}^{\,\flat}\;  &  \\[1.2ex]
     &&  1\;\;   & \longmapsto  & 1\;\;  &,\\[.8ex]
     &&  \vartheta_{\alpha}  & \longmapsto & \vartheta_\alpha   &, \\[.8ex]	
	 &&  \bar{\vartheta}_{\dot{\beta}}  & \longmapsto   & \bar{\vartheta}_{\dot{\beta}} &,\\[.8ex]
	 &&  \bdsigma_{\alpha\dot{\beta}}     & \longmapsto   & \bdsigma_{\alpha\dot{\beta}} &,\\[.8ex]
	 &&  \vartheta_1\vartheta_2  & \longmapsto   & 1\;\;   &,\\[.8ex]
	 &&  \bar{\vartheta}_{\dot{1}}\bar{\vartheta}_{\dot{2}}
	           & \longmapsto    &  -1\;\;  &,  \\[.8ex]
     && 	\vartheta_1\vartheta_2\bar{\vartheta}_{\dot{\beta}}
	           & \longmapsto    &  \bar{\vartheta}_{\dot{\beta}}   &, \\[.8ex]
   	 &&  \vartheta_\alpha\bar{\vartheta}_{\dot{1}}\bar{\vartheta}_{\dot{2}}
	           & \longmapsto    &  - \vartheta_\alpha   &, \\[.8ex]
     && \vartheta_1\vartheta_2\bar{\vartheta}_{\dot{1}}\bar{\vartheta}_{\dot{2}}			
	           & \longmapsto   & -1\;\;  &,
   \end{array}
 $$
 for $\alpha=1,2$ and $\dot{\beta}=\dot{1}, \dot{2}$.
 $\widehat{\Pev}$ is a ${\Bbb C}$-vector-space projection map
   from $\bigwedge_{\Bbb C}^{\tinybullet}(S^\prime\oplus S^{\prime\prime})$
     onto its sub-${\Bbb C}$-vector-space $\widehat{R}^{\,\flat}$.
 While $\widehat{\Pev}$ is not a ${\Bbb C}$-algebra-homomorphism, it is an $\SO^\uparrow(1,3)$-module-homomorphism.
}\end{sremark}

\medskip
  
\begin{sdefinition} {\bf [exotic function-ring of towered superspace]}\; {\rm
 Let
  $\widehat{X}^{\widehat{\squareflat}}$
  be a ringed-space with the underlying topology $X$ and the function ring
   \begin{eqnarray*}
    \lefteqn{C^\infty(\widehat{X}^{\widehat{\squareflat}})  \;\;	:=\;\;
	    C^\infty(X)^{\Bbb C}[\theta, \bar{\theta};  \vartheta, \bar{\vartheta}]^\exotic
	    }\\
	 && :=\;\;
	   \mbox{
	      the extension-over-${\Bbb C}$ of $C^\infty(X)^{\Bbb C}[\theta, \vartheta]^\anticommuting$
	       by $\widehat{R}^{\,\flat}$ with $\vartheta$, $\bar{\vartheta}$ anticumming with $\theta$, $\bar{\theta}$}\,.
   \end{eqnarray*}
 This is a ${\Bbb Z}/2$-graded, ${\Bbb Z}/2$-commutative nonassociative ring.
 We shall call $\widehat{X}^{\widehat{\squareflat}}$
   an {\it exotic towered superspace} and its function ring the {\it exotic function-ring} of a towered superspace.
}\end{sdefinition}

\bigskip

\noindent
The twisted complex conjugation $(\cdot)^\dag$ is naturally defined on $C^\infty(\widehat{X}^{\widehat{\squareflat}})$,
 with the property that
  $(\breve{f}\bullet \breve{g})^\dag\; =\;  \breve{g}^\dag \bullet \breve{f}^\dag$,
  for $\breve{f},\,\breve{g}\in C^\infty(\widehat{X}^{\widehat{\squareflat}})$,
 where $\bullet$ is the multiplication on $C^\infty(\widehat{X}^{\widehat{\squareflat}})$.

One can formulate
 (1) the notion of {\it chiral} and {\it antichiral superfields} on $X$,
 (2) the notion of the {\it small exotic function-ring} of $\widehat{X}^{\widehat{\squareflat}}$,
            which is a complexified $C^\infty$-ring --- in particular, commutative and associative ---
			  contained in $C^\infty(\widehat{X}^{\widehat{\squareflat}})$
			  as a subring,      and
 (3) Theorem~2.3
  all in terms of elements in $C^\infty(\widehat{X}^{\widehat{\squareflat}})$.
In particular, for Item (3),
 there is no need to introduce the additional purge-evaluation map now
 since that data is already merged into the construction of the exotic function-ring of the towered superspace.
This can be used to directly reproduce the Wess-Zumino model in [Wess \& Bagger: Chapter  V].
In this simple case,
  the nonassociativity of the $(\vartheta, \bar{\vartheta})$-part is completely
   overridden by the nilpotency of the $(\theta, \bar{\theta})$-part.

Unfortunately,
 when attempting to extend its application to the construction of supersymmetric gauge theories,
 one no longer has such a luck.
The nonassociativity of $C^\infty(\widehat{X}^{\widehat{\squareflat}}) $
 brings in new technical issues that do not look to have any simple cure for the time being.

\bigskip

\begin{flushleft}
{\bf Remark: A canonical/standard purge-evaluation map with respect to $(\vartheta, \bar{\vartheta})$?}
\end{flushleft}
So far in the consideration of purge-evaluation maps, we only take into account the requirement from physics
 that the density over $X$ of an action functional has to be real-valued
  (or complex-valued plus its complex conjugation ) without any nilpotent factors.
\begin{itemize}
  \item[{\bf Q.}]
   \parbox[t]{42em}{\it
     How additional physical considerations restrict the choice of the purge-evaluation map?}
\end{itemize}
For example, assume that
 $\Pev(\vartheta_1^{d_1}\vartheta_2^{d_2}
                \bar{\vartheta}_{\dot{1}}^{d_{\dot{1}}}\bar{\vartheta}_{\dot{2}}^{d_{\dot{2}}})
    \in {\Bbb R}$.
Then while the absolute-value of the value $\Pev$ takes may be absorbed into field-redefinitions,
 the sign of the value may not be.
In that case, {\it what physics consideration select  the sign of the values of the purge-evaluation map?}

To get a sense of this, let us consider the Wess-Zumino model quoted from [Wess \& Bagger: Eq.\:(5.13)]
 (in the case of one single chiral superfield ):
 $$
  {\cal L}\;=\;
    \sqrt{-1}\sum_\mu \partial_\mu \bar{\psi}\bar{\sigma}^\mu\psi
	  + A^\ast\square A
	  - \mbox{\large $\frac{1}{2}$} m\psi\psi
	  - \mbox{\large $\frac{1}{2}$} m^\ast \bar{\psi}\bar{\psi}
	  - g\psi\psi A - g^\ast \bar{\psi}\bar{\psi}A^\ast - F^\ast F\,.
 $$
Here, $(A, \psi, F)$  is a chiral multiplet and $m, g\in {\Bbb C}$.
To make the nilpotent feature of the independent components $A$, $\psi$, $F$ of the chiral superfield
  manifest,
 set
 \begin{eqnarray*}
  &
  A\;\rightsquigarrow\; f_{(0)}\,,\hspace{2em}
  A^\ast\;\rightsquigarrow\; \overline{f_{(0)}}\,,\hspace{2em}
  \psi_\alpha\;\rightsquigarrow\; \vartheta_\alpha f_{(\alpha)}\,,\hspace{2em}
  \bar{\psi}_{\dot{\alpha}}\;\rightsquigarrow\; \bar{\vartheta}_{\dot{\alpha}}
     \overline{f_{(\alpha)}}\,,
	  \\[.6ex]  &
  F\;\rightsquigarrow\; - \vartheta_1\vartheta_2 f_{(12)}\,,\hspace{2em}
  F^\ast\; \rightsquigarrow\; (-\vartheta_1\vartheta_2 f_{(12)})^\dag\;
     =\; \bar{\vartheta}_{\dot{1}} \bar{\vartheta}_{\dot{2}} \overline{f_{(12)}}
 \end{eqnarray*}
 and apply
   the rule of short-hand-to-long-hand notation change ([Wess \& Bagger: Eq.\:(A.21)]) and
   the rule of raising-or-lowering spinorial indices ([Wess \& Bagger: Eq.\:(A.9)]):
 \begin{eqnarray*}
  \psi\psi  &  :=  &
   \sum_\alpha \psi^\alpha\psi_\alpha\;
	    =\;  -2\, \psi_1\psi_2 \;
		=\; -2\,\vartheta_1\vartheta_2 f_{(1)}f_{(2)}\,,
	 \\		
   \bar{\psi}\bar{\psi} &  :=  &
   \sum_{\dot{\alpha}} \bar{\psi}_{\dot{\alpha}}\bar{\psi}^{\dot{\alpha}}\;
	    =\;\;\;\;   2\, \bar{\psi}_{\dot{1}}\bar{\psi}_{\dot{2}} \;
		=\;\;\;\;   2\, \bar{\vartheta}_{\dot{1}}\bar{\vartheta}_{\dot{2}}
		               \overline{f_{(1)}}\, \overline{f_{(2)}}\,,  	
 \end{eqnarray*}
 the density ${\cal L}$ of the action functional of the Wess-Zumino model is then converted to
 \begin{eqnarray*}
  \lefteqn{
  {\cal L}\;
     =\; \sqrt{-1} \sum_{\alpha, \dot{\beta}, \mu}
	         \bar{\vartheta}_{\dot{\beta}}\vartheta_\alpha
			     \partial_\mu \overline{f_{(\beta)}}
				    \bar{\sigma}^{\mu\dot{\beta}\alpha} f_{(\alpha)}
			 + \overline{f_{(0)}}\,\square f_{(0)}
			 + m\, \vartheta_1\vartheta_2 f_{(1)}f_{(2)}
			 - \bar{m}\,\bar{\vartheta}_{\dot{1}} \bar{\vartheta}_{\dot{2}}\,
	   		       \overline{f_{(1)}}\, \overline{f_{(2)}}
				 } \\
     &&	\hspace{2em}			
       +\, 2g\, \vartheta_1\vartheta_2 f_{(1)}f_{(2)} f_{(0)} 		
		- 2 \bar{g}\,\bar{\vartheta}_{\dot{1}} \bar{\vartheta}_{\dot{2}}\,
		          \overline{f_{(1)}}\, \overline{f_{(2)}}\, \overline{f_{(0)}}
		+ \vartheta_1\vartheta_2 \bar{\vartheta}_{\dot{1}} \bar{\vartheta}_{\dot{2}}\,
		     \overline{f_{(12)}} f_{(12)}\,.
 \end{eqnarray*}
Which can be converted further to a manifestly real expression
 \begin{eqnarray*}
  \lefteqn{
  {\cal L}\;
     =\; \mbox{\large $\frac{\sqrt{-1}}{2}$}\,
	         \sum_{\alpha, \dot{\beta}, \mu}
	          \bar{\vartheta}_{\dot{\beta}}\vartheta_\alpha
			   \mbox{\Large $($}
			      \partial_\mu \overline{f_{(\beta)}}
				    \bar{\sigma}^{\mu\dot{\beta}\alpha} f_{(\alpha)}
				  -   \overline{f_{(\beta)}}
				       \bar{\sigma}^{\mu\dot{\beta}\alpha} \partial_\mu f_{(\alpha)}	
			    \mbox{\Large $)$}		
			 - \sum_\mu \partial_\mu \overline{f_{(0)}}\,\partial^\mu f_{(0)}
    			 } \\
     && \hspace{2em}
	   +\, m\, \vartheta_1\vartheta_2 f_{(1)}f_{(2)}
			 - \bar{m}\,\bar{\vartheta}_{\dot{1}} \bar{\vartheta}_{\dot{2}}\,
	   		       \overline{f_{(1)}}\, \overline{f_{(2)}}		
       + 2g\, \vartheta_1\vartheta_2 f_{(1)}f_{(2)} f_{(0)} 		
		- 2 \bar{g}\,\bar{\vartheta}_{\dot{1}} \bar{\vartheta}_{\dot{2}}\,
		          \overline{f_{(1)}}\, \overline{f_{(2)}}\, \overline{f_{(0)}}
	     \\[.6ex]
     && \hspace{2em}
		+\, \vartheta_1\vartheta_2 \bar{\vartheta}_{\dot{1}} \bar{\vartheta}_{\dot{2}}\,
		        \overline{f_{(12)}} f_{(12)}\,,
 \end{eqnarray*}
up to boundary terms on $X$.
The Hamiltonian density ${\cal H}$ over the spatial $3$-space, identified as, say, $\{0\}\times {\Bbb R}^3\subset X$,
 on the configuration space of the system
 is given by
 \begin{eqnarray*}
  {\cal H} & = &
       \frac{\delta {\cal L}}{\delta (\partial_0 f_{(0)})} \cdot  \partial_0 f_{(0)}
	  +    \frac{\delta {\cal L}}{\delta (\partial_0 \overline{f_{(0)}})}
	            \cdot  \partial_0 \overline{f_{(0)}}
      + \sum_\alpha	
	         \frac{\delta {\cal L}}{\delta (\partial_0 f_{(\alpha)})} \cdot  \partial_0 f_{(\alpha)}
	  + \sum_\beta
	         \frac{\delta {\cal L}}{\delta (\partial_0 \overline{f_{(\beta)}})}
			    \cdot  \partial_0 \overline{f_{(\beta)}}	
	  - {\cal L}
	  \\
	&= &
	 - \mbox{\large $\frac{\sqrt{-1}}{2}$}\,
	         \sum_{\mu=1}^3 \sum_{\alpha, \dot{\beta}}
	          \bar{\vartheta}_{\dot{\beta}}\vartheta_\alpha
			   \mbox{\Large $($}
			      \partial_\mu \overline{f_{(\beta)}}
				    \bar{\sigma}^{\mu\dot{\beta}\alpha} f_{(\alpha)}
				  -   \overline{f_{(\beta)}}
				       \bar{\sigma}^{\mu\dot{\beta}\alpha} \partial_\mu f_{(\alpha)}	
			    \mbox{\Large $)$}		
	   +  \sum_{\mu=1}^3 \partial_\mu \overline{f_{(0)}}\,\partial^\mu f_{(0)}
    			\\
     && \hspace{2em}
	   -\, m\, \vartheta_1\vartheta_2 f_{(1)}f_{(2)}
	    + \bar{m}\,\bar{\vartheta}_{\dot{1}} \bar{\vartheta}_{\dot{2}}\,
	   		       \overline{f_{(1)}}\, \overline{f_{(2)}}		
       - 2g\, \vartheta_1\vartheta_2 f_{(1)}f_{(2)} f_{(0)} 		
		+ 2 \bar{g}\,\bar{\vartheta}_{\dot{1}} \bar{\vartheta}_{\dot{2}}\,
		          \overline{f_{(1)}}\, \overline{f_{(2)}}\, \overline{f_{(0)}}
	     \\[.6ex]
     && \hspace{2em}
	 -\, \vartheta_1\vartheta_2 \bar{\vartheta}_{\dot{1}} \bar{\vartheta}_{\dot{2}}\,
		        \overline{f_{(12)}} f_{(12)}\,,
 \end{eqnarray*}
 with the final expression restricted to the spatial slice $\{0\}\times {\Bbb R}^3\subset X$.
The Hamiltonian density over the spatial $3$-space is meant to be the energy density
   along a trajectory in the phase space following the equation of motion and hence better be real positive-or-bounded-below.
Thus,
 \begin{itemize}
  \item[(1)]
   If setting
      $\vartheta_1\vartheta_2 \rightsquigarrow \pm 1$ by convention,
    then it is required that
	  $\bar{\vartheta}_{\dot{1}}\bar{\vartheta}_{\dot{2}}\rightsquigarrow \mp 1$
	  in order to be compatible with the twisted complex conjugation.
	This also helps keep ${\cal H}$ real.

  \item[(2)]
   The summand
    $\,-\,\vartheta_1\vartheta_2 \bar{\vartheta}_{\dot{1}}\bar{\vartheta}_{\dot{2}}
	         \overline{f_{(12)}} f_{(12)}\,$ in ${\cal H}$
	is a potential energy term.
   Since $\overline{f_{(12)}} f_{(12)}\ge 0$,
    it is required that
     $\,-\,\vartheta_1\vartheta_2 \bar{\vartheta}_{\dot{1}}\bar{\vartheta}_{\dot{2}}
      \rightsquigarrow \mbox{positie real number}$.
   One may thus set
    $\vartheta_1\vartheta_2 \bar{\vartheta}_{\dot{1}}\bar{\vartheta}_{\dot{2}}
	   \rightsquigarrow -1$ to meet this requirement.

  \item[(3)]
   For the nilpotent factor $\bar{\vartheta}_{\dot{\beta}}\vartheta_\alpha$ in the kinetic terms of the spinor component fields
     $f_{(\alpha)}$, $\alpha=1,2$,
	exchanging the chirality/handedness of spinor fields $\psi \leftrightarrow \bar{\psi}$ changes the sign in front of the kinetic term.
  Thus, the sign of $\Pev(\bar{\vartheta}_{\dot{\beta}}\vartheta_\alpha)$ is only a matter of convention
   and we may set
     $\bar{\vartheta}_{\dot{\beta}}\vartheta_\alpha = -\vartheta_\alpha\bar{\vartheta}_{\dot{\beta}}
	    \rightsquigarrow \pm 1$
	either choice by convention.	
 \end{itemize}
Thus overall up to positive constant factors, which can be absorbed into a redefinition of fields or coupling constants,
  the following four choices are physically acceptable:
 \begin{itemize}
  \item[\LARGE $\cdot$]
   $\vartheta_1\vartheta_2 \rightsquigarrow \;\;\;1\,,\;\;
     \bar{\vartheta}_{\dot{1}}\bar{\vartheta}_{\dot{2}}\rightsquigarrow -1\,,\;\;
	 \vartheta_1\vartheta_2 \bar{\vartheta}_{\dot{1}}\bar{\vartheta}_{\dot{2}}
	   \rightsquigarrow -1\,,\;\;
	 \bar{\vartheta}_{\dot{\beta}}\vartheta_\alpha\rightsquigarrow \pm 1$\,;
 
  \item[\LARGE $\cdot$]
   $\vartheta_1\vartheta_2 \rightsquigarrow -1\,,\;\;
     \bar{\vartheta}_{\dot{1}}\bar{\vartheta}_{\dot{2}}\rightsquigarrow\;\;\;  1\,,\;\;
	 \vartheta_1\vartheta_2 \bar{\vartheta}_{\dot{1}}\bar{\vartheta}_{\dot{2}}
	   \rightsquigarrow -1\,,\;\;
	 \vartheta_\alpha \bar{\vartheta}_{\dot{\beta}}\rightsquigarrow \pm 1$\,.
 \end{itemize}

\bigskip

\section{The small function-ring of $\widehat{X}^{\widehat{\boxplus}}$ and the Wess-Zumino model\\
                {\large (cf.\:[Wess \& Bagger: Chapter V])}}

We reconstruct in this section
  \begin{itemize}
   \item[\LARGE $\cdot$] [Wess \& Bagger: Chapter V.\:{\it Chiral superfields}\,]
  \end{itemize}
 in the complexified ${\Bbb Z}/2$-graded $C^\infty$-Algebraic Geometry setting of Sec.\:1.
The same construction was made in [L-Y5: Sec.\:2] (SUSY(1)) with slightly different terminology;
cf.\:footnote~6 and footnote~10.

\bigskip

\begin{flushleft}
{\bf The small function-ring $C^\infty(\widehat{X}^{\widehat{\boxplus}})^\smallscriptsize$
           and its chiral and antichiral sectors}
\end{flushleft}
Recall from Sec.\:1.2
 the following complexified $C^\infty$-subrings of $C^\infty(\widehat{X}^{\widehat{\boxplus}})$.
Their elements give the superfields involved in the superspace formulation of the Wess-Zumino model.
 \begin{itemize}
  \item[\Large $\cdot$]
  The {\it small function-ring} $C^\infty(\widehat{X}^{\widehat{\boxplus}})^\smallscriptsize$
   of $\widehat{X}^{\widehat{\boxplus}}$,
  which consists of elements of $C^\infty(\widehat{X}^{\widehat{\boxplus}})$
   of the following form
 {\small
 \begin{eqnarray*}
  \breve{f}
 &= &
    f_{(0)}
	+ \sum_{\alpha}\theta^\alpha\vartheta_\alpha f_{(\alpha)}
	+ \sum_{\dot{\beta}}
	       \bar{\theta}^{\dot{\beta}}\bar{\vartheta}_{\dot{\beta}} f_{(\dot{\beta})}  \\
  && \hspace{1em}			
    +\; \theta^1\theta^2\vartheta_1\vartheta_2 f_{(12)}
	+ \sum_{\alpha,\dot{\beta}}\theta^\alpha \bar{\theta}^{\dot{\beta}}
	      \left(\rule{0ex}{1.2em}\right.\!
		    \sum_\mu \sigma^\mu_{\alpha\dot{\beta}} f_{[\mu]}\,
			 +\, \vartheta_\alpha\bar{\vartheta}_{\dot{\beta}} f_{(\alpha\dot{\beta})}
		  \!\left.\rule{0ex}{1.2em}\right)
    + \bar{\theta}^{\dot{1}}\bar{\theta}^{\dot{2}}
	   \bar{\vartheta}_{\dot{1}}\bar{\vartheta}_{\dot{2}} f_{(\dot{1}\dot{2})}  \\
  && \hspace{1em}		
	+ \sum_{\dot{\beta}}\theta^1\theta^2\bar{\theta}^{\dot{\beta}}
	     \left(\rule{0ex}{1.2em}\right.\!
		   \sum_{\alpha,\mu }
		          \vartheta_\alpha\,\sigma^{\mu \alpha}_{\;\;\;\;\dot{\beta}} f^\prime_{[\mu]}\,
		    +\, \vartheta_1\vartheta_2\bar{\vartheta}_{\dot{\beta}} f_{(12\dot{\beta})}
		 \!\left.\rule{0ex}{1.2em}\right)
    + \sum_\alpha \theta^\alpha\bar{\theta}^{\dot{1}}\bar{\theta}^{\dot{2}}
	   \left(\rule{0ex}{1.2em}\right.\!
	     \sum_{\dot{\beta}, \mu}
		  \sigma^{\mu\dot{\beta}}_\alpha
		     \bar{\vartheta}_{\dot{\beta}} f^{\prime\prime}_{[\mu]}\,
         +\, \vartheta_\alpha \bar{\vartheta}_{\dot{1}}\bar{\vartheta}_{\dot{2}}	
		           f_{(\alpha\dot{1}\dot{2})}
	   \!\left.\rule{0ex}{1.2em}\right)\\
  && \hspace{1em}
	+\; \theta^1\theta^2\bar{\theta}^{\dot{1}}\bar{\theta}^{\dot{2}}
	     \left(\rule{0ex}{1.2em}\right.\!
		  f^\sim_{(0)}
		  + \sum_{\alpha,\dot{\beta}, \mu} \vartheta_\alpha\bar{\vartheta}_{\dot{\beta}}
		         \bar{\sigma}^{\mu\dot{\beta}\alpha} f^{\sim}_{[\mu]}
		  + \vartheta_1\vartheta_2\bar{\vartheta}_{\dot{1}}\bar{\vartheta}_{\dot{2}}
		        f_{(12\dot{1}\dot{2})}
		 \!\left.\rule{0ex}{1.2em}\right)
	 \\
   & \in &	C^\infty(X)^{\Bbb C}[\theta, \bar{\theta}, \vartheta, \bar{\vartheta}]^\anticommuting\,.		
 \end{eqnarray*}
  }
    
  \item[\Large $\cdot$]
  The {\it small chiral function-ring} $C^\infty(\widehat{X}^{\widehat{\boxplus}})^{\smallscriptsize, \scriptsizech}$
   of $\widehat{X}^{\widehat{\boxplus}}$,
  which consists of elements of $C^\infty(\widehat{X}^{\widehat{\boxplus}})^\smallscriptsize$
   of the following form
     \marginpar{\vspace{2em}\raggedright\tiny
         \raisebox{-1ex}{\hspace{-2.4em}\LARGE $\cdot$}Cf.\,[\,\parbox[t]{20em}{Wess
		\& Bagger:\\ Eq.\:(5.3)].}}
  \begin{eqnarray*}
   \breve{f} & = &
     f_{(0)}(x)
	 + \sum_\gamma \theta^\gamma \vartheta_\gamma f_{(\gamma)}(x)
	    + \theta^1\theta^2
		     \vartheta_1\vartheta_2 f_{(12)}(x)						
			\\
	&& \hspace{2em}
	    +\, \sqrt{-1} \sum_{\gamma, \dot{\delta}, \nu}
		    \theta^\gamma \bar{\theta}^{\dot{\delta}} \sigma^\nu_{\gamma\dot{\delta}}
			  \partial_\nu f_{(0)}(x)
        + \sqrt{-1} \sum_{\dot{\delta}, \gamma, \nu}
		     \theta^1\theta^2 \bar{\theta}^{\dot{\delta}} \vartheta_\gamma
			  \sigma^{\nu\gamma}_{\;\;\;\;\dot{\delta}}\,\partial_\nu f_{(\gamma)}(x)
		- \theta^1\theta^2\bar{\theta}^{\dot{1}} \bar{\theta}^{\dot{2}}\,
		    \square f_{(0)}(x)\,.
  \end{eqnarray*}
  
   \item[\Large $\cdot$]
  The {\it small antichiral function-ring}
    $C^\infty(\widehat{X}^{\widehat{\boxplus}})^{\smallscriptsize, \scriptsizeach}$
   of $\widehat{X}^{\widehat{\boxplus}}$,
  which consists of elements of\\   $C^\infty(\widehat{X}^{\widehat{\boxplus}})^\smallscriptsize$
   of the following form
     \marginpar{\vspace{2em}\raggedright\tiny
         \raisebox{-1ex}{\hspace{-2.4em}\LARGE $\cdot$}Cf.\,[\,\parbox[t]{20em}{Wess
		\& Bagger:\\ Eq.\:(5.5)].}}
  \begin{eqnarray*}
   \breve{f} & = &
     f_{(0)}(x)
	 + \sum_{\dot{\delta}} \bar{\theta}^{\dot{\delta}} \bar{\vartheta}_{\dot{\delta}}
	       f_{(\dot{\delta})}(x)
	    + \bar{\theta}^{\dot{1}}\bar{\theta}^{\dot{2}} 		
			 \bar{\vartheta}_{\dot{1}}\bar{\vartheta}_{\dot{2}}  f_{(\dot{1}\dot{2})}(x)			
			\\
	&& \hspace{2em}
	    -\, \sqrt{-1} \sum_{\gamma, \dot{\delta}, \nu}
		    \theta^\gamma \bar{\theta}^{\dot{\delta}} \sigma^\nu_{\gamma\dot{\delta}}
			  \partial_\nu f_{(0)}(x)
        + \sqrt{-1} \sum_{\dot{\delta}, \gamma, \nu}
		     \theta^\gamma \bar{\theta}^{\dot{1}} \bar{\theta}^{\dot{2}}
			  \bar{\vartheta}_{\dot{\delta}}
			  \sigma^{\nu\dot{\delta}}_\gamma\,\partial_\nu f_{(\dot{\delta})}(x)
		- \theta^1\theta^2\bar{\theta}^{\dot{1}} \bar{\theta}^{\dot{2}}\,
		    \square f_{(0)}(x)\,.               		
  \end{eqnarray*}
 \end{itemize}
Recall also
 that, as a ring,
   $C^\infty(\widehat{X}^{\widehat{\boxplus}})^\smallscriptsize$
   is generated by
    $C^\infty(\widehat{X}^{\widehat{\boxplus}})^{\smallscriptsize, \scriptsizech}
      \cup C^\infty(\widehat{X}^{\widehat{\boxplus}})^{\smallscriptsize, \scriptsizeach}$
   and
 that the twisted complex conjugation ${}^\dag$ takes
 $C^\infty(\widehat{X}^{\widehat{\boxplus}})^{\smallscriptsize, \scriptsizech}$  and
 $C^\infty(\widehat{X}^{\widehat{\boxplus}})^{\smallscriptsize, \scriptsizeach}$
 to each other.
 
We now proceed to construct the Wess-Zumino model on $X$ in terms of
 $C^\infty(\widehat{X}^{\widehat{\boxplus}})^\smallscriptsize$.

\bigskip

\begin{flushleft}
{\bf Relevant basic computations/formulae}
\end{flushleft}
Let
   \begin{eqnarray*}
     \breve{f} & = &
	   f_{(0)}(x)
	   + \sum_\alpha \theta^\alpha\vartheta_\alpha f_{(\alpha)}(x)
	   + \theta^1\theta^2\vartheta_1\vartheta_2 f_{(12)}(x)
       + \sqrt{-1} \sum_{\alpha,\dot{\beta};\,\mu}
	          \theta^\alpha\bar{\theta}^{\dot{\beta}}	
			     \sigma^\mu_{\alpha\dot{\beta}}\, \partial_\mu f_{(0)}(x) \\
      && \hspace{1em}				
       + \sqrt{-1}\sum_{\dot{\beta};\, \alpha, \mu}				
	        \theta^1\theta^2\bar{\theta}^{\dot{\beta}}			
			   \vartheta_\alpha\sigma^{\mu\alpha}_{\;\;\;\;\dot{\beta}}\, \partial_\mu f_{(\alpha)}(x)
       - \theta^1\theta^2\bar{\theta}^{\dot{1}}\bar{\theta}^{\dot{2}}\,
	        \square f_{(0)}(x)\,,
   \end{eqnarray*}
  be a chiral function on $\widehat{X}^{\widehat{\boxplus}, \smallscriptsize}$, determined by
   $(f_{(0)},  f_{(\alpha)}, f_{(12)} )_{\alpha}$.
It follows from Corollary~1.3.11
 that its twisted complex conjugate $\breve{f}^\dag$ is
  the antichiral function on $X^{\physics}$
     determined by
	   $(f^{\dag }_{(0)},
	         f^{\dag }_{(\dot{\beta})},
			 f^{\dag }_{(\dot{1}\dot{2})})_{\dot{\beta}}
		 = (\overline{f_{(0)}}, -\,\overline{f_{(\beta)}},
		       \overline{f_{(12)}})_\beta$,
		where
		   $\overline{f_{(\tinybullet)}}$
     	  	is the complex conjugate of $f_{(\tinybullet)}\in C^\infty(X)^{\,\Bbb C}$.
 Explicitly,
   \begin{eqnarray*}
     \breve{f}^\dag & = &
	   \overline{f_{(0)}(x)}
	   - \sum_{\dot{\beta}} \bar{\theta}^{\dot{\beta}}\bar{\vartheta}_{\dot{\beta}}
	           \overline{f_{(\beta)}(x)}
	   + \bar{\theta}^{\dot{1}}\bar{\theta}^{\dot{2}}
	      \bar{\vartheta}_{\dot{1}}\bar{\vartheta}_{\dot{2}}
		               \overline{f_{(12)}(x)}
       - \sqrt{-1} \sum_{\alpha,\dot{\beta};\,\mu}
	          \theta^\alpha\bar{\theta}^{\dot{\beta}}	
			     \sigma^\mu_{\alpha\dot{\beta}}\,
				       \partial_\mu \overline{f_{(0)}(x)} \\
      && \hspace{1em}				
       -\, \sqrt{-1}\sum_{\alpha;\, \dot{\beta}, \mu}				
	        \theta^\alpha\bar{\theta}^{\dot{1}} \bar{\theta}^{\dot{2}}
			    \bar{\vartheta}_{\dot{\beta}}
			      \sigma^{\mu\dot{\beta}}_\alpha\, \partial_\mu \overline{f_{(\beta)}(x)}
       - \theta^1\theta^2\bar{\theta}^{\dot{1}}\bar{\theta}^{\dot{2}}\,
	        \square \overline{f_{(0)}(x)}\,.
   \end{eqnarray*}

Consequently, (recall that $\square:= -\partial_0^2+\partial_1^2+\partial_2^2+\partial_3^2$\,)
 {\small
 \begin{eqnarray*}
   \breve{f}^\dag \breve{f} & =  & \breve{f} \breve{f}^\dag \\
    & = & \overline{f_{(0)}}(x)f_{(0)}(x)
	            + \sum_{\alpha}\theta^\alpha\vartheta_\alpha
                      \overline{f_{(0)}(x)}\, f_{(\alpha)}(x)
			    - \sum_{\dot{\beta}}\bar{\theta}^{\dot{\beta}}\bar{\vartheta}_{\dot{\beta}}
				      \overline{f_{(\beta)}(x)}f_{(0)}(x)
                + \theta^1\theta^2\vartheta_1\vartheta_2
				      \overline{f_{(0)}(x)}\, f_{(12)}(x)\\
    &&	+ \sum_{\alpha,\dot{\beta}}
	             \theta^\alpha\bar{\theta}^{\dot{\beta}}
                \left\{\rule{0ex}{1.2em}\right.\!
                  \sqrt{-1}\sum_\mu \sigma^\mu_{\alpha\dot{\beta}}
				   \mbox{\Large $($}
 				    \overline{f_{(0)}(x)}\, \partial_\mu f_{(0)}(x)
				       - \partial_\mu \overline{f_{(0)}(x)}\, f_{(0)}(x)
				   \mbox{\Large $)$}
					+ \vartheta_\alpha \bar{\vartheta}_{\dot{\beta}}
					    \overline{f_{(\beta)}(x)}\, f_{(\alpha)}(x)
                \!\left.\rule{0ex}{1.2em}\right\}	  \\				
    && +\; \bar{\theta}^{\dot{1}}\bar{\theta}^{\dot{2}}
	            \bar{\vartheta}^{\dot{1}}\bar{\vartheta}^{\dot{2}}\,
                  \overline{f_{(12)}(x)}\, f_{(0)}(x)				\\
	&&   + \sum_{\dot{\beta}}\theta^1\theta^2\bar{\theta}^{\dot{\beta}}
	             \left\{\rule{0ex}{1.2em}\right.\!	
				  \sqrt{-1}\sum_{\mu, \alpha}	
						  \vartheta_\alpha \sigma^{\mu\alpha}_{\;\;\;\;\dot{\beta}}	
                             \mbox{\Large $($}	
						     	 \overline{f_{(0)}(x)}\, \partial_\mu f_{(\alpha)}(x)
							      - \partial_\mu\overline{f_{(0)}(x)}\, f_{(\alpha)}(x)
						     \mbox{\Large $)$}		
			   -  \vartheta_1\vartheta_2\bar{\vartheta}_{\dot{\beta}}
			           \overline{f_{(\beta)}(x)}\, f_{(12)}(x)
				 \!\left.\rule{0ex}{1.2em}\right\}             \\[1.2ex]
	&&   + \sum_{\alpha}\theta^\alpha\bar{\theta}^{\dot{1}}\bar{\theta}^{\dot{2}}
	             \left\{\rule{0ex}{1.2em}\right.\!				
				  \sqrt{-1}\sum_{\mu, \dot{\beta}}			
						  \bar{\vartheta}_{\dot{\beta}}\sigma^{\mu\dot{\beta}}_{\alpha}
						      \mbox{\Large $($}		
							    - \partial_\mu \overline{f_{(\beta)}(x)}\, f_{(0)}(x)
							      + \overline{f_{(\beta)}(x)}\, \partial_\mu f_{(0)}(x)
							 \mbox{\Large $)$}
			   +\, \vartheta_\alpha \bar{\vartheta}_{\dot{1}}\bar{\vartheta}_{\dot{2}}
			           \overline{f_{(12)}(x)}\, f_{(\alpha)}(x)
				 \!\left.\rule{0ex}{1.2em}\right\}             \\[1.2ex]	
    && +\; \theta^1\theta^2\bar{\theta}^{\dot{1}}\bar{\theta}^{\dot{2}}
	             \left\{\rule{0ex}{1.2em}\right.\!
				   -\,\square\overline{f_{(0)}(x)}\cdot f_{(0)}(x)
				    -\, \overline{f_{(0)}(x)}\cdot\square f_{(0)}(x)
					+\,2 \sum_\mu
				             \partial_\mu \overline{f_{(0)}(x)}\,
							      \partial^\mu f_{(0)}(x)  \\
       && \hspace{6em}				
               +\, \sqrt{-1}\,
			        \sum_{\alpha,\dot{\beta}, \mu}	
					  \vartheta_\alpha\bar
					    {\vartheta}_{\dot{\beta}}\, \bar{\sigma}^{\mu,\dot{\beta}\alpha}\,
			        \mbox{\Large $($}
			          -\, \overline{f_{(\beta)}(x)}\, \partial_\mu f_{(\alpha)}(x)
                     + f_{(\alpha)}(x)\,\partial_\mu \overline{f_{(\beta)}(x)}
					\mbox{\Large $)$}					\\
    && 	\hspace{6em}
	         +\, \vartheta_1\vartheta_2\bar{\vartheta}_{\dot{1}}\bar{\vartheta}_{\dot{2}}
			        \overline{f_{(12)}(x)}\, f_{(12)}(x)
           	    \!\left.\rule{0ex}{1.2em}\right\}   \,;
 \end{eqnarray*}}  
     \marginpar{\vspace{-24.4em}\raggedright\tiny
         \raisebox{-1ex}{\hspace{-2.4em}\LARGE $\cdot$}Cf.\,[\,\parbox[t]{20em}{Wess
		\& Bagger:\\ Eq.\:(5.9)].}}

   \marginpar{\vspace{1em}\raggedright\tiny
         \raisebox{-1ex}{\hspace{-2.4em}\LARGE $\cdot$}Cf.\,[\,\parbox[t]{20em}{Wess
		\& Bagger:\\ Eq.\:(5.7)].}}
{\small
 \begin{eqnarray*}
  \breve{f}^2 & =
    & f_{(0)}(x)^2
	   + 2 \sum_\alpha\theta^\alpha\vartheta_\alpha
	           f_{(0)}(x) f_{(\alpha)}(x)
	   + 2\, \theta^1\theta^2\vartheta_1\vartheta_2
		      \mbox{\Large $($}
			   f_{(0)}(x) f_{(12)}(x)
			   - f_{(1)}(x) f_{(2)}(x)
			  \mbox{\Large $)$}\\	
    &&  +\, (\mbox{terms of $\bar{\theta}$-degree $\ge 1$})\, ;
 \end{eqnarray*}}     

  \marginpar{\vspace{1em}\raggedright\tiny
         \raisebox{-1ex}{\hspace{-2.4em}\LARGE $\cdot$}Cf.\,[\,\parbox[t]{20em}{Wess
		\& Bagger:\\ Eq.\:(5.8)].}}
{\small
 \begin{eqnarray*}
  \breve{f}^3 & =
    & f_{(0)}(x)^3
	   + 3 \sum_\alpha\theta^\alpha\vartheta_\alpha
	           f_{(0)}(x)^2 f_{(\alpha)}(x)
	   + 3\, \theta^1\theta^2\vartheta_1\vartheta_2
		      \mbox{\Large $($}
			   f_{(0)}(x)^2 f_{(12)}(x)
			   - 2\, f_{(0)}(x)f_{(1)}(x)f_{(2)}(x)
			  \mbox{\Large $)$}\\	
    &&  +\, (\mbox{terms of $\bar{\theta}$-degree $\ge 1$})\, ;
 \end{eqnarray*}}     
{\small
 \begin{eqnarray*}
  (\breve{f}^\dag)^2 & =
    & \overline{f_{(0)}}(x)^2
	   - 2 \sum_{\dot{\beta}}\bar{\theta}^{\dot{\beta}}\bar{\vartheta}_{\dot{\beta}}
	           \overline{f_{(0)}}(x)\overline{f_{(\beta)}}(x)
	   + 2\, \bar{\theta}^{\dot{1}}\bar{\theta}^{\dot{2}}
	            \bar{\vartheta}_{\dot{1}}\bar{\vartheta}_{\dot{2}}
		      \mbox{\Large $($}
			   \overline{f_{(0)}}(x) \overline{f_{(12)}}(x)
			   - \overline{f_{(1)}}(x) \overline{f_{(2)}}(x)
			  \mbox{\Large $)$}\\	
    &&  +\, (\mbox{terms of $\theta$-degree $\ge 1$})\, ;
 \end{eqnarray*}    } 
{\small
 \begin{eqnarray*}
  (\breve{f}^\dag)^3 & =
    & \overline{f_{(0)}}(x)^3
	   - 3 \sum_{\dot{\beta}}\bar{\theta}^{\dot{\beta}}\bar{\vartheta}_{\dot{\beta}}
	          \overline{f_{(0)}}(x)^2
			  \overline{f_{(\beta)}}(x)
	   + 3\, \bar{\theta}^{\dot{1}}\bar{\theta}^{\dot{2}}
	            \bar{\vartheta}_{\dot{1}}\bar{\vartheta}_{\dot{2}}
		      \mbox{\Large $($}
			   \overline{f_{(0)}}(x)^2    \overline{f_{(12)}}(x)
			   - 2\, \overline{f_{(0)}}(x)\overline{f_{(1)}}(x)
			           \overline{f_{(2)}}(x)
			  \mbox{\Large $)$}\\	
    &&  +\, (\mbox{terms of $\theta$-degree $\ge 1$})\,.
 \end{eqnarray*}(Cf.}     
 \![Wess \& Bagger: Eqs.\:(5.9), (5.7), (5.8)].)

\bigskip

\begin{flushleft}
{\bf The action functional of the Wess-Zumino model}
\end{flushleft}
The action functional of the {\it Wess-Zumino model}
 is given by:\\
 (caution that
     $(d\theta^2 d\theta^1)^\dag= d\bar{\theta}^{\dot{1}}d\bar{\theta}^{\dot{2}}
	    = - d\bar{\theta}^{\dot{2}}d\bar{\theta}^{\dot{1}}$)
\bigskip

 {\small
 \begin{eqnarray*}
  \lefteqn{
     S(\breve{f})\;
    :=\;  \int_Xd^4x\,
	         \left\{\rule{0ex}{1.2em}\right.
	           -\, \mbox{\large $\frac{1}{4}$}\,
			         \int  d\bar{\theta}^{\dot{2}} d\bar{\theta}^{\dot{1}} d\theta^2 d\theta^1\,
				         \mbox{\Large $($}  \breve{f}^\dag \breve{f} \mbox{\Large $)$}
				}\\
          && \hspace{7em}				
			+\, \int d\theta^2 d\theta^1\,
			        \mbox{\Large $($}
				            \lambda \breve{f}+ \mbox{\large $\frac{1}{2}$}m \breve{f}^2
                               + \mbox{\large $\frac{1}{3}$}g \breve{f}^3
					   \mbox{\Large $)$}
            - \int d\bar{\theta}^{\dot{2}} d\bar{\theta}^{\dot{1}}\,
			        \mbox{\Large $($}
				                     \bar{\lambda} \breve{f}^\dag
				                      + \mbox{\large $\frac{1}{2}$}\bar{m} (\breve{f}^\dag)^2
                                      + \mbox{\large $\frac{1}{3}$}\bar{g} (\breve{f}^\dag)^3
								   \mbox{\Large $)$}
		     \left.\rule{0ex}{1.2em}\right\}
	 \\
 && =\;
   \int_X d^4x
     \left\{\rule{0ex}{1.2em}\right.   	
	   \mbox{\large $\frac{1}{4}$}\,\square\overline{f_{(0)}(x)}\cdot f_{(0)}(x)
		+ \mbox{\large $\frac{1}{4}$}\,  \overline{f_{(0)}(x)}\cdot\square f_{(0)}(x)
		- \mbox{\large $\frac{1}{2}$} \sum_\mu
				  \partial_\mu \overline{f_{(0)}(x)}\, \partial^\mu f_{(0)}(x)
		 \\
       && \hspace{6em}				
       +\, \mbox{\large $\frac{\sqrt{-1}}{4}$}\,\sum_{\alpha,\dot{\beta}, \mu}	
	            \vartheta_\alpha\bar{\vartheta}_{\dot{\beta}}\cdot
				  \bar{\sigma}^{\mu\bar{\beta}\alpha}\,
			        \mbox{\Large $($}
			         \overline{f_{(\beta)}(x)}\,\partial_\mu f_{(\alpha)}(x)
                     - f_{(\alpha)}(x)\, \partial_\mu \overline{f_{(\beta)}(x)}
					\mbox{\Large $)$}					
	     \\
        && 	\hspace{6em}
	      -\, \mbox{\large $\frac{1}{4}$}
	         \vartheta_1\vartheta_2\bar{\vartheta}_{\dot{1}}\bar{\vartheta}_{\dot{2}}\cdot
			 \overline{f_{(12)}(x)}\, f_{(12)}(x)	 	 	 	 	 	
    \\					
  && \hspace{6em}
     +\, \vartheta_1\vartheta_2\cdot
	        \left(\rule{0ex}{1.2em}\right.
               m\,
			      \mbox{\Large $($}
			        f_{(0)}(x) f_{(12)}(x) - f_{(1)}(x) f_{(2)}(x)
			      \mbox{\Large $)$}    	
				 \\[-1ex]
			 && \hspace{12em}
             +\, g\,
		         \mbox{\Large $($}
			      f_{(0)}(x)^2 f_{(12)}(x)- 2\, f_{(0)}(x)f_{(1)}(x)f_{(2)}(x)
			     \mbox{\Large $)$}
              + \lambda f_{(12)}(x)   	
			\left.\rule{0ex}{1.2em}\right)	
	   \\
	&& \hspace{6em}
	 -\,  \bar{\vartheta}_{\dot{1}}\bar{\vartheta}_{\dot{2}} \cdot
	         \left(\rule{0ex}{1.2em}\right.
		       \bar{m}\,
		         \mbox{\Large $($}
			      \overline{f_{(0)}}(x)\, \overline{f_{(12)}}(x)
			        - \overline{f_{(1)}}(x)\, \overline{f_{(2)}}(x)
			     \mbox{\Large $)$}
				  \\[-1ex]
			 && \hspace{12em}
             +\, \bar{g}\,
		          \mbox{\Large $($}
			        \overline{f_{(0)}}(x)^2\, \overline{f_{(12)}}(x)
			          - 2\, \overline{f_{(0)}}(x)\,\overline{f_{(1)}}(x)\,\overline{f_{(2)}}(x)
			  \mbox{\Large $)$}		
              +   \bar{\lambda}\, \overline{f_{(12)}(x)}			
			\left.\rule{0ex}{1.2em}\right)	    						
	  \left.\rule{0ex}{1.2em}\right\}\,.
\end{eqnarray*}
}
It follows from Theorem~2.3
 that up to a space-time boundary term, this functional is supersymmetric.
After imposing the purge-evaluation map
$$
  \Pev\;:\;
    \vartheta_1\vartheta_2\;\rightsquigarrow\; 1\,,\;\;\;\;
	\vartheta_\alpha\bar{\vartheta}_{\dot{\beta}}\; \rightsquigarrow\; -1\,,\;\;\;\;
    \bar{\vartheta}_{\dot{1}}	\bar{\vartheta}_{\dot{2}}\; \rightsquigarrow\; -1\,,\;\;\;\;
	 \vartheta_1\vartheta_2 \bar{\vartheta}_{\dot{1}}	\bar{\vartheta}_{\dot{2}}\;
	      \rightsquigarrow\; -1\,.
 $$
 to remove the even nilpotent factors
  $\vartheta_1\vartheta_2,\,
	\vartheta_\alpha\bar{\vartheta}_{\dot{\beta}},\,
    \bar{\vartheta}_{\dot{1}}	\bar{\vartheta}_{\dot{2}},\,
	 \vartheta_1\vartheta_2 \bar{\vartheta}_{\dot{1}}	\bar{\vartheta}_{\dot{2}}$	
 in the expression and integration by parts to fix the kinetic terms,
 $S(\breve{f})$ becomes
%
 %
  \marginpar{\vspace{7em}\raggedright\tiny
         \raisebox{-1ex}{\hspace{-2.4em}\LARGE $\cdot$}Cf.\,[\,\parbox[t]{20em}{Wess
		\& Bagger:\\ Eq.\:(5.11)].}}
 {\small
 \begin{eqnarray*}
  \lefteqn{
     S(\breve{f})\;
    :=\;  \int_Xd^4x\,
	         \Pev \left\{\rule{0ex}{1.2em}\right.
	           -\, \mbox{\large $\frac{1}{4}$}\,
			         \int  d\bar{\theta}^{\dot{2}} d\bar{\theta}^{\dot{1}} d\theta^2 d\theta^1\,
				         \mbox{\Large $($}  \breve{f}^\dag \breve{f} \mbox{\Large $)$}
				}\\
          && \hspace{7em}				
			+\, \int d\theta^2 d\theta^1\,
			        \mbox{\Large $($}
				            \lambda \breve{f}+ \mbox{\large $\frac{1}{2}$}m \breve{f}^2
                               + \mbox{\large $\frac{1}{3}$}g \breve{f}^3
					   \mbox{\Large $)$}
            - \int d\bar{\theta}^{\dot{2}} d\bar{\theta}^{\dot{1}}\,
			        \mbox{\Large $($}
				                     \bar{\lambda} \breve{f}^\dag
				                      + \mbox{\large $\frac{1}{2}$}\bar{m} (\breve{f}^\dag)^2
                                      + \mbox{\large $\frac{1}{3}$}\bar{g} (\breve{f}^\dag)^3
								   \mbox{\Large $)$}
		     \left.\rule{0ex}{1.2em}\right\}
	 \\
 && =\;
   \int_X d^4x
     \left\{\rule{0ex}{1.2em}\right.   	
	   \overline{f_{(0)}(x)}\cdot\square f_{(0)}(x)		
       +  \mbox{\large $\frac{\sqrt{-1}}{2}$}\,\sum_{\alpha,\dot{\beta}, \mu}\,	
				  \bar{\sigma}^{\mu\bar{\beta}\alpha}\,
			        f_{(\alpha)}(x)\, \partial_\mu \overline{f_{(\beta)}(x)}			
	   + \mbox{\large $\frac{1}{4}$}\,
			 \overline{f_{(12)}(x)}\, f_{(12)}(x)	 	 	 	 	 	
    \\					
  && \hspace{6em}
     +\, \left(\rule{0ex}{1.2em}\right.
               m\,
			      \mbox{\Large $($}
			        f_{(0)}(x) f_{(12)}(x) - f_{(1)}(x) f_{(2)}(x)
			      \mbox{\Large $)$}    	
				 \\[-1ex]
			 && \hspace{12em}
             +\, g\,
		         \mbox{\Large $($}
			      f_{(0)}(x)^2 f_{(12)}(x)- 2\, f_{(0)}(x)f_{(1)}(x)f_{(2)}(x)
			     \mbox{\Large $)$}
              + \lambda f_{(12)}(x)   	
			\left.\rule{0ex}{1.2em}\right)	
	   \\
	&& \hspace{6em}
	 +\,  \left(\rule{0ex}{1.2em}\right.
		       \bar{m}\,
		         \mbox{\Large $($}
			      \overline{f_{(0)}}(x)\, \overline{f_{(12)}}(x)
			        - \overline{f_{(1)}}(x)\, \overline{f_{(2)}}(x)
			     \mbox{\Large $)$}
				  \\[-1ex]
			 && \hspace{12em}
             +\, \bar{g}\,
		          \mbox{\Large $($}
			        \overline{f_{(0)}}(x)^2\, \overline{f_{(12)}}(x)
			          - 2\, \overline{f_{(0)}}(x)\,\overline{f_{(1)}}(x)\,\overline{f_{(2)}}(x)
			  \mbox{\Large $)$}		
              +   \bar{\lambda}\, \overline{f_{(12)}(x)}			
			\left.\rule{0ex}{1.2em}\right)	
           \\
	&& \hspace{6em}         			
	  +\; \mbox{(space-time boundary terms)}
	  \left.\rule{0ex}{1.2em}\right\}\,.
\end{eqnarray*}
}This 
is [Wess \& Bagger: Eq.\:(5.11)] in the setting of Sec.\:1 and Sec.\:2.

The component field $f_{(12)}$ (and hence $\overline{f_{(12)}}$)
  has no kinetic term and thus is non-dynamical.
It can be removed from the action functional
 by solving its equations of motion from $S(\breve{f})$
 $$
  \begin{array}{rcl}
   \mbox{\large $\frac{1}{4}$}\, f_{(12)}(x)
    + \bar{m}\,\overline{f_{(0)}(x)}+ \bar{g}\,\overline{f_{(0)}(x)}^{\,2}
	+ \bar{\lambda}  & = & 0\,,
	\\[2ex]
  \mbox{\large $\frac{1}{4}$}\, \overline{f_{(12)}(x)}
    + m\,f_{(0)}(x) +  g\,f_{(0)}(x)^2
	+ \lambda    & = & 0
  \end{array}
 $$
 and plugging back into $S(\breve{f})$.
This gives another form of the action functional that involves only dynamical component fields:

{\small
 \begin{eqnarray*}
  \lefteqn{
    S(f_{(0)}, (f_{\alpha})_{\alpha})
      }\\
  && :=\;
   \int_X d^4x
     \left\{\rule{0ex}{1.2em}\right.   	
	   \overline{f_{(0)}(x)}\cdot\square f_{(0)}(x)		
       +  \mbox{\large $\frac{\sqrt{-1}}{2}$}\,\sum_{\alpha,\dot{\beta}, \mu}\,	
				  \bar{\sigma}^{\mu\bar{\beta}\alpha}\,
			        f_{(\alpha)}(x)\, \partial_\mu \overline{f_{(\beta)}(x)}				   	 	 	
      \\					
  && \hspace{6em}
        -\, \mbox{\Large $($}
              m\, f_{(1)}(x) f_{(2)}(x)			     		
              + 2g\, f_{(0)}(x)f_{(1)}(x)f_{(2)}(x)			
	   \\[.6ex]
 	&& \hspace{12em}
	   +\, \bar{m}\, \overline{f_{(1)}}(x)\, \overline{f_{(2)}}(x)			  			
               + 2\,\bar{g}\, 		
			        \overline{f_{(0)}}(x)\,\overline{f_{(1)}}(x)\,\overline{f_{(2)}}(x)
			\mbox{\Large $)$}
           \\[1ex]
    && \hspace{6em}
     -\,4\,
	    \mbox{\Large $($}
	      m\,f_{(0)}(x) +  g\,f_{(0)}(x)^2 + \lambda
		\mbox{\Large $)$}
		 \cdot
		\mbox{\Large $($}
		 \bar{m}\,\overline{f_{(0)}(x)}+ \bar{g}\,\overline{f_{(0)}(x)}^{\,2}
	       + \bar{\lambda}
        \mbox{\Large $)$}		
	 \\[1ex]
	&& \hspace{6em}         			
	  +\, \mbox{(space-time boundary terms)}
	  \left.\rule{0ex}{1.2em}\right\}\,.
\end{eqnarray*}}(Cf.\:[Wess \& Bagger: Eq.\:(5.13)].) 
  \marginpar{\vspace{-13em}\raggedright\tiny
         \raisebox{-1ex}{\hspace{-2.4em}\LARGE $\cdot$}Cf.\,[\,\parbox[t]{20em}{Wess
		\& Bagger:\\ Eq.\:(5.13)].}}

\bigskip
 
\section{Supersymmetric $U(1)$ gauge theory with matter on $X$ in terms of $\widehat{X}^{\widehat{\boxplus}}$\;
 {\large (cf.\:[Wess \& Bagger: Chapter VI and Chapter VII, $U(1)$ part])}}

 We reconstruct in this section
  \begin{itemize}
   \item[\LARGE $\cdot$] [Wess \& Bagger:
               \parbox[t]{30em}{Chapter VI.\:{\it Vector superfields} \; and\\
                                                 Chapter VII.\:{\it Gauge invariant interactions}, $U(1)$ part\,]}
  \end{itemize}
 in the complexified ${\Bbb Z}/2$-graded $C^\infty$-Algebraic Geometry setting of Sec.\:1.

\bigskip
 
\subsection{Vector superfields and their associated (even left) connection}
		
Since all the bundles and sheaves involved in the construction of supersymmetric gauge theories in the current notes
 are trivialized, we will directly take a connection as a differential operator acting on
 $C^\infty(\widehat{X}^{\widehat{\boxplus}})$ to keep our focus on [Wess \& Bagger].
Readers are referred to [L-Y5: Sec.\:3.1] (SUSY(1)=D(14.1.Supp.1)) and references ibidem
 for more words on connections and gauge theories in the superworld.

\bigskip

\begin{definition} {\bf [even left connection on
                                             $C^\infty(\widehat{X}^{\widehat{\boxplus}})$-module]}\;
{\rm (Cf.\;[L-Y4:Definition 2.1.2] (D(14.1)).)
 Let $\widehat{M}$ be an $C^\infty(\widehat{X}^{\widehat{\boxplus}})$-module.
 An {\it even left connection} $\widehat{\nabla}$ on $\widehat{M}$
    is a ${\Bbb C}$-bilinear pairing	
  $$
    \begin{array}{ccccc}
	 \widehat{\nabla} & : & \Der_{\Bbb C}(\widehat{X}^{\widehat{\boxplus}}) \times \widehat{M}
	     & \longrightarrow   & \widehat{M}  \\[1.2ex]
    && (\xi, s)              &  \longmapsto    &  \widehat{\nabla}\!_{\xi}s		 	
	\end{array}
  $$
  such that
	\begin{itemize}
	 \item[(1)]  [{\it $C^\infty(\widehat{X}^{\widehat{\boxplus}})$-linearity
	                                   in the $\Der_{\Bbb C}(\widehat{X}^{\widehat{\boxplus}})$-argument}]\\[.6ex]	
	  $\mbox{\hspace{1em}}$
	  $\widehat{\nabla}\!_{f_1\xi_1 + f_2\xi_2}s\;
	     =\; f_1 \widehat{\nabla}\!_{\xi_1}s + f_2 \widehat{\nabla}\!_{\xi_2}s$, \hspace{1em}
      for $f_1, f_2 \in C^\infty(\widehat{X}^{\widehat{\boxplus}})$,
	       $\xi_1, \xi_2 \in \Der_{\Bbb C}(\widehat{X}^{\widehat{\boxplus}})$,  and
	       $s\in \widehat{M}$;

     \item[(2)]  [{\it ${\Bbb C}$-linearity in the $\widehat{M}$-argument}]\\[.6ex]
	 $\mbox{\hspace{1em}}$
	 $\widehat{\nabla}\!_\xi(c_1s_1+c_2s_2)\;
	     =\;  c_1 \widehat{\nabla}\!_\xi s_1 + c_2 \widehat{\nabla}\!_\xi s_2$, \hspace{1em}
	  for $c_1, c_2\in {\Bbb C}$,
	       $\xi\in \Der_{\Bbb C}(\widehat{X}^{\widehat{\boxplus}})$, and
	       $s_1, s_2\in \widehat{M}$;
	
	 \item[(3)] [{\it ${\Bbb Z}/2$-graded Leibniz rule in the $\widehat{M}$-argument}]\footnote{In
	                                       [L-Y4: Definition 2.1.2 ] (D(14.1)), a left connection on $\widehat{\cal E}$ is required to satisfy
	                                       the  generalized ${\Bbb Z}/2$-graded Leibniz rule in the $\widehat{\cal E}$-argument:\;
	                                         $\widehat{\nabla}_\xi(fs)\;
	                                            =\; (\xi f)s
	                                              + (-1)^{p(f)p(\xi)}\,f\cdot\,\!^{\varsigma_{\!f}}\!
											                    (\widehat{\nabla})_\xi s$,
                                                for $f\in \widehat{\cal O}_X^{\,\widehat{\boxplus}}$,
												      $\xi\in {\cal T}_{\widehat{X}^{\widehat{\boxplus}}}$ parity homogeneous
	                                                  and $s\in\widehat{\cal E}$,
                                             where
	                                               $^{\varsigma_{\!f}}\!(\widehat{\nabla})$
                                                      is the parity-conjugation of $\widehat{\nabla}$ induced by $f$;
                                                  i.e., 	
	                                               $^{\varsigma_{\!f}}\!(\widehat{\nabla})
	                                                      = \widehat{\nabla}$,  if $f$ is even, or
		                                           $\,\!^{\varsigma}\widehat{\nabla}
			                                                := \mbox{(even part of $\widehat{\nabla}$)}\,
					                                          -\, \mbox{(odd part of $\widehat{\nabla}$)}$  if $f$ is odd;
                                               (cf.\ [L-Y4: Definition~1.3.1] (D(14.1))).
											When $\widehat{\nabla}$ is even,
											    $^{\varsigma_{\!f}}\!(\widehat{\nabla})=\widehat{\nabla}$ always
											 and the general ${\Bbb Z}/2$-graded Leibniz rule reduces to
 											  the ${\Bbb Z}$-graded Leibniz rule.											  
												                                                                     }\\[.6ex]   
	 $\mbox{\hspace{1em}}$
	 $\widehat{\nabla}\!_\xi(fs)\;
	   =\; (\xi f)s
	           + (-1)^{p(f)p(\xi)}\,f\cdot  \widehat{\nabla}\!_\xi s$,\\[.6ex]
      for $f\in C^\infty(\widehat{X}^{\widehat{\boxplus}})$,
	       $\xi\in \Der_{\Bbb C}(\widehat{X}^{\widehat{\boxplus}})$ parity homogeneous
	       and $s\in\widehat{M}$.
	\end{itemize}
  As an operation on the pairs $(\xi, s)$,
   a connection $\nabla$ on $\widehat{M}$ is applied to $\xi$ from the right
   while applied to $s$ from the left;\footnote{In the ${\Bbb Z}/2$-graded world,
                                                                     it is instructive to denote $\widehat{\nabla}_{\!\xi}s$ as
																	  $\xi \widehat{\nabla} s$ or $_{\xi}\!\widehat{\nabla} s$
																	 (though we do not adopt it as a regularly used notation in this work).
																	In particular,
  																	  from $_{f\xi}\!\widehat{\nabla} s$
																	  to $f (\,\!_{\xi}\!\widehat{\nabla} s)$,
                                                                     $f$ and $\widehat{\nabla}$ do {\it not} pass each other.
                                                                      }  
  cf.\ [L-Y4: Lemma~1.3.7 \& Remark~1.3.8] (D(14.1)).
}\end{definition}

\bigskip

Note that since $\widehat{\nabla}$ is even, the parity of $\widehat{\nabla}\!_\xi$ is the same as that of $\xi$.

\bigskip

\begin{lemma-definition} {\bf [curvature tensor of (even left) connection]}\;\\ {\rm
 (Cf.\ [L-Y4: Lemma/Definition~2.1.9] (D(14.1)).)
 Continuing Definition~4.1.1.
 {\it Let $\widehat{\nabla}$ be an even left connection on $\widehat{M}$.
 Then the correspondence
  $$
   F^{\widehat{\nabla}}\;:\;
   (\xi_1,\xi_2;  s)\;\longmapsto\;
        \mbox{\Large $($}
		  [\widehat{\nabla}_{\!\xi_1}, \widehat{\nabla}_{\!\xi_2}\}\,
	         -\,\widehat{\nabla}_{[\xi_1,\xi_2\}}
		\mbox{\Large $)$}\, s\,,
  $$
  for $\xi_1,\xi_2\in \Der_{\Bbb C}(\widehat{X}^{\widehat{\boxplus}})$ parity-homogeneous and
       $s\in \widehat{M}$,
 satisfies the following tensorial property on $\widehat{X}^{\widehat{\boxplus}}$:
  \begin{eqnarray*}
    \mbox{\Large $($}
     [\widehat{\nabla}\!_{f\xi_1}, \widehat{\nabla}\!_{\xi_2}   \}
        -\widehat{\nabla}\!_{[f\xi_1,\xi_2\}}	
    \mbox{\Large $)$}s
     & = &  f\cdot
       \mbox{\Large $($}
         [\widehat{\nabla}\!_{\xi_1}, \widehat{\nabla}\!_{\xi_2}   \}
        -\widehat{\nabla}\!_{[\xi_1,\xi_2\}}	
	   \mbox{\Large $)$}s\,,                  \\[1.2ex]
    \mbox{\Large $($}
     [\widehat{\nabla}\!_{\xi_1}, \widehat{\nabla}\!_{f\xi_2}   \}
        -\widehat{\nabla}\!_{[\xi_1,f\xi_2\}}	
    \mbox{\Large $)$}s
     & = &  (-1)^{p(f)p(\xi_1)}\,f\cdot
       \mbox{\Large $($}
         [\widehat{\nabla}\!_{\xi_1}, \widehat{\nabla}\!_{\xi_2}   \}
        -\widehat{\nabla}\!_{[\xi_1,\xi_2\}}	
	   \mbox{\Large $)$}s\,,                  \\[1.2ex]
    \mbox{\Large $($}
     [\widehat{\nabla}\!_{\xi_1}, \widehat{\nabla}\!_{\xi_2}   \}
        -\widehat{\nabla}\!_{[\xi_1,\xi_2\}}	
    \mbox{\Large $)$}(fs)
     & = & (-1)^{p(f)(p(\xi_1)+ p(\xi_2))} f\cdot
       \mbox{\Large $($}
         [\widehat{\nabla}\!_{\xi_1}, \widehat{\nabla}\!_{\xi_2}   \}
        -\widehat{\nabla}\!_{[\xi_1,\xi_2\}}	
	   \mbox{\Large $)$}s\,,
  \end{eqnarray*}
  for $f\in C^\infty(\widehat{X}^{\widehat{\boxplus}})$,
       $\xi_1$, $\xi_2\in \Der_{\Bbb C}(\widehat{X}^{\widehat{\boxplus}})$,
	   $s\in \widehat{M}$
        all parity homogeneous.
   }
 {\rm $F^{\widehat{\nabla}}$ is called
     the {\it curvature tensor} on $\widehat{X}^{\widehat{\boxplus}}$
	   associated to the even left connection $\widehat{\nabla}$ on $\widehat{M}$.}			
}\end{lemma-definition}

\medskip

\begin{proof}
 This is a special case of [L-Y4: Lemma/Definition~2.1.9] (D(14.1)) with the odd part of $\widehat{\nabla}$ vanishes.

\end{proof}

\bigskip

Since in this work, we only address even left connections, we will simply call them {\it connections}.

\bigskip

\begin{definition} {\bf [vector superfield]}\; {\rm
 A {\it vector superfield} $\breve{V}$ is an element in $C^\infty(\widehat{X}^{\widehat{\boxplus}})$
  of the following form
 {\small
 \begin{eqnarray*}
  \breve{V}
  &= &
    V_{(0)}
	+ \sum_{\alpha}\theta^\alpha\vartheta_\alpha V_{(\alpha)}
	+ \sum_{\dot{\beta}}
	       \bar{\theta}^{\dot{\beta}}\bar{\vartheta}_{\dot{\beta}}V_{(\dot{\beta})}  		
    + \theta^1\theta^2\vartheta_1\vartheta_2 V_{(12)}
	+ \sum_{\alpha,\dot{\beta}, \mu}\theta^\alpha \bar{\theta}^{\dot{\beta}}
		    \sigma^\mu_{\alpha\dot{\beta}} V_{[\mu]}		
    + \bar{\theta}^{\dot{1}}\bar{\theta}^{\dot{2}}
	   \bar{\vartheta}_{\dot{1}}\bar{\vartheta}_{\dot{2}} V_{(\dot{1}\dot{2})}  \\
  && \hspace{1em}		
	+ \sum_{\dot{\beta}}\theta^1\theta^2\bar{\theta}^{\dot{\beta}}
	     \mbox{\Large $($}
		   \sqrt{-1}
		    \sum_{\alpha,\mu }
		          \vartheta_\alpha\,\sigma^{\mu \alpha}_{\;\;\;\;\dot{\beta}}\,
				    \partial_\mu V_{(\alpha)}
          + \bar{\vartheta}_{\dot{\beta}}\,
		       V^\prime_{(\dot{\beta})}					
         \mbox{\Large $)$}					
     \\
    && \hspace{1em}	
    +\, \sum_\alpha \theta^\alpha\bar{\theta}^{\dot{1}}\bar{\theta}^{\dot{2}}
	     \mbox{\Large $($}
		  \vartheta_\alpha\, V^{\prime\prime}_{(\alpha)}		
		  + \sqrt{-1}
	          \sum_{\dot{\beta}, \mu}
			      \bar{\vartheta}_{\dot{\beta}}\,
		          \sigma^{\mu\dot{\beta}}_\alpha 		
			      \partial_\mu V_{(\dot{\beta})}
	     \mbox{\Large $)$}
	+ \theta^1\theta^2\bar{\theta}^{\dot{1}}\bar{\theta}^{\dot{2}}\,
		  V^\sim_{(0)}		
 \end{eqnarray*}
  }that 
 satisfies the realness condition
     \marginpar{\vspace{0em}\raggedright\tiny
         \raisebox{-1ex}{\hspace{-2.4em}\LARGE $\cdot$}Cf.\,[\,\parbox[t]{20em}{Wess
		\& Bagger:\\ Eq.\:(6.1)].}}
  $$
    \breve{V}^{\,\dag} \;=\; \breve{V}\,.
  $$
 Here, $\breve{V} $ is expressed in coordinate functions
   $(x, \theta, \bar{\theta}, \vartheta, \bar{\vartheta})$ of $\widehat{X}^{\widehat{\boxplus}}$
  with the components\\ $V^{\tinybullet}_{\tinybullet}= V^{\tinybullet}_{\tinybullet}(x)$.
 The set of vector superfields in $C^\infty(\widehat{X}^{\widehat{\boxplus}})$
   form a $C^\infty(X)$-module.\footnote{In
                                            [L-Y5: Sec.\:3] (SUSY (1)),  we took the attitude that all physics-related superfields should
											 be connected to elements in the subring of $C^\infty(\widehat{X}^{\widehat{\boxplus}})$
											  generated by the small chiral elements and the small antichiral elements in
											  $C^\infty(\widehat{X}^{\widehat{\boxplus}})$.
										    Under such a restriction,
											   the most natural definition for the candidate for physicists' vector superfields
											   has more degrees of freedom
											  than that defined by physicists, particularly in [Wess \& Bagger].
											To distinguish them, we call it  {\it pre-vector superfield} in [L-Y5] (SUSY (1))
											  and had to introduce linear constraints to reduce a pre-vector superfield to a vector superfield.
											Surprisingly, that still defines a supersymmetric gauge theory mimicking [Wess \& Bagger].
                                           For the current work (SUSY (2.1)), we find a better picture.
										   In [L-Y5] (SUSY(1)), we look for vector superfields in the ring generated by chiral superfields
										       and antichiral superfields.
											Under this constraints there is no other choice to make than what is done in [L-Y5] (SUSY (1)).
                                            Why do we need to require vector superfields lie in the this ring?
                                           After all, unlike chiral superfields or antichiral superfields that form a ring,
										    vector superfields do not form a ring:
											 the multiplication of two vector superfields in general is no longer a vector superfield.
											 If we choose them still in the grand function-ring of the towered superspace but
											 not require them to lie in the above subring, then what shall we get?
											It is to answer this question that leads us in the end to the setting presented here.
											It matches now with [Wess \& Bagger].
                                            }     
}\end{definition}
	
\bigskip

\noindent
Explicitly, a vector superfield can be expressed as
{\small
 \begin{eqnarray*}
  \breve{V}
  &= &
    V_{(0)}
	+ \sum_{\alpha}\theta^\alpha\vartheta_\alpha V_{(\alpha)}
	- \sum_{\dot{\beta}}
	       \bar{\theta}^{\dot{\beta}}\bar{\vartheta}_{\dot{\beta}}\,  \overline{V_{(\beta)}}
    + \theta^1\theta^2\vartheta_1\vartheta_2 V_{(12)}
	+ \sum_{\alpha,\dot{\beta}, \mu}\theta^\alpha \bar{\theta}^{\dot{\beta}}
		  \sigma^\mu_{\alpha\dot{\beta}} V_{[\mu]}		
    + \bar{\theta}^{\dot{1}}\bar{\theta}^{\dot{2}}
	   \bar{\vartheta}_{\dot{1}}\bar{\vartheta}_{\dot{2}} \, \overline{V_{(12)}}  \\
  && \hspace{1em}		
	+ \sum_{\dot{\beta}}\theta^1\theta^2\bar{\theta}^{\dot{\beta}}
	     \mbox{\Large $($}
		   \sqrt{-1}
		    \sum_{\alpha,\mu }
		          \vartheta_\alpha\,\sigma^{\mu \alpha}_{\;\;\;\;\dot{\beta}}\,
				    \partial_\mu V_{(\alpha)}
          + \bar{\vartheta}_{\dot{\beta}}\,
		        \overline{V^{\prime\prime}_{(\beta)}}					
         \mbox{\Large $)$}					
     \\
    && \hspace{1em}	
    +\, \sum_\alpha \theta^\alpha\bar{\theta}^{\dot{1}}\bar{\theta}^{\dot{2}}
	     \mbox{\Large $($}
		  \vartheta_\alpha\, V^{\prime\prime}_{(\alpha)}
		  - \sqrt{-1}
	         \sum_{\dot{\beta}, \mu}
			  \bar{\vartheta}_{\dot{\beta}}\,
		      \sigma^{\mu\dot{\beta}}_\alpha 		
			  \partial_\mu  \overline{V_{(\beta)}}
	     \mbox{\Large $)$}
	+ \theta^1\theta^2\bar{\theta}^{\dot{1}}\bar{\theta}^{\dot{2}}\,
		  V^\sim_{(0)}		
 \end{eqnarray*}
}
     \marginpar{\vspace{-7em}\raggedright\tiny
         \raisebox{-1ex}{\hspace{-2.4em}\LARGE $\cdot$}Cf.\,[\,\parbox[t]{20em}{Wess
		\& Bagger:\\ Eq.\:(6.2)].}}
with
$$
  V_{(0)}\,,\;\; V_{[\mu]}\,,\;\; V^\sim_{(0)}\;\in\; C^\infty(X)\,,\;
  \mbox{i.e.\: real-valued}\,.
$$
Thus, while a small (resp.\ chiral/antichiral) superfield has
  $33$ (resp.\ $4$) independent complex components in $C^\infty(X)^{\Bbb C}$,
 a vector superfield $\breve{V}$ has $16$ independent real-valued components from
  $V_{(0)}$, $V_{[\mu]}$, $V^\sim_{(0)}\in C^\infty(X)$, $\mu=0,1,2,3$, and
  $V_{(\alpha)}$, $V_{(12)}$, $V^{\prime\prime}_{(\gamma)}\in C^\infty(X)^{\Bbb C}$,
    $\alpha, \gamma=1, 2$.
Note that by definition
 a vector superfield is contained in $C^\infty(\widehat{X}^{\widehat{\boxplus}})^\mediumscriptsize$,
  but in general not in $C^\infty(\widehat{X}^{\widehat{\boxplus}})^\smallscriptsize$.
		
\bigskip

For physicists working on supersymmetric gauge theories,
 the following class of even left connections (adapted to the current $U(1)$ case) is the major concern.

\bigskip

\begin{definition} {\bf [(left, even) connection associated to vector superfield]}\; {\rm
With the above setting,
   let $\breve{V}\in C^\infty(\widehat{X}^{\widehat{\boxplus}})$ be a vector superfield on $X$.
 Then, one can define
   an (even left) connection $\widehat{\nabla}^{\breve{V}}$
   on $C^\infty(\widehat{X}^{\widehat{\boxplus}})$ (as a left $C^\infty(\widehat{X}^{\widehat{\boxplus}})$-module)
   associated to $\breve{V}$ as follows.
 \begin{itemize}
  \item[(1)]
  Firstly, we acquire the compatibility with the chiral structure on $C^\infty(\widehat{X}^{\widehat{\boxplus}})$
   by setting
     $$
	    \widehat{\nabla}^{\breve{V}}_{e_{\beta^{\prime\prime}}}\;
		 :=\;  e_{\beta^{\prime\prime}}\,.
	 $$
 
  \item[(2)]
  Secondly, we set\footnote{The
                                             	 choice of using whether
												   $ e^{-\breve{V}}\circ e_{\alpha^\prime}\circ e^{\breve{V}}$ or
												   $e^{\breve{V}}\circ e_{\alpha^\prime}\circ e^{-\breve{V}}$
												   as the definition of $\widehat{\nabla}^{\breve{V}}_{e_{\alpha^\prime}}$
												   is dictated by how one would construct the action functional
												   for the gauge-invariant kinetic term for the chiral superfield
												   in the supersymmetric $U(1)$-gauge theory with matter.
												 The former is consistent with the setting in Sec.\,4.4
												   while the latter isn't.
												 Cf.\:Lemma~4.2.6 vs.\;Sec.\,4.4, theme
												           '{\it Explicit computations/formulae for $\breve{V}$ in Wess-Zumino gauge}'.
											                                                       } 
     $$
	   \widehat{\nabla}^{\breve{V}}_{e_{\alpha^\prime}}\;
	    :=\;  e^{-\breve{V}}\circ e_{\alpha^\prime}\circ e^{\breve{V}}\;
		  =\; e_{\alpha^\prime}\,+\, e^{-\breve{V}}(e_{\alpha^\prime}e^{\breve{V}})\,.
     $$
  Thus, in a way $\breve{V}$ is an indication of the twisting of the original antichiral structure of
    $C^\infty(\widehat{X}^{\widehat{\boxplus}})$
    to the one selected by $\widehat{\nabla}_{e_{\alpha^\prime}}$.
   
  \item[(3)]
  Finally, we set
   $$
      \widehat{\nabla}^{\breve{V}}_{e_{\mu}} \; =\;
	   -\, \mbox{\Large $\frac{\sqrt{-1}}{4}$}
	    \sum_{\dot{\beta}, \alpha} \bar{\sigma}_\mu^{\dot{\beta}\alpha}
		   \{\widehat{\nabla}^{\breve{V}}_{e_{\alpha^\prime}},
		        \widehat{\nabla}^{\breve{V}}_{e_{\beta^{\prime\prime}}}\}\,,
   $$
   where
	  $\bar{\sigma}_\mu
	     = (\bar{\sigma}_\mu^{\dot{\beta}\alpha})_{\dot{\beta}\alpha}
	     = \sum_\nu \eta_{\mu\nu}\bar{\sigma}^\nu$
	  satisfies
	 $$
	   \sum_{\alpha, \dot{\beta}} \bar{\sigma}_\mu^{\dot{\beta}\alpha} \sigma^\nu_{\alpha\dot{\beta}}\;
	    =\; -\,2 \delta_\mu^\nu\,,\;\;\;\;
		\mbox{for $\mu, \nu\,= 0,\,1,\,2,\,3$}\,.
	 $$		
     Explicitly,
	 {\footnotesize
     $$
      \bar{\sigma}_0\;:=\;
        \left[\!\begin{array}{rr}  1 & 0 \\ 0 & 1\end{array}\!\right]\!,\;\;
	  \bar{\sigma}_1\;:=\;
        \left[\!\begin{array}{rr} 0 & -1 \\  -1 & 0\end{array}\!\right]\!,\;\;
      \bar{\sigma}_2\;:=\;
	    \left[\!\begin{array}{rr} 0 & \sqrt{-1} \\ -\sqrt{-1} & 0\end{array}\!\right]\!,\;\;	
      \bar{\sigma}_3\;:=\;
        \left[\!\begin{array}{rr} -1 & 0 \\ 0 & 1\end{array}\!\right]\,.
     $$}
  This is indeed a flatness condition on the curvature of $\widehat{\nabla}^{\breve{V}}$
	 in the fermionic directions $(e_{\alpha^\prime}, e_{\beta^{\prime\prime}})$.
   (Cf.\;Lemma~4.1.5 for the precise statement.)
 \end{itemize}
 Since $\breve{V}$ is even, $\widehat{\nabla}^{\breve{V}}$ as defined is even as well.
 In this way
   a vector superfield $\breve{V}\in C^\infty(\widehat{X}^{\widehat{\boxplus}})$ determines an even left connection
   $\widehat{\nabla}^{\breve{V}}$ on $C^\infty(\widehat{X}^{\widehat{\boxplus}})$.
 $\widehat{\nabla}^{\breve{V}}$ is called
  the {\it connection} on $C^\infty(\widehat{X}^{\widehat{\boxplus}})$ {\it associated to $\breve{V}$};  					
 For simplicity of notations, we often denote $\widehat{\nabla}^{\breve{V}}$ by $\widehat{\nabla}$,
  keeping $\breve{V}$ implicit.
}\end{definition}

\medskip

\begin{lemma} {\bf [flatness of $\nabla^{\breve{V}}$ along fermionic directions]}\;
 Let $\widehat{\nabla}=\widehat{\nabla}^{\breve{V}}$ be the connection
   on $C^\infty(\widehat{X}^{\widehat{\boxplus}})$ associated to a vector superfield $\breve{V}$.
 Let $F^{\widehat{\nabla}}$ be the curvature $2$-tensor of $\widehat{\nabla}$ and
 denote
  $F^{\widehat{\nabla}}(e_{\alpha^\prime}, e_{\beta^\prime})$
  (resp.\
      $F^{\widehat{\nabla}}(e_{\alpha^{\prime\prime}}, e_{\beta^{\prime\prime}})$,
	  $F^{\widehat{\nabla}}(e_{\alpha^\prime}, e_{\beta^{\prime\prime}})$)
 by
  $F^{\widehat{\nabla}}_{\alpha^\prime\beta^\prime}$
   (resp.\  $F^{\widehat{\nabla}}_{\alpha^{\prime\prime}\beta^{\prime\prime}}$,
                  $F^{\widehat{\nabla}}_{\alpha^\prime\beta^{\prime\prime}}$).		
 Then
   with respect to the supersymmetrically invariant coframe $(e^I)_I$ on $\widehat{X}$,
  the components of the curvature tensor $F^{\widehat{\nabla}}$ of $\widehat{\nabla}$
  in purely fermionic directions all vanish:
  For $\alpha^\prime, \beta^\prime=1^\prime, 2^\prime$ and
       $\alpha^{\prime\prime}, \beta^{\prime\prime}=1^{\prime\prime}, 2^{\prime\prime}$,
   $$
     F^{\widehat{\nabla}}_{\alpha^{\prime}\beta^{\prime}}\;
      =\; F^{\widehat{\nabla}}_{\alpha^{\prime\prime}\beta^{\prime\prime}}\;
      =\; F^{\widehat{\nabla}}_{\alpha^{\prime}\beta^{\prime\prime}}\;=\; 0\,.
   $$
\end{lemma}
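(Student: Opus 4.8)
The plan is to verify the three vanishing statements directly from the construction of $\widehat{\nabla}^{\breve{V}}$ in Definition~4.1.4, using the curvature formula of Lemma/Definition~4.1.2 specialized to purely fermionic derivations. Recall that for parity-homogeneous $\xi_1,\xi_2$ the curvature acts as $F^{\widehat{\nabla}}(\xi_1,\xi_2) = [\widehat{\nabla}_{\xi_1},\widehat{\nabla}_{\xi_2}\} - \widehat{\nabla}_{[\xi_1,\xi_2\}}$; since $e_{\alpha^\prime}$ and $e_{\beta^{\prime\prime}}$ are all odd, the graded brackets here are anticommutators. The key input is that the $e_I$ form a supersymmetrically invariant frame whose only nonvanishing brackets among the fermionic members are $\{e_{\alpha^\prime},e_{\beta^{\prime\prime}}\} = -2\sqrt{-1}\sum_\mu \sigma^\mu_{\alpha\dot{\beta}}\partial_\mu$ (Example~1.2.5), while $\{e_{\alpha^\prime},e_{\beta^\prime}\} = \{e_{\alpha^{\prime\prime}},e_{\beta^{\prime\prime}}\} = 0$.

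The first step is the antichiral–antichiral block $F^{\widehat{\nabla}}_{\alpha^{\prime\prime}\beta^{\prime\prime}}$. Here Definition~4.1.4(1) sets $\widehat{\nabla}_{e_{\beta^{\prime\prime}}} = e_{\beta^{\prime\prime}}$ literally, so the connection is the plain derivation in these directions; hence $\{\widehat{\nabla}_{e_{\alpha^{\prime\prime}}},\widehat{\nabla}_{e_{\beta^{\prime\prime}}}\} = \{e_{\alpha^{\prime\prime}},e_{\beta^{\prime\prime}}\} = \widehat{\nabla}_{\{e_{\alpha^{\prime\prime}},e_{\beta^{\prime\prime}}\}}$ trivially, giving $F^{\widehat{\nabla}}_{\alpha^{\prime\prime}\beta^{\prime\prime}}=0$. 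The second step is the chiral–chiral block $F^{\widehat{\nabla}}_{\alpha^\prime\beta^\prime}$. Using $\widehat{\nabla}_{e_{\alpha^\prime}} = e^{-\breve{V}}\circ e_{\alpha^\prime}\circ e^{\breve{V}}$, I would compute $\{\widehat{\nabla}_{e_{\alpha^\prime}},\widehat{\nabla}_{e_{\beta^\prime}}\} = e^{-\breve{V}}\circ\{e_{\alpha^\prime},e_{\beta^\prime}\}\circ e^{\breve{V}}$ by noting that conjugation by $e^{\breve{V}}$ is an algebra automorphism and that the anticommutator of conjugates is the conjugate of the anticommutator (the $e^{\pm\breve{V}}$ factors cancel in the cross terms). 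Since $\{e_{\alpha^\prime},e_{\beta^\prime}\}=0$ and likewise $[e_{\alpha^\prime},e_{\beta^\prime}\}=0$ so $\widehat{\nabla}_{[e_{\alpha^\prime},e_{\beta^\prime}\}}=0$, this block vanishes as well.

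The third block, the chiral–antichiral mixed curvature $F^{\widehat{\nabla}}_{\alpha^\prime\beta^{\prime\prime}}$, is where the real content sits and is the main obstacle. The vanishing here is not automatic from the brackets; it is \emph{imposed} by Definition~4.1.4(3), which defines the bosonic connection components by $\widehat{\nabla}_{e_\mu} = -\tfrac{\sqrt{-1}}{4}\sum_{\dot{\beta},\alpha}\bar{\sigma}_\mu^{\dot{\beta}\alpha}\{\widehat{\nabla}_{e_{\alpha^\prime}},\widehat{\nabla}_{e_{\beta^{\prime\prime}}}\}$. The strategy is to show that $\{\widehat{\nabla}_{e_{\alpha^\prime}},\widehat{\nabla}_{e_{\beta^{\prime\prime}}}\}$ is exactly $\widehat{\nabla}_{\{e_{\alpha^\prime},e_{\beta^{\prime\prime}}\}}$. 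Writing $\{e_{\alpha^\prime},e_{\beta^{\prime\prime}}\} = -2\sqrt{-1}\sum_\mu\sigma^\mu_{\alpha\dot{\beta}}e_\mu$ (expressing the bracket in the invariant frame), I would invert the relation using the contraction identity $\sum_{\alpha,\dot{\beta}}\bar{\sigma}_\mu^{\dot{\beta}\alpha}\sigma^\nu_{\alpha\dot{\beta}} = -2\delta_\mu^\nu$ recorded in Definition~4.1.4 to check that the prescribed $\widehat{\nabla}_{e_\mu}$ is precisely the unique operator making $\{\widehat{\nabla}_{e_{\alpha^\prime}},\widehat{\nabla}_{e_{\beta^{\prime\prime}}}\} = \widehat{\nabla}_{\{e_{\alpha^\prime},e_{\beta^{\prime\prime}}\}}$ hold componentwise. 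Concretely, contracting the definition of $\widehat{\nabla}_{e_\mu}$ against $\sigma^\mu_{\alpha\dot{\beta}}$ and summing over $\mu$ reproduces $\{\widehat{\nabla}_{e_{\alpha^\prime}},\widehat{\nabla}_{e_{\beta^{\prime\prime}}}\}$ up to the scalar factor $-2\sqrt{-1}$, which matches the bracket coefficient; this identity is the whole point of the normalization chosen in Definition~4.1.4(3). Once this is established, $F^{\widehat{\nabla}}_{\alpha^\prime\beta^{\prime\prime}} = \{\widehat{\nabla}_{e_{\alpha^\prime}},\widehat{\nabla}_{e_{\beta^{\prime\prime}}}\} - \widehat{\nabla}_{\{e_{\alpha^\prime},e_{\beta^{\prime\prime}}\}} = 0$ follows immediately, completing all three cases. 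The delicate bookkeeping lies in tracking the dotted/undotted index contractions and the sign factors from the Pauli-matrix conventions of Sec.~1.1, so the main effort will be confirming that the definition's coefficient $-\tfrac{\sqrt{-1}}{4}$ and the identity $\sum\bar{\sigma}_\mu^{\dot{\beta}\alpha}\sigma^\nu_{\alpha\dot{\beta}}=-2\delta_\mu^\nu$ conspire exactly, which I expect they do by design.
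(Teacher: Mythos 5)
Your proposal is correct and is essentially the argument the paper intends (its ``proof'' merely cites [L-Y5: Lemma 3.1.9], which carries out the same three-block verification: the $\alpha''\beta''$ block is trivial since $\widehat{\nabla}_{e_{\beta''}}=e_{\beta''}$, the $\alpha'\beta'$ block vanishes because conjugation by $e^{\breve{V}}$ intertwines the anticommutators, and the mixed block vanishes by the very design of $\widehat{\nabla}_{e_\mu}$ — exactly as the paper's remark following the lemma confirms). One small precision for your third step: the contraction over $\mu$ actually uses the completeness relation $\sum_\mu\sigma^\mu_{\alpha\dot{\beta}}\,\bar{\sigma}_\mu^{\dot{\delta}\gamma}=-2\,\delta_\alpha^{\;\gamma}\delta_{\dot{\beta}}^{\;\dot{\delta}}$ (equivalently, that the $\sigma^\mu$ form a basis of $M_2({\Bbb C})$ with dual basis $-\frac{1}{2}\bar{\sigma}_\mu$), rather than the trace identity $\sum_{\alpha,\dot{\beta}}\bar{\sigma}_\mu^{\dot{\beta}\alpha}\sigma^\nu_{\alpha\dot{\beta}}=-2\delta_\mu^{\;\nu}$ you quote; the two are interchangeable here only because the $\sigma^\mu$ span, which is worth stating.
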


\medskip

\begin{proof}
 See [L-Y5: proof of Lemma 3.1.9] (SUSY(1)).

\end{proof}

\bigskip

From the perspective of a physicist, one indeed defines the connection $\widehat{\nabla}^{\breve{V}}$
  as in Definition~4.1.4 {\it so that} Lemma~4.1.5 holds.

\bigskip
	
\subsection{Vector superfields in Wess-Zumino gauge}

We prove the existence of Wess-Zumino gauge for a vector superfield $\breve{V}$
  in the sense of Definition~4.1.3
 and
 work out the connection $\widehat{\nabla}^{\breve{V}}$
   on $C^\infty(\widehat{X}^{\widehat{\boxplus}})$,
      as a (left) $C^\infty(\widehat{X}^{\widehat{\boxplus}})$-module,
  for $\breve{V}$ in Wess-Zumino gauge.

\bigskip

\begin{flushleft}
{\bf Gauge transformations of a vector superfield}
\end{flushleft}
Under a gauge transformation specified by a small chiral superfield
  $\breve{\Lambda}\in C^\infty(\widehat{X}^{\widehat{\boxplus}})^\smallscriptsize$,
  a vector superfield $\breve{V}$ transforms as
     \marginpar{\vspace{0em}\raggedright\tiny
         \raisebox{-1ex}{\hspace{-2.4em}\LARGE $\cdot$}Cf.\,[\,\parbox[t]{20em}{Wess
		\& Bagger:\\ Eq.\:(6.4)].}}
  $$
    \breve{V}\;\longrightarrow\;
	   \breve{V}+\delta_{\breve{\Lambda}}\breve{V}\,
	     :=\,\breve{V}- \sqrt{-1}(\breve{\Lambda}-\breve{\Lambda}^\dag)\,.
  $$
Explicitly in terms of the standard coordinate functions $(x, \theta, \bar{\theta}, \vartheta, \bar{\vartheta})$,
let
 {\small
 \begin{eqnarray*}
  \breve{V} & =
    &  V_{(0)}
       + \sum_\alpha \theta^\alpha\vartheta_\alpha  V_{(\alpha)}
	    -  \sum_{\dot{\beta}} \bar{\theta}^{\dot{\beta}}\bar{\vartheta}_{\dot{\beta}}
		       \overline{V_{(\beta)}}     		
	   +\, \theta^1\theta^2\vartheta_1\vartheta_2 V_{(12)}
	   + \sum_{\alpha,\dot{\beta}, \mu} \theta^\alpha \bar{\theta}^{\dot{\beta}}
               \sigma^\mu_{\alpha\dot{\beta}} V_{[\mu]}
       + \bar{\theta}^{\dot{1}}\bar{\theta}^{\dot{2}}
              \bar{\vartheta}_{\dot{1}}\bar{\vartheta}_{\dot{2}}\overline{V_{(12)}}			
	   \\
    &&
	   +\, \sum_{\dot{\beta}} \theta^1\theta^2 \bar{\theta}^{\dot{\beta}}
	          \mbox{\Large $($}
			   \sqrt{-1}
                 \sum_{\alpha, \mu} \vartheta_\alpha
				   \sigma^{\mu\alpha}_{\;\;\;\;\dot{\beta}}\partial_\mu	V_{(\alpha)}			
			    + \bar{\vartheta}_{\dot{\beta}}\,\overline{V^{\prime\prime}_{(\beta)}}
			  \mbox{\Large $)$}			  			
          \\
         && 				
	   +\,   \sum_\alpha \theta^\alpha \bar{\theta}^{\dot{1}}\bar{\theta}^{\dot{2}}
	          \mbox{\Large $($}			
	            \vartheta_\alpha V^{\prime\prime}_{(\alpha)}
				-\sqrt{-1}
       	           \sum_{\dot{\beta}, \mu}\bar{\vartheta}_{\dot{\beta}}
				     \sigma^{\mu\dot{\beta}}_\alpha \partial_\mu\,\overline{V_{(\beta)}}					   					  
			  \mbox{\Large $)$}	   				          			
       +  \theta^1\theta^2\bar{\theta}^{\dot{1}}\bar{\theta}^{\dot{2}}
	          V^\sim_{(0)}
			\hspace{2em}\in\; C^\infty(\widehat{X}^{\widehat{\boxplus}})
 \end{eqnarray*}}
 be a vector superfield and
{\small
   \begin{eqnarray*}
     \breve{\Lambda} & = &
	   \Lambda_{(0)}
	   + \sum_\alpha \theta^\alpha\vartheta_\alpha  \Lambda_{(\alpha)}
	   + \theta^1\theta^2\vartheta_1\vartheta_2 \Lambda_{(12)}
       + \sqrt{-1} \sum_{\alpha,\dot{\beta}, \mu}
	          \theta^\alpha\bar{\theta}^{\dot{\beta}}	
			     \sigma^\mu_{\alpha\dot{\beta}}\partial_\mu \Lambda_{(0)}\\
      && \hspace{1em}				
       +\, \sqrt{-1}\sum_{\dot{\beta}, \alpha, \mu}				
	        \theta^1\theta^2\bar{\theta}^{\dot{\beta}} \vartheta_\alpha
		       \sigma^{\mu\alpha}_{\;\;\;\;\dot{\beta}} \partial_\mu \Lambda_{(\alpha)}
       -\, \theta^1\theta^2\bar{\theta}^{\dot{1}}\bar{\theta}^{\dot{2}}\, \square \Lambda_{(0)}
	   \hspace{2em} \in\; C^\infty(\widehat{X}^{\widehat{\boxplus}})^{\smallscriptsize, \scriptsizech}\,,
   \end{eqnarray*}}
    where $\square := -\partial_0^2+\partial_1^2+\partial_2^2+\partial_3^2$,
  be a small chiral superfield.
The twisted complex conjugate $\breve{\Lambda}^\dag$ of $\breve{\Lambda}$ is given by
 {\small
   \begin{eqnarray*}
     \breve{\Lambda}^\dag & = &
	   \overline{\Lambda_{(0)}}
	   - \sum_{\dot{\beta}} \bar{\theta}^{\dot{\beta}}
	          \bar{\vartheta}_{\dot{\beta}}\overline{\Lambda_{(\beta)}}
	   + \bar{\theta}^{\dot{1}}\bar{\theta}^{\dot{2}}
	         \bar{\vartheta}_{\dot{1}}\bar{\vartheta}_{\dot{2}} \overline{\Lambda_{(12)}}
       - \sqrt{-1} \sum_{\alpha,\dot{\beta}, \mu}\theta^\alpha\bar{\theta}^{\dot{\beta}}	
			  \sigma^\mu_{\alpha\dot{\beta}}\partial_\mu \overline{\Lambda_{(0)}}    \\
      && \hspace{1em}				
       -\, \sqrt{-1}\sum_{\alpha, \dot{\beta}, \mu}
	        \theta^\alpha\bar{\theta}^{\dot{1}} \bar{\theta}^{\dot{2}} \bar{\vartheta}_{\dot{\beta}}
			   \sigma^{\mu\dot{\beta}}_\alpha \partial_\mu \overline{\Lambda_{(\beta)}}			
       -\, \theta^1\theta^2\bar{\theta}^{\dot{1}}\bar{\theta}^{\dot{2}}\, \square\, \overline{\Lambda_{(0)}}
          \hspace{2em}\in\;  C^\infty(\widehat{X}^{\widehat{\boxplus}})^{\smallscriptsize, \scriptsizeach}\,.
   \end{eqnarray*}}  
Then,
     \marginpar{\vspace{1em}\raggedright\tiny
         \raisebox{-1ex}{\hspace{-2.4em}\LARGE $\cdot$}Cf.\,[\,\parbox[t]{20em}{Wess
		\& Bagger:\\ Eq.\:(6.3)].}}
 {\small
 \begin{eqnarray*}
  \lefteqn{\breve{V}+\delta_{\breve{\Lambda}}\breve{V}
    := \breve{V} -\sqrt{-1} (\breve{\Lambda}-\breve{\Lambda}^\dag)   }
     \\
   &&  =\;
     \mbox{\Large $($}
      V_{(0)}
	   - \sqrt{-1}(\Lambda_{(0)}-\overline{\Lambda_{(0)}})
	 \mbox{\Large $)$}
       + \sum_\alpha \theta^\alpha\vartheta_\alpha
	        \mbox{\Large $($}
	           V_{(\alpha)}
			   - \sqrt{-1} \Lambda_{(\alpha)}
			\mbox{\Large $)$}
	   - \sum_{\dot{\beta}} \bar{\theta}^{\dot{\beta}}\bar{\vartheta}_{\dot{\beta}}
	        \mbox{\Large $($}
		       \overline{V_{(\beta)}}
			  + \sqrt{-1}\,\overline{\Lambda_{(\beta)}}
			\mbox{\Large $)$}                \\
    && \hspace{2em}			
	   +\, \theta^1\theta^2\vartheta_1\vartheta_2
	          \mbox{\Large $($}
	           V_{(12)} - \sqrt{-1} \Lambda_{(12)}
			  \mbox{\Large $)$}
	   + \sum_{\alpha,\dot{\beta}, \mu} \theta^\alpha \bar{\theta}^{\dot{\beta}}
             \sigma^\mu_{\alpha\dot{\beta}}
			     \mbox{\Large $($}
				  V_{[\mu]}
				  + \partial_\mu
				       \mbox{\large $($}
					     \Lambda_{(0)} + \overline{\Lambda_{(0)}}
					   \mbox{\large $)$}
				 \mbox{\Large $)$}       \\
    && \hspace{2em} 				
       +\, \bar{\theta}^{\dot{1}}\bar{\theta}^{\dot{2}}
              \bar{\vartheta}_{\dot{1}}\bar{\vartheta}_{\dot{2}}		
			   \mbox{\Large $($}
                \overline{V_{(12)}} + \sqrt{-1}\,\overline{\Lambda_{(12)}}
			   \mbox{\Large $)$}	
		\\[1ex]    	
	  && \hspace{2em}
	   +\, \sum_{\dot{\beta}} \theta^1\theta^2 \bar{\theta}^{\dot{\beta}}	
	          \left\{\rule{0ex}{1.2em}\right.
			   \sqrt{-1}
                 \sum_{\alpha, \mu} \vartheta_\alpha
				   \sigma^{\mu\alpha}_{\;\;\;\;\dot{\beta}}\, \partial_\mu	
				      \mbox{\Large $($}
				       V_{(\alpha)}	 - \sqrt{-1}\Lambda_{(\alpha)}
					  \mbox{\Large $)$}
			    + \bar{\vartheta}_{\dot{\beta}}\,				
				        \overline{V^{\prime\prime}_{(\beta)}} 					
			  \left.\rule{0ex}{1.2em}\right\}
          \\
      && \hspace{2em}				
	   +\,   \sum_\alpha \theta^\alpha \bar{\theta}^{\dot{1}}\bar{\theta}^{\dot{2}}
	          \left\{\rule{0ex}{1.2em}\right.
	            \vartheta_\alpha 		
				   V^{\prime\prime}_{(\alpha)}			
				-\sqrt{-1}
       	           \sum_{\dot{\beta}, \mu}\bar{\vartheta}_{\dot{\beta}}
				     \sigma^{\mu\dot{\beta}}_\alpha\, \partial_\mu\,
					   \mbox{\Large $($}
					     \overline{V_{(\beta)}} + \sqrt{-1}\,\overline{\Lambda_{(\beta)}}
					   \mbox{\Large $)$}
			  \left.\rule{0ex}{1.2em}\right\}				
		   \\		
     && \hspace{2em}				
       +\, \theta^1\theta^2\bar{\theta}^{\dot{1}}\bar{\theta}^{\dot{2}}
	         \mbox{\Large $($}
	          V^\sim_{(0)}
			  +\sqrt{-1}\,\square(\Lambda_{(0)}- \overline{\Lambda_{(0)}})
             \mbox{\Large $)$}			  	
      \\ [1.2ex]		
	 && =:\;
     \mbox{\Large $($}
      V_{(0)} +\delta_{\breve{\Lambda}}V_{(0)}
	 \mbox{\Large $)$}
       + \sum_\alpha \theta^\alpha\vartheta_\alpha
	        \mbox{\Large $($}
	           V_{(\alpha)}+ \delta_{\breve{\Lambda}}V_{(\alpha)}
			\mbox{\Large $)$}
	   - \sum_{\dot{\beta}} \bar{\theta}^{\dot{\beta}}\bar{\vartheta}_{\dot{\beta}}
	        \mbox{\Large $($}
		       \overline{V_{(\beta)}}+\delta_{\breve{\Lambda}}\overline{V_{(\beta)}}
			\mbox{\Large $)$}                \\
    && \hspace{2em}			
	   +\, \theta^1\theta^2\vartheta_1\vartheta_2
	          \mbox{\Large $($}
	           V_{(12)} + \delta_{\breve{\Lambda}}V_{(12)}
			  \mbox{\Large $)$}
	   + \sum_{\alpha,\dot{\beta}, \mu} \theta^\alpha \bar{\theta}^{\dot{\beta}}
             \sigma^\mu_{\alpha\dot{\beta}}
			     \mbox{\Large $($}
				   V_{[\mu]} + \delta_{\breve{\Lambda}}V_{([\mu])} 				 					
				 \mbox{\Large $)$}       				
       +  \bar{\theta}^{\dot{1}}\bar{\theta}^{\dot{2}}
              \bar{\vartheta}_{\dot{1}}\bar{\vartheta}_{\dot{2}}		
			   \mbox{\Large $($}
                \overline{V_{(12)}}+ \delta_{\breve{\Lambda}}\overline{V_{(12)}}
			   \mbox{\Large $)$}	     \\[1ex]			
     && \hspace{2em}
	   +\, \sum_{\dot{\beta}} \theta^1\theta^2 \bar{\theta}^{\dot{\beta}}	
	          \left\{\rule{0ex}{1.2em}\right.
			   \sqrt{-1}
                 \sum_{\alpha, \mu} \vartheta_\alpha
				   \sigma^{\mu\alpha}_{\;\;\;\;\dot{\beta}}\, \partial_\mu	
				      \mbox{\Large $($}
				       V_{(\alpha)}	 + \delta_{\breve{\Lambda}} V_{(\alpha)}		
					  \mbox{\Large $)$}
			    + \bar{\vartheta}_{\dot{\beta}}\,
				       \mbox{\Large $($}
				        \overline{V^{\prime\prime}_{(\beta)}}
						  + \delta_{\breve{\Lambda}}\overline{V^{\prime\prime}_{(\beta)}}
					   \mbox{\Large $)$}
			  \left.\rule{0ex}{1.2em}\right\}
          \\
      && \hspace{2em}				
	   +\,   \sum_\alpha \theta^\alpha \bar{\theta}^{\dot{1}}\bar{\theta}^{\dot{2}}
	          \left\{\rule{0ex}{1.2em}\right.
	            \vartheta_\alpha
				  \mbox{\Large $($}
				   V^{\prime\prime}_{(\alpha)} + \delta_{\breve{\Lambda}} V^{\prime\prime}_{(\alpha)}
				  \mbox{\Large $)$}
				-\sqrt{-1}
       	           \sum_{\dot{\beta}, \mu}\bar{\vartheta}_{\dot{\beta}}
				     \sigma^{\mu\dot{\beta}}_\alpha\, \partial_\mu\,
					   \mbox{\Large $($}
					     \overline{V_{(\beta)}} + \delta_{\breve{\Lambda}}	\overline{V_{(\beta)}}
					   \mbox{\Large $)$}
			  \left.\rule{0ex}{1.2em}\right\}				
		   \\
      && \hspace{2em}			
       +\,  \theta^1\theta^2\bar{\theta}^{\dot{1}}\bar{\theta}^{\dot{2}}
	            \mbox{\Large $($}
	              V^\sim_{(0)} + \delta_{\breve{\Lambda}} V^\sim_{(0)}
			    \mbox{\Large $)$}
     		\hspace{2em}\in\; C^\infty(\widehat{X}^{\widehat{\boxplus}})			
 \end{eqnarray*}
  }
 is another vector superfield in the sense of Definition~4.1.3.

A comparison of
    $\delta_{\breve{\Lambda}}V^\sim_{(0)}$
      against
    $\delta_{\breve{\Lambda}}V_{(0)}$
 implies that
if one expresses a vector superfield $\breve{V}\in C^\infty(\widehat{X}^{\widehat{\boxplus}})$
 in the following shifted form, which can always be made:

\bigskip
 
\begin{definition} {\bf [vector superfield in shifted expression]}\; {\rm
 By re-defining
    the $\theta^1\theta^2\bar{\theta}^{\dot{1}}\bar{\theta}^{\dot{2}}$-component $V^\sim_{(0)}$,
  any vector superfield in $C^\infty(\widehat{X}^{\widehat{\boxplus}})$ can be expressed
  in the following form
     \marginpar{\vspace{4em}\raggedright\tiny
         \raisebox{-1ex}{\hspace{-2.4em}\LARGE $\cdot$}Cf.\,[\,\parbox[t]{20em}{Wess
		\& Bagger:\\ Eq.\:(6.2)].}}
{\small
 \begin{eqnarray*}
  \breve{V}
  &= &
    V_{(0)}
	+ \sum_{\alpha}\theta^\alpha\vartheta_\alpha V_{(\alpha)}
	- \sum_{\dot{\beta}}
	       \bar{\theta}^{\dot{\beta}}\bar{\vartheta}_{\dot{\beta}}\,  \overline{V_{(\beta)}}
    + \theta^1\theta^2\vartheta_1\vartheta_2 V_{(12)}
	+ \sum_{\alpha,\dot{\beta}, \mu}\theta^\alpha \bar{\theta}^{\dot{\beta}}
		  \sigma^\mu_{\alpha\dot{\beta}} V_{[\mu]}		
    + \bar{\theta}^{\dot{1}}\bar{\theta}^{\dot{2}}
	   \bar{\vartheta}_{\dot{1}}\bar{\vartheta}_{\dot{2}} \, \overline{V_{(12)}}  \\
  && \hspace{1em}		
	+ \sum_{\dot{\beta}}\theta^1\theta^2\bar{\theta}^{\dot{\beta}}
	     \mbox{\Large $($}
		   \sqrt{-1}
		    \sum_{\alpha,\mu }
		          \vartheta_\alpha\,\sigma^{\mu \alpha}_{\;\;\;\;\dot{\beta}}\,
				    \partial_\mu V_{(\alpha)}
          + \bar{\vartheta}_{\dot{\beta}}\,
		        \overline{V^{\prime\prime}_{(\beta)}}					
         \mbox{\Large $)$}					
     \\
    && \hspace{1em}	
    +\, \sum_\alpha \theta^\alpha\bar{\theta}^{\dot{1}}\bar{\theta}^{\dot{2}}
	     \mbox{\Large $($}
		  \vartheta_\alpha\, V^{\prime\prime}_{(\alpha)}
		  - \sqrt{-1}
	         \sum_{\dot{\beta}, \mu}
		      \sigma^{\mu\dot{\beta}}_\alpha
		        \bar{\vartheta}_{\dot{\beta}}\,
			      \partial_\mu V_{(\dot{\beta})}
	     \mbox{\Large $)$}
	+ \theta^1\theta^2\bar{\theta}^{\dot{1}}\bar{\theta}^{\dot{2}}\,
		  \mbox{\large $($}V^\sim_{(0)} - \square\, V_{(0)}		  \mbox{\large $)$}
 \end{eqnarray*}
 }We 
 call a vector superfield of such form a {\it vector superfield in the shifted expression}.
}\end{definition}

\bigskip

\noindent
Then
 the components
   $V^{\prime\prime}_{(\alpha)}$, ${}_{\alpha=1,2}$,
   $V^\sim_{(0)} \in C^\infty(X)^{\Bbb C}$
  are invariant under gauge transformations:
     \marginpar{\vspace{3em}\raggedright\tiny
         \raisebox{-1ex}{\hspace{-2.4em}\LARGE $\cdot$}Cf.\,[\,\parbox[t]{20em}{Wess
		\& Bagger:\\ Eq.\:(6.5)].}}
 $$
  \delta_{\breve{\Lambda}}\;:\;\;\;
    \begin{array}{lcl}
     V_{(0)}             & \longrightarrow
       & V_{(0)}- \sqrt{-1}(\Lambda_{(0)}-\overline{\Lambda_{(0)}})\,, \\[1.2ex]
	 V_{(\alpha)}   & \longrightarrow
       & V_{(\alpha)}- \sqrt{-1} \Lambda_{(\alpha)}\,, \\[1.2ex]
			
     V_{(12)}           & \longrightarrow			
	   & V_{(12)} - \sqrt{-1} \Lambda_{(12)}\,,   \\[1.2ex]
     V_{[\mu]}             & \longrightarrow 			
	   & V_{[\mu]}
   		   + \partial_\mu \mbox{\large $($} \Lambda_{(0)} + \overline{\Lambda_{(0)}} \mbox{\large $)$}\,,\\[1.2ex]
	 V^{\prime\prime}_{(\alpha)}   & \longrightarrow
	   & V^{\prime\prime}_{(\alpha)}\,, \\[1.2ex]			
	 V^\sim_{(0)}     & \longrightarrow    & V^\sim_{(0)}\,.			  	
    \end{array}
  $$
  
\bigskip

\begin{remark} $[\Bbb R$-linearity\,$]$\;  {\rm
 An ${\Bbb R}$-linear combination\footnote{However,
                                                               caution that a $C^\infty(X)$-linear combination of vector superfields in the shifted expression
															    in general is not directly a vector superfield in the shifted expression.
															   One has to convert it accordingly.
															   }  
  of vector superfields in the shifted expression is also a vector superfield in the shifted expression.
}\end{remark}

\bigskip

It follows that, for a given vector superfield $\breve{V}$ in the shifted expression,
 if one chooses a small chiral superfield
 $\breve{\Lambda}\in C^\infty(\widehat{X}^{\widehat{\boxplus}})^{\smallscriptsize, \scriptsizech}$ with
  $$
    \Imaginary \Lambda_{(0)}\;
	  =\; -\, \mbox{\large $\frac{1}{2}$}\,V_{(0)}\,,\hspace{2em}
	\Lambda_{(\alpha)}\;
	  =\; -\,\sqrt{-1}\,V_{(\alpha)}\,,\hspace{2em}
    \Lambda_{(12)}\;
	  =\; -\,\sqrt{-1}\,V_{(12)}\,,
  $$
 which always exists,
 then after the gauge transformation specified by $\breve{\Lambda}$,
 $\breve{V}$ becomes
 {\small
 $$
   \breve{V}^\prime\;  =\;
	  \sum_{\alpha,\dot{\beta}, \mu} \theta^\alpha \bar{\theta}^{\dot{\beta}}
         \sigma^\mu_{\alpha\dot{\beta}}
			     \mbox{\Large $($}
				   V_{[\mu]}+ 2\,\partial_\mu \Real \Lambda_{(0)}	
				 \mbox{\Large $)$}
	   +\, \sum_{\dot{\beta}} \theta^1\theta^2 \bar{\theta}^{\dot{\beta}}			
	           \bar{\vartheta}_{\dot{\beta}}\, \overline{V^{\prime\prime}_{(\beta)}}
	   +\, \sum_\alpha \theta^\alpha \bar{\theta}^{\dot{1}}\bar{\theta}^{\dot{2}}	   			
       	         \vartheta_\alpha V^{\prime\prime}_{(\alpha)}
       +\, \theta^1\theta^2\bar{\theta}^{\dot{1}}\bar{\theta}^{\dot{2}}\, 	
	          V^\sim_{(0)}\,.		
 $$
  }
Here,
 $\Real \Lambda_{(0)}$ and $\Imaginary \Lambda_{(0)}$
  are the real part and the imaginary part of $\Lambda_{(0)}\in C^\infty(X)^{\Bbb C}$ respectively.
	
We summarize the above discussion into the following definition and lemma:

\bigskip

\begin{definition} {\bf [vector superfield in Wess-Zumino gauge]}\; {\rm
 A vector superfield $\breve{V}\in C^\infty(\widehat{X}^{\widehat{\boxplus}})$ that is of the following form
  in the standard coordinate functions $(x, \theta, \bar{\theta}, \vartheta, \bar{\vartheta})$
  on $\widehat{X}^{\widehat{\boxplus}}$
     \marginpar{\vspace{0em}\raggedright\tiny
         \raisebox{-1ex}{\hspace{-2.4em}\LARGE $\cdot$}Cf.\,[\,\parbox[t]{20em}{Wess
		\& Bagger:\\ above Eq.\:(6.6);\\   footnote, p.37].}}
 {\small
 $$
   \breve{V}\; =\;   			
	  \sum_{\alpha,\dot{\beta}, \mu} \theta^\alpha \bar{\theta}^{\dot{\beta}}
        \sigma^\mu_{\alpha\dot{\beta}} V_{[\mu]}		
	   +\, \sum_{\dot{\beta}} \theta^1\theta^2 \bar{\theta}^{\dot{\beta}}			
	           \bar{\vartheta}_{\dot{\beta}}\, \overline{V^{\prime\prime}_{(\beta)}}
	   +\, \sum_\alpha \theta^\alpha \bar{\theta}^{\dot{1}}\bar{\theta}^{\dot{2}}	   			
       	         \vartheta_\alpha V^{\prime\prime}_{(\alpha)}
       +\, \theta^1\theta^2\bar{\theta}^{\dot{1}}\bar{\theta}^{\dot{2}} 	
	          V^\sim_{(0)}
 $$}
  is called a {\it vector superfield in Wess-Zumino gauge}.
}\end{definition}

\bigskip

\medskip

\begin{lemma} {\bf [vector superfield representative in Wess-Zumino gauge]}\;
 Given any vector superfield $\breve{V}\in C^\infty(\widehat{X}^{\widehat{\boxplus}})$,
   there exists a unique small chiral superfield
    $\breve{\Lambda}\in C^\infty(\widehat{X}^{\widehat{\boxplus}})^{\smallscriptsize, \scriptsizech}$
          depending on $\breve{V}$ with $\Real \Lambda^{(0)}_{(0)}=0$
	 such that the gauge transformation specified by $\breve{\Lambda}$ takes $\breve{V}$ to a vector superfield in Wess-Zumino gauge.
 In particular, any vector superfield can be transformed to a vector superfield in Wess-Zumino gauge by a gauge transformation.
\end{lemma}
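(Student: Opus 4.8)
The plan is to prove both the existence of the Wess-Zumino gauge representative and the uniqueness of the specifying small chiral superfield $\breve{\Lambda}$ by reading off the component-wise gauge transformation law derived in the preceding theme. First I would put the given vector superfield $\breve{V}$ into its shifted expression (Definition~4.2.2), which is always possible by re-defining the $\theta^1\theta^2\bar{\theta}^{\dot{1}}\bar{\theta}^{\dot{2}}$-component. In the shifted expression the gauge transformation specified by a small chiral $\breve{\Lambda}$ acts component-by-component via the explicit table
$$
  \begin{array}{lcl}
   V_{(0)} & \longrightarrow & V_{(0)}- \sqrt{-1}(\Lambda_{(0)}-\overline{\Lambda_{(0)}})\,,\\[.6ex]
   V_{(\alpha)} & \longrightarrow & V_{(\alpha)}- \sqrt{-1}\Lambda_{(\alpha)}\,,\\[.6ex]
   V_{(12)} & \longrightarrow & V_{(12)}-\sqrt{-1}\Lambda_{(12)}\,,\\[.6ex]
   V_{[\mu]} & \longrightarrow & V_{[\mu]}+\partial_\mu(\Lambda_{(0)}+\overline{\Lambda_{(0)}})\,,
  \end{array}
$$
while $V^{\prime\prime}_{(\alpha)}$ and $V^\sim_{(0)}$ are gauge-invariant. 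The goal is to kill exactly the components $V_{(0)}$, $V_{(\alpha)}$, $V_{(12)}$ (and their conjugates), which are precisely the ones not already present in a Wess-Zumino-gauge field.

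The key observation is that each of the target components is controlled by an \emph{independent} piece of the chiral datum of $\breve{\Lambda}$. By Corollary~1.3.11, a small chiral superfield is determined freely by $(\Lambda_{(0)}, \Lambda_{(\alpha)}, \Lambda_{(12)})\in C^\infty(X)^{\Bbb C}$, and the assignment
$$
  \Imaginary \Lambda_{(0)}=-\tfrac{1}{2}V_{(0)}\,,\qquad
  \Lambda_{(\alpha)}=-\sqrt{-1}\,V_{(\alpha)}\,,\qquad
  \Lambda_{(12)}=-\sqrt{-1}\,V_{(12)}
$$
forces $V_{(0)}\to 0$, $V_{(\alpha)}\to 0$, $V_{(12)}\to 0$ after the transformation, leaving only the vector component $V_{[\mu]}$ shifted by the gauge function gradient $2\partial_\mu\Real\Lambda_{(0)}$ together with the invariant components $V^{\prime\prime}_{(\alpha)}$ and $V^\sim_{(0)}$. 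This is exactly the form of Definition~4.2.4. Since the imaginary part of $\Lambda_{(0)}$ is pinned by the equation above but its real part is left entirely free, I would impose the normalization $\Real\Lambda^{(0)}_{(0)}=0$ to remove this residual freedom and thereby obtain a genuinely unique $\breve{\Lambda}$.

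For uniqueness I would argue that any $\breve{\Lambda}$ sending $\breve{V}$ into Wess-Zumino gauge must satisfy the three displayed equations for $(\Imaginary\Lambda_{(0)},\Lambda_{(\alpha)},\Lambda_{(12)})$, since those are the unique solutions forcing the corresponding components to vanish; the only remaining ambiguity is $\Real\Lambda_{(0)}$, which is eliminated by the stated normalization. The main obstacle — really the only point requiring care rather than bookkeeping — is confirming that the real part of $\Lambda_{(0)}$ is the sole residual degree of freedom and that it acts purely as an ordinary $U(1)$ gauge transformation on $V_{[\mu]}$ without touching any component that defines the Wess-Zumino form; this is precisely why the normalization $\Real\Lambda^{(0)}_{(0)}=0$ suffices for uniqueness and does not over-constrain the problem. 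Everything else is a direct substitution into the component transformation law already established, so the proof reduces to verifying that the chosen $\breve{\Lambda}$ exists as a bona fide small chiral superfield (which it does, by the free parametrization in Corollary~1.3.11) and that it produces the asserted normal form.
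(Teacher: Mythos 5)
Your proof is correct and takes essentially the same route as the paper, which establishes the lemma exactly by the component-wise computation preceding it: pass to the shifted expression, read off the transformation table, impose $\Imaginary\Lambda_{(0)}=-\frac{1}{2}V_{(0)}$, $\Lambda_{(\alpha)}=-\sqrt{-1}\,V_{(\alpha)}$, $\Lambda_{(12)}=-\sqrt{-1}\,V_{(12)}$, and fix the residual freedom by $\Real\Lambda_{(0)}=0$. No gaps.
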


\medskip

\begin{lemma}  {\bf [naturality]}\;
 $(1)$
 The set of vector superfields in Wess-Zumino gauge is a $C^\infty(X)$-submodule of
     $C^\infty(\widehat{X}^{\widehat{\boxplus}})$.
 $(2)$
 If a vector superfield $\breve{V}$ expressed in terms of the standard coordinate functions
     $(x,\theta,\bar{\theta},\vartheta, \bar{\vartheta})$
	 on $\widehat{X}^{\widehat{\boxplus}}$ is in Wess-Zumino gauge,
 then it remains in Wess-Zumino gauge when re-expressed in terms of
    the chiral coordinate functions $(x^\prime,\theta,\bar{\theta},\vartheta,\bar{\vartheta})$ or
	the antichiral coordinate functions $(x^{\prime\prime},\theta, \bar{\theta},\vartheta,\bar{\vartheta})$
     on $\widehat{X}^{\widehat{\boxplus}}$.
\end{lemma}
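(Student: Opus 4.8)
The plan is to prove the two statements of Lemma~4.2.10 separately, with Statement~(1) being an immediate consequence of the gauge-transformation analysis already carried out, and Statement~(2) requiring a short but careful change-of-coordinates computation.

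For Statement~(1), I would first recall the explicit form of a vector superfield in Wess-Zumino gauge from Definition~4.2.8: such a $\breve{V}$ is determined entirely by the data $V_{[\mu]}\in C^\infty(X)$ (real-valued, $\mu=0,1,2,3$), $V^{\prime\prime}_{(\alpha)}\in C^\infty(X)^{\Bbb C}$ ($\alpha=1,2$), and $V^\sim_{(0)}\in C^\infty(X)$ (real-valued), with all lower-$(\theta,\bar\theta)$-degree components vanishing. The claim is that this subset is closed under $C^\infty(X)$-linear combinations. Since every monomial in the Wess-Zumino expression carries a coefficient that is linear in the displayed components, and since the conditions (i) $V_{(0)}=V_{(\alpha)}=V_{(12)}=0$ together with (ii) the realness constraints on $V_{[\mu]}$ and $V^\sim_{(0)}$ are each preserved under multiplication by a real-valued $f\in C^\infty(X)$ and under addition, the closure is immediate. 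The one point deserving an explicit remark (cf.\ Remark~4.2.7) is that $C^\infty(X)$-scaling must be by \emph{real-valued} functions to preserve the realness condition $\breve{V}^\dag=\breve{V}$; this is exactly why the module structure is over $C^\infty(X)$ rather than $C^\infty(X)^{\Bbb C}$.

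For Statement~(2), the key observation is that $x^{\prime\mu}=x^\mu+\sqrt{-1}\sum_{\alpha,\dot\beta}\theta^\alpha\sigma^\mu_{\alpha\dot\beta}\bar\theta^{\dot\beta}$ and $x^{\prime\prime\mu}=x^\mu-\sqrt{-1}\sum_{\alpha,\dot\beta}\theta^\alpha\sigma^\mu_{\alpha\dot\beta}\bar\theta^{\dot\beta}$ differ from $x^\mu$ only by a summand of $(\theta,\bar\theta)$-bidegree $(1,1)$. The plan is to substitute $x^\mu=x^{\prime\mu}-\sqrt{-1}\,\theta\sigma\bar\theta$ into each component $V^{\tinybullet}_{\tinybullet}(x)$ of the Wess-Zumino expression and Taylor-expand in the even nilpotent variable $\sqrt{-1}\,\theta\sigma\bar\theta$. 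The crucial point is a degree count: in the Wess-Zumino form, the \emph{lowest} $(\theta,\bar\theta)$-degree monomial present is the degree-$2$ term $\sum_{\alpha,\dot\beta,\mu}\theta^\alpha\bar\theta^{\dot\beta}\sigma^\mu_{\alpha\dot\beta}V_{[\mu]}$. Any correction produced by re-expanding $V_{[\mu]}(x^\prime-\sqrt{-1}\theta\sigma\bar\theta)$ contributes at $(\theta,\bar\theta)$-degree $\ge 2+2=4$, i.e.\ lands in the top monomial $\theta^1\theta^2\bar\theta^{\dot1}\bar\theta^{\dot2}$, which only shifts the coefficient $V^\sim_{(0)}$; similarly the degree-$3$ monomials $\theta^1\theta^2\bar\theta^{\dot\beta}$ and $\theta^\alpha\bar\theta^{\dot1}\bar\theta^{\dot2}$ generate corrections of degree $\ge 5$, which vanish identically since the Grassmann algebra on $(\theta^1,\theta^2,\bar\theta^{\dot1},\bar\theta^{\dot2})$ has top degree $4$. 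Therefore no new monomials of $(\theta,\bar\theta)$-bidegree lower than $(1,1)$ are generated, and in particular the components $V_{(0)}$, $V_{(\alpha)}$, $V_{(12)}$ remain zero after the substitution. The same argument applies verbatim to the antichiral coordinates $x^{\prime\prime}$ with the opposite sign.

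The main obstacle I anticipate is purely bookkeeping rather than conceptual: one must verify that the degree-raising corrections genuinely fall only into monomials already \emph{present} in the Wess-Zumino form (so that the \emph{shape} of the expression, not merely the vanishing of low-degree components, is preserved), and that the realness condition $\breve{V}^\dag=\breve{V}$ survives the coordinate change. For the latter I would invoke the compatibility of the twisted complex conjugation with the coordinate substitution, noting that $(x^{\prime\mu})^\dag=x^{\prime\prime\mu}$ and that conjugation swaps the chiral and antichiral coordinate systems (cf.\ Lemma~1.3.13 and Corollary~1.3.15); hence a Wess-Zumino-form expression that is real in the $(x,\theta,\bar\theta)$-system remains real after passing to either the $x^\prime$- or the $x^{\prime\prime}$-system. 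I expect the entire argument to reduce to this nilpotency-plus-degree-count observation, so the proof will be short once the degree bookkeeping is spelled out.
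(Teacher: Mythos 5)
Your proposal is correct and follows essentially the same route as the paper: Statement (1) is immediate from linearity and the realness constraints, and Statement (2) is exactly the paper's observation that, because $x^\prime-x$ and $x^{\prime\prime}-x$ are of $(\theta,\bar{\theta})$-bidegree $(1,1)$ and the lowest monomial present in Wess-Zumino gauge already has total degree $2$, the Taylor corrections can only land in the top component $\theta^1\theta^2\bar{\theta}^{\dot{1}}\bar{\theta}^{\dot{2}}$ (shifting $V^\sim_{(0)}$ by $\mp 2\sqrt{-1}\sum_\mu\partial^\mu V_{[\mu]}$) or vanish by nilpotency. The paper simply records the explicit shifted expressions, which your degree count reproduces.
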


\medskip

\begin{proof}
 Statement (1) is clear. We focus on Statement (2).

 Recall that, in shorthand,  $x^\prime = x+\sqrt{-1}\theta\sigma\bar{\theta}$.
 When in Wess-Zumino gauge,
  a vector superfield $\breve{V}$ in terms of the standard coordinate functions
    $(x,\theta,\bar{\theta},\vartheta, \bar{\vartheta})$
  is written as
   {\small
 $$
   \breve{V}\; =\;   			
	  \sum_{\alpha,\dot{\beta}, \mu} \theta^\alpha \bar{\theta}^{\dot{\beta}}
        \sigma^\mu_{\alpha\dot{\beta}} V_{[\mu]}(x)		
	   +\, \sum_{\dot{\beta}} \theta^1\theta^2 \bar{\theta}^{\dot{\beta}}			
	           \bar{\vartheta}_{\dot{\beta}}\, \overline{V^{\prime\prime}_{(\beta)}(x)}
	   +\, \sum_\alpha \theta^\alpha \bar{\theta}^{\dot{1}}\bar{\theta}^{\dot{2}}	   			
       	         \vartheta_\alpha V^{\prime\prime}_{(\alpha)}(x)
       +\, \theta^1\theta^2\bar{\theta}^{\dot{1}}\bar{\theta}^{\dot{2}} 	
	          V^\sim_{(0)}(x)
 $$}
 To re-express $\breve{V}$ in terms of
    the chiral coordinate functions $(x^\prime,\theta,\bar{\theta},\vartheta,\bar{\vartheta})$
	on $\widehat{X}^{\widehat{\boxplus}}$,
 one substitutes $x$ in $V^{\tinybullet}_{\tinybullet}(x)$ by $x^\prime-\sqrt{-1}\theta\sigma\bar{\theta}$   and
   use the $C^\infty$-hull structure of $C^\infty(\widehat{X})$ to expand it in $x^\prime$.
 Due to the product structure of $\theta^\alpha$, $\bar{\theta}^{\dot{\beta}}$,
  this will only influence the coefficient of $\theta^1\theta^2\bar{\theta}^{\dot{1}}\bar{\theta}^{\dot{2}}$
   and, hence, keep the vector superfield in Wess-Zumino gauge.
 Explicitly, the result is
 {\small
  \begin{eqnarray*}
   \lefteqn{
    \breve{V}\; =\;   			
	  \sum_{\alpha,\dot{\beta}, \mu} \theta^\alpha \bar{\theta}^{\dot{\beta}}
        \sigma^\mu_{\alpha\dot{\beta}} V_{[\mu]}(x^\prime)		
	   +\, \sum_{\dot{\beta}} \theta^1\theta^2 \bar{\theta}^{\dot{\beta}}			
	           \bar{\vartheta}_{\dot{\beta}}\, \overline{V^{\prime\prime}_{(\beta)}(x^\prime)}
	   +\, \sum_\alpha \theta^\alpha \bar{\theta}^{\dot{1}}\bar{\theta}^{\dot{2}}	   			
       	         \vartheta_\alpha V^{\prime\prime}_{(\alpha)}(x^\prime)			
			  }\\
     && \hspace{2em}				
       +\;  \theta^1\theta^2\bar{\theta}^{\dot{1}}\bar{\theta}^{\dot{2}}
	         \mbox{\Large $($}
	          V^\sim_{(0)}(x^\prime)- 2\sqrt{-1}\,\sum_\mu (\partial^\mu V_{[\mu]})(x^\prime)
			 \mbox{\Large $)$}\,. \hspace{18em}
  \end{eqnarray*}}

 Similar argument goes
 when re-expressing $\breve{V}$ in terms of the antichiral coordinate functions\\
    $(x^{\prime\prime},\theta,\bar{\theta},\vartheta,\bar{\vartheta})$ on $\widehat{X}^{\widehat{\boxplus}}$,
      with $x^{\prime\prime} = x-\sqrt{-1}\theta\sigma\bar{\theta}$.
 The explicit expression is given by
 {\small
  \begin{eqnarray*}
   \lefteqn{
    \breve{V}\; =\;   			
	  \sum_{\alpha,\dot{\beta}, \mu} \theta^\alpha \bar{\theta}^{\dot{\beta}}
        \sigma^\mu_{\alpha\dot{\beta}} V_{[\mu]}(x^{\prime\prime})		
	   +\, \sum_{\dot{\beta}} \theta^1\theta^2 \bar{\theta}^{\dot{\beta}}			
	           \bar{\vartheta}_{\dot{\beta}}\, \overline{V^{\prime\prime}_{(\beta)}(x^{\prime\prime})}
	   +\, \sum_\alpha \theta^\alpha \bar{\theta}^{\dot{1}}\bar{\theta}^{\dot{2}}	   			
       	         \vartheta_\alpha V^{\prime\prime}_{(\alpha)}(x^{\prime\prime})			
			  }\\
     && \hspace{2em}				
       +\;  \theta^1\theta^2\bar{\theta}^{\dot{1}}\bar{\theta}^{\dot{2}}
	         \mbox{\Large $($}
	          V^\sim_{(0)}(x^{\prime\prime})+ 2\sqrt{-1}\,\sum_\mu (\partial^\mu V_{[\mu]})(x^{\prime\prime})
			 \mbox{\Large $)$}\,. \hspace{18em}
  \end{eqnarray*}}

 This completes the proof.
	
\end{proof}

\bigskip

In the shifted expression, once a vector superfield is rendered a vector superfield $\breve{V}$ in Wess-Zumino gauge,
 a gauge transformation specified by $\breve{\Lambda}$ with
  $$
    \Imaginary \Lambda_{(0)}\;
	=\; \Lambda_{(\alpha)}\;
    =\; \Lambda_{(12)}\; =\; 	0\,,
  $$
 (i.e.\
   $$
     \breve{\Lambda}\;=\;	
	   \Lambda_{(0)}
	   + \sqrt{-1} \sum_{\alpha,\dot{\beta}, \mu}
	          \theta^\alpha\bar{\theta}^{\dot{\beta}}	
			     \sigma^\mu_{\alpha\dot{\beta}}\partial_\mu \Lambda_{(0)}
       -\, \theta^1\theta^2\bar{\theta}^{\dot{1}}\bar{\theta}^{\dot{2}}\,
	        \square \Lambda_{(0)}
   $$
   specified by a real-valued component $\Lambda_{(0)}$)
 will send $\breve{V}$ to another vector superfield $\breve{V}^\prime$ still in Wess-Zumino gauge
  with only the components $V_{[\mu]}$ of $\breve{V}$ transformed,
     \marginpar{\vspace{0em}\raggedright\tiny
         \raisebox{-1ex}{\hspace{-2.4em}\LARGE $\cdot$}Cf.\,[\,\parbox[t]{20em}{Wess
		\& Bagger:\\ above Eq.\:(6.6)].}}
  by
   $$
     V_{[\mu]}\;\longrightarrow\;
	 V_{[\mu]} \,+\, 2\,\partial_\mu \Lambda_{(0)}\,.
   $$
Thus, the residual gauge symmetries on vector superfields in Wess-Zumino gauge
 are the usual $U(1)$ gauge symmetries.

\bigskip

\begin{lemma}
{\bf [restriction $\nabla^{\breve{V}}$ of $\widehat{\nabla}^{\breve{V}}$ to $X^{\Bbb C}$]}\;
 Let $\breve{V}$ be a vector superfield in Wess-Zumino gauge.
 Then the restriction $\nabla^{\breve{V}}$ of $\widehat{\nabla}^{\breve{V}}_\mu$ to $X^{\Bbb C}$
  is given by $\partial_\mu -\frac{\sqrt{-1}}{2}\,V_{[\mu]}$, $\mu=0,1,2,3$\,.
\end{lemma}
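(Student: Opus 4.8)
The plan is to reduce everything to the two fermionic-direction pieces of $\widehat{\nabla}^{\breve{V}}$ and then exploit the nilpotency forced by Wess-Zumino gauge. By Definition~4.1.4(3) the only object to understand is $\widehat{\nabla}^{\breve{V}}_{e_\mu} = -\frac{\sqrt{-1}}{4}\sum_{\dot{\beta},\alpha}\bar{\sigma}_\mu^{\dot{\beta}\alpha}\{\widehat{\nabla}^{\breve{V}}_{e_{\alpha^\prime}},\widehat{\nabla}^{\breve{V}}_{e_{\beta^{\prime\prime}}}\}$, and since $\widehat{\nabla}^{\breve{V}}_{e_{\beta^{\prime\prime}}} = e_{\beta^{\prime\prime}}$ exactly, I only need $\widehat{\nabla}^{\breve{V}}_{e_{\alpha^\prime}} = e_{\alpha^\prime} + \breve{A}_\alpha$, where $\breve{A}_\alpha := e^{-\breve{V}}(e_{\alpha^\prime}e^{\breve{V}})$ acts by multiplication. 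First I would record that a vector superfield in Wess-Zumino gauge has every monomial of $(\theta,\bar{\theta})$-degree between $2$ and $4$, so that $\breve{V}^3 = 0$ and $e^{\pm\breve{V}} = 1 \pm \breve{V} + \tfrac12\breve{V}^2$. A short computation with the odd Leibniz rule gives $e_{\alpha^\prime}e^{\breve{V}} = (1+\breve{V})(e_{\alpha^\prime}\breve{V})$, hence $\breve{A}_\alpha = (1-\tfrac12\breve{V}^2)(e_{\alpha^\prime}\breve{V})$; but $\breve{V}^2(e_{\alpha^\prime}\breve{V})$ has $(\theta,\bar{\theta})$-degree $>4$ and therefore vanishes, so $\breve{A}_\alpha = e_{\alpha^\prime}\breve{V}$.

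Next I would form the anticommutator. Writing $\breve{A}_\alpha$ also for the corresponding multiplication operator, $\{e_{\alpha^\prime}+\breve{A}_\alpha,\, e_{\beta^{\prime\prime}}\} = \{e_{\alpha^\prime},\, e_{\beta^{\prime\prime}}\} + \{\breve{A}_\alpha,\, e_{\beta^{\prime\prime}}\}$; since $e_{\beta^{\prime\prime}}$ is an odd derivation and $\breve{A}_\alpha$ is odd, the graded Leibniz rule collapses the last anticommutator to multiplication by $e_{\beta^{\prime\prime}}\breve{A}_\alpha = e_{\beta^{\prime\prime}}e_{\alpha^\prime}\breve{V}$. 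Thus $\widehat{\nabla}^{\breve{V}}_{e_\mu} = -\frac{\sqrt{-1}}{4}\sum_{\dot{\beta},\alpha}\bar{\sigma}_\mu^{\dot{\beta}\alpha}\{e_{\alpha^\prime},e_{\beta^{\prime\prime}}\} - \frac{\sqrt{-1}}{4}\sum_{\dot{\beta},\alpha}\bar{\sigma}_\mu^{\dot{\beta}\alpha}(e_{\beta^{\prime\prime}}e_{\alpha^\prime}\breve{V})$. For the first summand I would substitute $\{e_{\alpha^\prime},e_{\beta^{\prime\prime}}\} = -2\sqrt{-1}\sum_\nu\sigma^\nu_{\alpha\dot{\beta}}\partial_\nu$ from Example~1.2.5 together with the contraction identity $\sum_{\alpha,\dot{\beta}}\bar{\sigma}_\mu^{\dot{\beta}\alpha}\sigma^\nu_{\alpha\dot{\beta}} = -2\delta_\mu^\nu$ of Definition~4.1.4; the factors $(-\tfrac{\sqrt{-1}}{4})(-2\sqrt{-1})(-2)$ multiply to $1$, so the first summand is exactly $\partial_\mu$, recovering the flat base derivation $e_\mu$.

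It then remains to restrict to $X^{\Bbb C}$, i.e.\ to extract the $(\theta,\bar{\theta},\vartheta,\bar{\vartheta})$-degree-$0$ part of the multiplication operator $-\frac{\sqrt{-1}}{4}\sum \bar{\sigma}_\mu^{\dot{\beta}\alpha}(e_{\beta^{\prime\prime}}e_{\alpha^\prime}\breve{V})$. The key computation is the degree-$0$ component of $e_{\beta^{\prime\prime}}e_{\alpha^\prime}\breve{V}$. Because $\breve{V}$ in Wess-Zumino gauge has no constant or linear part, $e_{\alpha^\prime}\breve{V}$ has lowest $(\theta,\bar{\theta})$-degree $1$, coming solely from $\partial/\partial\theta^\alpha$ hitting the $\sum_{\gamma,\dot{\delta},\nu}\theta^\gamma\bar{\theta}^{\dot{\delta}}\sigma^\nu_{\gamma\dot{\delta}}V_{[\nu]}$ term and yielding $\sum_{\dot{\delta},\nu}\bar{\theta}^{\dot{\delta}}\sigma^\nu_{\alpha\dot{\delta}}V_{[\nu]}$; the $\sqrt{-1}\,\sigma\bar{\theta}\,\partial_\nu$ piece of $e_{\alpha^\prime}$ contributes nothing in this degree for lack of a degree-$0$ term in $\breve{V}$. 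Applying $e_{\beta^{\prime\prime}} = -\partial/\partial\bar{\theta}^{\dot{\beta}} - \sqrt{-1}\sum_{\gamma,\nu}\theta^\gamma\sigma^\nu_{\gamma\dot{\beta}}\partial_\nu$ and keeping only degree $0$, only $-\partial/\partial\bar{\theta}^{\dot{\beta}}$ acting on this degree-$1$ piece survives, giving $-\sum_\nu\sigma^\nu_{\alpha\dot{\beta}}V_{[\nu]}$. Substituting and using the same contraction identity once more produces $\frac{\sqrt{-1}}{4}\sum_\nu(-2\delta_\mu^\nu)V_{[\nu]} = -\frac{\sqrt{-1}}{2}V_{[\mu]}$, whence $\nabla^{\breve{V}}_\mu = \partial_\mu - \frac{\sqrt{-1}}{2}V_{[\mu]}$.

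The main obstacle will be bookkeeping rather than any conceptual difficulty: keeping the ${\Bbb Z}/2$-graded signs straight when passing multiplication operators through the odd derivations $e_{\alpha^\prime}, e_{\beta^{\prime\prime}}$, verifying that every higher-$(\theta,\bar{\theta})$-degree contribution to $\breve{A}_\alpha$ and to $e_{\beta^{\prime\prime}}e_{\alpha^\prime}\breve{V}$ genuinely drops out on restriction to the base, and tracking the interlocking factors of $\sqrt{-1}$, $2$, and the Pauli contraction $\sum_{\alpha,\dot{\beta}}\bar{\sigma}_\mu^{\dot{\beta}\alpha}\sigma^\nu_{\alpha\dot{\beta}} = -2\delta_\mu^\nu$ so that the numerical coefficient lands precisely on the asserted $-\tfrac{\sqrt{-1}}{2}$.
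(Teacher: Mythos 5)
Your argument is correct, and it is more self-contained than what the paper offers: the paper's proof of this lemma is essentially a citation to [L-Y5: Lemma 3.2.6] together with a pointer to the full explicit expansion of $\Theta_{\alpha\dot{\beta}} = \{e^{-\breve{V}}(e_{\alpha^\prime}e^{\breve{V}}),\, e_{\beta^{\prime\prime}}\}$ carried out over several pages in the following theme, from which the $(\theta,\bar{\theta})$-degree-$0$ coefficient $-\sum_\nu \sigma^\nu_{\alpha\dot{\beta}}V_{[\nu]}$ is read off. You instead shortcut the full expansion by a nilpotency argument: since $\breve{V}^2$ in Wess-Zumino gauge sits in top $(\theta,\bar{\theta})$-degree and $e_{\alpha^\prime}\breve{V}$ has no degree-$0$ part, the identity $e^{-\breve{V}}(e_{\alpha^\prime}e^{\breve{V}}) = (1-\tfrac12\breve{V}^2)(e_{\alpha^\prime}\breve{V}) = e_{\alpha^\prime}\breve{V}$ holds exactly, which incidentally explains the paper's observation that ``terms quadratic in components of $\breve{V}$ cancel each other.'' From there the reduction of $\{e_{\alpha^\prime}\breve{V},\, e_{\beta^{\prime\prime}}\}$ to multiplication by $e_{\beta^{\prime\prime}}e_{\alpha^\prime}\breve{V}$, the extraction of its degree-$0$ part $-\sum_\nu\sigma^\nu_{\alpha\dot{\beta}}V_{[\nu]}$, and the two applications of $\sum_{\alpha,\dot{\beta}}\bar{\sigma}_\mu^{\dot{\beta}\alpha}\sigma^\nu_{\alpha\dot{\beta}} = -2\delta_\mu^\nu$ (one recovering $\partial_\mu$, one producing $-\tfrac{\sqrt{-1}}{2}V_{[\mu]}$) all check out, including the sign bookkeeping in $\{\breve{A}_\alpha,e_{\beta^{\prime\prime}}\}$. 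What your route buys is a proof of the lemma that does not depend on the external reference or on the correctness of the full $\Theta_{\alpha\dot{\beta}}$ computation; what it gives up is only the by-product information about the higher-degree terms, which the lemma does not need.
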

 
\medskip

\begin{proof}
 This follows from the same argument of [L-Y5: proof of Lemma~3.2.6] (SUSY(1)).
 See also the full expression of $\widehat{\nabla}^{\breve{V}}_\mu$ in the next theme.

\end{proof}

\bigskip

\begin{flushleft}
{\bf Explicit formula of $\widehat{\nabla}^{\breve{V}}_{\tinybullet}$ for $\breve{V}$ in Wess-Zumino gauge}
\end{flushleft}
Note that the powers of a vector superfield $\breve{V}\in C^\infty(\widehat{X}^{\widehat{\boxplus}})$ in Wess-Zumino gauge
 can be computed easily:
     \marginpar{\vspace{2em}\raggedright\tiny
         \raisebox{-1ex}{\hspace{-2.4em}\LARGE $\cdot$}Cf.\,[\,\parbox[t]{20em}{Wess
		\& Bagger:\\ Eq.\:(6.6)].}}
{\small
\begin{eqnarray*}
 \breve{V} & = &
	\sum_{\alpha,\dot{\beta}, \mu} \theta^\alpha \bar{\theta}^{\dot{\beta}}
        \sigma^\mu_{\alpha\dot{\beta}} V_{[\mu]}		
	   +\, \sum_{\dot{\beta}} \theta^1\theta^2 \bar{\theta}^{\dot{\beta}}			
	           \bar{\vartheta}_{\dot{\beta}}\, \overline{V^{\prime\prime}_{(\beta)}}
	   +\, \sum_\alpha \theta^\alpha \bar{\theta}^{\dot{1}}\bar{\theta}^{\dot{2}}	   			
       	         \vartheta_\alpha V^{\prime\prime}_{(\alpha)}
       +\, \theta^1\theta^2\bar{\theta}^{\dot{1}}\bar{\theta}^{\dot{2}} 	V^\sim_{(0)}\,,
     \\
 \breve{V}^2 & = &
   2\, \theta^1\theta^2\bar{\theta}^{\dot{1}}\bar{\theta}^{\dot{2}}
      \sum_{\mu, \nu} \eta^{\mu\nu} V_{[\mu]} V_{[\nu]}\,,
    \\
   \breve{V}^3 & = & 0\,.
\end{eqnarray*}
}
It follows that
{\small
\begin{eqnarray*}
 e^{\breve{V}} &  = &   1 + \breve{V} + \mbox{\large $\frac{1}{2}$}\,\breve{V}^2
     \\
  & = &
	1 + \sum_{\alpha,\dot{\beta}, \mu} \theta^\alpha \bar{\theta}^{\dot{\beta}}
        \sigma^\mu_{\alpha\dot{\beta}} V_{[\mu]}		
	   +\, \sum_{\dot{\beta}} \theta^1\theta^2 \bar{\theta}^{\dot{\beta}}			
	           \bar{\vartheta}_{\dot{\beta}}\, \overline{V^{\prime\prime}_{(\beta)}}
	   +\, \sum_\alpha \theta^\alpha \bar{\theta}^{\dot{1}}\bar{\theta}^{\dot{2}}	   			
       	         \vartheta_\alpha V^{\prime\prime}_{(\alpha)}
	  \\
     && \hspace{1em}
       +\; \theta^1\theta^2\bar{\theta}^{\dot{1}}\bar{\theta}^{\dot{2}} 	
	          \mbox{\Large $($}
	            V^\sim_{(0)}+  \sum_{\mu, \nu} \eta^{\mu\nu} V_{[\mu]} V_{[\nu]}
	          \mbox{\Large $)$}
\end{eqnarray*}
}
and $\widehat{\nabla}^{\breve{V}}_{e_I}$, $I=\mu, \alpha^\prime, \beta^{\prime\prime}$,  can be computed
 less tediously for $\breve{V}$ in Wess-Zumino gauge.
The result is given below in the standard coordinate functions $(x, \theta, \bar{\theta}, \vartheta, \bar{\vartheta})$
  on $\widehat{X}^{\widehat{\boxplus}}$.

First,
$$
  \widehat{\nabla}^{\breve{V}}_{e_{\beta^{\prime\prime}}}\;
    :=\; e_{\beta^{\prime\prime}}
$$
by definition.

Next, $\widehat{\nabla}^{\breve{V}}_{e_{\alpha^\prime}}$
 can be computed slightly easier in the antichiral coordinate functions
 $(x^{\prime\prime}, \theta, \bar{\theta}, \vartheta, \bar{\vartheta})$ first and then converted back to
 $(x, \theta, \bar{\theta}, \vartheta, \bar{\vartheta})$:
{\small
\begin{eqnarray*}
 \lefteqn{
   \widehat{\nabla}^{\breve{V}}_{e_{\alpha^\prime}}\;\;
   :=\;\;  e^{-\breve{V}} \circ  e_{\alpha^\prime} \circ e^{\breve{V}}\;\;
     =\;\;  e_{\alpha^\prime}
	            + e^{-\breve{V}} \mbox{\large $($}e_{\alpha^\prime} e^{\breve{V}}\mbox{\large $)$}
		} \\
  && =\;\;
   e_{\alpha^\prime}
   + \sum_{\dot{\beta}, \mu}
        \bar{\theta}^{\dot{\beta}}
	      \sigma^\mu_{\alpha\dot{\beta}} V_{[\mu]}
	       - \sum_{\gamma, \dot{\beta}}
	           \theta^\gamma \bar{\theta}^{\dot{\beta}} \bar{\vartheta}_{\dot{\beta}}\,
		         \varepsilon_{\alpha\gamma}  \overline{V^{\prime\prime}_{(\beta)}}
           + \bar{\theta}^{\dot{1}} \bar{\theta}^{\dot{2}}	
	             \vartheta_\alpha V^{\prime\prime}_{(\alpha)}	
	  \\
	 && \hspace{2em}
	  +\, \sum_\gamma
	            \theta^\gamma \bar{\theta}^{\dot{1}}\bar{\theta}^{\dot{2}}
				 \mbox{\Large $($}
				  -\varepsilon_{\alpha\gamma}V^\sim_{(0)}
				  + \sqrt{-1}\sum_{\dot{\beta}, \mu, \nu}
				         \sigma^{\mu\dot{\beta}}_\alpha \sigma^\nu_{\gamma\dot{\beta}}  \partial_\mu V_{[\nu]}
				 \mbox{\Large $)$}
	   - \sqrt{-1}\theta^1\theta^2 \bar{\theta}^{\dot{1}} \bar{\theta}^{\dot{2}}
	        \sum_{\dot{\beta}}\bar{\vartheta}_{\dot{\beta}}
			     \sigma^{\mu\dot{\beta}}_\alpha \partial_\mu\, \overline{V^{\prime\prime}_{(\beta)}}\,.
\end{eqnarray*}
}
The identity
 $$
   (\sigma^\mu\bar{\sigma}^\nu + \sigma^\nu \bar{\sigma}^\mu )_{\alpha\gamma}\;
    :=\; \sum_{\dot{\beta}, \dot{\delta}}
	         \varepsilon^{\dot{\beta}\dot{\delta}}
	      (\sigma^\mu_{\alpha\dot{\beta}}\sigma^\nu_{\gamma\dot{\delta}}
		      + \sigma^\nu_{\alpha\dot{\beta}} \sigma^\mu_{\gamma\dot{\delta}})\;
	  =\; 2\,\eta^{\mu\nu}\varepsilon_{\alpha\gamma}
 $$
 is used in the computation to simplify
 the coefficient function of $\theta^\gamma\bar{\theta}^{\dot{1}}\bar{\theta}^{\dot{2}}$.
Note that terms quadratic in components of $\breve{V}$ cancel each other   and
 only terms linear in components of $\breve{V}$ remain in the end.
 
Finally,
 $$
    \widehat{\nabla}^{\breve{V}}_{e_{\mu}} \;
	   =\;
	   -\, \mbox{\Large $\frac{\sqrt{-1}}{4}$}
	    \sum_{\dot{\beta}, \alpha} \bar{\sigma}_\mu^{\dot{\beta}\alpha}
		   \{\widehat{\nabla}^{\breve{V}}_{e_{\alpha^\prime}},
		        \widehat{\nabla}^{\breve{V}}_{e_{\beta^{\prime\prime}}}\}\;
	   =
	     \partial_\mu\,
		  -\, \mbox{\Large $\frac{\sqrt{-1}}{4}$}
	            \sum_{\dot{\beta}, \alpha} \bar{\sigma}_\mu^{\dot{\beta}\alpha}
		            \Theta_{\alpha\dot{\beta}}\,,
 $$
 where
 $$
  \Theta_{\alpha\dot{\beta}}\;
   :=\; \{e^{-\breve{V}}(e_{\alpha^\prime}e^{\breve{V}}), e_{\beta^{\prime\prime}}\}\;
	=\; (e_{\beta^{\prime\prime}} e^{-\breve{V}})
	        (e_{\alpha^\prime} e^{\breve{V}})
			 + e^{-\breve{V}} (e_{\beta^{\prime\prime}}e_{\alpha^\prime}e^{\breve{V}})\,.				   		
 $$
After some (pages of) straightforward computations, one obtains the explicit expression for $\Theta_{\alpha\dot{\beta}}$
 in the standard coordinate functions $(x,\theta, \bar{\theta}, \vartheta, \bar{\vartheta})$:
 
{\small
 \begin{eqnarray*}
   \Theta_{\alpha\dot{\beta}}
    & = &
	 -\,\sum_\nu \sigma^\nu_{\alpha\dot{\beta}} V_{[\nu]}
	 - \sum_\gamma
	     \theta^\gamma \bar{\vartheta}_{\dot{\beta}}
	        \varepsilon_{\alpha\gamma}   \overline{V^{\prime\prime}_{(\beta)}}
	 + \sum_{\dot{\delta}}
	       \bar{\theta}^{\dot{\delta}} \vartheta_\alpha
		    \varepsilon_{\dot{\beta}\dot{\delta}} V^{\prime\prime}_{(\alpha)}
		 \\
		&&
     +\, \sum_{\gamma, \dot{\delta}}				
	      \theta^\gamma \bar{\theta}^{\dot{\delta}}
	       \left\{\rule{0ex}{1.2em}\right.
		     \sum_{\mu, \nu}
			  \mbox{\Large $($}
			    \sigma^\mu_{\alpha\dot{\beta}} \sigma^\nu_{\gamma\dot{\delta}}
				 - \sigma^\mu_{\alpha\dot{\delta}} \sigma^\nu_{\gamma\dot{\beta}}
				 +\varepsilon_{\alpha\gamma} \varepsilon_{\dot{\beta}\dot{\delta}} \eta^{\mu\nu}
			  \mbox{\Large $)$}\,
			  V_{[\mu]} V_{[\nu]}
			    \\[-.6ex]
				&& \hspace{7em}
		 +\, \sqrt{-1}\sum_{\mu, \nu}
		      \mbox{\Large $($}
			   \sigma^\mu_{\alpha\dot{\delta}} \sigma^\nu_{\gamma\dot{\beta}}
			   - \sigma^\nu_{\alpha\dot{\delta}} \sigma^\mu_{\gamma\dot{\beta}}
			   - \sigma^\mu_{\alpha\dot{\beta}} \sigma^\nu_{\gamma\dot{\delta}}
			  \mbox{\Large $)$}\,
			  \partial_\mu V_{[\nu]}
          + \varepsilon_{\alpha\gamma} \varepsilon_{\dot{\beta}\dot{\delta}} V^\sim_{(0)}
		   \left.\rule{0ex}{1.2em}\right\}		
		\\
		&&
	 +\, \sqrt{-1} \sum_{\dot{\delta}, \nu}
	      \theta^1\theta^2\bar{\theta}^{\dot{\delta}}
		    \mbox{\Large $($}
			 -2 \bar{\vartheta}_{\dot{\delta}}
			     \sigma^\nu_{\alpha\dot{\beta}}\,\partial_\nu\,\overline{V^{\prime\prime}_{(\delta)}}
			  + \bar{\vartheta}_{\dot{\beta}}  \sigma^\nu_{\alpha\dot{\delta}}\,
			      \partial_\nu\,\overline{V^{\prime\prime}_{(\beta)}}\,
			\mbox{\Large $)$}
	  -  \sqrt{-1}\sum_{\gamma, \nu}
	      \theta^\gamma\bar{\theta}^{\dot{1}}\bar{\theta}^{\dot{2}}
		    \vartheta_\alpha \sigma^\nu_{\gamma\dot{\beta}}\partial_\nu V^{\prime\prime}_{(\alpha)}
        \\
       && 		
	 +\,\theta^1\theta^2 \bar{\theta}^{\dot{1}}\bar{\theta}^{\dot{2}}
	        \left\{\rule{0ex}{1.2em}\right.
			 -\sqrt{-1}\sum_\nu \sigma^\nu_{\alpha\dot{\beta}} \partial_\nu V^\sim_{(0)}			
			 - \sum_{\gamma, \dot{\delta}, \mu, \mu^\prime, \nu}
			     \sigma^{\mu\dot{\delta}}_\alpha \sigma^{\mu^\prime\gamma}_{\;\;\;\;\dot{\beta}}
				   \sigma^\nu_{\gamma\dot{\delta}}
				  \partial_\mu\partial_{\mu^\prime} V_{[\nu]}
				  \\[-.6ex]
				  && \hspace{7em}
			 +\, 2\,\sqrt{-1}\,\sum_{\gamma, \dot{\delta}, \nu^\prime, \mu, \nu}
			      \sigma^{\nu^\prime}_{\gamma\dot{\delta}}\, \sigma^{\mu\dot{\delta}}_\alpha\,
				    \sigma^{\nu\gamma}_{\;\;\;\;\dot{\beta}}
					 V_{[\nu^\prime]}
					 \mbox{\large $($}
					  \partial_\mu V_{[\nu]} - \partial_\nu V_{[\mu]}
					 \mbox{\large $)$}
			\left.\rule{0ex}{1.2em}\right\}\,.	
 \end{eqnarray*}
}

\noindent
The identity
$$
   (\sigma^\mu\bar{\sigma}^\nu + \sigma^\nu \bar{\sigma}^\mu )_\gamma^{\;\gamma^\prime}\;
    :=\; \sum_{\dot{\delta}}
	      (\sigma^\mu_{\gamma\dot{\delta}}\bar{\sigma}^{\nu\dot{\delta}\gamma^\prime}
		      + \sigma^\nu_{\gamma\dot{\delta}} \bar{\sigma}^{\mu\dot{\delta}\gamma^\prime})\;
	  =\; -\, 2\,\eta^{\mu\nu}\delta_\gamma^{\;\gamma^\prime}
 $$
 is used in the computation to simplify the coefficient functions.
Terms cubic in components of $\breve{V}$ do appear in the intermediate steps but they cancel each other in the end.
 It is a curious feature that the curvature tensor $\partial_\mu V_{[\nu]}-\partial_\nu V_{[\mu]} $ on $X$
  appear in the coefficient function of $\theta^1\theta^2\bar{\theta}^{\dot{1}}\bar{\theta}^{\dot{2}}$.
From the explicit expression of $\Theta_{\alpha\dot{\beta}}$, one concludes also that
 $$
  \widehat{\nabla}^{\breve{V}}_\mu\;
   =\; \partial_\mu
         -\, \mbox{\Large $\frac{\sqrt{-1}}{2}$}\,V_{[\mu]}\,
		 +\, \mbox{(terms of total $(\theta, \bar{\theta})$-degree $\ge 1$)}
 $$
 in Lemma~4.2.6.

\bigskip

\subsection{Supersymmetry transformations of a vector superfield in Wess-Zumino gauge}	

In this subsection we repeat the discussion of [L-Y5: Sec.\:3.3] (SUSY(1))
 for the upgraded notion of vector superfield.
Readers are recommend to read this subsection along with
   [Argu: Sec.\,4.3, pp.\:76, 77] of {\sl Argurio} and
   [G-G-R-S: Sec.\,4.2.1, from before Eq.\,(4.2.7) to after  Eq.\,(4.2.13)]
      of {\it Gates, Grosaru, Ro\u{c}ek, \& Siegel}
 for comparison.
 		
Recall the Grassmann parameter level of $\widehat{X}^{\widehat{\boxplus}}$.
The setup, discussions, and results in Sec.\,1.3, Sec.\,4.1 and Sec.\,4.2 can be generalized straightforwardly
 to the case where the Grassmann parameter level is turned on.
In particular,
 recall the additional coordinate functions
    $(\eta,\bar{\eta}):= (\eta^1,\eta^2; \bar{\eta}^{\dot{1}},\bar{\eta}^{\dot{2}})$
    on $\widehat{X}^{\widehat{\boxplus}}$ from the parameter level,
 then: (cf.\:Lemma~1.3.4)
 
\bigskip

\begin{lemma} {\bf [chiral function and antichiral function on $\widehat{X}^{\widehat{\boxplus}}$
              with parameter level activated]}\\
 $(1)$
 $\breve{f}\in C^\infty(\widehat{X}^{\widehat{\boxplus}})$ is chiral if and only if,
   as an element of
    $C^\infty(X)^{\Bbb C}[\theta, \bar{\theta}, \eta, \bar{\eta}, \vartheta, \bar{\theta}]^\anticommuting$,
  $\breve{f}$ is of the following form
  \begin{eqnarray*}
   \breve{f} & = &
     \breve{f}_{(0)}(x, \eta, \bar{\eta}, \vartheta, \bar{\vartheta})
	 + \sum_\gamma \theta^\gamma \breve{f}_{(\gamma)}(x, \eta, \bar{\eta}, \vartheta, \bar{\theta})
	     \\
	&&
	    +\, \theta^1\theta^2 \breve{f}_{(12)}(x,\eta, \bar{\eta}, \vartheta, \bar{\vartheta})		     	
	    + \sqrt{-1} \sum_{\gamma, \dot{\delta}, \nu}
		    \theta^\gamma \bar{\theta}^{\dot{\delta}} \sigma^\nu_{\gamma\dot{\delta}}
			  \partial_\nu \breve{f}_{(0)}(x, \eta, \bar{\eta}, \vartheta, \bar{\vartheta})
 		\\
	&& 			
        +\, \sqrt{-1} \sum_{\dot{\delta}, \gamma, \nu}
		     \theta^1\theta^2 \bar{\theta}^{\dot{\delta}}
			  \sigma^{\nu\gamma}_{\;\;\;\;\dot{\delta}}\,\partial_\nu
			   \breve{f}_{(\gamma)}(x, \eta, \bar{\eta}, \vartheta, \bar{\vartheta})
		- \theta^1\theta^2\bar{\theta}^{\dot{1}} \bar{\theta}^{\dot{2}}\,
		    \square \breve{f}_{(0)}(x,\eta, \bar{\eta}, \vartheta, \bar{\vartheta})\,.
  \end{eqnarray*}
 In particular, a chiral $\breve{f}\in C^\infty(\widehat{X}^{\widehat{\boxplus}})$
  has four independent components in $C^\infty(X)^{\Bbb C}[\eta, \bar{\eta}, \vartheta, \bar{\vartheta}]^\anticommuting$:
  $$
    \breve{f}_{(0)}\,,\;\;  \breve{f}_{(\gamma)}\,,\,{}_{\gamma\;=\;1, 2}\,,\;\;
	\breve{f}_{(12)}\,.
  $$
 In terms of the standard chiral coordinate functions $(x^\prime, \theta, \bar{\theta}, \eta, \bar{\eta}, \vartheta, \bar{\vartheta})$
  on $\widehat{X}^{\widehat{\boxplus}}$,
 $$
  \breve{f}\;
   =\; \breve{f}_{(0)}(x^\prime, \eta, \bar{\eta}, \vartheta, \bar{\vartheta})
         + \sum_\gamma \theta^\gamma
		      \breve{f}_{(\gamma)}(x^\prime, \eta, \bar{\eta}, \vartheta, \bar{\vartheta})
		 + \theta^1\theta^2
		     \breve{f}_{(12)}(x^\prime, \eta, \bar{\eta}, \vartheta, \bar{\vartheta})\,,		
 $$
 which is independent of $\bar{\theta}$.
 
 $(2)$
   $\breve{f}\in C^\infty(\widehat{X}^{\widehat{\boxplus}})$ is antichiral if and only if,
   as an element of $C^\infty(X)^{\Bbb C}[\theta, \bar{\theta}, \eta, \bar{\eta}, \vartheta, \bar{\theta}]^\anticommuting$,
  $\breve{f}$ is of the following form
  \begin{eqnarray*}
   \breve{f} & = &
     \breve{f}_{(0)}(x, \eta, \bar{\eta}, \vartheta, \bar{\vartheta})
	 + \sum_{\dot{\delta}} \bar{\theta}^{\dot{\delta}}
	      \breve{f}_{(\dot{\delta})}(x, \eta, \bar{\eta}, \vartheta, \bar{\theta})
		\\
    &&
	    +\, \bar{\theta}^1\bar{\theta}^2 \breve{f}_{(\dot{1}\dot{2})}(x, \eta, \bar{\eta}, \vartheta, \bar{\vartheta})		     	
	    - \sqrt{-1} \sum_{\gamma, \dot{\delta}, \nu}
		    \theta^\gamma \bar{\theta}^{\dot{\delta}} \sigma^\nu_{\gamma\dot{\delta}}
			  \partial_\nu \breve{f}_{(0)}(x, \eta,  \bar{\eta}, \vartheta, \bar{\vartheta})
 		\\
	&& 			
        +\, \sqrt{-1} \sum_{\gamma, \dot{\delta}, \nu}
		      \theta^\gamma \bar{\theta}^{\dot{1}} \bar{\theta}^{\dot{2}}
			  \sigma^{\nu\dot{\delta}}_\gamma\,
			  \partial_\nu \breve{f}_{(\dot{\delta})}(x, \eta, \bar{\eta}, \vartheta, \bar{\vartheta})
		- \theta^1\theta^2\bar{\theta}^{\dot{1}} \bar{\theta}^{\dot{2}}\,
		    \square \breve{f}_{(0)}(x,\eta, \bar{\eta}, \vartheta, \bar{\vartheta})\,.
  \end{eqnarray*}
 In particular, an antichiral $\breve{f}\in C^\infty(\widehat{X}^{\widehat{\boxplus}})$
  has four independent components in $C^\infty(X)^{\Bbb C}[\eta, \bar{\eta}, \vartheta, \bar{\vartheta}]^\anticommuting$:
  $$
    \breve{f}_{(0)}\,,\;\;
	\breve{f}_{(\dot{\delta})}\,,\,{}_{\dot{\delta}\;=\;\dot{1}, \dot{2}}\,,\;\;
	\breve{f}_{(\dot{1}\dot{2})}\,.
  $$
 In terms of the standard antichiral coordinate functions
  $(x^{\prime\prime}, \theta, \bar{\theta}, \eta, \bar{\eta}, \vartheta, \bar{\vartheta})$
  on $\widehat{X}^{\widehat{\boxplus}}$,
 $$
  \breve{f}\;
   =\; \breve{f}_{(0)}(x^{\prime\prime}, \eta, \bar{\eta}, \vartheta, \bar{\vartheta})
         + \sum_{\dot{\delta}} \bar{\theta}^{\dot{\delta}}
		      \breve{f}_{(\dot{\delta})}(x^{\prime\prime}, \eta, \bar{\eta}, \vartheta, \bar{\vartheta})
		 + \bar{\theta}^{\dot{1}}\bar{\theta}^{\dot{2}}
		     \breve{f}_{(\dot{1}\dot{2})}(x^{\prime\prime}, \eta, \bar{\eta}, \vartheta, \bar{\vartheta})\,,		
 $$
 which is independent of $\theta$.
\end{lemma}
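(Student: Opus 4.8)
The plan is to reduce Lemma 4.3.1 to the already-established Lemma 1.3.4 by observing that the chirality conditions are defined by the supersymmetrically invariant derivations $e_{\beta^{\prime\prime}}$ (resp.\ $e_{\alpha^\prime}$), and that these derivations act only on the $(x,\theta,\bar{\theta})$-variables, treating the parameter-level coordinate functions $(\eta,\bar{\eta})$ and the field-level coordinate functions $(\vartheta,\bar{\vartheta})$ as inert. Concretely, in Definition 1.1.3 through the setup of Sec.\,1.2 the coordinate functions $(\eta,\bar{\eta})$ arise from covariantly constant sections of a separate copy $\mathcal{S}^{\prime\,\vee}_{\parameter}\oplus\mathcal{S}^{\prime\prime\,\vee}_{\parameter}$, and hence $\partial/\partial\eta$, $\partial/\partial\bar{\eta}$ commute with the $\theta$- and $\bar{\theta}$-derivatives and with $\partial_\mu$ appearing in $e_{\alpha^\prime}$, $e_{\beta^{\prime\prime}}$. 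The same is true for $(\vartheta,\bar{\vartheta})$. Therefore the defining equations $e_{1^{\prime\prime}}\breve{f}=e_{2^{\prime\prime}}\breve{f}=0$ are computed by exactly the formula displayed in the proof of Lemma 1.3.4, with every coefficient now living in $C^\infty(X)^{\Bbb C}[\eta,\bar{\eta},\vartheta,\bar{\vartheta}]^{\anticommuting}$ rather than in $C^\infty(X)^{\Bbb C}[\vartheta,\bar{\vartheta}]^{\anticommuting}$.

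The key steps, in order, are as follows. First I would write out the general $(\theta,\bar{\theta})$-expansion of $\breve{f}\in C^\infty(\widehat{X}^{\widehat{\boxplus}})$ exactly as in the proof of Lemma 1.3.4, but now declaring the coefficients $\breve{f}_{(\tinybullet)}$ to be elements of $C^\infty(X)^{\Bbb C}[\eta,\bar{\eta},\vartheta,\bar{\vartheta}]^{\anticommuting}$. Second, I would apply $-e_{\beta^{\prime\prime}}$ term-by-term; since $e_{\beta^{\prime\prime}}=-\partial/\partial\bar{\theta}^{\dot{\beta}}-\sqrt{-1}\sum_{\alpha,\mu}\theta^\alpha\sigma^\mu_{\alpha\dot{\beta}}\partial_\mu$ involves no $\eta$-, $\bar{\eta}$-, $\vartheta$-, or $\bar{\vartheta}$-dependence, the computation is formally identical to the one already performed, yielding the same system of equations relating $\breve{f}_{(\gamma\dot{\beta})}$, $\breve{f}_{(12\dot{\beta})}$, $\breve{f}_{(12\dot{1}\dot{2})}$ to $\sqrt{-1}$-multiples of $\partial_\nu$ of lower-order coefficients, together with the vanishing $\breve{f}_{(\dot{\beta})}=\breve{f}_{(\dot{1}\dot{2})}=\breve{f}_{(\gamma\dot{1}\dot{2})}=0$. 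Third, I would invoke the identity $\breve{f}_{(\gamma^\prime\dot{\delta})}=\sqrt{-1}\sum_\nu\sigma^\nu_{\gamma^\prime\dot{\delta}}\partial_\nu\breve{f}_{(0)}$ to collapse the last equation to $\breve{f}_{(12\dot{1}\dot{2})}=-\square\breve{f}_{(0)}$, which gives the claimed form. Finally, the passage to the standard chiral coordinate functions $(x^\prime,\theta,\bar{\theta},\eta,\bar{\eta},\vartheta,\bar{\vartheta})$ follows from the observation, already used in Lemma 1.3.4, that in these coordinates $e_{\beta^{\prime\prime}}=-\partial/\partial\bar{\theta}^{\dot{\beta}}$, so chirality is precisely $\bar{\theta}$-independence. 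Statement (2) is handled by the symmetric argument, or more economically by applying the twisted complex conjugation ${}^\dag$ and Corollary 1.3.14, which swaps chiral with antichiral.

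The main conceptual point to verify carefully — and the one place where the generalization is not literally a copy of Lemma 1.3.4 — is that turning on the parameter level does not introduce any interaction between $(\eta,\bar{\eta})$ and the chirality operators. This is exactly the content of the phrase ``can be generalized straightforwardly'' in the text preceding the lemma, and it rests on the fact that $e_{\alpha^\prime}$ and $e_{\beta^{\prime\prime}}$ lie in the image of the built-in inclusion $\Der_{\Bbb C}(\widehat{X})\hookrightarrow\Der_{\Bbb C}(\widehat{X}^{\widehat{\boxplus}})$ (Definition 1.2.2), so they differentiate only along the fundamental-level directions. Once this is noted, the whole proof is bookkeeping: every $C^\infty(X)^{\Bbb C}$ that appeared in Lemma 1.3.4 is simply promoted to $C^\infty(X)^{\Bbb C}[\eta,\bar{\eta},\vartheta,\bar{\vartheta}]^{\anticommuting}$, with all derivations and the $\varepsilon$- and $\sigma$-contractions acting as $C^\infty(X)^{\Bbb C}[\eta,\bar{\eta},\vartheta,\bar{\vartheta}]^{\anticommuting}$-linear operators on the $(\theta,\bar{\theta})$-graded pieces. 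I would therefore write the proof as a short reduction: ``The proof is identical to that of Lemma~1.3.4, noting that $e_{\alpha^\prime},e_{\beta^{\prime\prime}}\in\Der_{\Bbb C}(\widehat{X})$ act trivially on the parameter-level and field-level coordinate functions $(\eta,\bar{\eta},\vartheta,\bar{\vartheta})$, so that the entire computation goes through verbatim with coefficients in $C^\infty(X)^{\Bbb C}[\eta,\bar{\eta},\vartheta,\bar{\vartheta}]^{\anticommuting}$.''
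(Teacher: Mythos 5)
Your proposal is correct and coincides with what the paper actually does: the paper offers no separate proof of this lemma, presenting it only as the ``straightforward generalization'' of Lemma~1.3.4 obtained by promoting the coefficients from $C^\infty(X)^{\Bbb C}[\vartheta,\bar{\vartheta}]^{\mbox{\scriptsize\it anti-c}}$ to $C^\infty(X)^{\Bbb C}[\eta,\bar{\eta},\vartheta,\bar{\vartheta}]^{\mbox{\scriptsize\it anti-c}}$, exactly as you argue via the inertness of $(\eta,\bar{\eta},\vartheta,\bar{\vartheta})$ under $e_{\alpha^\prime}$, $e_{\beta^{\prime\prime}}\in\Der_{\Bbb C}(\widehat{X})$. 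The only blemish is the citation of ``Corollary~1.3.14'' for the chiral/antichiral swap under ${}^\dag$, which should point to the swapping corollary at the end of Sec.~1.3 (Corollary~1.3.11 in the paper's numbering); this does not affect the argument.
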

 
\bigskip

\noindent
We shall use this to understand how supersymmetries act on vector superfields in Wess-Zumino gauge.
 
Consider the infinitesimal supersymmetry transformation
  $$
    \delta_{(\eta,\bar{\eta})}\breve{V}\;
     :=\; (\eta Q+ \bar{\eta}\bar{Q})  \breve{V}\;
	 :=\;  \mbox{\Large $($}
	            \mbox{$\sum$}_\alpha \eta^\alpha Q_\alpha
				- \mbox{$\sum$}_{\dot{\beta}}\bar{\eta}^{\dot{\beta}} \bar{Q}_{\dot{\beta}}
		 	 \mbox{\Large $)$}\, \breve{V}			
  $$
  of $\breve{V}$.
Then
  $(\delta_{(\eta,\bar{\eta})}\breve{V})^\dag
     = \delta_{(\eta,\bar{\eta})}\breve{V}$.
However, for $\breve{V}$ in Wess-Zumino gauge,
  $(\eta Q+ \bar{\eta}\bar{Q})  \breve{V}$ remains a vector superfield
    (in the sense of reality condition and
	    matching of the independent components with the vector multiplet from representations of the $d=3+1$, $N=1$ supersymmetry algebra)
     but in general no longer in Wess-Zumino gauge (in the sense of the pattern as a $(\theta, \bar{\theta})$-polynomial).
This can be remedied by a gauge transformation:
(e.g., [Argu: Sec.\,4.3.1], [G-G-R-S: Sec.\,4.2.a.1], [W-B: Chap.\,VII, Exercise (8)], and
                    [We: Sec.\,15.3, Eq.\,(15.78)])

\bigskip

\begin{lemma} {\bf [existence and uniqueness of correcting gauge transformation]}\;
 Let $\breve{V}$ be a vector superfield in Wess-Zumino gauge.
 Then there exists a chiral function
   $\breve{\Lambda}_{(\eta,\bar{\eta}; \breve{V})}\in C^\infty(\widehat{X}^{\widehat{\boxplus}})^{\scriptsizech}$
    (now with the parameter level activated)
   depending ${\Bbb C}$-multilinearly on $(\eta,\bar{\eta})$ and $\breve{V}$
   such that
   the gauge transformation
   $$
     (\eta Q+ \bar{\eta}\bar{Q})  \breve{V}  \,
       -\, \sqrt{-1}
	      (\breve{\Lambda}_{(\eta,\bar{\eta}; \breve{V})}- \breve{\Lambda}_{(\eta,\bar{\eta}; \breve{V})}^\dag)	
   $$
   of $(\eta Q+ \bar{\eta}\bar{Q})  \breve{V}$
   is in Wess-Zumino gauge.
 Furthermore, one may require that the $(\theta,\bar{\theta})$-degree-$0$ component
   $\breve{\Lambda}_{(\eta,\bar{\eta}; \breve{V}), (0)}$
     of $\breve{\Lambda}_{(\eta,\bar{\eta}; \breve{V})}$ vanish,
  in which case $\breve{\Lambda}_{(\eta,\bar{\eta}; \breve{V})}$ is unique.
\end{lemma}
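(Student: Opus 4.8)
The plan is to prove existence and uniqueness of the correcting gauge transformation $\breve{\Lambda}_{(\eta,\bar\eta;\breve V)}$ by directly matching the $(\theta,\bar\theta)$-expansion of $(\eta Q+\bar\eta\bar Q)\breve V$ against the pattern defining Wess-Zumino gauge (Definition~4.2.4), and then solving the resulting linear system for the chiral-superfield components of $\breve\Lambda$. First I would compute $(\eta Q+\bar\eta\bar Q)\breve V$ explicitly for $\breve V$ in Wess-Zumino gauge, using the formulas for $Q_\alpha$, $\bar Q_{\dot\beta}$ from Example~1.2.5 and the fact that $\breve V$ has only the four components $V_{[\mu]}$, $\overline{V^{\prime\prime}_{(\beta)}}$, $V^{\prime\prime}_{(\alpha)}$, $V^\sim_{(0)}$. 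Since $Q_\alpha$ and $\bar Q_{\dot\beta}$ each lower or mix the $(\theta,\bar\theta)$-degree while introducing a $\partial_\mu$ through the $\sqrt{-1}\,\theta\sigma\bar\theta\,\partial$ term, the result is again a vector superfield (the reality $(\delta_{(\eta,\bar\eta)}\breve V)^\dag=\delta_{(\eta,\bar\eta)}\breve V$ is already noted in the text), but it will in general acquire nonzero components at $(\theta,\bar\theta)$-degrees $0$, $(1,0)$, $(0,0)$-with-$\theta^1\theta^2$, etc., that are forbidden in Wess-Zumino gauge.

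Next I would isolate exactly those ``bad'' components --- namely the would-be $V_{(0)}$, $V_{(\alpha)}$, and $V_{(12)}$ components of $(\eta Q+\bar\eta\bar Q)\breve V$ in the shifted expression of Definition~4.2.3 --- and recall from the gauge-transformation law (the displayed $\delta_{\breve\Lambda}$ table following Definition~4.2.3) that a chiral $\breve\Lambda$ acts on precisely these components by
\[
  V_{(0)}\;\longmapsto\; V_{(0)}-\sqrt{-1}(\Lambda_{(0)}-\overline{\Lambda_{(0)}})\,,\qquad
  V_{(\alpha)}\;\longmapsto\; V_{(\alpha)}-\sqrt{-1}\,\Lambda_{(\alpha)}\,,\qquad
  V_{(12)}\;\longmapsto\; V_{(12)}-\sqrt{-1}\,\Lambda_{(12)}\,,
\]
while leaving $V^{\prime\prime}_{(\alpha)}$ and $V^\sim_{(0)}$ untouched. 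This is the crucial structural input: the components that a chiral gauge transformation can alter are in bijection with the independent data $(\Imaginary\Lambda_{(0)},\Lambda_{(\alpha)},\Lambda_{(12)})$ of a chiral superfield (Corollary~1.3.11), the gauge-invariant directions $V^{\prime\prime}_{(\alpha)},V^\sim_{(0)}$ being untouched and already in WZ form. Hence the matching requirement becomes a system of linear equations that can be solved componentwise for $\Imaginary\Lambda_{(0)}$, $\Lambda_{(\alpha)}$, and $\Lambda_{(12)}$ in terms of the bad components of $(\eta Q+\bar\eta\bar Q)\breve V$, which themselves depend $\mathbb C$-multilinearly on $(\eta,\bar\eta)$ and on the components of $\breve V$; the multilinearity claim then follows automatically since each $Q_\alpha$, $\bar Q_{\dot\beta}$ is $\mathbb C$-linear and the solved $\Lambda$-components are $\mathbb C$-linear images of the bad components.

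For uniqueness, I would observe that after imposing the normalization $\breve\Lambda_{(\eta,\bar\eta;\breve V),(0)}=0$ (equivalently $\Real\Lambda_{(0)}=0$ together with the already-fixed $\Imaginary\Lambda_{(0)}$), the chiral superfield $\breve\Lambda$ is completely determined by its components $\Lambda_{(0)},\Lambda_{(\alpha)},\Lambda_{(12)}$ via Corollary~1.3.11, and the linear system above pins these down uniquely; the residual freedom of a chiral gauge transformation with $\Imaginary\Lambda_{(0)}=\Lambda_{(\alpha)}=\Lambda_{(12)}=0$ (i.e. specified by a real $\Lambda_{(0)}$) only shifts $V_{[\mu]}$ by $2\partial_\mu\Lambda_{(0)}$ and thus cannot affect whether the result is in WZ gauge, but is killed precisely by the normalization $\Real\Lambda_{(0)}=0$. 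The main obstacle I anticipate is the bookkeeping in the first step: correctly tracking the degree-shifting action of $\eta Q+\bar\eta\bar Q$ on each monomial of $\breve V$ (including the derivative terms $\sqrt{-1}\,\theta\sigma\bar\theta\,\partial_\mu$) and verifying that the bad components produced lie exactly in the span of the gauge-adjustable directions $(\Imaginary\Lambda_{(0)},\Lambda_{(\alpha)},\Lambda_{(12)})$ --- i.e. that no obstruction appears in the gauge-invariant directions $V^{\prime\prime}_{(\alpha)},V^\sim_{(0)}$. That this closure holds is essentially guaranteed by the reality of $\delta_{(\eta,\bar\eta)}\breve V$ and the fact that $(\eta Q+\bar\eta\bar Q)$ maps vector superfields to vector superfields, so the check is that the solvability pattern matches, which I expect to go through after the explicit computation.
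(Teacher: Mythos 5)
Your overall strategy --- compute $(\eta Q+\bar{\eta}\bar{Q})\breve{V}$ explicitly, identify the components forbidden in Wess--Zumino gauge, and solve the resulting linear system for $(\Lambda_{(0)},\Lambda_{(\alpha)},\Lambda_{(12)})$ via the gauge-transformation table --- is exactly the paper's proof, which reduces the lemma to the component-matching argument of Sec.~4.2 (the reasoning behind Lemma~4.2.4) together with the chirality characterization. One citation nit: the characterization you need is Lemma~4.3.1 (the parameter-level version), not the small-chiral statements of Sec.~1.3, since the solved components of $\breve{\Lambda}_{(\eta,\bar{\eta};\breve{V})}$ carry $\bar{\eta}$- and $\bar{\vartheta}$-factors.

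One point in your plan is stated backwards, and it matters for the ``Furthermore'' clause. You say $(\eta Q+\bar{\eta}\bar{Q})\breve{V}$ ``will in general acquire nonzero components at $(\theta,\bar{\theta})$-degree $0$.'' It does not: every monomial of a WZ-gauge $\breve{V}$ has total $(\theta,\bar{\theta})$-degree $\ge 2$, and each of $Q_\alpha$, $\bar{Q}_{\dot{\beta}}$ shifts that degree by $\pm 1$, so the degree-$0$ component of $(\eta Q+\bar{\eta}\bar{Q})\breve{V}$ vanishes identically --- this is the first sentence of the paper's proof. The observation is not cosmetic: the only gauge parameter that can cancel a degree-$0$ defect is $\Imaginary\Lambda_{(0)}$, so were that defect nonzero you could not impose $\breve{\Lambda}_{(\eta,\bar{\eta};\breve{V}),(0)}=0$, and the normalized-uniqueness claim would fail. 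As written, your uniqueness paragraph silently assumes the ``already-fixed $\Imaginary\Lambda_{(0)}$'' is compatible with $\breve{\Lambda}_{(0)}=0$; you need to record that it is fixed to be zero. With that added, the rest of your argument (the bad components lie exactly in the span of the adjustable directions $(\Imaginary\Lambda_{(0)},\Lambda_{(\alpha)},\Lambda_{(12)})$, multilinearity from the ${\Bbb C}$-linearity of $Q$, $\bar{Q}$ and of the solve, and the residual real-$\Lambda_{(0)}$ freedom killed by the normalization) goes through and matches the paper.
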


\medskip

\begin{proof}
 When $\breve{V}$ is in Wess-Zumino gauge,
   $(\theta, \bar{\theta})$-degree-$0$ component $((\eta Q+ \bar{\eta}\bar{Q})  \breve{V})_{(0)}$
   of $(\eta Q+ \bar{\eta}\bar{Q})  \breve{V}$ is always zero.
 It follows from Lemma~4.3.1
  and the same reasoning as the explicit computation in Sec.\:4.2 that leads to Lemma~4.2.4
  that there is a unique chiral function $\breve{\Lambda}\in C^\infty(\widehat{X}^{\widehat{\boxplus}})^\scriptsizech$
   associated to $(\eta Q+ \bar{\eta}\bar{Q})  \breve{V}$
    with the $(\theta, \bar{\theta})$-degree-$0$ component $\breve{\Lambda}_{(0)}=0$
  such that
   $(\eta Q+ \bar{\eta}\bar{Q})  \breve{V}  \,-\, \sqrt{-1}(\breve{\Lambda}- \breve{\Lambda}^\dag)$
   is in Wess-Zumino gauge.
 The same explicit computation implies also that this unique $\Lambda$ depends ${\Bbb C}$-multilinearly
   on $(\eta,\bar{\eta})$ and $\breve{V}$.
 This proves the lemma.
 
\end{proof}					

\medskip

\begin{definition} {\bf [supersymmetry in Wess-Zumino gauge]}\; {\rm
 Continuing Lemma~4.3.2.
 Set
  $$
   (\eta Q+ \bar{\eta}\bar{Q})  \breve{V}  \,
       -\, \sqrt{-1}
	        (\breve{\Lambda}_{(\eta,\bar{\eta}; \breve{V})}
		                 - \breve{\Lambda}_{(\eta,\bar{\eta}; \breve{V})}^\dag)\;	
     =\; \sum_\alpha \eta^\alpha Q_\alpha^{\!\tinyWZ} \breve{V}
	       - \sum_{\dot{\beta}}
		        \bar{\eta}^{\dot{\beta}}\bar{Q}_{\dot{\beta}}^{\!\tinyWZ}\breve{V}\,.
   $$
 This defines {\it (infinitesimal) supersymmetry transformations in Wess-Zumino gauge}
    $Q_\alpha^{\!\tinyWZ}$, $\bar{Q}_{\dot{\beta}}^{\!\tinyWZ}$
  that take a vector superfield in Wess-Zumino gauge to another in Wess-Zumino gauge.  	
}\end{definition}

\bigskip

Explicitly, let
$$
   \breve{V}\;=\;
    \sum_{\gamma,\dot{\delta},\nu}\theta^\gamma \bar{\theta}^{\dot{\delta}}
	   \sigma^\nu_{\gamma\dot{\delta}} V_{[\nu]}
	 + \sum_{\dot{\delta}}\theta^1\theta^2\bar{\theta}^{\dot{\delta}} \bar{\vartheta}_{\dot{\delta}}\,
	       \overline{V^{\prime\prime}_{(\delta)}}
	 + \sum_\gamma \theta^\gamma \bar{\theta}^{\dot{1}} \bar{\theta}^{\dot{2}}\vartheta_\gamma	
	        V^{\prime\prime}_{(\gamma)}
     + \theta^1\theta^2\bar{\theta}^{\dot{1}}\bar{\theta}^{\dot{2}}
	       V^\sim_{(0)}
 $$
 in $C^\infty(\widehat{X}^{\widehat{\boxplus}})$ be a vector superfield in Wess-Zumino gauge.
Then,
 {\small
 \begin{eqnarray*}
   \lefteqn{
    \delta_{\eta Q+ \bar{\eta}\bar{Q}}\breve{V}\;
      :=\; (\eta Q+ \bar{\eta}\bar{Q})  \breve{V} }
	\\
   && =\; \mbox{\Large $($}
	          \sum_\alpha \eta^\alpha \frac{\partial}{\partial\theta^\alpha}
			    -\sqrt{-1}\sum_{\alpha,\dot{\beta},\mu}
				    \eta^\alpha \sigma^\mu_{\alpha\dot{\beta}}\bar{\theta}^{\dot{\beta}}\partial_\mu
	         \mbox{\Large $)$}	    \breve{V}
			+  \mbox{\Large $($}
	          \sum_{\dot{\beta}}
			     \bar{\eta}^{\dot{\beta}} \frac{\partial}{\partial\bar{\theta}^{\dot{\beta}}}
			    + \sqrt{-1}\sum_{\alpha,\dot{\beta},\mu}
				    \theta^\alpha \sigma^\mu_{\alpha\dot{\beta}}\bar{\eta}^{\dot{\beta}}\partial_\mu
	            \mbox{\Large $)$}	    \breve{V}
    \\
   && =\; \sum_{\gamma, \dot{\beta}, \nu}
                 \theta^\gamma
                    \bar{\eta}^{\dot{\beta}}	\sigma^\nu_{\gamma\dot{\beta}} V_{[\nu]}
			 - \sum_{\dot{\delta}, \alpha, \nu}
			      \bar{\theta}^{\dot{\delta}}
				    \eta^\alpha \sigma^\nu_{\alpha\dot{\delta}}V_{[\nu]}
			 + \theta^1\theta^2
			      \sum_{\dot{\beta}}\bar{\eta}^{\dot{\beta}} \bar{\vartheta}_{\dot{\beta}}\,
					\overline{V^{\prime\prime}_{(\beta)})}
	 \\
    &&  \hspace{2em}
	         + \sum_{\gamma, \dot{\delta}} \theta^\gamma \bar{\theta}^{\dot{\delta}}
			      \mbox{\Large $($}
				    - \sum_{\alpha}\eta^\alpha \bar{\vartheta}_{\dot{\delta}}					
					    \varepsilon_{\alpha\gamma}\, \overline{V^{\prime\prime}_{(\delta)}}																								
                    + \sum_{\dot{\beta}}\bar{\eta}^{\dot{\beta}} \vartheta_\gamma									
					     \varepsilon_{\dot{\beta}\dot{\delta}}	V^{\prime\prime}_{(\gamma)}				
				  \mbox{\Large $)$}
             + \bar{\theta}^{\dot{1}} \bar{\theta}^{\dot{2}}
			      \sum_\alpha \eta^\alpha \vartheta_\alpha	V^{\prime\prime}_{(\alpha)}
				\\
	&& \hspace{2em}
	         + \sum_{\dot{\delta}, \dot{\beta}}
			      \theta^1\theta^2 \bar{\theta}^{\dot{\delta}} \bar{\eta}^{\dot{\beta}}
			      \mbox{\Large $($}
			         \varepsilon_{\dot{\beta}\dot{\delta}} V^\sim_{(0)}
				     - \sqrt{-1} \sum_{\gamma, \mu, \nu}
					      \sigma^{\mu\gamma}_{\;\;\;\;\dot{\beta}} \sigma^\nu_{\gamma\dot{\delta}}\,
					       \partial_\mu V_{[\nu]}
			      \mbox{\Large $)$}    \\
    && \hspace{2em}	
             + \sum_{\gamma, \alpha}
			       \theta^\gamma \bar{\theta}^{\dot{1}} \bar{\theta}^{\dot{2}} \eta^\alpha
                   \mbox{\Large $($}
                     \varepsilon_{\alpha\gamma} V^\sim_{(0)}
					  + \sqrt{-1} \sum_{\dot{\delta}, \mu, \nu}
						   \sigma^{\mu\dot{\delta}}_\alpha \sigma^\nu_{\gamma\dot{\delta}}\,
						    \partial_\mu V_{[\nu]}
                   \mbox{\Large $)$}				   \\
    && \hspace{2em}
             +\, \sqrt{-1}\,	\theta^1\theta^2\bar{\theta}^{\dot{1}}\bar{\theta}^{\dot{2}}			       
			        \mbox{\Large $($}					
					  \sum_{\alpha,\dot{\delta}, \mu}
					       \eta^\alpha \bar{\vartheta}_{\dot{\delta}}
						      \sigma^{\mu\dot{\delta}}_\alpha\,
						        \partial_\mu\, \overline{V^{\prime\prime}_{(\delta)}}
					  + \sum_{\dot{\beta},\gamma, \mu}
					       \bar{\eta}^{\dot{\beta}} \vartheta_\gamma
						     \sigma^{\mu\gamma}_{\dot{\beta}}\,
						      \partial_\mu V^{\prime\prime}_{(\gamma)}
					\mbox{\Large $)$}\,.
 \end{eqnarray*}}
Recall Lemma~4.3.1
  and
let $\breve{\Lambda}= \breve{\Lambda}_{(\eta, \bar{\eta}; \breve{V})}$
 be the unique chiral function in $C^\infty(\widehat{X}^{\widehat{\boxplus}})$ with
 $$
    \breve{\Lambda}_{(0)}\;=\; 0\,,\hspace{2em}
	\breve{\Lambda}_{(\gamma)}\;
	  =\; -\sqrt{-1} \sum_{\dot{\beta}, \nu} \bar{\eta}^{\dot{\beta}}
	              \sigma^\nu_{\gamma\dot{\beta}} V_{[\nu]}\,,\hspace{2em}				
    \breve{\Lambda}_{(12)}\;
	  =\; -\sqrt{-1} \sum_{\dot{\beta}} \bar{\eta}^{\dot{\beta}} \bar{\vartheta}_{\dot{\beta}}\,
				                   \overline{V^{\prime\prime}_{(\beta)}}\,.
 $$
I.e.
 $$
  \breve{\Lambda}_{(\eta, \bar{\eta}; \breve{V})} \; =\;
    -\,\sqrt{-1}\sum_{\gamma, \dot{\beta}, \nu}
	      \theta^\gamma \bar{\eta}^{\dot{\beta}} \sigma^\nu_{\gamma\dot{\beta}}V_{[\nu]}
	-\sqrt{-1}\,\theta^1\theta^2 \sum_{\dot{\beta}}	
	        \bar{\eta}^{\dot{\beta}}\bar{\vartheta}_{\dot{\beta}}\overline{V^{\prime\prime}_{(\beta)}} 			
	+ \sum_{\dot{\delta}, \dot{\beta}, \gamma, \mu, \nu}
	     \theta^1\theta^2 \bar{\theta}^{\dot{\delta}} \bar{\eta}^{\dot{\beta}}
			\sigma^{\mu\gamma}_{\;\;\;\;\dot{\delta}} \sigma^\nu_{\gamma\dot{\beta}} \partial_\mu V_{[\nu]}\,.
 $$
 Then,
 $$
  \breve{\Lambda}_{(\eta, \bar{\eta}; \breve{V})}^\dag \; =\;
    -\,\sqrt{-1}\sum_{\dot{\delta}, \alpha, \nu}
	      \bar{\theta}^{\dot{\delta}} \eta^\alpha \sigma^\nu_{\alpha\dot{\delta}}V_{[\nu]}
	+ \sqrt{-1}\,\bar{\theta}^{\dot{1}}\bar{\theta}^{\dot{2}}
	       \sum_{\alpha}	
	         \eta^\alpha \vartheta_\alpha V^{\prime\prime}_{(\alpha)}
	+ \sum_{\gamma, \alpha, \dot{\delta}, \mu, \nu}
	     \theta^\gamma \bar{\theta}^{\dot{1}} \bar{\theta}^{\dot{2}} \eta^\alpha
			\sigma^{\mu\dot{\delta}}_\gamma \sigma^\nu_{\alpha\dot{\delta}} \partial_\mu V_{[\nu]}
 $$
 and
 {\small
 \begin{eqnarray*}
   \lefteqn{
     \delta_{\eta Q+ \bar{\eta}\bar{Q}}\breve{V}\,
	    +\, \delta_{\breve{\Lambda}_{(\eta,\bar{\eta}; \breve{V})}}\breve{V}\;
      =\; (\eta Q+\bar{\eta}\bar{Q})\breve{V}
	           -\sqrt{-1}\,(\breve{\Lambda}_{(\eta, \bar{\eta}; \breve{V})}
			                               - \breve{\Lambda}_{(\eta, \bar{\eta}; \breve{V})}^\dag) 	 }
	 \\[1ex]
	  && =\;
	    \sum_{\gamma, \dot{\delta}} \theta^\gamma \bar{\theta}^{\dot{\delta}}
			      \mbox{\Large $($}
				    - \sum_{\alpha}\eta^\alpha \bar{\vartheta}_{\dot{\delta}}					
					    \varepsilon_{\alpha\gamma}\, \overline{V^{\prime\prime}_{(\delta)}}																								
                    + \sum_{\dot{\beta}}\bar{\eta}^{\dot{\beta}} \vartheta_\gamma									
					     \varepsilon_{\dot{\beta}\dot{\delta}}	V^{\prime\prime}_{(\gamma)}				
				  \mbox{\Large $)$}				
				\\
	&& \hspace{2em}
	         + \sum_{\dot{\delta}, \dot{\beta}}
			      \theta^1\theta^2 \bar{\theta}^{\dot{\delta}} \bar{\eta}^{\dot{\beta}}
			      \mbox{\Large $($}
			         \varepsilon_{\dot{\beta}\dot{\delta}} V^\sim_{(0)}
				     - \sqrt{-1} \sum_{\gamma, \mu, \nu}
					      \sigma^{\mu\gamma}_{\;\;\;\;\dot{\beta}} \sigma^\nu_{\gamma\dot{\delta}}\, F_{\mu\nu}
			      \mbox{\Large $)$}    \\
    && \hspace{2em}	
             + \sum_{\gamma, \alpha}
			       \theta^\gamma \bar{\theta}^{\dot{1}} \bar{\theta}^{\dot{2}} \eta^\alpha
                   \mbox{\Large $($}
                     \varepsilon_{\alpha\gamma} V^\sim_{(0)}
					  + \sqrt{-1} \sum_{\dot{\delta}, \mu, \nu}
						   \sigma^{\mu\dot{\delta}}_\alpha \sigma^\nu_{\gamma\dot{\delta}}\, F_{\mu\nu}
                   \mbox{\Large $)$}				   \\
    && \hspace{2em}
             +\, \sqrt{-1}\,	\theta^1\theta^2\bar{\theta}^{\dot{1}}\bar{\theta}^{\dot{2}}			       
			        \mbox{\Large $($}					
					  \sum_{\alpha,\dot{\delta}, \mu}
					       \eta^\alpha \bar{\vartheta}_{\dot{\delta}}
						      \sigma^{\mu\dot{\delta}}_\alpha\,
						        \partial_\mu\, \overline{V^{\prime\prime}_{(\delta)}}
					  + \sum_{\dot{\beta},\gamma, \mu}
					       \bar{\eta}^{\dot{\beta}} \vartheta_\gamma
						     \sigma^{\mu\gamma}_{\dot{\beta}}\,
						      \partial_\mu V^{\prime\prime}_{(\gamma)}
					\mbox{\Large $)$}\,,
 \end{eqnarray*}}
   where $F_{\mu\nu}:= \partial_\mu V^{(0)}_{[\nu]}- \partial_\nu V^{(0)}_{[\mu]}$,
 now resumes in Wess-Zumino gauge.

From the last expression and Definition~4.3.3,
one reads off
{\small
 \begin{eqnarray*}
   Q_\alpha^{\!\tinyWZ} \breve{V}
    & = &
     - \sum_{\gamma, \dot{\delta}} \theta^\gamma \bar{\theta}^{\dot{\delta}}
			\bar{\vartheta}_{\dot{\delta}}					
					    \varepsilon_{\alpha\gamma}\, \overline{V^{\prime\prime}_{(\delta)}}	
			- \sum_\gamma
   			\theta^\gamma \bar{\theta}^{\dot{1}} \bar{\theta}^{\dot{2}}
                   \mbox{\Large $($}
                     \varepsilon_{\alpha\gamma} V^\sim_{(0)}
					  + \sqrt{-1} \sum_{\dot{\delta}, \mu, \nu}
						   \sigma^{\mu\dot{\delta}}_\alpha \sigma^\nu_{\gamma\dot{\delta}}\, F_{\mu\nu}
                   \mbox{\Large $)$}				   \\
    && \hspace{2em}
             +\, \sqrt{-1}\,	\theta^1\theta^2\bar{\theta}^{\dot{1}}\bar{\theta}^{\dot{2}}			       			
					  \sum_{\dot{\delta}, \mu}
					      \bar{\vartheta}_{\dot{\delta}}
						      \sigma^{\mu\dot{\delta}}_\alpha\,
						        \partial_\mu\, \overline{V^{\prime\prime}_{(\delta)}}\,, 							
 \end{eqnarray*}}		
{\small
 \begin{eqnarray*}
   \bar{Q}_{\dot{\beta}}^{\!\tinyWZ}\breve{V}
    & = &
	    -\, \sum_{\gamma, \dot{\delta}} \theta^\gamma \bar{\theta}^{\dot{\delta}}
	           \vartheta_\gamma	\varepsilon_{\dot{\beta}\dot{\delta}} V^{\prime\prime}_{(\gamma)}			
	         + \sum_{\dot{\delta}}
			      \theta^1\theta^2 \bar{\theta}^{\dot{\delta}} \
			      \mbox{\Large $($}
			         \varepsilon_{\dot{\beta}\dot{\delta}} V^\sim_{(0)}
				     - \sqrt{-1} \sum_{\gamma, \mu, \nu}
					      \sigma^{\mu\gamma}_{\;\;\;\;\dot{\beta}} \sigma^\nu_{\gamma\dot{\delta}}\, F_{\mu\nu}
			      \mbox{\Large $)$}
				  \\	
    && \hspace{2em}
             -\, \sqrt{-1}\,	\theta^1\theta^2\bar{\theta}^{\dot{1}}\bar{\theta}^{\dot{2}}			       			 
					\sum_{\gamma, \mu}  \vartheta_\gamma \sigma^{\mu\gamma}_{\;\;\;\;\dot{\beta}}\,
					  \partial_\mu V^{\prime\prime}_{(\gamma)}\,.
 \end{eqnarray*}}
The supersymmetry algebra generated by
   $Q_\alpha^{\!\tinyWZ}$'s, $\bar{Q}_{\dot{\beta}}^{\!\tinyWZ}$'s, and $\partial_\mu$'s
  is now closed only up to a gauge transformation.

\bigskip

\subsection{Supersymmetric $U(1)$ gauge theory with matter on $X$
          in terms of $\widehat{X}^{\widehat{\boxplus}}$}

With the preparations in Sec.\,4.1 and Sec.\,4.2, we are now ready to construct\footnote{{\it Note
                                                                                                     for mathematicians.}\hspace{1em}
															                                        See [L-Y5: Sec.\:3.5, footnote 28] (SUSY(1)).
	                                                                                                  } 
 a supersymmetric $U(1)$ gauge theory with matter on $X$ in terms of functions on
 $\widehat{X}^{\widehat{\boxplus}}$.

\bigskip

\begin{flushleft}
{\bf Two basic derived\footnote{Here,
                                              we are not using the term `{\it derived}' in any deeper sense.
											  We only mean that such superfields arise from the combination of more basic superfields
											    such as small chiral superfields and vector superfields.
											  For example, the superpotential is a polynomial (or more generally holomorphic function) of
											    small chiral superfields and thus can be regarded as a "derived" superfield.
											  Caution that these derived superfields may go beyond the small function-ring
											    $C^\infty(\widehat{X}^{\widehat{\boxplus}})^\smalltiny$
											     and lie only in $C^\infty(\widehat{X}^{\widehat{\boxplus}})$.
												 } 
			superfields: gaugino superfield and kinetic-term superfield}
\end{flushleft}
Unlike chiral or antichiral superfields, a vector superfield $\breve{V}$
 contains no components that involve space-time derivatives.
For that reason, to construct a supersymmetric action functional for components of $\breve{V}$,
one needs to work out appropriate derived superfields from $\breve{V}$  first.

\bigskip

\begin{lemma-definition} {\bf [gaugino superfield]}\;
 {\rm (Cf.\:[Wess \& Bagger: Eq.\:(6.7)].)}\\
 Let $\breve{V}\in C^\infty(\widehat{X}^{\widehat{\boxplus}})$ be a vector superfield.
 Define\footnote{The
                                design here  is made so that\;
								  $W_\alpha
								    = \vartheta_\alpha V^{\prime\prime}_{(\alpha)}\,
								        +\,(\mbox{terms of $(\theta,\bar{\theta})$-degree $\ge 1$})$\;  and\\
								  $\bar{W}_{\dot{\beta}}
								    =  \bar{\vartheta}_{\dot{\beta}}\,
									       \overline{V^{\prime\prime}_{(\beta)}}\,
								        +\, (\mbox{terms of $(\theta,\bar{\theta})$-degree $\ge 1$})$.			
                                Caution that, while $e_{\alpha^\prime}= \partial/\partial \theta^\alpha\,+\,\cdots$,\;
                                    $e_{\beta^{\prime\prime}}
									   = -\,\partial/\partial\bar{\theta}^{\dot{\beta}}\,+\,\cdots\,.$
								          } 
     \marginpar{\vspace{0em}\raggedright\tiny
         \raisebox{-1ex}{\hspace{-2.4em}\LARGE $\cdot$}Cf.\,[\,\parbox[t]{20em}{Wess
		\& Bagger:\\ Eq.\:(6.7)].}}
   $$
    W_\alpha\;
	  :=\; e_{2^{\prime\prime}}e_{1^{\prime\prime}} e_{\alpha^\prime} \breve{V}
	\hspace{2em}
	(\,\mbox{resp.}\;\;
	\bar{W}_{\dot{\beta}}\;
	  :=\; e_{1^\prime}e_{2^\prime} e_{\beta^{\prime\prime}} \breve{V}
	   \,)\,
   $$
   $\alpha=1,2$, $\dot{\beta}=\dot{1}, \dot{2}$.
 Then
  $(1)$
   $W_\alpha$ (resp.\ $\bar{W}_{\dot{\beta}}$) is chiral (resp.\ antichiral).
  $(2)$
   $W_\alpha$ and $\bar{W}_{\dot{\beta}}$ are invariant under gauge transformations on $\breve{V}$.

 {\rm
     $W_\alpha$, $\bar{W}_{\dot{\beta}}$
	   are called the {\it gaugino superfields} associated to the vector superfield $\breve{V}$.
      }
\end{lemma-definition}

\medskip

\begin{proof}
 The same as [L-Y5: Proof of Lemma/Definition 3.5.1] (SUSY(1))
  but now for vector superfield in the sense of Definition~4.1.3.
 Details are repeated below due to the importance of these quantities.
 
 For Statement (1),
 $$
  \begin{array}{llrll}
   e_{1^{\prime\prime}}W_\alpha
    & = & -\,  e_{2^{\prime\prime}}(e_{1^{\prime\prime}})^2 e_{\alpha^\prime}\breve{V}
	       & = & 0\,,          \\
  e_{2^{\prime\prime}}W_\alpha
    & = &   (e_{2^{\prime\prime}})^2 e_{1^{\prime\prime}} e_{\alpha^\prime}\breve{V}
	       &  = & 0
  \end{array}
 $$
  since $(e_{1^{\prime\prime}})^2=(e_{2^{\prime\prime}})^2=0$.
 Similarly for the antichirality of  $\bar{W}_{\dot{\beta}}$.
  
 For Statement (2),
   under a gauge transformation
     $\breve{V}\rightarrow \breve{V} -\sqrt{-1}(\breve{\Lambda}-\breve{\Lambda}^\dag)$
	 on $\breve{V}$ specified by a small chiral superfield
	  $\breve{\Lambda}\in C^\infty(\widehat{X}^{\widehat{\boxplus}})^{\smallscriptsize, \scriptsizech}$,
   \begin{eqnarray*}
     W_\alpha  & \rightarrow
	  & e_{2^{\prime\prime}}e_{1^{\prime\prime}}e_{\alpha^\prime}
            \mbox{\large $($}
			  \breve{V} -\sqrt{-1}(\breve{\Lambda}-\breve{\Lambda}^\dag)
			\mbox{\large $)$}\;\; 		
         =\;\; W_\alpha\,
	              -\,\sqrt{-1}\, e_{2^{\prime\prime}}e_{1^{\prime\prime}}e_{\alpha^\prime}
                                              \breve{\Lambda}  \\				
	 && =\;  W_\alpha\,
	              -\,\sqrt{-1}\,
				      \mbox{\large $($}
					   \{e_{1^{\prime\prime}}, e_{\alpha^\prime}\} e_{2^{\prime\prime}}
					   -  e_{2^{\prime\prime}}e_{\alpha^\prime}e_{1^{\prime\prime}}
					  \mbox{\large $ )$}
                                      \breve{\Lambda}\;\;\;=\;\;\;  W_\alpha
   \end{eqnarray*}
 since
   $\Lambda^\dag$ is antichiral (thus, $e_{\alpha^\prime}\breve{\Lambda}^\dag=0$)  and
   $\Lambda$ is chiral
     (thus, $e_{1^{\prime\prime}}\breve{\Lambda}
	                = e_{2^{\prime\prime}}\breve{\Lambda}=0$).
 Similarly for $\bar{W}_{\dot{\beta}}$.
  
\end{proof}

\bigskip

It follows that in the construction of a supersymmetric $U(1)$-gauge theory with matter,
 one may assume that the vector superfield $\breve{V}$ is in Wess-Zumino gauge,
  which encodes the component fields
    $$
	  V_{[\mu]}\,,\hspace{1em}
	  V^{\prime\prime}_{(\alpha)}\,,  \hspace{1em}
	  V^\sim_{(0)}
	$$
 on $X$.
Here, $\mu=0,1,2,3$, and $\alpha=1,2$.
For $\breve{V}$  in Wess-Zumino gauge,
 $\breve{V}^3=0$ and
 its exponential $e^{\breve{V}}$
   is simply the polynomial $1+\breve{V}+\frac{1}{2}\breve{V}^2$ in $\breve{V}$.

Under a gauge transformation specified by a small chiral superfield
  $\breve{\Lambda}$,
  a small chiral superfield
   $\breve{\Phi}\in C^\infty(\widehat{X}^{\widehat{\boxplus}})^{\smallscriptsize, \scriptsizech}$
   transforms as
     \marginpar{\vspace{0em}\raggedright\tiny
         \raisebox{-1ex}{\hspace{-2.4em}\LARGE $\cdot$}Cf.\,[\,\parbox[t]{20em}{Wess
		\& Bagger:\\ Eq.\:(7.1)].}}
  $$
    \breve{\Phi}\; \longrightarrow\; e^{\sqrt{-1}\,\breve{\Lambda}}\breve{\Phi}
  $$
  while
  $\breve{\Phi}^\dag \in C^\infty(\widehat{X}^{\widehat{\boxplus}})^{\smallscriptsize, \scriptsizeach}$
   transforms as
     \marginpar{\vspace{0em}\raggedright\tiny
         \raisebox{-1ex}{\hspace{-2.4em}\LARGE $\cdot$}Cf.\,[\,\parbox[t]{20em}{Wess
		\& Bagger:\\ Eq.\:(7.3)].}}
  $$
    \breve{\Phi}^\dag\; \longrightarrow\;  \breve{\Phi}^\dag e^{-\sqrt{-1}\,\breve{\Lambda}^\dag}\,.
  $$
It follows that
  
\bigskip

\begin{lemma-definition} {\bf [gauge-invariant kinetic term for small chiral superfield]}\; Let\\
 $\breve{V}\in C^\infty(\widehat{X}^{\widehat{\boxplus}})$ be a vector superfield and
 $\breve{\Phi}\in C^\infty(\widehat{X}^{\widehat{\boxplus}})^{\smallscriptsize, \scriptsizech}$
  be a small chiral superfield on $X$.
 Then the product
     \marginpar{\vspace{0em}\raggedright\tiny
         \raisebox{-1ex}{\hspace{-2.4em}\LARGE $\cdot$}Cf.\,[\,\parbox[t]{20em}{Wess
		\& Bagger:\\ Eq.\:(7.6)].}}
   $$
     \breve{\Phi}^\dag  e^{\breve{V}}\breve{\Phi}
   $$
  is gauge-invariant.
 {\rm
  Since the expression of the product in $(x,\theta,\bar{\theta},\vartheta,\bar{\vartheta})$
   involves space-time derivatives ($\partial_\mu$, $\mu=0,1,2,3$) of components of $\breve{\Phi}$,
   this product is called the {\it gauge-invariant kinetic term for the small chiral superfield} $\breve{\Phi}$.
   }
\end{lemma-definition}

\medskip

\begin{proof}
 By construction,
 under the gauge transformation specified by a small chiral superfield $\breve{\Lambda}$,
    \begin{eqnarray*}
       \breve{\Phi}^\dag  e^{\breve{V}}\breve{\Phi}
	     & \longrightarrow
		 & \mbox{\Large $($}
		      \Phi^\dag  e^{-\,\sqrt{-1}\,\breve{\Lambda}^\dag}
		    \mbox{\Large $)$}\,
		      e^{\breve{V}-\sqrt{-1}(\breve{\Lambda}-\breve{\Lambda}^\dag)}\,
		    \mbox{\Large $($}
		      e^{\sqrt{-1}\,\breve{\Lambda}}\breve{\Phi}
		    \mbox{\Large $)$}                                 \\
    && \hspace{1em}
	        =\; \breve{\Phi}^\dag\,
		        e^{-\,\sqrt{-1}\breve{\Lambda}^\dag
				        + \breve{V}-\sqrt{-1}(\breve{\Lambda}-\breve{\Lambda}^\dag)
						+\sqrt{-1}\,\breve{\Lambda}}\,
			   \breve{\Phi}\;\;
            =\;\;    \breve{\Phi}^\dag  e^{\breve{V}}\breve{\Phi}\,.			
   \end{eqnarray*}
 		
\end{proof}

\bigskip

Note that, for $\Phi$ a small chiral superfield and $V$ a vector superfield on $X$,
  $W_\alpha$ and $\bar{W}_{\dot{\beta}}$ in general are no longer tame
  while
    $\breve{\Phi}^\dag e^{\breve{V}}\breve{\Phi}$
	always lies in $C^\infty(\widehat{X}^{\widehat{\boxplus}})^\mediumscriptsize$.

\bigskip

\begin{flushleft}
{\bf Explicit computations/formulae for $\breve{V}$ in Wess-Zumino gauge}
\end{flushleft}
Let
{\small
 \begin{eqnarray*}
  \breve{V}
  &= &		
	\sum_{\alpha,\dot{\beta}, \mu}\theta^\alpha \bar{\theta}^{\dot{\beta}}
	  \sigma^\mu_{\alpha\dot{\beta}} V_{[\mu]}(x)	
	+ \sum_{\dot{\beta}}\theta^1\theta^2\bar{\theta}^{\dot{\beta}}	
          \bar{\vartheta}_{\dot{\beta}}\,
		        \overline{V^{\prime\prime}_{(\beta)}}(x)					
    +\, \sum_\alpha \theta^\alpha\bar{\theta}^{\dot{1}}\bar{\theta}^{\dot{2}}
		  \vartheta_\alpha\, V^{\prime\prime}_{(\alpha)} (x)
	+ \theta^1\theta^2\bar{\theta}^{\dot{1}}\bar{\theta}^{\dot{2}}\,
		  V^\sim_{(0)}(x)\,,		
 \end{eqnarray*}
}be 
 a vector superfield in Wess-Zumino gauge,
  in the standard coordinate functions $(x, \theta, \bar{\theta}, \vartheta, \bar{\vartheta})$
  on $\widehat{X}^{\widehat{\boxplus}}$.
Recall
   the chiral coordinate functions $(x^\prime, \theta, \bar{\theta}, \vartheta, \bar{\vartheta})$ and
   the antichiral coordinate functions $(x^{\prime\prime}, \theta, \bar{\theta}, \vartheta, \bar{\vartheta})$
 on $\widehat{X}^{\widehat{\boxplus}}$, where
 $$
   x^{\prime\,\mu} \;
    :=\;  x^\mu
	       + \sqrt{-1}\sum_{\alpha, \dot{\beta}}
		        \theta^\alpha \sigma^\mu_{\alpha\dot{\beta}}   \bar{\theta}^{\dot{\beta}}
			\hspace{2em}\mbox{and}\hspace{2em}
   x^{\prime\prime\,\mu} \;
    :=\;  x^\mu
	       - \sqrt{-1}\sum_{\alpha, \dot{\beta}}
		        \theta^\alpha \sigma^\mu_{\alpha\dot{\beta}}   \bar{\theta}^{\dot{\beta}}\,.			
 $$
It is convenient to compute
  $W_\alpha$ and $W_1W_2$
     in the chiral coordinates $(x^\prime, \theta, \bar{\theta}, \vartheta, \bar{\vartheta})$ and
 $\bar{W}_{\dot{\beta}}$ and $\bar{W}_{\dot{2}}\bar{W}_{\dot{1}}$
     in the antichiral coordinates $(x^\prime, \theta, \bar{\theta}, \vartheta, \bar{\vartheta})$.

\bigskip

\noindent $\tinybullet$
$W_\alpha$:
(in chiral coordinate functions $(x^\prime, \theta,\bar{\theta},\vartheta,\bar{\vartheta})$
    on $\widehat{X}^{\widehat{\boxplus}}$)
 
 \medskip

 \noindent
Through the $C^\infty$-hull structure of $C^\infty(\widehat{X})$,
 one can express $\breve{V}$
 in terms of the chiral coordinate functions $(x^\prime, \theta, \bar{\theta}, \vartheta, \bar{\vartheta})$
 on $\widehat{X}^{\widehat{\boxplus}}$ as
 {\small
 \begin{eqnarray*}
  \breve{V}
  &= &		
	\sum_{\alpha,\dot{\beta}, \mu}\theta^\alpha \bar{\theta}^{\dot{\beta}}
	   \sigma^\mu_{\alpha\dot{\beta}} V_{[\mu]}(x^\prime)	
	+ \sum_{\dot{\beta}}\theta^1\theta^2\bar{\theta}^{\dot{\beta}}	
          \bar{\vartheta}_{\dot{\beta}}\,
		        \overline{V^{\prime\prime}_{(\beta)}}(x^\prime)					
    +\, \sum_\alpha \theta^\alpha\bar{\theta}^{\dot{1}}\bar{\theta}^{\dot{2}}
		  \vartheta_\alpha\, V^{\prime\prime}_{(\alpha)} (x^\prime)
     \\
	&& \hspace{3em}	
	+\, \theta^1\theta^2\bar{\theta}^{\dot{1}}\bar{\theta}^{\dot{2}}\,
	    \mbox{\Large $($}
		  V^\sim_{(0)}(x^\prime)
		  - 2 \sqrt{-1}\,\sum_{\mu, \nu} \eta^{\mu\nu}\,\partial_{\mu} V_{[\nu]}(x^\prime)
		\mbox{\Large $)$}  \,.
 \end{eqnarray*}
  }
 
 \noindent
 Recall that
   $e_{\alpha^\prime}x^{\prime\,\mu}
      =  2\,\sqrt{-1}\,\sum_{\dot{\beta}}
	           \sigma^\mu_{\alpha\dot{\beta}}\,\bar{\theta}^{\dot{\beta}}$.
 Then, a straightforward computation gives
 {\small
 \begin{eqnarray*}
  \lefteqn{
   e_{\alpha^\prime}\breve{V} \;\;
     =\;\; \sum_{\dot{\delta}, \nu}
	       \bar{\theta}^{\dot{\delta}}\sigma^\nu_{\alpha\dot{\delta}}\,V_{[\nu]}(x^\prime)
		  - \sum_{\gamma, \dot{\delta}}
		      \theta^\gamma \bar{\theta}^{\dot{\delta}}\bar{\vartheta}_{\dot{\delta}}\,
			     \varepsilon_{\alpha\gamma}\overline{V^{\prime\prime}_{(\delta)}}(x^\prime)
		 + \bar{\theta}^{\dot{`}}\bar{\theta}^{\dot{2}} \vartheta_\alpha
		       V^{\prime\prime}_{(\alpha)} (x^\prime)	
       }\\
	 && \hspace{0em}
	 +\,\sum_{\gamma}\theta^\gamma \bar{\theta}^{\dot{1}} \bar{\theta}^{\dot{2}}
	     \left\{\rule{0ex}{1.2em}\right.
		  2\,\sqrt{-1}\,
		    \sum_{\dot{\delta}, \dot{\beta}, \mu, \nu}
			  \varepsilon^{\dot{\delta}\dot{\beta}} \sigma^\nu_{\gamma\dot{\delta}}
			    \sigma^\mu_{\alpha\dot{\beta}} \, \partial_\mu V_{[\nu]}(x^\prime)
			 + \varepsilon_{\alpha\gamma}
			     \mbox{\Large $($}
			     - V^\sim_{(0)}(x^\prime)
				 + 2\sqrt{-1} \sum_{\mu, \nu}\eta^{\mu\nu}
				     \partial_\mu V_{[\nu]}(x^\prime)
			    \mbox{\Large $)$}		
		 \left.\rule{0ex}{1.2em}\right\}
	 \\
    && \hspace{0em}
	  -\,2\,\sqrt{-1}\, \theta^1\theta^2\bar{\theta}^{\dot{1}} \bar{\theta}^{\dot{2}}
	      \sum_{\dot{\delta}, \dot{\beta}. \mu}
		  \bar{\vartheta}_{\dot{\delta}}  \varepsilon^{\dot{\delta}\dot{\beta}}
		     \sigma^\mu_{\alpha\dot{\beta}}\,
			 \partial_\mu \overline{V^{\prime\prime}_{\delta}}(x^\prime)     \,.	
 \end{eqnarray*}
 }Since 
 $e_{1^{\prime\prime}}x^{\prime}= e_{2^{\prime\prime}}x^\prime =0$,
 {\small
 \begin{eqnarray*}
  \lefteqn{
   W_\alpha\;\; :=\;\;
      e_{2^{\prime\prime}}e_{1^{\prime\prime}}(e_{\alpha^\prime}\breve{V})\;\;
     =\;\;
	  (-1)^2\,
	 \mbox{\Large $\frac{\partial}{\partial \bar{\theta}^{\dot{2}}}$}\,
 	   \mbox{\Large $\frac{\partial}{\partial \bar{\theta}^{\dot{1}}}$}\,
	    \mbox{\large $($}
		   (e_\alpha \breve{V})(x^\prime, \theta, \bar{\theta}, \vartheta, \bar{\vartheta})
		\mbox{\large $)$}
	  }\\
  && =\;\;
    \vartheta_\alpha V^{\prime\prime}_{(\alpha)}(x^\prime)
  			\\
  && \hspace{2em}		
	+ \sum_\gamma
	    \theta^\gamma
		 \left\{\rule{0ex}{1.2em}\right.
		 2\sqrt{-1} \,
		  \sum_{\dot{\delta}, \dot{\beta}, \mu, \nu}
		   \varepsilon^{\dot{\delta}\dot{\beta}}\,\sigma^\nu_{\gamma\dot{\delta}}\,
		    \sigma^\mu_{\alpha\dot{\beta}}\,
			\partial_\mu V_{[\nu]}(x^\prime)
	    + \varepsilon_{\alpha\gamma}
		     \mbox{\Large $($}
		      -V^\sim_{(0)}(x^\prime)
			   + 2\sqrt{-1} \sum_{\mu, \nu}\eta^{\mu\nu}
				     \partial_\mu V_{[\nu]}(x^\prime)
		     \mbox{\Large $)$}
		 \left.\rule{0ex}{1.2em}\right\}
	   \\
   && \hspace{2em}
	  -\,2\,\sqrt{-1}\, \theta^1\theta^2
	      \sum_{\dot{\delta}, \dot{\beta}. \mu}
		  \bar{\vartheta}_{\dot{\delta}}  \varepsilon^{\dot{\delta}\dot{\beta}}
		     \sigma^\mu_{\alpha\dot{\beta}}\,
			 \partial_\mu \overline{V^{\prime\prime}_{\delta}}(x^\prime)     \,.			
 \end{eqnarray*}	
 } 

\noindent
Applying the family of identities from raising or lowering the spinor index in the defining identity  of the Dirac/Pauli matrices
   $$
  (\sigma^\mu\,\bar{\sigma}^\nu + \sigma^\nu\,\bar{\sigma}^\mu)_\alpha^{\:\:\gamma}\;
     =\; -2\eta^{\mu\nu}\,\delta_\alpha^{\:\:\gamma}
   $$
    and
   a relabelling of the $\mu$, $\nu$ indices
 to the summation of terms of involving $\partial_\mu V_{[\nu]}(x^\prime)$,
one has the simplification in the end
 {\small
 \begin{eqnarray*}
  W_\alpha & = &
     \vartheta_\alpha V^{\prime\prime}_{(\alpha)}(x^\prime)
	- \sum_\gamma
	    \theta^\gamma
		 \left\{\rule{0ex}{1.2em}\right.
		  \sqrt{-1} \,
		  \sum_{\mu, \nu, \dot{\beta}, \dot{\delta}}
		   \varepsilon^{\dot{\beta}\dot{\delta}}\,
		     \sigma^\mu_{\alpha\dot{\beta}}\,\sigma^\nu_{\gamma\dot{\delta}}\,
			 F_{\mu\nu}(x^\prime)
	    + \varepsilon_{\alpha\gamma}\,V^\sim_{(0)}(x^\prime)			   			 			
		 \left.\rule{0ex}{1.2em}\right\}
	   \\
   &&
	  -\,2\,\sqrt{-1}\, \theta^1\theta^2
	      \sum_{\dot{\delta}, \dot{\beta}. \mu}
		  \bar{\vartheta}_{\dot{\delta}}  \varepsilon^{\dot{\delta}\dot{\beta}}
		     \sigma^\mu_{\alpha\dot{\beta}}\,
			 \partial_\mu \overline{V^{\prime\prime}_{\delta}}(x^\prime)     \,,
 \end{eqnarray*}
 }where 
 $F_{\mu\nu}:= \partial_\mu V_{[\nu]}- \partial_\nu V_{[\nu]}$.

\bigskip

\noindent $\tinybullet$
$W_1W_2$:
(in chiral coordinate functions $(x^\prime, \theta,\bar{\theta},\vartheta,\bar{\vartheta})$
    on $\widehat{X}^{\widehat{\boxplus}}$)

\medskip

\noindent
From the expression of $W_\alpha$ above,
{\small
\begin{eqnarray*}
 \lefteqn{W_1W_2\;\;
  =\;\;
  \vartheta_1\vartheta_2\, V^{\prime}_{(1)}(x^\prime) V^{\prime\prime}_{(2)}(x^\prime)
     }\\
  && \hspace{2em}	
  +\,\sum_\gamma
     \theta^\gamma
      \left\{\rule{0ex}{1.2em}\right.
	   \vartheta_1 V^{\prime\prime}_{(1)}(x^\prime)\,
	      \mbox{\Large $($}
		   \sqrt{-1}
		    \sum_{\mu, \nu, \dot{\beta}, \dot{\delta}}
			  \varepsilon^{\dot{\beta}\dot{\delta}}\,
			    \sigma^\mu_{2\dot{\beta}}\,\sigma^\nu_{\gamma\dot{\delta}}\,
				F_{\mu\nu}(x^\prime)
		    + \varepsilon_{2\gamma} V^\sim_{(0)}(x^\prime)
		  \mbox{\Large $)$}
		  \\
		&& \hspace{8em}
	   -\, \vartheta_2 V^{\prime\prime}_{(2)}(x^\prime)\,
	      \mbox{\Large $($}
		   \sqrt{-1}
		    \sum_{\mu, \nu, \dot{\beta}, \dot{\delta}}
			  \varepsilon^{\dot{\beta}\dot{\delta}}\,
			    \sigma^\mu_{1\dot{\beta}}\,\sigma^\nu_{\gamma\dot{\delta}}\,
				F_{\mu\nu}(x^\prime)
		   + \varepsilon_{1\gamma} V^\sim_{(0)}(x^\prime)
		  \mbox{\Large $)$}
	  \left.\rule{0ex}{1.2em}\right\}
	  \\
  && \hspace{2em}
   +\, \theta^1\theta^2
          \left\{\rule{0ex}{1.2em}\right.
		   2\,\sqrt{-1}\,
		    \mbox{\Large $($}
		     \vartheta_1
			  \sum_{\dot{\beta}, \dot{\delta}, \mu}
			    \bar{\vartheta}_{\dot{\delta}}\,
				 \varepsilon^{\dot{\beta}\dot{\delta}}\, \sigma^\mu_{2\dot{\beta}}
				 , \partial_\mu\,\overline{V^{\prime\prime}_{(\delta)}}(x^\prime)\,
				   V^{\prime\prime}_{(1)}(x^\prime)
			    \\[-1ex]
			&& \hspace{14em}	
             -\,  \vartheta_2
			  \sum_{\dot{\beta}, \dot{\delta}, \mu}
			    \bar{\vartheta}_{\dot{\delta}}\,
				 \varepsilon^{\dot{\beta}\dot{\delta}}\, \sigma^\mu_{1\dot{\beta}}
				 , \partial_\mu\,\overline{V^{\prime\prime}_{(\delta)}}(x^\prime)\,
				   V^{\prime\prime}_{(2)}(x^\prime)	
            \mbox{\Large $)$}				
                \\
			&& \hspace{8em}			
       +\,\sum_{\gamma\gamma^\prime}
	         \varepsilon^{\gamma\gamma^\prime}
			 \mbox{\Large $($}
			  \sqrt{-1} \sum_{\mu, \nu, \dot{\beta}, \dot{\delta}}
			   \varepsilon^{\dot{\beta}\dot{\delta}}\,
			    \sigma^\mu_{1\dot{\beta}}\, \sigma^\nu_{\gamma\dot{\delta}}\,
				  F_{\mu\nu}(x^\prime)
			  + \varepsilon_{1\gamma}\, V^\sim_{(0)}(x^\prime)	
			 \mbox{\Large $)$}
			      \\
			  && \hspace{14em}		
             \cdot			
			 \mbox{\Large $($}
              \sqrt{-1} \sum_{\mu^\prime, \nu^\prime, \dot{\beta}^\prime, \dot{\delta}^\prime}
			   \varepsilon^{\dot{\beta}^\prime \dot{\delta}^\prime}\,
			    \sigma^{\mu^\prime}_{2\dot{\beta}^\prime}\,
				\sigma^{\nu^\prime}_{\gamma^\prime \dot{\delta}^\prime}\,
				  F_{\mu^\prime\nu^\prime}(x^\prime)
			  + \varepsilon_{2\gamma^\prime}\, V^\sim_{(0)}(x^\prime)	
            \mbox{\Large $)$}			 	
		  \left.\rule{0ex}{1.2em}\right\}\,.
\end{eqnarray*}
} 

\noindent
The $\theta^1\theta^2$-component can be simplified as follows:
 \begin{itemize}
  \item[\LARGE $\cdot$]
   Terms involving $\vartheta_\alpha \bar{\vartheta}_{\dot{\delta}}$
    can be combined to\\
	$
	 -\,2\,\sqrt{-1}\sum_{\alpha, \dot{\beta}, \mu}
      \vartheta_\alpha \bar{\vartheta}_{\dot{\beta}}	 \,
	   \bar{\sigma}^{\mu\dot{\beta}\alpha}\,
	     V^{\prime\prime}_{(\alpha)}(x^\prime)\,
		    \partial_\mu\overline{V^{\prime\prime}_{(\beta)}}(x^\prime)
	$
	after some relabelling.

  \item[\LARGE $\cdot$]	
   For the remaining terms, terms involving a
     $F_{\mu\nu}(x^\prime)F_{\mu^\prime\nu^\prime}(x^\prime)$ factor
	can be combined to\\
    $-2\sum_{\mu, \nu, \mu^\prime, \nu^\prime}
	    \eta^{\mu\mu^\prime} \eta^{\nu\nu^\prime}
		    F_{\mu\nu}(x^\prime)F_{\mu^\prime\nu^\prime}(x^\prime)
	  + \sqrt{-1}
	       \sum_{\mu, \nu, \mu^\prime, \nu^\prime}
	        \varepsilon^{\,\mu\nu\mu^\prime\nu^\prime} F_{\mu\nu}(x^\prime)F_{\mu^\prime\nu^\prime}$
	after first converting them to the sum
	 $$
	   -\,\mbox{\Large $\frac{1}{4}$}
	     \sum_{\mbox{\tiny
		                    $\begin{array}{c}
		                        \mu, \nu, \mu^\prime, \nu^\prime; \\
		                        \gamma, \dot{\beta}, \dot{\delta},\\
						        \gamma^\prime, \dot{\beta}^\prime, \dot{\delta}^\prime
						      \end{array}$
						   }}							
		  \varepsilon^{\gamma\gamma^\prime}\, \varepsilon^{\dot{\beta}\dot{\delta}}\,
            \varepsilon^{\dot{\beta}^\prime \dot{\delta}^\prime}\,
           \mbox{\large $($}
		     \sigma^\mu_{1\dot{\beta}}\,\sigma^\nu_{\gamma\dot{\delta}}
			    -    \sigma^\nu_{1\dot{\beta}}\,\sigma^\mu_{\gamma\dot{\delta}}
           \mbox{\large $)$}\,
		   \mbox{\large $($}
		     \sigma^{\mu^\prime}_{2\dot{\beta}^\prime}\,
			      \sigma^{\nu^\prime}_{\gamma^\prime\dot{\delta}^\prime}
			 - \sigma^{\nu^\prime}_{2\dot{\beta}^\prime}\,
			      \sigma^{\mu^\prime}_{\gamma^\prime\dot{\delta}^\prime}				
           \mbox{\large $)$}\,
         F_{\mu\nu}(x^\prime)F_{\mu^\prime \nu^\prime}(x^\prime)\,,
	 $$
	via relabelling of indices and using the property that $F_{\mu\nu}=-F_{\nu\mu}$,
	and then employing the following identity
	 $$
	   \Tr \mbox{\large $($}\sigma^{\mu\nu}\,\sigma^{\mu^\prime\nu^\prime}  \mbox{\large $)$}\;
	    =\; -\,\mbox{\Large $\frac{1}{2}$}\,
		           \mbox{\large $($}
			         \eta^{\mu\mu^\prime}\,\eta^{\nu\nu^\prime}
				     - \eta^{\mu\nu^\prime}\,\eta^{\nu\mu^\prime}\,			
			      \mbox{\large $)$}
			 -\,\mbox{\Large $\frac{\sqrt{-1}}{2}$}\,
			    \varepsilon^{\,\mu\nu\mu^\prime\nu^\prime}\,,
	 $$
	 where
	    $$		  		
          {\sigma^{\mu\nu}}_\alpha^{\:\:\gamma}\;
		    :=\;  \mbox{\Large $\frac{1}{4}$}
			       (\sigma^\mu\,\bar{\sigma}^\nu - \sigma^\nu\,\bar{\sigma}^\mu)_\alpha^{\:\:\gamma}\;
        $$
		   are the generators of the Lorentz group in the spinor representation        and
		$\varepsilon^{\,\mu\nu\mu^\prime\nu^\prime}$ is the space-time volume-element tensor
	       with respect to the frame $(\partial_\mu)_\mu$ with $\varepsilon^{0123}=1$.
	
  \item[\LARGE $\cdot$]	
   Terms involving a simple $F_{\mu\nu}$ or $F_{\mu^\prime\nu^\prime}$ factor
	cancel each other after relabelling and using the property that $F_{\mu\nu}= - F_{\nu\mu}$;
	and, hence, do not contribute in the end.
	
  \item[\LARGE $\cdot$]	
   Terms without a $F_{\mu\nu}$ or $F_{\mu^\prime\nu^\prime}$ factor can be combined to
   $V^\sim_{(0)}(x^\prime)^{\,2}$.
 \end{itemize}

 In summary,
{\small
\begin{eqnarray*}
 \lefteqn{W_1W_2\;\;
  =\;\;
  \vartheta_1\vartheta_2\, V^{\prime}_{(1)}(x^\prime) V^{\prime\prime}_{(2)}(x^\prime)
     }\\
  && \hspace{2em}	
  +\,\sum_\gamma
     \theta^\gamma
      \left\{\rule{0ex}{1.2em}\right.
	   \vartheta_1 V^{\prime\prime}_{(1)}(x^\prime)\,
	      \mbox{\Large $($}
		   \sqrt{-1}
		    \sum_{\mu, \nu, \dot{\beta}, \dot{\delta}}
			  \varepsilon^{\dot{\beta}\dot{\delta}}\,
			    \sigma^\mu_{2\dot{\beta}}\,\sigma^\nu_{\gamma\dot{\delta}}\,
				F_{\mu\nu}(x^\prime)
		    + \varepsilon_{2\gamma} V^\sim_{(0)}(x^\prime)
		  \mbox{\Large $)$}
		  \\
		&& \hspace{8em}
	   -\, \vartheta_2 V^{\prime\prime}_{(2)}(x^\prime)\,
	      \mbox{\Large $($}
		   \sqrt{-1}
		    \sum_{\mu, \nu, \dot{\beta}, \dot{\delta}}
			  \varepsilon^{\dot{\beta}\dot{\delta}}\,
			    \sigma^\mu_{1\dot{\beta}}\,\sigma^\nu_{\gamma\dot{\delta}}\,
				F_{\mu\nu}(x^\prime)
		   + \varepsilon_{1\gamma} V^\sim_{(0)}(x^\prime)
		  \mbox{\Large $)$}
	  \left.\rule{0ex}{1.2em}\right\}
	  \\
  && \hspace{2em}
   +\, \theta^1\theta^2
          \left\{\rule{0ex}{1.2em}\right.		
		  V^\sim_{(0)}(x^\prime)^{\,2}\,	
		   -\,2\,\sqrt{-1}\,		
			\sum_{\alpha, \dot{\beta}, \mu}
			   \vartheta_\alpha \bar{\vartheta}_{\dot{\beta}}\,
			      \bar{\sigma}^{\mu \dot{\beta}\alpha}
				   	V^{\prime\prime}_{(\alpha)}(x^\prime)\,
				    \partial_\mu\,\overline{V^{\prime\prime}_{(\beta)}}(x^\prime)\; 			
                \\
			&& \hspace{7em}			
          -\,2\,\sum_{\mu, \nu, \mu^\prime, \nu^\prime}
	              \eta^{\mu\mu^\prime} \eta^{\nu\nu^\prime}
		             F_{\mu\nu}(x^\prime)F_{\mu^\prime\nu^\prime}(x^\prime)
	      +\,  \sqrt{-1}\,
	            \sum_{\mu, \nu, \mu^\prime, \nu^\prime}
	             \varepsilon^{\,\mu\nu\mu^\prime\nu^\prime} F_{\mu\nu}(x^\prime)F_{\mu^\prime\nu^\prime}			
		  \left.\rule{0ex}{1.2em}\right\}\,.
\end{eqnarray*}
} 
  \marginpar{\vspace{-12em}\raggedright\tiny
          \raisebox{-1ex}{\hspace{-2.4em}\LARGE $\cdot$}Cf.\,[\,\parbox[t]{20em}{Wess
		\& Bagger:\\ Eq.\:(6.13)].}}

The substitutions
{\small
 $$
   \begin{array}{c}
     \theta\theta
	    \rightsquigarrow  -2\theta^1\theta^2\,,\hspace{2em}
	  m, n, k, l \rightsquigarrow \mu, \nu, \mu^\prime, \nu^\prime\, \hspace{2em}
     v_n \rightsquigarrow  - V_{[\mu]}\,, \hspace{2em}
     \lambda_\alpha  \rightsquigarrow
	    \mbox{\large $\frac{\sqrt{-1}}{2}$}
		   \vartheta_\alpha V^{\prime\prime}_{(\alpha)}\,,
		 \\[1.2ex]	
     D  \rightsquigarrow  - \mbox{\large $\frac{1}{2}$}V^\sim_{(0)}\,, \hspace{2em}        					
	 \bar{\lambda}_{\dot{\beta}} \rightsquigarrow
	   - \mbox{\large $\frac{\sqrt{-1}}{2}$}
	       \bar{\vartheta}_{\dot{\beta}} \overline{V^{\prime\prime}_{(\beta)}}\,, \hspace{2em}
	v_{mn}\rightsquigarrow - F_{\mu\nu}
   \end{array}
 $$}plus 
 an appropriate raising-and-lowering of the spinor and the space-time indices turn [Wess \& Bagger: Eq.\:(6.13)]
  into the expression for the $\theta^1\theta^2$-component above up to a factor of $\,-\frac{1}{2}\,$:
{\small
 \begin{eqnarray*}
  &&
   W^\alpha W_\alpha\;\;\mbox{in [Wess \& Bagger: Eq.\:(6.13) via Eq.\:(6.7)] }
      \\[1.2ex]
	&& \hspace{2em}
    =\;\; (\,\cdots\,)
	   + \theta\theta
	        \mbox{\Large $($}
			  -2\sqrt{-1} \lambda\sigma^m\partial_m \bar{\lambda}
			  - \mbox{\large $\frac{1}{2}$}v^{mn}v_{mn}
			  + D^2
			  + \mbox{\large $\frac{\sqrt{-1}}{2}$}
			       v^{mn}v^{kl}\varepsilon_{mnkl}
			\mbox{\Large $)$}
	  \\[1.2ex]
    && \hspace{2em}
	\rightsquigarrow\;
	  (\,\cdots\,)
	    - \mbox{\large $\frac{1}{2}$}\cdot
		   \theta^1\theta^2\,
		    \mbox{(the $\theta^1\theta^2$-component computed above)}\,.
 \end{eqnarray*}
} 

\bigskip

\noindent $\tinybullet$
$\bar{W}_{\dot{\beta}}$ and $\bar{W}_{\dot{2}}\bar{W}_{\dot{1}}$:
 (in antichiral coordinate functions $(x^{\prime\prime}, \theta,\bar{\theta}, \vartheta, \bar{\vartheta})$
  on $\widehat{X}^{\widehat{\boxplus}}$)

\medskip

\noindent
The formulae for
  $\bar{W}_{\dot{\beta}}$ and $\bar{W}_{\dot{2}}\bar{W}_{\dot{1}}$
 follow either by similar computations
             or by taking the twisted complex conjugate on $W_\alpha$ and $W_1W_2$ respectively.
The results are listed below.

In terms of the antichiral coordinate functions
 $(x^{\prime\prime}, \theta, \bar{\theta}, \vartheta, \bar{\vartheta})$
 on $\widehat{X}^{\widehat{\boxplus}}$,
 {\small
 \begin{eqnarray*}
  \breve{V}
  &= &		
	\sum_{\alpha,\dot{\beta}}\theta^\alpha \bar{\theta}^{\dot{\beta}}
		    \sum_\mu \sigma^\mu_{\alpha\dot{\beta}} V_{[\mu]}(x^{\prime\prime})	
	+ \sum_{\dot{\beta}}\theta^1\theta^2\bar{\theta}^{\dot{\beta}}	
          \bar{\vartheta}_{\dot{\beta}}\,
		        \overline{V^{\prime\prime}_{(\beta)}}(x^{\prime\prime})					
    +\, \sum_\alpha \theta^\alpha\bar{\theta}^{\dot{1}}\bar{\theta}^{\dot{2}}
		  \vartheta_\alpha\, V^{\prime\prime}_{(\alpha)} (x^{\prime\prime})
     \\
	&& \hspace{3em}	
	+\, \theta^1\theta^2\bar{\theta}^{\dot{1}}\bar{\theta}^{\dot{2}}\,
	    \mbox{\Large $($}
		  V^\sim_{(0)}(x^{\prime\prime})
		  +2 \sqrt{-1}\,\sum_{\mu, \nu} \eta^{\mu\nu}\,\partial_{\mu} V_{[\nu]}(x^{\prime\prime})
		\mbox{\Large $)$}  \,;
 \end{eqnarray*}
  }
 
{\small
\begin{eqnarray*}
 \lefteqn{
   e_{\beta^{\prime\prime}}\breve{V}\;\;
     =\;\; \sum_{\gamma, \nu}
	          \theta^\gamma
	            \sigma^{\nu}_{\gamma\dot{\beta}} V_{[\nu]}(x^{\prime\prime})
		     - \theta^1\theta^2 \bar{\vartheta}_{\dot{\beta}}\,
			     \overline{V^{\prime\prime}_{(\beta)}}(x^{\prime\prime})				
		    + \sum_{\gamma, \dot{\delta}}
			     \theta^\gamma \bar{\theta}^{\dot{\delta}}
			        \vartheta_\gamma \varepsilon_{\dot{\delta}\dot{\beta}}
				     V^{\prime\prime}_{(\gamma)}(x^{\prime\prime})
    }\\
  && \hspace{2em}
  -\, \sum_{\dot{\delta}}
     \theta^1\theta^2 \bar{\theta}^{\dot{\delta}}
	   \left\{\rule{0ex}{1.2em}\right.	
	     2\sqrt{-1}\sum_{\alpha, \gamma, \mu, \nu}
		   \varepsilon^{\alpha\gamma}
		     \sigma^\mu_{\alpha\dot{\beta}} \sigma^\nu_{\gamma\dot{\delta}}\,
			  \partial_\mu V_{[\nu]}(x^{\prime\prime})
		 + \varepsilon_{\dot{\delta}\dot{\beta}}
		       \mbox{\Large $($}
			     V^\sim_{(0)}(x^{\prime\prime})
				+ 2\sqrt{-1}\sum_{\mu, \nu}
				     \eta^{\mu\nu} \partial_\mu V_{[\nu]}(x^{\prime\prime})
			   \mbox{\Large $)$}
	   \left.\rule{0ex}{1.2em}\right\}
	   \\
	 && \hspace{2em}
	   +\,2\sqrt{-1}\,\theta^1\theta^2\bar{\theta}^{\dot{1}} \bar{\theta}^{\dot{2}}
	        \sum_{\gamma, \nu}\vartheta_\gamma \sigma^{\nu\gamma}_{\;\;\;\;\dot{\beta}}
			   \partial_\nu V^{\prime\prime}_{(\gamma)}(x^{\prime\prime})\,;
\end{eqnarray*}
}

{\small
\begin{eqnarray*}
 \bar{W}_{\dot{\beta}}
   & :=\: & e_{1^\prime}e_{2^\prime} e_{\beta^{\prime\prime}} \breve{V}
       \\
  &  =  &
    \bar{\vartheta}_{\dot{\beta}}\,\overline{V^{\prime\prime}_{(\beta)}}(x^{\prime\prime})
	+ \sum_{\dot{\delta}}
	     \bar{\theta}^{\dot{\delta}}
		   \left\{\rule{0ex}{1.2em}\right.
	        \varepsilon_{\dot{\delta}\dot{\beta}} V^\sim_{(0)}(x^{\prime\prime})
			+\sqrt{-1}\sum_{\alpha, \gamma, \mu, \nu}
			    \varepsilon^{\alpha\gamma}
				  \sigma^\mu_{\alpha\dot{\beta}}  \sigma^\nu_{\gamma\dot{\delta}}
				F_{\mu\nu}(x^{\prime\prime})
	       \left.\rule{0ex}{1.2em}\right\}
   \\
  &&
    -\,2\sqrt{-1}\,\bar{\theta}^{\dot{1}}\bar{\theta}^{\dot{2}}
	  \sum_{\gamma, \nu}
	    \vartheta_\gamma \sigma^{\nu\gamma}_{\;\;\;\;\dot{\beta}}\,
		  \partial_\nu V^{\prime\prime}_{(\gamma)}(x^{\prime\prime})\,,
\end{eqnarray*}
}where  
$F_{\mu\nu}:= \partial_\mu V_{[\nu]}- \partial_\nu V_{[\mu]}$; and

{\small
\begin{eqnarray*}
 \bar{W}_{\dot{2}}\bar{W}_{\dot{1}}
   &= &
   -\,\bar{\vartheta}_{\dot{1}}  \bar{\vartheta}_{\dot{2}}\,
     \overline{V^{\prime\prime}_{(1)}}(x^{\prime\prime})\,
	   \overline{V^{\prime\prime}_{(2)}}(x^{\prime\prime})
   \\
  &&
   +\, \sum_{\dot{\beta}}
          \bar{\theta}^{\dot{\beta}}
		   \left\{\rule{0ex}{1.2em}\right.
		     \bar{\vartheta}_{\dot{1}}\,\overline{V^{\prime\prime}_{(1)}}(x^{\prime\prime})
			  \mbox{\Large $($}
		        \varepsilon_{\dot{\beta}\dot{2}} V^\sim_{(0)}(x^{\prime\prime})
				+ \sqrt{-1}\sum_{\alpha, \gamma, \mu, \nu}
				     \varepsilon^{\alpha\gamma}
					   \sigma^\mu_{\alpha\dot{2}} \sigma^\nu_{\gamma\dot{\beta}}
					   F_{\mu\nu}(x^{\prime\prime})
			  \mbox{\Large $)$}			
			   \\
		  && \hspace{6em}			  			
			-\, \bar{\vartheta}_{\dot{2}}\,\overline{V^{\prime\prime}_{(2)}}(x^{\prime\prime})
			  \mbox{\Large $($}
		        \varepsilon_{\dot{\beta}\dot{1}} V^\sim_{(0)}(x^{\prime\prime})
				+ \sqrt{-1}\sum_{\alpha, \gamma, \mu, \nu}
				     \varepsilon^{\alpha\gamma}
					   \sigma^\mu_{\alpha\dot{1}} \sigma^\nu_{\gamma\dot{\beta}}
					   F_{\mu\nu}(x^{\prime\prime})
			  \mbox{\Large $)$}			  			
		   \left.\rule{0ex}{1.2em}\right\}	
     \\		
  &&
    +\, \bar{\theta}^{\dot{1}}  \bar{\theta}^{\dot{2}}
	       \left\{\rule{0ex}{1.2em}\right.
		    - V^\sim_{(0)}(x^{\prime\prime})^{\,2}
		    -2\sqrt{-1}\sum_{\alpha,\dot{\beta}, \mu}
			   \vartheta_\alpha \bar{\vartheta}_{\dot{\beta}}\,
			     \bar{\sigma}^{\mu\dot{\beta}\alpha}\,
				  \overline{V^{\prime\prime}_{(\beta)}}(x^{\prime\prime})\,
				   \partial_\mu V^{\prime\prime}_{(\alpha)}(x^{\prime\prime})
			 \\
			 && \hspace{5em}			
		    +\,2\sum_{\mu,\nu, \mu^\prime, \nu^\prime}
			     \eta^{\mu\mu^\prime}\eta^{\nu\nu^\prime}
			      F_{\mu\nu}(x^{\prime\prime}) F_{\mu^\prime\nu^\prime}(x^{\prime\prime})
		     + \sqrt{-1}\sum_{\mu, \nu, \mu^\prime, \nu^\prime}
			       \varepsilon^{\mu\nu\mu^\prime\nu^\prime}
				    F_{\mu\nu}(x^{\prime\prime}) F_{\mu^\prime\nu^\prime}(x^{\prime\prime})
	       \left.\rule{0ex}{1.2em}\right\},
\end{eqnarray*}
}where 
$\varepsilon^{\,\mu\nu\mu^\prime\nu^\prime}$ is the space-time volume-element tensor
	       with respect to the frame $(\partial_\mu)_\mu$ with $\varepsilon^{0123}=1$.

\bigskip

\noindent $\tinybullet$
$\breve{\Phi}^\dag e^{\breve{V}}\breve{\Phi}$:
 (in standard coordinate functions $(x, \theta,\bar{\theta}, \vartheta, \bar{\vartheta})$
  on $\widehat{X}^{\widehat{\boxplus}}$)		

\medskip

\noindent
Let
   \begin{eqnarray*}
     \breve{\Phi} & = &
	   \Phi_{(0)}
	   + \sum_\alpha \theta^\alpha\vartheta_\alpha  \Phi_{(\alpha)}
	   + \theta^1\theta^2\vartheta_1\vartheta_2 \Phi_{(12)}
       + \sqrt{-1} \sum_{\alpha,\dot{\beta};\,\mu}
	          \theta^\alpha\bar{\theta}^{\dot{\beta}}	
			     \sigma^\mu_{\alpha\dot{\beta}}\, \partial_\mu \Phi_{(0)}\\
      && \hspace{1em}				
       + \sqrt{-1}\sum_{\dot{\beta};\, \alpha, \mu}				
	        \theta^1\theta^2\bar{\theta}^{\dot{\beta}}			
			   \vartheta_\alpha\sigma^{\mu\alpha}_{\;\;\;\;\dot{\beta}}\, \partial_\mu \Phi_{(\alpha)}
       - \theta^1\theta^2\bar{\theta}^{\dot{1}}\bar{\theta}^{\dot{2}}\,
	        \square \Phi_{(0)}\,,
   \end{eqnarray*}
  be a chiral superfield on $X^{\physics}$, determined by
   $(\Phi_{(0)},  (\Phi_{(\alpha)})_\alpha, \Phi_{(12)} )$
   and
   \begin{eqnarray*}
     \breve{\Phi}^\dag & = &
	   \overline{\Phi_{(0)}}
	   - \sum_{\dot{\beta}} \bar{\theta}^{\dot{\beta}}\bar{\vartheta}_{\dot{\beta}}
	           \overline{\Phi_{(\beta)}}
	   + \bar{\theta}^{\dot{1}}\bar{\theta}^{\dot{2}}
	      \bar{\vartheta}_{\dot{1}}\bar{\vartheta}_{\dot{2}}
		               \overline{\Phi_{(12)}}
       - \sqrt{-1} \sum_{\alpha,\dot{\beta};\,\mu}
	          \theta^\alpha\bar{\theta}^{\dot{\beta}}	
			     \sigma^\mu_{\alpha\dot{\beta}}\,
				       \partial_\mu \overline{\Phi_{(0)}} \\
      && \hspace{1em}				
       -\, \sqrt{-1}\sum_{\alpha;\, \dot{\beta}, \mu}				
	        \theta^\alpha\bar{\theta}^{\dot{1}} \bar{\theta}^{\dot{2}}
			    \bar{\vartheta}_{\dot{\beta}}
			      \sigma^{\mu\dot{\beta}}_\alpha\, \partial_\mu \overline{\Phi_{(\beta)}}
       - \theta^1\theta^2\bar{\theta}^{\dot{1}}\bar{\theta}^{\dot{2}}\,
	        \square \overline{\Phi_{(0)}}\,.
   \end{eqnarray*}
 be its twisted complex conjugate, which is antichiral.
Note also that, for the vector superfield $\breve{V}$  in the Wess-Zumino gauge,
 $$
   \breve{V}^2\;
     =\;  2\,\theta^1\theta^2 \bar{\theta}^{\dot{1}} \bar{\theta}^{\dot{2}}
	        \sum_{\mu, \nu}\eta^{\mu\nu} V_{[\mu]} V_{[\nu]}\,,
     \hspace{2em} \breve{V}^3\;=\; 0\,,
 $$
  and hence
 the exponential $e^{\breve{V}}$ is given by
 {\small
 \begin{eqnarray*}
    e^{\breve{V}}& =  &
	 1 + \breve{V} + \mbox{\large $\frac{1}{2}$}\,\breve{V}^2	
      \\
  &=&
     1\,+\,
	 \sum_{\alpha,\dot{\beta}}\theta^\alpha \bar{\theta}^{\dot{\beta}}
		    \sum_\mu \sigma^\mu_{\alpha\dot{\beta}} V_{[\mu]}(x)	
	 + \sum_{\dot{\beta}}\theta^1\theta^2\bar{\theta}^{\dot{\beta}}	
          \bar{\vartheta}_{\dot{\beta}}\,
		        \overline{V^{\prime\prime}_{(\beta)}}(x)					
     +\, \sum_\alpha \theta^\alpha\bar{\theta}^{\dot{1}}\bar{\theta}^{\dot{2}}
		  \vartheta_\alpha\, V^{\prime\prime}_{(\alpha)} (x)
		 \\
  &&
	 +\, \theta^1\theta^2\bar{\theta}^{\dot{1}}\bar{\theta}^{\dot{2}}
	     \mbox{\large $($}		
		  V^\sim_{(0)}(x)
		    + \sum_{\mu, \nu}\eta^{\mu\nu} V_{[\mu]} V_{[\nu]}
         \mbox{\large $)$}\,.		
 \end{eqnarray*}
}
		
\noindent
A straightforward computation gives
{\small
 \begin{eqnarray*}
    e^{\breve{V}}\,\breve{\Phi}& = &
	  \Phi_{(0)}
	  + \sum_\gamma
	       \theta^\gamma \vartheta_\gamma \Phi_{(\gamma)}
	  + \theta^1\theta^2 \vartheta_1\vartheta_2 \Phi_{(12)}
	  + \sum_{\gamma, \dot{\delta}, \nu}
	      \theta^\gamma \bar{\theta}^{\dot{\delta}}
		   \sigma^\nu_{\gamma \dot{\delta}}
		     \mbox{\large $($}
			  V_{[\nu]}\Phi_{(0)}  + \sqrt{-1}\, \partial_\nu \Phi_{(0)}			
			 \mbox{\large $)$}
     \\
	&&
	+\, \sum_{\dot{\delta}}
	       \theta^1 \theta^2 \bar{\theta}^{\dot{\delta}}
		    \left\{\rule{0ex}{1.2em}\right.
			   \sum_{\gamma, \nu}
				  \vartheta_\alpha \sigma^{\nu\gamma}_{\;\;\;\;\dot{\delta}}
				   \mbox{\large $($}
				    V_{[\nu]}\Phi_{(\gamma)}
					 + \sqrt{-1}\,\partial_\nu \Phi_{(\gamma)}
				   \mbox{\large $)$}				
              + \bar{\vartheta}_{\dot{\delta}}\,
			        \overline{V^{\prime\prime}_{(\delta)}}\,\Phi_{(0)}				   				
			\left.\rule{0ex}{1.2em}\right\}\,
	  +\,  \sum_\gamma
	           \theta^\gamma \bar{\theta}^{\dot{1}} \bar{\theta}^{\dot{2}}\,
			     \vartheta_\gamma V^{\prime\prime}_{(\gamma)} \Phi_{(0)}
     \\
	 &&
	 +\,\theta^1\theta^2 \bar{\theta}^{\dot{1}} \bar{\theta}^{\dot{2}}
	       \left\{\rule{0ex}{1.2em}\right.
		       \mbox{\large $($}
			     V^\sim_{(0)}
				  + \sum_{\mu, \nu}\eta^{\mu\nu} V_{[\mu]} V_{[\nu]}
			   \mbox{\large $)$}\,
			   \Phi_{(0)}
             + 2\sqrt{-1}\sum_{\mu, \nu}
			      \eta^{\mu\nu} V_{[\mu]}\,\partial_\nu \Phi_{(0)} 				
			 - \square \Phi_{(0)}
				\\
	           && \hspace{24em}		
			 -\, \vartheta_1\vartheta_2
            	\mbox{\large $($}
				 V^{\prime\prime}_{(1)}\Phi_{(2)}
				 + V^{\prime\prime}_{(2)}\Phi_{(1)}
				\mbox{\large $)$}
		   \left.\rule{0ex}{1.2em}\right\}
 \end{eqnarray*}
} 

\noindent
and

{\small
 \begin{eqnarray*}
  \lefteqn{
    \breve{\Phi}^\dag\, e^{\breve{V}}\,\breve{\Phi}\;\;=\;\;
	 \breve{\Phi}^\dag\,\mbox{\large $($}e^{\breve{V}}\,\breve{\Phi} \mbox{\large $)$}
      }
	 \\
 && =\;\;
   \overline{\Phi_{(0)}} \,\Phi_{(0)}
	+ \sum_\alpha
	     \theta^\alpha \vartheta_\alpha\,
		   \overline{\Phi_{(0)}}\,\Phi_{(\alpha)}
	- \sum_{\dot{\beta}}
	    \bar{\theta}^{\dot{\beta}}\bar{\vartheta}_{\dot{\beta}}\,
		   \overline{\Phi_{(\beta)}}\,\Phi_{(0)}
	+ \theta^1\theta^2 \vartheta_1\vartheta_2\,
	     \overline{\Phi_{(0)}}\,\Phi_{(12)}
   \\
   && \hspace{2em}
   \sum_{\alpha, \dot{\beta}}
     \theta^\alpha \bar{\theta}^{\dot{\beta}}
	  \left\{\rule{0ex}{1.2em}\right.
	   \sum_\mu  \sigma^\mu_{\alpha\dot{\beta}}
	     \mbox{\Large $($}
	      -\sqrt{-1}\,\partial_\mu \overline{\Phi_{(0)}}\,\Phi_{(0)}
		  + \sqrt{-1}\,\overline{\Phi_{(0)}}\,\partial_\mu \Phi_{(0)}
		  + \overline{\Phi_{(0)}}\, V_{[\mu]}\Phi_{(0)}
	     \mbox{\Large $)$}
	   + \vartheta_\alpha \bar{\vartheta}_{\dot{\beta}}\,
	        \overline{\Phi_{(\beta)}}\,\Phi_{(\alpha)}	
	  \left.\rule{0ex}{1.2em}\right\}
	 \\
   && \hspace{2em}
	+\,\bar{\theta}^{\dot{1}}\bar{\theta}^{\dot{2}}
	       \bar{\vartheta}_{\dot{1}} \bar{\vartheta}_{\dot{2}}\,
		     \overline{\Phi_{(12)}}\,\Phi_{(0)}
     \\
   && \hspace{2em}			
    +\,\sum_{\dot{\beta}}
         \theta^1\theta^2 \bar{\theta}^{\dot{\beta}}		
		   \left\{\rule{0ex}{1.2em}\right.
		    \sum_{\alpha, \mu}
			   \vartheta_{\alpha}  \sigma^{\mu\alpha}_{\;\;\;\;\dot{\beta}}
			  \mbox{\Large $($}
			    -\sqrt{-1}\,\partial_\mu \overline{\Phi_{(0)}}\,\Phi_{(\alpha)}
				+ \sqrt{-1} \,\overline{\Phi_{(0)}}\,\partial_\mu\Phi_{(\alpha)}
				+\,\overline{\Phi_{(0)}}\,V_{[\mu]}\Phi_{(\alpha)}
			  \mbox{\Large $)$}			
     		 \\[-1.6ex]
            && \hspace{10em}	
           +\, \bar{\vartheta}_{\dot{\beta}}\,
		         \overline{\Phi_{(0)}}\,\overline{V^{\prime\prime}_{(\beta)}}\,\Phi_{(0)}	
		   - \vartheta_1\vartheta_2 \bar{\vartheta}_{\dot{\beta}}\,
		         \overline{\Phi_{(\beta)}}\,\Phi_{(12)}				
		  \left.\rule{0ex}{1.2em}\right\}		
     \\
   && \hspace{2em}
    +\,\sum_\alpha
	     \theta^\alpha \bar{\theta}^{\dot{1}} \bar{\theta}^{\dot{2}}
	      \left\{\rule{0ex}{1.2em}\right.
		    \sum_{\dot{\beta}, \mu}
			   \bar{\vartheta}_{\dot{\beta}}  \sigma^{\mu\dot{\beta}}_\alpha
			  \mbox{\Large $($}
			    -\sqrt{-1}\,\partial_\mu \overline{\Phi_{(\beta)}}\,\Phi_{(0)}
				+\sqrt{-1} \,\overline{\Phi_{(\beta)}}\,\partial_\mu\Phi_{(0)}
				+\,\overline{\Phi_{(\beta)}}\,V_{[\mu]}\Phi_{(0)}
			  \mbox{\Large $)$}			
     		 \\[-1.6ex]
            && \hspace{10em}	
           +\, \vartheta_\alpha\,\overline{\Phi_{(0)}}\,V^{\prime\prime}_{(\alpha)}\Phi_{(0)}			  
		   + \vartheta_\alpha \bar{\vartheta}_{\dot{1}} \bar{\vartheta}_{\dot{2}}\,
		         \overline{\Phi_{(12)}}\,\Phi_{(\alpha)}				
		  \left.\rule{0ex}{1.2em}\right\}
     \\
   && \hspace{2em}		
    +\,\theta^1\theta^2 \bar{\theta}^{\dot{1}} \bar{\theta}^{\dot{2}}
	       \left\{\rule{0ex}{1.2em}\right.
		   - \square\,\overline{\Phi_{(0)}}\,\Phi_{(0)}
		   + 2 \sum_{\mu,\nu}  \eta^{\mu\nu}\,
		           \partial_\mu\,\overline{\Phi_{(0)}}\,\partial_\nu \Phi_{(0)}
		    - \overline{\Phi_{(0)}}\,\square\Phi_{(0)}
			- 2\sqrt{-1}\sum_{\mu, \nu} \eta^{\mu\nu}
			      \partial_\mu \overline{\Phi_{(0)}}\,V_{[\nu]} \Phi_{(0)}
               \\[-.6ex]
              && \hspace{10em}				
			+\, 2\sqrt{-1}\, \overline{\Phi_{(0)}}
                   \sum_{\mu, \nu}\eta^{\mu\nu}
				     V_{[\mu]} \partial_\nu\Phi_{(0)}
			 + \overline{\Phi_{(0)}}
			      \mbox{\large $($}
				    V^\sim_{(0)}
					+ \sum_{\mu, \nu}\eta^{\mu\nu}V_{[\mu]}V_{[\nu]}
				  \mbox{\large $)$}
				  \Phi_{(0)}
       \\
      && \hspace{4em}				
	    -\, \vartheta_1\vartheta_2\,
		      \overline{\Phi_{(0)}}\,
			    \mbox{\large $($}
				  V^{\prime\prime}_{(1)}\Phi_{(2)}
				  + V^{\prime\prime}_{(2)} \Phi_{(1)}
				\mbox{\large $)$}
	      \\
      && \hspace{4em}		
	    +\, \sum_{\alpha, \dot{\beta}}
		       \vartheta_\alpha \bar{\vartheta}_{\dot{\beta}}
			    \mbox{\Large $($}
				 \sqrt{-1}\sum_\mu
				     \bar{\sigma}^{\mu\dot{\beta}\alpha}
					   \mbox{\large $($}
					     \partial_\mu  \overline{\Phi_{(\beta)}}\,\Phi_{(\alpha)}
						 - \overline{\Phi_{(\beta)}}\,\partial_\mu\Phi_{(\alpha)}
					   \mbox{\large $)$}
				 - \sum_\mu \bar{\sigma}^{\mu\dot{\beta}\alpha}
				     \overline{\Phi_{(\beta)}}\,V_{[\mu]}\Phi_{(\alpha)}
				\mbox{\Large $)$}
	      \\
      && \hspace{4em}						
	    +\,\bar{\vartheta}_{\dot{1}}\bar{\vartheta}_{\dot{2}}\,
		       \mbox{\large $($}
			    \overline{\Phi_{(1)}}\,\overline{V^{\prime\prime}_{(2)}}
				+ \overline{\Phi_{(2)}}\,\overline{V^{\prime\prime}_{(1)}}
			   \mbox{\large $)$}\,
			   \Phi_{(0)}
         + \vartheta_1 \vartheta_2 \bar{\vartheta}_{\dot{1}}\bar{\vartheta}_{\dot{2}}\,
               \overline{\Phi_{(12)}}\,\Phi_{(12)}		
     \left.\rule{0ex}{1.2em}\right\}\,.
 \end{eqnarray*}
} 
  \marginpar{\vspace{-9em}\raggedright\tiny
          \raisebox{-1ex}{\hspace{-2.4em}\LARGE $\cdot$}Cf.\,[\,\parbox[t]{20em}{Wess
		\& Bagger:\\ Eq.\:(7.7)].}}

For comparison with [Wess \& Bagger: Eq.\:(7.7)],
replace $\breve{V}$ by $t\breve{V}$ for a real parameter $t$,
 focus on the $\theta^1\theta^2\bar{\theta}^{\dot{1}}\bar{\theta}^{\dot{2}}$-component,
 and extract some boundary terms:
 
{\small
\begin{eqnarray*}
 \lefteqn{
   \breve{\Phi}^\dag e^{t \breve{V}}\breve{\Phi}\;\;
      =\;\;   \mbox{(terms of total $(\theta, \bar{\theta})$-degree $\le 3$)}
    }\\
   && \hspace{2em}		
    +\,\theta^1\theta^2 \bar{\theta}^{\dot{1}} \bar{\theta}^{\dot{2}}
	       \left\{\rule{0ex}{1.2em}\right.
		   \sum_\mu
		     \partial_\mu
			   \mbox{\Large $($}
	             3\,\partial^\mu \overline{\Phi_{(0)}}\,\Phi_{(0)}
                 - \overline{\Phi_{(0)}}\,\partial^\mu \Phi_{(0)}				
                - \sqrt{-1}\sum_{\alpha, \dot{\beta}}		
				     \vartheta_\alpha \bar{\vartheta}_{\dot{\beta}}
					   \bar{\sigma}^{\mu\dot{\beta}\alpha}\,
					     \overline{\Phi_{(\beta)}}\,\Phi_{(\alpha)}
		       \mbox{\Large $)$}
   \\
   && \hspace{4em}			
		   - 4\, \square\,\overline{\Phi_{(0)}}\,\Phi_{(0)}		   						
			- 2\sqrt{-1}\,t\, \sum_{\mu, \nu} \eta^{\mu\nu}
			      \partial_\mu \overline{\Phi_{(0)}}\,V_{[\nu]} \Phi_{(0)}
               \\[-.6ex]
              && \hspace{10em}				
			+\, 2\sqrt{-1} \,t\, \overline{\Phi_{(0)}}
                   \sum_{\mu, \nu}\eta^{\mu\nu}
				     V_{[\mu]} \partial_\nu\Phi_{(0)}
			 + \overline{\Phi_{(0)}}
			      \mbox{\large $($}
				    t\,V^\sim_{(0)}
					+ t^2\, \sum_{\mu, \nu}\eta^{\mu\nu}V_{[\mu]}V_{[\nu]}
				  \mbox{\large $)$}
				  \Phi_{(0)}
       \\
      && \hspace{4em}				
	    -\, \vartheta_1\vartheta_2\,t\,
		      \overline{\Phi_{(0)}}\,
			    \mbox{\large $($}
				  V^{\prime\prime}_{(1)}\Phi_{(2)}
				  + V^{\prime\prime}_{(2)} \Phi_{(1)}
				\mbox{\large $)$}
	      \\
      && \hspace{4em}		
	    +\, 2\sqrt{-1}\sum_{\alpha, \dot{\beta}}
		       \vartheta_\alpha \bar{\vartheta}_{\dot{\beta}}
			    \mbox{\Large $($}
				  \sum_\mu
				     \bar{\sigma}^{\mu\dot{\beta}\alpha}					 					
					     \partial_\mu  \overline{\Phi_{(\beta)}}\,\Phi_{(\alpha)}						 					   
				 -\,t\, \sum_\mu \bar{\sigma}^{\mu\dot{\beta}\alpha}
				     \overline{\Phi_{(\beta)}}\,V_{[\mu]}\Phi_{(\alpha)}
				\mbox{\Large $)$}
	      \\
      && \hspace{4em}						
	    +\,\bar{\vartheta}_{\dot{1}}\bar{\vartheta}_{\dot{2}}\,t\,
		       \mbox{\large $($}
			    \overline{\Phi_{(1)}}\,\overline{V^{\prime\prime}_{(2)}}
				+ \overline{\Phi_{(2)}}\,\overline{V^{\prime\prime}_{(1)}}
			   \mbox{\large $)$}\,
			   \Phi_{(0)}
         + \vartheta_1 \vartheta_2 \bar{\vartheta}_{\dot{1}}\bar{\vartheta}_{\dot{2}}\,
               \overline{\Phi_{(12)}}\,\Phi_{(12)}		
     \left.\rule{0ex}{1.2em}\right\}\,.
\end{eqnarray*}
} 
 
\noindent
Then the substitutions
{\small
 $$
   \begin{array}{c}
     \theta\theta\bar{\theta}\bar{\theta}
	    \rightsquigarrow  -4\theta^1\theta^2 \bar{\theta}^{\dot{1}} \bar{\theta}^{\dot{2}}\,,\hspace{2em}
	  n \rightsquigarrow \mu\,, \hspace{2em}
     A \rightsquigarrow \Phi_{(0)}\,,\hspace{2em}
     \psi_\alpha   \rightsquigarrow
	    \mbox{\large $\frac{1}{\sqrt{2}}$} \vartheta_\alpha\Phi_{(\alpha)}\,, \hspace{2em} 		
     F  \rightsquigarrow
	    - \mbox{\large $\frac{1}{2}$}\vartheta_1\vartheta_2 \Phi_{(12)}\,,
		\\[1.2ex]
     v_n \rightsquigarrow  - V_{[\mu]}\,, \hspace{2em}
     \lambda_\alpha  \rightsquigarrow
	    \mbox{\large $\frac{\sqrt{-1}}{2}$}
		   \vartheta_\alpha V^{\prime\prime}_{(\alpha)}\,, \hspace{2em}
     D  \rightsquigarrow  - \mbox{\large $\frac{1}{2}$}V^\sim_{(0)}\,, \hspace{2em}
     F^\ast  \rightsquigarrow
	     \mbox{\large $\frac{1}{2}$}
		    \bar{\vartheta}_{\dot{1}} \bar{\vartheta}_{\dot{2}} \overline{\Phi_{(12)}}\,,
			\hspace{2em}
	 \bar{\lambda}_{\dot{\beta}} \rightsquigarrow
	   - \mbox{\large $\frac{\sqrt{-1}}{2}$}
	       \bar{\vartheta}_{\dot{\beta}} \overline{V^{\prime\prime}_{(\beta)}}
   \end{array}
 $$}turn 
  [Wess \& Bagger: Eq.\:(22.7)]  into the expression
  for the $\theta^1\theta^2\bar{\theta}^{\dot{1}}\bar{\theta}^{\dot{2}}$-component above:

 {\center\footnotesize
 $$
 \hspace{-1em}\begin{array}{ccl}
  \underline{\raisebox{-1ex}{\hspace{2em}}
     \mbox{[Wess \& Bagger:\:Eq.\,(7.7), in $\theta\theta\bar{\theta}\bar{\theta}$]}\hspace{2em}}
    & \underline{\raisebox{-1ex}{\hspace{2em}}
	        \mbox{explicit formula for $\breve{\Phi}^\dag e^{\breve{V}}\breve{\Phi}$
			               in $\theta^1\theta^2\bar{\theta}^{\dot{1}}\bar{\theta}^{\dot{2}}$}\hspace{2em}}
						   \\[2ex]
   F F^{\ast } + A\square A^\ast
    &
	  \vartheta_1 \vartheta_2 \bar{\vartheta}_{\dot{1}}\bar{\vartheta}_{\dot{2}}\,
               \overline{\Phi_{(12)}}\,\Phi_{(12)}	
            - 4\, \square\,\overline{\Phi_{(0)}}\,\Phi_{(0)}		   			   	
		 \\[2ex]
    + \sqrt{-1}	\partial_n\bar{\psi}\bar{\sigma}^n\psi
     & +\, 2\sqrt{-1}\sum_{\alpha, \dot{\beta}, \mu}
		              \vartheta_\alpha \bar{\vartheta}_{\dot{\beta}}
				        \bar{\sigma}^{\mu\dot{\beta}\alpha}					 					
					     \partial_\mu  \overline{\Phi_{(\beta)}}\,\Phi_{(\alpha)}		
	     \\[2ex]
    + \mbox{\large $\frac{1}{2}$}\,v_n\bar{\psi}\bar{\sigma}^n\psi
    &
	   -\, 2\sqrt{-1}\, t\, \sum_{\alpha, \dot{\beta}, \mu}
		       \vartheta_\alpha \bar{\vartheta}_{\dot{\beta}}			   			
				 \bar{\sigma}^{\mu\dot{\beta}\alpha}
				     \overline{\Phi_{(\beta)}}\,V_{[\mu]}\Phi_{(\alpha)}			
		  \\[2ex]
   +\,\mbox{\large $\frac{\sqrt{-1}}{2}$}\,t v^n
         (A^\ast \partial_n A - \partial_nA^\ast A )
	&
      -\, 2\,\sqrt{-1}\,t\, \sum_{\mu, \nu} \eta^{\mu\nu}
			      \partial_\mu \overline{\Phi_{(0)}}\,V_{[\nu]} \Phi_{(0)}
			+\, 2\sqrt{-1} \,t\, \overline{\Phi_{(0)}}
                   \sum_{\mu, \nu}\eta^{\mu\nu}
				     V_{[\mu]} \partial_\nu\Phi_{(0)}				
	   \\[1.2ex]	
   -\,\mbox{\large $\frac{\sqrt{-1}}{\sqrt{2}}$}\,t\,
    (A\bar{\lambda}\bar{\psi} - A^\ast\lambda\psi)
   &	
	  -\, \vartheta_1\vartheta_2\,t\,
		      \overline{\Phi_{(0)}}\,
			    \mbox{\large $($}
				  V^{\prime\prime}_{(1)}\Phi_{(2)}
				  + V^{\prime\prime}_{(2)} \Phi_{(1)}
				\mbox{\large $)$}	
	    + \bar{\vartheta}_{\dot{1}}\bar{\vartheta}_{\dot{2}}\,t\,
		       \mbox{\large $($}
			    \overline{\Phi_{(1)}}\,\overline{V^{\prime\prime}_{(2)}}
				+ \overline{\Phi_{(2)}}\,\overline{V^{\prime\prime}_{(1)}}
			   \mbox{\large $)$}\,
			   \Phi_{(0)}
       \\[2ex]
  +\, \mbox{\large $\frac{1}{2}$}\,
         \mbox{\large $($}
		  tD- \mbox{\large $\frac{1}{2}$}\,t^2 v_n v^n
		 \mbox{\large $)$}
		A^\ast A
	&
	  +\, \overline{\Phi_{(0)}}
			      \mbox{\large $($}
				    t\,V^\sim_{(0)}
					+ t^2\, \sum_{\mu, \nu}\eta^{\mu\nu}V_{[\mu]}V_{[\nu]}
				  \mbox{\large $)$}
				  \Phi_{(0)}
 \end{array}
 $$
 } 

\bigskip
 
Finally, up to boundary terms,
 there is another expression
 for the $\theta^1\theta^2\bar{\theta}^{\dot{1}}\bar{\theta}^{\dot{2}}$-component
 of $\breve{\Phi}^\dag e^{t \breve{V}} \breve{\Phi}$
 that is mathematically more appealing:
 
{\small
\begin{eqnarray*}
 \lefteqn{
   \breve{\Phi}^\dag e^{t \breve{V}}\breve{\Phi}\;\;
      =\;\;   \mbox{(terms of total $(\theta, \bar{\theta})$-degree $\le 3$)
	                +  (space-time boundary terms)}
    }\\
   && \hspace{2em}		
    +\,\theta^1\theta^2 \bar{\theta}^{\dot{1}} \bar{\theta}^{\dot{2}}
	       \left\{\rule{0ex}{1.2em}\right.
		  - \sum_\mu
		     \partial_\mu
			   \mbox{\large $($}
	             \partial^\mu \overline{\Phi_{(0)}}\,\Phi_{(0)}
                 + \overline{\Phi_{(0)}}\,\partial^\mu \Phi_{(0)}				
			   \mbox{\large $)$}
           + \sqrt{-1}				
		        \sum_\mu
				 \partial_\mu
				 \mbox{\large $($}
                  \sum_{\alpha, \dot{\beta}}		
				     \vartheta_\alpha \bar{\vartheta}_{\dot{\beta}}
					   \bar{\sigma}^{\mu\dot{\beta}\alpha}\,
					     \overline{\Phi_{(\beta)}}\,\Phi_{(\alpha)}
		       \mbox{\large $)$}
	   \\
     && \hspace{4em}
	 +\, 4\, \sum_{\mu,\nu} \eta^{\mu\nu}
	       \mbox{\large $($}
		    \partial_\mu + \mbox{\large $\frac{\sqrt{-1}}{2}$}\,tV_{[\mu]}
		   \mbox{\large $)$}
		   \overline{\Phi_{(0)}}
		   \cdot
		    \mbox{\large $($}
		    \partial_\mu - \mbox{\large $\frac{\sqrt{-1}}{2}$} t V_{[\nu]}
		   \mbox{\large $)$}
		   \Phi_{(0)}	
       \\
      && \hspace{4em}		
	    -\, 2\sqrt{-1}\sum_{\alpha, \dot{\beta}, \mu}
		       \vartheta_\alpha \bar{\vartheta}_{\dot{\beta}}
				  \bar{\sigma}^{\mu\dot{\beta}\alpha}\,					 					
 				  \overline{\Phi_{(\beta)}}\cdot
                     \mbox{\large $($}				
				   	    \partial_\mu  - \mbox{\large $\frac{\sqrt{-1}}{2}$}t V_{[\mu]}  																
                     \mbox{\large $)$}
				     \Phi_{(\alpha)}						 					   					 									
	      \\
     && \hspace{4em}				
	    -\, \vartheta_1\vartheta_2\,t\,
		      \overline{\Phi_{(0)}}\,
			    \mbox{\large $($}
				  V^{\prime\prime}_{(1)}\Phi_{(2)}
				  + V^{\prime\prime}_{(2)} \Phi_{(1)}
				\mbox{\large $)$}	     	
	    + \bar{\vartheta}_{\dot{1}}\bar{\vartheta}_{\dot{2}}\,t\,
		       \mbox{\large $($}
			    \overline{\Phi_{(1)}}\,\overline{V^{\prime\prime}_{(2)}}
				+ \overline{\Phi_{(2)}}\,\overline{V^{\prime\prime}_{(1)}}
			   \mbox{\large $)$}\,
			   \Phi_{(0)}
        \\
     && \hspace{4em}					
         +\, \vartheta_1 \vartheta_2 \bar{\vartheta}_{\dot{1}}\bar{\vartheta}_{\dot{2}}\,
               \overline{\Phi_{(12)}}\,\Phi_{(12)}		 	
		  +\, t\, \overline{\Phi_{(0)}}\, V^\sim_{(0)}\Phi_{(0)}			
     \left.\rule{0ex}{1.2em}\right\}
	 \\[2ex]
  &&
      =\;\;   \mbox{(terms of total $(\theta, \bar{\theta})$-degree $\le 3$)
	               				     +  (space-time boundary terms)}
     \\
   && \hspace{2em}		
    +\,\theta^1\theta^2 \bar{\theta}^{\dot{1}} \bar{\theta}^{\dot{2}}
	       \left\{\rule{0ex}{1.2em}\right.
		  - \sum_\mu
		     \partial_\mu
			   \mbox{\large $($}
	             \partial^\mu \overline{\Phi_{(0)}}\,\Phi_{(0)}
                 + \overline{\Phi_{(0)}}\,\partial^\mu \Phi_{(0)}				
			   \mbox{\large $)$}
           + \sqrt{-1}				
		        \sum_\mu
				 \partial_\mu
				 \mbox{\large $($}
                  \sum_{\alpha, \dot{\beta}}		
				     \vartheta_\alpha \bar{\vartheta}_{\dot{\beta}}
					   \bar{\sigma}^{\mu\dot{\beta}\alpha}\,
					     \overline{\Phi_{(\beta)}}\,\Phi_{(\alpha)}
		       \mbox{\large $)$}
	   \\
     && \hspace{4em}
	 +\, 4\, \sum_{\mu,\nu} \eta^{\mu\nu}
	        \nabla^{t \breve{V}}_\mu\,\overline{\Phi_{(0)}}\, \nabla^{t \breve{V}}_\nu\Phi_{(0)}
	    - 2\sqrt{-1}\sum_{\alpha, \dot{\beta}, \mu}
		       \vartheta_\alpha \bar{\vartheta}_{\dot{\beta}}
				  \bar{\sigma}^{\mu\dot{\beta}\alpha}\,					 					
 				  \overline{\Phi_{(\beta)}}\,\nabla^{t \breve{V}}_\mu\Phi_{(\alpha)}						 					   					 									
	      \\
     && \hspace{4em}				
	    -\, \vartheta_1\vartheta_2\,t\,
		      \overline{\Phi_{(0)}}\,
			    \mbox{\large $($}
				  V^{\prime\prime}_{(1)}\Phi_{(2)}
				  + V^{\prime\prime}_{(2)} \Phi_{(1)}
				\mbox{\large $)$}	     	
	    + \bar{\vartheta}_{\dot{1}}\bar{\vartheta}_{\dot{2}}\,t\,
		       \mbox{\large $($}
			    \overline{\Phi_{(1)}}\,\overline{V^{\prime\prime}_{(2)}}
				+ \overline{\Phi_{(2)}}\,\overline{V^{\prime\prime}_{(1)}}
			   \mbox{\large $)$}\,
			   \Phi_{(0)}
        \\
     && \hspace{4em}					
         +\, \vartheta_1 \vartheta_2 \bar{\vartheta}_{\dot{1}}\bar{\vartheta}_{\dot{2}}\,
               \overline{\Phi_{(12)}}\,\Phi_{(12)}		 	
		  +\, t\, \overline{\Phi_{(0)}}\, V^\sim_{(0)}\Phi_{(0)}			
     \left.\rule{0ex}{1.2em}\right\}\,,
\end{eqnarray*}
}
where
 \begin{eqnarray*}
    \nabla^{t\breve{V}}_\mu \Phi_{(0)}\;
     :=\; (\partial_\mu - \mbox{\large $\frac{\sqrt{-1}}{2}$}t V_{[\mu]})\,
            	 \Phi_{(0)}\,,
	 &&
    \nabla^{t\breve{V}}_\mu \overline{\Phi_{(0)}}\;
     :=\; (\partial_\mu + \mbox{\large $\frac{\sqrt{-1}}{2}$}t V_{[\mu]})\,
            	 \overline{\Phi_{(0)}}\,,
		\\[1.2ex]
   \nabla^{t\breve{V}}_\mu \Phi_{(\alpha)}\;
     :=\; (\partial_\mu - \mbox{\large $\frac{\sqrt{-1}}{2}$}t V_{[\mu]})\,
            	 \Phi_{(\alpha)}\,,
	 &&
    \nabla^{t\breve{V}}_\mu \overline{\Phi_{(\beta)}}\;
     :=\; (\partial_\mu + \mbox{\large $\frac{\sqrt{-1}}{2}$}t V_{[\mu]})\,
            	 \overline{\Phi_{(\beta)}}
 \end{eqnarray*}
 are the covariant derivative of the component fields along $\partial/\partial x^\mu$
 associated to the connection $\widehat{\nabla}^{t\breve{V}}$ associated to $t\breve{V}$.
Note that this is consistent with Lemma~4.2.6;
cf.\;footnote~17.									

\bigskip

\begin{flushleft}
{\bf A supersymmetric action functional for $U(1)$ gauge theory with matter on $X$}
\end{flushleft}
Now restore the electric charge $e_m$ in the discussion.
Then the gauge-invariant kinetic term for the matter chiral superfield $\breve{\Phi}$ becomes
 $$
   \breve{\Phi}^\dag e^{e_m \breve{V}}\breve{\Phi}\,.
 $$
Thus, replacing $\breve{\Lambda}$  with $e_m\breve{\Lambda}$ and
                          $\breve{V}$ with $e_m\breve{V}$ in the above discussion and computations,
 we recover the charge $e_m$ case we well.
It follows now from Theorem~2.3
 that\footnote{\makebox[11.6em][l]{\it Note for mathematicians}
                                               The coefficients are chosen to make the kinetic term
											     of the complex scalar  field $\Phi_{(0)}$ in the standard/normalized form:
											    $\sum_\mu\partial_\mu\overline{\Phi_{(0)}}
												                       \partial^\mu\Phi_{(0)}$
												and the kinetic term of the gauge field $V_{[\mu]}$
												 in the standard form
												  $-\frac{1}{2g_{\mbox{\tiny\it gauge}}^2}
												      \mbox{\it Tr} \sum_{\mu,\nu}F_{\mu\nu}F^{\mu\nu}$
												  of the Yang-Mills theory with gauge coupling $g_{\mbox{\tiny\it gauge}}$.
									}
     \marginpar{\vspace{2em}\raggedright\tiny
         \raisebox{-1ex}{\hspace{-2.4em}\LARGE $\cdot$}Cf.\,[\,\parbox[t]{20em}{Wess
		\& Bagger:\\ Eq.\:(6.15), \\  Eq.\:(7,2), and \\  Eq.\:(7.6)].}}
 {\small
 \begin{eqnarray*}
  \lefteqn{
   S(\breve{V}, \breve{\Phi})
      }\\[1ex]
   && :=\;\;
    \mbox{\Large $\frac{\tau}{8}$}\,
            \int_{\widehat{X}}d^4x\, d\theta^2d\theta^1\,	 W_1W_2\;
     -\; \mbox{\Large $\frac{\bar{\tau}}{8}$}
            \int_{\widehat{X}}d^4x\, d\bar{\theta}^{\dot{2}}d\bar{\theta}^{\dot{1}}\,	
			      \bar{W}_{\dot{2}}\bar{W}_{\dot{1}}
		\\[1ex]
   && \hspace{2em}
     -\; \mbox{\Large $\frac{1}{4}$}
        \int_{\widehat{X}}
	     d^4x\,    d\bar{\theta}^{\dot{2}}d\bar{\theta}^{\dot{1}} d\theta^2d\theta^1\,	
		       \breve{\Phi}^\dag e^{e_m\breve{V}} \breve{\Phi}
			                \\[1ex]								
	&& \hspace{2em}
	  +\; \int_{\widehat{X}}d^4x\,d\theta^2 d\theta^1\,
			                       \mbox{\Large $($}
				                     \lambda \breve{\Phi}
				                      + \mbox{\large $\frac{1}{2}$}m \breve{\Phi}^2
                                      + \mbox{\large $\frac{1}{3}$}g \breve{\Phi}^3
								   \mbox{\Large $)$}\,
            +\, \int_{\widehat{X}}d^4x\,d\bar{\theta}^{\dot{2}} d\bar{\theta}^{\dot{1}}\,
			                       \mbox{\Large $($}
				                     \bar{\lambda} \breve{\Phi}^\dag
				                      + \mbox{\large $\frac{1}{2}$}\bar{m} (\breve{\Phi}^\dag)^2
                                      + \mbox{\large $\frac{1}{3}$}\bar{g} (\breve{\Phi}^\dag)^3
								   \mbox{\Large $)$}
 \end{eqnarray*}
 }
 gives a gauge-invariant action functional for the component fields
  $(\Phi_{(0)}; \Phi_{(\alpha)}; \Phi_{(12)})_{\alpha=1,2}$
    of $\breve{\Phi}$ (cf.\;small chiral matter) and
  $(V_{[\mu]}; V^{\prime\prime}_{(\alpha)}; V^\sim_{(0)} )_{\mu=0,1,2,3; \alpha=1,2}$
    of $\breve{V}$ (cf.\;gauge field and gaugino field)   on $X$
 that is invariant under supersymmetries, up to boundary terms on $X$.
Here,
  $$
      \tau\; :=\;  \frac{1}{g_\gaugescriptsize^2} \,-\,\sqrt{-1}\,\frac{\theta_\gaugescriptsize}{8\pi^2}\;
	  \in\; {\Bbb C}
  $$
  is the complexified gauge coupling constant, $e_m\in {\Bbb R}$ is the matter charge, and
  $\lambda$, $m$, $g\in {\Bbb C}$ are the complex coupling constant as in the Wess-Zumino model for a small chiral superfield $\Phi$,
  cf.\:Sec.\,3.
 
Explicitly, up to boundary terms on $X$,
{\small
\begin{eqnarray*}
 \lefteqn{
   S(\breve{V}, \breve{\Phi})\;\;=\;\;
     S(V_{[\mu],\, \mu=0,1,2,3},  V^{\prime\prime}_{(\alpha),\, \alpha=1,2},  V^\sim_{(0)}\,;\,
	        \Phi_{(0)}, \Phi_{(\alpha),\, \alpha=1,2}, \Phi_{(12)})
     }\\
  && =\;
   \int_X d^4x\,
     \left\{\rule{0ex}{1.4em}\right.
	  \mbox{\Large $\frac{\tau+ \bar{\tau}}{8}$}\, {V^\sim_{(0)}}^2
	 + \mbox{\Large $\frac{\sqrt{-1}}{4}$}
	      \sum_{\alpha, \dot{\beta}, \mu}
	      \vartheta_\alpha \bar{\vartheta}_{\dot{\beta}}\, \bar{\sigma}^{\mu\dot{\beta}\alpha}
		    \mbox{\Large $($}
			 -\tau V^{\prime\prime}_{(\alpha)}\partial_\mu\,\overline{V^{\prime\prime}_{(\beta)}}
			 +\bar{\tau}\,\overline{V^{\prime\prime}_{(\beta)}}\,\partial_\mu V^{\prime\prime}_{(\alpha)}
			\mbox{\Large $)$}
			\\
	 && \hspace{8em}
	  -\, \mbox{\Large $\frac{\tau+\bar{\tau}}{4}$}\sum_{\mu, \nu} F^{\mu\nu} F_{\mu\nu}
	  + \mbox{\Large $\frac{\sqrt{-1}\,(\tau-\bar{\tau})}{8}$}\sum_{\mu,\nu,\mu^\prime, \nu^\prime}
	       \varepsilon^{\mu\nu\mu^\prime\nu^\prime} F_{\mu\nu} F_{\mu^\prime\nu^\prime}
    \\
   && \hspace{4em}
    -\, \sum_{\mu, \nu}
	       \eta^{\mu\nu}
             \mbox{\large $($}\partial_\mu
			     + \mbox{\Large $\frac{\sqrt{-1}}{2}$} e_m V_{[\mu]} \mbox{\large $)$}
			   \overline{\Phi_{(0)}}\,
             \mbox{\large $($}\partial_\mu
			      - \mbox{\Large $\frac{\sqrt{-1}}{2}$} e_m V_{[\mu]} \mbox{\large $)$}
			  \Phi_{(0)}
			\\
	  && \hspace{6em}
	  +\, \mbox{\Large $\frac{\sqrt{-1}}{2}$}
	      \sum_{\alpha, \dot{\beta}, \mu}
	       \vartheta_\alpha \bar{\vartheta}_{\dot{\beta}}\, \bar{\sigma}^{\mu\dot{\beta}\alpha}\,
		    \overline{\Phi_{(\beta)}}\,
			  \mbox{\large $($}\partial_\mu
			             - \mbox{\Large $\frac{\sqrt{-1}}{2}$} e_m V_{[\mu]} \mbox{\large $)$}
			 \Phi_{(\alpha)}
			 \\
	&& \hspace{6em}
     +\, \mbox{\Large $\frac{1}{4}$}
	      \vartheta_1\vartheta_2\,e_m\,
		      \overline{\Phi_{(0)}}\,
			    \mbox{\large $($}
				  V^{\prime\prime}_{(1)}\Phi_{(2)}
				  + V^{\prime\prime}_{(2)} \Phi_{(1)}
				\mbox{\large $)$}	     	
     - 	\mbox{\Large $\frac{1}{4}$}
		\bar{\vartheta}_{\dot{1}}\bar{\vartheta}_{\dot{2}}\,e_m\,
		       \mbox{\large $($}
			    \overline{\Phi_{(1)}}\,\overline{V^{\prime\prime}_{(2)}}
				+ \overline{\Phi_{(2)}}\,\overline{V^{\prime\prime}_{(1)}}
			   \mbox{\large $)$}\,
			   \Phi_{(0)}
        \\
     && \hspace{6em}					
	    -\, \mbox{\Large $\frac{1}{4}$}
         \vartheta_1 \vartheta_2 \bar{\vartheta}_{\dot{1}}\bar{\vartheta}_{\dot{2}}\,
               \overline{\Phi_{(12)}}\,\Phi_{(12)}		 	
       - \mbox{\Large $\frac{1}{4}$}			
		   e_m\, \overline{\Phi_{(0)}}\, V^\sim_{(0)}\Phi_{(0)}			
		  \\
 && \hspace{6em}
     +\, \vartheta_1\vartheta_2
	        \left(\rule{0ex}{1.2em}\right.
               m\,
			      \mbox{\Large $($}
			        \Phi_{(0)} \Phi_{(12)} - \Phi_{(1)} \Phi_{(2)}
			      \mbox{\Large $)$}    	
				 \\[-1ex]
			 && \hspace{12em}
             +\, g\,
		         \mbox{\Large $($}
			      \Phi_{(0)}^2 \Phi_{(12)}- 2\, \Phi_{(0)}\Phi_{(1)}\Phi_{(2)}
			     \mbox{\Large $)$}
              + \lambda \Phi_{(12)}
			\left.\rule{0ex}{1.2em}\right)	
	   \\
	&& \hspace{6em}
	 -\,  \bar{\vartheta}_{\dot{1}}\bar{\vartheta}_{\dot{2}}
	         \left(\rule{0ex}{1.2em}\right.
		       \bar{m}\,
		         \mbox{\Large $($}
			      \overline{\Phi_{(0)}}\, \overline{\Phi_{(12)}}
			        - \overline{\Phi_{(1)}}\, \overline{\Phi_{(2)}}
			     \mbox{\Large $)$}
				  \\[-1ex]
			 && \hspace{12em}
             +\, \bar{g}\,
		          \mbox{\Large $($}
			        \overline{\Phi_{(0)}}^2\, \overline{\Phi_{(12)}}
			          - 2\, \overline{\Phi_{(0)}}\,\overline{\Phi_{(1)}}\,\overline{\Phi_{(2)}}
			  \mbox{\Large $)$}		
              +   \bar{\lambda}\, \overline{\Phi_{(12)}}			
			\left.\rule{0ex}{1.2em}\right)	    								   		   		
     \left.\rule{0ex}{1.4em}\right\}\,.
\end{eqnarray*}
}
 
After imposing the purge-evaluation map
$$
  \Pev\;:\;
    \vartheta_1\vartheta_2\;\rightsquigarrow\; 1\,,\;\;\;\;
	\vartheta_\alpha\bar{\vartheta}_{\dot{\beta}}\; \rightsquigarrow\; -1\,,\;\;\;\;
    \bar{\vartheta}_{\dot{1}}	\bar{\vartheta}_{\dot{2}}\; \rightsquigarrow\; -1\,,\;\;\;\;
	 \vartheta_1\vartheta_2 \bar{\vartheta}_{\dot{1}}	\bar{\vartheta}_{\dot{2}}\;
	      \rightsquigarrow\; -1\,.
 $$
 to remove the even nilpotent factors
  $\vartheta_1\vartheta_2,\,
	\vartheta_\alpha\bar{\vartheta}_{\dot{\beta}},\,
    \bar{\vartheta}_{\dot{1}}	\bar{\vartheta}_{\dot{2}},\,
	 \vartheta_1\vartheta_2 \bar{\vartheta}_{\dot{1}}	\bar{\vartheta}_{\dot{2}}$	
 in the expression,
the action functional becomes
{\small
\begin{eqnarray*}
 \lefteqn{
   S(\breve{V}, \breve{\Phi})\;\;=\;\;
     S(V_{[\mu],\, \mu=0,1,2,3},  V^{\prime\prime}_{(\alpha),\, \alpha=1,2},  V^\sim_{(0)}\,;\,
	        \Phi_{(0)}, \Phi_{(\alpha),\, \alpha=1,2}, \Phi_{(12)})
     }\\
  && =\;
   \int_X d^4x\,
     \left\{\rule{0ex}{1.4em}\right.
	  \mbox{\Large $\frac{\tau+ \bar{\tau}}{8}$}\, {V^\sim_{(0)}}^2
	 + \mbox{\Large $\frac{\sqrt{-1}}{4}$}
	      \sum_{\alpha, \dot{\beta}, \mu}
	       \bar{\sigma}^{\mu\dot{\beta}\alpha}
		    \mbox{\Large $($}
			 \tau V^{\prime\prime}_{(\alpha)}\partial_\mu\,\overline{V^{\prime\prime}_{(\beta)}}
			 - \bar{\tau}\,\overline{V^{\prime\prime}_{(\beta)}}\,\partial_\mu V^{\prime\prime}_{(\alpha)}
			\mbox{\Large $)$}
			\\
	 && \hspace{8em}
	  -\, \mbox{\Large $\frac{\tau+\bar{\tau}}{4}$}\sum_{\mu, \nu} F^{\mu\nu} F_{\mu\nu}
	  + \mbox{\Large $\frac{\sqrt{-1}\,(\tau-\bar{\tau})}{8}$}\sum_{\mu,\nu,\mu^\prime, \nu^\prime}
	       \varepsilon^{\mu\nu\mu^\prime\nu^\prime} F_{\mu\nu} F_{\mu^\prime\nu^\prime}
    \\
   && \hspace{4em}
    -\, \sum_{\mu, \nu}
	       \eta^{\mu\nu}
             \mbox{\large $($}\partial_\mu
			     + \mbox{\Large $\frac{\sqrt{-1}}{2}$} e_m V_{[\mu]} \mbox{\large $)$}
			   \overline{\Phi_{(0)}}\,
             \mbox{\large $($}\partial_\mu
			      - \mbox{\Large $\frac{\sqrt{-1}}{2}$} e_m V_{[\mu]} \mbox{\large $)$}
			  \Phi_{(0)}
			\\
	  && \hspace{6em}
	  -\, \mbox{\Large $\frac{\sqrt{-1}}{2}$}
	      \sum_{\alpha, \dot{\beta}, \mu}
	       \bar{\sigma}^{\mu\dot{\beta}\alpha}\,
		    \overline{\Phi_{(\beta)}}\,
			  \mbox{\large $($}\partial_\mu
			             - \mbox{\Large $\frac{\sqrt{-1}}{2}$} e_m V_{[\mu]} \mbox{\large $)$}
			 \Phi_{(\alpha)}
			 \\
	&& \hspace{6em}
     +\, \mbox{\Large $\frac{1}{4}$}\,e_m\,
		      \overline{\Phi_{(0)}}\,
			    \mbox{\large $($}
				  V^{\prime\prime}_{(1)}\Phi_{(2)}
				  + V^{\prime\prime}_{(2)} \Phi_{(1)}
				\mbox{\large $)$}	     	
     + 	\mbox{\Large $\frac{1}{4}$}\,e_m\,
		       \mbox{\large $($}
			    \overline{\Phi_{(1)}}\,\overline{V^{\prime\prime}_{(2)}}
				+ \overline{\Phi_{(2)}}\,\overline{V^{\prime\prime}_{(1)}}
			   \mbox{\large $)$}\,
			   \Phi_{(0)}
        \\
     && \hspace{6em}					
	    +\, \mbox{\Large $\frac{1}{4}$}\,
               \overline{\Phi_{(12)}}\,\Phi_{(12)}		 	
       - \mbox{\Large $\frac{1}{4}$}			
		   e_m\, \overline{\Phi_{(0)}}\, V^\sim_{(0)}\Phi_{(0)}			
		  \\
 && \hspace{6em}
     +\,  m\,
			      \mbox{\Large $($}
			        \Phi_{(0)} \Phi_{(12)} - \Phi_{(1)} \Phi_{(2)}
			      \mbox{\Large $)$}    	
				 \\[-1ex]
			 && \hspace{12em}
             +\, g\,
		         \mbox{\Large $($}
			      \Phi_{(0)}^2 \Phi_{(12)}- 2\, \Phi_{(0)}\Phi_{(1)}\Phi_{(2)}
			     \mbox{\Large $)$}
              + \lambda \Phi_{(12)}
	   \\
	&& \hspace{6em}
	 +\,  \bar{m}\,
		         \mbox{\Large $($}
			      \overline{\Phi_{(0)}}\, \overline{\Phi_{(12)}}
			        - \overline{\Phi_{(1)}}\, \overline{\Phi_{(2)}}
			     \mbox{\Large $)$}
				  \\[-1ex]
			 && \hspace{12em}
             +\, \bar{g}\,
		          \mbox{\Large $($}
			        \overline{\Phi_{(0)}}^2\, \overline{\Phi_{(12)}}
			          - 2\, \overline{\Phi_{(0)}}\,\overline{\Phi_{(1)}}\,\overline{\Phi_{(2)}}
			  \mbox{\Large $)$}		
              +   \bar{\lambda}\, \overline{\Phi_{(12)}}			
     \left.\rule{0ex}{1.4em}\right\}\,.
\end{eqnarray*}
}

\bigskip

\begin{remark}$[$supersymmetric non-Abelian gauge theory\,$]$\; {\rm
 (Cf.\:[Wess \& Bagger: Ch.\:VII, pp.\,45, 46, 47].)
 The above construction can be generalized to the non-Abelian case.
 In particular, [Wess \& Bagger: Eqs.\:(7.22), (7.24)] can be computed explicitly under the setting of Sec.\:1.
 However, the proof of the existence of Wess-Zumino gauge is more technical. 
 Thus, the discussion of supersymmetric non-Abelian gauge theories in the current setting deserves a separate work in its own right.
}\end{remark}

\bigskip

\section{$d=3+1$, $N=1$ nonlinear sigma models\;\\
 {\large (cf.\:[Wess \& Bagger: Chapter XXII])}}	

We reconstruct in this section
  \begin{itemize}
   \item[\LARGE $\cdot$] [Wess \& Bagger: Chapter XXII.\:{\it Chiral models and K\"{a}hler geometry}\,]
  \end{itemize}
 in the complexified ${\Bbb Z}/2$-graded $C^\infty$-Algebraic Geometry setting of Sec.\:1.

\bigskip
 
\subsection{Smooth maps from $\widehat{X}^{\widehat{\boxplus}, \smallscriptsize}$ to a smooth manifold $Y$}

Basics of smooth maps
  from the small towered superspace $\widehat{X}^{\widehat{\boxplus}, \smallscriptsize}$ to a smooth manifold $Y$
 from the aspect of complexified $C^\infty$-Algebraic Geometry are given in this subsection.

\bigskip
 
\begin{flushleft}
{\bf Smooth maps from the small towered superspace $\widehat{X}^{\widehat{\boxplus}, \smallscriptsize}$
          to a smooth manifold $Y$}
\end{flushleft}
Let $Y$ be a smooth manifold,   with
  its function ring $C^\infty(Y)$ of smooth functions,
  its structure sheaf ${\cal O}_Y$ of smooth functions,   and
  their complexification $C^\infty(Y)^{\Bbb C}$ and ${\cal O}_Y^{\,\Bbb C}$ respectively.

\bigskip
 
\begin{definition} {\bf [smooth map from small towered superspace]}\; {\rm
 A {\it smooth map $\breve{f}$} from the small towered superspace
   $\widehat{X}^{\widehat{\boxplus}, \smallscriptsize}$ to $Y$, in notation
  $$
     \breve{f}\; :\; \widehat{X}^{\widehat{\boxplus}, \smallscriptsize}\; \longrightarrow\; Y\,,
  $$
  is a pair $(f, \breve{f}^\sharp)$,
  where $f:X\rightarrow Y$ is a smooth map of smooth manifolds and
  $$
      \breve{f}^\sharp\; :\;  C^\infty(Y)\; \longrightarrow\;  C^\infty(\widehat{X}^{\widehat{\boxplus}})^\smallscriptsize
  $$
  is a ring-homomorphism
  with its $(\theta, \bar{\theta})$-degree-zero component $f_{(0)}^\sharp$ identical to
  the $C^\infty$-ring-homomorphism $f^\sharp: C^\infty(Y)\rightarrow C^\infty(X)$ associated to $f$.
 As locally-ringed spaces,
  we may regard $\widehat{X}^{\widehat{\boxplus}, \smallscriptsize}$ and $Y$
    as equivalence classes of gluing systems of rings and
  $\breve{f}^\sharp$ as from an equivalence class of gluing systems of ring-homomorphisms and write
  $\breve{f}^\sharp:{\cal O}_Y\rightarrow  \widehat{\cal O}_X^{\,\widehat{\boxplus}, \smallscriptsize}$;
 (cf.\ the similar setting in [L-Y1: Sec.\,1] (D(1))).
}\end{definition}

\bigskip

The following lemma can be checked directly:

\bigskip

\begin{lemma}
{\bf [$\breve{f}^\sharp$ as $C^\infty$-ring-homomorphism and
            natural extension to $C^\infty(Y)^{\Bbb C}$]}\;
 The ring-homomorphism
    $\breve{f}^\sharp: C^\infty(Y)\rightarrow C^\infty(\widehat{X}^{\widehat{\boxplus}})^\smallscriptsize$
   is a $C^\infty$-ring-homomorphism from $C^\infty(Y)$ to the $C^\infty$-hull of
    $C^\infty(\widehat{X}^{\widehat{\boxplus}})^\smallscriptsize$.
 $\breve{f}^\sharp$ extends naturally to a ring-homomorphism
  $C^\infty(Y)^{\Bbb C}\rightarrow C^\infty(\widehat{X}^{\widehat{\boxplus}})^\smallscriptsize$
  by the correspondence
  $$
    h_1+\sqrt{-1}\,h_2\;
	  \longrightarrow\; \breve{f}^\sharp(h_1)+\sqrt{-1}\,\breve{f}^\sharp(h_2)\,,
  $$
   for $h_1, h_2\in C^\infty(Y)$.
 We will denote this extension of $\breve{f}^\sharp$ still  by $\breve{f}^\sharp$.
\end{lemma}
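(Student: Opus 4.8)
The plan is to treat the two statements in turn, the first carrying all the content and the second being a formality. For Statement (1) I would begin by checking that $\breve{f}^\sharp$ indeed lands in the $C^\infty$-hull: for a (real-valued) $h\in C^\infty(Y)$, the $(\theta,\bar{\theta})$-degree-zero component of $\breve{f}^\sharp(h)$ is by hypothesis $f^\sharp(h)=h\circ f\in C^\infty(X)$, which is real-valued, so $\breve{f}^\sharp(h)$ satisfies the defining condition of the $C^\infty$-hull recorded in Lemma/Definition~1.2.14. What remains is to upgrade the bare ring-homomorphism $\breve{f}^\sharp$ to a $C^\infty$-ring-homomorphism into that hull $R$, i.e. to establish, for every $\Phi\in C^\infty({\Bbb R}^n)$ (viewed as a $C^\infty$-operation in the sense of [Jo]) and all $h_1,\dots,h_n\in C^\infty(Y)$, the identity
\[
  \breve{f}^\sharp\bigl(\Phi(h_1,\dots,h_n)\bigr)\;=\;\Phi_R\bigl(\breve{f}^\sharp h_1,\dots,\breve{f}^\sharp h_n\bigr),
\]
where $\Phi_R$ denotes the $C^\infty$-operation on $R$.

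The engine for this is Hadamard's lemma together with finite nilpotency. I would write $\breve{f}^\sharp(h_i)=f^\sharp(h_i)+\nu_i$, with $\nu_i$ lying in the ideal $\mathfrak{n}$ generated by the eight odd coordinate functions $\theta^1,\theta^2,\bar{\theta}^{\dot{1}},\bar{\theta}^{\dot{2}},\vartheta_1,\vartheta_2,\bar{\vartheta}_{\dot{1}},\bar{\vartheta}_{\dot{2}}$; since these square to zero one has $\mathfrak{n}^{9}=0$, so every series in the $\nu_i$ truncates. The $C^\infty$-structure on $R$, being that of a nilpotent extension of $C^\infty(X)$, is given by the finite Taylor expansion around the real $(0)$-components,
\[
  \Phi_R(\breve{f}^\sharp\mathbf{h})\;=\;\sum_{|I|\le 8}\frac{1}{I!}\,
      (\partial^I\Phi)_{C^\infty(X)}(f^\sharp h_1,\dots,f^\sharp h_n)\,\nu^I,
\]
and because $f^\sharp$ is itself a $C^\infty$-ring-homomorphism of manifolds the coefficients equal $f^\sharp(\partial^I\Phi(\mathbf{h}))$. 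To match this against $\breve{f}^\sharp(\Phi(\mathbf{h}))$ I would run an induction on the nilpotency order: Hadamard's lemma $\Phi(\mathbf{x})=\Phi(\mathbf{a})+\sum_i(x_i-a_i)\,g_i(\mathbf{x},\mathbf{a})$, applied inside $R$ with $x_i=\breve{f}^\sharp h_i$, $a_i=f^\sharp h_i$, and applied in parallel to $\Phi(\mathbf{h})$ inside $C^\infty(Y)$ before pushing through the ring-homomorphism $\breve{f}^\sharp$, peels off one factor of $\mathfrak{n}$ at each stage, reducing the discrepancy at order $N$ to the already-known identity at order $N-1$; the base case is the equality of $(0)$-components observed above.

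The main obstacle is exactly this upgrade — that a homomorphism assumed only to respect $+$ and $\times$ must automatically respect all smooth operations, which is \emph{false} for a general $\mathbb{R}$-algebra map into a $C^\infty$-ring. The whole weight rests on the finite nilpotency of $\mathfrak{n}$ and on the reduction $R\to C^\infty(X)$ being $C^\infty$: these force the Hadamard remainder (which carries a monomial factor $\nu^J$ with $|J|$ maximal) to vanish, thereby removing the apparent circularity of substituting nilpotents into the smooth remainder term and letting the induction close. Statement (2) is then routine: I would set $\breve{f}^\sharp(h_1+\sqrt{-1}\,h_2):=\breve{f}^\sharp(h_1)+\sqrt{-1}\,\breve{f}^\sharp(h_2)$ and check well-definedness and multiplicativity, the latter reducing to the real case via $(h_1+\sqrt{-1}\,h_2)(h_1'+\sqrt{-1}\,h_2')=(h_1h_1'-h_2h_2')+\sqrt{-1}\,(h_1h_2'+h_2h_1')$ and ${\Bbb R}$-bilinearity; since source and target are ${\Bbb C}$-algebras this produces the asserted ${\Bbb C}$-algebra, in particular ring, homomorphism and completes the lemma.
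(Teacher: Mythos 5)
The paper gives no argument for this lemma beyond ``can be checked directly,'' so there is no official proof to measure against; judged on its own terms, your write-up gets the framing right. The image does land in the $C^\infty$-hull because the $(\theta,\bar{\theta})$-degree-zero component is $f^\sharp(h)=h\circ f\in C^\infty(X)$; the $C^\infty$-operation $\Phi_R$ on the hull is indeed the finite Taylor expansion about the reduced part, with coefficients $f^\sharp(\partial^I\Phi(\mathbf{h}))$; the complexification statement is routine as you say; and you correctly isolate the crux, namely that a bare ring homomorphism into a $C^\infty$-ring need not respect smooth operations and that finite nilpotency of $\mathfrak{n}$ is what must rescue the situation.

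The gap is in the central inductive step. Hadamard's lemma $\Phi(\mathbf{x})=\Phi(\mathbf{a})+\sum_i(x_i-a_i)g_i(\mathbf{x},\mathbf{a})$ needs an expansion point, and the only available candidate is $a_i=f^\sharp h_i=h_i\circ f$, which is an element of $C^\infty(X)$ --- not a point of ${\Bbb R}^n$ and not an element of $C^\infty(Y)$. Hence there is no identity \emph{in the source ring} $C^\infty(Y)$ expressing $\Phi(\mathbf{h})$ through sums and products of elements whose images under $\breve{f}^\sharp$ you control, so ``applying Hadamard in parallel inside $C^\infty(Y)$ before pushing through $\breve{f}^\sharp$'' does not typecheck: your expansion only re-derives the right-hand side $\Phi_R(\breve{f}^\sharp\mathbf{h})$, which you already have, and never gets a handle on $\breve{f}^\sharp(\Phi(\mathbf{h}))$; trying to substitute the nilpotents into a two-variable remainder $g_i$ presupposes the very $C^\infty$-homomorphism property being proved, so the circularity is not in fact removed. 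The missing idea is a reduction to a \emph{fixed} expansion point: first show $\breve{f}^\sharp(h)$ depends only on the germ of $h$ along $f$ (if $h$ vanishes near $f(x)$, choose a bump $\chi$ with $h\chi=0$ and $\chi\equiv 1$ near $f(x)$; since $\breve{f}^\sharp(\chi)$ is $1$ plus nilpotent, it becomes invertible after evaluating the $C^\infty(X)^{\Bbb C}$-coefficients at $x$, forcing $\breve{f}^\sharp(h)$ to vanish there), then compose with coefficient-evaluation at each $x\in X$ to land in a finite-dimensional Weil algebra, where Taylor's formula with Hadamard remainder about the honest point $f(x)\in Y$ \emph{is} an identity in $C^\infty(Y)$ that can be pushed through the ring homomorphism; the order-$(k+1)$ remainder then dies by nilpotency, the two sides agree at every $x$, and hence agree. (Equivalently, one can first extract the generalized Leibniz identities for the components of $\breve{f}^\sharp$, as the paper does in proving Proposition~5.1.3, and invoke the pointwise characterization of such multi-derivations as differential operators along $f$ --- but that characterization is itself proved by the same evaluation-plus-Hadamard device.) With this insertion your induction closes; without it, it does not.
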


\bigskip

\noindent
Due to the above lemma and with slight abuse of terminology,
  we will call\\
  $\breve{f}^\sharp:~C^\infty(Y)\rightarrow~C^\infty(\widehat{X}^{\widehat{\boxplus}})^\smallscriptsize$ 
  directly a $C^\infty$-ring-homomorphism.
Also, depending on context, we will write
 $\breve{f}^\sharp:~C^\infty(Y)^{\Bbb C}
    \rightarrow~C^\infty(\widehat{X}^{\widehat{\boxplus}})^\smallscriptsize$
   as a ring-homomorphism
 or $\breve{f}^\sharp: {\cal O}_Y^{\,\Bbb C}
        \rightarrow  \widehat{\cal O}_X^{\,\widehat{\boxplus}, \smallscriptsize}$
   as an equivalence of gluing systems of ring-homomorphisms.

The components of $\breve{f}^\sharp$ in the $(\theta, \bar{\theta}, \vartheta, \bar{\vartheta})$-expansion
   are $C^\infty(X)^{\Bbb C}$-valued operation on $C^\infty(X)$.
Their basic properties are summarized in the following proposition.
   
\bigskip

\begin{proposition} {\bf [components of $\breve{f}^\sharp$]}\;
 Given a smooth map $\breve{f}: \widehat{X}^{\widehat{\boxplus}, \smallscriptsize} \rightarrow Y$,
  let
 {\small
  \begin{eqnarray*}
   \lefteqn{
    \breve{f}^\sharp\; =\;
	  f^\sharp_{(0)}
      + \sum_\alpha \theta^\alpha\vartheta_\alpha f^\sharp_{(\alpha)}
      + \sum_{\dot{\beta}} \bar{\theta}^{\dot{\beta}} \bar{\vartheta}_{\dot{\beta}}	
	        f^\sharp_{(\dot{\beta})}  }\\
   && \hspace{2em}						
	  +\, \theta^1\theta^2 \vartheta_1\vartheta_2 f^\sharp_{(12)}
	  + \sum_{\alpha, \dot{\beta}} \theta^\alpha \bar{\theta}^{\dot{\beta}}
	       \mbox{\Large $($}
		    \sum_\mu
			   \sigma^\mu_{\alpha\dot{\beta}} f^\sharp_{[\mu]}
			 + \vartheta_\alpha \bar{\vartheta}_{\dot{\beta}}   f^\sharp_{(\alpha\dot{\beta})}
		   \mbox{\Large $)$}	
      +\, \bar{\theta}^{\dot{1}} \bar{\theta}^{\dot{2}}
	          \bar{\vartheta}_{\dot{1}} \bar{\vartheta}_{\dot{2}} f^\sharp_{(\dot{1}\dot{2})}
			   \\
   && \hspace{2em}			
	  +\, \sum_{\dot{\beta}} \theta^1\theta^2\bar{\theta}^{\dot{\beta}}
	       \mbox{\Large $($}
		     \sum_{\alpha, \mu}
			      \vartheta_\alpha \sigma^{\mu\alpha}_{\;\;\;\;\dot{\beta}} f^{\sharp\prime}_{[\mu]}
	         + \vartheta_1\vartheta_2\bar{\vartheta}_{\dot{\beta}} f^\sharp_{(12\dot{\beta})}
		   \mbox{\Large $)$}
	  +\, \sum_\alpha \theta^\alpha \bar{\theta}^{\dot{1}} \bar{\theta}^{\dot{2}}
	      \mbox{\Large $($}
		    \sum_{\dot{\beta},\mu} \bar{\vartheta}_{\dot{\beta}}
			      \sigma^{\mu\dot{\beta}}_\alpha f^{\sharp\prime\prime}_{[\mu]}
			+ \vartheta_\alpha \bar{\vartheta}_{\dot{1}} \bar{\vartheta}_{\dot{2}}
			        f^\sharp_{(\alpha\dot{1}\dot{2})}
		  \mbox{\Large $)$}		   		    \\
   && \hspace{2em}		
	  +\, \theta^1\theta^2 \bar{\theta}^{\dot{1}}\bar{\theta}^{\dot{2}}
	       \mbox{\Large $($}
		    f^{\sharp\sim}_{(0)}
			+ \sum_{\alpha, \dot{\beta}, \mu}
			     \vartheta_\alpha \bar{\vartheta}_{\dot{\beta}}
			       \bar{\sigma}^{\mu\dot{\beta}\alpha}  f^{\sharp\sim}_{[\mu]}		
	        + \vartheta_1\vartheta_2 \bar{\vartheta}_{\dot{1}} \bar{\vartheta}_{\dot{2}}
           			f^\sharp_{(12\dot{1}\dot{2})}	
		   \mbox{\Large $)$}
  \end{eqnarray*}}be 
   the presentation of the associated $\breve{f}^\sharp$ in terms of components
     in the $(\theta, \bar{\theta}, \vartheta, \bar{\vartheta})$-expansion  and
   ${\cal D}_Y^{[1, i]}$, $i\in {\Bbb Z}_{\ge 1}$
    be the sheaf of differential operators on $Y$ of order $i$ with smooth coefficients and without the zero-th order term.
 (Note that ${\cal D}_Y^{[1,1]}$ is the tangent sheaf ${\cal T}_Y$ of $Y$.)
 Then
  $f_{(0)}^\sharp = f^\sharp: {\cal O}_Y\rightarrow {\cal O}_X$
  is the equivalence class of gluing systems of $C^\infty$-ring-homomorphisms associated to
    $f:X\rightarrow Y$ underlying $\breve{f}$;
   and  $\breve{f}^\sharp-f_{(0)}^\sharp$  is a smooth section of
  {\small
  \begin{eqnarray*}
   \lefteqn{
     \sum_\alpha
	      {\cal O}_X^{\,\Bbb C}\cdot \theta^\alpha\vartheta_\alpha
	       \otimes f_{(0)}^\ast {\cal T}_Y
      + \sum_{\dot{\beta}}
	      {\cal O}_X^{\,\Bbb C}\cdot  \bar{\theta}^{\dot{\beta}} \bar{\vartheta}_{\dot{\beta}}
			\otimes f_{(0)}^\ast {\cal T}_Y
	  + {\cal O}_X^{\,\Bbb C}\cdot
	       \theta^1\theta^2 \vartheta_1\vartheta_2
	        \otimes f_{(0)}^\ast {\cal D}_Y^{[1,2]}    }\\
   &&			
   	  +\, \sum_{\alpha, \dot{\beta}}
	       {\cal O}_X^{\,\Bbb C}\cdot  \theta^\alpha \bar{\theta}^{\dot{\beta}}		
		    \mbox{\Large $($}
			  \sum_\mu \sigma^\mu_{\alpha\dot{\beta}}\otimes f_{(0)}^\ast{\cal T}_Y
			+ \vartheta_\alpha\vartheta_{\dot{\beta}}
                  \otimes f_{(0)}^\ast {\cal D}_Y^{[1,2]}		  	
		    \mbox{\Large $)$}				
      + {\cal O}_X^{\,\Bbb C}\cdot
	       \bar{\theta}^{\dot{1}} \bar{\theta}^{\dot{2}}
		       \bar{\vartheta}_{\dot{1}} \bar{\vartheta}_{\dot{2}}
	          \otimes f_{(0)}^\ast {\cal D}_Y^{[1,2]}    \\
   &&
	  +\,  \sum_{\dot{\beta}}
	        {\cal O}_X^{\,\Bbb C}\cdot \theta^1\theta^2\bar{\theta}^{\dot{\beta}}			
		  \mbox{\Large $($}	
		   \sum_{\alpha, \mu}
            \vartheta_\alpha \sigma^{\mu\alpha}_{\;\;\;\;\dot{\beta}}		
			        \otimes f_{(0)}^\ast {\cal D}_Y^{[1,2]}						
	       + \vartheta_1\vartheta_2 \bar{\vartheta}_{\dot{\beta}}
		     \otimes f_{(0)}^\ast {\cal D}_Y^{[1,3]}
		  \mbox{\Large $)$}	 			 \\
   && \hspace{3em}		
	  +\,  \sum_\alpha
	       {\cal O}_X^{\,\Bbb C}\cdot
	      \theta^\alpha \bar{\theta}^{\dot{1}} \bar{\theta}^{\dot{2}} 		
		\mbox{\Large $($}
		  \sum_{\dot{\beta}, \mu}\sigma^{\mu\dot{\beta}}_\alpha
		      \otimes f_{(0)}^\ast {\cal D}_Y^{[1,2]}
		 +\vartheta_\alpha \bar{\vartheta}_{\dot{1}} \bar{\vartheta}_{\dot{2}}
	         \otimes f_{(0)}^\ast {\cal D}_Y^{[1,3]}
        \mbox{\Large $)$}			 			 			  \\
  &&
	  +\, {\cal O}_X^{\,\Bbb C}\cdot
	         \theta^1\theta^2 \bar{\theta}^{\dot{1}}\bar{\theta}^{\dot{2}}			 			
		\mbox{\Large $($}	 			
		f_{(0)}^\ast {\cal D}_Y^{[1,2]}		
		 + \sum_{\alpha, \dot{\beta}, \mu}
		       \vartheta_\alpha \bar{\vartheta}_{\dot{\beta}}
			      \sigma^\mu_{\alpha\dot{\beta}}
				   \otimes f_{(0)}^\ast {\cal D}_Y^{[1,3]}			
	     + \vartheta_1\vartheta_2 \bar{\vartheta}_{\dot{1}}  \bar{\vartheta}_{\dot{2}}
			     \otimes f_{(0)}^\ast {\cal D}_Y^{[1,4]}
	    \mbox{\Large $)$}		 			 			
  \end{eqnarray*}}in 
  $\widehat{\cal O}_X^{\,\widehat{\boxplus}, \smallscriptsize}\otimes_{{\cal O}_X}\!\!f_{(0)}^\ast{\cal D}_Y
     = \widehat{\cal O}_X^{\,\widehat{\boxplus}, \smallscriptsize}
	                    \otimes_{f_{(0)}^\sharp, {\cal O}_Y} {\cal D}_Y$,
  where ${\cal D}_Y$ is the sheaf of differential operators on $Y$ of smooth coefficients.
  
 In terms of $\breve{f}^\sharp: {\cal O}_Y\rightarrow  \widehat{\cal O}_X^{\,\widehat{\boxplus}, \smallscriptsize}$,
  a local chart $V$ of $Y$ with coordinates functions $(y^1,\,\cdots\,,\, y^n)$,
  and an open set $U\subset X$ such that $f_{(0)}(U)\subset V$,
 let
 {\small
  \begin{eqnarray*}
    \breve{f}^i  & :=\; &    \breve{f}^\sharp(y^i)   \\
	& = &
      f^\sharp_{(0)}(y^i)
      + \sum_\alpha \theta^\alpha\vartheta_\alpha f^\sharp_{(\alpha)}(y^i)
      + \sum_{\dot{\beta}} \bar{\theta}^{\dot{\beta}} \bar{\vartheta}_{\dot{\beta}}	
	        f^\sharp_{(\dot{\beta})}(y^i)  \\
   && \hspace{1em}						
	  +\, \theta^1\theta^2 \vartheta_1\vartheta_2 f^\sharp_{(12)}(y^i)
	  + \sum_{\alpha, \dot{\beta}} \theta^\alpha \bar{\theta}^{\dot{\beta}}
	       \mbox{\Large $($}
		    \sum_\mu
			   \sigma^\mu_{\alpha\dot{\beta}} f^\sharp_{[\mu]}(y^i)
			 + \vartheta_\alpha \bar{\vartheta}_{\dot{\beta}}   f^\sharp_{(\alpha\dot{\beta})}(y^i)
		   \mbox{\Large $)$}	
      +\, \bar{\theta}^{\dot{1}} \bar{\theta}^{\dot{2}}
	          \bar{\vartheta}_{\dot{1}} \bar{\vartheta}_{\dot{2}}
			      f^\sharp_{(\dot{1}\dot{2})}(y^i)   \\
   && \hspace{1em}			
	  +\, \sum_{\dot{\beta}} \theta^1\theta^2\bar{\theta}^{\dot{\beta}}
	       \mbox{\Large $($}
		     \sum_{\alpha, \mu}
			      \vartheta_\alpha \sigma^{\mu\alpha}_{\;\;\;\;\dot{\beta}} f^{\sharp\prime}_{[\mu]}(y^i)
	         + \vartheta_1\vartheta_2\bar{\vartheta}_{\dot{\beta}} f^\sharp_{(12\dot{\beta})}(y^i)
		   \mbox{\Large $)$}    \\
   && \hspace{3em}		
	  +\, \sum_\alpha \theta^\alpha \bar{\theta}^{\dot{1}} \bar{\theta}^{\dot{2}}
	      \mbox{\Large $($}
		    \sum_{\dot{\beta},\mu} \bar{\vartheta}_{\dot{\beta}}
			      \sigma^{\mu\dot{\beta}}_\alpha f^{\sharp\prime\prime}_{[\mu]}(y^i)
			+ \vartheta_\alpha \bar{\vartheta}_{\dot{1}} \bar{\vartheta}_{\dot{2}}
			        f^\sharp_{(\alpha\dot{1}\dot{2})}(y^i)
		  \mbox{\Large $)$}		   		    \\
   && \hspace{1em}		
	  +\, \theta^1\theta^2 \bar{\theta}^{\dot{1}}\bar{\theta}^{\dot{2}}
	       \mbox{\Large $($}
		    f^{\sharp\sim}_{(0)}(y^i)
			+ \sum_{\alpha, \dot{\beta}, \mu}
			     \vartheta_\alpha \bar{\vartheta}_{\dot{\beta}}
			       \bar{\sigma}^{\mu\dot{\beta}\alpha}  f^{\sharp\sim}_{[\mu]}(y^i)		
	        + \vartheta_1\vartheta_2 \bar{\vartheta}_{\dot{1}} \bar{\vartheta}_{\dot{2}}
           			f^\sharp_{(12\dot{1}\dot{2})}(y^i)	
		   \mbox{\Large $)$} \\			  		  		
   &\;=: & f_{(0)}^\sharp(y^i) + \breve{n}^i_{\breve{f}} \;\;\;\;
         \in\; C^\infty(\widehat{U}^{\widehat{\boxplus}})^\smallscriptsize\,, 		
  \end{eqnarray*}}where 
  $\breve{n}^i_{\breve{f}}$ is the nilpotent part of $\breve{f}^\sharp(y^i)$,
    and
  denote $\frac{\partial}{\partial y^i}$ by $\partial_i$, for $i=1,\,\cdots\,,\, n$.
  Then,
  for $h\in C^\infty(V)$,
 {\small
  \begin{eqnarray*}
   \lefteqn{
    \breve{f}^\sharp(h)\; =\;
	  f^\sharp_{(0)}(h)
     + \sum_\alpha \theta^\alpha\vartheta_\alpha
	         \sum_{i=1}^n f^\sharp_{(\alpha)}(y^i)\otimes \partial_i h
     + \sum_{\dot{\beta}} \bar{\theta}^{\dot{\beta}} \bar{\vartheta}_{\dot{\beta}}	
	         \sum_{j=1}^n f^\sharp_{(\dot{\beta})}(y^j) \otimes \partial_j h      }\\
    &&    			
	 +\, \theta^1\theta^2 \cdot  \vartheta_1\vartheta_2\,
	        \left\{\rule{0ex}{1.2em}\right.			
			  \sum_i   f^\sharp_{(12)}(y^i)\otimes \partial_i
			    + \mbox{\Large $\frac{1}{2}$}
				     \sum_{i,j} J_{\breve{f}, (12)}^{\,ij}\otimes \partial_i\partial_j
		    \left.\rule{0ex}{1.2em}\right\} h   \\
	&& \hspace{1em}
	 + \sum_{\alpha, \dot{\beta}} \theta^\alpha \bar{\theta}^{\dot{\beta}}	
		    \left\{\rule{0ex}{1.2em}\right.
			  \sigma^\mu_{\alpha\dot{\beta}}\,
			    \sum_i   f^\sharp_{[\mu]}(y^i)\otimes \partial_i							
			  +  \vartheta_\alpha \bar{\vartheta}_{\dot{\beta}}\,				
			      \mbox{\Large $($}
				   \sum_i f_{(\alpha\dot{\beta})}^\sharp(y^i)\otimes \partial_i
			        + \mbox{\Large $\frac{1}{2}$}\,				
				         \sum_{i,j} J_{\breve{f}, (\alpha\dot{\beta})}^{\,ij}\otimes \partial_i\partial_j
			     \mbox{\Large $)$}		 					
		    \left.\rule{0ex}{1.2em}\right\} h             \\
    && \hspace{1em}
     +\, \bar{\theta}^{\dot{1}} \bar{\theta}^{\dot{2}} \cdot
	        \bar{\vartheta}_{\dot{1}} \bar{\vartheta}_{\dot{2}}
			\left\{\rule{0ex}{1.2em}\right.
			  \sum_i   f^\sharp_{(\dot{1}\dot{2})}(y^i)\otimes \partial_i
			    + \mbox{\Large $\frac{1}{2}$}
				     \sum_{i,j} J_{\breve{f}, (\dot{1}\dot{2})}^{\,ij}\otimes \partial_i\partial_j
		    \left.\rule{0ex}{1.2em}\right\} h  \\
   &&		
	+\, \sum_{\dot{\beta}} \theta^1\theta^2\bar{\theta}^{\dot{\beta}}	   		
            \left\{\rule{0ex}{1.2em}\right.
			  \sum_{\alpha, \mu} \vartheta_\alpha \sigma^{\mu\alpha}_{\;\;\;\;\dot{\beta}}
			   \mbox{\Large $($}
			     \sum_i f^{\sharp\,\prime}_{[\mu]}(y^i) \otimes \partial_i\,
			      + \mbox{\Large $\frac{1}{2}$}
				        \sum_{i,j} J_{\breve{f}, ([\mu]}^{\,ij\,\prime}\otimes \partial_i\partial_j
               \mbox{\Large $)$}			   \\
      && \hspace{4em}			
		 +\, \vartheta_1\vartheta_2 \bar{\vartheta}_{\dot{\beta}}
			 \mbox{\Large $($}
			  \sum_i   f^\sharp_{(12\dot{\beta})}(y^i)\otimes \partial_i
			    + \mbox{\Large $\frac{1}{2}$}
				     \sum_{i,j} J_{\breve{f}, (12\dot{\beta})}^{\,ij}\otimes \partial_i\partial_j
			    + \mbox{\Large $\frac{1}{3!}$}
				     \sum_{i,j,k} J_{\breve{f}, (12\dot{\beta})}^{\,ijk}\otimes \partial_i\partial_j \partial_k
             \mbox{\Large $)$}					
		    \left.\rule{0ex}{1.2em}\right\} h    \\
   && \hspace{1em}
	  +\, \sum_\alpha
	          \theta^\alpha \bar{\theta}^{\dot{1}} \bar{\theta}^{\dot{2}}		  			
            \left\{\rule{0ex}{1.2em}\right. 			
			 \sum_{\dot{\beta}, \mu}
			   \bar{\vartheta}_{\dot{\beta}} \sigma^{\mu\dot{\beta}}_\alpha
			   \mbox{\Large $($}
			     \sum_i f^{\sharp\,\prime\prime}_{[\mu]}(y^i) \otimes \partial_i\,
			      + \mbox{\Large $\frac{1}{2}$}
				      \sum_{i,j}
					     J_{\breve{f}, [\mu]}^{\,ij\,\prime\prime}\otimes \partial_i\partial_j
               \mbox{\Large $)$}		\\			
      && \hspace{4em}
			 +\,  \vartheta_\alpha \bar{\vartheta}_{\dot{1}}\bar{\vartheta}_{\dot{2}}
			  \mbox{\Large $($}
			  \sum_i   f^\sharp_{(\alpha\dot{1}\dot{2})}(y^i)\otimes \partial_i
			    + \mbox{\Large $\frac{1}{2}$}
				     \sum_{i,j} J_{\breve{f}, (\alpha\dot{1}\dot{2})}^{\,ij}\otimes \partial_i\partial_j
			    + \mbox{\Large $\frac{1}{3!}$}
				     \sum_{i,j,k} J_{\breve{f}, (\alpha\dot{1}\dot{2})}^{\,ijk}
					       \otimes \partial_i\partial_j \partial_k
             \mbox{\Large $)$}						
		    \left.\rule{0ex}{1.2em}\right\} h    \\			
   &&		
	 +\, \theta^1\theta^2 \bar{\theta}^{\dot{1}}\bar{\theta}^{\dot{2}} \cdot
            \left\{\rule{0ex}{1.2em}\right.
			 \mbox{\Large $($}
			  \sum_i f^{\sharp\,\sim}_{[0]}(y^i)\otimes \partial_i
			   + \mbox{\Large $\frac{1}{2}$}
				     \sum_{i,j} J_{\breve{f}, (0)}^{\,ij\,\sim}\otimes \partial_i\partial_j
			 \mbox{\Large $)$} 	 \\
	  && \hspace{4em}	
			  + \sum_{\alpha, \dot{\beta}}
			       \vartheta_\alpha \bar{\vartheta}_{\dot{\beta}}\bar{\sigma}^{\mu\dot{\beta}\alpha}
				    \mbox{\Large $($}
				      \sum_i f^{\sharp\,\sim}_{[\mu]}(y^i)\otimes \partial_i
			          + \mbox{\Large $\frac{1}{2}$}
				           \sum_{i,j} J_{\breve{f}, [\mu]}^{\,ij\,\sim}\otimes \partial_i\partial_j
                      + \mbox{\Large $\frac{1}{3!}$}
				           \sum_{i,j,k} J_{\breve{f}, [\mu]}^{\,ijk\,\sim}\otimes \partial_i\partial_j						   						
				    \mbox{\Large $)$} \\
      && \hspace{4em}
             +\, \vartheta_1\vartheta_2\bar{\vartheta}_{\dot{1}}	  \bar{\vartheta}_{\dot{2}}
			   \mbox{\Large $($}
			    \sum_i   f^\sharp_{(12\dot{1}\dot{2})}(y^i)\otimes \partial_i
			      + \mbox{\Large $\frac{1}{2}$}
				     \sum_{i,j} J_{\breve{f}, (12\dot{1}\dot{2})}^{\,ij}\otimes \partial_i\partial_j\\[-1ex]
      && \hspace{12em}					
			    + \mbox{\Large $\frac{1}{3!}$}
				     \sum_{i,j,k} J_{\breve{f}, (12\dot{1}\dot{2})}^{\,ijk}
					     \otimes \partial_i\partial_j \partial_k
			    + \mbox{\Large $\frac{1}{4!}$}
				     \sum_{i,j,k,l} J_{\breve{f}, (12\dot{1}\dot{2})}^{\,ijkl}
					     \otimes \partial_i\partial_j \partial_k\partial_l
			  \mbox{\Large $)$}
		    \left.\rule{0ex}{1.2em}\right\} h    \\[1.2ex]
	 && \in\;
	        \widehat{\cal O}_U^{\,\widehat{\boxplus}, \smallscriptsize}	
			      \otimes_{f_{(0)}^\sharp, {\cal O}_V}{\cal D}_V\;
		    =\;  \widehat{\cal O}_U^{\,\widehat{\boxplus}, \smallscriptsize}
			          \otimes_{{\cal O}_U}f_{(0)}^\ast {\cal D}_V\,.
  \end{eqnarray*}}Here, 
  $J_{\breve{f}, (\tinybullet)}^{\,\tinybullet}\in C^\infty(U)^{\Bbb C}$ are
    the components in the following expansions:
 {\small
  \begin{eqnarray*}
   \lefteqn{
    \breve{n}^i_{\breve{f}}\, \breve{n}^j_{\breve{f}}\;
      =\;  \theta^1\theta^2 \vartheta_1\vartheta_2 J_{\breve{f}, (12)}^{\,ij}
	  + \sum_{\alpha, \dot{\beta}} \theta^\alpha \bar{\theta}^{\dot{\beta}}
	       \vartheta_\alpha\bar{\vartheta}_{\dot{\beta}} J_{\breve{f},(\alpha\dot{\beta})}^{\,ij}
      +\, \bar{\theta}^{\dot{1}} \bar{\theta}^{\dot{2}}
	           \bar{\vartheta}_{\dot{1}} \bar{\vartheta}_{\dot{2}}
	       J_{\breve{f},(\dot{1}\dot{2})}^{\,\ij}    } \\
   && \hspace{4em}
	+\, \sum_{\dot{\beta}} \theta^1\theta^2\bar{\theta}^{\dot{\beta}}	
		   \mbox{\Large $($}
		     \sum_{\alpha, \mu }
			   \vartheta_\alpha \sigma^{\mu\alpha}_{\;\;\;\;\dot{\beta}} J^{\,ij\,\prime}_{\breve{f}, [\mu]}			   		
	       + \vartheta_1\vartheta_2 \bar{\vartheta}_{\dot{\beta}}
		        J_{\breve{f}, (12\dot{\beta})}^{\,ij}
		   \mbox{\Large $)$} \\
   && \hspace{6em}		
	+\, \sum_\alpha \theta^\alpha \bar{\theta}^{\dot{1}} \bar{\theta}^{\dot{2}} 	
	   \mbox{\Large $($}
	     \sum_{\dot{\beta}, \mu}
		   \bar{\vartheta}_{\dot{\beta}} \sigma^{\mu\dot{\beta}}_\alpha
		      J^{\,ij\,\prime\prime}_{\breve{f}, [\mu]}
	    + \vartheta_\alpha \bar{\vartheta}_{\dot{1}} \bar{\vartheta}_{\dot{2}}
	       J_{\breve{f}, (\alpha\dot{1}\dot{2})}^{\,ij}
	   \mbox{\Large $)$}	   		    \\
   && \hspace{4em}	
	+\, \theta^1\theta^2 \bar{\theta}^{\dot{1}}\bar{\theta}^{\dot{2}}	
	   \mbox{\Large $($}
	     J^{\,ij\,\sim}_{\breve{f}, (0)}
		 + \sum_{\alpha, \dot{\beta}} \vartheta_\alpha \bar{\vartheta}_{\dot{\beta}}
		       \bar{\sigma}^{\mu\dot{\beta}\alpha} J^{\,ij\,\sim}_{\breve{f}, [\mu]}
	     +\vartheta_1\vartheta_2 \bar{\vartheta}_{\dot{1}} \bar{\vartheta}_{\dot{2}}
	      J_{\breve{f}, (12\dot{1}\dot{2})}^{\,ij}
	   \mbox{\Large $)$} \,, 		
  \end{eqnarray*}
  }
 {\small
  \begin{eqnarray*}
    \breve{n}^i_{\breve{f}}\,  \breve{n}^j_{\breve{f}}\, \breve{n}^k_{\breve{f}}
     & = &  \sum_{\dot{\beta}} \theta^1\theta^2\bar{\theta}^{\dot{\beta}}
	         \vartheta_1\vartheta_2 \bar{\vartheta}_{\dot{\beta}}
		     J_{\breve{f}, (12\dot{\beta})}^{\,ijk}
	     + \sum_\alpha   \theta^\alpha \bar{\theta}^{\dot{1}} \bar{\theta}^{\dot{2}}
		      \vartheta_\alpha \bar{\vartheta}_{\dot{1}} \bar{\vartheta}_{\dot{2}}
	            J_{\breve{f}, (\alpha\dot{1}\dot{2})}^{\,ijk}   \\
    &&				
	     + \theta^1\theta^2 \bar{\theta}^{\dot{1}}\bar{\theta}^{\dot{2}}		
			 \mbox{\Large $($}
			   \sum_{\alpha, \dot{\beta}} \vartheta_\alpha \bar{\vartheta}_{\dot{\beta}}
			     \bar{\sigma}^{\mu\dot{\beta}\alpha} J^{\,ij\,\sim}_{\breve{f}, [\mu]}			
	          + \vartheta_1\vartheta_2 \bar{\vartheta}_{\dot{1}}\bar{\vartheta}_{\dot{2}}
			        J_{\breve{f}, (12\dot{1}\dot{2})}^{\,ijk}
			 \mbox{\Large $)$} \,, 		
  \end{eqnarray*}
  }
 {\small
  $$
    \breve{n}^i_{\breve{f}}\, \breve{n}^j_{\breve{f}}
	   \breve{n}^k_{\breve{f}}\, \breve{n}^l_{\breve{f}}\;
     =\; \theta^1\theta^2 \bar{\theta}^{\dot{1}}\bar{\theta}^{\dot{2}}
	        \vartheta_1\vartheta_2 \bar{\vartheta}_{\dot{1}}\bar{\vartheta}_{\dot{2}}\,
	         J_{\breve{f}, (12\dot{1}\dot{2})}^{\,ijk}\,,
  $$
  }
 
 \medskip
 
 \noindent
 which can be read off explicitly from the expansion
 
 \medskip
 
 {\footnotesize
 \begin{eqnarray*}
  \lefteqn{
     \breve{n}^i_{\breve{f}}\,\breve{n}^j_{\breve{f}}\;
	  =\;   -\,\theta^1\theta^2\vartheta_1\bar{\vartheta}_{\dot{1}} \bar{\vartheta}_{\dot{2}}\cdot
               \mbox{\Large $($}
			      f^\sharp_{(1)}(y^i) f^\sharp_{(2)}(y^j)
				 + f^\sharp_{(2)}(y^i) f^\sharp_{(1)}(y^j)
               \mbox{\Large $)$}			      } \\
  &&			
   -\,  \sum_{\alpha,\dot{\beta}}
           \theta^\alpha \bar{\theta}^{\dot{\beta}} \vartheta_\alpha \bar{\vartheta}_{\dot{\beta}}\cdot
               \mbox{\Large $($}
			     f^\sharp_{(\alpha)}(y^i) f^\sharp_{(\dot{\beta})}(y^j)
				 + f^\sharp_{(\dot{\beta})}(y^i) f^\sharp_{(\alpha)}(y^j)
               \mbox{\Large $)$}				
   -  \bar{\theta}^{\dot{1}}\bar{\theta}^{\dot{2}}
         \bar{\vartheta}_{\dot{1}}\bar{\vartheta}_{\dot{2}}
             \cdot
               \mbox{\Large $($}
			    f^\sharp_{(\dot{1})}(y^i) f^\sharp_{(\dot{2})}(y^j)
				+ f^\sharp_{(\dot{2})}(y^i) f^\sharp_{(\dot{1})}(y^j)
               \mbox{\Large $)$}	\\
  &&
   +\, \sum_{\dot{\beta}}
          \theta^1\theta^2\bar{\theta}^{\dot{\beta}}
		  \left\{\rule{0ex}{1.2em}\right.
		   \sum_{\alpha, \mu} \vartheta_\alpha\sigma^{\mu\alpha}_{\;\;\;\;\dot{\beta}} \cdot
		     \mbox{\Large $($}
		      f^\sharp_{(\alpha)}(y^i) f^\sharp_{[\mu]}(y^j)
			   + f^\sharp_{[\mu]}(y^i) f^\sharp_{(\alpha)}(y^j)
		     \mbox{\Large $)$}  \\
      && \hspace{7em}		
		 +\, \vartheta_1\vartheta_2 \bar{\vartheta}_{\dot{\beta}}\cdot		  		
            \mbox{\Large $($}
			  f^\sharp_{(12)}(y^i) f^\sharp_{(\dot{\beta})}(y^j)
		      + f^\sharp_{(1\dot{\beta})}(y^i) f^\sharp_{(2)}(y^j)
		      + f^\sharp_{(2\dot{\beta})}(y^i) f^\sharp_{(1)}(y^j) \\[-2ex]
	  && \hspace{15em}
			 +\, f^\sharp_{(1)}(y^i) f^\sharp_{(2\dot{\beta})}(y^j)
			  + f^\sharp_{(2)}(y^i) f^\sharp_{(1\dot{\beta})}(y^j)
			  + f^\sharp_{(\dot{\beta})}(y^i) f^\sharp_{(12)}(y^j)
            \mbox{\Large $)$}	
		 \left.\rule{0ex}{1.2em}\right\}\\
  &&
   +\, \sum_\alpha
          \theta^\alpha\bar{\theta}^{\bar{1}}\bar{\theta}^{\dot{2}}
		  \left\{\rule{0ex}{1.2em}\right.
		  -\, \sum_{\dot{\beta}, \mu}
		         \bar{\vartheta}_{\dot{\beta}}\sigma^{\mu\dot{\beta}}_\alpha  \cdot
		     \mbox{\Large $($}
		      f^\sharp_{(\dot{\beta})}(y^i) f^\sharp_{[\mu]}(y^j)
			   + f^\sharp_{[\mu]}(y^i) f^\sharp_{(\dot{\beta})}(y^j)
			 \mbox{\Large $)$}	
		  \\
      && \hspace{7em}		
		+\,  \vartheta_\alpha  \bar{\vartheta}_{\dot{1}} \bar{\vartheta}_{\dot{2}}   \cdot
            \mbox{\Large $($}
			  f^\sharp_{(\dot{1}\dot{2})}(y^i) f^\sharp_{(\alpha)}(y^j)
		      + f^\sharp_{(\alpha\dot{1})}(y^i) f^\sharp_{(\dot{2})}(y^j)
		      + f^\sharp_{(\alpha\dot{2})}(y^i) f^\sharp_{(\dot{1})}(y^j) \\[-2ex]
	  && \hspace{15em}
			 +\, f^\sharp_{(\dot{1})}(y^i) f^\sharp_{(\alpha\dot{2})}(y^j)
			  + f^\sharp_{(\dot{2})}(y^i) f^\sharp_{(\alpha\dot{1})}(y^j)
			  + f^\sharp_{(\alpha)}(y^i) f^\sharp_{(\dot{1}\dot{2})}(y^j)
            \mbox{\Large $)$}
		  \left.\rule{0ex}{1.2em}\right\}  \\			
  && +\, \theta^1\theta^2\bar{\theta}^{\dot{1}}\bar{\theta}^{\dot{2}}
       \left\{\rule{0ex}{1.2em}\right.
	    2\,\sum_{\mu, \nu} \eta^{\mu\nu} f^\sharp_{[\mu]}(y^i) f^\sharp_{[\nu]} (y^j) \\
     && \hspace{4em}
	   +\, \sum_{\alpha, \dot{\beta}, \mu}
		      \vartheta_\alpha \bar{\vartheta}_{\dot{\beta}} \bar{\sigma}^{\mu\dot{\beta}\alpha}\cdot
			   \mbox{\Large $($}
			     - f^\sharp_{(\alpha)}(y^i) f^{\sharp\,\prime\prime}_{[\mu]}(y^j)
				+ f^\sharp_{(\dot{\beta})}(y^i) f^{\sharp\,\prime}_{[\mu]}(y^j)
				 - f^\sharp_{(\alpha\dot{\beta})}(y^i) f^\sharp_{[\mu]}(y^j)  \\[-2.4ex]
     && \hspace{15em}
              -\, f^{\sharp\,\prime\prime}_{[\mu]}(y^i)  	 f^\sharp_{(\alpha)}(y^j)
			    + f^{\sharp\,\prime}_{[\mu]}(y^i) f^\sharp_{(\dot{\beta})}(y^j)
			     - f^\sharp_{[\mu]}(y^i)f^\sharp_{(\alpha\dot{\beta})}(y^j)
		       \mbox{\Large $)$} \\[2ex]
	 && \hspace{4em}
	   +\, \vartheta_1\vartheta_2\bar{\vartheta}_{\dot{1}} \bar{\vartheta}_{\dot{2}}
        \cdot
         \mbox{\Large $($}
			 - f^\sharp_{(12\dot{1})}(y^i) f^\sharp_{(\dot{2})}(y^j)
		      - f^\sharp_{(12\dot{2})}(y^i) f^\sharp_{(\dot{1})}(y^j)
		      - f^\sharp_{(1\dot{1}\dot{2})}(y^i) f^\sharp_{(\dot{2})}(y^j)
			  - f^\sharp_{(2\dot{1}\dot{2})}(y^i) f^\sharp_{(1)}(y^j) \\
         && \hspace{7em}			
			+\, f^\sharp_{(12)}(y^i) f^\sharp_{(\dot{1}\dot{2})}(y^j)
			  + f^\sharp_{(1\dot{1})}(y^i) f^\sharp_{(2\dot{2})}(y^j)
			  + f^\sharp_{(1\dot{2})}(y^i) f^\sharp_{(2\dot{1})}(y^j)
		      + f^\sharp_{(2\dot{1})}(y^i) f^\sharp_{(1\dot{2})}(y^j) \\
         && \hspace{7em}			
		      +\, f^\sharp_{(2\dot{2})}(y^i) f^\sharp_{(1\dot{1})}(y^j)
			  +f^\sharp_{(\dot{1}\dot{2})}(y^i) f^\sharp_{(12)}(y^j)
			  - f^\sharp_{(1)}(y^i) f^\sharp_{(2\dot{1}\dot{2})}(y^j)
			  - f^\sharp_{(2)}(y^i) f^\sharp_{(1\dot{1}\dot{2})}(y^j)\\
         && \hspace{7em}
			 -\, f^\sharp_{(\dot{1})}(y^i) f^\sharp_{(12\dot{2})}(y^j)
		      - f^\sharp_{(\dot{2})}(y^i) f^\sharp_{(12\dot{1})}(y^j)		
            \mbox{\Large $)$}
		    \left.\rule{0ex}{1.2em}\right\}\,,
 \end{eqnarray*}
 } 
 {\footnotesize
 \begin{eqnarray*}
  \lefteqn{
     \breve{n}^i_{\breve{f}} \, \breve{n}^j_{\breve{f}} \, \breve{n}^k_{\breve{f}}\;
	 =\;
      -\, \sum_{\dot{\beta}}
             \theta^1\theta^2\bar{\theta}^{\dot{\beta}}
			    \vartheta_1 \vartheta_2 \bar{\vartheta}_{\dot{\beta}}\cdot
            \left\{\rule{0ex}{1.2em}\right.\!
			  f^\sharp_{(\dot{\beta})}(y^i)\cdot
			      \mbox{\large $($}
				     f^\sharp_{(1)}(y^j) f^\sharp_{(2)}(y^k)
				     + f^\sharp_{(2)}(y^j) f^\sharp_{(1)}(y^k)
				  \mbox{\large $)$} 				  } \\[-2.4ex]
      && \hspace{6em}
	      +\, f^\sharp_{(\dot{\beta})}(y^j)\cdot
			      \mbox{\large $($}
				     f^\sharp_{(1)}(y^i) f^\sharp_{(2)}(y^k)
				     + f^\sharp_{(2)}(y^i) f^\sharp_{(1)}(y^k)
				  \mbox{\large $)$} 			
           +  f^\sharp_{(\dot{\beta})}(y^k)\cdot
			      \mbox{\large $($}
				     f^\sharp_{(1)}(y^i) f^\sharp_{(2)}(y^j)
				     + f^\sharp_{(2)}(y^i) f^\sharp_{(1)}(y^j)
				  \mbox{\large $)$} 	
             \!\!\left.\rule{0ex}{1.2em}\right\}   \\
  &&
   -\, \sum_\alpha
          \theta^\alpha\bar{\theta}^{\bar{1}}\bar{\theta}^{\dot{2}}
		      \vartheta_\alpha \bar{\vartheta}_{\dot{1}} \bar{\vartheta}_{\dot{2}}  \cdot			  	
          \left\{\rule{0ex}{1.2em}\right.\!
			  f^\sharp_{(\alpha)}(y^i)\cdot
			      \mbox{\large $($}
				     f^\sharp_{(\dot{1})}(y^j) f^\sharp_{(\dot{2})}(y^k)
				     + f^\sharp_{(\dot{2})}(y^j) f^\sharp_{(\dot{1})}(y^k)
				  \mbox{\large $)$} 				   \\[-2.4ex]
      && \hspace{6em}
	      +\, f^\sharp_{(\alpha)}(y^j)\cdot
			      \mbox{\large $($}
				     f^\sharp_{(\dot{1})}(y^i) f^\sharp_{(\dot{2})}(y^k)
				     + f^\sharp_{(\dot{2})}(y^i) f^\sharp_{(\dot{1})}(y^k)
				  \mbox{\large $)$} 			
           +  f^\sharp_{(\alpha)}(y^k)\cdot
			      \mbox{\large $($}
				     f^\sharp_{(\dot{1})}(y^i) f^\sharp_{(\dot{2})}(y^j)
				     + f^\sharp_{(\dot{2})}(y^i) f^\sharp_{(\dot{1})}(y^j)
				  \mbox{\large $)$} 	
             \!\!\left.\rule{0ex}{1.2em}\right\}   \\	   					
  && +\, \theta^1\theta^2\bar{\theta}^{\dot{1}}\bar{\theta}^{\dot{2}} \cdot
     \left\{\rule{0ex}{1.2em}\right.\!
	   \sum_{\alpha, \dot{\beta}, \mu}
	     \vartheta_\alpha \bar{\vartheta}_{\dot{\beta}} \bar{\sigma}^{\mu\dot{\beta}\alpha}
		  \mbox{\Large $($}
		   f^\sharp_{[\mu]}(y^i)\cdot
			      \mbox{\large $($}
				     f^\sharp_{(\alpha)}(y^j) f^\sharp_{(\dot{\beta})}(y^k)
				     + f^\sharp_{(\dot{\beta})}(y^j) f^\sharp_{(\alpha)}(y^k)
				  \mbox{\large $)$} 		   \\[-2.4ex]
      && \hspace{15em}
           +  f^\sharp_{[\mu]}(y^j)\cdot
			      \mbox{\large $($}
				     f^\sharp_{(\alpha)}(y^i) f^\sharp_{(\dot{\beta})}(y^k)
				     + f^\sharp_{(\dot{\beta})}(y^i) f^\sharp_{(\alpha)}(y^k)
				  \mbox{\large $)$} 					  	   \\[-1ex]
      && \hspace{15em}
		   +  f^\sharp_{[\mu]}(y^k)\cdot
			      \mbox{\large $($}
				     f^\sharp_{(\alpha)}(y^i) f^\sharp_{(\dot{\beta})}(y^j)
				     + f^\sharp_{(\dot{\beta})}(y^i) f^\sharp_{(\alpha)}(y^j)
				  \mbox{\large $)$} 	
		  \mbox{\Large $)$}    \\
     && \hspace{7em}
       -\, \vartheta_1\vartheta_2 \bar{\vartheta}_{\dot{1}} \bar{\vartheta}_{\dot{2}}
	       \mbox{\Large $($}
             f^\sharp_{(12)}(y^i)\cdot
			      \mbox{\large $($}
				     f^\sharp_{(\dot{1})}(y^j) f^\sharp_{(\dot{2})}(y^k)
				     + f^\sharp_{(\dot{2})}(y^j) f^\sharp_{(\dot{1})}(y^k)
				  \mbox{\large $)$} 		   \\[-.4ex]
        && \hspace{16em}
           +\, f^\sharp_{(1\dot{1})}(y^i)\cdot
			      \mbox{\large $($}
				     f^\sharp_{(2)}(y^j) f^\sharp_{(\dot{2})}(y^k)
				     + f^\sharp_{(\dot{2})}(y^j) f^\sharp_{(2)}(y^k)
				  \mbox{\large $)$} 					  \\[-.4ex]
        && \hspace{16em}
		    +\, f^\sharp_{(1\dot{2})}(y^i)\cdot
			      \mbox{\large $($}
				     f^\sharp_{(2)}(y^j) f^\sharp_{(\dot{1})}(y^k)
				     + f^\sharp_{(\dot{1})}(y^j) f^\sharp_{(2)}(y^k)
				  \mbox{\large $)$} 	\\[-.4ex]
        && \hspace{16em}
           +\, f^\sharp_{(2\dot{1})}(y^i)\cdot
			      \mbox{\large $($}
				     f^\sharp_{(1)}(y^j) f^\sharp_{(\dot{2})}(y^k)
				     + f^\sharp_{(\dot{2})}(y^j) f^\sharp_{(1)}(y^k)
				  \mbox{\large $)$} 					  \\[-.4ex] 				
	    && \hspace{16em}
           +\, f^\sharp_{(2\dot{2})}(y^i)\cdot
			      \mbox{\large $($}
				     f^\sharp_{(1)}(y^j) f^\sharp_{(\dot{1})}(y^k)
				     + f^\sharp_{(\dot{1})}(y^j) f^\sharp_{(1)}(y^k)
				  \mbox{\large $)$} 					  \\[-.4ex]
		&& \hspace{16em}
           +\, f^\sharp_{(\dot{1}\dot{2})}(y^i)\cdot
			      \mbox{\large $($}
				     f^\sharp_{(1)}(y^j) f^\sharp_{(2)}(y^k)
				     + f^\sharp_{(2)}(y^j) f^\sharp_{(1)}(y^k)
				  \mbox{\large $)$} 					  \\
    && \hspace{13em}
           +\, f^\sharp_{(12)}(y^j)\cdot
			      \mbox{\large $($}
				     f^\sharp_{(\dot{1})}(y^i) f^\sharp_{(\dot{2})}(y^k)
				     + f^\sharp_{(\dot{2})}(y^i) f^\sharp_{(\dot{1})}(y^k)
				  \mbox{\large $)$} 		   \\[-.4ex]
        && \hspace{16em}
           +\, f^\sharp_{(1\dot{1})}(y^j)\cdot
			      \mbox{\large $($}
				     f^\sharp_{(2)}(y^i) f^\sharp_{(\dot{2})}(y^k)
				     + f^\sharp_{(\dot{2})}(y^i) f^\sharp_{(2)}(y^k)
				  \mbox{\large $)$} 					  \\[-.4ex]
        && \hspace{16em}
		    +\, f^\sharp_{(1\dot{2})}(y^j)\cdot
			      \mbox{\large $($}
				     f^\sharp_{(2)}(y^i) f^\sharp_{(\dot{1})}(y^k)
				     + f^\sharp_{(\dot{1})}(y^i) f^\sharp_{(2)}(y^k)
				  \mbox{\large $)$} 	\\[-.4ex]
        && \hspace{16em}
           +\, f^\sharp_{(2\dot{1})}(y^j)\cdot
			      \mbox{\large $($}
				     f^\sharp_{(1)}(y^i) f^\sharp_{(\dot{2})}(y^k)
				     + f^\sharp_{(\dot{2})}(y^i) f^\sharp_{(1)}(y^k)
				  \mbox{\large $)$} 					  \\[-.4ex] 				
	    && \hspace{16em}
           +\, f^\sharp_{(2\dot{2})}(y^j)\cdot
			      \mbox{\large $($}
				     f^\sharp_{(1)}(y^i) f^\sharp_{(\dot{1})}(y^k)
				     + f^\sharp_{(\dot{1})}(y^i) f^\sharp_{(1)}(y^k)
				  \mbox{\large $)$} 					  \\
		&& \hspace{16em}
           +\, f^\sharp_{(\dot{1}\dot{2})}(y^j)\cdot
			      \mbox{\large $($}
				     f^\sharp_{(1)}(y^i) f^\sharp_{(2)}(y^k)
				     + f^\sharp_{(2)}(y^i) f^\sharp_{(1)}(y^k)
				  \mbox{\large $)$} 					  \\
    && \hspace{13em}
           +\,f^\sharp_{(12)}(y^k)\cdot
			      \mbox{\large $($}
				     f^\sharp_{(\dot{1})}(y^i) f^\sharp_{(\dot{2})}(y^j)
				     + f^\sharp_{(\dot{2})}(y^i) f^\sharp_{(\dot{1})}(y^j)
				  \mbox{\large $)$} 		   \\[-.4ex]
        && \hspace{16em}
           +\, f^\sharp_{(1\dot{1})}(y^k)\cdot
			      \mbox{\large $($}
				     f^\sharp_{(2)}(y^i) f^\sharp_{(\dot{2})}(y^j)
				     + f^\sharp_{(\dot{2})}(y^i) f^\sharp_{(2)}(y^j)
				  \mbox{\large $)$} 					  \\[-.4ex]
        && \hspace{16em}
		    +\, f^\sharp_{(1\dot{2})}(y^k)\cdot
			      \mbox{\large $($}
				     f^\sharp_{(2)}(y^i) f^\sharp_{(\dot{1})}(y^j)
				     + f^\sharp_{(\dot{1})}(y^i) f^\sharp_{(2)}(y^j)
				  \mbox{\large $)$} 	\\[-.4ex]
        && \hspace{16em}
           +\, f^\sharp_{(2\dot{1})}(y^k)\cdot
			      \mbox{\large $($}
				     f^\sharp_{(1)}(y^i) f^\sharp_{(\dot{2})}(y^j)
				     + f^\sharp_{(\dot{2})}(y^i) f^\sharp_{(1)}(y^j)
				  \mbox{\large $)$} 					  \\[-.4ex] 				
	    && \hspace{16em}
           +\, f^\sharp_{(2\dot{2})}(y^k)\cdot
			      \mbox{\large $($}
				     f^\sharp_{(1)}(y^i) f^\sharp_{(\dot{1})}(y^j)
				     + f^\sharp_{(\dot{1})}(y^i) f^\sharp_{(1)}(y^j)
				  \mbox{\large $)$} 					  \\[-.4ex]
		&& \hspace{16em}
           +\, f^\sharp_{(\dot{1}\dot{2})}(y^k)\cdot
			      \mbox{\large $($}
				     f^\sharp_{(1)}(y^i) f^\sharp_{(2)}(y^j)
				     + f^\sharp_{(2)}(y^i) f^\sharp_{(1)}(y^j)
				  \mbox{\large $)$} 					  		
           \;\mbox{\Large $)$}		
     \!\!\left.\rule{0ex}{1.2em}\right\}\,,			
 \end{eqnarray*}
 } 
 {\footnotesize
 \begin{eqnarray*}
  \lefteqn{
   \breve{n}^i_{\breve{f}} \, \breve{n}^j_{\breve{f}}\,
      \breve{n}^k_{\breve{f}}\,  \breve{n}^l_{\breve{f}}\;
     =\; \theta^1\theta^2\bar{\theta}^{\dot{1}}\bar{\theta}^{\dot{2}}
	        \vartheta_1\vartheta_2\bar{\vartheta}_{\dot{1}} \bar{\vartheta}_{\dot{2}}
	     \cdot
		 \left\{\rule{0ex}{1.2em}\right.\!
            \mbox{\large $($}
				f^\sharp_{(1)}(y^i) f^\sharp_{(2)}(y^j)
				+ f^\sharp_{(2)}(y^i) f^\sharp_{(1)}(y^j)
	        \mbox{\large $)$}
            \mbox{\large $($}
				f^\sharp_{(\dot{1})}(y^k) f^\sharp_{(\dot{2})}(y^l)
				+ f^\sharp_{(\dot{2})}(y^k) f^\sharp_{(\dot{1})}(y^l)
	        \mbox{\large $)$} }\\
    && \hspace{8em}			
      +\, \mbox{\large $($}
				f^\sharp_{(\dot{1})}(y^i) f^\sharp_{(\dot{2})}(y^j)
				+ f^\sharp_{(\dot{2})}(y^i) f^\sharp_{(\dot{1})}(y^j)
	        \mbox{\large $)$}
            \mbox{\large $($}
				f^\sharp_{(1)}(y^k) f^\sharp_{(2)}(y^l)
				+ f^\sharp_{(2)}(y^k) f^\sharp_{(1)}(y^l)
	        \mbox{\large $)$} 			\\			
    && \hspace{8em}			
      +\, \sum_{\alpha\ne \gamma, \dot{\beta}\ne \dot{\delta}}
	        \mbox{\large $($}
				f^\sharp_{(\alpha)}(y^i) f^\sharp_{(\dot{\beta})}(y^j)
				+ f^\sharp_{(\dot{\beta})}(y^i) f^\sharp_{(\alpha)}(y^j)
	        \mbox{\large $)$}
            \mbox{\large $($}
				f^\sharp_{(\gamma)}(y^k) f^\sharp_{(\dot{\delta})}(y^l)
				+ f^\sharp_{(\dot{\delta})}(y^k) f^\sharp_{(\gamma)}(y^l)
	        \mbox{\large $)$} 			
	    \!\!\left.\rule{0ex}{1.2em}\right\}\!.
 \end{eqnarray*}
 } 
\end{proposition}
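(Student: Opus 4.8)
The plan is to derive the whole statement from the $C^\infty$-ring structure of $C^\infty(\widehat{X}^{\widehat{\boxplus}})^\smallscriptsize$ recorded in Lemma/Definition~1.2.14, together with Hadamard's lemma for $C^\infty$-rings. The essential point is that $\breve{f}^\sharp$ is a $C^\infty$-ring-homomorphism (Lemma~5.1.2) whose value on each coordinate function $y^i$ differs from its $(\theta,\bar{\theta})$-degree-zero part $f_{(0)}^\sharp(y^i)=f^\sharp(y^i)$ by the element $\breve{n}^i_{\breve{f}}$, which lies in the nilpotent ideal generated by $\theta,\bar{\theta},\vartheta,\bar{\vartheta}$. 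Thus $\breve{f}^\sharp(h)$ is \emph{forced} to be the truncated Taylor expansion of $h$ about $f_{(0)}$, with the increments $\breve{n}^i_{\breve{f}}$ in place of coordinate displacements.

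First I would invoke Hadamard's lemma in the form: for $h\in C^\infty(V)$ one has, inside the $C^\infty$-hull of $C^\infty(\widehat{X}^{\widehat{\boxplus}})^\smallscriptsize$, the finite expansion $\breve{f}^\sharp(h)=\sum_{k\ge 0}\frac{1}{k!}\sum_{i_1,\ldots,i_k}\breve{n}^{i_1}_{\breve{f}}\cdots\breve{n}^{i_k}_{\breve{f}}\otimes(\partial_{i_1}\cdots\partial_{i_k}h)(f_{(0)})$, the remainder at each stage being a smooth multiple of the $(k{+}1)$-fold product of increments. Since each $\breve{n}^i_{\breve{f}}$ has $(\theta,\bar{\theta})$-degree at least one and there are only four anticommuting generators $\theta^1,\theta^2,\bar{\theta}^{\dot{1}},\bar{\theta}^{\dot{2}}$, every product $\breve{n}^{i_1}_{\breve{f}}\cdots\breve{n}^{i_k}_{\breve{f}}$ with $k\ge 5$ vanishes, so the series truncates at fourth order. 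This truncation simultaneously gives the first assertion — that $\breve{f}^\sharp-f_{(0)}^\sharp$ is a section of $\widehat{\cal O}_X^{\,\widehat{\boxplus}, \smallscriptsize}\otimes_{f_{(0)}^\sharp,{\cal O}_Y}{\cal D}_Y$, the $(\theta,\bar{\theta},\vartheta,\bar{\vartheta})$-monomial in front of each $f_{(0)}^\ast{\cal D}_Y^{[1,i]}$ recording exactly the maximal order $i$ of increment-product that can survive in that sector — and the displayed formula for $\breve{f}^\sharp(h)$, once the coefficients $J_{\breve{f},(\tinybullet)}^{\,\tinybullet}$ are identified as the $(\theta,\bar{\theta},\vartheta,\bar{\vartheta})$-components of the products $\breve{n}^{i_1}_{\breve{f}}\cdots\breve{n}^{i_k}_{\breve{f}}$.

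The computational core is then the explicit evaluation of the two-, three-, and four-fold products of the nilpotent parts. These I would obtain by direct substitution of the $(\theta,\bar{\theta},\vartheta,\bar{\vartheta})$-expansion of $\breve{f}^i=\breve{f}^\sharp(y^i)$ into the small-sector multiplication formula displayed following Lemma/Definition~1.2.14, discarding the degree-zero contribution; bounded nilpotency annihilates all would-be higher products, leaving precisely the $\theta^1\theta^2$-, $\theta^\alpha\bar{\theta}^{\dot{\beta}}$-, $\bar{\theta}^{\dot{1}}\bar{\theta}^{\dot{2}}$-, $\theta^1\theta^2\bar{\theta}^{\dot{\beta}}$-, $\theta^\alpha\bar{\theta}^{\dot{1}}\bar{\theta}^{\dot{2}}$-, and $\theta^1\theta^2\bar{\theta}^{\dot{1}}\bar{\theta}^{\dot{2}}$-sectors appearing in the statement. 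Reading off coefficients of each monomial produces the $J$-expansions verbatim.

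The hard part will not be conceptual but organizational: the degree-$4$ product $\breve{n}^i_{\breve{f}}\breve{n}^j_{\breve{f}}\breve{n}^k_{\breve{f}}\breve{n}^l_{\breve{f}}$ and the top $\theta^1\theta^2\bar{\theta}^{\dot{1}}\bar{\theta}^{\dot{2}}$-components of the lower products involve a large number of cross-terms among the twelve fermionic components $f^\sharp_{(\alpha)}, f^\sharp_{(\dot{\beta})}, f^\sharp_{(12\dot{\beta})}, f^\sharp_{(\alpha\dot{1}\dot{2})}$ and the bosonic components, where keeping the signs and the $\varepsilon$- and $\sigma$-contractions consistent with Definition~1.1.2 and Definition~1.1.3 is delicate. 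I would control this by fixing the outer $(\theta,\bar{\theta})$-monomial first and only then expanding the $(\vartheta,\bar{\vartheta})$-dependence, so that each $J$-coefficient emerges as a single symmetric sum over the inserted indices $i,j,k,l$; the symmetry of $\partial_{i_1}\cdots\partial_{i_k}$ then matches the factorials $\frac{1}{k!}$ in the displayed formula, confirming that the Taylor expansion and the explicit products agree term by term.
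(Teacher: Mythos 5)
Your proposal is correct, and its computational core---expanding $\breve{f}^\sharp(h)=h\big(f_{(0)}^\sharp(y^1)+\breve{n}^1_{\breve{f}},\,\cdots\,,f_{(0)}^\sharp(y^n)+\breve{n}^n_{\breve{f}}\big)$ via the $C^\infty$-hull structure into a Taylor expansion in the nilpotents $\breve{n}^i_{\breve{f}}$ that truncates at fourth order, then evaluating the products $\breve{n}^{i_1}_{\breve{f}}\cdots\breve{n}^{i_k}_{\breve{f}}$ term by term from the small-sector multiplication---is exactly the paper's route to the explicit formula and the $J$-coefficients. Where you genuinely diverge is in how the first, sheaf-theoretic claim is established. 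The paper does \emph{not} derive it from the Taylor expansion: it first expands the ring-homomorphism identity $\breve{f}^\sharp(h_1h_2)=\breve{f}^\sharp(h_1)\,\breve{f}^\sharp(h_2)$ in $(\theta,\bar{\theta},\vartheta,\bar{\vartheta})$-components, obtaining a system of ordinary and higher-order Leibniz rules which identify $f^\sharp_{(\alpha)}$, $f^\sharp_{(\dot{\beta})}$, $f^\sharp_{[\mu]}$ as $C^\infty(X)^{\Bbb C}$-valued derivations on $C^\infty(Y)$ and, inductively, the remaining components as differential operators of orders $2$, $3$, $4$ without zeroth-order term. That argument is coordinate-free and exhibits the intrinsic algebraic characterization of each component, which is what the membership in $f_{(0)}^\ast{\cal D}_Y^{[1,i]}$ really encodes. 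Your alternative---reading the order bounds off the local normal form and appealing to the fact that differential operators of bounded order form a sheaf so the local statement globalizes---is legitimate and somewhat more economical, but it establishes the order bounds only as an upper bound visible in one chart rather than via the Leibniz identities themselves; if you take this route you should at least remark that the expansion is independent of the chart because the operators $\frac{1}{k!}\sum_{i_1,\ldots,i_k}\breve{n}^{i_1}_{\breve{f}}\cdots\breve{n}^{i_k}_{\breve{f}}\otimes\partial_{i_1}\cdots\partial_{i_k}$ assemble into the chart-independent operator $\breve{f}^\sharp-f_{(0)}^\sharp$. Both routes reach the same conclusion; the paper's choice to run both makes the structural claim and the explicit local formula mutually reinforcing.
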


\smallskip

\begin{proof}
 By construction,
   $f^\sharp_{(0)}=f^\sharp$.
 Now let $h_1, h_2\in C^\infty(Y)$.
 Since $\breve{f}^\sharp$ is a ring-homomorphism, one has
  $$
   \breve{f}^\sharp(h_1h_2)\;=\; \breve{f}^\sharp(h_1) \breve{f}^\sharp(h_2).
  $$
 The expansion of the above identity in terms of $(\theta, \bar{\theta}, \vartheta, \bar{\vartheta})$-components
   gives
 {\footnotesize
  $$
   \begin{array}{rcl}
    f^\sharp_{(\alpha)}(h_1h_2)
	  & = &   f^\sharp_{(\alpha)}(h_1)\, f^\sharp_{(0)}(h_2)
	          + f^\sharp_{(0)}(h_1)\, f^\sharp_{(\alpha)}(h_2)\,, \\[1.6ex]
    f^\sharp_{(\dot{\beta})}(h_1h_2)\;
	  & = &  f^\sharp_{(\dot{\beta})}(h_1)\, f^\sharp_{(0)}(h_2)
	          + f^\sharp_{(0)}(h_1)\, f^\sharp_{(\dot{\beta})}(h_2)\,; \\[2ex]
    f^\sharp_{(12)}(h_1h_2)
	  & = & f^\sharp_{(12)}(h_1)\, f^\sharp_{(0)}(h_2)
	          - f^\sharp_{(1)}(h_1)\, f^\sharp_{(2)}(h_2)
			  - f^\sharp_{(2)}(h_1)\, f^\sharp_{(1)}(h_2)
	          + f^\sharp_{(0)}(h_1)\, f^\sharp_{(12)}(h_2)\,, \\[1.6ex] 	      	
    f^\sharp_{[\mu]}(h_1h_2)			
	  & = & f^\sharp_{[\mu]}(h_1)\, h^\sharp_{(0)}(h_2)
	             + f^\sharp_{(0)}(h_1)\, f^\sharp_{[\mu]}(h_2) \,, \\[1.6ex]
    f^\sharp_{(\alpha\dot{\beta})}(h_1h_2)
	   & = &  f^\sharp_{(\alpha\dot{\beta})}(h_1)\, f^\sharp_{(0)}(h_2)
	          - f^\sharp_{(\alpha)}(h_1)\, f^\sharp_{(\dot{\beta})}(h_2)
			  - f^\sharp_{(\dot{\beta})}(h_1)\, f^\sharp_{(\alpha)}(h_2)
	          + f^\sharp_{(0)}(h_1)\, f^\sharp_{(\alpha\dot{\beta})}(h_2)\,, \\[1.6ex]
    f^\sharp_{(\dot{1}\dot{2})}(h_1h_2)
	  & = &  f^\sharp_{(\dot{1}\dot{2})}(h_1)\, f^\sharp_{(0)}(h_2)
	          - f^\sharp_{(\dot{1})}(h_1)\, f^\sharp_{(\dot{2})}(h_2)
			  - f^\sharp_{(\dot{2})}(h_1)\, f^\sharp_{(\dot{1})}(h_2)
	          + f^\sharp_{(0)}(h_1)\, f^\sharp_{(\dot{1}\dot{2})}(h_2)\,; \\[2ex] 	
    f^{\sharp\,\prime}_{[\mu]} (h_1h_2)			
	  & = & f^{\sharp\,\prime}_{[\mu]}(h_1)\, f^\sharp_{(0)}(h_2)
	          + \mbox{\large $\frac{1}{2}$}
			      \sum_{\alpha, \dot{\beta}, \nu}
				    \sigma_{\mu\alpha}^{\;\;\;\;\dot{\beta}}\sigma^{\nu\alpha}_{\;\;\;\;\dot{\beta}}
				   \mbox{\large $($}
			         f^\sharp_{[\nu]}(h_1)\, f^\sharp_{(\alpha)}(h_2)
			         + f^\sharp_{(\alpha)}(h_1)\, f^\sharp_{[\nu]}(h_2)
			       \!\mbox{\large $)$}
	          + f^\sharp_{(0)}(h_1)\, f^{\sharp\,\prime}_{[\mu]}(h_2)\,, \\[1.6ex]			
    f^\sharp_{(12\dot{\beta})}(h_1h_2)
	  & = & f^\sharp_{(12\dot{\beta})}(h_1)\, f^\sharp_{(0)}(h_2)
	          + f^\sharp_{(12)}(h_1)\, f^\sharp_{(\dot{\beta})}(h_2)
			  + f^\sharp_{(1\dot{\beta})}(h_1)\, f^\sharp_{(2)}(h_2)
	          + f^\sharp_{(2\dot{\beta})}(h_1)\, f^\sharp_{(1)}(h_2) \\[1.2ex] 	
         && \hspace{2em}
	      +\,  f^\sharp_{(1)}(h_1)\, f^\sharp_{(2\dot{\beta})}(h_2)
	          + f^\sharp_{(2)}(h_1)\, f^\sharp_{(1\dot{\beta})}(h_2)
			  + f^\sharp_{(\dot{\beta})}(h_1)\, f^\sharp_{(12)}(h_2)
	          + f^\sharp_{(0)}(h_1)\, f^\sharp_{(12\dot{\beta})}(h_2)\,, \\[1.6ex]
  f^{\sharp\,\prime\prime}_{[\mu]}(h_1h_2)
    & = & f^{\sharp\,\prime\prime}_{[\mu]}(h_1)\, f^\sharp_{(0)}(h_2)	
	          -\,\mbox{\large $\frac{1}{2}$}
			      \sum_{\alpha, \dot{\beta}, \nu}
				   \sigma_{\mu\dot{\beta}}^\alpha \sigma^{\nu\dot{\beta}}_\alpha
   				    \mbox{\large $($}
	                  f^\sharp_{[\nu]}(h_1)\, f^\sharp_{(\dot{\beta})}(h_2)
			          + f^\sharp_{(\dot{\beta})}(h_1)\, f^\sharp_{[\nu]}(h_2)
			        \!\mbox{\large $)$}			  			
	          + f^\sharp_{(0)}(h_1)\, f^{\sharp\,\prime\prime}_{[\mu]}(h_2)\,, \\[1.6ex]
  f^\sharp_{(\alpha\dot{1}\dot{2})}(h_1h_2)
	 &  &   f^\sharp_{(\alpha\dot{1}\dot{2})}(h_1)\, f^\sharp_{(0)}(h_2)
	          + f^\sharp_{(\dot{1}\dot{2})}(h_1)\, f^\sharp_{(\alpha)}(h_2)
			  + f^\sharp_{(\alpha\dot{1})}(h_1)\, f^\sharp_{(\dot{2})}(h_2)
	          + f^\sharp_{(\alpha\dot{2})}(h_1)\, f^\sharp_{(\dot{1})}(h_2) \\[1.2ex] 	
         && \hspace{2em}
	        +\, f^\sharp_{(\dot{1})}(h_1)\, f^\sharp_{(\alpha\dot{2})}(h_2)
	          + f^\sharp_{(\dot{2})}(h_1)\, f^\sharp_{(\alpha\dot{1})}(h_2)
			  + f^\sharp_{(\alpha)}(h_1)\, f^\sharp_{(\dot{1}\dot{2})}(h_2)
	          + f^\sharp_{(0)}(h_1)\, f^\sharp_{(\alpha\dot{1}\dot{2})}(h_2)\,; \\[2ex]
   f^{\sharp\,\sim}_{(0)}(h_1h_2)
     & = &  f^{\sharp\,\sim}_{(0)}(h_1)\, f^\sharp_{(0)}(h_2)	
	             + 2\,\sum_{\mu, \nu} \eta^{\mu\nu}f^\sharp_{[\mu]}(h_1)\, f^\sharp_{[\nu]}(h_2)						  
	             + f^\sharp_{(0)}(h_1)\, f^{\sharp\,\sim}_{(0)}(h_2) \,,  \\[1.6ex]
   f^{\sharp\,\sim}_{[\mu]}(h_1h_2)
     & = & f^{\sharp\,\sim}_{[\mu]}(h_1)\, f^\sharp_{(0)}(h_2)	
	           + f^\sharp_{(0)}(h_1)\, f^{\sharp\,\sim}_{[\mu]}(h_2) \\[1.2ex]
	    && \hspace{-2em}
				-\,\mbox{\large $\frac{1}{2}$}
				   \sum_{\alpha, \dot{\beta}, \nu}
				     \sigma_{\mu\alpha\dot{\beta}} \bar{\sigma}^{\nu\dot{\beta}\alpha}
				\mbox{\large $($}
	             f^{\sharp\,\prime}_{[\nu]}(h_1)\, f^\sharp_{(\dot{\beta})}(h_2)
		        -  f^{\sharp\,\prime\prime}_{[\nu]}(h_1)\, f^\sharp_{(\alpha)}(h_2)
	            - f^\sharp_{(\alpha)}(h_1)\, f^{\sharp\,\prime\prime}_{[\nu]}(h_2)
	            + f^\sharp_{(\dot{\beta})}(h_1)\, f^{\sharp\,\prime}_{[\nu]}(h_2)
			    \!\mbox{\large $)$}\,,     \\[1.6ex]    	
   f^\sharp_{(12\dot{1}\dot{2})}(h_1h_2)
	 & = &  f^\sharp_{(12\dot{1}\dot{2})}(h_1)\, f^\sharp_{(0)}(h_2)
	          - f^\sharp_{(12\dot{1})}(h_1)\, f^\sharp_{(\dot{2})}(h_2)
			  - f^\sharp_{(12\dot{2})}(h_1)\, f^\sharp_{(\dot{1})}(h_2)
	          - f^\sharp_{(1\dot{1}\dot{2})}(h_1)\, f^\sharp_{(2)}(h_2) \\[1.2ex] 	
       && \hspace{2em}
	      -\,  f^\sharp_{(2\dot{1}\dot{2})}(h_1)\, f^\sharp_{(1)}(h_2)
	          + f^\sharp_{(12)}(h_1)\, f^\sharp_{(\dot{1}\dot{2})}(h_2)
			  + f^\sharp_{(1\dot{1})}(h_1)\, f^\sharp_{(2\dot{2})}(h_2)
	          + f^\sharp_{(1\dot{2})}(h_1)\, f^\sharp_{(2\dot{1})}(h_2) \\[1.2ex]
       && \hspace{2em}
	      +\, f^\sharp_{(2\dot{1})}(h_1)\, f^\sharp_{(1\dot{2})}(h_2)
	          + f^\sharp_{(2\dot{2})}(h_1)\, f^\sharp_{(1\dot{1})}(h_2)
			  + f^\sharp_{(\dot{1}\dot{2})}(h_1)\, f^\sharp_{(12)}(h_2)
	          - f^\sharp_{(1)}(h_1)\, f^\sharp_{(2\dot{1}\dot{2})}(h_2) \\[1.2ex] 	
       && \hspace{2em}
	        -\, f^\sharp_{(2)}(h_1)\, f^\sharp_{(1\dot{1}\dot{2})}(h_2)
	          - f^\sharp_{(\dot{1})}(h_1)\, f^\sharp_{(12\dot{2})}(h_2)
			  - f^\sharp_{(\dot{2})}(h_1)\, f^\sharp_{(12\dot{1})}(h_2)
	          + f^\sharp_{(0)}(h_1)\, f^\sharp_{(12\dot{1}\dot{2})}(h_2)\,.
   \end{array}
  $$}
 This shows that
  \begin{itemize}
   \item[\LARGE $\cdot$]
    $f^\sharp_{(\alpha)}$, $f^\sharp_{(\dot{\beta})}$, and $f^\sharp_{[\mu]}$
    satisfy the Leibniz rule and, hence, are $C^\infty(X)^{\Bbb C}$-valued derivations\\ on $C^\infty(Y)$;
  \end{itemize}
  other $f^{\sharp\,\tinybullet}_{\tinybullet}$ satisfy higher-order Leibniz rules and, hence,
     are $C^\infty(X)^{\Bbb C}$-valued differential operators on $C^\infty(Y)$:\;
   Inductively and recursively,
  \begin{itemize}
   \item[\LARGE $\cdot$]
     $f^\sharp_{(12)}$, $f^\sharp_{(\alpha\dot{\beta})}$, $f^\sharp_{(\dot{1}\dot{2})}$,
	 $f^{\sharp\,\prime}_{[\mu]}$, $f^{\sharp\,\prime\prime}_{[\mu]}$,   and
	 $f^{\sharp\,\sim}_{(0)}$ are
	 $C^\infty(X)^{\Bbb C}$-valued second-order differential operators on $C^\infty(Y)$;
	
   \item[\LARGE $\cdot$]	
     $f^\sharp_{(12\dot{\beta})}$, $f^\sharp_{(\alpha\dot{1}\dot{2})}$, and
	    $f^{\sharp\,\sim}_{[\mu]}$
	 are $C^\infty(X)^{\Bbb C}$-valued third-order differential operators on $C^\infty(Y)$;
	
   \item[\LARGE $\cdot$]	
    $f^\sharp_{(12\dot{1}\dot{2})}$
	  is a $C^\infty(X)^{\Bbb C}$-valued fourth-order differential operator on $C^\infty(Y)$.
  \end{itemize}
  
 Locally and explicitly, under the setting of the statement of the proposition,
  it follows from the $C^\infty$-hull structure of $C^\infty(\widehat{U}^{\widehat{\boxplus}, \smallscriptsize})$ that
 \begin{eqnarray*}
  \lefteqn{
   \breve{f}^\sharp (h)\;
    =\; h( \breve{f}^\sharp(y^1), \,\cdots\,,\, \breve{f}^\sharp(y^n))\;
	=\; h(f_{(0)}^\sharp(y_1)+ \breve{n}^1_{\breve{f}}, \,
	                 \cdots\,,\, f_{(0)}(y^n)+\breve{n}^n_{\breve{f}})	}\\
  &&
   =\; h(f_{(0)}^\sharp(y_1), \,\cdots\,,\, f_{(0)}(y^n))
         + \sum_i  (\partial_i h)(f_{(0)}^\sharp(y_1), \,\cdots\,,\, f_{(0)}(y^n))
        		   \cdot \breve{n}^i_{\breve{f}}    \\[-1ex]
  && \hspace{2em}				
        +\, \mbox{\Large $\frac{1}{2}$}
		      \sum_{i,j}  (\partial_i\partial_j h)(f_{(0)}^\sharp(y_1), \,\cdots\,,\, f_{(0)}(y^n))
        		   \cdot \breve{n}^i_{\breve{f}} \, \breve{n}^j_{\breve{f}}         	\\
  && \hspace{2em}				
        +\, \mbox{\Large $\frac{1}{3!}$}
		      \sum_{i,j,k}  (\partial_i\partial_j \partial_kh)
			                                        (f_{(0)}^\sharp(y_1), \,\cdots\,,\, f_{(0)}(y^n))
        		   \cdot \breve{n}^i_{\breve{f}}\,\breve{n}^j_{\breve{f}}\, \breve{n}^k_{\breve{f}} \\
  && \hspace{2em}				
        +\, \mbox{\Large $\frac{1}{4!}$}
		      \sum_{i,j,k,l}  (\partial_i\partial_j\partial_k\partial_l h)
			                      (f_{(0)}^\sharp(y_1), \,\cdots\,,\, f_{(0)}(y^n))
        		   \cdot \breve{n}^i_{\breve{f}} \, \breve{n}^j_{\breve{f}} \,
				              \breve{n}^k_{\breve{f}} \, \breve{n}^l_{\breve{f}}\,.  	
 \end{eqnarray*}
 After plugging in the $(\theta,\bar{\theta}, \vartheta, \bar{\vartheta})$-expansion
    of $\breve{n}^i_{\breve{f}}$'s and then simplifying the resulting expression,
 one obtains the explicit expression of $f^{\sharp\,\tinybullet}_{\tinybullet}$
  as differential operators of order as stated in the proposition and without the degree-zero term.
 This proves the proposition.
   
\end{proof}

\bigskip

\subsection{Chiral maps from $\widehat{X}^{\widehat{\boxplus}, \smallscriptsize}$ to a complex manifold $Y$}
	
In this subsection we study smooth maps from the small towered superspace
  $\widehat{X}^{\widehat{\boxplus}, \smallscriptsize}$ to a complex manifold $Y$ and
  introduce the notion of `chiral maps'.

\bigskip

\begin{flushleft}
{\bf Smooth functions on a complex manifold}
\end{flushleft}
Let $Y$ be a complex manifold of complex dimension $n$.
As a smooth real $2n$-manifold, its function-ring $C^\infty(Y)$ is a $C^\infty$-ring.
In this digression, we review how this structure is rephrased in terms of complex coordinates on $Y$;
cf.\: [L-Y4: Sec.\,4] (D(14.1)).

\bigskip

\begin{definition} {\bf [smooth function in complex coordinates]}\; {\rm
 For a local coordinate chart $V\subset Y$,
   the $C^\infty$-ring structure of $C^\infty(V)$ in terms of the complex coordinate functions
   $$
      (z^1,\,\cdots\,,\, z^n)=(y^1+\sqrt{-1}y^2, \,\cdots\,,\, y^{2n-1}+\sqrt{-1}y^{2n})\,.
  $$
 We will call $(y^1,\,y^2, \,\dots\,, y^{2n-1},\, y^{2n})$
   the {\it real coordinates functions} on $V$
      {\it associated to the complex coordinate functions} $(z^1,\,\cdots\,, z^n)$.
 Let $h=h(y^1,\,\cdots\,,\, y^{2n})\in C^\infty(V)$.
 Then, define
  $$
    h_{\Bbb C}\; :=\;  h_{\Bbb C}(z^1,\,\cdots\,,\, z^n\,,\,\bar{z}^1,\,\cdots\,,\,\bar{z}^n)\;
	 :=\;  h(y^1,\,\cdots\,,\, y^{2n})
  $$
  and call it {\it the presentation of $h\in C^\infty(V)$
   in complex coordinate functions $(z^1,\,\cdots\,,\, z^n)$ on $V$}.
 Define
   {\footnotesize
   $$
     \frac{\partial}{\partial z^i}\;
	    :=\;  \mbox{\Large $\frac{1}{2}$}
		          \mbox{\Large $($}
				     \frac{\partial}{\partial y^{2i-1}}- \sqrt{-1}\frac{\partial}{\partial z^{2i}}
		        \mbox{\Large $)$}\,,\hspace{2em}
     \frac{\partial}{\partial \bar{z}^i}\;
	    :=\;  \mbox{\Large $\frac{1}{2}$}
		          \mbox{\Large $($}
				     \frac{\partial}{\partial y^{2i-1}}+ \sqrt{-1}\frac{\partial}{\partial z^{2i}}
		        \mbox{\Large $)$}
   $$}and 
 $$
   \mbox{\Large $\frac{\partial}{\partial z^i}$}\,h_{\Bbb C}\;
     :=\; 	
	     \mbox{\Large
	       $\frac{1}{2}(
			\frac{\partial}{\partial y^{2i-1}}
			   \mbox{\normalsize $- \sqrt{-1}$}\frac{\partial}{\partial z^{2i}})$  }\! h\,,\;\;
   \mbox{\Large $\frac{\partial}{\partial \bar{z}^i}$}\,h_{\Bbb C}\;
     :=\; 	
	     \mbox{\Large
	       $\frac{1}{2}(
			\frac{\partial}{\partial y^{2i-1}}
			   \mbox{\normalsize $+ \sqrt{-1}$}\frac{\partial}{\partial z^{2i}})$  }\! h\;\;
     \in\; C^\infty(V)^{\Bbb C}\,. 			
 $$
}\end{definition}
 
\medskip

\begin{lemma} {\bf [Taylor's formula in complex coordinates]}\;
 Denote coordinate functions  on $V$ collectively by
  $\boldy:= (y^1,\,\cdots\,,\, y^{2n})$,
  $\boldz:= (z^1,\,\cdots\,, z^n) $, and $\bar{\boldz}:= (\bar{z}^1,\,\cdots\,,\, \bar{z}^n)$.
 Then,
   for
    $h\in C^\infty(V)$ and
    $q\in V$ of coordinates $\boldy$,  and
	$\bolda := (a^1,\,\cdots\,,\, a^{2n})\in {\Bbb R}^{2n}$ such that
        points $q_t$ of real coordinates $\boldy_t:=\boldy+t\cdot \bolda$ are contained in $V$ for all $t\in [0,1]$,
   the Taylor's formula
   $$
     h(\boldy+\bolda)\;
	   =\;  \sum_{d=0}^l
              \sum_{|\scriptsizeboldd|=d}	
			  \frac{1}{\boldd !}\,
	          \frac{\partial^{\,d} h}{\partial \boldy^{\scriptsizeboldd}}(\boldy)\,
			  \bolda^{\scriptsizeboldd}\;
			 +\;  \sum_{|\scriptsizeboldd|=l+1}	
			  \frac{1}{\boldd !}\,
	          \frac{\partial^{\,l+1} h}{\partial \boldy^{\scriptsizeboldd}}(\boldy_{t_0})\,
			  \bolda^{\scriptsizeboldd}\;
   $$
  for some $t_0\in [0,1]$  depending on $\bolda$
  has the following form in complex coordinates
   \begin{eqnarray*}
    \lefteqn{
      h_{\Bbb C}(\boldz+\boldu, \bar{\boldz}+\bar{\boldu})\;
	   =\;  \sum_{d=0}^l\,
              \sum_{|\scriptsizeboldd_1|+|\scriptsizeboldd_2|=d}	
			  \frac{1}{\boldd_1 ! \boldd_2 !}\,
	          \frac{\partial^{\,d} h_{\Bbb C}}{\partial \boldz^{\scriptsizeboldd_1}
			                                          \partial \bar{\boldz}^{\scriptsizeboldd_2}}(\boldz,\bar{\boldz})\,
			  \boldu^{\scriptsizeboldd_1}\bar{\boldu}^{\scriptsizeboldd_2}\;   }\\
     &&\hspace{7em}			
			 +\;  \sum_{|\scriptsizeboldd|=l+1}	
			  \frac{1}{\boldd_1 !\boldd_2 !}\,
	          \frac{\partial^{\,l+1} h_{\Bbb C}}
			      {\partial \boldz^{\scriptsizeboldd_1}\partial \bar{\boldz}^{\scriptsizeboldd_2}}
				  (\boldz_{t_0}, \bar{\boldz}_{t_0})\,
			  \boldu^{\scriptsizeboldd_1}\bar{\boldu}^{\scriptsizeboldd_2}
   \end{eqnarray*}
   for some $t_0\in [0,1]$  depending on $\boldu$.
  Here,
    $\boldd:= (d_1,\,\cdots\,,\, d_{2n})$ with $d_i\in {\Bbb Z}_{\ge 0}$,\\
	$|\boldd|:= d_1+\,\cdots\,+d_{2n}$,
	$\boldd !:= d_1!\,\cdots\,d_{2n}!$ with $0!:=1$,
	$\partial^{\,d}/{\partial\boldy}^{\scriptsizeboldd}
	   :=  (\partial/\partial y^1)^{d_1}
	           \cdots  (\partial/\partial y^{2n})^{d_{2n}} $ for $|\boldd|=d$,
    $\bolda^{\scriptsizeboldd}:= (a^1)^{d_1}\,\cdots\,(a^{2n})^{d_{2n}}$			   			
  and similarly
    $\boldu:= (u^1,\,\cdots\,,\, u^n)\in {\Bbb C}^n$	
	such that
        points $q_t$ of complex coordinates $\boldz_t:=\boldz+t\cdot \boldu$
		are contained in $V$ for all $t\in [0,1]$, 	
	$\boldd_i=(d_{i,1},\,\cdots\,,\, d_{i, n})$, $i=1,2$, with $d_{i,j}\in {\Bbb Z}_{\ge 0}$,
    $|d_i|:= d_{i,1}+\,\cdots\,+ d_{i,n}$,	
    $\boldd_i !:= d_{i,1}!\,\cdots\,d_{i,n}!$, \\	
    $\partial^{\,d}/(\partial\boldz^{\scriptsizeboldd_1}\partial\bar{\boldz}^{\scriptsizeboldd_2})
	   :=  (\partial/\partial z^1)^{d_{1,1}} \cdots  (\partial/\partial z^n)^{d_{1,n}}
	        (\partial/\partial \bar{z}^1)^{d_{2,1}} \cdots  (\partial/\partial \bar{z}^n)^{d_{2,n}}
	 $ for $|\boldd_1|+|\boldd_2|=d$, \\
    $\boldu^{\scriptsizeboldd}:= (u^1)^{d_{1,1}}\,\cdots\,(u^{1,n})^{d_{1,n}}$,
	$\bar{\boldu}^{\scriptsizeboldd}
	    := (\bar{u}^1)^{d_{2,1}}\,\cdots\,(\bar{u}^{2,n})^{d_{2,n}}$.
\end{lemma}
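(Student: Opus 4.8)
The plan is to deduce the complex-coordinate expansion directly from the real-coordinate Taylor formula already quoted in the statement, matching the two formulas block by block in each homogeneous degree $d$. First I would fix the dictionary between the real and the complex displacement: setting $u^i := a^{2i-1} + \sqrt{-1}\,a^{2i}$ for $i=1,\,\cdots\,,\, n$, the relation $z^i = y^{2i-1}+\sqrt{-1}\,y^{2i}$ makes the real ray $\boldy_t := \boldy + t\cdot\bolda$ coincide with the complex ray $\boldz_t := \boldz + t\cdot\boldu$; in particular the intermediate point $q_{t_0}$ delivered by the Lagrange remainder of the real formula has complex-coordinate presentation $(\boldz_{t_0}, \bar{\boldz}_{t_0})$, so the very same $t_0$ serves in the complex remainder.

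The key — and essentially the only — computation is the first-order operator identity
$$
 \sum_{k=1}^{2n} a^k\,\frac{\partial}{\partial y^k}\;
   =\; \sum_{i=1}^n \mbox{\Large $($} u^i\,\frac{\partial}{\partial z^i}
         + \bar{u}^i\,\frac{\partial}{\partial \bar{z}^i}\mbox{\Large $)$}\,,
$$
which I would verify by inserting the Wirtinger derivatives of Definition~5.2.1: the terms proportional to $\sqrt{-1}$ cancel in pairs, and the coefficients of $\partial/\partial y^{2i-1}$ and $\partial/\partial y^{2i}$ collapse to $a^{2i-1}$ and $a^{2i}$ respectively. Write $D$ for this common directional-derivative operator, so that $Dh = Dh_{\Bbb C}$ is one and the same function read in the two coordinate systems.

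Next I would record that $\partial/\partial z^i$ and $\partial/\partial \bar{z}^i$ are constant-coefficient ${\Bbb C}$-linear combinations of the mutually commuting partials $\partial/\partial y^k$, so that all the operators $\partial/\partial z^i$, $\partial/\partial \bar{z}^i$ commute among themselves; this is exactly what licenses applying the multinomial theorem to $D^d$ in the complex variables,
$$
 \frac{1}{d!}\,D^d\;
   =\; \sum_{|\scriptsizeboldd_1|+|\scriptsizeboldd_2|=d}
         \frac{1}{\boldd_1!\,\boldd_2!}\,
         \boldu^{\scriptsizeboldd_1}\,\bar{\boldu}^{\scriptsizeboldd_2}\,
         \frac{\partial^{\,d}}{\partial\boldz^{\scriptsizeboldd_1}\partial\bar{\boldz}^{\scriptsizeboldd_2}}\,,
$$
while the same theorem in the real variables reproduces the degree-$d$ block $\sum_{|\scriptsizeboldd|=d}\frac{1}{\boldd!}\,\bolda^{\scriptsizeboldd}\,\partial^{\,d}/\partial\boldy^{\scriptsizeboldd}$ of the quoted formula. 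Applying the real expansion to $h$ and the complex expansion to $h_{\Bbb C}$, and using $D^d h = D^d h_{\Bbb C}$, I conclude that the degree-$d$ block of the real formula equals the degree-$d$ block of the claimed complex formula for every $d = 0,\,\cdots\,,\, l$, and likewise for the remainder block at $d = l+1$ evaluated at $(\boldz_{t_0}, \bar{\boldz}_{t_0})$; summing over $d$ yields the statement. The only place requiring care — rather than a genuine obstacle — is the multi-index bookkeeping: one must check that the multinomial coefficients $d!/(\boldd_1!\,\boldd_2!)$ cancel against the factor $1/d!$ correctly, and that the commutativity of the Wirtinger operators is invoked \emph{before} rearranging $D^d$ into the ordered monomials $\partial^{\,d}/(\partial\boldz^{\scriptsizeboldd_1}\partial\bar{\boldz}^{\scriptsizeboldd_2})$.
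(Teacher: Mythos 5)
Your proof is correct. Note first that the paper itself states this lemma without any proof --- the text passes directly from the lemma to the next theme --- so there is no argument of the authors to compare against; yours fills that gap with what is surely the intended (and standard) reasoning. The essential content is exactly what you isolate: the operator identity $\sum_{k=1}^{2n} a^k\,\partial/\partial y^k = \sum_{i=1}^n \bigl( u^i\,\partial/\partial z^i + \bar{u}^i\,\partial/\partial\bar{z}^i \bigr)$ with $u^i = a^{2i-1}+\sqrt{-1}\,a^{2i}$, which one checks directly from the Wirtinger derivatives of Definition~5.2.1 (the $\sqrt{-1}$ terms cancel in pairs, as you say); the commutativity of the Wirtinger operators, inherited from that of the $\partial/\partial y^k$ since they are constant-coefficient linear combinations thereof; and the multinomial theorem applied to the common power $D^d$ in the two bases, whose coefficients $d!/(\boldd_1!\,\boldd_2!)$ indeed cancel against $1/d!$ to give the stated $1/(\boldd_1!\,\boldd_2!)$. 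Your observation about the remainder is also the right one: both displayed formulas are the one-variable Lagrange-remainder expansion of $g(t)=h(\boldy+t\bolda)=h_{\Bbb C}(\boldz+t\boldu,\bar{\boldz}+t\bar{\boldu})$ at $t=1$, so the same intermediate $t_0$ serves in both, merely re-read in complex coordinates. (Incidentally, the index set of the complex remainder in the paper's statement should read $|\boldd_1|+|\boldd_2|=l+1$; this is a typo there, not a defect of your argument.)
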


\bigskip

\begin{flushleft}
{\bf Chiral and antichiral maps from $\widehat{X}^{\widehat{\boxplus}, \smallscriptsize}$ to a complex manifold $Y$}
\end{flushleft}

\begin{definition} {\bf [chiral/antichiral map]}\; {\rm
 Let
  $\breve{f}: \widehat{X}^{\widehat{\boxplus}, \smallscriptsize} \rightarrow Y$ be a smooth map    and
  $$
     \breve{f}^\sharp\;:\;  {\cal O}_Y^{\,\Bbb C}\;
       \longrightarrow\;  \widehat{\cal O}_X^{\,\widehat{\boxplus}, \smallscriptsize}
  $$	
   be the associated equivalence class of gluing systems of ring-homomorphisms.
 Then
  $\breve{f}$ is called {\it chiral} if it satisfies
  (1)
	 $\breve{f}^\sharp(\bar{h})= \breve{f}^\sharp(h)^\dag$   and
  (2)
     $\breve{f}^\sharp({\cal O}_Y^{\,{\Bbb C}, \hol})
        \subset \widehat{\cal O}_X^{\,\widehat{\boxplus}, \smallscriptsize, \scriptsizech}$.
   
 Similarly,
   $\breve{f}$ is called {\it antichiral} if it satisfies
  (1)
	 $\breve{f}^\sharp(\bar{h})= \breve{f}^\sharp(h)^\dag$   and
  (2$^\prime$)
     $\breve{f}^\sharp({\cal O}_Y^{\,{\Bbb C}, \hol})
        \subset \widehat{\cal O}_X^{\,\widehat{\boxplus}, \smallscriptsize, \scriptsizeach}$.
}\end{definition}

\bigskip

Note that
 if   $\breve{f}: \widehat{X}^{\widehat{\boxplus}, \smallscriptsize} \rightarrow Y$ is chiral
 (resp.\:antichiral) then
   $\breve{f}^\sharp({\cal O}_Y^{\,{\Bbb C}, \ahol})
        \subset \widehat{\cal O}_X^{\,\widehat{\boxplus}, \smallscriptsize, \scriptsizeach}$.
  (resp.\ $\breve{f}^\sharp({\cal O}_Y^{\,{\Bbb C}, \ahol})
        \subset \widehat{\cal O}_X^{\,\widehat{\boxplus}, \smallscriptsize, \scriptsizech}$.)

\bigskip

\begin{remark} $[$naturality of chiral/antichiral map$]$\; {\rm
 Recall that the most natural class of maps from a complex manifold to another complex manifold is
   the class of holomorphic maps or antiholomorphic maps.
 One should think the same for chiral or antichiral maps from
  $\widehat{X}^{\widehat{\boxplus}, \smallscriptsize}$ to a complex manifold.
}\end{remark}

\bigskip

\begin{flushleft}
{\bf Local presentation of the components of $\breve{f}^\sharp$ that defines a chiral map $\breve{f}$}
\end{flushleft}
Proposition~5.1.3
 can be adapted for a chiral map from $\widehat{X}^{\widehat{\boxplus}, \smallscriptsize}$ to a complex manifold $Y$.
For the simplicity of presentation, we let $Y={\Bbb C}^n$ in a single complex coordinate chart and
work out the explicit form of the action functional for chiral maps to a complex manifold
 $\breve{f}:\widehat{X}^{\widehat{\boxplus}, \smallscriptsize} \rightarrow Y$
 specified by the independent components
 $(f^\sharp_{(0)}, (f^\sharp_{(\alpha)})_\alpha, f^\sharp_{(12)})$  of the associated
 $\breve{f}^\sharp:C^\infty(Y)^{\Bbb C}\rightarrow C^\infty(\widehat{X}^{\widehat{\boxplus}})$.

\bigskip

\noindent
$(a)$\;{\it $h\in C^\infty(Y)^{\Bbb C}$}

\medskip

\noindent
Continuing the discussion and the notations in the previous theme.
Let
 $$
   h_{\Bbb C}(z^1,\,\cdots\,,\,z^n\,,\,\bar{z}^1,\,\cdots\,,\, \bar{z}^n)\;
   := h(y^1,\, y^2,\,\cdots\,,\, y^{2n-1}, y^{2n}) \; \in\; C^\infty(Y)
 $$
 be a K\"{a}hler potential of the K\"{a}hler metric on $Y={\Bbb C}^n$,
   expressed in terms of the complex coordinates on ${\Bbb C}$,
   --- here, for simplicity, we assume that
        the K\"{a}hler metric on ${\Bbb C}^n$ admits a K\"{a}hler potential that is defined on all ${\Bbb C}^n$ ---
 and
 {\small
 \begin{eqnarray*}
  \breve{f}^i & :=\:  & \breve{f}^\sharp(z^i) \\
  & = &
    f^i_{(0)}(x)
	+ \sum_\alpha \theta^\alpha\vartheta_\alpha f^i_{(\alpha)}(x)
    + \sqrt{-1} \sum_{\alpha,\dot{\beta},\mu}
         \theta^\alpha\bar{\theta}^{\dot{\beta}}
		   \sigma_{\alpha\dot{\beta}}^\mu\, \partial_\mu f^i_{(0)}(x)
	+ \theta^1\theta^2 \vartheta_1\vartheta_2 f^i_{(12)}(x)    \\
  && \hspace{3em}
    + \sqrt{-1} \sum_{\dot{\beta},\alpha, \mu}
	     \theta^1\theta^2\bar{\theta}^{\dot{\beta}}  \vartheta_{\alpha}
		   \sigma^{\mu\alpha}_{\;\;\;\;\dot{\beta}}\,\partial_\mu f^i_{(\alpha)}(x)
	- \theta^1\theta^2\bar{\theta}^{\dot{1}}\bar{\theta}^{\dot{2}}\,
	    \square f^i_{(0)}(x)  \\
  & \:=: & f^i_{(0)}(x) + \breve{n}^{\prime\, i}_{\breve{f}}\,,		              \\[1.2ex]
  \breve{f}^{i\dag} & :=\:  & \breve{f}^\sharp(\bar{z}^i) \\
  & = &
    \overline{f^i_{(0)}(x)}
	- \sum_{\dot{\beta}} \bar{\theta}^{\dot{\beta}}\bar{\vartheta}_{\dot{\beta}}\,
      	\overline{f^i_{(\beta)}(x)}
    - \sqrt{-1} \sum_{\alpha,\dot{\beta},\mu}
         \theta^\alpha\bar{\theta}^{\dot{\beta}}
		   \sigma_{\alpha\dot{\beta}}^\mu\, \partial_\mu  \overline{f^i_{(0)}(x)}
	+ \bar{\theta}^{\dot{1}}\bar{\theta}^{\dot{2}}
	     \bar{\vartheta}_{\dot{1}} \bar{\vartheta}_{\dot{2}}\,
		                                \overline{f^i_{(12)}(x)}    \\
   && \hspace{3em}
    - \sqrt{-1} \sum_{\alpha,\dot{\beta}\mu}
	     \theta^\alpha\bar{\theta}^{\dot{1}}\bar{\theta}^{\dot{2}}
		     \bar{\vartheta}_{\dot{\beta}}			
		   \sigma^{\mu\dot{\beta}}_\alpha\,\partial_\mu \overline{f^i_{(\beta)}(x)}
	- \theta^1\theta^2\bar{\theta}^{\dot{1}}\bar{\theta}^{\dot{2}}\,
	    \square \overline{f^i_{(0)}(x)}\\
  & \:=: &		
     \overline{f^i_{(0)}}(x) + \breve{n}^{\prime\prime\, i}_{\breve{f}}\,. \\		
 \end{eqnarray*}}
Here,
 $\breve{n}^{\prime\, i}_{\breve{f}}$  (resp.\ $\breve{n}^{\prime\prime\, i}_{\breve{f}}$)
   is the nilpotent component of $\breve{f}^i$ (resp.\ $\breve{f}^{i,\dag}$).
They commute among themselves and satisfy
 $$
   \breve{n}^{\prime\, i_1}_{\breve{f}}\, \breve{n}^{\prime\, i_2}_{\breve{f}} \,
        \breve{n}^{\prime\, i_3}_{\breve{f}}\;
   =\;   \breve{n}^{\prime\prime\, i_1}_{\breve{f}}\,
               \breve{n}^{\prime\prime\, i_2}_{\breve{f}} \,
			       \breve{n}^{\prime\prime\, i_3}_{\breve{f}}\;
   =\; 0\,,
 $$
 for $1\le i_1, i_2, i_3\le n$.
Denote $(f^1_{(0)}(x),\,\cdots\,,\, f^n_{(0)}(x))$  collectively by $f_{(0)}(x)$ and\\
  $(\overline{f^1_{(0)}(x)}\,,\, \cdots\,,\, \overline{f^n_{(0)}(x)})$
     collectively by $\overline{f_{(0)}(x)}$.
Then it follows from Lemma~5.2.2
		and the $C^\infty$-hull structure of $C^\infty(\widehat{X}^{\widehat{\boxplus}})^\smallscriptsize$
 that
{\small
\begin{eqnarray*}
 \breve{f}^\sharp(h) & = &  \breve{f}^\sharp(h_{\Bbb C}) \\
 & =   & h_{\Bbb C}(\breve{f}^1,\,\cdots\,,\, \breve{f}^n\,,\,
                                           \breve{f}^{1\dag},\,\cdots\,,\, \breve{f}^{n\dag}) \\
 &= &
   h_{\Bbb C}(f_{(0)}(x), \overline{f_{(0)}(x)})
   + \sum_{i=1}^n
      (\partial_{z^i}h_{\Bbb C})(f_{(0)}(x), \overline{f_{(0)}(x)})
		  \cdot \breve{n}^{\prime\, i}_{\breve{f}}		
   + \sum_{j=1}^n
      (\partial_{\bar{z}^j}h_{\Bbb C})(f_{(0)}(x), \overline{f_{(0)}(x)})
		    \cdot \breve{n}^{\prime\prime\, j}_{\breve{f}}		   \\
 &&
  + \mbox{\Large $\frac{1}{2}$}\,
       \sum_{i,j}(\partial_{z^i}\partial_{z^j}
	        h_{\Bbb C})(f_{(0)}(x), \overline{f_{(0)}(x)})
	    \cdot \breve{n}^{\prime\, i}_{\breve{f}}\, \breve{n}^{\prime\, j}_{\breve{f}}
  +  \sum_{i,j}(\partial_{z^i}\partial_{\bar{z}^j}h_{\Bbb C})
	            (f_{(0)}(x), \overline{f_{(0)}(x)})
	    \cdot \breve{n}^{\prime\, i}_{\breve{f}}\, \breve{n}^{\prime\prime\, j}_{\breve{f}}   \\
 && \hspace{3em}
  + \mbox{\Large $\frac{1}{2}$}\,
       \sum_{i,j}(\partial_{\bar{z}^i}\partial_{\bar{z}^j}h_{\Bbb C})
	            (f_{(0)}(x), \overline{f_{(0)}(x)})
	    \cdot \breve{n}^{\prime\prime\, i}_{\breve{f}}\, \breve{n}^{\prime\prime\, j}_{\breve{f}} \\
 &&
  + \mbox{\Large $\frac{1}{2}$}\,
       \sum_{i,j, k}(\partial_{z^i}\partial_{z^j}\partial_{\bar{z}^k}h_{\Bbb C})
	            (f_{(0)}(x), \overline{f_{(0)}(x)})
	    \cdot \breve{n}^{\prime\, i}_{\breve{f}}\, \breve{n}^{\prime\, j}_{\breve{f}}\,
		             \breve{n}^{\prime\prime\, k}_{\breve{f}}  \\ 		
 && \hspace{3em}
  + \mbox{\Large $\frac{1}{2}$}\,
       \sum_{i,j, k}(\partial_{z^i}\partial_{\bar{z}^j}\partial_{\bar{z}^k}h_{\Bbb C})
	            (f_{(0)}(x), \overline{f_{(0)}(x)})
	    \cdot \breve{n}^{\prime\, i}_{\breve{f}}\,
		              \breve{n}^{\prime\prime\, j}_{\breve{f}} \, \breve{n}^{\prime\prime\, k}_{\breve{f}}  \\
 &&
  + \mbox{\Large $\frac{1}{2!\,2!}$}\,
       \sum_{i,j, k, l}(\partial_{z^i}\partial_{z^j}\partial_{\bar{z}^k}\partial_{\bar{z}^l}h_{\Bbb C})
	            (f_{(0)}(x), \overline{f_{(0)}(x)})
	    \cdot \breve{n}^{\prime\, i}_{\breve{f}}\, \breve{n}^{\prime\, j}_{\breve{f}}\,
		              \breve{n}^{\prime\prime\, k}_{\breve{f}} \, \breve{n}^{\prime\prime\, l}_{\breve{f}}\,.
\end{eqnarray*}}  

By definition,
{\small
 \begin{eqnarray*}
  \breve{n}^{\prime\, i}_{\breve{{f}}}
  & = &
	 \sum_\alpha \theta^\alpha\vartheta_\alpha f^i_{(\alpha)}(x)
    + \sqrt{-1} \sum_{\alpha,\dot{\beta},\mu}
         \theta^\alpha\bar{\theta}^{\dot{\beta}}
		   \sigma_{\alpha\dot{\beta}}^\mu\, \partial_\mu f^i_{(0)}(x)
	+ \theta^1\theta^2 \vartheta_1\vartheta_2 f^i_{(12)}(x)    \\
  && \hspace{3em}
    + \sqrt{-1} \sum_{\dot{\beta},\alpha, \mu}
	     \theta^1\theta^2\bar{\theta}^{\dot{\beta}}  \vartheta_\alpha
		   \sigma^{\mu\alpha}_{\;\;\;\;\dot{\beta}}\,\partial_\mu f^i_{(\alpha)}(x)		
	- \theta^1\theta^2\bar{\theta}^{\dot{1}}\bar{\theta}^{\dot{2}}\,
	    \square f^i_{(0)}(x)  \,,		              \\[1.2ex]
  \breve{n}^{\prime\prime\, i} & =  & (\breve{n}^{\prime\, i})^\dag \\
  & = &
	- \sum_{\dot{\beta}} \bar{\theta}^{\dot{\beta}}\bar{\vartheta}_{\dot{\beta}}\,
      	\overline{f^i_{(\beta)}(x)}
    - \sqrt{-1} \sum_{\alpha,\dot{\beta},\mu}
         \theta^\alpha\bar{\theta}^{\dot{\beta}}
		   \sigma_{\alpha\dot{\beta}}^\mu\, \partial_\mu  \overline{f^i_{(0)}(x)}
	+ \bar{\theta}^{\dot{1}}\bar{\theta}^{\dot{2}}
	      \bar{\vartheta}_{\dot{1}} \bar{\vartheta}_{\dot{2}}\, \overline{f^i_{(12)}(x)}    \\
   && \hspace{3em}
    - \sqrt{-1} \sum_{\alpha,\dot{\beta}, \mu}
	     \theta^\alpha \bar{\theta}^{\dot{1}}\bar{\theta}^{\dot{2}}  \bar{\vartheta}_{\dot{\beta}}		        
		   \sigma^{\mu\dot{\beta}}_\alpha\,\partial_\mu \overline{f^i_{(\beta)}(x)}		 		
	- \theta^1\theta^2\bar{\theta}^{\dot{1}}\bar{\theta}^{\dot{2}}\,
	    \square \overline{f^i_{(0)}(x)}\,.		
 \end{eqnarray*}}A 
straightforward computation gives the following explicit formulae:

{\small
\begin{eqnarray*}
 \lefteqn{\breve{n}^{\prime\, i}_{\breve{f}}\, \breve{n}^{\prime\, j}_{\breve{f}}  } \\
  && =\;\;
   -\,\theta^1\theta^2 \vartheta_1\vartheta_2
        \,\mbox{\large $($}
		  f^i_{(1)}(x) f^j_{(2)}(x) + f^i_{(2)}(x) f^j_{(1)}(x)
		\mbox{\large $)$}\\
  && \hspace{2em}
   +\,\sqrt{-1}\, \sum_{\dot{\beta}, \alpha, \mu}
         \theta^1\theta^2\bar{\theta}^{\dot{\beta}}  \vartheta_\alpha
		   \sigma^{\mu\alpha}_{\;\;\;\;\dot{\beta}}
		  \,\mbox{\large $($}		
		     f^i_{(\alpha)}(x)\,\partial_\mu f^j_{(0)}(x)
			 +  \partial_\mu f^i_{(0)}(x) f^j_{(\alpha)}(x)
		  \mbox{\large $)$}   		  		  		  \\
  && \hspace{2em}
   -\,2\,\theta^1\theta^2\bar{\theta}^{\dot{1}}\bar{\theta}^{\dot{2}}
            \sum_{\mu,\nu}\eta^{\mu\nu}\,\partial_\mu f^i_{(0)}(x)\,\partial_\nu f^j_{(0)}(x) \,;
\end{eqnarray*}} 
{\small
\begin{eqnarray*}
 \lefteqn{\breve{n}^{\prime\prime\, i}_{\breve{f}}\, \breve{n}^{\prime\prime\, j}_{\breve{f}}   } \\
  && =\;\;
   -\, \bar{\theta}^{\dot{1}}\bar{\theta}^{\dot{2}}
          \bar{\vartheta}_{\dot{1}}  \bar{\vartheta}_{\dot{2}}
        \,\mbox{\large $($}
		  \overline{f^i_{(1)}(x)}\, \overline{f^j_{(2)}(x)}
		   + \overline{f^i_{(2)}(x)}\, \overline{f^j_{(1)}(x)}
		\mbox{\large $)$}\\
  && \hspace{2em}
   -\,\sqrt{-1}\, \sum_{\alpha, \dot{\beta}, \mu}
         \theta^\alpha \bar{\theta}^{\dot{1}}\bar{\theta}^{\dot{2}}\bar{\vartheta}_{\dot{\beta}}
		   \sigma^{\mu\dot{\beta}}_\alpha
		  \,\mbox{\large $($}
		     \overline{f^i_{(\beta)}(x)}\,\partial_\mu \overline{f^j_{(0)}(x)}
			 + \partial_\mu \overline{f^i_{(0)}(x)}\, \overline{f^j_{(\beta)}(x)}					
		  \mbox{\large $)$}   		  \\
  && \hspace{2em}
   -\,2\,\theta^1\theta^2\bar{\theta}^{\dot{1}}\bar{\theta}^{\dot{2}}
            \sum_{\mu,\nu}\eta^{\mu\nu}\,\partial_\mu \overline{f^i_{(0)}(x)}\,
			           \partial_\nu \overline{f^j_{(0)}(x)} \,;
\end{eqnarray*}} 
{\small
\begin{eqnarray*}
 \lefteqn{\breve{n}^{\prime\, i}_{\breve{f}}\, \breve{n}^{\prime\prime\, j}_{\breve{f}}    } \\
 && =\;\;
   \sum_{\alpha, \dot{\beta}}
      \theta^\alpha \bar{\theta}^{\dot{\beta}}  \vartheta_{\alpha} \bar{\vartheta}_{\dot{\beta}}\,      
          f^i_{(\alpha)}(x)\,\overline{f^j_{(\beta)}(x)}    \\
 && \hspace{2em}
   -\, \sum_{\dot{\beta}}    \theta^1\theta^2\bar{\theta}^{\dot{\beta}}
         \,\mbox{\Large $($}
               \sqrt{-1} \sum_{\alpha, \mu}
			    \vartheta_\alpha \sigma^{\mu\alpha}_{\;\;\;\;\dot{\beta}}
				     f^i_{(\alpha)}(x)\,\partial_\mu \overline{f^j_{(0)}(x)}
		  + \vartheta_1\vartheta_2 \bar{\vartheta}_{\dot{\beta}}\,
 		            f^i_{(12)}\,\overline{f^j_{(\beta)}(x)}
	     \mbox{\Large $)$}    \\	
 && \hspace{2em}
   +\, \sum_\alpha \theta^\alpha \bar{\theta}^{\dot{1}} \bar{\theta}^{\dot{2}}
          \,\mbox{\Large $($}	
		     \sqrt{-1}\sum_{\dot{\beta}, \mu}
			    \bar{\vartheta}_{\dot{\beta}}
			      \sigma^{\mu\dot{\beta}}_\alpha\,
				     \partial_\mu f^i_{(0)}(x)\,\overline{f^j_{(\beta)}(x)}
			+ \vartheta_\alpha \bar{\vartheta}_{\dot{1}} \bar{\vartheta}_{\dot{2}}\,
			      f^i_{(\alpha)}(x)\,\overline{f^j_{(12)}(x)}  			
           \mbox{\Large $)$}		\\
 && \hspace{2em}
  +\, \theta^1\theta^2\bar{\theta}^{\dot{1}}\bar{\theta}^{\dot{2}}
         \,\mbox{\LARGE $($}
		     2 \sum_{\mu, \nu} \eta^{\mu\nu}
		          \partial_\mu f^i_{(0)}(x)\, \partial_\nu \overline{f^j_{(0)}(x)}      \\
     && \hspace{4em}
        +\, \sqrt{-1}\, \sum_{\alpha, \dot{\beta}, \mu}
		        \vartheta_\alpha \bar{\vartheta}_{\dot{\beta}} \bar{\sigma}^{\mu\dot{\beta}\alpha}
		       \mbox{\large $($}
			    f^i_{(\alpha)}(x)\, \partial_\mu \overline{f^j_{(\beta)}(x)}
				-   \partial_\mu f^i_{(\alpha)}(x)     \overline{f^j_{(\beta)}(x)}			
			   \mbox{\large $)$} \\
     && \hspace{4em}
      	  +\, \vartheta_1\vartheta_2 \bar{\vartheta}_{\dot{1}} \bar{\vartheta}_{\dot{2}}\,
  		           f^i_{(12)}(x)\,\overline{f^j_{(12)}(x)}
		 \mbox{\Large $)$}\,;
\end{eqnarray*}
} 
{\small
\begin{eqnarray*}
 \lefteqn{\breve{n}^{\prime\, i}_{\breve{f}}\, \breve{n}^{\prime\, j}_{\breve{f}} \,
    \breve{n}^{\prime\prime\, k}_{\breve{f}}                } \\
 && =\;\;
 \sum_{\dot{\beta}}
   \theta^1\theta^2 \bar{\theta}^{\dot{\beta}}
       \vartheta_1 \vartheta_2 \bar{\vartheta}_{\dot{\beta}}
      \,\mbox{\Large $($}
	    f^i_{(1)}(x)f^j_{(2)}(x) + f^i_{(2)}(x) f^j_{(1)}(x)
	  \mbox{\Large $)$}\cdot \overline{f^k_{(\beta)}(x)}    \\
 && \hspace{2em}
  -\, \theta^1\theta^2\bar{\theta}^{\dot{1}}\bar{\theta}^{\dot{2}}
         \,\mbox{\LARGE $($}
        \sqrt{-1}\,\sum_{\alpha, \dot{\beta}, \mu}
          \vartheta_\alpha \bar{\vartheta}_{\dot{\beta}} \bar{\sigma}^{\mu\dot{\beta}\alpha}		
            \mbox{\large $($}
		      f^i_{(\alpha)}(x)\, \partial_\mu f_{(0)}^j(x)
		      + \partial_\mu f_{(0)}^i(x) f^j_{(\alpha)}(x)
		    \mbox{\large $)$}\cdot  \overline{f^k_{(\beta)}(x)}		 		  \\		  		
    && \hspace{10em}
	    +\, \vartheta_1\vartheta_2 \bar{\vartheta}_{\dot{1}} \bar{\vartheta}_{\dot{2}}\,
             \mbox{\large $($}
	           f^i_{(1)}(x)f^j_{(2)}(x) + f^i_{(2)}(x) f^j_{(1)}(x)
	         \mbox{\large $)$}\cdot \overline{f^k_{(12)}(x)}  	
		  \mbox{\LARGE $)$}\,;
\end{eqnarray*}
} 
{\small
\begin{eqnarray*}
 \lefteqn{\breve{n}^{\prime\, i}_{\breve{f}}\, \breve{n}^{\prime\prime\, j}_{\breve{f}} \,
                      \breve{n}^{\prime\prime\, k}_{\breve{f}}             }\\
 && =\;\;
  -\, \sum_\alpha
        \theta^\alpha \bar{\theta}^{\dot{1}} \bar{\theta}^{\dot{2}}
            \vartheta_\alpha \bar{\vartheta}_{\dot{1}} \bar{\vartheta}_{\dot{2}}  \,
          f^i_{(\alpha)}(x) \cdot
             \mbox{\large $($}
	             \overline{f^j_{(1)}(x)}\, \overline{f^k_{(2)}(x)}
		           + \overline{f^j_{(2)}(x)}\cdot \overline{f^k_{(1)}(x)}
	         \mbox{\large $)$} \\
 && \hspace{2em}
  +\, \theta^1\theta^2\bar{\theta}^{\dot{1}}\bar{\theta}^{\dot{2}}
         \,\mbox{\LARGE $($}		 		
          \sqrt{-1}\, \sum_{\alpha, \dot{\beta}, \mu}
		     \vartheta_\alpha \bar{\vartheta}_{\dot{\beta}} \bar{\sigma}^{\mu\dot{\beta}\alpha}\,
			   f^i_{(\alpha)}(x) \cdot
                 \mbox{\large $($}
		          \overline{f^j_{(\beta)}(x)}\, \partial_\mu \overline{f_{(0)}^k(x)}
		            + \partial_\mu \overline{f_{(0)}^j(x)}\,\overline{f^k_{(\beta)}(x)}
				 \mbox{\large $)$}			  		 		 		  \\
    && \hspace{10em}		
	       -\, \vartheta_1\vartheta_2 \bar{\vartheta}_{\dot{1}}  \bar{\vartheta}_{\dot{2}}\,
		     f^i_{(12)}(x) \cdot
		     \mbox{\large $($}
	           \overline{f^j_{(1)}(x)}\, \overline{f^k_{(2)}(x)}
			    + \overline{f^j_{(2)}(x)}\, \overline{f^k_{(1)}(x)}
	         \mbox{\large $)$}
         \mbox{\LARGE $)$} \,;
\end{eqnarray*}
} 
{\small
\begin{eqnarray*}
 \lefteqn{\breve{n}^{\prime\, i}_{\breve{f}}\, \breve{n}^{\prime\, j}_{\breve{f}}\,
                     \breve{n}^{\prime\prime\, k}_{\breve{f}} \, \breve{n}^{\prime\prime\, l}_{\breve{f}}    }\\
 && =\;\;
    \theta^1\theta^2\bar{\theta}^{\dot{1}}\bar{\theta}^{\dot{2}}
	 \vartheta_1\vartheta_2 \bar{\vartheta}_{\dot{1}} \bar{\vartheta}_{\dot{2}}
        \,\mbox{\large $($}
		  f^i_{(1)}(x)f^j_{(2)}(x)  + f^i_{(2)}(x)f^j_{(1)}(x)
		 \mbox{\large $)$}
		   \cdot
		  \mbox{\large $($}
		   	\overline{f^k_{(1)}(x)}\,  \overline{f^l_{(2)}(x)}
			  + \overline{f^k_{(2)}(x)}\, \overline{f^l_{(1)}(x)}
		 \mbox{\large $)$}\,.
\end{eqnarray*}
} 

\vspace{6em}
Substituting these expressions into the expansion of
 $h_{\Bbb C}(\breve{f}^1, \,\cdots\,,\,\breve{f}^n\,,\,
                                \breve{f}^{1\dag},\,\dots\,,\, \breve{f}^{n\dag})$,
one obtains

{\footnotesize
\begin{eqnarray*}
 \lefteqn{
  \breve{f}^\sharp(h) \;\; =\;\;   \breve{f}^\sharp(h_{\Bbb C})\;\;
    =\;\;  h_{\Bbb C}(\breve{f}^1,\,\cdots\,,\, \breve{f}^n\,,\,
                                           \breve{f}^{1\dag},\,\cdots\,,\, \breve{f}^{n\dag})     } \\
 && =\;\;
   h_{\Bbb C}(f_{(0)}(x), \overline{f_{(0)}(x)})\,
    +  \sum_{\alpha, i}\theta^\alpha \vartheta_\alpha\,
	       f^i_{(\alpha)}(x)\cdot
	           (\partial_{z^i}   h_{\Bbb C})(f_{(0)}(x), \overline{f_{(0)}(x)})
	 - \sum_{\dot{\beta}, i}\bar{\theta}^{\dot{\beta}} \bar{\vartheta}_{\dot{\beta}}\,
	       \overline{f^i_{(\beta)}(x)}\cdot
	           (\partial_{\bar{z}^i}   h_{\Bbb C})(f_{(0)}(x), \overline{f_{(0)}(x)})    \\
 && \hspace{2em}			
    +\,  \theta^1\theta^2 \vartheta^1 \vartheta^2\,
       \left\{\rule{0ex}{1.2em}\right.
          \sum_i f^i_{(12)}(x)\cdot
		     (\partial_{z^i} h_{\Bbb C})(f_{(0)}(x), \overline{f_{(0)}(x)})\\[-2ex]
    && \hspace{10em} 		
		   -\mbox{\large $\frac{1}{2}$}\,\sum_{i,j}
		       \mbox{\large $($}
			     f^i_{(1)}(x) f^j_{(2)}(x) + f^i_{(2)}(x) f^j_{(1)} (x)
			   \mbox{\large $)$}\cdot
			    (\partial_{z^i} \partial_{z^j}   h_{\Bbb C})(f_{(0)}(x), \overline{f_{(0)}(x)})
       \left.\rule{0ex}{1.2em}\right\}     \\
 && \hspace{2em}
   +\, \sum_{\alpha,\dot{\beta}} \theta^\alpha \bar{\theta}^{\dot{\beta}} \,
          \left\{\rule{0ex}{1.2em}\right.
	       \sqrt{-1}\sum_{\mu, i} \sigma^\mu_{\alpha\dot{\beta}}
		     \mbox{\LARGE $($}
			  \partial_\mu f^i_{(0)}(x)\cdot
			      (\partial_{z^i}h_{\Bbb C})(f_{(0)}(x), \overline{f_{(0)}(x)})
				- \partial_\mu \overline{f^i_{(0)}(x)}\cdot
				      (\partial_{\bar{z}^i} h_{\Bbb C})(f_{(0)}(x), \overline{f_{(0)}(x)})			
	         \mbox{\LARGE $)$} \\
   && \hspace{10em}
		+\,  \vartheta_\alpha \bar{\vartheta}_{\dot{\beta}}
		       \sum_{i,j} f^i_{(\alpha)}(x) \overline{f^j_{(\beta)}(x)}\cdot
			     (\partial_{z^i}\partial_{\bar{z}^j} h_{\Bbb C})(f_{(0)}(x), \overline{f_{(0)}(x)})
	      \left.\rule{0ex}{1.2em}\right\}   \\
 && \hspace{2em}			
    +\,  \bar{\theta}^{\dot{1}}\bar{\theta}^{\dot{2}}
	          \bar{\vartheta}^{\dot{1}} \bar{\vartheta}^{\dot{2}}\,
       \left\{\rule{0ex}{1.2em}\right.
          \sum_i \overline{f^i_{(12)}(x)}\cdot
		       (\partial_{\bar{z}^i}   h_{\Bbb C})(f_{(0)}(x), \overline{f_{(0)}(x)}) \\[-2ex]
    && \hspace{10em}			
		   - \mbox{\large $\frac{1}{2}$}\,\sum_{i,j}
		       \mbox{\large $($}
			     \overline{f^i_{(1)}(x)}\, \overline{ f^j_{(2)}(x)}
				   + \overline{f^i_{(2)}(x)}\, \overline{f^j_{(1)} (x)}
			   \mbox{\large $)$}\cdot
			    (\partial_{\bar{z}^i} \partial_{\bar{z}^j}h_{\Bbb C})
				                                                                    (f_{(0)}(x), \overline{f_{(0)}(x)})
       \left.\rule{0ex}{1.2em}\right\}     \\ 		
 && \hspace{2em}
   + \sum _{\dot{\beta}} \theta^1\theta^2\bar{\theta}^{\dot{\beta}}
       \left\{\rule{0ex}{1.2em}\right.
	     \sqrt{-1}\sum_{\alpha, \mu}
		   \vartheta_\alpha   \sigma^{\mu\alpha}_{\;\;\;\;\dot{\beta}}
	       \mbox{\LARGE $($}
		    \sum_i \partial_\mu f^i_{(\alpha)}(x)\cdot
			   (\partial_{z^i}h_{\Bbb C})(f_{(0)}(x), \overline{f_{(0)}(x)})  \\
      && \hspace{14em}			
		    +\, \mbox{\large $\frac{1}{2}$}\sum_{i,j}
			      \mbox{\large $($}
				    f^i_{(\alpha)}(x)\,\partial_\mu f^j_{(0)}(x)
					+ \partial_\mu f^i_{(0)}(x) f^j_{(\alpha)}(x)
				  \mbox{\large $)$} \cdot
				   (\partial_{z^i}\partial_{z^j} h_{\Bbb C})(f_{(0)}(x), \overline{f_{(0)}(x)})   \\
      && \hspace{14em}				
		    -\, \sum_{i,j} f^i_{(\alpha)}(x)\,\partial_\mu \overline{f^j_{(0)}(x)}\cdot
			      (\partial_{z^i}\partial_{\bar{z}^j} h_{\Bbb C})
				                                                            (f_{(0)}(x), \overline{f_{(0)}(x)})
		   \mbox{\LARGE $)$}    \\
   && \hspace{6em}
      +\, \vartheta_1\vartheta_2 \bar{\vartheta}_{\dot{\beta}}
	          \mbox{\LARGE $($}
			    -\sum_{i,j} f^i_{(12)}(x)\overline{f^j_{(\beta)}(x)}\cdot
				   (\partial_{z^i}\partial_{\bar{z}^j} h_{\Bbb C})
				                                                            (f_{(0)}(x), \overline{f_{(0)}(x)})    \\
      && \hspace{12em}																			
                + \mbox{\large $\frac{1}{2}$}	
				     \sum_{i,j,k}
					   \mbox{\large $($}
					     f^i_{(1)}(x) f^j_{(2)}(x) +   f^i_{(2)}(x) f^j_{(1)}(x)
					   \mbox{\large $)$}\,
					   \overline{f^k_{(\beta)}(x)} \cdot
				     (\partial_{z^i}\partial_{z^j}\partial_{\bar{z}^k} h_{\Bbb C})
				                                                            (f_{(0)}(x), \overline{f_{(0)}(x)})
			  \mbox{\LARGE $)$}
	   \left.\rule{0ex}{1.2em}\right\} \\
 && \hspace{2em}
   + \sum _{\alpha} \theta^\alpha \bar{\theta}^{\dot{1}}\bar{\theta}^{\dot{2}}
       \left\{\rule{0ex}{1.2em}\right.
	     \sqrt{-1}\sum_{\dot{\beta}, \mu}
		   \bar{\vartheta}_{\dot{\beta}}   \sigma^{\mu\dot{\beta}}_\alpha
	       \mbox{\LARGE $($}
		    - \sum_i \partial_\mu \overline{f^i_{(\beta)}(x)}\cdot
			   (\partial_{\bar{z}^i}h_{\Bbb C})(f_{(0)}(x), \overline{f_{(0)}(x)})  \\
      && \hspace{14em}			
		    -\, \mbox{\large $\frac{1}{2}$}\sum_{i,j}
			      \mbox{\large $($}
				    \overline{f^i_{(\beta)}(x)}\,\partial_\mu \overline{f^j_{(0)}(x)}
					+ \partial_\mu \overline{f^i_{(0)}(x)}\, \overline{f^j_{(\beta)}(x)}
				  \mbox{\large $)$} \cdot
				   (\partial_{\bar{z}^i}\partial_{\bar{z}^j}
				         h_{\Bbb C})(f_{(0)}(x), \overline{f_{(0)}(x)})   \\
      && \hspace{14em}				
		    +\, \sum_{i,j}\partial_\mu f^i_{(0)}(x)\,\overline{f^j_{(\beta)}(x)}\cdot
			      (\partial_{z^i}\partial_{\bar{z}^j} h_{\Bbb C})
				                                                            (f_{(0)}(x), \overline{f_{(0)}(x)})
		   \mbox{\LARGE $)$}    \\
   && \hspace{6em}
      +\, \vartheta_\alpha \bar{\vartheta}_{\dot{1}} \bar{\vartheta}_{\dot{2}}
	          \mbox{\LARGE $($}
			    \sum_{i,j} f^i_{(\alpha)}(x)\overline{f^j_{(12)}(x)}\cdot
				   (\partial_{z^i}\partial_{\bar{z}^j} h_{\Bbb C})
				                                                            (f_{(0)}(x), \overline{f_{(0)}(x)})    \\
      && \hspace{12em}			
                -\, \mbox{\large $\frac{1}{2}$}	
				     \sum_{i,j,k}
					   f^i_{(\alpha)}(x)\,
					   \mbox{\large $($}
					    \overline{f^j_{(1)}(x)}\, \overline{f^k_{(2)}(x)}
						   +  \overline{f^j_{(2)}(x)}\, \overline{f^k_{(1)}(x)}
					   \mbox{\large $)$}\,
					   \cdot
				     (\partial_{z^i}\partial_{\bar{z}^j}\partial_{\bar{z}^k} h_{\Bbb C})
				                                                            (f_{(0)}(x), \overline{f_{(0)}(x)})
			  \mbox{\LARGE $)$}
	   \left.\rule{0ex}{1.2em}\right\}   	
 \end{eqnarray*}
 }

 \bigskip
 \centerline{(formula continued to the next page)}

{\footnotesize
 \begin{eqnarray*}
 && \hspace{2em}
   +\, \theta^1\theta^2\bar{\theta}^{\dot{1}}\bar{\theta}^{\dot{2}}\,
   \left\{\rule{0ex}{1.2em}\right.\!
    - \sum_{i=1}^n
	     \square f^i_{(0)}(x)\cdot
        (\partial_{z^i}h_{\Bbb C})(f_{(0)}(x), \overline{f_{(0)}(x)})
	- \sum_i \square \overline{f^i_{(0)}(x)}\cdot		
              (\partial_{\bar{z}^j}h_{\Bbb C})(f_{(0)}(x), \overline{f_{(0)}(x)})		  \\
    && \hspace{10em}
      - \, \sum_{i,j, \mu, \nu}
          \eta^{\mu\nu}\,\partial_\mu f^i_{(0)}(x)\,\partial_\nu f^j_{(0)}(x)	\cdot
         (\partial_{z^i}\partial_{z^j}h_{\Bbb C})(f_{(0)}(x), \overline{f_{(0)}(x)})  \\				 		
    && \hspace{10em}
        +\,2  \sum_{i,j, \mu, \nu}
          \eta^{\mu\nu}\,\partial_\mu f^i_{(0)}(x)\,\partial_\nu \overline{f^j_{(0)}(x)}	\cdot
         (\partial_{z^i}\partial_{\bar{z}^j}h_{\Bbb C})(f_{(0)}(x), \overline{f_{(0)}(x)})  \\
    && \hspace{10em}
      -\, \sum_{i,j, \mu, \nu}
         \eta^{\mu\nu}\,
		    \partial_\mu \overline{f^i_{(0)}(x)}\,\partial_\nu \overline{f^j_{(0)}(x)} \cdot
             (\partial_{\bar{z}^i}\partial_{\bar{z}^j}h_{\Bbb C})
	                                           (f_{(0)}(x), \overline{f_{(0)}(x)}) \\				
 && \hspace{4em}
   +\, \sqrt{-1} \sum_{\alpha, \dot{\beta}, \mu}
           \vartheta_\alpha \bar{\vartheta}_{\dot{\beta}}\, \bar{\sigma}^{\mu\dot{\beta}\alpha}
        \mbox{\LARGE $($}
          \sum_{i,j}
		      \mbox{\large $($}
		       f^i_{(\alpha)}(x)\,\partial_\mu \overline{f^j_{(\beta)}(x)}	
			    -   \partial_\mu f^i_{(\alpha)}(x)\,\overline{f^j_{(\beta)}(x)}	
			  \mbox{\large $)$} \cdot
			(\partial_{z^i}\partial_{\bar{z}^j}h_{\Bbb C})(f_{(0)}(x), \overline{f_{(0)}(x)}) \\
      && \hspace{10em}			
	      -\,\mbox{\large $\frac{1}{2}$}
		      \sum_{i,j,k}
			    \mbox{\large $($}
				  f^i_{(\alpha)}(x)\, \partial_\mu f^j_{(0)}(x)
				   + \partial_\mu f^i_{(0)}(x) f^j_{(\alpha)}(x)
				\mbox{\large $)$}\,
				\overline{f^k_{(\beta)}(x)}\cdot
			   (\partial_{z^i}\partial_{z^j}\partial_{\bar{z}^k}h_{\Bbb C})
			               (f_{(0)}(x), \overline{f_{(0)}(x)})  \\
      && \hspace{10em}						
	     +\,\mbox{\large $\frac{1}{2}$}
		      \sum_{i,j,k}
			   f^i_{(\alpha)}(x)
			    \mbox{\large $($}
				  \overline{f^j_{(\beta)}(x)}\, \partial_\mu \overline{f^k_{(0)}(x)}
				   + \partial_\mu \overline{f^j_{(0)}(x)} \, \overline{f^k_{(\beta)}(x)}
				\mbox{\large $)$}\cdot
			   (\partial_{z^i}\partial_{\bar{z}^j}\partial_{\bar{z}^k}h_{\Bbb C})
			               (f_{(0)}(x), \overline{f_{(0)}(x)})	  	
        \mbox{\LARGE $)$}
		\\
  && \hspace{4em}
   +\, \vartheta_1\vartheta_2 \bar{\vartheta}_{\dot{1}}\bar{\vartheta}_{\dot{2}}
    \mbox{\LARGE $($}
      \sum_{i,j}
           f^i_{(12)}(x)\,\overline{f^j_{(12)}(x)}    \cdot
           (\partial_{z^i}\partial_{\bar{z}^j}h_{\Bbb C})
	                  (f_{(0)}(x), \overline{f_{(0)}(x)})   \\
    && \hspace{10em}
     -\, \mbox{\large $\frac{1}{2}$}\,
       \sum_{i,j,k}	   	   				
		     \mbox{\large $($}
	           f^i_{(1)}(x)f^j_{(2)}(x) + f^i_{(2)}(x) f^j_{(1)}(x)
	         \mbox{\large $)$}\,
			 \overline{f^k_{(12)}(x)}      \cdot
            (\partial_{z^i}\partial_{z^j}\partial_{\bar{z}^k}h_{\Bbb C})
	            (f_{(0)}(x), \overline{f_{(0)}(x)})			 \\			 			 			 		 		 
    && \hspace{10em}
     -\, \mbox{\large $\frac{1}{2}$}\,
       \sum_{i,j, k}
		   f^i_{(12)}(x) \,
		     \mbox{\large $($}
	           \overline{f^j_{(1)}(x)}\, \overline{f^k_{(2)}(x)}
			    + \overline{f^j_{(2)}(x)}\, \overline{f^k_{(1)}(x)}
	         \mbox{\large $)$}      \cdot
           (\partial_{z^i}\partial_{\bar{z}^j}\partial_{\bar{z}^k}h_{\Bbb C})
	             (f_{(0)}(x), \overline{f_{(0)}(x)})			 \\			 			 		 		 
    && \hspace{10em}
     +\, \mbox{\large $\frac{1}{2!\,2!}$}\,
       \sum_{i,j, k, l}
         \mbox{\large $($}
		  f^i_{(1)}(x)f^j_{(2)}(x)  + f^i_{(2)}(x)f^j_{(1)}(x)
		 \mbox{\large $)$}\,
		 \mbox{\large $($}
		   	\overline{f^k_{(1)}(x)}\,  \overline{f^l_{(2)}(x)}
			  + \overline{f^k_{(2)}(x)}\, \overline{f^l_{(1)}(x)}
		 \mbox{\large $)$}     \\[-2ex]
     && \hspace{30em}		
		     \cdot(\partial_{z^i}\partial_{z^j}\partial_{\bar{z}^k}\partial_{\bar{z}^l}h_{\Bbb C})
	            (f_{(0)}(x), \overline{f_{(0)}(x)})
    \mbox{\LARGE $)$}		
  \left.\rule{0ex}{1.2em}\right\}			
\end{eqnarray*}
 }

 \bigskip
 \centerline{(formula continued to the next page)}
 
{\footnotesize
\begin{eqnarray*}
 && =\;\;
   h_{\Bbb C}(f_{(0)}(x), \overline{f_{(0)}(x)})\,
    +  \sum_{\alpha, i}\theta^\alpha \vartheta_\alpha\,
	       f^i_{(\alpha)}(x)\cdot
	           (\partial_{z^i}   h_{\Bbb C})(f_{(0)}(x), \overline{f_{(0)}(x)})
	 - \sum_{\dot{\beta}, i}\bar{\theta}^{\dot{\beta}} \bar{\vartheta}_{\dot{\beta}}\,
	       \overline{f^i_{(\beta)}(x)}\cdot
	           (\partial_{\bar{z}^i}   h_{\Bbb C})(f_{(0)}(x), \overline{f_{(0)}(x)})    \\
 && \hspace{2em}			
    +\,  \theta^1\theta^2 \vartheta^1 \vartheta^2\,
       \left\{\rule{0ex}{1.2em}\right.
          \sum_i f^i_{(12)}(x)\cdot
		     (\partial_{z^i} h_{\Bbb C})(f_{(0)}(x), \overline{f_{(0)}(x)})
		   -\,\sum_{i,j} f^i_{(1)}(x) f^j_{(2)}(x)   \cdot
			    (\partial_{z^i} \partial_{z^j}   h_{\Bbb C})(f_{(0)}(x), \overline{f_{(0)}(x)})
       \left.\rule{0ex}{1.2em}\right\}     \\
 && \hspace{2em}
   +\, \sum_{\alpha,\dot{\beta}} \theta^\alpha \bar{\theta}^{\dot{\beta}} \,
          \left\{\rule{0ex}{1.2em}\right.
	       \sqrt{-1}\sum_{\mu, i} \sigma^\mu_{\alpha\dot{\beta}}
		     \mbox{\LARGE $($}
			  \partial_\mu f^i_{(0)}(x)\cdot
			      (\partial_{z^i}h_{\Bbb C})(f_{(0)}(x), \overline{f_{(0)}(x)})
				- \partial_\mu \overline{f^i_{(0)}(x)}\cdot
				      (\partial_{\bar{z}^i} h_{\Bbb C})(f_{(0)}(x), \overline{f_{(0)}(x)})			
	         \mbox{\LARGE $)$} \\
   && \hspace{10em}
		+\,  \vartheta_\alpha \bar{\vartheta}_{\dot{\beta}}
		       \sum_{i,j} f^i_{(\alpha)}(x) \overline{f^j_{(\beta)}(x)}\cdot
			     (\partial_{z^i}\partial_{\bar{z}^j} h_{\Bbb C})(f_{(0)}(x), \overline{f_{(0)}(x)})
	      \left.\rule{0ex}{1.2em}\right\}   \\
 && \hspace{2em}			
    +\,  \bar{\theta}^{\dot{1}}\bar{\theta}^{\dot{2}}
	          \bar{\vartheta}^{\dot{1}} \bar{\vartheta}^{\dot{2}}\,
       \left\{\rule{0ex}{1.2em}\right.
          \sum_i \overline{f^i_{(12)}(x)}\cdot
		       (\partial_{\bar{z}^i}   h_{\Bbb C})(f_{(0)}(x), \overline{f_{(0)}(x)})
		   - \sum_{i,j}\overline{f^i_{(1)}(x)}\, \overline{ f^j_{(2)}(x)} \cdot
			    (\partial_{\bar{z}^i} \partial_{\bar{z}^j}h_{\Bbb C})
				                                                                    (f_{(0)}(x), \overline{f_{(0)}(x)})
       \left.\rule{0ex}{1.2em}\right\}     \\ 		
 && \hspace{2em}
   + \sum _{\dot{\beta}} \theta^1\theta^2\bar{\theta}^{\dot{\beta}}
       \left\{\rule{0ex}{1.2em}\right.
	     \sqrt{-1}\sum_{\alpha, \mu}
		   \vartheta_\alpha   \sigma^{\mu\alpha}_{\;\;\;\;\dot{\beta}}
	       \mbox{\LARGE $($}
		    \sum_i \partial_\mu f^i_{(\alpha)}(x)\cdot
			   (\partial_{z^i}h_{\Bbb C})(f_{(0)}(x), \overline{f_{(0)}(x)})  \\
      && \hspace{14em}			
		    +\,\sum_{i,j}
			      \mbox{\large $($}
				    f^i_{(\alpha)}(x)\,\partial_\mu f^j_{(0)}(x)
				  \mbox{\large $)$} \cdot
				   (\partial_{z^i}\partial_{z^j} h_{\Bbb C})(f_{(0)}(x), \overline{f_{(0)}(x)})   \\
      && \hspace{14em}				
		    -\, \sum_{i,j} f^i_{(\alpha)}(x)\,\partial_\mu \overline{f^j_{(0)}(x)}\cdot
			      (\partial_{z^i}\partial_{\bar{z}^j} h_{\Bbb C})
				                                                            (f_{(0)}(x), \overline{f_{(0)}(x)})
		   \mbox{\LARGE $)$}    \\
   && \hspace{6em}
      +\, \vartheta_1\vartheta_2 \bar{\vartheta}_{\dot{\beta}}
	          \mbox{\LARGE $($}
			    -\sum_{i,j} f^i_{(12)}(x)\overline{f^j_{(\beta)}(x)}\cdot
				   (\partial_{z^i}\partial_{\bar{z}^j} h_{\Bbb C})
				                                                            (f_{(0)}(x), \overline{f_{(0)}(x)})    \\
      && \hspace{12em}																			
                +  \sum_{i,j,k} f^i_{(1)}(x) f^j_{(2)}(x) \,\overline{f^k_{(\beta)}(x)} \cdot
				     (\partial_{z^i}\partial_{z^j}\partial_{\bar{z}^k} h_{\Bbb C})
				                                                            (f_{(0)}(x), \overline{f_{(0)}(x)})
			  \mbox{\LARGE $)$}
	   \left.\rule{0ex}{1.2em}\right\} \\
 && \hspace{2em}
   + \sum _{\alpha} \theta^\alpha \bar{\theta}^{\dot{1}}\bar{\theta}^{\dot{2}}
       \left\{\rule{0ex}{1.2em}\right.
	     \sqrt{-1}\sum_{\dot{\beta}, \mu}
		   \bar{\vartheta}_{\dot{\beta}}   \sigma^{\mu\dot{\beta}}_\alpha
	       \mbox{\LARGE $($}
		    - \sum_i \partial_\mu \overline{f^i_{(\beta)}(x)}\cdot
			   (\partial_{\bar{z}^i}h_{\Bbb C})(f_{(0)}(x), \overline{f_{(0)}(x)})  \\
      && \hspace{14em}			
		    -\, \sum_{i,j} \overline{f^i_{(\beta)}(x)}\,\partial_\mu \overline{f^j_{(0)}(x)}\cdot
				   (\partial_{\bar{z}^i}\partial_{\bar{z}^j}
				         h_{\Bbb C})(f_{(0)}(x), \overline{f_{(0)}(x)})   \\
      && \hspace{14em}				
		    +\, \sum_{i,j}\partial_\mu f^i_{(0)}(x)\,\overline{f^j_{(\beta)}(x)}\cdot
			      (\partial_{z^i}\partial_{\bar{z}^j} h_{\Bbb C})
				                                                            (f_{(0)}(x), \overline{f_{(0)}(x)})
		   \mbox{\LARGE $)$}    \\
   && \hspace{6em}
      +\, \vartheta_\alpha \bar{\vartheta}_{\dot{1}} \bar{\vartheta}_{\dot{2}}
	          \mbox{\LARGE $($}
			    \sum_{i,j} f^i_{(\alpha)}(x)\overline{f^j_{(12)}(x)}\cdot
				   (\partial_{z^i}\partial_{\bar{z}^j} h_{\Bbb C})
				                                                            (f_{(0)}(x), \overline{f_{(0)}(x)})    \\
      && \hspace{12em}			
                -\,\sum_{i,j,k}
				     f^i_{(\alpha)}(x)\, \overline{f^j_{(1)}(x)}\, \overline{f^k_{(2)}(x)} \cdot
				     (\partial_{z^i}\partial_{\bar{z}^j}\partial_{\bar{z}^k} h_{\Bbb C})
				                                                            (f_{(0)}(x), \overline{f_{(0)}(x)})
			  \mbox{\LARGE $)$}
	   \left.\rule{0ex}{1.2em}\right\}   \\
 \end{eqnarray*}
 }

 \bigskip
 \centerline{(formula continued to the next page)}
 
{\footnotesize
 \begin{eqnarray*}
 && \hspace{2em}
   +\, \theta^1\theta^2\bar{\theta}^{\dot{1}}\bar{\theta}^{\dot{2}}\,
   \left\{\rule{0ex}{1.2em}\right.\!
    - \sum_{i=1}^n
	     \square f^i_{(0)}(x)\cdot
        (\partial_{z^i}h_{\Bbb C})(f_{(0)}(x), \overline{f_{(0)}(x)})
	- \sum_i \square \overline{f^i_{(0)}(x)}\cdot		
              (\partial_{\bar{z}^j}h_{\Bbb C})(f_{(0)}(x), \overline{f_{(0)}(x)})		  \\
    && \hspace{10em}
      - \, \sum_{i,j, \mu, \nu}
          \eta^{\mu\nu}\,\partial_\mu f^i_{(0)}(x)\,\partial_\nu f^j_{(0)}(x)	\cdot
         (\partial_{z^i}\partial_{z^j}h_{\Bbb C})(f_{(0)}(x), \overline{f_{(0)}(x)})  \\				 		
    && \hspace{10em}
        +\,2  \sum_{i,j, \mu, \nu}
          \eta^{\mu\nu}\,\partial_\mu f^i_{(0)}(x)\,\partial_\nu \overline{f^j_{(0)}(x)}	\cdot
         (\partial_{z^i}\partial_{\bar{z}^j}h_{\Bbb C})(f_{(0)}(x), \overline{f_{(0)}(x)})  \\
    && \hspace{10em}
      -\, \sum_{i,j, \mu, \nu}
         \eta^{\mu\nu}\,
		    \partial_\mu \overline{f^i_{(0)}(x)}\,\partial_\nu \overline{f^j_{(0)}(x)} \cdot
             (\partial_{\bar{z}^i}\partial_{\bar{z}^j}h_{\Bbb C})
	                                           (f_{(0)}(x), \overline{f_{(0)}(x)}) \\				
 && \hspace{4em}
   +\, \sqrt{-1} \sum_{\alpha, \dot{\beta}, \mu}
           \vartheta_\alpha \bar{\vartheta}_{\dot{\beta}}\, \bar{\sigma}^{\mu\dot{\beta}\alpha}
        \mbox{\LARGE $($}
          \sum_{i,j}
		      \mbox{\large $($}
		       f^i_{(\alpha)}(x)\,\partial_\mu \overline{f^j_{(\beta)}(x)}	
			    -   \partial_\mu f^i_{(\alpha)}(x)\,\overline{f^j_{(\beta)}(x)}	
			  \mbox{\large $)$} \cdot
			(\partial_{z^i}\partial_{\bar{z}^j}h_{\Bbb C})(f_{(0)}(x), \overline{f_{(0)}(x)}) \\
      && \hspace{10em}			
	      -\,\sum_{i,j,k} \partial_\mu f^i_{(0)}(x) f^j_{(\alpha)}(x)\,
				\overline{f^k_{(\beta)}(x)}\cdot
			   (\partial_{z^i}\partial_{z^j}\partial_{\bar{z}^k}h_{\Bbb C})
			               (f_{(0)}(x), \overline{f_{(0)}(x)})  \\
      && \hspace{10em}						
	     +\,\sum_{i,j,k}
			   f^i_{(\alpha)}(x)
				  \overline{f^j_{(\beta)}(x)}\, \partial_\mu \overline{f^k_{(0)}(x)} \cdot
			   (\partial_{z^i}\partial_{\bar{z}^j}\partial_{\bar{z}^k}h_{\Bbb C})
			               (f_{(0)}(x), \overline{f_{(0)}(x)})	  	
        \mbox{\LARGE $)$}  \\
 && \hspace{4em}
   +\, \vartheta_1\vartheta_2 \bar{\vartheta}_{\dot{1}}\bar{\vartheta}_{\dot{2}}
    \mbox{\LARGE $($}
      \sum_{i,j}
           f^i_{(12)}(x)\,\overline{f^j_{(12)}(x)}    \cdot
           (\partial_{z^i}\partial_{\bar{z}^j}h_{\Bbb C})
	                  (f_{(0)}(x), \overline{f_{(0)}(x)})   \\
    && \hspace{10em}
     -\, \sum_{i,j,k}	 f^i_{(1)}(x)f^j_{(2)}(x) \,\overline{f^k_{(12)}(x)}   \cdot
            (\partial_{z^i}\partial_{z^j}\partial_{\bar{z}^k}h_{\Bbb C})
	            (f_{(0)}(x), \overline{f_{(0)}(x)})			 \\			 			 			 		 		 
    && \hspace{10em}
     -\, \sum_{i,j, k}
		   f^i_{(12)}(x) \,\overline{f^j_{(1)}(x)}\, \overline{f^k_{(2)}(x)} \cdot
           (\partial_{z^i}\partial_{\bar{z}^j}\partial_{\bar{z}^k}h_{\Bbb C})
	             (f_{(0)}(x), \overline{f_{(0)}(x)})			 \\			 			 		 		 
    && \hspace{10em}
     +\,\sum_{i,j, k, l}
		  f^i_{(1)}(x)f^j_{(2)}(x)\,
		   	\overline{f^k_{(1)}(x)}\,  \overline{f^l_{(2)}(x)}
		     \cdot(\partial_{z^i}\partial_{z^j}\partial_{\bar{z}^k}\partial_{\bar{z}^l}h_{\Bbb C})
	            (f_{(0)}(x), \overline{f_{(0)}(x)})
    \mbox{\LARGE $)$}		
  \left.\rule{0ex}{1.2em}\right\}\,,
\end{eqnarray*}
} 

\medskip

\noindent
after relabeling the $i, j, k, l$ indices of some of the terms and collecting the like terms.
In particular,
 as in Proposition~5.1.3,
 the components of $\breve{f}-f_{(0)}$
     in the $(\theta, \bar{\theta}, \vartheta, \bar{\vartheta})$-expansion
     are $C^\infty(X)^{\Bbb C}$-valued differential operators on $C^\infty(Y)^{\Bbb C}$    and
 the equality $(\breve{f}^\sharp(h) )^\dag = \breve{f}^\sharp(h)$ holds explicitly
     for $\breve{f}$ chiral and $h$ (real-valued) smooth function on $Y$.

\bigskip

\noindent
$(b)$\;{\it $h$ holomorphic on $Y={\Bbb C}^n$}

\medskip

\noindent
When $h\in C^\infty(Y)^{\Bbb C}$ is holomorphic on $Y$,
 $$
   \partial_{\bar{z}^i}h_{\Bbb C}\;
    =\; \partial_{z^i}\partial_{\bar{z}^j}h_{\Bbb C}\;
	=\; \partial_{z^i}\partial_{z^j}\partial_{\bar{z}^k}h_{\Bbb C}\;
	=\; \partial_{z^i}\partial_{\bar{z}^j}\partial_{\bar{z}^k}h_{\Bbb C}\;
	=\;	\partial_{z^i}\partial_{z^j}\partial_{\bar{z}^k}\partial_{\bar{z}^l}h_{\Bbb C}\;	
	=\; 0
 $$
 and
$\breve{f}^\sharp(h)$	 reduces to

{\footnotesize
\begin{eqnarray*}
 \lefteqn{
  \breve{f}^\sharp(h) \;\; =\;\;   \breve{f}^\sharp(h_{\Bbb C})\;\;
    =\;\;  h_{\Bbb C}(\breve{f}^1,\,\cdots\,,\, \breve{f}^n)     } \\
 && =\;\;
   h_{\Bbb C}(f_{(0)}(x))\,
    +  \sum_{\alpha, i}\theta^\alpha \vartheta_\alpha\,
	       f^i_{(\alpha)}(x)\cdot
	           (\partial_{z^i}   h_{\Bbb C})(f_{(0)}(x))  	 \\
 && \hspace{2em}			
    +\,  \theta^1\theta^2 \vartheta^1 \vartheta^2\,
       \left\{\rule{0ex}{1.2em}\right.
          \sum_i f^i_{(12)}(x)\cdot
		     (\partial_{z^i} h_{\Bbb C})(f_{(0)}(x))
		   - \sum_{i,j}
			     f^i_{(1)}(x) f^j_{(2)}(x)\cdot			
			    (\partial_{z^i} \partial_{z^j}   h_{\Bbb C})(f_{(0)}(x))
       \left.\rule{0ex}{1.2em}\right\}	      \\
 && \hspace{2em}
   +\, \sqrt{-1}\sum_{\alpha,\dot{\beta}, \mu, i} \theta^\alpha \bar{\theta}^{\dot{\beta}} \,
	       \sigma^\mu_{\alpha\dot{\beta}}\,\partial_\mu f^i_{(0)}(x)\cdot
			      (\partial_{z^i}h_{\Bbb C})(f_{(0)}(x))\\		
 && \hspace{2em}
   +\, \sqrt{-1}\sum _{\dot{\beta}, \alpha, \mu} \theta^1\theta^2\bar{\theta}^{\dot{\beta}}
		   \vartheta_\alpha   \sigma^{\mu\alpha}_{\;\;\;\;\dot{\beta}}
	       \left\{\rule{0ex}{1.2em}\right.
		    \sum_i \partial_\mu f^i_{(\alpha)}(x)\cdot
			  (\partial_{z^i}h_{\Bbb C})(f_{(0)}(x))
		    +\, \sum_{i,j}
				    f^i_{(\alpha)}(x)\,\partial_\mu f^j_{(0)}(x)	\cdot
			      (\partial_{z^i}\partial_{z^j} h_{\Bbb C})(f_{(0)}(x))   	   			
	   \left.\rule{0ex}{1.2em}\right\} \\   	
 && \hspace{2em}
   -\, \theta^1\theta^2\bar{\theta}^{\dot{1}}\bar{\theta}^{\dot{2}}\,
   \left\{\rule{0ex}{1.2em}\right.\!
    \sum_{i=1}^n
	     \square f^i_{(0)}(x)\cdot
        (\partial_{z^i}h_{\Bbb C})(f_{(0)}(x))			
      + \, \sum_{i,j, \mu, \nu}
          \eta^{\mu\nu}\,\partial_\mu f^i_{(0)}(x)\,\partial_\nu f^j_{(0)}(x)	\cdot
         (\partial_{z^i}\partial_{z^j}h_{\Bbb C})(f_{(0)}(x))
  \left.\rule{0ex}{1.2em}\right\}.			
\end{eqnarray*}
} 

This is the chiral superfield in $C^\infty(\widehat{X}^{\widehat{\boxplus}})^{\smallscriptsize}$
 determined by the four components
 {\footnotesize
 $$
  \mbox{\large $($}
   h_{\Bbb C}(f_{(0)}(x))\,,\;
   f^i_{(\alpha)}(x)\cdot
	           (\partial_{z^i}   h_{\Bbb C})(f_{(0)}(x))\,,\;  	
    \sum_i f^i_{(12)}(x)\cdot
		     (\partial_{z^i} h_{\Bbb C})(f_{(0)}(x))
		   - \sum_{i,j}
			     f^i_{(1)}(x) f^j_{(2)}(x)\cdot			
			    (\partial_{z^i} \partial_{z^j}   h_{\Bbb C})(f_{(0)}(x))
  \mbox{\large $)$}_{\alpha, =1, 2}\,.
 $$} 

Its twisted complex conjugate $\breve{f}^\sharp(h)^\dag$ is antichiral and is given by

{\footnotesize
\begin{eqnarray*}
 \lefteqn{  (\breve{f}^\sharp(h))^\dag\;\;
  =\;\;    \bar{h}_{\Bbb C}( (\breve{f}^\sharp(z^1)^\dag,\,\cdots\,,\,(\breve{f}^\sharp(z^n)^\dag)   \;\;
  =\;\;    \bar{h}_{\Bbb C}(\breve{f}^{1\dag},\,\cdots\,,\,\breve{f}^{n\dag})}\\[1.2ex]  		
 && =\;\;
   \bar{h}_{\Bbb C}(\overline{f_{(0)}(x)})
   - \sum_{\dot{\beta}}\bar{\theta}^{\dot{\beta}} \bar{\vartheta}_{\dot{\beta}}
        \sum_i  \overline{f^i_{(\beta)}(x)}\cdot
		                 (\partial_{\bar{z}^i}\bar{h}_{\Bbb C})(\overline{f_{(0)}(x)})\\
 && \hspace{1.6em}
   +\, \bar{\theta}^{\dot{1}}\bar{\theta}^{\dot{2}}
           \bar{\vartheta}_{\dot{1}} \bar{\vartheta}_{\dot{2}}
        \,\mbox{\Large $($}
		    \sum_i  \overline{f^i_{(12)}(x)}\cdot
			                 (\partial_{\bar{z}^i}\bar{h}_{\Bbb C})(\overline{f_{(0)}(x)})\,
		     -\,\sum_{i,j}
				    \overline{f^i_{(1)}(x)}\, \overline{f^j_{(2)}(x)}  \cdot
           		      (\partial_{\bar{z}^i}\partial_{\bar{z}^j}\bar{h}_{\Bbb C})
			              (\overline{f_{(0)}(x)})
	   \!\mbox{\Large $)$}  \\
 && \hspace{1.6em}
  -\, \sqrt{-1}\sum_{\alpha,\dot{\beta}; i,\mu}
         \theta^\alpha \bar{\theta}^{\dot{\beta}} \sigma^\mu_{\alpha\dot{\beta}}
			  \partial_\mu \overline{f^i_{(0)}(x)}  \cdot
			   (\partial_{\bar{z}^i}\bar{h}_{\Bbb C})(\overline{f_{(0)}(x)})\\
 && \hspace{1.6em}
  -\, \sqrt{-1}\sum_{\alpha; \dot{\beta}, \mu}
        \theta^\alpha \bar{\theta}^{\dot{1}} \bar{\theta}^{\dot{2}}
         \bar{\vartheta}_{\dot{\beta}} \sigma^{\mu\dot{\beta}}_\alpha  		
		\mbox{\Large $($}
		  \sum_i 		
		     \partial_\mu \overline{f^i_{(\beta)}(x)}	\cdot
			 (\partial_{\bar{z}^i}\bar{h}_{\Bbb C})(\overline{f_{(0)}(x)})
         + \sum_{i,j}
		        \overline{f^i_{(\beta)}(x)}\,\partial_\mu \overline{f^j_{(0)}(x)}\cdot
                 (\partial_{\bar{z}^i}\partial_{\bar{z}^j}\bar{h}_{\Bbb C})
				              (\overline{f_{(0)}(x)})		
		\!\mbox{\Large $)$}\\
 && \hspace{1.6em}
  -\, \theta^1\theta^2 \bar{\theta}^{\dot{1}} \bar{\theta}^{\dot{2}}
       \mbox{\Large $($}
        \sum_i
		   \square \overline{f^i_{(0)}(x)} \cdot
		   (\partial_{\bar{z}^i}\bar{h}_{\Bbb C})(\overline{f_{(0)}(x)})
        + \sum_{i,j,\mu,\nu}		
			  \eta^{\mu\nu}\,
			   \partial_\mu \overline{f^i_{(0)}(x)}\,\partial_\nu \overline{f^j_{(0)}(x)}\cdot
			(\partial_{\bar{z}^i}\partial_{\bar{z}^j}\bar{h}_{\Bbb C})(\overline{f_{(0)}(x)})
       \mbox{\Large $)$}\,.
\end{eqnarray*}
}

\bigskip

\subsection{The action functional for chiral maps\\ --- $d=3+1$, $N=1$ nonlinear sigma models with a superpotential}

With the mathematical background/preparation in Sec.\:5.1 and Sec.\:5.2,
 we can now reconstruct
  \begin{itemize}
   \item[\LARGE $\cdot$]
    [Wess \& Bagger: Chapter XXII.\:{\it Chiral models and K\"{a}hler geometry}\,]
  \end{itemize}
 in the complexified ${\Bbb Z}/2$-graded $C^\infty$-Algebraic Geometry setting of Sec.\:1.

\bigskip

\begin{definition}{\bf [action functional for chiral maps]}\;  {\rm
 Let
  \begin{itemize}
   \item[\LARGE $\cdot$]
    $Y=({\Bbb C}^n, h)$ be the K\"{a}hler manifold ${\Bbb C}^n$
     with complex coordinates\\
 	 $(z^1, \,\cdots\,, z^n)=(y^1+\sqrt{-1}y^2\,, \,\cdots\,,\, y^{2n-1}+\sqrt{-1}y^{2n})$  and 	
	 a K\"{a}hler metric determined by\\ a K\"{a}hler potential:
	  a pluri-subharmonic function $h\in C^\infty({\Bbb C}^n)$,
	
    \item[\LARGE $\cdot$]	
	  $W$ be a holomorphic function on $Y$,   and
	
    \item[\LARGE $\cdot$]	
    $\breve{f}: \widehat{X}^{\widehat{\boxplus}, \smallscriptsize} \rightarrow Y$ be a chiral map,
     with the underlying $C^\infty$-ring-homomorphism
     $$
      \breve{f}^\sharp\,:\; C^\infty(Y)\;
	    \longrightarrow\; C^\infty(\widehat{X}^{\widehat{\boxplus}})^\smallscriptsize\,.
     $$
  \end{itemize}
 Then, as a $4$-dimensional sigma model on $Y$,
  the $N=1$ supersymmetric action functional for $\breve{f}$'s  is given by the action functional
  \marginpar{\vspace{.6em}\raggedright\tiny
         \raisebox{-1ex}{\hspace{-2.4em}\LARGE $\cdot$}Cf.\,[\,\parbox[t]{8em}{Wess
		\& Bagger:\\ Eq.\:(22.1)].}}
  $$
   S^{(h, W)}(\breve{f})\;=\;
    \int_{X}\!d^{\,4}x\!
    \left(-\frac{1}{4}
    \int\! d\bar{\theta}^{\dot{2}}\,d\bar{\theta}^{\dot{1}}\,d\theta^2\, d\theta^1
      \breve{f}^\sharp(h)
 	  + \mbox{\LARGE $[$}\!
	        \int\! d\theta^2 d\theta^1
               \breve{f}^\sharp(W)		  	     	
           - \int\! d\bar{\theta}^{\bar{2}}d\bar{\theta}^{\dot{1}}	
		        \mbox{\large $($}\breve{f}^\sharp(W)\!\mbox{\large $)$}^\dag
   	      \mbox{\LARGE $]$}	
	  \!\right)\!.
  $$
 Here the factor $\,-\frac{1}{4}\,$ is added
  so that in the end the kinetic term for $f_{(0)}:X\rightarrow Y$ would have the correct/conventional coefficient.\;\;
 (Cf.\:[Wess \& Bagger: Eq.\:(22.1)].)
}\end{definition}

\vspace{10em}
\bigskip
 
\begin{flushleft}
{\bf The K\"{a}hler potential part}
\end{flushleft}
{\small
\begin{eqnarray*}
 \lefteqn{
  \breve{f}^\sharp(h)\;\;
   =\;\;
     \mbox{{\Large $($}terms of total $(\theta,\bar{\theta})$-degree = 0, 1, 2, 3{\Large $)$}} } \\
 && \hspace{2em}
   +\, \theta^1\theta^2\bar{\theta}^{\dot{1}}\bar{\theta}^{\dot{2}}\,
   \left\{\rule{0ex}{1.2em}\right.
    - \sum_i  (\partial_{z^i}h_{\Bbb C})(f_{(0)}(x), \overline{f_{(0)}(x)})
	     \cdot  \square f^i_{(0)}(x)    	
    - \sum_i (\partial_{\bar{z}^i}h_{\Bbb C})(f_{(0)}(x), \overline{f_{(0)}(x)})
	     \cdot \square\,\overline{ f^i_{(0)}(x)}        \\
     && \hspace{10em}		
	   -   \sum_{i,j}(\partial_{z^i}\partial_{z^j}h_{\Bbb C})
	            (f_{(0)}(x), \overline{f_{(0)}(x)})  \cdot
		     \sum_{\mu, \nu} \eta^{\mu\nu}
		          \partial_\mu f^i_{(0)}(x)\, \partial_\nu f^j_{(0)}(x)				    \\
     && \hspace{10em}
       - \sum_{i,j}(\partial_{\bar{z}^i}\partial_{\bar{z}^j}h_{\Bbb C})
	            (f_{(0)}(x), \overline{f_{(0)}(x)})  \cdot
		  \sum_{\mu, \nu} \eta^{\mu\nu}
		          \partial_\mu \overline{f^i_{(0)}(x)}\, \partial_\nu \overline{f^j_{(0)}(x)}     \\	
 &&  \hspace{3em}
    + \sum_{i,j}(\partial_{z^i}\partial_{\bar{z}^j}h_{\Bbb C})
	            (f_{(0)}(x), \overline{f_{(0)}(x)})
	    \cdot  \mbox{\LARGE $($}
		  2 \sum_{\mu, \nu} \eta^{\mu\nu}
		          \partial_\mu f^i_{(0)}(x)\, \partial_\nu \overline{f^j_{(0)}(x)}      \\
     && \hspace{8em}
        +\, \sqrt{-1}\, \sum_{\alpha, \dot{\beta},\mu}
		        \vartheta_\alpha \bar{\vartheta}_{\dot{\beta}} \, \bar{\sigma}^{\mu\dot{\beta}\alpha}
				  \mbox{\Large $($}
				   f^i_{(\alpha)}(x)\, \partial_\mu \overline{f^j_{(\beta)}(x)}
				   -  \partial_\mu f^i_{(\alpha)}(x)\,\overline{f^j_{(\beta)}(x)}				
				  \mbox{\Large $)$}   \\
     && \hspace{8em}
          	 +\,\vartheta_1\vartheta_2 \bar{\vartheta}_{\dot{1}}  \bar{\vartheta}_{\dot{2}}
 			        f^i_{(12)}(x)\,\overline{f^j_{(12)}(x)}			
		    \mbox{\LARGE $)$}				 \\[1.2ex]
 && \hspace{3em}
  -\, \sum_{i,j,k}(\partial_{z^i}\partial_{z^j}\partial_{\bar{z}^k}h_{\Bbb C})
	            (f_{(0)}(x), \overline{f_{(0)}(x)})
	    \cdot    \\[-1.2ex]
     && \hspace{7em}		
         \mbox{\LARGE $($}		
          \sqrt{-1}\,\sum_{\alpha, \dot{\beta}, \mu}
		    \vartheta_\alpha \bar{\vartheta}_{\dot{\beta}} \, \bar{\sigma}^{\mu\dot{\beta}\alpha}			            
		       \partial_\mu f^i_{(0)}(x) f^j_{(\alpha)}(x)\, \overline{f^k_{(\beta)}(x)}		
	      +\,\vartheta_1\vartheta_2 \bar{\vartheta}_{\dot{1}} \bar{\vartheta}_{\dot{2}}\,
	           f^i_{(1)}(x)f^j_{(2)}(x) \, \overline{f^k_{(12)}(x)}
        \mbox{\LARGE $)$}     \\[1.2ex] 		
 && \hspace{3em}
  +\,
       \sum_{i,j,k}(\partial_{z^i}\partial_{\bar{z}^j}\partial_{\bar{z}^k}h_{\Bbb C})
	            (f_{(0)}(x), \overline{f_{(0)}(x)})
		\cdot  \\[-1.2ex]
   && \hspace{7em}		
         \mbox{\LARGE $($}		
         \sqrt{-1}\sum_{\alpha, \dot{\beta}, \mu}
		   \vartheta_\alpha \bar{\vartheta}_{\dot{\beta}}\,\bar{\sigma}^{\mu\dot{\beta}\alpha}\,
		    f^i_{(\alpha)}(x)\,
		     \overline{f^j_{(\beta)}(x)}\,\partial_\mu \overline{f_{(0)}^k(x)}
          -\, \vartheta_1\vartheta_2 \bar{\vartheta}_{\dot{1}} \bar{\vartheta}_{\dot{2}}\,
    	     f^i_{(12)}(x) \cdot
	           \overline{f^j_{(1)}(x)}\, \overline{f^k_{(2)}(x)}			  	
         \!\mbox{\LARGE $)$}   \\[1.2ex] 						
 && \hspace{3em}
  +\, \sum_{i,j, k, l}
	    (\partial_{z^i}\partial_{z^j}\partial_{\bar{z}^k}\partial_{\bar{z}^l}h_{\Bbb C})
	            (f_{(0)}(x), \overline{f_{(0)}(x)})\,
		  \vartheta_1 \vartheta_2 \bar{\vartheta}_{\dot{1}} \bar{\vartheta}_{\dot{2}}\cdot
		  f^i_{(1)}(x)f^j_{(2)}(x)\,  		
		   	\overline{f^k_{(1)}(x)}\,  \overline{f^l_{(2)}(x)}
  \left.\rule{0ex}{1.2em}\right\}.			
\end{eqnarray*}
} 
 
\smallskip
 
First observe that the sum of the first four summations has a conversion
{\small
\begin{eqnarray*}
 \lefteqn{
  - \sum_{i=1}^n
      (\partial_{z^i}h_{\Bbb C})(f_{(0)}(x), \overline{f_{(0)}(x)})
		  \cdot  \square f^i_{(0)}(x)  		
  - \sum_{j=1}^n
      (\partial_{\bar{z}^j}h_{\Bbb C})(f_{(0)}(x), \overline{f_{(0)}(x)})
		    \cdot   \square \overline{f^i_{(0)}(x)}		     } \\
 && \hspace{3em}
  - \, \sum_{i,j}(\partial_{z^i}\partial_{z^j}h_{\Bbb C})(f_{(0)}(x), \overline{f_{(0)}(x)})
	    \cdot
         \sum_{\mu,\nu}\eta^{\mu\nu}\,\partial_\mu f^i_{(0)}(x)\,\partial_\nu f^j_{(0)}(x)		\\		
 && \hspace{3em}
   -\, \sum_{i,j}(\partial_{\bar{z}^i}\partial_{\bar{z}^j}h_{\Bbb C})
	            (f_{(0)}(x), \overline{f_{(0)}(x)})
	    \cdot  \sum_{\mu,\nu}\eta^{\mu\nu}\,\partial_\mu \overline{f^i_{(0)}(x)}\,
			                                         \partial_\nu \overline{f^j_{(0)}(x)} 		  \\[2ex] 		
 && =\;\;
  -\,\sum_{\mu,\nu} \eta^{\mu\nu} \partial_\mu
       \mbox{\LARGE $($}
	    \sum_i
		 (\partial_{z^i}h_{\Bbb C})(f_{(0)}(x), \overline{f_{(0)}(x)})
		      \cdot \partial_\nu f^i_{(0)}(x)
	    + \sum_j
		    (\partial_{\bar{z}^j}h_{\Bbb C})(f_{(0)}(x), \overline{f_{(0)}(x)})
		      \cdot \partial_\nu \overline{f^j_{(0)}(x)}		
	   \mbox{\LARGE $)$}   	   \\
 && \hspace{2em}
  +\, 2\,\sum_{i,j}(\partial_{z^i}\partial_{\bar{z}^j}h_{\Bbb C})
	            (f_{(0)}(x), \overline{f_{(0)}(x)})
		 \cdot
		 \sum_{\mu, \nu} \eta^{\mu\nu}
		          \partial_\mu f^i_{(0)}(x)\, \partial_\nu \overline{f^j_{(0)}(x)}\,.
\end{eqnarray*}The
} 
first summation $-\sum_{\mu,\nu}\eta^{\mu\nu}\partial_\mu(\cdots_{\,\nu})$
 gives rise to a boundary-term of the action functional $S^h(\breve{f})$ on the space-time $X$.
Second and optionally,
 one may choose to present the kinetic term for spinor fields with only $\partial_\mu f^i_{(\alpha)}$'s
 or only $\partial\mu \overline{f^i_{(\beta)}}$'s.
Suppose the former, then
{\small
\begin{eqnarray*}
 \lefteqn{
   \sqrt{-1}\,\sum_{i,j}(\partial_{z^i}\partial_{\bar{z}^j}h_{\Bbb C})
	            (f_{(0)}(x), \overline{f_{(0)}(x)})
		\sum_{\alpha, \dot{\beta}, \mu}
		  \vartheta_\alpha\bar{\vartheta}_{\dot{\beta}}\,\bar{\sigma}^{\mu\dot{\beta}\alpha}\,
		   f^i_{(\alpha)}(x)\,\partial_\mu \overline{f^j_{(\beta)}(x)}
     } \\
 && =\;\;
   \sqrt{-1}\,\sum_\mu \partial_\mu
     \mbox{\LARGE $($}
	   \sum_{i,j}
	     (\partial_{z^i}\partial_{\bar{z}^j}h_{\Bbb C})(f_{(0)}(x), \overline{f_{(0)}(x)})
		 \sum_{\alpha, \dot{\beta}}
		   \vartheta_\alpha\bar{\vartheta}_{\dot{\beta}}\,\bar{\sigma}^{\mu\dot{\beta}\alpha}\,
		   f^i_{(\alpha)}(x)\,\overline{f^j_{(\beta)}(x)}
	 \mbox{\LARGE $)$}
   \\
 && \hspace{3em}
   -\,\sqrt{-1}\,\sum_{i,j, k}(\partial_{z^i}\partial_{z^j}\partial_{\bar{z}^k}h_{\Bbb C})
	              (f_{(0)}(x), \overline{f_{(0)}(x)})
		\sum_{\alpha, \dot{\beta}, \mu}
		  \vartheta_\alpha\bar{\vartheta}_{\dot{\beta}}\,\bar{\sigma}^{\mu\dot{\beta}\alpha}\,
		   \partial_\mu f^j_{(0)}(x) f^j_{(\alpha)}(x)\, \overline{f^k_{(\beta)}(x)}
    \\
 && \hspace{3em}
   -\,\sqrt{-1}\,\sum_{i,j, k}(\partial_{z^i}\partial_{\bar{z}^j}\partial_{\bar{z}^k}h_{\Bbb C})
	              (f_{(0)}(x), \overline{f_{(0)}(x)})
		\sum_{\alpha, \dot{\beta}, \mu}
		  \vartheta_\alpha\bar{\vartheta}_{\dot{\beta}}\,\bar{\sigma}^{\mu\dot{\beta}\alpha}\,
		    f^j_{(\alpha)}(x) f^j_{(\beta)}(x)\, \partial_\mu\, \overline{f^k_{(0)}(x)}
    \\	
 && \hspace{3em}
   -\, \sqrt{-1}\,\sum_{i,j}(\partial_{z^i}\partial_{\bar{z}^j}h_{\Bbb C})
	            (f_{(0)}(x), \overline{f_{(0)}(x)})
		\sum_{\alpha, \dot{\beta}, \mu}
		  \vartheta_\alpha\bar{\vartheta}_{\dot{\beta}}\,\bar{\sigma}^{\mu\dot{\beta}\alpha}\,
		    \partial_\mu f^i_{(\alpha)}(x)\,\overline{f^j_{(\beta)}(x)}  \,,
\end{eqnarray*}which
} 
contribute another boundary-term to the action functional $S^h(\breve{f})$ on the space-time $X$.
 
\smallskip
 
Thus,
setting
{\small
\begin{eqnarray*}
  \lefteqn{
   \mbox{(space-time boundary terms)}
    }\\
  && =\;\;
    -\,\sum_{\mu,\nu} \eta^{\mu\nu} \partial_\mu
       \mbox{\LARGE $($}
	    \sum_i
		 (\partial_{z^i}h_{\Bbb C})(f_{(0)}(x), \overline{f_{(0)}(x)})
		      \cdot \partial_\nu f^i_{(0)}(x)
	    + \sum_j
		    (\partial_{\bar{z}^j}h_{\Bbb C})(f_{(0)}(x), \overline{f_{(0)}(x)})
		      \cdot \partial_\nu \overline{f^j_{(0)}(x)}		
  	   \mbox{\LARGE $)$}   	   \\
  && \hspace{3em} 	
  +\, \sqrt{-1}\,\sum_\mu \partial_\mu
     \mbox{\LARGE $($}
	   \sum_{i,j}
	     (\partial_{z^i}\partial_{\bar{z}^j}h_{\Bbb C})(f_{(0)}(x), \overline{f_{(0)}(x)})
		 \sum_{\alpha, \dot{\beta}}
		   \vartheta_\alpha\bar{\vartheta}_{\dot{\beta}}\,\bar{\sigma}^{\mu\dot{\beta}\alpha}\,
		   f^i_{(\alpha)}(x)\,\overline{f^j_{(\beta)}(x)}
	 \mbox{\LARGE $)$}             	
\end{eqnarray*}}then, 
in a final form
{\small
\begin{eqnarray*}
 \lefteqn{
  \breve{f}^\sharp(h)\;\;
   =\;\;
     \mbox{{\Large $($}terms of total $(\theta,\bar{\theta})$-degree = 0, 1, 2, 3{\Large $)$}}
	 + \mbox{(space-time boundary terms)}  } \\
 && \hspace{2em}
   +\, \theta^1\theta^2\bar{\theta}^{\dot{1}}\bar{\theta}^{\dot{2}}\,
   \left\{\rule{0ex}{1.2em}\right.\!
    \sum_{i,j}(\partial_{z^i}\partial_{\bar{z}^j}h_{\Bbb C})
	            (f_{(0)}(x), \overline{f_{(0)}(x)})
	    \cdot  \mbox{\LARGE $($}\;
		  4 \sum_{\mu, \nu} \eta^{\mu\nu}
		          \partial_\mu f^i_{(0)}(x)\, \partial_\nu \overline{f^j_{(0)}(x)}      \\
     && \hspace{8em}
        -\,2\, \sqrt{-1}\, \sum_{\alpha, \dot{\beta},\mu}
		        \vartheta_\alpha \bar{\vartheta}_{\dot{\beta}} \, \bar{\sigma}^{\mu\dot{\beta}\alpha}				 		
				      \partial_\mu f^i_{(\alpha)}(x)\,\overline{f^j_{(\beta)}(x)}				
          	 + \vartheta_1\vartheta_2 \bar{\vartheta}_{\dot{1}}  \bar{\vartheta}_{\dot{2}}\,
 			        f^i_{(12)}(x)\,\overline{f^j_{(12)}(x)}			
		    \mbox{\LARGE $)$}				 \\[1.2ex]
 && \hspace{3em}
  -\,\sum_{i,j,k}(\partial_{z^i}\partial_{z^j}\partial_{\bar{z}^k}h_{\Bbb C})
	            (f_{(0)}(x), \overline{f_{(0)}(x)})
	    \cdot    \\[-1.2ex]
     && \hspace{7em}		
         \mbox{\LARGE $($}		
          2\,\sqrt{-1}\,\sum_{\alpha, \dot{\beta}, \mu}
		    \vartheta_\alpha \bar{\vartheta}_{\dot{\beta}} \, \bar{\sigma}^{\mu\dot{\beta}\alpha}			
              \partial_\mu f^i_{(0)}(x) f^j_{(\alpha)}(x)\, \overline{f^k_{(\beta)}(x)}		
	      + \vartheta_1\vartheta_2 \bar{\vartheta}_{\dot{1}} \bar{\vartheta}_{\dot{2}}\,	
	           f^i_{(1)}(x)f^j_{(2)}(x)\, \overline{f^k_{(12)}(x)}
        \mbox{\LARGE $)$}     \\[1.2ex] 		
 && \hspace{3em}
  -\,
       \sum_{i,j,k}(\partial_{z^i}\partial_{\bar{z}^j}\partial_{\bar{z}^k}h_{\Bbb C})
	            (f_{(0)}(x), \overline{f_{(0)}(x)})\cdot
        \vartheta_1\vartheta_2 \bar{\vartheta}_{\dot{1}} \bar{\vartheta}_{\dot{2}}\,
    	     f^i_{(12)}(x) \, \overline{f^j_{(1)}(x)}\, \overline{f^k_{(2)}(x)}			   	
         \!\mbox{\LARGE $)$}   \\[1.2ex] 						
 && \hspace{3em}
  +\, \sum_{i,j, k, l}
	      (\partial_{z^i}\partial_{z^j}\partial_{\bar{z}^k}\partial_{\bar{z}^l}h_{\Bbb C})
	            (f_{(0)}(x), \overline{f_{(0)}(x)})\cdot
		  \vartheta_1 \vartheta_2 \bar{\vartheta}_{\dot{1}} \bar{\vartheta}_{\dot{2}}\, 		
		  f^i_{(1)}(x)f^j_{(2)}(x)\, \overline{f^k_{(1)}(x)}\,  \overline{f^l_{(2)}(x)}			
  \left.\rule{0ex}{1.2em}\right\},			
\end{eqnarray*}
} 
 \marginpar{\vspace{-18em}\raggedright\tiny
         \raisebox{-1ex}{\hspace{-2.4em}\LARGE $\cdot$}Cf.\,[\,\parbox[t]{8em}{Wess
		\& Bagger:\\ Eq.\:(22.7)].}}
\bigskip

\noindent
which is the re-writing of [Wess \& Bagger: Eq.\:(22.7)] in the setting of current notes:

 {\center\footnotesize
 $$
 \hspace{-1em}\begin{array}{ccl}
  \underline{\raisebox{-1ex}{\hspace{2em}}
     \mbox{[W-B: Ch.\:XXII, Eq.\,(22.7), in $\theta\theta\bar{\theta}\bar{\theta}$]}\hspace{2em}}
    & \underline{\raisebox{-1ex}{\hspace{2em}}
	        \mbox{final explicit formula for $\breve{f}^\sharp(h)$
			               in $\theta^1\theta^2\bar{\theta}^{\dot{1}}\bar{\theta}^{\dot{2}}$}\hspace{2em}}
						   \\[2ex]
  \mbox{\large $\frac{\partial^2 K_{NM}|}{\partial A^i \partial A^{\ast j}}$}F^i F^{\ast j}	
    &
	   \sum_{i,j}\partial_{z^i}\partial_{\bar{z}^j}h_{\Bbb C}\cdot
	    \vartheta_1\vartheta_2\bar{\vartheta}_{\dot{1}} \bar{\vartheta}_{\dot{2}}\cdot
		 f^i_{(12)}\,\overline{f^j_{(12)}}  \\[2ex]
    \raisebox{.2em}{
     $-\,\mbox{\large $\frac{1}{2}
            \frac{\partial^3 K_{NM}|}{\partial A^i \partial A^{\ast j}\partial A^{\ast k}}$}\,
		   F^i\bar{\chi}^j \bar{\chi}^k
      -\,\mbox{\large $\frac{1}{2}
        \frac{\partial^3 K_{NM}|}{\partial A^{\ast i} \partial A^j \partial A^k}$}\,
		   F^{\ast i}\chi^j \chi^k$
		     }
    &
	      \begin{array}{l}	
	       -\,\sum_{i,j,k}\partial_{z^i}\partial_{z^j}\partial_{\bar{z}^k}h_{\Bbb C}\cdot
			   \vartheta_1\vartheta_2\bar{\vartheta}_{\dot{1}} \bar{\vartheta}_{\dot{2}}\,
		      f^i_{(1)}f^j_{(2)} \overline{f^k_{(12)}}     \\[.6ex]
		  \hspace{2em}
          -\,\sum_{i,j,k}\partial_{z^i}\partial_{\bar{z}^j}\partial_{\bar{z}^k}h_{\Bbb C} \cdot
	              \vartheta_1\vartheta_2\bar{\vartheta}_{\dot{1}} \bar{\vartheta}_{\dot{2}}\,
		        f^i_{(12)}(x)\,\overline{f^j_{(1)}}\, \overline{f^k_{(2)}}			       	
	      \end{array}   \\[3ex]
   \raisebox{-.2em}{
     $+\, \mbox{\large $\frac{1}{4}
           \frac{\partial^4 K_{NM}|}{\partial A^i\partial A^j\partial A^{\ast k}\partial A^{\ast l}}$}\,
		    \chi^i\chi^j\bar{\chi}^k\bar{\chi}^l$
			}
	&
      \rule{0ex}{2em}
	   +\,\sum_{i,j, k, l}
		      \partial_{z^i}\partial_{z^j}\partial_{\bar{z}^k}\partial_{\bar{z}^l}h_{\Bbb C}\cdot
	             \vartheta_1\vartheta_2\bar{\vartheta}_{\dot{1}} \bar{\vartheta}_{\dot{2}}\,
		        f^i_{(1)}f^j_{(2)}\,\overline{f^k_{(1)}}\,  \overline{f^l_{(2)}}		
	   \\[1.2ex]	
  \raisebox{0em}{
   $-\,\mbox{\large $\frac{\partial^2 K_{NM}|}{\partial A^i \partial A^{\ast j}}$}\,
      \partial_mA^i \partial^m A^{\ast j}$
	  }
   &
      \rule{0ex}{2em}
      +\, 4\, \sum_{i,j}\partial_{z^i}\partial_{\bar{z}^j}h_{\Bbb C}
		 \cdot
		 \sum_{\mu, \nu} \eta^{\mu\nu}
		          \partial_\mu f^i_{(0)}\, \partial_\nu \overline{f^j_{(0)}}
       \\[2ex]
   \raisebox{0em}{
   $-\,\sqrt{-1}\,\mbox{\large
       $\frac{\partial^2 K_{NM}|}{\partial A^i \partial A^{\ast j}}$}\,
	   \bar{\chi}^j \bar{\sigma}^m \partial_m \chi^i$
	   }
	&
        \rule{0ex}{2em}
	   -\,2\, \sqrt{-1}\sum_{i,j}\partial_{z^i}\partial_{\bar{z}^j}h_{\Bbb C}\cdot
		   \sum_{\alpha,\dot{\beta},\mu} \vartheta_\alpha \bar{\vartheta}_{\dot{\beta}}\,
		    \bar{\sigma}^{\mu\dot{\beta}\alpha}\,  		
				\partial_\mu f^i_{(\alpha)}  \overline{f^j_{(\beta)}}		
	   \\[1.2ex]		
  \raisebox{0em}{
   $-\,\sqrt{-1}\, \mbox{\large
         $\frac{\partial^3 K_{NM}|}{\partial A^i \partial A^j \partial A^{\ast k}}$}\,
	      \bar{\chi}^k \bar{\sigma}^m\chi^i \partial_m A^j $
	}
  &
  \hspace{-2em}
   \rule{0ex}{2em}
   - \,2 \sqrt{-1}\,
       \sum_{i,j,k}\partial_{z^i}\partial_{z^j}\partial_{\bar{z}^k}h_{\Bbb C}
	    \cdot
		 \sum_{\alpha, \dot{\beta}, \mu}\vartheta_\alpha \bar{\vartheta}_{\dot{\beta}}\,
		    \bar{\sigma}^{\mu\dot{\beta}\alpha}\,
		  \partial_\mu f^i_{(0)} f_{(\alpha)}^j\, \overline{f^k_{(\beta)}}
 \end{array}
 $$
 } 

\bigskip

\begin{flushleft}
{\bf The superpotential part}
\end{flushleft}
Let $W=W(z^1,\,\cdots\,,\,z^n)$ be a holomorphic function on $Y$.
It follows from Sec.\:5.2 that
 
{\footnotesize
\begin{eqnarray*}
 \lefteqn{\breve{f}^\sharp(W)\;\;
  =\;\;    W(\breve{f}^\sharp(z^1),\,\cdots\,,\,\breve{f}^\sharp(z^n))   \;\;
  =\;\;    W(\breve{f}^1,\,\cdots\,,\,\breve{f}^n)}\\[1.2ex]  		
 && =\;\;
   W(f_{(0)}(x))
   + \sum_\alpha\theta^\theta\vartheta_\alpha
        \sum_i (\partial_{z^i}W)(f_{(0)}(x))\cdot f^i_{(\alpha)}(x)\\
 && \hspace{1.6em}
   +\, \theta^1\theta^2\vartheta_1\vartheta_2
        \,\mbox{\Large $($}
		    \sum_i (\partial_{z^i}W)(f_{(0)}(x))\cdot f^i_{(12)}(x)\,
		     -\,\sum_{i,j}
			     (\partial_{z^i}\partial_{z^j}W)(f_{(0)}(x))
			 	 \cdot  f^i_{(1)}(x)f^j_{(2)}(x)
	   \!\mbox{\Large $)$}  \\
 && \hspace{1.6em}
  +\, \sqrt{-1}\sum_{\alpha,\dot{\beta}; i,\mu}
         \theta^\alpha \bar{\theta}^{\dot{\beta}} \sigma^\mu_{\alpha\dot{\beta}}
			\cdot (\partial_{z^i}W)(f_{(0)}(x))\cdot \partial_\mu f^i_{(0)}(x)  \\
 && \hspace{1.6em}
  +\, \sqrt{-1}\sum_{\dot{\beta}, \alpha, \mu}
        \theta^1 \theta^2 \bar{\theta}^{\dot{\beta}}
         \vartheta_\alpha \sigma^{\mu\alpha}_{\;\;\;\;\dot{\beta}}   		
		\mbox{\Large $($}
		  \sum_i (\partial_{z^i}W)(f_{(0)}(x))\cdot \partial_\mu f^i_{(\alpha)}(x)				
         + \sum_{i,j}(\partial_{z^i}\partial_{z^j}W)(f_{(0)}(x))
                 \cdot   f^i_{(\alpha)}(x)\,\partial_\mu f^j_{(0)}(x)		
		\!\mbox{\Large $)$}\\
 && \hspace{1.6em}
  -\, \theta^1\theta^2 \bar{\theta}^{\dot{1}} \bar{\theta}^{\dot{2}}
       \mbox{\Large $($}
        \sum_i(\partial_{z^i}W)(f_{(0)}(x))\cdot \square f^i_{(0)}(x)
        + \sum_{i,j,\mu,\nu}
		     (\partial_{z^i}\partial_{z^j}W)(f_{(0)}(x))\cdot \eta^{\mu\nu}\,
			   \partial_\mu f^i_{(0)}(x)\,\partial_\nu f^j_{(0)}(x)
       \mbox{\Large $)$}
\end{eqnarray*}
}
 \marginpar{\vspace{-14em}\raggedright\tiny
         \raisebox{-1ex}{\hspace{-2.4em}\LARGE $\cdot$}Cf.\,[\,\parbox[t]{8em}{Wess
		\& Bagger:\\ Eq.\:(22.3)].}}

\noindent
and
 \marginpar{\vspace{2.4em}\raggedright\tiny
         \raisebox{-1ex}{\hspace{-2.4em}\LARGE $\cdot$}Cf.\,[\,\parbox[t]{8em}{Wess
		\& Bagger:\\ Eq.\:(22.4)].}}

{\footnotesize
\begin{eqnarray*}
 \lefteqn{  (\breve{f}^\sharp(W))^\dag\;\;
  =\;\;    \overline{W}( (\breve{f}^\sharp(z^1)^\dag,\,\cdots\,,\,(\breve{f}^\sharp(z^n)^\dag)   \;\;
  =\;\;    \overline{W}(\breve{f}^{1\dag},\,\cdots\,,\,\breve{f}^{n\dag})}\\[1.2ex]  		
 && =\;\;
   \overline{W}(\overline{f_{(0)}(x)})
   - \sum_{\dot{\beta}}\bar{\theta}^{\dot{\beta}} \bar{\vartheta}_{\dot{\beta}}
        \sum_i (\partial_{\bar{z}^i}\overline{W})(\overline{f_{(0)}(x)})
		      \cdot \overline{f^i_{(\beta)}(x)}\\
 && \hspace{1.6em}
   +\, \bar{\theta}^{\dot{1}}\bar{\theta}^{\dot{2}}
           \bar{\vartheta}_{\dot{1}} \bar{\vartheta}_{\dot{2}}
        \,\mbox{\Large $($}
		    \sum_i (\partial_{\bar{z}^i}\overline{W})
			          (\overline{f_{(0)}(x)})\cdot \overline{f^i_{(12)}(x)}\,
		     -\,\sum_{i,j}
			     (\partial_{\bar{z}^i}\partial_{\bar{z}^j}\overline{W})(\overline{f_{(0)}(x)})
			 	 \cdot
				    \overline{f^i_{(1)}(x)}\, \overline{f^j_{(2)}(x)}
	   \!\mbox{\Large $)$}  \\
 && \hspace{1.6em}
  -\, \sqrt{-1}\sum_{\alpha,\dot{\beta}; i,\mu}
         \theta^\alpha \bar{\theta}^{\dot{\beta}} \sigma^\mu_{\alpha\dot{\beta}}
			\cdot (\partial_{\bar{z}^i}\overline{W})(\overline{f_{(0)}(x)})
			  \cdot \partial_\mu \overline{f^i_{(0)}(x)}  \\
 && \hspace{1.6em}
  -\, \sqrt{-1}\sum_{\alpha; \dot{\beta}, \mu}
        \theta^\alpha \bar{\theta}^{\dot{1}} \bar{\theta}^{\dot{2}}
         \bar{\vartheta}_{\dot{\beta}} \sigma^{\mu\dot{\beta}}_\alpha  		
		\mbox{\Large $($}
		  \sum_i (\partial_{\bar{z}^i}\overline{W})(\overline{f_{(0)}(x)})
		    \cdot \partial_\mu \overline{f^i_{(\beta)}(x)}				
         + \sum_{i,j}(\partial_{\bar{z}^i}
		         \partial_{\bar{z}^j}\overline{W})(\overline{f_{(0)}(x)})
                 \cdot   \overline{f^i_{(\beta)}(x)}\,\partial_\mu \overline{f^j_{(0)}(x)}		
		\!\mbox{\Large $)$}\\
 && \hspace{1.6em}
  -\, \theta^1\theta^2 \bar{\theta}^{\dot{1}} \bar{\theta}^{\dot{2}}
       \mbox{\Large $($}
        \sum_i(\partial_{\bar{z}^i}\overline{W})(\overline{f_{(0)}(x)})\cdot
		   \square \overline{f^i_{(0)}(x)}
        + \sum_{i,j,\mu,\nu}
		     (\partial_{\bar{z}^i}\partial_{\bar{z}^j}\overline{W})(\overline{f_{(0)}(x)})
			  \cdot \eta^{\mu\nu}\,
			   \partial_\mu \overline{f^i_{(0)}(x)}\,\partial_\nu \overline{f^j_{(0)}(x)}
       \mbox{\Large $)$}\,.
\end{eqnarray*}
}

\vspace{10em}
\bigskip

\begin{flushleft}
{\bf The action functional of $d = 3 + 1$, $N = 1$ sigma models with a superpotential}
\end{flushleft}
\noindent
Combining the previous two themes,
 one has the action functional of $d = 3 + 1$, $N = 1$ sigma models with a superpotential
 given explicitly by
{\small
\begin{eqnarray*}
 \lefteqn{
  S^{(h, W)}(\breve{f})\;=\;
   \int_X\!d^{\,4}x
   \left(-\frac{1}{4}
   \int\! d\bar{\theta}^{\dot{2}}\,d\bar{\theta}^{\dot{1}}\,d\theta^2\, d\theta^1
     \breve{f}^\sharp(h)
	 + \mbox{\LARGE $[$}\!
	       \int\! d\theta^2 d\theta^1
              \breve{f}^\sharp(W)		  	     	
          - \int\! d\bar{\theta}^{\dot{2}}d\dot{\theta}^{\dot{1}}	
		       \mbox{\large $($}\breve{f}^\sharp(W)\!\mbox{\large $)$}^\dag
   	     \mbox{\LARGE $]$}	
	 \!\right) }\\
 && =\;\;
  \int_X\! d^{\,4}x
   \left\{\rule{0ex}{1.2em}\right.
    -\,\sum_{i,j, \mu,\nu}
       (\partial_{z^i} \partial_{\bar{z}^j}h_{\Bbb C})(f_{(0)}(x), \overline{f_{(0)}(x)})\,
	    \eta^{\mu\nu}\partial_\mu f^i_{(0)}(x)\, \partial_\nu \overline{f^j_{(0)}(x)}  	
		\\
   && \hspace{3em}
	 +\, \mbox{\Large $\frac{\sqrt{-1}}{2}$}
	    \sum_{i,j,\alpha, \dot{\beta}, \mu}
		 (\partial_{z^i} \partial_{\bar{z}^j}h_{\Bbb C})(f_{(0)}(x), \overline{f_{(0)}(x)})
		  \cdot \vartheta_\alpha \dot{\vartheta}_{\dot{\beta}}\cdot
		  \bar{\sigma}^{\mu\dot{\beta}\alpha}\,
		   \partial_\mu f^i_{(\alpha)}(x)\,\overline{f^j_{(\beta)}(x)}		 	
		\\
   && \hspace{3em}
     -\, \mbox{\Large $\frac{1}{4}$}
	    \sum_{i,j}
		 (\partial_{z^i} \partial_{\bar{z}^j}h_{\Bbb C})(f_{(0)}(x), \overline{f_{(0)}(x)})
		 \cdot \vartheta_1\vartheta_2 \bar{\vartheta}_{\dot{1}} \bar{\vartheta}_{\dot{2}}\cdot
		 f^i_{(12)}(x)\,\overline{f^j_{(12)}(x)}	
		\\
   && \hspace{3em}
     +\, \sum_{i,j, k}
		 (\partial_{z^i}\partial_{z^j} \partial_{\bar{z}^k}h_{\Bbb C})
		      (f_{(0)}(x), \overline{f_{(0)}(x)}) \cdot
		  \\[-3ex]
		  && \hspace{7em}
		 \mbox{\Large $($}
		  \mbox{\Large $\frac{1}{4}$}\,
		    \vartheta_1\vartheta_2 \bar{\vartheta}_{\dot{1}} \bar{\vartheta}_{\dot{2}}\cdot
			f^i_{(1)}(x) f^j_{(2)}(x)\,\overline{f^k_{(12)}(x)}
			+ \mbox{\Large $\frac{\sqrt{-1}}{2}$}
			   \sum_{\alpha, \dot{\beta}, \mu}
			    \vartheta_\alpha \bar{\vartheta}_{\dot{\beta}}\cdot
				\bar{\sigma}^{\mu\dot{\beta}\alpha} \cdot
				\partial_\mu f^i_{(0)}(x)\,f^j_{(\alpha)}(x) \,\overline{f^k_{(\beta)}(x)}
		 \mbox{\Large $)$ }      		
		 \\
	&& \hspace{3em}
	 +\, \mbox{\Large $\frac{1}{4}$}
	    \sum_{i,j, k}
		 (\partial_{z^i}\partial_{\bar{z}^j} \partial_{\bar{z}^k}h_{\Bbb C})
		      (f_{(0)}(x), \overline{f_{(0)}(x)}) \cdot
		  \vartheta_1\vartheta_2 \bar{\vartheta}_{\dot{1}} \bar{\vartheta}_{\dot{2}}\cdot
		  f^i_{(12)}(x)\, \overline{f^j_{(1)}(x)}\,\overline{f^k_{(2)}(x)}		
         \\		
    && \hspace{3em}
	 -\, \mbox{\Large $\frac{1}{4}$}
	    \sum_{i,j, k, l}
		 (\partial_{z^i}\partial_{z^j}\partial_{\bar{z}^k} \partial_{\bar{z}^l}h_{\Bbb C})
		      (f_{(0)}(x), \overline{f_{(0)}(x)}) \cdot
		  \vartheta_1\vartheta_2 \bar{\vartheta}_{\dot{1}} \bar{\vartheta}_{\dot{2}}\cdot
		  f^i_{(1)}(x) f^j_{(2)}(x)\, \overline{f^k_{(1)}(x)}\,\overline{f^l_{(2)}(x)}		
         \\		  		
    && \hspace{3em}
	  +\, \sum_i (\partial_{z^i}W)(f_{(0)})\cdot \vartheta_1\vartheta_2 \cdot f^i_{(12)}(x)
	    - \sum_{i,j}  (\partial_{z^i}\partial_{z^j} W)(f_{(0)}(x))\cdot
		    \vartheta_1\vartheta_2\cdot f^i_{(1)}(x) f^j_{(2)}(x)
      \\	
    && \hspace{3em}
	  -\, \sum_i (\partial_{\bar{z}^i}\overline{W})(\overline{f_{(0)}})  \cdot
	        \bar{\vartheta}_{\dot{1}} \bar{\vartheta}_{\dot{2}} \cdot  \overline{f^i_{(12)}(x)}
	    + \sum_{i,j}
		     (\partial_{\bar{z}^i}\partial_{\bar{z}^j}\overline{W})(\overline{f_{(0)}(x)})\cdot
		    \bar{\vartheta}_{\dot{1}}\bar{\vartheta}_{\dot{2}}\cdot
			   \overline{f^i_{(1)}(x)}\, \overline{f^j_{(2)}(x)}
   \left.\rule{0ex}{1.2em}\right\}\,,
\end{eqnarray*}up} 
to boundary terms.
(Caution that
   $(d\theta^2 d\theta^1)^\dag    = d\bar{\theta}^{\dot{1}}d\bar{\theta}^{\dot{2}}
	   =\,-\,d\bar{\theta}^{\dot{2}}d\bar{\theta}^{\dot{1}}$.)

Recall that in terms of the K\"{a}hler potential $h_{\Bbb C}$,
   the K\"{a}hler metric $g$ on $Y={\Bbb C}^n$,
   the Christoffel symbols for the Levi-Civita connection associated to the metric,   and
   the curvature tensor
 are given by the {\it Basic Identities}:
 \begin{eqnarray*}
  & g_{i\bar{j}}\;=\; g_{\bar{j}i}\;=\; \partial_i\partial_{\bar{j}}h_{\Bbb C}\,,\hspace{2em}
  \sum_s g_{s\bar{k}}\,\Gamma^s_{ji}\;=\; \partial_i g_{j\bar{k}}\,,\hspace{2em}
  \sum_s g_{\bar{s}k}\,\Gamma^{\bar{s}}_{\bar{j}\bar{i}}\;
     =\; \partial_{\bar{i}} g_{\bar{j}k}\,,    & \\[1.2ex]
  & \begin{array}{rcl}
       R_{i\bar{k}j\bar{l}} & =
	    & \partial_j\partial_{\bar{l}} g_{i\bar{k}}
	        - \sum_{s,t} g^{\bar{s}t}\partial_j g_{i\bar{s}}\,\partial_{\bar{l}} g_{\bar{k}t}\;\;
		   =\;\; \partial_j\partial_{\bar{l}} g_{i\bar{k}}
		           - \sum_{s, t }g_{s\bar{t}} \Gamma^s_{ij} \Gamma^{\bar{t}}_{\bar{k}\bar{l}}
				                                     \\[1ex]
       & =
  	      & \partial_i\partial_j\partial_{\bar{k}}\partial_{\bar{l}} h_{\Bbb C}
	     - \sum_{s, t} g^{\bar{s}t}\,\partial_{\bar{s}}\partial_i\partial_j h_{\Bbb C}
		                              \cdot \partial_t \partial_{\bar{k}} \partial_{\bar{l}} h_{\Bbb C}\,,
     \end{array}									
 \end{eqnarray*}
(e.g.\ [K-N: vol.\:II: Sec.\:IX.5]).
It follows that, up to boundary terms and with some re-arrangement of terms and relabelling,

{\small
\begin{eqnarray*}
 \lefteqn{
  S^{(h, W)}(\breve{f})\;=\;
   \int_X\!d^{\,4}x
   \left(-\frac{1}{4}
   \int\! d\bar{\theta}^{\dot{2}}\,d\bar{\theta}^{\dot{1}}\,d\theta^2\, d\theta^1
     \breve{f}^\sharp(h)
	 + \mbox{\LARGE $[$}\!
	       \int\! d\theta^2 d\theta^1
              \breve{f}^\sharp(W)		  	     	
          - \int\! d\bar{\theta}^{\dot{2}}d\dot{\theta}^{\dot{1}}	
		       \mbox{\large $($}\breve{f}^\sharp(W)\!\mbox{\large $)$}^\dag
   	     \mbox{\LARGE $]$}	
	 \!\right) }\\
 && =\;\;
  \int_X\! d^{\,4}x
   \left\{\rule{0ex}{1.2em}\right.
    -\,\sum_{i,j, \mu,\nu}
        g_{i\bar{j}}(f_{(0)}(x), \overline{f_{(0)}(x)})\,
	    \eta^{\mu\nu}\partial_\mu f^i_{(0)}(x)\, \partial_\nu \overline{f^j_{(0)}(x)}  	
		\\
   && \hspace{3em}
	 +\, \mbox{\Large $\frac{\sqrt{-1}}{2}$}
	    \sum_{i,j,\alpha, \dot{\beta}, \mu}
		 \vartheta_\alpha \dot{\vartheta}_{\dot{\beta}}\cdot
		     g_{i\bar{j}}(f_{(0)}(x), \overline{f_{(0)}(x)})\,
		  \bar{\sigma}^{\mu\dot{\beta}\alpha}\,
		   D_\mu f^i_{(\alpha)}(x)\,\overline{f^j_{(\beta)}(x)}		 	
		\\
   && \hspace{3em}
	 -\, \mbox{\Large $\frac{1}{4}$}\,
	    \vartheta_1\vartheta_2 \bar{\vartheta}_{\dot{1}} \bar{\vartheta}_{\dot{2}}\cdot
	    \sum_{i,j, k, l}
		  g_{i\bar{k}, j\bar{l}}(f_{(0)}(x), \overline{f_{(0)}(x)}) \,	
		  f^i_{(1)}(x) f^j_{(2)}(x)\, \overline{f^k_{(1)}(x)}\,\overline{f^l_{(2)}(x)}		
         \\		  			
   && \hspace{3em}
     -\, \mbox{\Large $\frac{1}{4}$}\,
	    \vartheta_1\vartheta_2 \bar{\vartheta}_{\dot{1}} \bar{\vartheta}_{\dot{2}}\cdot
	    \sum_{i,j}
		 g_{i\bar{j}}(f_{(0)}(x), \overline{f_{(0)}(x)})\,
		 f^i_{(12)}(x)\,\overline{f^j_{(12)}(x)}	
		\\
   && \hspace{3em}
      -\,\vartheta_1\vartheta_2\cdot
   		\sum_{i,j}
		 (\partial_{z^i}\partial_{z^j} W)(f_{(0)}(x))\, f^i_{(1)}(x) f^j_{(2)}(x)
        +  \bar{\vartheta}_{\dot{1}}\bar{\vartheta}_{\dot{2}}\cdot
  	          \sum_{i,j}
		     (\partial_{\bar{z}^i}\partial_{\bar{z}^j}\overline{W})(\overline{f_{(0)}(x)})\,		
			    \overline{f^i_{(1)}(x)}\, \overline{ f^j_{(2)}(x)}			
       \\
    && \hspace{3em}
	  +\, \sum_i
	      f^i_{(12)}(x)\,
       \mbox{\Large $($}\,		
		  \vartheta_1\vartheta_2\cdot (\partial_{z^i}W)(f_{(0)})
		  \\[-2ex]
		&& \hspace{10em}
	      + \mbox{\Large $\frac{1}{4}$}
		     \vartheta_1\vartheta_2 \bar{\vartheta}_{\dot{1}} \bar{\vartheta}_{\dot{2}}\cdot
	         \sum_{j, k, l}
			   g_{i\bar{l}}(f_{(0)}(x), \overline{f_{(0)}(x)}) \,
		       \Gamma^{\bar{l}}_{\bar{j}\bar{k}}(f_{(0)}(x), \overline{f_{(0)}(x)}) \,
		       \overline{f^j_{(1)}(x)}\,\overline{f^k_{(2)}(x)}				
       \mbox{\Large $)$}		
      \\	
    && \hspace{3em}
	  +\,\sum_i
	      \overline{f^i_{(12)}(x)}	
		  \mbox{\Large $($}	
	       - \bar{\vartheta}_{\dot{1}} \bar{\vartheta}_{\dot{2}} \cdot
		        (\partial_{\bar{z}^i}\overline{W})(\overline{f_{(0)}})
				\\[-2ex]
		&& \hspace{10em}
		    + \mbox{\Large $\frac{1}{4}$}\,
			    \vartheta_1\vartheta_2 \bar{\vartheta}_{\dot{1}} \bar{\vartheta}_{\dot{2}}\cdot
       		     \sum_{j, k, l}
				   g_{l\bar{i}}(f_{(0)}(x), \overline{f_{(0)}(x)}) \,			
		           \Gamma^l_{jk}(f_{(0)}(x), \overline{f_{(0)}(x)}) \,			
			       f^j_{(1)}(x) f^k_{(2)}(x)		
          \mbox{\Large $)$}		
   \left.\rule{0ex}{1.2em}\right\}\,,
\end{eqnarray*}where} 
 \marginpar{\vspace{-26.4em}\raggedright\tiny
         \raisebox{-1ex}{\hspace{-2.4em}\LARGE $\cdot$}Cf.\,[\,\parbox[t]{8em}{Wess
		\& Bagger:\\ Eq.\:(22.10)] \\
		before purging\\ nilpotent\\ Grassmann\\ factors.}}
 $$
   D_\mu f^i_{(\alpha)}(x) \;
    :=\;   \partial_\mu f^i_{(\alpha)}(x)
	          + \sum_{j,k} \Gamma^i_{jk}(f_{(0)}(x), \overline{f_{(0)}(x)})\,
			       \partial_\mu f^j_{(0)}(x) f^k_{(\alpha)}(x)
 $$
 is the covariant derivative along $\partial_\mu$ from the induced connection on
  $({\cal S}^\prime\oplus {\cal S}^{\prime\prime})
       \otimes_{\,{\cal O}_X^{\,\Bbb C}}\!\! f_{(0)}^\ast{\cal T}_Y$.
This reproduces [Wess \& Bagger: Eq.\:(22.10)]
  completely via the function ring of towered superspace as defined in the current notes
without imposing the purge-evaluation map to remove the even nilpotent factors
  $$
    \vartheta_1\vartheta_2\,,\;\;\;\;
	\vartheta_\alpha\bar{\vartheta}_{\dot{\beta}}\,,\;\;\;\;
    \bar{\vartheta}_{\dot{1}}	\bar{\vartheta}_{\dot{2}}\,,\;\;\;\;
	 \vartheta_1\vartheta_2 \bar{\vartheta}_{\dot{1}}	\bar{\vartheta}_{\dot{2}}
  $$	
 in the expression.

In the form above
the action functional $S^{(h, W)}$ remains to take values in a Grassmann algebra, albeit even.
To render it real-valued, one needs to impose a purge-evaluation map.
Here we will take\footnote{Note
                                  that $\vartheta_\alpha \bar{\vartheta}_{\dot{\beta}}$'s
             				       appear only in the kinetic term
   				                   $\sum_{\alpha, \dot{\beta}, \mu, i,j}
								               \vartheta_\alpha \bar{\vartheta}_{\dot{\beta}}\,
				                               \bar{\sigma}^{\mu\dot{\beta}\alpha}\, g_{i\bar{j}}\,
					                          D_\mu f^i_\alpha\, \overline{f^j_{(\beta)}}$
           				            for the mappino fields $(f^i_{(\alpha)})_{i,\alpha}$.
								   One is free to set them to be either all $1$ or all $-1$.
								   The choice here is to match [Wess \& Bagger] even with the sign.
								   For the other three. once setting $\vartheta_1\vartheta_2\rightsquigarrow 1$, then
								     the purge-evaluation rule,
									    $\bar{\vartheta}_{\dot{1}}	\bar{\vartheta}_{\dot{2}}\rightsquigarrow -1$ and
	                                    $\vartheta_1\vartheta_2 \bar{\vartheta}_{\dot{1}}	\bar{\vartheta}_{\dot{2}}
										   \rightsquigarrow -1$,
									   is governed by the invariance under the twisted complex conjugation and
									    the wish to make the rule as close to product-preserving as possible.										
								   It turns out that this is also the choice that makes the final form of $S^{(h,W)}(\breve{f})$
								     term-by-term sign-identical with [Wess \& Bagger].
				                    } 
 $$
  \Pev\;:\;
    \vartheta_1\vartheta_2\;\rightsquigarrow\; 1\,,\;\;\;\;
	\vartheta_\alpha\bar{\vartheta}_{\dot{\beta}}\; \rightsquigarrow\; -1\,,\;\;\;\;
    \bar{\vartheta}_{\dot{1}}	\bar{\vartheta}_{\dot{2}}\; \rightsquigarrow\; -1\,,\;\;\;\;
	 \vartheta_1\vartheta_2 \bar{\vartheta}_{\dot{1}}	\bar{\vartheta}_{\dot{2}}\;
	      \rightsquigarrow\; -1\,.
 $$
Then, in the next-to-final form and up to boundary terms,
%
%
{\footnotesize
\begin{eqnarray*}
 \lefteqn{
  S^{(h, W)}(\breve{f})\;=\;
   \int_X\!d^{\,4}x\,
    \Pev
   \!\!\left(-\frac{1}{4}
   \int\! d\bar{\theta}^{\dot{2}}\,d\bar{\theta}^{\dot{1}}\,d\theta^2\, d\theta^1
     \breve{f}^\sharp(h)
	 + \mbox{\LARGE $[$}\!
	       \int\! d\theta^2 d\theta^1
              \breve{f}^\sharp(W)		  	     	
          - \int\! d\bar{\theta}^{\dot{2}}d\dot{\theta}^{\dot{1}}	
		       \mbox{\large $($}\breve{f}^\sharp(W)\!\mbox{\large $)$}^\dag
   	     \mbox{\LARGE $]$}	
	 \right) }\\
 && =\;\;
  \int_X\! d^{\,4}x
   \left\{\rule{0ex}{1.2em}\right.
    -\,\sum_{i,j, \mu,\nu}
        g_{i\bar{j}}(f_{(0)}(x), \overline{f_{(0)}(x)})\,
	    \eta^{\mu\nu}\partial_\mu f^i_{(0)}(x)\, \partial_\nu \overline{f^j_{(0)}(x)}  	
		\\
   && \hspace{3em}
	 -\, \mbox{\large $\frac{\sqrt{-1}}{2}$}
	    \sum_{i,j,\alpha, \dot{\beta}, \mu}
		     g_{i\bar{j}}(f_{(0)}(x), \overline{f_{(0)}(x)})\,
		  \bar{\sigma}^{\mu\dot{\beta}\alpha}\,
		   D_\mu f^i_{(\alpha)}(x)\,\overline{f^j_{(\beta)}(x)}		 	
		\\
   && \hspace{3em}
	 +\, \mbox{\large $\frac{1}{4}$}\,
	    \sum_{i,j, k, l}
		  g_{i\bar{k}, j\bar{l}}(f_{(0)}(x), \overline{f_{(0)}(x)}) \,	
		  f^i_{(1)}(x) f^j_{(2)}(x)\, \overline{f^k_{(1)}(x)}\,\overline{f^l_{(2)}(x)}		
     +\, \mbox{\large $\frac{1}{4}$}\,
	    \sum_{i,j}
		 g_{i\bar{j}}(f_{(0)}(x), \overline{f_{(0)}(x)})\,
		 f^i_{(12)}(x)\,\overline{f^j_{(12)}(x)}	
		\\
   && \hspace{3em}
      -\,\sum_{i,j}
		 (\partial_{z^i}\partial_{z^j} W)(f_{(0)}(x))\, f^i_{(1)}(x) f^j_{(2)}(x)
        - \sum_{i,j}
		     (\partial_{\bar{z}^i}\partial_{\bar{z}^j}\overline{W})(\overline{f_{(0)}(x)})\,		
			    \overline{f^i_{(1)}(x)}\, \overline{ f^j_{(2)}(x)}			
       \\
    && \hspace{3em}
	  +\, \sum_i
	      f^i_{(12)}(x)\,
       \mbox{\Large $($}\,		
		(\partial_{z^i}W)(f_{(0)})
	      - \mbox{\large $\frac{1}{4}$}
	         \sum_{j, k, l}
			   g_{i\bar{l}}(f_{(0)}(x), \overline{f_{(0)}(x)}) \,
		       \Gamma^{\bar{l}}_{\bar{j}\bar{k}}(f_{(0)}(x), \overline{f_{(0)}(x)}) \,
		       \overline{f^j_{(1)}(x)}\,\overline{f^k_{(2)}(x)}				
       \mbox{\Large $)$}		
      \\	
    && \hspace{3em}
	  +\,\sum_i
	      \overline{f^i_{(12)}(x)}	
		  \mbox{\Large $($}	
		     (\partial_{\bar{z}^i}\overline{W})(\overline{f_{(0)}})
		    - \mbox{\large $\frac{1}{4}$}\,
       		     \sum_{j, k, l}
				   g_{l\bar{i}}(f_{(0)}(x), \overline{f_{(0)}(x)}) \,			
		           \Gamma^l_{jk}(f_{(0)}(x), \overline{f_{(0)}(x)}) \,			
			       f^j_{(1)}(x) f^k_{(2)}(x)		
          \mbox{\Large $)$}		
   \left.\rule{0ex}{1.2em}\right\}\,.
\end{eqnarray*}}(Cf.\:[Wess \& Bagger: Eq.\:(22.10)].)  
 \marginpar{\vspace{-18em}\raggedright\tiny
         \raisebox{-1ex}{\hspace{-2.4em}\LARGE $\cdot$}Cf.\,[\,\parbox[t]{8em}{Wess
		\& Bagger:\\ Eq.\:(22.10)].}}
 
Observe next that
  $S^{(h, W)}(\breve{f})$ contains no kinetic terms
       for $f^i_{(12)}$'s and $\overline{f^i_{(12)}}$'s and, hence,
 these components are nondynamical and 	
 the equations of motion for them from the first variation of $S^{(h, W)}(\breve{f})$
  with respect to $f^i_{(12)}$ or $\overline{f^i_{(12)}}$ are purely algebraic in
     $f^i_{(12)}$, $\overline{f^j_{(12)}}$:
 {\footnotesize
 \begin{eqnarray*}
   \mbox{\large $\frac{1}{4}$}
      \sum_i g_{i\bar{j}}(f_{(0)}(x), \overline{f_{(0)}(x)}) f^i_{(12)}(x)	
	+\, (\partial_{\bar{z}^j}\overline{W})(\overline{f_{(0)}(x)})
	   \hspace{10em}
      \\[-1.2ex]
 	\hspace{3em}
     - \mbox{\large $\frac{1}{4}$}
	     \sum_{i, k,l}
	      g_{i\bar{j}}(f_{(0)}(x), \overline{f_{(0)}(x)})
		    \Gamma^i_{kl}(f_{(0)}(x), \overline{f_{(0)}(x)}) f^k_{(1)}(x) f^l_{(2)}(x)
	 & = & 0\,,\;\;\;\; \mbox{for all $\bar{j}$}\,;
	 \\[1.2ex]
	\mbox{\large $\frac{1}{4}$}
      \sum_j g_{i\bar{j}}(f_{(0)}(x), \overline{f_{(0)}(x)}) \,\overline{f^j_{(12)}(x)}	
	+\, (\partial_{z^i}W)(f_{(0)}(x))
	   \hspace{10em}
      \\[-1.2ex]
 	\hspace{3em}
     - \mbox{\large $\frac{1}{4}$}
	     \sum_{j, k, l}
	      g_{i\bar{j}}(f_{(0)}(x), \overline{f_{(0)}(x)})
		    \Gamma^{\bar{j}}_{\bar{k}\bar{l}}(f_{(0)}(x), \overline{f_{(0)}(x)})\,
			   \overline{f^k_{(1)}(x)}\, \overline{ f^l_{(2)}(x)}
	 & = & 0\,,\;\;\;\; \mbox{for all $i$}\,.    	
 \end{eqnarray*}}Which  
gives
 {\footnotesize
 \begin{eqnarray*}
   f^i_{(12)}(x)    & =
    & -\,4 \sum_j g^{i\bar{j}}(f_{(0)}(x), \overline{f_{(0)}(x)})\,
	             (\partial_{\bar{z}^j}\overline{W})(\overline{f_{(0)}(x)})
       +  \sum_{k,l}
	        \Gamma^i_{kl}(f_{(0)}(x), \overline{f_{(0)}(x)})\,
			    f^k_{(1)}(x) f^l_{(2)}(x)\,, \\
   \overline{f^j_{(12)}(x)}   & =
    & -\,4 \sum_i
	             g^{i\bar{j}}(f_{(0)}(x), \overline{f_{(0)}(x)})
	              (\partial_{z^i} W)(f_{(0)}(x))
       + \sum_{k,l}
	        \Gamma^{\bar{j}}_{\bar{k}\bar{l}}(f_{(0)}(x), \overline{f_{(0)}(x)})
		       \overline{f^k_{(1)}(x)}\, \overline{f^l_{(2)}(x)}\,.    	
 \end{eqnarray*}
Plugging this into $S^{(h,W)}(\breve{f})$
   to remove\footnote{\makebox[12em][l]{\it Note for mathematicians}
                        This is called by physicists
			              ``{\it integrating out the non-dynamical $f^i_{(12)}$ and $\overline{f^j_{(0)}}$}"
					      from the perspective of path-integrals in Quantum Field Theory.
													   }
 the nondynamical component fields	$f^i_{(12)}$ and $\overline{f^j_{(12)}}$     and
 employing the Basic Identities,
one obtains the final form of the action functional after two sets of cancellations
--- one leading to the curvature term and the other involving superpotential terms ---

{\small
\begin{eqnarray*}
 \lefteqn{
  S^{(h,W)}(f_{(0)}, (f_{(\alpha)})_{\alpha=1,2})\;\;
     =\;\;    S^{(g,W)}(f_{(0)}, (f_{(1)})_{\alpha=1,2})        }\\
 && =\;\;
  \int_X\! d^{\,4}x
    \left\{\rule{0ex}{1.2em}\right.\!
	  -\,\sum_{i,j, \mu,\nu}
        g_{i\bar{j}}(f_{(0)}(x), \overline{f_{(0)}(x)})\,
	    \eta^{\mu\nu}\partial_\mu f^i_{(0)}(x)\, \partial_\nu \overline{f^j_{(0)}(x)}  	
		\\
 && \hspace{3em}
	 -\, \mbox{\large $\frac{\sqrt{-1}}{2}$}
	    \sum_{i,j,\alpha, \dot{\beta}, \mu}
		     g_{i\bar{j}}(f_{(0)}(x), \overline{f_{(0)}(x)})\,
		  \bar{\sigma}^{\mu\dot{\beta}\alpha}\,
		   D_\mu f^i_{(\alpha)}(x)\,\overline{f^j_{(\beta)}(x)}		 	
		\\		
  && \hspace{3em}
	 +\, \mbox{\large $\frac{1}{4}$}\,
	    \sum_{i,j, k, l}
		  R_{i\bar{k} j\bar{l}}(f_{(0)}(x), \overline{f_{(0)}(x)}) \,	
		  f^i_{(1)}(x) f^j_{(2)}(x)\, \overline{f^k_{(1)}(x)}\,\overline{f^l_{(2)}(x)}	
		\\
  && \hspace{3em}
     -\,4\, \sum_{i,j}
	      g^{i\bar{j}}(f_{(0)}(x), \overline{f_{(0)}(x)})\,
		   (\partial_{z^i} W)(f_{(0)}(x))\,
		   (\partial_{\bar{z}^j}\overline{W})(\overline{f_{(0)}(x)})			
	    \\
   && \hspace{3em}
    -\, \sum_{i,j}
	      (D_{z^i}\partial_{z^j}W)(f_{(0)}(x), \overline{f_{(0)}(x)})\,
		     f^i_{(1)}(x) f^j_{(2)}(x)
     - \sum_{i,j}
	      (D_{\bar{z}^i}\partial_{\bar{z}^j}\overline{W})
		              (f_{(0)}(x), \overline{f_{(0)}(x)})\,
		     \overline{f^i_{(1)}(x)}\, \overline{f^j_{(2)}(x)}			 				
    \left.\rule{0ex}{1.2em}\right\},
\end{eqnarray*}where} 
 \marginpar{\vspace{-16em}\raggedright\tiny
         \raisebox{-1ex}{\hspace{-2.4em}\LARGE $\cdot$}Cf.\,[\,\parbox[t]{20em}{Wess
		\& Bagger:\\ Eq.\:(22.12)].}}
 $$
   D_{z^i}\partial_{z^j} W\;
    =\;   \partial_{z^i}\partial_{z^j} W
	          - \sum_k \Gamma^s_{ij}\partial_{z^k} W
    \hspace{2em}\mbox{and}\hspace{2em}			
   D_{\bar{z}^i}\partial_{\bar{z}^j}\overline{W}\;
    =\;   \partial_{\bar{z}^i}\partial_{\bar{z}^j}\overline{W}
	          - \sum_k \Gamma^{\bar{k}}_{\bar{i}\bar{j}}\partial_{\bar{k}} \overline{W}
 $$
 are the induced covariant derivative on the complexified cotangent bundle $T^{\ast\,{\Bbb C}} Y$ of $Y$
 from the Levi-Civita connection on $T_{\ast}Y$.
Up to some conventional coefficients, this is precisely the manifestly real expression for
 [Wess \& Bagger: Eq.\:(22.12)] in the setting of the current notes.
 
\bigskip

\begin{remark} $[$\,for $Y$ a general K\"{a}hler manifold\,$]$\; {\rm
 The final expression of $S^{(h, W)}(f_{(0)}, (f_{(\alpha)})_{\alpha=1,2})$
   implies that\\
   $S^{(h, W)}(f_{(0)}, (f_{(\alpha)})_{\alpha=1,2})
     = S^{(g, W)}(f_{(0)}, (f_{(\alpha)})_{\alpha=1,2})$
   depends only on the K\"{a}hler metric $g$,
     not the choice of the K\'{a}hler potential $h$ that gives the metric.
 It follows that
  the $d=3+1$, $N=1$ supersymmetric sigma model
  $S^{(g, W)}(f_{(0)}, (f_{(\alpha)})_{\alpha=1,2})$ is defined
  for all K\"{a}hler manifold $Y$.
(When $Y$  is compact, the superpotential $W=0$.)
}\end{remark}

\bigskip

Once a mathematical presentation of superspace and supersymmetry
     that matches the particle physicists' standard language of the topic as presented in [Wess \& Bagger]
   is completed,
 the immediate next question is:
  \begin{itemize}
   \item[{\bf Q.}]
    \parbox[t]{45em}{\it What is the intrinsic description of the same,
     without resorting to a choice of a trivialization of the spinor bundles by covariantly constant sections?}
  \end{itemize}
 That is the theme for the sequel.

\newpage
\baselineskip 13pt
{\footnotesize

\vspace{1em}

\noindent
chienhao.liu@gmail.com,
chienliu@cmsa.fas.harvard.edu; \\
yau@math.harvard.edu

}

\end{document}